\documentclass[11pt,reqno,english]{amsart}
\usepackage[T1]{fontenc}
\usepackage[latin9]{inputenc}
\usepackage{geometry}
\geometry{verbose,tmargin=3cm,bmargin=3cm,lmargin=3cm,rmargin=3cm,headsep=1cm,footskip=1cm}
\setcounter{tocdepth}{1}
\usepackage[svgnames]{xcolor}
\usepackage{babel}
\usepackage{mathrsfs}
\usepackage{mathtools}
\usepackage{bm}
\usepackage{amsbsy}
\usepackage{amstext}
\usepackage{amsthm}
\usepackage{amssymb}
\usepackage{stmaryrd}
\usepackage{graphicx}
\usepackage{setspace}
\usepackage{esint}

\usepackage{tikz}
\usetikzlibrary[patterns]
\usetikzlibrary{arrows.meta}

\setstretch{1.15}
\usepackage[unicode=true,pdfusetitle,
 bookmarks=true,bookmarksnumbered=false,bookmarksopen=false,
 breaklinks=false,pdfborder={0 0 1},backref=false,colorlinks=false]
 {hyperref}

\makeatletter
\numberwithin{equation}{section}
\numberwithin{figure}{section}
\theoremstyle{plain}
\newtheorem{thm}{\protect\theoremname}[section]
\theoremstyle{remark}
\newtheorem{rem}[thm]{\protect\remarkname}
\theoremstyle{definition}
\newtheorem{defn}[thm]{\protect\definitionname}
\theoremstyle{plain}
\newtheorem{prop}[thm]{\protect\propositionname}
\theoremstyle{definition}
\newtheorem*{example*}{\protect\examplename}
\theoremstyle{plain}
\newtheorem{cor}[thm]{\protect\corollaryname}
\theoremstyle{plain}
\newtheorem{lem}[thm]{\protect\lemmaname}


\usepackage{babel}
\usepackage{mathrsfs}
\usepackage{bm}
\usepackage{amsbsy}
\usepackage{amstext}

\let\SF@@footnote\footnote
\def\footnote{\ifx\protect\@typeset@protect
    \expandafter\SF@@footnote
  \else
    \expandafter\SF@gobble@opt
  \fi
}
\expandafter\def\csname SF@gobble@opt \endcsname{\@ifnextchar[
  \SF@gobble@twobracket
  \@gobble
}
\edef\SF@gobble@opt{\noexpand\protect
  \expandafter\noexpand\csname SF@gobble@opt \endcsname}
\def\SF@gobble@twobracket[#1]#2{}



\usepackage{cite}
\usepackage{bbm}

\providecommand{\leftsquigarrow}{%
  \mathrel{\mathpalette\reflect@squig\relax}%
}
\newcommand{\reflect@squig}[2]{%
  \reflectbox{$\m@th#1\rightsquigarrow$}%
}

\makeatother

\providecommand{\corollaryname}{Corollary}
\providecommand{\definitionname}{Definition}
\providecommand{\examplename}{Example}
\providecommand{\lemmaname}{Lemma}
\providecommand{\propositionname}{Proposition}
\providecommand{\remarkname}{Remark}
\providecommand{\theoremname}{Theorem}

\begin{document}
\title[Metastability for parabolic equations with drift 2]{Metastability and time scales for parabolic equations with drift
2: the general time scale}
\author{Claudio Landim, Jungkyoung Lee, and Insuk Seo}

\address{IMPA, Estrada Dona Castorina 110, J. Botanico, 22460 Rio de
Janeiro, Brazil and Univ. Rouen Normandie, CNRS,
LMRS UMR 6085,  F-76000 Rouen, France. \\
e-mail: \texttt{landim@impa.br} }

\address{June E Huh Center for Mathematical Challenges, Korea Institute for Advanced Study,
Republic of Korea. \\
e-mail: \texttt{jklee@kias.re.kr} }

\address{Department of Mathematical Sciences, Seoul National
University and
Research Institute of Mathematics, Republic of Korea. \\
e-mail: \texttt{insuk.seo@snu.ac.kr} }

\begin{abstract}
Consider the elliptic operator given by
\[
\mathscr{L}_{\epsilon}f\,=\,\boldsymbol{b}\cdot\nabla f\,+\,\epsilon\,\Delta f
\]
for some a smooth vector field $\boldsymbol{b}\colon\mathbb{R}^{d}\to\mathbb{R}^{d}$
and a small parameter $\epsilon>0$. Consider the initial-valued problem
on $\mathbb{R}^{d}$
\[
\left\{ \begin{aligned} & \partial_{t}\,u_{\epsilon}\,=\,\mathscr{L}_{\epsilon}\,u_{\epsilon}\;,\\
 & u_{\epsilon}(0,\,\cdot)=u_{0}(\cdot)\;,
\end{aligned}
\right.
\]
for some bounded continuous function $u_{0}$. Under the hypothesis that the diffusion
on $\mathbb{R}^{d}$ induced by generator $\mathscr{L}_{\epsilon}$
has a Gibbs invariant measure of the form $Z_{\epsilon}^{-1}\exp\left\{ -U(x)/\epsilon\right\} dx$
for some smooth Morse potential function $U$ with finitely many critical
points, we provide the complete characterization of the multi-scale
behavior of the solution $u_{\epsilon}$ in the regime $\epsilon\rightarrow0$.
More precisely, we find the critical time scales $\theta_{\epsilon}^{(1)},\,\dots,\,\theta_{\epsilon}^{(\mathfrak{q})}$
such that $\theta_{\epsilon}^{(1)}\to\infty$, $\theta_{\epsilon}^{(p+1)}/\theta_{\epsilon}^{(p)}\to\infty$
for all $1\le p\le\mathfrak{q}-1$ as $\epsilon\rightarrow0$, and
the kernels $R_{t}^{(p)}\colon\mathcal{M}_{0}\times\mathcal{M}_{0}\to\mathbb{R}_{+}$,
where $\mathcal{M}_{0}$ represents the set of local minima of $U$,
such that
\[
\lim_{\epsilon\to0}u_{\epsilon}(t\theta_{\epsilon}^{(p)},\,\boldsymbol{x})\;=\;\sum_{\boldsymbol{m}'\in\mathcal{M}_{0}}R_{t}^{(p)}(\boldsymbol{m},\,\boldsymbol{m}')\,u_{0}(\boldsymbol{m}')\;,
\]
for all $t>0$ and $\boldsymbol{x}$ in the domain of attraction of
$\boldsymbol{m}$ for the dynamical system described by the ordinary
differential equation $\dot{\boldsymbol{x}}(t)=\boldsymbol{b}(\boldsymbol{x}(t))$.
We then complete the characterization of the solution $u_{\epsilon}$
by computing the exact asymptotic limit of the solution $u_{\epsilon}(\varrho_{\epsilon},\,\boldsymbol{x})$
for the intermediate time-scales $\varrho_{\epsilon}$ such that $\varrho_{\epsilon}/\theta_{\epsilon}^{(p)}\to\infty$,
$\varrho_{\epsilon}/\theta_{\epsilon}^{(p+1)}\to0$ for some $0\le p\le\mathfrak{q}$,
where $\theta_{\epsilon}^{(0)}=1$, $\theta_{\epsilon}^{(\mathfrak{q}+1)}=+\infty$.

Our analysis makes essential use of the hierarchical tree structure underlying the metastable behavior in different time-scales of the diffusion on $\mathbb{R}^{d}$
induced by generator $\mathscr{L}_{\epsilon}$. This result can be
regarded as the precise refinement of Freidlin-Wentzell theory which
was not known for more than a half century. The kernels $R_{t}^{(p)}(\,\cdot\,,\,\cdot\,)$
are expressed in terms of Markov semigroups $\{p_{t}^{(p)}:t\ge0\}$,
defined on partitions of $\mathcal{M}_{0}$ appearing in the description
of the tree structure.
\end{abstract}

\maketitle

\section{Introduction\label{sec1}}

In this article, we examine the asymptotic behavior of the solution
of the parabolic initial-valued problem on $\mathbb{R}^{d}$
\begin{equation}
\left\{ \begin{aligned} & \partial_{t}\,u_{\epsilon}\ =\ \mathscr{L}_{\epsilon}\,u_{\epsilon}\;,\\
 & u_{\epsilon}(0,\,\cdot)\ =\ u_{0}(\cdot)\;,
\end{aligned}
\right.\label{eq:pde1}
\end{equation}
where the elliptic operator $\mathscr{L}_{\epsilon}$, $\epsilon>0$,
is given by
\begin{equation}
\mathscr{L}_{\epsilon}f\ =\ \boldsymbol{b}\cdot\nabla f\,+\,\epsilon\,\Delta f\;,\ \ f\in C^{2}(\mathbb{R}^{d})\;,\label{eq:gen}
\end{equation}
for some smooth vector field $\boldsymbol{b}:\mathbb{R}^{d}\rightarrow\mathbb{R}^{d}$.
We assume that the initial condition $u_{0}\colon\mathbb{R}^{d}\to\mathbb{R}$
is a bounded and continuous function.

It is well-known (e.g., \cite{Miclo2, fk10a,fk10b,is15,is17}) that the
solution $u_{\epsilon}$ exhibits a multi time-scale structure when the
dynamical system
\begin{equation}
\dot{\boldsymbol{x}}(t)\,=\,\boldsymbol{b}(\boldsymbol{x}(t))\label{eq:ode}
\end{equation}
has more than one stable equilibrium. Namely, there exist several
different time-scales along which the solution $u_{\epsilon}$ converges
pointwisely in the domain of attraction of each stable equilibrium.
Quantitative precise estimates of the asymptotic behavior were, however,
not known so far.

We assume that the diffusion process $\{\boldsymbol{x}_{\epsilon}(t)\}_{t\ge0}$
induced by the generator $\mathscr{L}_{\epsilon}$ has a Gibbs invariant
measure of the form
\begin{equation}
d\mu_{\epsilon}(\boldsymbol{x})\ =\ \frac{1}{Z_{\epsilon}}e^{-U(\boldsymbol{x})/\epsilon}\,d\boldsymbol{x}\;,\label{eq:invmeas}
\end{equation}
where $U\colon\mathbb{R}^{d}\to\mathbb{R}$ is a smooth potential
and $Z_{\epsilon}$ is the partition function turning $\mu_{\epsilon}$
into a probability measure on $\mathbb{R}^{d}$. We further assume
that $U$ has a finite number of local minima, that it satisfies a
suitable growth condition (cf. Section \ref{sec2}), and that the
critical points of $U$ are non-degenerate (which entails that $U$
is a Morse function). Under these hypotheses, we present a full characterization
of the multi-scale structure of the solution $u_{\epsilon}(\cdot)$.

It has been shown in \cite{LS-22} that the diffusion process $\{\boldsymbol{x}_{\epsilon}(t)\}_{t\ge0}$
has a Gibbs invariant measure \eqref{eq:invmeas} if, and only if,
the vector field $\boldsymbol{b}:\mathbb{R}^{d}\rightarrow\mathbb{R}^{d}$
appearing in \eqref{eq:gen} has a decomposition of the form
\begin{equation}
\boldsymbol{b}\ =\ -\,(\nabla U+\boldsymbol{\ell})\text{\;\;where\;\;}\nabla\cdot\boldsymbol{\ell}\ \equiv\ 0\;,\;\;\nabla U\cdot\boldsymbol{\ell}\ \equiv\ 0\;,\label{eq:decb}
\end{equation}
where $U$ is the potential function introduced in \eqref{eq:invmeas}
and $\boldsymbol{\ell}\colon\mathbb{R}^{d}\to\mathbb{R}$ is an incompressible
vector field orthogonal to the gradient of $U$. In the special case
$\boldsymbol{\ell}=\boldsymbol{0}$, $\mathscr{L}_{\epsilon}$ is
the generator of the overdamped Langevin dynamics under the potential
$U$ at temperature $\epsilon>0$.

\subsection{Multi-scale structure of solution $u_{\epsilon}$ }

We turn to the main result of the article. Denote by \textcolor{blue}{$\mathcal{M}_{0}$}
the set of the local minima of $U$, assumed to have at least two
elements, $|\mathcal{M}_{0}|\ge2$. Denote by $\mathcal{D}(\boldsymbol{m})$,
$\boldsymbol{m}\in\mathcal{M}_{0}$, the domain of attraction of $\boldsymbol{m}$
for the dynamical system \eqref{eq:ode}.

For two positive sequences $(\alpha_{\epsilon})_{\epsilon>0}$ and
$(\beta_{\epsilon})_{\epsilon>0}$, write ${\color{blue}\alpha_{\epsilon}\prec\beta_{\epsilon}}$
if $\alpha_{\epsilon}/\beta_{\epsilon}\to0$ as $\epsilon\rightarrow0$.
The main result of this article identifies a positive integer $\mathfrak{q}$,
time-scales $\theta_{\epsilon}^{(j)}$, $j\in\llbracket0,\,\mathfrak{q}+1\rrbracket$\footnote{In this article, we write $\llbracket a,\,b\rrbracket=[a,\,b]\cap\mathbb{Z}$
where $a,\,b\in\mathbb{R}$ and $[a,\,b]$ denotes the closed interval.}, depending only on the potential function $U$, real numbers $c^{(k)}(\boldsymbol{m})$, $k\in\llbracket0,\,\mathfrak{q}\rrbracket$,
and continuous functions $f^{(l)}(\cdot\,;\boldsymbol{m})\colon(0,+\infty)\rightarrow\mathbb{R}$,
$l\in\llbracket1,\,\mathfrak{q}\rrbracket$, $\bm{m}\in\mathcal{M}_{0}$,
such that
\begin{equation}
1\equiv\theta_{\epsilon}^{(0)}\prec\theta_{\epsilon}^{(1)}\prec\cdots\prec\theta_{\epsilon}^{(\mathfrak{q})}\prec\theta_{\epsilon}^{(\mathfrak{q}+1)}\equiv\infty\;,\label{eq:timescale}
\end{equation}
and
\begin{equation}
\lim_{\epsilon\to0}\,u_{\epsilon}(\varrho_{\epsilon},\boldsymbol{x})\ =\ \begin{cases}
c^{(p)}(\boldsymbol{m}) & \text{if}\ \theta_{\epsilon}^{(p)}\prec\varrho_{\epsilon}\prec\theta_{\epsilon}^{(p+1)}\;,\;\;p\in\llbracket0,\,\mathfrak{q}\rrbracket\;,\\
f^{(p)}(t,\,\boldsymbol{m}) & \text{if}\ \varrho_{\epsilon}=t\theta_{\epsilon}^{(p)}\;,\;\;p\in\llbracket1,\,\mathfrak{q}\rrbracket\;,
\end{cases}\label{eq:mes1}
\end{equation}
for all $\boldsymbol{m}\in\mathcal{M}_{0}$ and $\boldsymbol{x}\in\mathcal{D}(\boldsymbol{m})$.
Furthermore,
\[
\lim_{t\rightarrow0}f^{(p)}(t;\,\boldsymbol{m})\ =\ c^{(p-1)}(\boldsymbol{m})\;\;\;\text{and}\;\;\;\lim_{t\rightarrow\infty}f^{(p)}(t;\,\boldsymbol{m})\ =\ c^{(p)}(\boldsymbol{m})\;.
\]
In other words, the function $f^{(p)}(t;\boldsymbol{m})$ interpolates
the constants $c^{(p-1)}(\boldsymbol{m})$ and $c^{(p)}(\boldsymbol{m})$.

By \eqref{eq:timescale}, the equations \eqref{eq:mes1} fully characterize
the asymptotic behavior of the solution $u_{\epsilon}$ for all time
scales (cf. Figure \ref{fig:multiscale}). Moreover, it will be seen
in the proof that the constants $c^{(p)}(\boldsymbol{m})$ and $f^{(p)}(t;\boldsymbol{m})$,
$t\ge0$, are convex combinations of $\{u_{0}(\boldsymbol{m}'),\,\boldsymbol{m}'\in\mathcal{M}_{0}\}$,
whose weights depend only on $U$ and $\boldsymbol{\ell}$. In particular,
$c^{(0)}(\boldsymbol{m})=u_{0}(\boldsymbol{m})$. The cases $p=0$,
$p=1$ have been considered in \cite{LLS-1st}.

The arguments presented in this article also apply to the case where
equation \eqref{eq:pde1} is considered on a bounded domain with
Dirichlet or Neumann boundary conditions, see Remark \ref{rem:bdry}.

\begin{figure}
\includegraphics[scale=0.082]{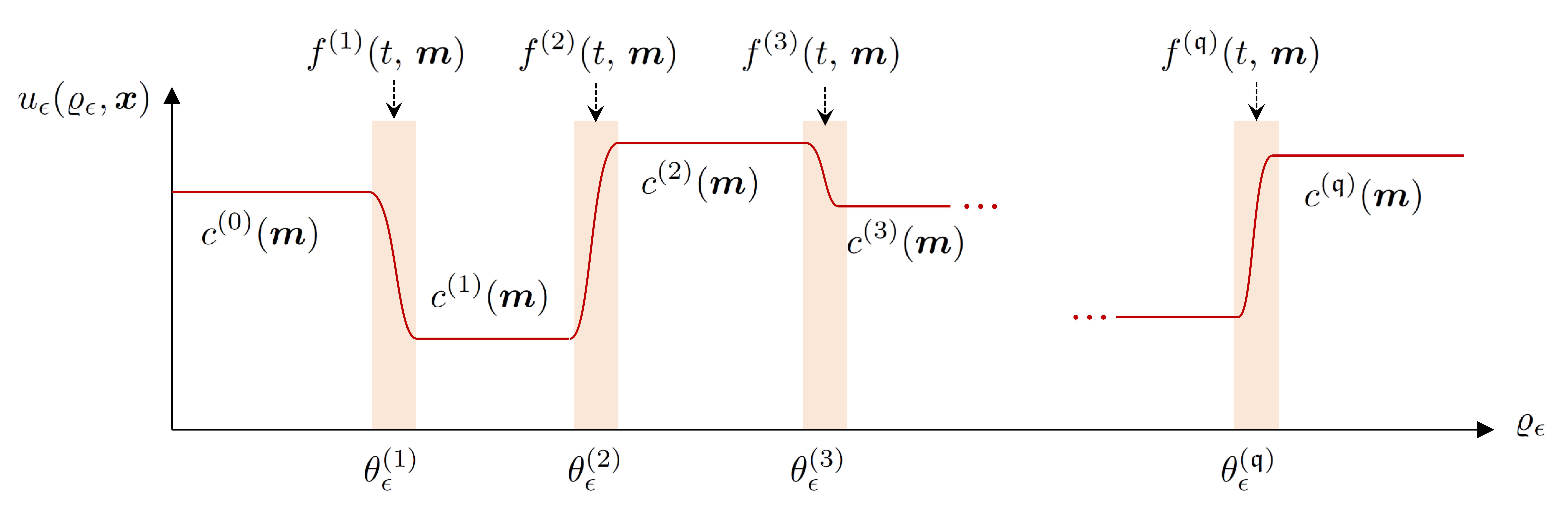}\caption{An illustration of the multi-scale structure of the solution $u_{\epsilon}$
explained in \eqref{eq:mes1}.}
\label{fig:multiscale}
\end{figure}

\subsection{Connection to metastability of diffusion processes }

To understand the multi-scale structure \eqref{eq:mes1}, we examine
the metastable behavior of the diffusion process $\boldsymbol{x}_{\epsilon}(\cdot)$
associated with the generator $\mathscr{L}_{\epsilon}$, and described
by the stochastic differential equation
\begin{equation}
d\boldsymbol{x}_{\epsilon}(t)\ =\ \boldsymbol{b}(\boldsymbol{x}_{\epsilon}(t))\,dt\,+\,\sqrt{2\epsilon}\,dW_{t}\;,\label{sde}
\end{equation}
where $W_{t}$ represents a $d$-dimensional Brownian motion and $\epsilon>0$
a small constant corresponding to the temperature. As $\epsilon$
is small, we regard this diffusion process as a small perturbation
of the dynamical system $\boldsymbol{x}(\cdot)$ given by \eqref{eq:ode}.
In particular, when the dynamical system \eqref{eq:ode} has more
than one stable equilibria, the diffusion process $\boldsymbol{x}_{\epsilon}(\cdot)$
is expected to exhibit a metastable behavior among the stable equilibria.

This problem has been investigated thoroughly during the last decades.
Freidlin and Wentzell \cite{FW} considered it from the point of view
of the large deviations, and set the bases of the theory. The estimates
obtained with large deviations arguments \cite{ov} have been refined
in the case $\boldsymbol{b}=-\nabla U$ \cite{BEGK} and in the general
case \cite{LLS-1st} with methods relying on the Markov chain potential
theory \cite{l-review}. Recently, \cite{RS,LS-22b} considered the
tunneling of the process $\boldsymbol{x}_{\epsilon}(\cdot)$ among
the global minima of $U$.

In spite of the efforts toward the understanding of the metastable
behavior of the process $\boldsymbol{x}_{\epsilon}(\cdot)$, a full
description remained an open problem in the last decades. This problem
is tackled here by constructing a \emph{tree} where each generation
encloses the metastable behavior of the process $\boldsymbol{x}_{\epsilon}(\cdot)$
at a certain time-scale. We refer to Figure \ref{fig:example_tree}
for an illustration of the construction in a specific example.

In Section \ref{sec4}, for each time scale $\theta_{\epsilon}^{(p)}$,
$p\in\llbracket1,\,\mathfrak{q}\rrbracket$, we construct a partition
\begin{equation}
\left\{ \,\mathcal{M}_{1}^{(p)},\,\dots,\,\mathcal{M}_{a_{p}}^{(p)},\,\mathcal{N}_{p}\,\right\} \label{eq:part}
\end{equation}
of the set $\mathcal{M}_{0}$, and a $\{\mathcal{M}_{1}^{(p)},\dots,\mathcal{M}_{a_{p}}^{(p)}\}$-valued
continuous-time Markov chain, denoted by $\{\mathbf{y}^{(p)}(t)\}_{t\ge0}$,
which describes the metastable behavior of the process $\boldsymbol{x}_{\epsilon}(\cdot)$
among the wells defined by the local minima of $U$. Mind that the
element $\mathcal{N}_{p}$ of the partition does not belong to the
state space of the chain $\mathbf{y}^{(p)}(\cdot)$ as, in the scale
$\theta_{\epsilon}^{(p)}$, the process remains only a negligible
amount of time in a neighborhood of the local minima of $U$ in $\mathcal{N}_{p}$.
In contrast, each set $\mathcal{M}_{i}^{(p)}$ corresponds to a metastable
or stable set of the process $\boldsymbol{x}_{\epsilon}(\cdot)$ in
the time-scale $\theta_{\epsilon}^{(p)}$. A similar structure has
been introduced in \cite{bl11,lx16} in the context of metastable
finite-state Markov chains.

The multi-scale structure of the solution $u_{\epsilon}$ explained
in \eqref{eq:mes1} is connected to the metastable behavior of the
process $\boldsymbol{x}_{\epsilon}(\cdot)$ by the stochastic representation
of $u_{\epsilon}$ in terms of $\boldsymbol{x}_{\epsilon}(\cdot)$:
\begin{equation}
u_{\epsilon}(t,\,\boldsymbol{x})\;=\;\mathbb{E}\left[u_{0}(\boldsymbol{x}_{\epsilon}(t))\,\Big|\,\boldsymbol{x}_{\epsilon}(0)=\boldsymbol{x}\right]\;.\label{eq:probexp}
\end{equation}
This relation along with the metastable results for the process $\boldsymbol{x}_{\epsilon}(\cdot)$
yields the following expressions for the functions and constants appearing
in \eqref{eq:mes1}:
\begin{align}
f^{(p)}(t,\,\boldsymbol{m})\ =\  & \sum_{k,\,\ell=1}^{a_{p}}\,\sum_{\boldsymbol{m}'\in\mathcal{M}_{\ell}^{(p)}}\mathfrak{a}^{(p-1)}(\boldsymbol{m},\,\mathcal{M}_{k}^{(p)})\,p_{t}^{(p)}(\mathcal{M}_{k}^{(p)},\,\mathcal{M}_{\ell}^{(p)})\,\frac{\nu(\boldsymbol{m}')}{\nu(\mathcal{M}_{\ell}^{(p)})}\,u_{0}(\boldsymbol{m}')\;,\label{eq:fp}\\
c^{(p)}(\boldsymbol{m})\ =\  & \sum_{k=1}^{a_{p}}\,\sum_{\boldsymbol{m}'\in\mathcal{M}_{k}^{(p)}}\,\mathfrak{a}^{(p-1)}(\boldsymbol{m},\,\mathcal{M}_{k}^{(p)})\,\frac{\nu(\boldsymbol{m}')}{\nu(\mathcal{M}_{\ell}^{(p)})}\,u_{0}(\boldsymbol{m}')\;,\label{eq:cp}
\end{align}
for all $\boldsymbol{m}\in\mathcal{M}_{0}$ and $p\in\llbracket1,\,\mathfrak{q}\rrbracket$,
where
\begin{itemize}
\item $\mathfrak{a}^{(p-1)}(\boldsymbol{m},\,\mathcal{M}_{k}^{(p)})$ is
a constant determined by $\mathbf{y}^{(1)}(\cdot),\,\dots,\,\mathbf{y}^{(p-1)}(\cdot)$
describing the probability that the process $\boldsymbol{x}_{\epsilon}(\cdot)$
starting from $\boldsymbol{m}$ hits a neighborhood of $\mathcal{M}_{k}^{(p)}$
before hitting those of $\mathcal{M}_{j}^{(p)}$, $j\neq k$,
\item $p_{t}^{(p)}(\,\cdot\,,\,\cdot\,)$ is the semigroup associated with
the process $\mathbf{y}^{(p)}(\cdot)$, and
\item $\nu(\cdot)/\nu(\mathcal{M}_{\ell}^{(p)})$ can be regarded as the
probability measure on $\mathcal{M}_{\ell}^{(p)}$ describing the
stationary state of the process $\mathbf{y}^{(p-1)}(\cdot)$ restricted
to the well $\mathcal{M}_{\ell}^{(p)}$ (where we regard $\mathbf{y}^{(0)}(\cdot)$
as a trivial Markov chain that does not move at all). We refer to
\eqref{eq:nu} for the exact definition.
\end{itemize}

\subsection{Resolvent approach to metastability}

The proof of the results described above is purely probabilistic and
relies on the theory of metastable Markov processes developed in \cite{BL1,BL2,LLM,LMS2,RS,LMS}.
The crucial point consists in showing that the solution of a resolvent
equation is asymptotically constant in the wells. More precisely,
fix a constant $\lambda>0$ and $p\in\llbracket1,\,\mathfrak{q}\rrbracket$.
For $\boldsymbol{g}\colon\{\mathcal{M}_{1}^{(p)},\,\dots,\,\mathcal{M}_{a_{p}}^{(p)}\}\rightarrow\mathbb{R}$
denote by $\phi_{\epsilon}^{\boldsymbol{g}}$ the unique solution
of the resolvent equation
\begin{equation}
\left(\lambda-\theta_{\epsilon}^{(p)}\mathscr{L}_{\epsilon}\right)\,\phi_{\epsilon}\ =\ \sum_{j=1}^{a_{p}}\boldsymbol{g}(\mathcal{M}_{j}^{(p)})\,\chi_{\mathcal{E}(\mathcal{M}_{j}^{(p)})}\;,\label{eq:res}
\end{equation}
where $\chi_{\mathcal{A}}$, $\mathcal{A}\subset\mathbb{R}^{d}$,
represents the indicator function of the set $\mathcal{A}$ and $\mathcal{E}(\mathcal{M}_{j}^{(p)})$
a neighborhood of the set $\mathcal{M}_{j}^{(p)}$ (e.g., the union
of small open balls around each $\boldsymbol{m}\in\mathcal{M}_{j}^{(p)}$).
We prove that the solution $\phi_{\epsilon}^{\boldsymbol{g}}$ is
asymptotically constant in each set $\mathcal{E}(\mathcal{M}_{j}^{(p)})$
in the sense that
\begin{equation}
\lim_{\epsilon\rightarrow0}\,\max_{j=1,\,\dots,\,a_{p}}\,\sup_{\boldsymbol{x}\in\mathcal{E}(\mathcal{M}_{j}^{(p)})}\left|\,\phi_{\epsilon}^{\boldsymbol{g}}(\boldsymbol{x})-\boldsymbol{f}(\mathcal{M}_{j}^{(p)})\,\right|\ =\ 0\;,\label{fx3}
\end{equation}
where $\boldsymbol{f}$ is the solution of the reduced resolvent equation
\begin{equation}
\left(\lambda-\mathfrak{L}^{(p)}\right)\,\boldsymbol{f}\ =\ \boldsymbol{g}\;,\label{fx1}
\end{equation}
and $\mathfrak{L}^{(p)}$ is the generator of the Markov chain $\mathbf{y}^{(p)}(\cdot)$
introduced above.

The resolvent approach to metastability developed recently in \cite{LMS}
asserts that establishing \eqref{fx3}, \eqref{fx1} for all functions
$\boldsymbol{g}\colon\{\mathcal{M}_{1}^{(p)},\,\dots,\,\mathcal{M}_{a_{p}}^{(p)}\}\rightarrow\mathbb{R}$
is a necessary and sufficient condition for the metastability behavior
of the process $\boldsymbol{x}_{\epsilon}(\cdot)$ at the time-scale
$\theta_{\epsilon}^{(p)}$. A rigorous formulation of this statement
is presented in Section \ref{sec3}.

Therefore, since the derivation of \eqref{eq:mes1} is reduced to
the proof of the metastable behavior of the diffusion $\boldsymbol{x}_{\epsilon}(\cdot)$,
and this latter result follows from \eqref{fx3}, \eqref{fx1}, the
proof is reduced to this last property of the resolvent equation.

\subsection{Related works}

As mentioned above, the asymptotic behavior of the solution
$u_{\epsilon}$ of the equation \eqref{eq:pde1} in the regime
$\epsilon\rightarrow0$ is closely connected to the metastable behavior
of the diffusion process $\boldsymbol{x}_{\epsilon}(\cdot)$. The
analysis of $u_{\epsilon}$ based on this connection has been examined
before.

Miclo \cite{Miclo2} proved the existence of a finite number of
disjoint cycles which absorb the trajectories in the non-critical
time-scales. This results corresponds to the limits \eqref{eq:mes1} in
the case
$\theta_{\epsilon}^{(p)}\prec\varrho_{\epsilon}\prec\theta_{\epsilon}^{(p+1)}$
for some $p$. 

Freidlin and Koralov
\cite{fk10a,fk10b} found a critical depth $D>0$ and showed that
the the solution $u_{\epsilon}(t,\,x)$ in the interval $t\in[0,\,e^{(D-\eta)/\epsilon}]$
and $t\in[e^{(D+\eta)/\epsilon},\,\infty)$ differ significantly for
all $\eta>0$. Therefore, a dramatic phase transition occurs at the
scale $\theta_{\epsilon}=e^{D/\epsilon}$.

This result has been extended by Koralov and Tcheuko \cite{kt16} to
cases which exhibit multiple metastable time-scales. Ishii and
Souganidis \cite{is15,is17} derived similar results with purely
analytical methods. The main advantage exploited here in respect to
\cite{fk10a,fk10b} is that the stationary state is explicitly
known. Miclo \cite{Miclo4} and Bertini, Gabrielli and Landim
\cite{bgl2} considered the same problem in the context of finite-state
Markov chains.

The metastable behavior of the process
$\boldsymbol{x}_{\epsilon}(\cdot)$ has been recently studied in
several articles: \cite{LM} provided sharp asymptotics on the
low-lying spectra which is closely related with the metastability of
the process $\boldsymbol{x}_{\epsilon}(\cdot)$, \cite{BEGK,LS-22}
established Eyring-Kramers law precisely estimating the mean
transition time from a local minimum of $U$ to another one, and
\cite{LS-22b,RS} investigated the metastability among the global
minima (i.e., ground states) of $U$. The last work can be regarded as
the analysis of the metastability at the final time-scale
$\theta_{\epsilon}^{(\mathfrak{q})}$ considered here.

Uniform estimates, similar to \eqref{fx3}, for solutions of Dirichlet
problems go back at least to Devinatz and Friedman \cite{DF78}, and
Day \cite{Day82}. The convergence to a constant is called in the
literature the leveling property of the equation. We refer to Leli\`evre,
Le Peutrec and Nectoux \cite{LLP22} for a recent account and further
references.

\subsection{Novelty of article}

We conclude this section with some comments on the
novelties of the article.

Firstly, we established a robust inductive scheme
to analyze a complex energy landscape hierarchy structure. This scheme
is completely new and has potential to be applied to other models
with complex landscape, such as the Ising and Potts models \cite{BC,KS1,KS2},
spin glass models \cite{MPV}, or to the loss function appearing in
deep learning \cite{BDM}.

The metastability analysis of the current article is based on the
martingale approach \cite{BL1,LLM} and the resolvent appoach
\cite{LMS}, but we get highly refined results than the previous ones
based on a multitude of technical novelties. For instance, we
generalize the result of Barrera-Jara \cite{BJ} to prove the local
ergodicity of the process $\boldsymbol{x}_{\epsilon}(\cdot)$ to the
case where $U$ has multiple local minima, we introduce a new type of
test functions to characterize the values of the resolvent equation in
the wells, we design an auxiliary process, denoted by
$\widehat{\mathbf{y}}^{(p)}(\cdot)$, to define the Markov chain
$\mathbf{y}^{(p)}(\cdot)$ which describes the metastable behavior of
$\boldsymbol{x}_{\epsilon}(\cdot)$ at the time-scale
$\theta_{\epsilon}^{(p)}$.  These innovations permit to extend the
analysis of the metastable behavior of the process
$\boldsymbol{x}_{\epsilon}(\cdot)$ to the case where it starts from a
domain of attraction of a local minimum which can even be negligible at
each time scale; extending the previous results which required the
process to start from a microscopic well around a local minimum whose
depth corresponds to the time scale.

\section{Main results on partial differential equations\label{sec2}}

In this section, we present the main results on the solution to partial
differential equations (PDEs). Theorem \ref{t00}, explained in Subsection
\ref{sec2.3}, provides the complete multi-scale structure of the
solution $u_{\epsilon}$ to the parabolic equation \eqref{eq:pde1}
in the $\epsilon\rightarrow0$ regime. The underlying analysis allowing
us to establish this multi-scale structure is the demonstration of
the leveling property of the solution to the resolvent equations associated
with the operator $\mathscr{L}_{\epsilon}$. This second result is
explained in Subsection \ref{sec2.4}.

The third result is the rigorous verification of the tree structure formed by the metastable states of the diffusion process
\textcolor{blue}{$\bm{x}_{\epsilon}(\cdot)$} associated with the
generator $\mathscr{L}_{\epsilon}$ introduced in \eqref{sde} in the
$\epsilon\rightarrow0$ regime.  This result is explained in the next
section.

\subsection{\label{sec2.1}Potential function and zero-noise dynamics}

Before starting to state the main results, we summarize here the assumptions
on the potential function $U\colon\mathbb{R}^{d}\to\mathbb{R}$ and
vector field $\boldsymbol{\ell}\colon\mathbb{R}^{d}\to\mathbb{R}^{d}$
which appeared in the decomposition \eqref{eq:decb} of $\boldsymbol{b}$.
We assume that they are smooth enough, $U\in C^{3}(\mathbb{R}^{d})$
and $\boldsymbol{\ell}\in C^{2}(\mathbb{R}^{d},\,\mathbb{R}^{d})$,
and satisfy the relation \eqref{eq:decb}.

We assume the following control of $U$ at infinity:
\begin{equation}
\begin{gathered}\lim_{n\to\infty}\,\inf_{|\boldsymbol{x}|\geq n}\,\frac{U(\boldsymbol{x})}{|\boldsymbol{x}|}\ =\ \infty\;,\ \ \lim_{|\boldsymbol{x}|\to\infty}\,\frac{\boldsymbol{x}}{|\boldsymbol{x}|}\cdot\nabla U(\boldsymbol{x})\ =\ \infty\;,\\
\lim_{|\boldsymbol{x}|\to\infty}\,\left\{ \,\left|\,\nabla U(\boldsymbol{x})\,\right|\,-\,2\,\Delta U(\boldsymbol{x})\,\right\} \ =\ \infty\;.
\end{gathered}
\label{eq:growth}
\end{equation}
In this formula and below, ${\color{blue}|\boldsymbol{x}|}$ represents
the Euclidean norm of $\boldsymbol{x}\in\mathbb{R}^{d}$. These are
routine assumptions (cf. \cite{BEGK,LMS,LS-22}) guaranteeing the
positive recurrence of the process $\boldsymbol{x}_{\epsilon}(\cdot)$
and hence that the Gibbs measure $\mu_{\epsilon}$ given in \eqref{eq:invmeas}
is the invariant probability measure of the process $\boldsymbol{x}_{\epsilon}(\cdot)$
(cf. \cite[Theorem 2.2]{LS-22})

Denote by ${\color{blue}\mathcal{C}}$ the set of critical points
of $U$. We assume that $U$ is a Morse function (cf. \cite[Definition 1.7]{Nic18}),
i.e., all the critical points of $U$ are non-degenerate in the sense
that the Hessian of $U$ at $\boldsymbol{c}$, denoted by\textcolor{blue}{
$(\nabla^{2}U)(\boldsymbol{c})$,} is invertible for all $\boldsymbol{c}\in\mathcal{C}$.

Denote by $\{\boldsymbol{x}(t)\}_{t\ge0}$ the zero-noise dynamics
associated with \eqref{sde} which is described by the ODE
\begin{equation}
\dot{\boldsymbol{x}}(t)\ =\ \boldsymbol{b}(\boldsymbol{x}(t))\;\;\;;\ \;t\in\mathbb{R}\;.\label{eq:x(t)}
\end{equation}
Namely, we can regard the diffusion process $\boldsymbol{x}_{\epsilon}(\cdot)$
given in \eqref{sde} as a small random perturbation of the dynamical
system \eqref{eq:x(t)}. In particular, the metastability of the process
$\boldsymbol{x}_{\epsilon}(\cdot)$ is closely related with the phase
diagram of the dynamical system \eqref{eq:x(t)}. The following is
the basic results on the dynamical system \eqref{eq:x(t)} under \eqref{eq:decb}.
Recall that we denote by $\mathcal{M}_{0}$ the set of local minima
of $U$.
\begin{thm}[{{\cite[Theorem 2.1]{LS-22}}}]
The set $\mathcal{C}$ of critical points of $U$ is exactly the
set of all equilibria of the dynamical system $\boldsymbol{x}(\cdot)$,
and the set $\mathcal{M}_{0}$ is exactly the set of all the stable
equilibria of $\boldsymbol{x}(\cdot)$.
\end{thm}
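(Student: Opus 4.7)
The theorem is a two-fold identification: first, that $\mathcal{C}$ equals the equilibrium set of \eqref{eq:x(t)}, and second, that $\mathcal{M}_{0}$ equals the set of stable equilibria. Both statements rest on the decomposition \eqref{eq:decb} together with the Morse hypothesis on $U$.

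For the first part, one implication is immediate. If $\boldsymbol{b}(\boldsymbol{c})=0$, then $\nabla U(\boldsymbol{c})=-\boldsymbol{\ell}(\boldsymbol{c})$; taking the inner product with $\nabla U(\boldsymbol{c})$ and invoking $\nabla U\cdot\boldsymbol{\ell}\equiv 0$ gives $|\nabla U(\boldsymbol{c})|^{2}=0$, so $\boldsymbol{c}\in\mathcal{C}$. The reverse implication is the genuinely non-trivial step and is where the Morse property enters. Differentiating the identity $\nabla U\cdot\boldsymbol{\ell}\equiv 0$ once yields
\[
(\nabla^{2}U)\,\boldsymbol{\ell}\,+\,(D\boldsymbol{\ell})^{\top}\,\nabla U\ \equiv\ 0\;.
\]
At $\boldsymbol{c}\in\mathcal{C}$ the second summand vanishes, leaving $(\nabla^{2}U)(\boldsymbol{c})\,\boldsymbol{\ell}(\boldsymbol{c})=0$. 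Since $U$ is Morse the Hessian is invertible at $\boldsymbol{c}$, which forces $\boldsymbol{\ell}(\boldsymbol{c})=0$ and hence $\boldsymbol{b}(\boldsymbol{c})=-\nabla U(\boldsymbol{c})-\boldsymbol{\ell}(\boldsymbol{c})=0$.

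For the second part the decisive observation is that $U$ itself is a strict local Lyapunov function for the zero-noise flow: by the orthogonality again,
\[
\tfrac{d}{dt}\,U(\boldsymbol{x}(t))\ =\ \nabla U(\boldsymbol{x}(t))\cdot\boldsymbol{b}(\boldsymbol{x}(t))\ =\ -\,|\nabla U(\boldsymbol{x}(t))|^{2}\ \le\ 0\;,
\]
with equality only on $\mathcal{C}$. If $\boldsymbol{m}\in\mathcal{M}_{0}$, the Morse property produces a small ball $B$ around $\boldsymbol{m}$ in which $\boldsymbol{m}$ is the unique critical point and the unique minimum of $U$, so the standard Lyapunov/LaSalle argument applied to $U-U(\boldsymbol{m})$ shows that $\boldsymbol{m}$ is asymptotically stable. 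Conversely, if $\boldsymbol{c}\in\mathcal{C}\setminus\mathcal{M}_{0}$, the Morse property implies that $\nabla^{2}U(\boldsymbol{c})$ admits a negative eigenvalue, so every neighborhood of $\boldsymbol{c}$ contains points $\boldsymbol{x}_{0}$ with $U(\boldsymbol{x}_{0})<U(\boldsymbol{c})$. If $\boldsymbol{c}$ were Lyapunov stable, one could take such $\boldsymbol{x}_{0}$ inside a forward-invariant ball $B$ containing no other critical point; LaSalle would then force $\boldsymbol{x}(t;\boldsymbol{x}_{0})\to\boldsymbol{c}$ and hence $U(\boldsymbol{x}(t;\boldsymbol{x}_{0}))\to U(\boldsymbol{c})$, in contradiction with $U(\boldsymbol{x}(t;\boldsymbol{x}_{0}))\le U(\boldsymbol{x}_{0})<U(\boldsymbol{c})$ for all $t\ge 0$.

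The main obstacle, such as it is, lies in the non-trivial direction of the first part, namely showing that the incompressible companion $\boldsymbol{\ell}$ vanishes at every critical point of $U$. Without the Morse assumption, $\boldsymbol{\ell}$ could in principle introduce spurious zeros of $\boldsymbol{b}$ disjoint from $\mathcal{C}$, and the identification $\mathcal{C}=\{\boldsymbol{b}=0\}$ would fail. Once that step is secured, the remainder is a textbook application of Lyapunov theory with $U$ itself as the Lyapunov function.
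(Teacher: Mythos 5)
Your proposal is correct. Note first that this theorem is imported from \cite[Theorem 2.1]{LS-22} and is not re-proved in the present paper, so there is no in-paper argument to compare against; but the route you take is the natural one and is essentially what that reference uses. The key step — differentiating the orthogonality identity $\nabla U\cdot\boldsymbol{\ell}\equiv 0$ to obtain $(\nabla^{2}U)\boldsymbol{\ell}+(D\boldsymbol{\ell})^{\top}\nabla U\equiv 0$, then evaluating at a critical point and invoking Morse non-degeneracy to conclude $\boldsymbol{\ell}(\boldsymbol{c})=0$ — is exactly what makes the equilibrium set of $\boldsymbol{b}$ coincide with $\mathcal{C}$, and you correctly flag this as the only non-trivial point. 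The Lyapunov/LaSalle argument for the second claim is sound: $U$ decreases strictly along non-constant trajectories by the same orthogonality, critical points are isolated by the Morse hypothesis, local minima are therefore asymptotically stable, and a critical point that is not a local minimum cannot be Lyapunov stable because trajectories started at nearby lower-energy points can never approach it. The only remark worth making is that for the instability of saddles one could alternatively linearize and invoke the fact (used elsewhere in the paper, around \eqref{eq:omega}) that $(\nabla^{2}U+D\boldsymbol{\ell})(\boldsymbol{\sigma})$ has a negative eigenvalue at any index-one saddle $\boldsymbol{\sigma}$, so $-\nabla\boldsymbol{b}(\boldsymbol{\sigma})$ has a positive eigenvalue; your LaSalle argument has the advantage of covering all non-minimal critical points uniformly without appealing to the spectral lemma.
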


In view of the previous theorem, the metastability of the process
$\boldsymbol{x}_{\epsilon}(\cdot)$, and therefore the multi-scale
structure of the solution $u_{\epsilon}$, appears only when $|\mathcal{M}_{0}|\ge2$,
i.e., when there are multiple stable equilibria of the dynamical system
$\boldsymbol{x}(\cdot)$. Therefore, we henceforth assume that $|\mathcal{M}_{0}|\ge2$.
We refer to Remark \ref{rem:M0=00003D00003D1} for a comment on the
trivial case with $|\mathcal{M}_{0}|=1$.

For two critical points $\boldsymbol{c}_{1},\,\boldsymbol{c}_{2}\in\mathcal{C}$
of $U$, a heteroclinic orbit $\phi$ from $\boldsymbol{c}_{1}$ to
$\boldsymbol{c}_{2}$ is the path $\phi\colon\mathbb{R}\to\mathbb{R}^{d}$
satisfying \eqref{eq:x(t)}, i.e., $\dot{\phi}(t)\,=\,\boldsymbol{b}(\phi(t))$
for all $t\in\mathbb{R}$, and such that
\begin{equation}
\lim_{t\to-\infty}\phi(t)\ =\ \boldsymbol{c}_{1}\,,\quad\lim_{t\to\infty}\phi(t)\ =\ \boldsymbol{c}_{2}\;.\label{2-13}
\end{equation}

For $\boldsymbol{c}_{1},\,\boldsymbol{c}_{2}\in\mathcal{C}$, if there
exists a heteroclinic orbit from $\boldsymbol{c}_{1}$ to $\boldsymbol{c}_{2}$,
we write ${\color{blue}\boldsymbol{c}_{1}\curvearrowright\boldsymbol{c}_{2}}$.
We note that the potential $U$ is decreasing along the heteroclinic
orbit $\phi$ because
\[
\frac{d}{dt}U(\phi(t))\ =\ -\,|\,\nabla U(\phi(t))\,|^{2}\;.
\]
We used here that the field $\boldsymbol{\ell}$ is orthogonal to
$\nabla U$. Thus, reparametrising $\phi$, there exists a continuous
path $\boldsymbol{z}:[0,\,1]\rightarrow\mathbb{R}^{d}$ such that
\begin{equation}
\boldsymbol{z}(0)\,=\,\bm{c}_{1}\;,\;\;\boldsymbol{z}(1)\,=\,\bm{c}_{2}\;,\text{ and \;\;\;}U(\boldsymbol{z}(t))\,<\,U(\bm{c}_{1})\;\;\text{ for all }\;\;t\in(0,\,1]\label{z(t)_exp}
\end{equation}
when $\boldsymbol{c}_{1}\curvearrowright\boldsymbol{c}_{2}$.

We denote by ${\color{blue}\mathcal{S}_{0}}$ the set of saddle points
of $U$. Since $U$ is a Morse function, $\mathcal{S}_{0}$ is the
set of critical points $\boldsymbol{\sigma}$ of $U$ at which the
Hessian $(\nabla^{2}U)(\boldsymbol{\sigma})$ of $U$ at $\boldsymbol{\sigma}$
has only one negative eigenvalue, all the other ones being strictly
positive. In particular, by the Hartman-Grobman theorem, there exist
two heteroclinic orbits starting from $\boldsymbol{\sigma}$. We assume
that these two heteroclinic orbits end at local minima, i.e., for
all $\boldsymbol{\sigma}\in\mathcal{S}_{0}$, there exist $\boldsymbol{m}_{\boldsymbol{\sigma}}^{+},\,\boldsymbol{m}_{\boldsymbol{\sigma}}^{-}\in\mathcal{M}_{0}$
such that
\begin{equation}
\boldsymbol{\sigma}\curvearrowright{\color{blue}\boldsymbol{m}_{\boldsymbol{\sigma}}^{+}}\;\;\text{and}\;\;\boldsymbol{\sigma}\curvearrowright{\color{blue}\boldsymbol{m}_{\boldsymbol{\sigma}}^{-}}\;.\label{hyp2}
\end{equation}
We do not assume that the limit points are distinct, we may have that
$\boldsymbol{m}_{\boldsymbol{\sigma}}^{+}=\boldsymbol{m}_{\boldsymbol{\sigma}}^{-}$.
We refer to Figure \ref{fig:hetero} for an illustration.

\begin{figure}
\includegraphics[scale=0.072]{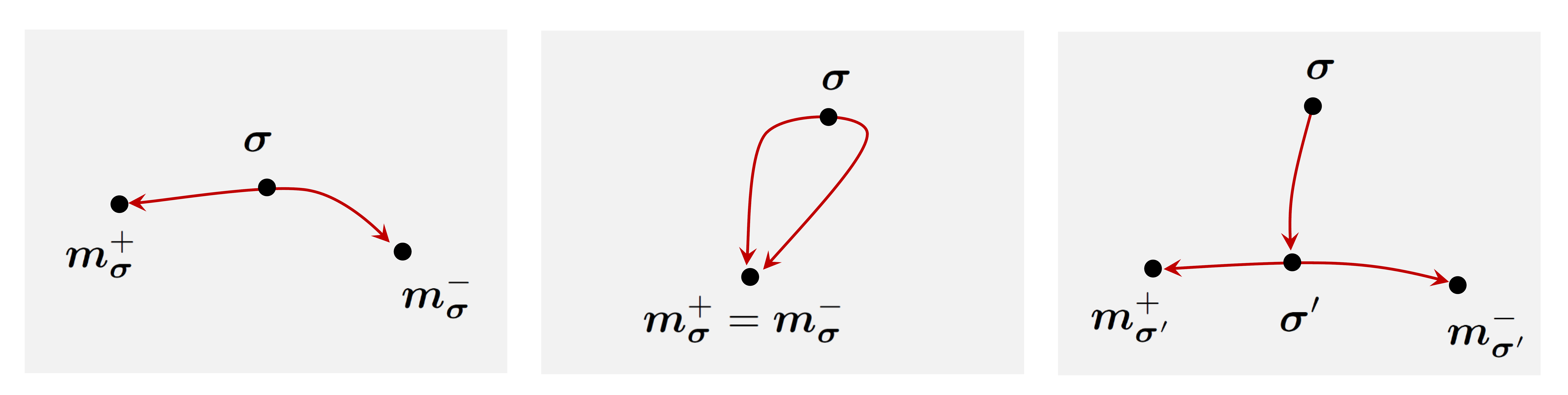}

\caption{\label{fig:hetero}Heteroclinic orbits starting from a saddle $\boldsymbol{\sigma}$.
We allow cases (1) and (2), but do not allow case (3). In particular,
in case (3), both $\bm{\sigma}$ and $\bm{\sigma}'$ are saddle. The
unstable manifold of $\bm{\sigma}$ and stable manifold of $\bm{\sigma}'$
do not intersect transversally in this case, and therefore the system
is not Morse--Smale.}
\end{figure}

\begin{rem}
The assumption \eqref{hyp2} holds when the dynamical system $\boldsymbol{x}(\cdot)$
defined in \eqref{eq:x(t)} is a Morse-Smale system. In the Morse-Smale
system, for two critical points $\boldsymbol{c}_{1},\,\boldsymbol{c}_{2}\in\mathcal{C}$,
the unstable manifold of $\boldsymbol{c}_{1}$ and the stable manifold
of $\boldsymbol{c}_{2}$ intersect transversally and thus when $\boldsymbol{c}_{1}\curvearrowright\boldsymbol{c}_{2}$,
the index (the number of negative eigenvalue of the Hessian) must
strictly decrease along heteroclinic orbits; hence \eqref{hyp2} follows
naturally.
\end{rem}

\begin{rem}
The assumption \eqref{hyp2} needs not to be satisfied for all saddle
points, cf. Remark \ref{rem_assu_saddle}.
\end{rem}

\subsection{\label{sec2.2}Tree structure of metastable behavior}

In this subsection, we introduce the tree structure associated with
the potential $U$ which is used in all the main statements. Since
a detailed presentation is very long, we postpone it to Section \ref{sec4}
and just sketch it here.
\begin{defn}[Tree structure]
\label{def:tree}Recall that we have assumed $|\mathcal{M}_{0}|\ge2$.
Under this assumption, we can construct the following objects in Section
\ref{sec4}.
\begin{enumerate}
\item A positive integer ${\color{blue}\mathfrak{q}\ge1}$ denoting the
number of scales.
\item A sequence of depths $0<d^{(1)}<\dots<d^{(\mathfrak{q})}<\infty$
and time-scales
\[
{\color{blue}\theta_{\epsilon}^{(p)}}\ :=\ \exp\frac{d^{(p)}}{\epsilon}\;\;;\ \;p\in\llbracket1,\,\mathfrak{q}\rrbracket\;.
\]
\item A sequence of finite-state Markov chains ${\bf y}^{(p)}(\cdot)$,
$p\in\llbracket1,\,\mathfrak{q}\rrbracket$, constructed inductively
as follows.
\end{enumerate}
\end{defn}

Let
\begin{equation}
{\color{blue}\mathscr{S}^{(1)}\ =\ \mathscr{V}^{(1)}}\,:=\,\left\{ \,\{\boldsymbol{m}\}:\boldsymbol{m}\in\mathcal{M}_{0}\,\right\} \;\;\text{and}\;\;{\color{blue}\mathscr{N}^{(1)}}\ :=\ \varnothing\;.\label{eq:v1}
\end{equation}
In Section \ref{sec4}, we construct a $\mathscr{S}^{(1)}$-valued
continuous-time Markov chain \textcolor{blue}{$\{\widehat{\mathbf{y}}^{(1)}(t)\}_{t\ge0}$},
and a $\mathscr{V}^{(1)}$-valued continuous-time Markov chain ${\color{blue}\{\mathbf{y}^{(1)}(t)\}_{t\ge0}}$.
For $p=1$ they coincide.

Denote by $\mathfrak{n}_{0}$ the number of local minima of $U$,
${\color{blue}\mathfrak{n}_{0}=|\mathcal{M}_{0}|}$, and by ${\color{blue}\mathscr{R}_{1}^{(1)},\dots,\mathscr{R}_{\mathfrak{n}_{1}}^{(1)}}$,
${\color{blue}\mathscr{T}^{(1)}}$ the closed irreducible classes
and the transient states of the Markov chain ${\bf y}^{(1)}$, respectively.
If $\mathfrak{n}_{1}=1$, the construction is complete, $\mathfrak{q}=1$,
and $d^{(1)}$, $\mathscr{V}^{(1)}$, $\mathscr{N}^{(1)}$, $\mathbf{y}^{(1)}(\cdot)$
have been defined. If $\mathfrak{n}_{1}>1$, we add a new layer to
the construction.

Fix $p\ge1$. Assume that the construction has been carried out up
to layer $p$, and that \textcolor{blue}{$\mathfrak{n}_{p}$}, the
number of recurrent classes of the Markov chain $\mathbf{y}^{(p)}(\cdot)$,
is strictly larger than $1$: $\mathfrak{n}_{p}>1$. For $j\in\llbracket1,\,\mathfrak{n}_{p}\rrbracket$,
let
\begin{equation}
\mathcal{M}_{j}^{(p+1)}\ :=\ \bigcup_{\mathcal{M}\in\mathscr{R}_{j}^{(p)}}\mathcal{M}\;,\ \ {\color{blue}\mathscr{V}^{(p+1)}}\ :=\ \left\{ \,\mathcal{M}_{1}^{(p+1)}\,\dots,\mathcal{M}_{\mathfrak{n}_{p}}^{(p+1)}\,\right\} \;,\ \ {\color{blue}\mathscr{N}^{(p+1)}}\ :=\ \mathscr{N}^{(p)}\cup\mathscr{T}^{(p)}\;,\label{2-14}
\end{equation}
${\color{blue}\mathscr{S}^{(p+1)}}:=\mathscr{V}^{(p+1)}\cup\mathscr{N}^{(p+1)}$.
Note that the elements of $\mathscr{S}^{(p+1)}$ are subsets of
$\mathcal{M}_{0}$.  In Section \ref{sec4}, we construct a
$\mathscr{S}^{(p+1)}$-valued continuous-time Markov chain
${\color{blue}\{\widehat{\mathbf{y}}^{(p+1)}(t)\}_{t\ge0}}$, and
define ${\color{blue}\{\mathbf{y}^{(p+1)}(t)\}_{t\ge0}}$ as its trace
on the set $\mathscr{V}^{(p+1)}$ (cf. Appendix \ref{app:trace}). The
$\mathscr{S}^{(p+1)}$-valued Markov chain
$\widehat{\mathbf{y}}^{(p+1)}(\cdot)$ is an artificial process
introduced to define in a simple way the Markov chain
$\mathbf{y}^{(p+1)}(\cdot)$.  It follows from this construction that
\begin{itemize}
\item[(a)] If $A\in\mathscr{V}^{(q)}$, $q\in\llbracket1,\,\mathfrak{q}\rrbracket$,
then either $A\in\mathscr{N}^{(q+1)}$ or there exists a unique $B\in\mathscr{V}^{(q+1)}$
such that $A\subset B$. In contrast, if $A\in\mathscr{N}^{(q)}$
then $A\in\mathscr{N}^{(q+1)}$.
\item[(b)] By Theorem \ref{t:tree}-(3), $|\mathscr{V}^{(q)}|>|\mathscr{V}^{(q+1)}|$
if $|\mathscr{V}^{(q)}|>1$. In particular, there exists $p\ge1$,
denoted by $\mathfrak{q}$, such that $\mathfrak{n}_{\mathfrak{q}}=1$,
and this construction ends.
\item[(c)] For each $1\in\llbracket1,\,\mathfrak{q}\rrbracket$, the elements
of $\mathscr{S}^{(\mathfrak{q})}$ form a partition of $\mathcal{M}_{0}$.
\item[(d)] The transient states of the chain $\mathbf{y}^{(q)}(\cdot)$ are
the sets in $\mathscr{V}^{(q)}$ which are transfered to $\mathscr{N}^{(q+1)}$.
The closed irreducible classes form the elements of $\mathscr{V}^{(q+1)}$,
$q\in\llbracket1,\,\mathfrak{q}\rrbracket$.
\end{itemize}
By (a) and (b), at each step $q$ there is at least one element $A\in\mathscr{V}^{(q)}$
which is transferred to $\mathscr{N}^{(q+1)}$ or which merges with
at least another element of $\mathscr{V}^{(q)}$ to become an element
of $\mathscr{V}^{(q+1)}$.

One can visualize the $q$-th partition of $\mathcal{M}_{0}$ as a
generation of a tree. The root is the set $\mathcal{M}_{0}$. The
first generation consists of two types of elements: the unique element of $\mathscr{V}^{(\mathfrak{q}+1)}$
and the elements of $\mathscr{N}^{(\mathfrak{q}+1)}$.
The $n$-th generation, $n\in\llbracket2,\,\mathfrak{q}\rrbracket$,
is formed by the elements of $\mathscr{V}^{(\mathfrak{q}+2-n)}$ and
the union of the elements of $\mathscr{N}^{(\mathfrak{q}+2-n)}$.
The $(\mathfrak{q}+1)$-th and last generation is composed by the
singletons $\{\bm{m}\}$, $\bm{m}\in\mathcal{M}_{0}$. In this interpretation,
a set $A$ in the generation $n+1$ is a child of a set $B$ in the
generation $n$ if $A\subset B$. In the tree language, the construction
has been carried out from the leaves to the root. The example given
in Section \ref{sec2_example} illustrates this structure.
\begin{rem}
\label{rem:tree} It turns out from the construction carried out in
Section \ref{sec4} that:
\begin{enumerate}
\item The depths $d^{(1)}<\cdots<d^{(\mathfrak{q})}$ are defined together
with the processes $\widehat{\mathbf{y}}^{(p+1)}(\cdot)$ in the inductive
argument. These depths and the partitions $\mathscr{S}^{(p)}$, $p\in\llbracket1,\,\mathfrak{q}+1\rrbracket$,
are defined only in terms of the potential function $U$ and independent
of $\boldsymbol{\ell}$. However, the Markov chain ${\bf y}^{(p)}(\cdot)$
depends on $\boldsymbol{\ell}$.
\item By Proposition \ref{prop_global_min}, $\mathscr{V}^{(\mathfrak{q}+1)}$
consists of all global minima of $U$.
\item The main properties of the tree structure are stated as $\mathfrak{P}_{1}(p),\,\dots,\,\mathfrak{P}_{4}(p)$,
$p\in\llbracket1,\,\mathfrak{q}\rrbracket$, in Section \ref{sec5}.
\end{enumerate}
\end{rem}

\subsection{\label{sec2_example}Example of tree structure}

In this subsection, we present an example to illustrate the previous
construction. We refer to Figure \ref{fig:potential} for the graph
of the one-dimensional potential $U:\mathbb{R}\rightarrow\mathbb{R}$.
The ten local minima are represented by $\mathcal{M}_{0}=\{\bm{m}_{1},\,\bm{m}_{2},\,\dots,\,\bm{m}_{10}\}$,
and three depths $d^{(j)}$, $j\in\llbracket1,\,3\rrbracket$, are
indicated by red, blue and green arrows.

\begin{figure}
\includegraphics[scale=0.27]{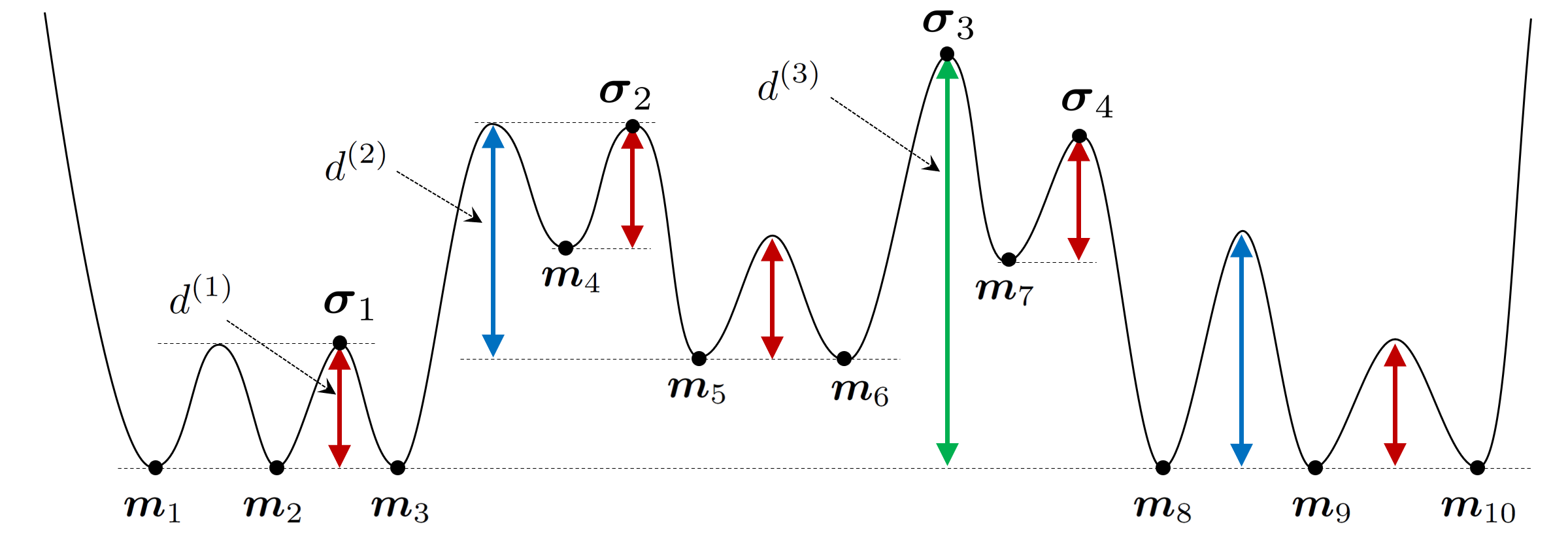}\caption{An example of potential function $U$}
\label{fig:potential}
\end{figure}

As explained in the previous subsection, $\mathscr{S}^{(1)}=\mathscr{V}^{(1)}=\{\,\{\bm{m}_{1}\},\,\{\bm{m}_{2}\},\,\dots,\,\{\bm{m}_{10}\}\}$,
$\mathscr{N}^{(1)}=\varnothing$. The depth $d^{(1)}$ is defined
as the smallest energy barrier between two local minima. In Figure
\ref{fig:potential}, it is indicated by the red arrow.

The $\mathscr{S}^{(1)}$-valued Markov chain ${\bf y}^{(1)}(\cdot)$
can be informally described as follows. It may jump from a state $\{\bm{m}_{i}\}$
to a state $\{\bm{m}_{j}\}$ if $\bm{m}_{j}$ is a neighbour of $\bm{m}_{i}$
and if the potential barrier between $\bm{m}_{i}$ and $\bm{m}_{j}$
is $d^{(1)}$. The specific jump rates will be given in Section \ref{sec4}.

By the description of the jump-rates, there are four recurrent classes
\begin{equation}
\{\boldsymbol{m}_{1},\,\boldsymbol{m}_{2},\,\boldsymbol{m}_{3}\},\,\{\boldsymbol{m}_{8}\},\,\{\boldsymbol{m}_{9},\,\boldsymbol{m}_{10}\},\,\{\boldsymbol{m}_{5},\,\boldsymbol{m}_{6}\}\label{eq:recc}
\end{equation}
and two transient states $\{\boldsymbol{m}_{4}\}$ and $\{\boldsymbol{m}_{7}\}$.
By construction, $\mathscr{V}^{(2)}$ is the set formed by the four
recurrent classes and $\mathscr{N}^{(2)}$ by the two transient states.
The figure \ref{fig:example_tree} illustrates the last two generations
of the tree, formed by the partitions $\mathscr{S}^{(1)}$ and $\mathscr{S}^{(2)}$.

\begin{figure}
\includegraphics[scale=0.072]{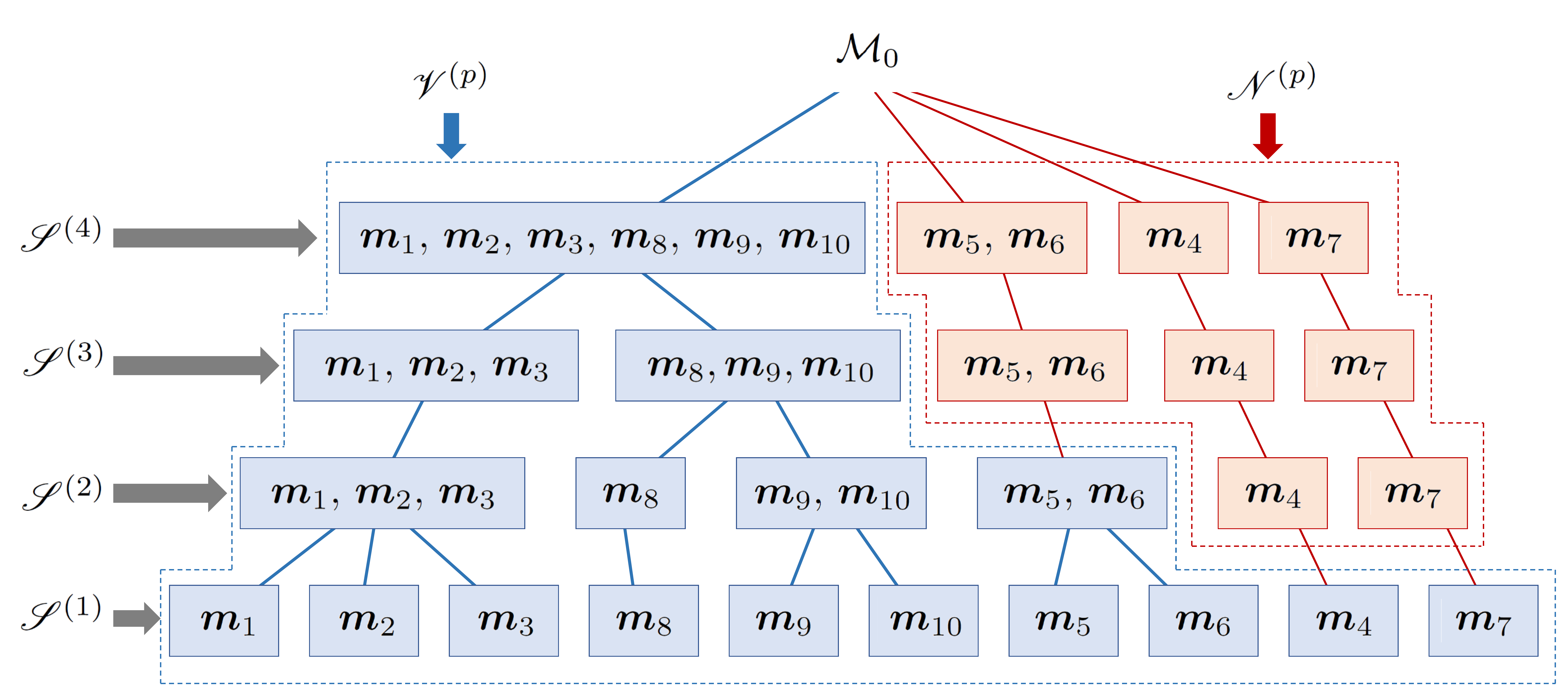}\label{fig:example_tree}\caption{Tree structure associated with the potential given in Figure \ref{fig:potential}}
\end{figure}

The second depth $d^{(2)}$ is given by the smallest energy barrier
between the sets in $\mathscr{V}^{(2)}$. In the example, it is illustrated
by the blue arrows. The $\mathscr{V}^{(2)}$-valued Markov chain ${\bf y}^{(2)}(\cdot)$
can be informally described as follows. It may jump from a set $\mathcal{M}\in\mathscr{V}^{(2)}$
to a set $\mathcal{M}'\in\mathscr{V}^{(2)}$ if $\mathcal{M}'$ is
a neighbour of $\mathcal{M}$ (that is, if there are no sets $\mathcal{M}''\in\mathscr{V}^{(2)}$
between $\mathcal{M}$ and $\mathcal{M}'$) and if the potential barrier
between $\mathcal{M}$ and $\mathcal{M}'$ is $d^{(2)}$. In particular,
the Markov chain ${\bf y}^{(2)}(\cdot)$ has two recurrent classes
\[
\{\boldsymbol{m}_{1},\,\boldsymbol{m}_{2},\,\boldsymbol{m}_{3}\}\;\;\text{and}\;\;\left\{ \{\boldsymbol{m}_{8}\},\,\{\boldsymbol{m}_{9},\,\boldsymbol{m}_{10}\}\right\} \;,
\]
and one transient state $\{\boldsymbol{m}_{5},\,\boldsymbol{m}_{6}\}$.

By definition, the set $\mathscr{V}^{(3)}$ is a pair formed by the
two recurrent classes. Each element of $\mathscr{V}^{(3)}$ is obtained
by taking the union of the sets in the recurrent class. On the other
hand, $\mathscr{N}^{(3)}$ is the union of the set $\mathscr{N}^{(2)}$
with the transient states:
\begin{equation}
\begin{gathered}\mathscr{V}^{(3)}\ =\ \left\{ \,\{\boldsymbol{m}_{1},\,\boldsymbol{m}_{2},\,\boldsymbol{m}_{3}\},\,\{\boldsymbol{m}_{8},\,\boldsymbol{m}_{9},\,\boldsymbol{m}_{10}\}\,\right\} \;,\\
\mathscr{N}^{(3)}\ =\ \mathscr{N}^{(2)}\cup\left\{ \,\{\boldsymbol{m}_{5},\,\boldsymbol{m}_{6}\}\,\right\} \ =\ \left\{ \,\{\boldsymbol{m}_{5},\,\boldsymbol{m}_{6}\},\,\{\boldsymbol{m}_{4}\},\,\{\boldsymbol{m}_{7}\}\,\right\} \;.
\end{gathered}
\label{eq:v3}
\end{equation}
This construction adds a new generation to the tree being constructed
in Figure \ref{fig:example_tree}.

We may proceed and define a new depth $d^{(3)}$, illustrated by a
green arrow in Figure \ref{fig:potential}, and a $\mathscr{V}^{(3)}$-valued
Markov chain ${\bf y}^{(3)}(\cdot)$. This process has a unique recurrent
class formed by the two elements of $\mathscr{V}^{(3)}$. Hence, $\mathscr{V}^{(4)}$
is a singleton, $\mathfrak{n}_{3}=|\mathscr{V}^{(4)}|=1$, and the
construction ends so that $\mathfrak{q}=3$. Note that the unique
element of $\mathscr{V}^{(4)}$ is the set of all the global minima
of $U$. Since there is no transient state, $\mathscr{N}^{(4)}=\mathscr{N}^{(3)}$
as indicated in Figure \ref{fig:example_tree}.

\subsection{\label{sec2.3}Multi-scale structure of parabolic equations}

The statement of the main result of this article requires some more
notation. Denote by \textcolor{blue}{$\widehat{\mathfrak{L}}^{(p)}$},
\textcolor{blue}{$\mathfrak{L}^{(p)}$}, $p\in\llbracket1,\,\mathfrak{q}\rrbracket$,
the infinitesimal generators of the Markov chains $\mathbf{\widehat{y}}^{(p)}(\cdot)$,
${\bf y}^{(p)}(\cdot)$, respectively. Let ${\color{blue}\mathcal{\widehat{Q}}_{\mathcal{M}}^{(p)}}$,
${\color{blue}\mathcal{Q}_{\mathcal{M}'}^{(p)}}$, $\mathcal{M}\in\mathscr{S}^{(p)}$,
$\mathcal{M}'\in\mathscr{V}^{(p)}$, be the law of the process $\mathbf{\widehat{y}}^{(p)}(\cdot)$,
$\mathbf{y}^{(p)}(\cdot)$ starting at $\mathcal{M}$, $\mathcal{M}'$,
respectively.

Recall that $\mathcal{M}_{0}$ denotes the set of local minima of
$U$.
\begin{enumerate}
\item For $\boldsymbol{m}\in\mathcal{M}_{0}$ and $\mathcal{M}\subset\mathcal{M}_{0}$,
let the weights $\nu(\boldsymbol{m})$ and $\nu(\mathcal{M})$ be
given by
\begin{equation}
{\color{blue}\nu(\boldsymbol{m})}\ :=\ \frac{1}{\sqrt{\det(\nabla^{2}U)(\boldsymbol{m})}}\;,\ \ {\color{blue}\nu(\mathcal{M})}\ :=\ \sum_{\boldsymbol{m}\in\mathcal{M}}\nu(\boldsymbol{m})\;.\label{eq:nu}
\end{equation}
\item For $\boldsymbol{m}\in\mathcal{M}_{0}$ and $p\in\llbracket1,\,\mathfrak{q}\rrbracket$,
denote by ${\color{blue}\mathcal{M}(p,\,\boldsymbol{m})}$ the element
in $\mathscr{S}^{(p)}$ which contains $\boldsymbol{m}$. In particular,
by \eqref{eq:v1}, we have $\mathcal{M}(1,\,\boldsymbol{m})=\{\boldsymbol{m}\}$.
\item For $p\in\llbracket1,\,\mathfrak{q}+1\rrbracket$, $\mathcal{M}\in\mathscr{V}^{(p)}$
and $\boldsymbol{m}\in\mathcal{M}_{0}$, let $\mathfrak{a}^{(p-1)}(\boldsymbol{m},\,\mathcal{M})$
be the probability that the auxiliary Markov chain $\widehat{\mathbf{y}}^{(p)}(\cdot)$
starting from $\mathcal{M}(p,\,\boldsymbol{m})$ firstly reaches an
element of $\mathscr{V}^{(p)}$ exactly at $\mathcal{M}$ (cf. Remark
\ref{rem:a(p-1)}):
\begin{equation}
{\color{blue}\mathfrak{a}^{(p-1)}(\boldsymbol{m},\,\mathcal{M})}\ :=\ \widehat{\mathcal{Q}}_{\mathcal{M}(p,\,\boldsymbol{m})}^{(p)}\left[\,\tau_{\mathscr{V}^{(p)}}=\tau_{\mathcal{M}}\,\right]\;,\label{noteq1}
\end{equation}
where $\tau_{\mathscr{A}}$, $\mathscr{A}\subset\mathscr{S}^{(p)}$
represents the hitting time of the set $\mathscr{A}$:
\[
{\color{blue}\tau_{\mathscr{A}}}\ :=\ \inf\left\{ \,t>0\,:\,\widehat{\mathbf{y}}^{(p)}(t)\in\mathscr{A}\,\right\} \;.
\]
Note that there is an abuse of notation since we write $\tau_{\mathcal{M}'}$
for $\tau_{\{\mathcal{M}'\}}$. Clearly, if $\boldsymbol{m}\in\mathcal{M}$
for some $\mathcal{M}\in\mathscr{V}^{(p)}$, we have
\begin{equation}
\mathfrak{a}^{(p-1)}(\boldsymbol{m},\,\mathcal{M}')\ =\ \mathbf{1}\left\{ \,\mathcal{M}'=\mathcal{M}\,\right\} \;.\label{eq:a0-1}
\end{equation}
Thus, in view of \eqref{eq:v1}, we have
\begin{equation}
{\color{blue}\mathfrak{a}^{(0)}(\boldsymbol{m},\,\boldsymbol{m}')}\ =\ \mathbf{1}\left\{ \,\boldsymbol{m}=\boldsymbol{m}'\,\right\} \;.\label{eq:a0}
\end{equation}
\item For $t>0$, denote by $p_{t}^{(p)}(\cdot,\,\cdot)$ the transition
probability of the Markov chain $\mathbf{y}^{(p)}(\cdot)$:
\begin{equation}
{\color{blue}p_{t}^{(p)}(\mathcal{M}',\,\mathcal{M}'')}\ :=\ \mathcal{Q}_{\mathcal{M}'}^{(p)}\left[\,{\bf y}^{(p)}(t)=\mathcal{M}''\,\right]\;,\ \ \mathcal{M}',\,\mathcal{M}''\in\mathscr{V}^{(p)}\;.\label{noteq2}
\end{equation}
\item For $\boldsymbol{m}\in\mathcal{M}_{0}$, denote by ${\color{blue}\mathcal{D}(\boldsymbol{m})}\subset\mathbb{R}^{d}$
the domain of attraction of $\boldsymbol{m}$ for the dynamical system
described by \eqref{eq:x(t)}.
\end{enumerate}
We are now ready to state the main result. Let \textcolor{blue}{$\theta_{\epsilon}^{(0)}\equiv1$}
and \textcolor{blue}{$\theta_{\epsilon}^{(\mathfrak{q}+1)}\equiv\infty$}
for convenience.
\begin{thm}
\label{t00}Fix a bounded and continuous function $u_{0}\colon\mathbb{R}^{d}\to\mathbb{R}$
and denote by $u_{\epsilon}$ the solution of the parabolic equation
\eqref{eq:pde1}. For all $\boldsymbol{m}\in\mathcal{M}_{0}$, the
following hold.
\begin{enumerate}
\item[(a)] For all $p\in\llbracket1,\,\mathfrak{q}\rrbracket$, $\boldsymbol{x}\in\mathcal{D}(\boldsymbol{m})$
and $t>0$,
\begin{equation}
\lim_{\epsilon\to0}\,u_{\epsilon}(\theta_{\epsilon}^{(p)}t,\,\boldsymbol{x})\ =\ \sum_{\mathcal{M}',\,\mathcal{M}''\in\mathscr{V}^{(p)}}\,\mathfrak{a}^{(p-1)}(\boldsymbol{m},\,\mathcal{M}')\,p_{t}^{(p)}(\mathcal{M}',\,\mathcal{M}'')\,\sum_{\boldsymbol{m}''\in\mathcal{M}''}\frac{\nu(\boldsymbol{m}'')}{\nu(\mathcal{M}'')}\,u_{0}(\boldsymbol{m}'')\ .\label{eq:t00-a}
\end{equation}
\item[(b)] For all $p\in\llbracket1,\,\mathfrak{q}\rrbracket$, $\boldsymbol{x}\in\mathcal{D}(\boldsymbol{m})$
and sequence $(\varrho_{\epsilon})_{\epsilon>0}$ such that $\theta_{\epsilon}^{(p-1)}\,\prec\,\varrho_{\epsilon}\,\prec\,\theta_{\epsilon}^{(p)}$,
\begin{equation}
\lim_{\epsilon\to0}\,u_{\epsilon}(\varrho_{\epsilon},\,\boldsymbol{x})\ =\ \sum_{\mathcal{M}'\in\mathscr{V}^{(p)}}\,\mathfrak{a}^{(p-1)}(\boldsymbol{m},\,\mathcal{M}')\,\sum_{\boldsymbol{m}'\in\mathcal{M}'}\frac{\nu(\boldsymbol{m}')}{\nu(\mathcal{M}')}\,u_{0}(\boldsymbol{m}')\ .\label{eq:t00-b}
\end{equation}
\item[(c)] For all $\bm{x}\in\mathbb{R}^{d}$ and sequence $(\varrho_{\epsilon})_{\epsilon>0}$
such that $\theta_{\epsilon}^{(\mathfrak{q})}\,\prec\,\varrho_{\epsilon}<\,\infty$,
\begin{equation}
\lim_{\epsilon\to0}\,u_{\epsilon}(\varrho_{\epsilon},\,\boldsymbol{x})\ =\ \sum_{\boldsymbol{m}'\in\mathcal{M}_{\star}}\frac{\nu(\boldsymbol{m}')}{\nu(\mathcal{M}_{\star})}\,u_{0}(\boldsymbol{m'})\ .\label{eq:t00-c}
\end{equation}
\end{enumerate}
\end{thm}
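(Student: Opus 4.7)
The plan is to combine the stochastic representation $u_\epsilon(t,\boldsymbol{x}) = \mathbb{E}_{\boldsymbol{x}}[u_0(\boldsymbol{x}_\epsilon(t))]$ from \eqref{eq:probexp} with the metastable tree-structure description of $\boldsymbol{x}_\epsilon(\cdot)$, whose characterization through the Markov chains $\mathbf{y}^{(p)}(\cdot)$ rests on the resolvent-leveling property \eqref{fx3}--\eqref{fx1} established in Section \ref{sec3}. Boundedness of $u_0$ together with dominated convergence reduces the problem to computing the asymptotic distribution of $\boldsymbol{x}_\epsilon$ at the relevant time; continuity of $u_0$ then allows one to replace $u_0(\boldsymbol{x}_\epsilon(\cdot))$ by $\sum_{\boldsymbol{m}'\in\mathcal{M}_0} u_0(\boldsymbol{m}')\,\mathbf{1}_{B_\delta(\boldsymbol{m}')}(\boldsymbol{x}_\epsilon(\cdot))$ up to an error that vanishes with $\delta$, so that only hitting and occupation probabilities of small balls around local minima remain to be analyzed.

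For part (a), the first step handles the short initial transient: starting from $\boldsymbol{x}\in\mathcal{D}(\boldsymbol{m})$, Freidlin--Wentzell shadowing brings $\boldsymbol{x}_\epsilon$ into $B_\delta(\boldsymbol{m})$ in a time $T(\boldsymbol{x},\delta)$ that is negligible compared to $\theta_\epsilon^{(1)}$, which permits replacing $\boldsymbol{x}$ by $\boldsymbol{m}$ in the expectation. The remaining task is to compute $\lim_\epsilon \mathbb{P}_{\boldsymbol{m}}[\boldsymbol{x}_\epsilon(\theta_\epsilon^{(p)}t) \in B_\delta(\boldsymbol{m}'')]$, and this is decomposed along three independent asymptotic mechanisms. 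First, on a scale larger than $\theta_\epsilon^{(p-1)}$ but still $o(\theta_\epsilon^{(p)})$, the process hits the neighbourhood $\mathcal{E}(\mathcal{M}')$ for some $\mathcal{M}'\in\mathscr{V}^{(p)}$ with probability converging to $\mathfrak{a}^{(p-1)}(\boldsymbol{m},\mathcal{M}')$ by definition \eqref{noteq1} and the inductive construction of $\widehat{\mathbf{y}}^{(p)}$. Second, the metastability at scale $\theta_\epsilon^{(p)}$ furnished by Section \ref{sec3} ensures that, started from a neighbourhood of $\mathcal{M}'$, the indicator $\mathbf{1}_{\mathcal{E}(\mathcal{M}'')}(\boldsymbol{x}_\epsilon(\theta_\epsilon^{(p)}\cdot))$ converges in finite-dimensional distribution to $\mathbf{1}_{\{\mathcal{M}''\}}(\mathbf{y}^{(p)}(\cdot))$ started at $\mathcal{M}'$, producing the transition factor $p_t^{(p)}(\mathcal{M}',\mathcal{M}'')$. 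Third, local ergodicity inside the well $\mathcal{E}(\mathcal{M}'')$---the generalized Barrera--Jara type result announced in the novelty discussion---shows that the conditional distribution of $\boldsymbol{x}_\epsilon(\theta_\epsilon^{(p)}t)$ on the local minima inside $\mathcal{M}''$ converges to the Gibbs quasi-stationary weights $\nu(\boldsymbol{m}'')/\nu(\mathcal{M}'')$. Multiplying and summing these three factors yields \eqref{eq:t00-a}.

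Part (b) follows from the same three-step decomposition in the regime $\theta_\epsilon^{(p-1)}\prec\varrho_\epsilon\prec\theta_\epsilon^{(p)}$: the process has time to enter $\mathcal{E}(\mathcal{M}')$ with probability $\mathfrak{a}^{(p-1)}(\boldsymbol{m},\mathcal{M}')$ and to equilibrate inside it, but not to perform any further transition between elements of $\mathscr{V}^{(p)}$, so the transition factor collapses to $\mathbf{1}\{\mathcal{M}'=\mathcal{M}''\}$ and \eqref{eq:t00-b} results. Part (c) is the full-equilibration limit beyond $\theta_\epsilon^{(\mathfrak{q})}$: at this scale the chain $\mathbf{y}^{(\mathfrak{q})}$ has mixed into its unique recurrent class, which by Remark \ref{rem:tree}(2) coincides with the set $\mathcal{M}_\star$ of global minima, and a last application of the local Gibbs equilibration inside $\mathcal{M}_\star$ delivers \eqref{eq:t00-c}.

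The principal obstacle is the convergence to $\mathbf{y}^{(p)}$ of the rescaled well-occupation statistics when the starting local minimum $\boldsymbol{m}$ need not be a deep minimum at scale $\theta_\epsilon^{(p)}$---in particular when $\{\boldsymbol{m}\}\in\mathscr{N}^{(p)}$, so that its own domain of attraction is negligible at that scale. Classical metastability results such as \cite{BEGK,LLS-1st} require the process to start inside a microscopic well whose depth matches the scale; here one must in addition control the entrance distribution into $\bigcup_j\mathcal{E}(\mathcal{M}_j^{(p)})$, which is precisely the information encoded by $\mathfrak{a}^{(p-1)}$. Establishing this uniformly in $\boldsymbol{m}\in\mathcal{M}_0$ demands both the full resolvent leveling \eqref{fx3} for every test function $\boldsymbol{g}$ and the strengthened local ergodicity described in the introduction; these are the technical advances that will power the outline above.
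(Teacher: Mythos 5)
Your overall strategy matches the paper's: combine the stochastic representation \eqref{eq:probexp} with the metastable description of $\boldsymbol{x}_\epsilon(\cdot)$, reduce via continuity of $u_0$ to computing hitting/occupation probabilities of small neighbourhoods of local minima, and factor those probabilities into an initial-transient contribution, an entrance distribution $\mathfrak{a}^{(p-1)}$, a metastable transition factor $p_t^{(p)}$, and a local-equilibration weight $\nu(\boldsymbol{m}'')/\nu(\mathcal{M}'')$. This is exactly the architecture of Propositions~\ref{p_fdd01}--\ref{p_fdd03}, which rest on Propositions~\ref{p_marginDA}, \ref{p_marginDA-int}, \ref{p_marginDA-int-1} together with the auxiliary Propositions~\ref{prop:fd1}, \ref{prop:fd2_int}.

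There is, however, a genuine gap in your derivation of the factor $\nu(\boldsymbol{m}'')/\nu(\mathcal{M}'')$. You attribute it to a Barrera--Jara type ``local ergodicity inside the well $\mathcal{E}(\mathcal{M}'')$.'' But for $|\mathcal{M}''|\ge 2$ the set $\mathcal{E}(\mathcal{M}'')$ is a \emph{disjoint} union of microscopic neighbourhoods of the local minima in $\mathcal{M}''$, separated by potential barriers of depth $d^{(q)}$, $q<p$. The local-ergodicity result imported from \cite[Section~3]{LLS-1st} controls equilibration within a single such neighbourhood $\mathcal{E}(\boldsymbol{m})$, not redistribution across the barriers inside the compound well $\mathcal{M}''$. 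What actually produces the weights $\nu(\cdot)/\nu(\mathcal{M}'')$ is Proposition~\ref{p_rev_y}: the reduced chain $\mathbf{y}^{(p-1)}(\cdot)$ restricted to the recurrent class $\mathscr{V}^{(p-1)}(\mathcal{M}'')$ is \emph{reversible} with respect to $\nu(\cdot)/\nu(\mathcal{M}'')$, and therefore mixes to this distribution. This reversibility is then fed into the inductive argument of Lemma~\ref{lem:margin1}, which inserts a ``pre-mixing'' step of duration $\theta_\epsilon^{(p-1)}s_0$ (a time negligible at scale $\theta_\epsilon^{(p)}$ but large enough for $\mathbf{y}^{(p-1)}$ to equilibrate inside $\mathscr{V}^{(p-1)}(\mathcal{M}'')$) just before the observation time $\theta_\epsilon^{(p)}t$. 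Without the reversibility of $\mathbf{y}^{(p-1)}$ on its recurrent classes --- a structural fact about the tree that is proved separately in Section~\ref{sec13} via Lemma~\ref{l_SM-WM-1} and Proposition~\ref{prop:revpre} --- the weights $\nu(\boldsymbol{m}'')/\nu(\mathcal{M}'')$ cannot be derived, and your third factor is not justified. In particular, a naive local-ergodicity argument would not explain why the asymptotic occupation weights are universal (depending only on $U$ through $\nu$) and not, say, the stationary measure of some non-reversible restricted chain that could depend on $\boldsymbol\ell$.
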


The previous theorem provides the full characterization of multi-scale
behavior of the solution $u_{\epsilon}$ (cf. Figure \ref{fig:multiscale}).
Part (b) with $p=1$ and $p=\mathfrak{q}+1$ correspond to the initial
and terminal behavior of the solution, respectively, and the case
$p\in\llbracket2,\,\mathfrak{q}\rrbracket$ corresponds to the intermediate
scale between two critical scales $\theta_{\epsilon}^{(p-1)}$ and
$\theta_{\epsilon}^{(p)}$. The precise asymptotic behavior at the
critical scale is explained by part (a).

The proof of this theorem is based on the metastability result explained
in next section.
\begin{rem}[Initial cases]
In our companion paper \cite{LLS-1st}, the case $p=1$ (for both
(a) and (b)) has been investigated.
\end{rem}

\begin{rem}[Simplification of \eqref{eq:t00-b} for initial and terminal cases]
Equation \eqref{eq:t00-b} for the initial case
$p=1$ can be stated in a more concise form. Indeed, for each sequence
$(\varrho_{\epsilon})_{\epsilon>0}$ such that $1\prec\varrho_{\epsilon}\prec\theta_{\epsilon}^{(1)}$,
by \eqref{eq:a0}, we can simplify \eqref{eq:t00-b} into
\begin{align*}
\lim_{\epsilon\to0}\,u_{\epsilon}(\varrho_{\epsilon},\,\boldsymbol{x})\  & =\ u_{0}(\boldsymbol{m})\ .
\end{align*}
This is the contents of \cite[Theorem 2.1]{LLS-1st}. We also emphasize
that the terminal case $p=\mathfrak{q}+1$ is proven for all $\boldsymbol{x}\in\mathbb{R}^{d}$,
instead for the points in a domain of attraction of a stable equilibrium.
\end{rem}

\begin{rem}[The case $|\mathcal{M}_{0}|=1$]
\label{rem:M0=00003D00003D1}Suppose that $|\mathcal{M}_{0}|=1$
so that we can write $\mathcal{M}_{0}=\{\boldsymbol{m}\}$. Then,
by the growth condition \eqref{eq:growth}, $\boldsymbol{m}$ is the
unique global minimum of $U$, and our proof yields that
\begin{equation}
\lim_{\epsilon\to0}\,u_{\epsilon}(\varrho_{\epsilon},\,\boldsymbol{x})\ =\ u_{0}(\boldsymbol{m})\label{eq:case1}
\end{equation}
for all $\boldsymbol{x}\in\mathcal{D}(\boldsymbol{m})$ and time scale
$\varrho_{\epsilon}$ such that $\varrho_{\epsilon}\rightarrow\infty$
as $\epsilon\rightarrow0$. Thus, \eqref{eq:case1} holds for all
$\boldsymbol{x}\in\mathbb{R}^{d}$ if $\boldsymbol{m}$ is the unique
critical point of $U$. Otherwise, by \cite[Theorem 6.1]{LLS-1st},
we can show that \eqref{eq:case1} holds for all time scales $\varrho_{\epsilon}\succ\frac{1}{\epsilon}$.
We expect, however, that \eqref{eq:case1} holds for time scales $\varrho_{\epsilon}\succ\log\frac{1}{\epsilon}$.
\end{rem}

\begin{rem}[Dependency of results with $U$ and $\boldsymbol{\ell}$]
The jump rates of the processes $\mathbf{y}^{(p)}(\cdot)$ determine
the terms $\mathfrak{a}^{(p-1)}(\cdot,\cdot)$, $p_{t}^{(p)}(\cdot,\cdot)$
appearing in the statement of Theorem \ref{t00}. The weight $\nu(\cdot)$,
defined in \eqref{eq:nu}, only depends on the behavior of $U$ in
small neighbourhoods of local minima, and does not depend on the field
$\boldsymbol{\ell}$. It accounts for the time spent at a neighbourhood
of local minima between jumps. The rates $\omega(\cdot,\cdot)$, introduced
below in \eqref{eq:rate_30}, only depends on the behavior of $U$
and $\boldsymbol{\ell}$ in small neighbourhoods of saddle points.
It measures the difficulty to overpass the saddle point. Hence, the
asymptotic behavior of the solution $u_{\epsilon}$ described in Theorem
\ref{t00} only depends on the behavior of $U$ around local minima
and the one of $U$ and $\boldsymbol{\ell}$ around saddle points.
\end{rem}

\begin{rem}[\textcolor{black}{Analysis on bounded domains}]
\label{rem:bdry} Our approach covers the case of bounded domains
with smooth boundaries. Denote by $\Omega$ such a domain, and let
$\partial\Omega$ be its boundary. Assume that $U$ is a Morse function
which satisfies \eqref{hyp2}, and that there are finitely many critical
points of $U$ in $\Omega$. The hypotheses \eqref{eq:growth} on
the asymptotic behavior of the potential are replaced by the condition
that $\bm{n}(\bm{x})\cdot(\nabla U)(\bm{x})>0$ for all $\bm{x}\in\partial\Omega$,
where $\bm{n}$ represents the exterior normal vector to the boundary.

Let
\[
H\ :=\ \min_{\bm{x}\in\partial\Omega}U(\bm{x})\,-\,\min_{\bm{y}\in\Omega}U(\bm{y})\;.
\]
Denote by $\mathcal{M}_{0}$ the set of all local minima of $U$ in
$\Omega$, and assume that $|\mathcal{M}_{0}|\ge2$.

The method presented in this article applies to the parabolic equation
\eqref{eq:pde1} on $\Omega$ with Neumann boundary condition
\[
\nabla u_{\epsilon}(t,\,\boldsymbol{x})\cdot\boldsymbol{n}(\boldsymbol{x})\ \equiv\ 0\;\;\;\text{for }(t,\,\boldsymbol{x})\in(0,\,\infty)\times\partial\Omega\;,
\]
or Dirichlet boundary condition
\[
u_{\epsilon}(t,\,\boldsymbol{x})\ =\ 0\;\;\;\text{for }(t,\,\boldsymbol{x})\in(0,\,\infty)\times\partial\Omega\;.
\]
In both cases, it applies to time scales $\theta_{\epsilon}^{(p)}$
associated to depths $d^{(p)}<H$, and to time-scales $\varrho_{\epsilon}\prec e^{H/\epsilon}$.

For the Neumann, Dirichlet boundary conditions, the diffusion process
$\boldsymbol{x}_{\epsilon}(\cdot)$ introduced in \eqref{eq:probexp}
is reflected, absorbed at the boundary $\partial\Omega$, respectively.
\end{rem}

\begin{rem}[Explanation on $\mathfrak{a}^{(p-1)}(\boldsymbol{m},\,\mathcal{M})$]
\label{rem:a(p-1)} It might be unnatural at first
glance that we use index $p-1$ instead of $p$ at the definition
\eqref{noteq1}. To see why it is a natural selection of the index, let
$p\in\llbracket2,\,\mathfrak{q}\rrbracket$ and let
$\boldsymbol{m}\in\mathcal{M}_{0}$ such that
$\mathcal{M}(p,\,\bm{m})\in\mathscr{N}^{(p)}$ so that the probability
at the right-hand side of \eqref{noteq1} is non-trivial.  Then, from
Lemma \ref{l: tau_V}, it turns out that
\[
\mathfrak{a}^{(p-1)}(\boldsymbol{m},\,\mathcal{M})\ =\ \widehat{\mathcal{Q}}_{\mathcal{M}(p-1,\,\boldsymbol{m})}^{(p-1)}\left[\,\tau_{\mathscr{R}^{(p-1)}}=\tau_{\mathscr{V}^{(p-1)}(\mathcal{M})}\,\right]\;.
\]
Namely, $\mathfrak{a}^{(p-1)}(\boldsymbol{m},\,\mathcal{M})$ denotes
the probability of an event that happened already at the time scale
$\theta_{\epsilon}^{(p-1)}$. This is the reason that we use the index
$p-1$ here.

Moreover, we can even express $\mathfrak{a}^{(p-1)}(\boldsymbol{m},\,\mathcal{M})$
in terms of the Markov chains $\mathbf{y}^{(1)}(\cdot),\,\dots,\,\mathbf{y}^{(p-1)}(\cdot)$
(without using the auxiliary process $\mathbf{\widehat{y}}^{(p-1)}(\cdot)$).
To see this, for $p\in\llbracket1,\,\mathfrak{q}\rrbracket$, $\mathcal{M}\in\mathscr{V}^{(p-1)}$,
and $\mathcal{M}'\in\mathscr{V}^{(p)}$, let $\mathfrak{A}^{(p-1)}(\mathcal{M},\,\mathcal{M}')$
be the probability that the Markov chain ${\bf y}^{(p-1)}(\cdot)$
starting from $\mathcal{M}$ is absorbed at the closed irreducible
class of $\mathbf{y}^{(p-1)}(\cdot)$ which forms the set $\mathcal{M}'$:
\[
\mathfrak{A}^{(p-1)}(\mathcal{M},\,\mathcal{M}')\ =\ \mathcal{Q}_{\mathcal{M}}^{(p-1)}\left[\,\tau_{\mathscr{V}^{(p)}}=\tau_{\mathcal{M}'}\,\right]\;.
\]
Decompose the probability on the right-hand side of \eqref{noteq1}
according to the events $\{\tau_{\mathscr{V}^{(p-1)}}=\tau_{\mathcal{M}''}\}$,
$\mathcal{M}''\in\mathscr{V}^{(p-1)}$. This decomposition, the strong
Markov property and the ideas presented in the previous paragraph
yield that
\[
\mathfrak{a}^{(p-1)}(\boldsymbol{m},\,\mathcal{M}')\ =\ \sum_{\mathcal{M}''\in\mathscr{V}^{(p-1)}}\,\mathfrak{a}^{(p-2)}(\boldsymbol{m},\,\mathcal{M}'')\,\mathfrak{A}^{(p-1)}(\mathcal{M}'',\,\mathcal{M}')
\]
for all $\boldsymbol{m}\in\mathcal{M}_{0}$,
$\mathcal{M}'\in\mathscr{V}^{(p)}$, and
$p\in\llbracket2,\,\mathfrak{q}\rrbracket$. This identity provides a
cascade interpretation of the weights
$\mathfrak{a}^{(p-1)}(\boldsymbol{m},\,\mathcal{M})$.  It represents
the probability that starting from $\boldsymbol{m}$ the chain is
absorbed at a closed irreducible class of the process
$\mathbf{y}^{(1)}(\cdot)$, which at the second layer is absorbed at a
closed irreducible class of $\mathbf{y}^{(2)}(\cdot)$, and so on until
at layer $p-1$ it is absorbed at the closed irreducible class
$\mathbf{y}^{(p-1)}(\cdot)$ which forms the set
$\mathcal{M}\in\mathscr{V}^{(p)}$.  With this cascade expression, we
can represent $\mathfrak{a}^{(p-1)}(\boldsymbol{m},\,\mathcal{M})$
only in terms of Markov chains
$\mathbf{y}^{(1)}(\cdot),\,\dots,\,\mathbf{y}^{(p-1)}(\cdot)$.
\end{rem}

\begin{rem}[Right-hand sides of \eqref{eq:t00-a} and \eqref{eq:t00-b}]
The expression on the right-hand side of \eqref{eq:t00-a} has a
very simple interpretation. The previous remark explains the term
$\mathfrak{a}^{(p-1)}(\boldsymbol{m},\,\mathcal{M}')$. Note that
it refers to an event which occurs in a time-scale much shorter than
the one at which jumps between sets in $\mathscr{V}^{(p)}$ are observed.
The term $p_{t}^{(p)}(\mathcal{M}',\,\mathcal{M}'')$ corresponds
to the probability that the reduced Markov chain ${\bf y}^{(p)}(\cdot)$
is at $\mathcal{M}''$ at time $t$ if it started from $\mathcal{M}'$
and hence explains what happens at the scale $\theta_{\epsilon}^{(p)}$.

Finally, the term $\nu(\boldsymbol{m})/\nu(\mathcal{M}'')$ corresponds
to the proportion of time the diffusion $\boldsymbol{x}_{\epsilon}(\cdot)$
reflected at the boundary of $\mathcal{M}''$ remains close to $\boldsymbol{m}$.
More precisely, since $\mathcal{M}(1,\,\boldsymbol{m})=\{\boldsymbol{m}\}$
and $\mathcal{M}(p,\,\boldsymbol{m})=\mathcal{M}''$ we can write
\[
\frac{\nu(\boldsymbol{m})}{\nu(\mathcal{M}'')}\ =\ \prod_{q=1}^{p-1}\frac{\nu(\mathcal{M}(q,\,\boldsymbol{m}))}{\nu(\mathcal{M}(q+1,\,\boldsymbol{m}))}\;\cdot
\]
The term $\nu(\cdot)/\nu(\mathcal{M}(q,\boldsymbol{m}))$ represents
the stationary state of the reduced Markov chain ${\bf y}^{(q-1)}(\cdot)$
restricted to its irreducible class $\{\mathcal{M}\in\mathscr{V}^{(q-1)}:\mathcal{M}\subset\mathcal{M}(q,\,\boldsymbol{m})\}$.
We can provide a similar explanation on the right-hand side of \eqref{eq:t00-b}.
\end{rem}

\subsection{\label{sec2.4}Leveling property of resolvent equations }

The proof of Theorem \ref{t00} is based on the leveling properties
of the resolvent equation presented in this subsection. To explain
this result, we first define the valleys around each minimum $\boldsymbol{m}\in\mathcal{M}_{0}$.

Let
\begin{equation}
{\color{blue}\{\,U<H\,\}}\ :=\ \left\{ \,\boldsymbol{x}\in\mathbb{R}^{d}\,:\,U(\boldsymbol{x})<H\,\right\} \;\;\text{and}\ \ {\color{blue}\{\,U\le H\,\}}\ :=\ \left\{ \,\boldsymbol{x}\in\mathbb{R}^{d}\,:\,U(\boldsymbol{x})\le H\,\right\} \;.\label{eq:level}
\end{equation}
For each $\boldsymbol{m}\in\mathcal{M}_{0}$, denote by ${\color{blue}\mathcal{W}^{r}(\boldsymbol{m})}$
the connected component of the set $\{U\le U(\boldsymbol{m})+r\}$
containing $\boldsymbol{m}$.

For each $\boldsymbol{m}\in\mathcal{M}_{0}$, take $r_{0}=r_{0}(\boldsymbol{m})>0$
small so that $\mathcal{W}^{3r_{0}}(\boldsymbol{m})\subset\mathcal{D}(\boldsymbol{m})$
and satisfies \cite[condition (a)-(e) at the paragraph before (2.12)]{LLS-1st}
related with the control of the error term of the second order Taylor
expansion of $U$ around $\boldsymbol{m}$. In particular, $\boldsymbol{m}$
is the unique critical point of $U$ in $\mathcal{W}^{3r_{0}}(\boldsymbol{m})$.
Then, let us define the\emph{ valley }around $\boldsymbol{m}$ as
\begin{equation}
{\color{blue}\mathcal{E}(\boldsymbol{m}})\ :=\ \mathcal{W}^{r_{0}}(\boldsymbol{m})\;.\label{e_Em}
\end{equation}
For $\mathcal{M}\subset\mathcal{M}_{0}$, write $\mathcal{E}(\mathcal{M})$
for the union of the valleys around local minima of $\mathcal{M}$:
\begin{equation}
{\color{blue}\mathcal{E}(\mathcal{M})}\ :=\ \bigcup_{\boldsymbol{m}\in\mathcal{M}}\mathcal{E}(\boldsymbol{m})\;.\label{e_E_M}
\end{equation}

Fix a constant $\lambda>0$ and $p\in\llbracket1,\,\mathfrak{q}\rrbracket$.
For $\boldsymbol{g}\colon\mathscr{V}^{(p)}\rightarrow\mathbb{R}$,
denote by \textcolor{blue}{ $\phi_{\epsilon}=\phi_{\epsilon}^{p,\boldsymbol{g},\lambda}$
}the unique solution of the resolvent equation
\begin{equation}
\left(\lambda-\theta_{\epsilon}^{(p)}\mathscr{L}_{\epsilon}\right)\,\phi_{\epsilon}\ =\ \sum_{\mathcal{M}\in\mathscr{V}^{(p)}}\boldsymbol{g}(\mathcal{M})\,\chi_{_{\mathcal{E}(\mathcal{M})}}\;,\label{e_res}
\end{equation}
where \textcolor{blue}{ $\chi_{\mathcal{A}}$}, $\mathcal{A}\subset\mathbb{R}^{d}$,
denotes the indicator function on $\mathcal{A}$. The function on
the right-hand side vanishes at $[\cup_{\mathcal{M}\in\mathscr{V}^{(p)}}\mathcal{E}(\mathcal{M})]^{c}$
and is constant on each well $\mathcal{E}(\mathcal{M})$, $\mathcal{M}\in\mathscr{V}^{(p)}$,
where the constant value is $\boldsymbol{g}(\mathcal{M})$.

Recall from Section \ref{sec2.3} that $\mathfrak{L}^{(p)}$
denotes the generator associated with the Markov chain ${\bf y}^{(p)}(\cdot)$.
The second main result of this article reads as follows.
\begin{thm}
\label{t_res}Fix a constant $\lambda>0$, $p\in\llbracket1,\,\mathfrak{q}+1\rrbracket$
and $\boldsymbol{g}\colon\mathscr{V}^{(p)}\rightarrow\mathbb{R}$.
Then, for all $\boldsymbol{m}\in\mathcal{M}_{0}$ and compact set
$\mathcal{K}\subset\mathcal{D}(\boldsymbol{m})$, the solution $\phi_{\epsilon}$
to the resolvent equation \eqref{e_res} satisfies
\[
\lim_{\epsilon\rightarrow0}\,\sup_{\boldsymbol{x}\in\mathcal{K}}\,\bigg|\,\phi_{\epsilon}(\boldsymbol{x})\,-\sum_{\mathcal{M}'\in\mathscr{V}^{(p)}}\mathfrak{a}^{(p-1)}(\boldsymbol{m},\,\mathcal{M}')\,\boldsymbol{f}(\mathcal{M}')\,\bigg|\ =\ 0\;,
\]
where $\boldsymbol{f}:\mathscr{V}^{(p)}\rightarrow\mathbb{R}$ denotes
the unique solution of the reduced resolvent equation
\begin{equation}
\left(\lambda-\mathfrak{L}^{(p)}\right)\,\boldsymbol{f}\ =\ \boldsymbol{g}\;.\label{eq:res_y}
\end{equation}
In particular, if $\mathcal{M}(p,\,\bm{m})\in\mathscr{V}^{(p)}$,
by \eqref{eq:a0-1}, for all compact set $\mathcal{K}\subset\mathcal{D}(\boldsymbol{m})$,
we have
\[
\lim_{\epsilon\rightarrow0}\,\sup_{\boldsymbol{x}\in\mathcal{K}}\,\left|\,\phi_{\epsilon}(\boldsymbol{x})-\boldsymbol{f}(\mathcal{M})\,\right|\ =\ 0\;.
\]
\end{thm}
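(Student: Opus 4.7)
I will prove Theorem~\ref{t_res} by induction on $p\in\llbracket 1,\,\mathfrak{q}+1\rrbracket$, the base case $p=1$ being established in \cite{LLS-1st}. The starting point is the probabilistic representation of $\phi_\epsilon$. Since the right-hand side of \eqref{e_res} equals $h := \sum_{\mathcal{M}\in\mathscr{V}^{(p)}}\boldsymbol{g}(\mathcal{M})\,\chi_{\mathcal{E}(\mathcal{M})}$, the Feynman--Kac formula yields
\[
\phi_\epsilon(\boldsymbol{x})\ =\ \mathbb{E}_{\boldsymbol{x}}\!\left[\,\int_0^\infty e^{-\lambda t}\,h\bigl(\boldsymbol{x}_\epsilon(\theta_\epsilon^{(p)}\, t)\bigr)\,dt\,\right]\;.
\]
Since the solution $\boldsymbol{f}$ of \eqref{eq:res_y} admits the analogous representation $\boldsymbol{f}(\mathcal{M}') = \mathbb{E}_{\mathcal{M}'}^{(p)}[\int_0^\infty e^{-\lambda t}\boldsymbol{g}({\bf y}^{(p)}(t))\,dt]$, the theorem reduces to showing that the long-run occupation of the rescaled diffusion on the wells $\{\mathcal{E}(\mathcal{M}'):\mathcal{M}'\in\mathscr{V}^{(p)}\}$ converges to that of ${\bf y}^{(p)}(\cdot)$ started with the initial distribution $\mathfrak{a}^{(p-1)}(\boldsymbol{m},\,\cdot\,)$.

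The first reduction removes the dependence on the starting point $\boldsymbol{x}\in\mathcal{K}$. For a compact $\mathcal{K}\subset\mathcal{D}(\boldsymbol{m})$, the deterministic flow \eqref{eq:x(t)} enters the valley $\mathcal{E}(\boldsymbol{m})$ in a deterministic time $T=T(\mathcal{K})<\infty$ uniform in $\boldsymbol{x}\in\mathcal{K}$. Standard Freidlin--Wentzell tracking estimates then imply that $\boldsymbol{x}_\epsilon(T)\in\mathcal{E}(\boldsymbol{m})$ with probability tending to $1$, uniformly in $\boldsymbol{x}\in\mathcal{K}$. Since $T\prec\theta_\epsilon^{(p)}$, the integral over $t\in[0,\,T/\theta_\epsilon^{(p)}]$ contributes at most $O(\|\boldsymbol{g}\|_\infty\, T/\theta_\epsilon^{(p)})$ to $\phi_\epsilon(\boldsymbol{x})$, and the strong Markov property at time $T$ reduces the proof to the case $\boldsymbol{x}=\boldsymbol{m}$.

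The argument then splits according to whether $\mathcal{M}(p,\boldsymbol{m})\in\mathscr{V}^{(p)}$ or $\mathcal{M}(p,\boldsymbol{m})\in\mathscr{N}^{(p)}$. In the first case, \eqref{eq:a0-1} says the target simplifies to $\boldsymbol{f}(\mathcal{M}(p,\boldsymbol{m}))$, and what is needed is the finite-dimensional convergence of the trace of $\boldsymbol{x}_\epsilon(\theta_\epsilon^{(p)}\cdot)$ on $\bigcup_{\mathcal{M}'\in\mathscr{V}^{(p)}}\mathcal{E}(\mathcal{M}')$ to ${\bf y}^{(p)}(\cdot)$ started at $\mathcal{M}(p,\boldsymbol{m})$, together with the negligibility of the time spent outside these wells. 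In the second case, $\mathcal{M}(p,\boldsymbol{m})\in\mathscr{N}^{(p)}$ and the diffusion must first traverse a lower-depth landscape; here the inductive hypothesis at layer $p-1$, combined with the cascade identity in Remark~\ref{rem:a(p-1)} and the strong Markov property, shows that on any scale $\varrho_\epsilon$ with $\theta_\epsilon^{(p-1)}\prec\varrho_\epsilon\prec\theta_\epsilon^{(p)}$ the diffusion has already been absorbed into some $\mathcal{E}(\mathcal{M}')$, $\mathcal{M}'\in\mathscr{V}^{(p)}$, with probability $\mathfrak{a}^{(p-1)}(\boldsymbol{m},\mathcal{M}')$, reducing the problem to the first case.

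The main technical obstacle will be to establish the two leveling facts on which this reduction rests: that $\phi_\epsilon$ is asymptotically constant on each well $\mathcal{E}(\mathcal{M}')$, and that within each well the empirical occupation converges to $\nu(\cdot)/\nu(\mathcal{M}')$. The first rests on a local ergodicity statement for $\boldsymbol{x}_\epsilon(\cdot)$ inside a well, which requires extending the Barrera--Jara estimates of \cite{BJ} to the case where $U$ has multiple local minima inside $\mathcal{E}(\mathcal{M}')$, an extension flagged in the introduction as one of the novelties of the paper. The second relies on capacity and Dirichlet form estimates of the type developed in \cite{LMS,LS-22} to identify the jump rates of the auxiliary chain $\widehat{\mathbf{y}}^{(p)}(\cdot)$, and then to recover $\mathbf{y}^{(p)}(\cdot)$ as its trace on $\mathscr{V}^{(p)}$. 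Once these are in place, testing the resolvent equation \eqref{e_res} against functions localized near saddle points produces, in the limit, precisely the reduced equation \eqref{eq:res_y}; by uniqueness of its solution, the limiting constants must coincide with $\boldsymbol{f}$.
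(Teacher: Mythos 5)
Your proposal identifies the same building blocks the paper uses — induction across the time scales $\theta_\epsilon^{(p)}$, the Feynman--Kac representation of $\phi_\epsilon$, reduction from a compact $\mathcal{K}\subset\mathcal{D}(\boldsymbol{m})$ to a start in $\mathcal{E}(\boldsymbol{m})$, flatness of $\phi_\epsilon$ on each well via local ergodicity (the Barrera--Jara extension), identification of the well values via test functions concentrated at the saddle points, and the cascade mechanism $\mathfrak{a}^{(p-1)}(\boldsymbol{m},\cdot)$ to handle $\mathcal{M}(p,\boldsymbol{m})\in\mathscr{N}^{(p)}$ — and the overall strategy coincides with the paper's (Propositions~\ref{p_flat}, \ref{p_res}, \ref{p_char_neg} together with the inductive scheme of Section~\ref{sec3}).

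One aspect of the logical ordering deserves a caveat. You write that the theorem ``reduces to showing'' the finite-dimensional convergence of the trace of $\boldsymbol{x}_\epsilon(\theta_\epsilon^{(p)}\cdot)$ to $\mathbf{y}^{(p)}(\cdot)$; taken literally this inverts the dependency graph used in the paper. In the paper the resolvent leveling $\mathfrak{R}^{(p)}$ is established \emph{directly}, by multiplying~\eqref{e_res} by the saddle-point test functions $Q_\epsilon^{\mathcal{M}}$ and integrating against $\mu_\epsilon$ (Lemmas~\ref{l2-01}--\ref{l2-02}, culminating in Proposition~\ref{p_res}); the finite-dimensional convergence $\mathfrak{C}_{\rm fdd}^{(p)}$ is then a \emph{consequence}, extracted through the chain $\mathfrak{R}^{(p)}\Leftrightarrow\mathfrak{C}^{(p)}$ (Theorem~\ref{t_res0}), $\mathfrak{R}^{(p)}+\mathfrak{H}^{(p)}\Rightarrow\mathfrak{G}^{(p)}$, and $\mathfrak{C}^{(p)}+\mathfrak{G}^{(p)}\Rightarrow\mathfrak{C}_{\rm fdd}^{(p)}$, with the hitting condition $\mathfrak{H}^{(p+1)}$ fed back from $\mathfrak{C}_{\rm fdd}^{(1)},\dots,\mathfrak{C}_{\rm fdd}^{(p)}$ at the next layer. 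Your last paragraph correctly puts the testing argument at the heart of the proof, so the confusion is presentational rather than fatal, but if you tried to prove the fdd convergence \emph{first} and then derive the resolvent estimate from it, you would find yourself lacking a handle on $\mathfrak{C}_{\rm fdd}^{(p)}$: the resolvent estimate is what makes the metastable convergence accessible, not the other way around. Also note that for Theorem~\ref{t_res} itself the occupation weights $\nu(\cdot)/\nu(\mathcal{M}')$ you invoke are not needed — they enter only in Theorem~\ref{t00}, through Proposition~\ref{p_rev_y} and Lemma~\ref{lem:margin1}.
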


The previous theorem asserts that the solution to the resolvent equation
\eqref{e_res} is flat on each domain of attraction of each minimum
$\boldsymbol{m}\in\mathcal{M}_{0}$ at each time-scale $\theta_{\epsilon}^{(p)}$,
$p\in\llbracket1,\,\mathfrak{q}\rrbracket$. The value of the solution
depends on the time-scale. It is remarkable that this flat value is
determined by the solution to the resolvent equation \eqref{eq:res_y}
associated with the generator $\mathfrak{L}^{(p)}$. Such a phenomenon
is called leveling property of the equation (cf. \cite{Day82,DF78}).
\begin{rem}
We believe that Theorem \ref{t_res} holds when we define the metastable
valley $\mathcal{E}(\boldsymbol{m})$ as a large compact subset of
$\mathcal{D}(\boldsymbol{m})$ containing a neighborhood of $\boldsymbol{m}$.
We use the specific form of the valley $\mathcal{E}(\boldsymbol{m})$
mainly because the mixing properties of the diffusion process $\boldsymbol{x}_{\epsilon}(\cdot)$
were analyzed in \cite[Section 3]{LLS-1st} only for this set.
\end{rem}

\subsection{\label{sec25}Probablistic representations of solutions to PDEs }

The main theorems' proofs are based on the stochastic representation
of the resolvent equation solution \eqref{e_res} and the parabolic
equation solution \eqref{eq:pde1}.

Denote by ${\color{blue}\mathbb{P}_{\bm{x}}^{\epsilon}}$ the law
of $\bm{x}_{\epsilon}(\cdot)$ starting from $\bm{x}\in\mathbb{R}^{d}$
and by ${\color{blue}\mathbb{E}_{\bm{x}}^{\epsilon}}$ the corresponding
expectation. On the one hand,
\[
\phi_{\epsilon}(\bm{x})\ =\ \mathbb{E}_{\bm{x}}^{\epsilon}\left[\,\int_{0}^{\infty}\,e^{-\lambda s}\,G(\bm{x}_{\epsilon}(s\,\theta_{\epsilon}^{(p)}))\,ds\,\right]\;,
\]
where $G=\sum_{\mathcal{M}\in\mathscr{V}^{(p)}}\boldsymbol{g}(\mathcal{M})\,\chi_{_{\mathcal{E}(\mathcal{M})}}$.
We refer to \cite[Chapter 6, Section 5]{Fri} for a proof of this
result in the case of bounded domains with $G$ H\"older continuous.
The result can be extended to a sum of indicators, as in \eqref{e_res},
by approximating this function from below and from above by sequences
of H\"older continuous functions, by recalling the maximum principle
and by using the dominated convergence theorem. It can be extended
to $\mathbb{R}^{d}$ because the potential $U$ is confining in view
of the hypotheses \eqref{eq:growth}.

Similarly, the solution of \eqref{eq:pde1} can be expressed as
\[
u_{\epsilon}(t,\,\bm{x})\ =\ \mathbb{E}_{\bm{x}}^{\epsilon}\left[\,u_{0}(\bm{x}_{\epsilon}(t))\,\right]\;.
\]
We refer to \cite[Theorem 6.5.3]{Fri} for a proof of this result
in the case of a bounded drift. As for the resolvent equation, this
result can be extended to the present context because the potential
$U$ is confining. It is indeed enough to modify the drift outside
a ball of radius $R$ centered at the origin to make it bounded, and
to show that the time needed to hit the boundary of the ball increases
to $+\infty$ as the radius of the ball increases to $+\infty$ because
the potential is confining.

\section{\label{sec3}Main results on metastability of diffusion processes }

In this section, we explain our third main result which is on the
full description of the metastability of the diffusion process $\boldsymbol{x}_{\epsilon}(\cdot)$
defined in \eqref{sde}.

\subsection{Full description of metastable behavior}

The following theorem is the main result regarding the metastable
behavior of the process $\bm{x}_{\epsilon}(\cdot)$.
\begin{thm}
\label{t_meta}Let $p\in\llbracket1,\,\mathfrak{q}\rrbracket$, $\boldsymbol{m}\in\mathcal{M}_{0}$,
and let $\boldsymbol{x}\in\mathcal{D}(\boldsymbol{m})$. Then, for
all $n\ge1$, $0<t_{1}<\cdots<t_{n}$, $\mathcal{M}_{k_{1}},\,\dots,\,\mathcal{M}_{k_{n}}\in\mathscr{V}^{(p)}$,
and sequences $(t_{j,\,\epsilon})_{\epsilon>0}$, $j\in\llbracket1,\,n\rrbracket$,
such that $t_{j,\,\epsilon}\to t_{j}$ as $\epsilon\rightarrow0$,
we have
\begin{equation}
\lim_{\epsilon\rightarrow0}\,\mathbb{P}_{\boldsymbol{x}}^{\epsilon}\bigg[\,\bigcap_{j=1}^{n}\big\{\,\bm{x}_{\epsilon}(\theta_{\epsilon}^{(p)}t_{j,\,\epsilon})\in\mathcal{E}(\mathcal{M}_{k_{j}})\,\big\}\,\bigg]\ =\ \mathcal{Q}_{\mathfrak{a}^{(p-1)}(\boldsymbol{m},\,\cdot)}^{(p)}\Big[\,\bigcap_{j=1}^{n}\big\{\,{\bf y}^{(p)}(t_{j})=\mathcal{M}_{k_{j}}\,\big\}\,\Big]\;,\label{eq:fdd}
\end{equation}
where\textcolor{blue}{ $\mathcal{Q}_{\mathfrak{a}^{(p-1)}(\boldsymbol{m},\,\cdot)}^{(p)}$}
denotes the law of the process $\mathbf{y}^{(p)}(\cdot)$ starting
from the probability distribution $\mathfrak{a}^{(p-1)}(\boldsymbol{m},\,\cdot)$
on $\mathscr{V}^{(p)}$ defined in \eqref{noteq1}.
\end{thm}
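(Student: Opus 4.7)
The plan is to deduce the joint convergence \eqref{eq:fdd} from the leveling property of Theorem \ref{t_res} by combining a Laplace-transform argument at a single time with the strong Markov property to propagate to later times, using the uniformity on compact subsets of domains of attraction in Theorem \ref{t_res} in an essential way.

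\emph{Step 1 (one-dimensional marginals via Laplace transforms).} Fix $\mathcal{M}\in\mathscr{V}^{(p)}$ and $\lambda>0$ and choose $\boldsymbol{g}=\mathbf{1}_{\mathcal{M}}$ in \eqref{e_res}. The Feynman--Kac representation recalled in Section \ref{sec25} gives
\[
\phi_\epsilon^{p,\mathbf{1}_{\mathcal{M}},\lambda}(\boldsymbol{x})\;=\;\mathbb{E}_{\boldsymbol{x}}^\epsilon\bigg[\,\int_0^\infty e^{-\lambda s}\,\chi_{\mathcal{E}(\mathcal{M})}\!\big(\boldsymbol{x}_\epsilon(s\theta_\epsilon^{(p)})\big)\,ds\,\bigg]\;,
\]
while the unique solution of the reduced resolvent equation \eqref{eq:res_y} is $\boldsymbol{f}(\mathcal{M}')=\int_0^\infty e^{-\lambda s}\,p_s^{(p)}(\mathcal{M}',\mathcal{M})\,ds$. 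Theorem \ref{t_res} therefore yields, uniformly on compact subsets of $\mathcal{D}(\boldsymbol{m})$,
\[
\lim_{\epsilon\to 0}\int_0^\infty e^{-\lambda s}\,\mathbb{P}_{\boldsymbol{x}}^\epsilon[\boldsymbol{x}_\epsilon(s\theta_\epsilon^{(p)})\in\mathcal{E}(\mathcal{M})]\,ds\;=\;\sum_{\mathcal{M}'\in\mathscr{V}^{(p)}}\mathfrak{a}^{(p-1)}(\boldsymbol{m},\mathcal{M}')\int_0^\infty e^{-\lambda s}\,p_s^{(p)}(\mathcal{M}',\mathcal{M})\,ds\;.
\]
Because the family $\{s\mapsto\mathbb{P}_{\boldsymbol{x}}^\epsilon[\boldsymbol{x}_\epsilon(s\theta_\epsilon^{(p)})\in\mathcal{E}(\mathcal{M})]\}_\epsilon$ is uniformly bounded and the candidate limit $s\mapsto\sum_{\mathcal{M}'}\mathfrak{a}^{(p-1)}(\boldsymbol{m},\mathcal{M}')\,p_s^{(p)}(\mathcal{M}',\mathcal{M})$ is continuous on $(0,\infty)$, Helly's selection principle together with uniqueness of the Laplace transform upgrades this integrated convergence to pointwise, and then locally uniform, convergence in $s\in(0,\infty)$. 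This settles the case $n=1$ of \eqref{eq:fdd}.

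\emph{Step 2 (iteration via strong Markov).} I proceed by induction on $n$. Conditioning on the position at time $t_{1,\epsilon}\theta_\epsilon^{(p)}$ and using the strong Markov property,
\[
\mathbb{P}_{\boldsymbol{x}}^\epsilon\bigg[\bigcap_{j=1}^n\big\{\boldsymbol{x}_\epsilon(t_{j,\epsilon}\theta_\epsilon^{(p)})\in\mathcal{E}(\mathcal{M}_{k_j})\big\}\bigg]\;=\;\mathbb{E}_{\boldsymbol{x}}^\epsilon\Big[\,\chi_{\mathcal{E}(\mathcal{M}_{k_1})}\!\big(\boldsymbol{x}_\epsilon(t_{1,\epsilon}\theta_\epsilon^{(p)})\big)\,\Psi_\epsilon\!\big(\boldsymbol{x}_\epsilon(t_{1,\epsilon}\theta_\epsilon^{(p)})\big)\,\Big]\;,
\]
with $\Psi_\epsilon(\boldsymbol{y}):=\mathbb{P}_{\boldsymbol{y}}^\epsilon\big[\,\bigcap_{j=2}^{n}\{\boldsymbol{x}_\epsilon((t_{j,\epsilon}-t_{1,\epsilon})\theta_\epsilon^{(p)})\in\mathcal{E}(\mathcal{M}_{k_j})\}\,\big]$. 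Any $\boldsymbol{y}\in\mathcal{E}(\mathcal{M}_{k_1})$ lies in $\mathcal{D}(\boldsymbol{m}')$ for some $\boldsymbol{m}'\in\mathcal{M}_{k_1}$, and for such $\boldsymbol{m}'$ the identity \eqref{eq:a0-1} gives $\mathfrak{a}^{(p-1)}(\boldsymbol{m}',\mathcal{M}')=\mathbf{1}\{\mathcal{M}'=\mathcal{M}_{k_1}\}$. Since $\mathcal{E}(\mathcal{M}_{k_1})$ is compact and contained in $\bigcup_{\boldsymbol{m}'\in\mathcal{M}_{k_1}}\mathcal{D}(\boldsymbol{m}')$, the inductive hypothesis, strengthened to be uniform in the starting point on compacts of the domains of attraction, yields
\[
\lim_{\epsilon\to 0}\,\sup_{\boldsymbol{y}\in\mathcal{E}(\mathcal{M}_{k_1})}\bigg|\,\Psi_\epsilon(\boldsymbol{y})-\mathcal{Q}_{\mathcal{M}_{k_1}}^{(p)}\Big[\bigcap_{j=2}^{n}\big\{\mathbf{y}^{(p)}(t_j-t_1)=\mathcal{M}_{k_j}\big\}\Big]\,\bigg|\;=\;0\;.
\]
Substituting into the previous display, using Step 1 for the leading factor and the Markov property of $\mathbf{y}^{(p)}(\cdot)$, one recovers the right-hand side of \eqref{eq:fdd}.

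\emph{Main obstacle.} The delicate point is upgrading the Laplace-transform limit of Step 1 to convergence that is uniform in the starting point over the compact valley $\mathcal{E}(\mathcal{M}_{k_1})$: only such uniformity allows the strong Markov decomposition of Step 2 to close the induction. The uniformity in $\boldsymbol{x}$ on compacts of $\mathcal{D}(\boldsymbol{m})$ at the resolvent level is precisely the content of Theorem \ref{t_res}; transferring it to the time marginal requires an equicontinuity argument exploiting the continuity of $t\mapsto p_t^{(p)}(\mathcal{M}',\mathcal{M})$ together with short-time regularity estimates for $\boldsymbol{x}_\epsilon(\cdot)$. A secondary subtlety is to check that the short-time relaxation from $\boldsymbol{x}\in\mathcal{D}(\boldsymbol{m})$ distributes the mass among the wells of $\mathscr{V}^{(p)}$ exactly according to $\mathfrak{a}^{(p-1)}(\boldsymbol{m},\cdot)$; this is already encoded in Step 1 and is consistent with the cascade interpretation of Remark \ref{rem:a(p-1)}.
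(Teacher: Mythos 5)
Your Step 1 has a genuine gap in the Laplace-transform inversion. Convergence of the Laplace transforms $\lambda\mapsto\int_0^\infty e^{-\lambda s}f_\epsilon(s)\,ds$ to $\int_0^\infty e^{-\lambda s}f(s)\,ds$, for uniformly bounded $f_\epsilon$ and continuous limit $f$, gives only that $f_\epsilon(s)\,ds\to f(s)\,ds$ weakly as measures; it does not give pointwise convergence of $f_\epsilon$. The reference to Helly's selection principle does not repair this: Helly applies to monotone or bounded-variation functions, and the marginals $s\mapsto\mathbb{P}_{\boldsymbol{x}}^\epsilon[\boldsymbol{x}_\epsilon(s\theta_\epsilon^{(p)})\in\mathcal{E}(\mathcal{M})]$ have no such monotonicity. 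A rapidly oscillating sequence such as $f_\epsilon(s)=(1+\sin(s/\epsilon))/2$ has the same Laplace transforms in the limit as the constant $1/2$ but converges pointwise nowhere; the resolvent equation simply cannot detect such oscillations. What is needed is a time-equicontinuity statement saying that, starting from a valley $\mathcal{E}(\mathcal{M})$, the speeded-up process is still found in $\mathcal{E}(\mathcal{M})$ at a short positive (macroscopic) time — precisely the non-escaping condition $\mathfrak{G}^{(p)}$ of Definition \ref{def_NE}. You acknowledge this in the closing paragraph by invoking "short-time regularity estimates" without supplying them, but this is the crux, not a secondary subtlety: establishing $\mathfrak{G}^{(p)}$ is the content of Proposition \ref{prop_NE} (through Section \ref{sec11}), and it is not a consequence of Theorem \ref{t_res} alone.

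The paper circumvents the inversion problem entirely. It deduces from the resolvent condition $\mathfrak{R}^{(p)}$ (via Theorem \ref{t_res0}, which is \cite[Theorem 2.3]{LMS}) convergence in the Skorohod topology of the trace process — a path-level statement that does not suffer from the integrated-versus-pointwise mismatch. It then passes from Skorohod convergence of the trace process to finite-dimensional convergence of the original speeded-up process via Proposition \ref{prop_C_F0}, which is exactly where $\mathfrak{G}^{(p)}$ enters. Finally, the extension from initial conditions in the valleys $\mathcal{E}(\mathcal{M})$, $\mathcal{M}\in\mathscr{V}^{(p)}$, to an arbitrary $\boldsymbol{x}\in\mathcal{D}(\boldsymbol{m})$ uses $\mathfrak{H}^{(p)}$ through Propositions \ref{prop:fd1} and \ref{p_meta}, which again you implicitly rely on (the weights $\mathfrak{a}^{(p-1)}(\boldsymbol{m},\cdot)$ already appear in your Step 1 from Theorem \ref{t_res}, but the theorem's uniformity on compacts of $\mathcal{D}(\boldsymbol{m})$ only applies to the resolvent, and the transfer to time marginals from a general $\boldsymbol{x}$ requires the hitting-distribution estimate $\mathfrak{H}^{(p)}$). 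Your Step 2 — reducing $n\ge2$ to $n=1$ via the strong Markov property and the uniformity of the $n=1$ statement over compact subsets of domains of attraction — is sound in spirit and mirrors the paper's Proposition \ref{p_meta}, but it cannot close the argument until Step 1 is repaired.
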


We first mention that, when $\boldsymbol{m}\in\mathcal{M}$ for some
$\mathcal{M}\in\mathscr{V}^{(p)}$, the estimate \eqref{eq:fdd} can
be simplified into, for all $\boldsymbol{x}\in\mathcal{D}(\boldsymbol{m})$,
\[
\lim_{\epsilon\rightarrow0}\,\mathbb{P}_{\boldsymbol{x}}^{\epsilon}\bigg[\,\bigcap_{j=1}^{n}\big\{\,\bm{x}_{\epsilon}(\theta_{\epsilon}^{(p)}t_{j,\,\epsilon})\in\mathcal{E}(\mathcal{M}_{k_{j}})\,\big\}\,\bigg]\ =\ \mathcal{Q}_{\mathcal{M}}^{(p)}\bigg[\,\bigcap_{j=1}^{n}\big\{\,{\bf y}^{(p)}(t_{j})=\mathcal{M}_{k_{j}}\,\big\}\,\bigg]\;.
\]
This estimates demonstrates that the Markov chain $\mathbf{y}^{(p)}(\cdot)$
is the one describing the metastable behavior of the process $\bm{x}_{\epsilon}(\cdot)$
speeded-up by the time-scale $\theta_{\epsilon}^{(p)}$. The more
general estimate \eqref{eq:fdd} implies that, when the process starts
from a neighborhood of $\boldsymbol{m}\in\mathcal{M}$ for some $\mathcal{M}\in\mathscr{N}^{(p)}$,
the process first arrives instanteneously at a valley $\mathcal{M}'\in\mathscr{V}^{(p)}$
according to the probability distribution $\mathfrak{a}^{(p-1)}(\boldsymbol{m},\,\cdot)$,
and then behaves as the Markov chain $\mathbf{y}^{(p)}(\cdot)$.

We explain in Section \ref{sec12} the proof of Theorem \ref{t00}
based on the metastability result stated in Theorem \ref{t_meta}.

\subsection{General framework}

In this section, we introduce important notions to investigate the
metastable behavior of the process $\boldsymbol{x}_{\epsilon}(\cdot)$
at a certain time-scale $1\prec{\color{blue}\theta_{\epsilon}}\prec\infty$.
We remark that the result summarized in this section is not restricted
to the diffusion processes considered in the current article, but
can be applied to any Markov process exhibiting metastable-type behavior.

To explain this general framework, let ${\color{blue}\mathscr{V}}$
be a finite set, and ${\color{blue}\mathscr{E}=\{\mathcal{E}_{i}:i\in\mathscr{V}\}}$
be a collection of subsets of $\mathbb{R}^{d}$, each one representing
a metastable or stable set. We assume that $\mathcal{E}_{i}\cap\mathcal{E}_{j}=\varnothing$
for $i\ne j$, and let ${\color{blue}\mathcal{E}=\bigcup_{i\in\mathscr{V}}\mathcal{E}_{i}}$.

Denote by $\boldsymbol{y}_{\epsilon}(\cdot)$ the process $\boldsymbol{x}_{\epsilon}(\cdot)$
speeded-up by $\theta_{\epsilon}$: ${\color{blue}\boldsymbol{y}_{\epsilon}(t)=\boldsymbol{x}_{\epsilon}(\theta_{\epsilon}t)}$.
We expect that the speeded-up process $\boldsymbol{y}_{\epsilon}(\cdot)$
jumps among the sets in $\mathscr{E}$. We wish to describe this hopping
dynamics among the sets in $\mathscr{E}$ by a certain $\mathscr{V}$-valued
continuous-time Markov chain ${\color{blue}\{{\bf y}(t)\}_{t\ge0}}$.
As in Theorem \ref{t_meta}, we can formulate this as a convergence
of finite-dimensional distributions.
\begin{defn}[Finite-dimensional distributions]
\label{def_FC} We say that the condition $\mathfrak{C}_{{\rm fdd}}(\theta_{\epsilon},\,\mathscr{E},\,{\bf y})$
holds if for all $i\in\mathscr{V}$, $n\ge1$, $0<t_{1}<\cdots<t_{n}$,
sequences $(t_{j,\,\epsilon})_{\epsilon>0}$, $j\in\llbracket1,\,n\rrbracket$
such that $t_{j,\,\epsilon}\to t_{j}$, and $k_{1},\,\dots,\,k_{n}\in\mathscr{V}$,
we have
\begin{equation}
\lim_{\epsilon\rightarrow0}\,\sup_{\boldsymbol{x}\in\mathcal{E}_{i}}\,\bigg|\,\mathbb{Q}_{\boldsymbol{x}}^{\epsilon}\Big[\,\bigcap_{j=1}^{n}\big\{\,\boldsymbol{y}_{\epsilon}(t_{j,\,\epsilon})\in\mathcal{E}_{k_{j}}\,\big\}\,\Big]\,-\,\mathcal{Q}_{i}\Big[\,\bigcap_{j=1}^{n}\big\{\,{\bf y}(t_{j})=k_{j}\,\big\}\,\Big]\,\bigg|\ =\ 0\label{eq:fddc}
\end{equation}
where ${\color{blue}\mathbb{Q}_{\boldsymbol{x}}^{\epsilon}}$ and
${\color{blue}\mathcal{Q}_{i}}$ denote laws of the process $\boldsymbol{y}_{\epsilon}(\cdot)$
starting at $\boldsymbol{x}$ and of the process ${\bf y}(\cdot)$
starting at $i$, respectively.
\end{defn}

\begin{rem}
\label{rem:fdd}Let $(\gamma_{\epsilon})_{\epsilon>0}$ be the sequence
such that $\gamma_{\epsilon}\prec\theta_{\epsilon}$. Then, since
\eqref{eq:fddc} holds for all sequences $(\boldsymbol{z}_{\epsilon})_{\epsilon>0}$
in $\mathcal{E}_{i}$ and sequences $(t_{j,\,\epsilon})_{\epsilon>0}$,
$j\in\llbracket1,\,n\rrbracket$ such that $t_{j,\,\epsilon}\to t_{j}$,
we immediately have
\[
\lim_{\epsilon\rightarrow0}\,\sup_{\boldsymbol{x}\in\mathcal{E}_{i}}\,\sup_{s_{1},\,\dots,\,s_{n}\in[-\gamma_{\epsilon},\,\gamma_{\epsilon}]}\,\bigg|\,\mathbb{Q}_{\boldsymbol{x}}^{\epsilon}\Big[\,\bigcap_{j=1}^{n}\big\{\,\boldsymbol{y}_{\epsilon}(t_{j}+s_{j}/\theta_{\epsilon})\in\mathcal{E}_{k_{j}}\,\big\}\,\Big]\,-\,\mathcal{Q}_{i}\Big[\,\bigcap_{j=1}^{n}\big\{\,{\bf y}(t_{j})=k_{j}\,\big\}\,\Big]\,\bigg|\ =\ 0
\]
under the same conditions.
\end{rem}

\begin{rem}
\label{rem:fddneg}By letting $n=1$ in \eqref{eq:fddc} and summing
over $k_{1}\in\mathscr{V}$, we get
\[
\lim_{\epsilon\rightarrow0}\,\mathbb{Q}_{\boldsymbol{z}_{\epsilon}}^{\epsilon}\left[\,\boldsymbol{y}_{\epsilon}(t_{\epsilon})\notin\mathcal{E}\,\right]\ =\ 0
\]
for all sequence $(t_{\epsilon})_{\epsilon>0}$ such that $t_{\epsilon}\rightarrow t>0$.
\end{rem}

The verification of the condition $\mathfrak{C}_{{\rm fdd}}(\theta_{\epsilon},\,\mathscr{E},\,{\bf y})$
is usually based on the alternative formulation of the convergence
developed in \cite{BL1}. To explain this, we first define ${\color{blue}\{\boldsymbol{y}_{\epsilon}^{\mathcal{E}}(t)\}_{t\ge0}}$
as the trace process  (cf. Appendix \ref{app:trace})
of $\boldsymbol{y}_{\epsilon}(\cdot)$ on the set
$\mathcal{E}$. Note that the process $\boldsymbol{x}_{\epsilon}(\cdot)$,
and hence $\boldsymbol{y}_{\epsilon}(\cdot)$, is positive recurrent
by \cite[Theorem 2.2]{LS-22} so that we can define the trace process
provided that $\mathcal{E}$ contains an open ball.

this is the process obtained from $\boldsymbol{y}_{\epsilon}(\cdot)$
by neglecting the excursions outside of the sets in $\mathscr{E}$.
We then define the $\mathscr{V}$ process $\{\mathbf{y}_{\epsilon}(t)\}_{t\ge0}$
by
\[
{\color{blue}\mathbf{y}_{\epsilon}(t)}\ =\ i\;\;\iff\;\;\boldsymbol{y}_{\epsilon}^{\mathcal{E}}(t)\in\mathcal{E}_{i}\;\;\;;\;t\ge0\;,
\]
so that the process $\mathbf{y}_{\epsilon}(\cdot)$ indicates the
index of the set in $\mathscr{E}$ visited by the process $\boldsymbol{y}_{\epsilon}^{\mathcal{E}}(\cdot).$
We call $\mathbf{y}_{\epsilon}(\cdot)$ as the \textit{order process.
}The order process captures the hopping dynamics of the speeded-up
process $\boldsymbol{y}_{\epsilon}(\cdot)$ among the sets in $\mathscr{E}$.
Hence, if the order process $\mathbf{y}_{\epsilon}(\cdot)$ converges
to the Markov chain $\mathbf{y}(\cdot)$, this latter process describes
the metastable behavior of the process $\boldsymbol{x}_{\epsilon}(\cdot)$
at the scale $\theta_{\epsilon}$. This can be formalized in the following
manner.
\begin{defn}[Convergence in law]
\label{def_C} We say that the condition $\mathfrak{C}(\theta_{\epsilon},\,\mathscr{E},\,{\bf y})$
($\mathfrak{C}$ for convergence) holds if, for all $i\in\mathscr{V}$,
and for all sequences $(\boldsymbol{z}_{\epsilon})_{\epsilon>0}$
in $\mathcal{E}_{i}$, the process $\mathbf{y}_{\epsilon}(\cdot)$
with $\boldsymbol{y}_{\epsilon}(0)=\boldsymbol{z}_{\epsilon}$ converges
in the Skorohod topology to the $\mathscr{V}$-valued continuous-time
Markov chain ${\bf y}(\cdot)$ starting from $i$, and for all $T>0$,
\begin{equation}
\lim_{\epsilon\rightarrow0}\,\sup_{\boldsymbol{x}\in\mathcal{E}}\,\mathbb{Q}_{\boldsymbol{x}}^{\epsilon}\bigg[\,\int_{0}^{T}\,\chi_{_{\mathbb{R}^{d}\setminus\mathcal{E}}}(\,\boldsymbol{y}_{\epsilon}(t)\,)\,dt\,\bigg]\ =\ 0\;.\label{eq:neg}
\end{equation}
\end{defn}

The condition \eqref{eq:neg} assert that the speeded-up process $\boldsymbol{y}_{\epsilon}(\cdot)$
spends most of the time inside the set in $\mathscr{E}$; this condition
guarantees that $\mathscr{E}$ contains all the relevant metastable
or stable sets. We also not that, by Remark \ref{rem:fddneg}, the
condition $\mathfrak{C}_{{\rm fdd}}(\theta_{\epsilon},\,\mathscr{E},\,{\bf y})$
implies \eqref{eq:neg} (via Fubini's theorem).

To explain the relation between the conditions $\mathfrak{C}(\theta_{\epsilon},\,\mathscr{E},\,{\bf y})$
and $\mathfrak{C}_{{\rm fdd}}(\theta_{\epsilon},\,\mathscr{E},\,{\bf y})$,
we introduce one more condition.
\begin{defn}[Non-escaping condition]
\label{def_NE} We say that the condition $\mathfrak{G}(\theta_{\epsilon},\,\mathscr{E},\,{\bf y})$
($\mathfrak{G}$ for glued) holds if
\[
\limsup_{b\rightarrow0}\,\limsup_{\epsilon\rightarrow0}\,\sup_{i\in\mathscr{V}}\,\sup_{\boldsymbol{x}\in\mathcal{E}_{i}}\,\sup_{t\in[b,\,2b]}\,\mathbb{Q}_{\boldsymbol{x}}^{\epsilon}\left[\,\boldsymbol{y}_{\epsilon}(t)\notin\mathcal{E}_{i}\,\right]\ =\ 0\;.
\]
\end{defn}

Namely, starting from a set $\mathcal{E}_{i}\in\mathscr{E}$, the
process $\boldsymbol{y}_{\epsilon}(\cdot)$ remains at the same set
during a very short time scale. The next result is obtained in \cite[Proposition 10.2]{LLS-1st}
and \cite[Proposition 2.1]{LLM}. The proof is omitted since it is essentially the same.
\begin{prop}
\label{prop_C_F0}Conditions $\mathfrak{C}(\theta_{\epsilon},\,\mathscr{E},\,{\bf y})$
and $\mathfrak{G}(\theta_{\epsilon},\,\mathscr{E},\,{\bf y})$ imply
the condition $\mathfrak{C}_{{\rm fdd}}(\theta_{\epsilon},\,\mathscr{E},\,{\bf y})$.
\end{prop}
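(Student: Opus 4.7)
The strategy is to lift the Skorohod convergence of the trace-indexed order process $\mathbf{y}_\epsilon(\cdot)$, supplied by $\mathfrak{C}$, to convergence of the finite-dimensional distributions of the original-time process $\boldsymbol{y}_\epsilon(\cdot)$ at the deterministic times $t_{j,\epsilon}$, and to use the non-escape bound $\mathfrak{G}$ in order to transport marginals across the random trace-to-original time change. Since the limit $\mathbf{y}(\cdot)$ is a continuous-time Markov chain on the finite set $\mathscr{V}$, every deterministic time is almost surely a continuity point of $\mathbf{y}$, so Skorohod convergence directly yields
\[
\sup_{\boldsymbol{z}\in\mathcal{E}_i}\,\bigg|\,\mathbb{Q}^{\epsilon}_{\boldsymbol{z}}\Big[\,\bigcap_{j=1}^n\{\mathbf{y}_\epsilon(s_j)=k_j\}\,\Big]\,-\,\mathcal{Q}_i\Big[\,\bigcap_{j=1}^n\{\mathbf{y}(s_j)=k_j\}\,\Big]\,\bigg|\ \longrightarrow\ 0
\]
for every fixed $s_1<\cdots<s_n$ and every $k_j\in\mathscr{V}$, with uniformity over $\boldsymbol{z}\in\mathcal{E}_i$ coming from a standard subsequence argument since $\mathfrak{C}$ provides the limit along every sequence in $\mathcal{E}_i$.

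I would next set up a sandwich between trace and original time. Fix a small $b>0$ and introduce the nearby trace-time points $s_j^{\pm}:=t_j\pm\tfrac{3}{2}b$. Recall that $\mathbf{y}_\epsilon(s)=k$ if and only if $\boldsymbol{y}_\epsilon(T_\epsilon^{-1}(s))\in\mathcal{E}_k$, where $T_\epsilon(t):=\int_0^t\chi_{\mathcal{E}}(\boldsymbol{y}_\epsilon(r))\,dr$ measures the time spent in $\mathcal{E}$. The assumption \eqref{eq:neg} together with Markov's inequality produces
\[
\mathbb{Q}^{\epsilon}_{\boldsymbol{x}}\Big[\,\sup_{t\in[0,T]}|T_\epsilon^{-1}(t)-t|>\tfrac{b}{4}\,\Big]\ \longrightarrow\ 0\quad(\epsilon\to0).
\]
On the complementary event $\mathcal{G}_\epsilon$, each stopping time $\sigma_j:=T_\epsilon^{-1}(s_j^{-})$ (an additive-functional level-crossing time) satisfies $\sigma_j\in[s_j^{-},s_j^{-}+b/4]$, so the elapsed increments $t_{j,\epsilon}-\sigma_j$ lie in the window $[b,2b]$ for all $\epsilon$ small enough; moreover $\boldsymbol{y}_\epsilon(\sigma_j)\in\mathcal{E}_{k_j}$ precisely on the event $\{\mathbf{y}_\epsilon(s_j^{-})=k_j\}$.

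Writing $A_\epsilon:=\bigcap_j\{\boldsymbol{y}_\epsilon(t_{j,\epsilon})\in\mathcal{E}_{k_j}\}$ and $B_\epsilon^{\pm}:=\bigcap_j\{\mathbf{y}_\epsilon(s_j^{\pm})=k_j\}$, the strong Markov property applied at each $\sigma_j$ followed by the bound $\mathfrak{G}$ yields
\[
\mathbb{Q}^{\epsilon}_{\boldsymbol{x}}\bigl[\,\{\mathbf{y}_\epsilon(s_j^{-})=k_j\}\cap\{\boldsymbol{y}_\epsilon(t_{j,\epsilon})\notin\mathcal{E}_{k_j}\}\,\bigr]\ \le\ \varepsilon(b)+o_\epsilon(1),
\]
where $\varepsilon(b)$ denotes the supremum appearing in $\mathfrak{G}$ which vanishes as $b\to0$. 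A union bound over $j$ gives $\mathbb{Q}^{\epsilon}_{\boldsymbol{x}}[A_\epsilon]\ge\mathbb{Q}^{\epsilon}_{\boldsymbol{x}}[B_\epsilon^{-}]-n\varepsilon(b)-o_\epsilon(1)$. The symmetric argument, propagating each $\boldsymbol{y}_\epsilon(t_{j,\epsilon})\in\mathcal{E}_{k_j}$ forward by $\tfrac{3}{2}b$ via $\mathfrak{G}$ and then re-expressing the resulting event in the trace clock at time $s_j^{+}$, delivers the matching inequality $\mathbb{Q}^{\epsilon}_{\boldsymbol{x}}[A_\epsilon]\le\mathbb{Q}^{\epsilon}_{\boldsymbol{x}}[B_\epsilon^{+}]+n\varepsilon(b)+o_\epsilon(1)$.

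Sending $\epsilon\to0$ and invoking the fdd convergence of the first display at the times $s_j^{\pm}$ sandwiches the $\liminf$ and $\limsup$ of $\mathbb{Q}^{\epsilon}_{\boldsymbol{x}}[A_\epsilon]$ between $\mathcal{Q}_i\big[\bigcap_j\{\mathbf{y}(s_j^{\pm})=k_j\}\big]\mp n\varepsilon(b)$; letting $b\to0$ together with the a.s.\ right-continuity of $\mathbf{y}$ at the fixed times $t_j$ closes the proof of \eqref{eq:fddc}. The hard part is the third paragraph: one must simultaneously, over all $j$, certify that the random elapsed times $t_{j,\epsilon}-\sigma_j$ fall in the window $[b,2b]$ demanded by $\mathfrak{G}$, and this is precisely where the uniform closeness of $T_\epsilon^{-1}$ to the identity furnished by \eqref{eq:neg} becomes indispensable.
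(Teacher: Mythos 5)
Your strategy — pass through the trace clock, control the time change via \eqref{eq:neg}, apply the strong Markov property at the level-crossing stopping times, and invoke $\mathfrak{G}$ on deterministic increments in $[b,2b]$ — is the right framework, and the forward inequality $\mathbb{Q}^\epsilon_{\boldsymbol{x}}[A_\epsilon]\ge\mathbb{Q}^\epsilon_{\boldsymbol{x}}[B_\epsilon^-]-n\varepsilon(b)-o_\epsilon(1)$ is executed correctly: $\sigma_j=T_\epsilon^{-1}(s_j^-)$ is a stopping time, $\boldsymbol{y}_\epsilon(\sigma_j)=\boldsymbol{y}_\epsilon^{\mathcal{E}}(s_j^-)$, and $t_{j,\epsilon}-\sigma_j$ is $\mathcal{F}_{\sigma_j}$-measurable and lands in $[b,2b]$ on your good event, so the strong Markov property hands you exactly the quantity that $\mathfrak{G}$ controls.

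The gap is in the ``symmetric argument''. You propose to propagate $\{\boldsymbol{y}_\epsilon(t_{j,\epsilon})\in\mathcal{E}_{k_j}\}$ forward by a deterministic $\tfrac32 b$ and then ``re-express the resulting event in the trace clock at time $s_j^+$''. That last step does not go through. The trace-clock event $\{\mathbf{y}_\epsilon(s_j^+)=k_j\}$ lives at the \emph{random} real time $\sigma_j^+:=T_\epsilon^{-1}(s_j^+)$, which depends on the trajectory strictly after $t_{j,\epsilon}$ and is therefore not $\mathcal{F}_{t_{j,\epsilon}}$-measurable. The strong Markov property at the deterministic time $t_{j,\epsilon}$ only gives you control at fixed shifts $t_{j,\epsilon}+r$; it does not let you evaluate the process at the random time $\sigma_j^+$, and $\mathfrak{G}$ is formulated for deterministic elapsed times $t\in[b,2b]$, not for random ones. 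Knowing $T_\epsilon^{-1}$ is uniformly $\tfrac{b}{4}$-close to the identity tells you $\sigma_j^+-t_{j,\epsilon}\in[b,2b]$, but does not make the composite event accessible to that Markov decomposition.

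The cleanest repair is to drop $B_\epsilon^+$ entirely and obtain both inclusions from the same stopping times $\sigma_j=T_\epsilon^{-1}(s_j^-)$. By construction $\boldsymbol{y}_\epsilon(\sigma_j)\in\mathcal{E}$, so $\boldsymbol{y}_\epsilon(\sigma_j)\in\mathcal{E}_{k'}$ for some $k'\in\mathscr{V}$. Applying the strong Markov property at $\sigma_j$ and $\mathfrak{G}$ with index $k'$, for each $k'$,
\[
\mathbb{Q}^\epsilon_{\boldsymbol{x}}\big[\{\boldsymbol{y}_\epsilon(\sigma_j)\in\mathcal{E}_{k'}\}\cap\{\boldsymbol{y}_\epsilon(t_{j,\epsilon})\notin\mathcal{E}_{k'}\}\cap\mathcal{G}_\epsilon\big]\ \le\ \varepsilon(b)\ ,
\]
and summing over $k'\in\mathscr{V}$ bounds the probability that $\boldsymbol{y}_\epsilon(t_{j,\epsilon})$ and $\boldsymbol{y}_\epsilon(\sigma_j)$ sit in different (or no) sets of $\mathscr{E}$ by $|\mathscr{V}|\,\varepsilon(b)+\mathbb{Q}^\epsilon_{\boldsymbol{x}}[\mathcal{G}_\epsilon^c]$. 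This controls the symmetric difference of $\{\boldsymbol{y}_\epsilon(t_{j,\epsilon})\in\mathcal{E}_{k_j}\}$ and $\{\mathbf{y}_\epsilon(s_j^-)=k_j\}$, hence of $A_\epsilon$ and $B_\epsilon^-$ up to $n|\mathscr{V}|\,\varepsilon(b)+o_\epsilon(1)$, giving the upper as well as the lower bound without any appeal to $s_j^+$. The remainder of your argument (fdd convergence of $\mathbf{y}_\epsilon$ at fixed trace times from $\mathfrak{C}$, right-continuity of $\mathbf{y}$ as $b\downarrow0$) is fine, modulo the mild caveat that your uniform estimate on $T_\epsilon^{-1}$ should be stated for $s\le T-b/4$ rather than all of $[0,T]$, which suffices since the $s_j^-$ are strictly below $t_n<T$.
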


Hence, in order to prove $\mathfrak{C}_{{\rm fdd}}(\theta_{\epsilon},\,\mathscr{E},\,{\bf y})$,
we need to establish $\mathfrak{C}(\theta_{\epsilon},\,\mathscr{E},\,{\bf y})$
and $\mathfrak{G}(\theta_{\epsilon},\,\mathscr{E},\,{\bf y})$. Of
course, the verification of the condition $\mathfrak{G}(\theta_{\epsilon},\,\mathscr{E},\,{\bf y})$
is a technical matter and there are many possible ways to do this.
Hence, we need to establish a way to verify the condition $\mathfrak{C}(\theta_{\epsilon},\,\mathscr{E},\,{\bf y})$.The
verification of $\mathfrak{C}(\theta_{\epsilon},\,\mathscr{E},\,{\bf y})$
is based on the resolvent approach developed in \cite{LMS} which
will be explained below.

\subsection{Resolvent approach to metastability }

Let $\lambda>0$ be a constant which will be fixed through the article.
For a function $\mathbf{g}\colon\mathscr{V}\rightarrow\mathbb{R}$,
denote by $\phi_{\epsilon}^{\mathbf{g}}$ the unique solution of the
resolvent equation
\begin{equation}
\left(\lambda-\theta_{\epsilon}\mathscr{L}_{\epsilon}\right)\,\phi_{\epsilon}^{\mathbf{g}}\ =\ \sum_{i\in\mathscr{V}}{\bf g}(i)\,\chi_{_{\mathcal{E}_{i}}}\;.\label{e_res0}
\end{equation}

\begin{defn}
We say that the condition $\mathfrak{R}(\theta_{\epsilon},\,\mathscr{E},\,{\bf y})$
($\mathfrak{R}$ for resolvent) holds if for all ${\bf g}\colon\mathscr{V}\to\mathbb{R}$,
the unique solution $\phi_{\epsilon}^{{\bf g}}$ of \eqref{e_res0}
exists and satisfies
\begin{equation}
\lim_{\epsilon\rightarrow0}\,\max_{i\in\mathscr{V}}\,\parallel\,\phi_{\epsilon}^{{\bf g}}-\mathbf{f}(i)\,\Vert_{L^{\infty}(\mathcal{E}_{i})}\ =\ 0\;,\label{e_conphi}
\end{equation}
where ${\bf f}\colon\mathscr{V}\to\mathbb{R}$ is the solution of
the reduced resolvent equation
\[
\left(\lambda-\mathfrak{L}\right)\,{\bf f}\ =\ {\bf g}\;,
\]
and $\mathfrak{L}$ is the infinitesimal generator of the continuous-time
Markov chain ${\bf y}(\cdot)$.
\end{defn}

The next theorem is \cite[Theorem 2.3]{LMS}.
\begin{thm}
\label{t_res0} Conditions $\mathfrak{C}(\theta_{\epsilon},\,\mathscr{E},\,{\bf y})$
and $\mathfrak{R}(\theta_{\epsilon},\,\mathscr{E},\,{\bf y})$ are
equivalent.
\end{thm}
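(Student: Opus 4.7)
The theorem is an equivalence, so I would prove both implications by passing through the Feynman--Kac representation
\[
\phi_\epsilon^{\mathbf{g}}(\boldsymbol{x}) \;=\; \mathbb{E}_{\boldsymbol{x}}^{\epsilon}\!\left[\int_0^\infty e^{-\lambda s}\,G(\boldsymbol{y}_\epsilon(s))\,ds\right],\qquad G \,=\, \sum_{i\in\mathscr{V}}\mathbf{g}(i)\,\chi_{\mathcal{E}_i},
\]
which holds because $(\lambda-\theta_\epsilon\mathscr{L}_\epsilon)^{-1}$ is the Laplace transform of the semigroup of the speeded-up process $\boldsymbol{y}_\epsilon(\cdot)$ (cf.\ Section~\ref{sec25}). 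This identity is the bridge connecting $\mathfrak{R}$ and $\mathfrak{C}$.

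For $\mathfrak{C}\Rightarrow\mathfrak{R}$, I fix a sequence $\boldsymbol{z}_\epsilon\in\mathcal{E}_i$. Performing the random time change that removes excursions outside $\mathcal{E}$, the Feynman--Kac integral rewrites as an integral involving the order process $\mathbf{y}_\epsilon$; by \eqref{eq:neg} this change of clock converges to the identity in expectation. Combined with the Skorohod convergence of $\mathbf{y}_\epsilon$ to $\mathbf{y}$ granted by $\mathfrak{C}$ and the almost sure continuity of $\omega\mapsto\int_0^\infty e^{-\lambda s}\mathbf{g}(\omega(s))\,ds$ under the pure-jump limit law, bounded convergence yields
\[
\lim_{\epsilon\to0}\phi_\epsilon^{\mathbf{g}}(\boldsymbol{z}_\epsilon)\;=\;\mathcal{Q}_i\!\left[\int_0^\infty e^{-\lambda s}\mathbf{g}(\mathbf{y}(s))\,ds\right]\;=\;\mathbf{f}(i).
\]
Since $\boldsymbol{z}_\epsilon$ was arbitrary in $\mathcal{E}_i$, the uniform limit \eqref{e_conphi} follows.

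For the converse $\mathfrak{R}\Rightarrow\mathfrak{C}$, the plan is threefold. First, specializing $\mathfrak{R}$ to $\mathbf{g}\equiv\mathbf{1}$ gives $\phi_\epsilon^{\mathbf{1}}(\boldsymbol{x})\to 1/\lambda$ uniformly on $\mathcal{E}$, and via the Feynman--Kac representation this upper-bounds the expected time $\boldsymbol{y}_\epsilon(\cdot)$ spends outside $\mathcal{E}$; Markov's inequality plus a truncation then yield \eqref{eq:neg}. Second, I prove tightness of $\{\mathbf{y}_\epsilon\}$ in $D([0,T],\mathscr{V})$ via Aldous's criterion: because $|\mathscr{V}|<\infty$ and the state space is discrete, tightness reduces to controlling rapid oscillations between the sets $\mathcal{E}_i$, and the hitting-time probabilities governing these oscillations are themselves expressible through resolvents on indicator $\mathbf{g}$'s to which $\mathfrak{R}$ applies. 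Third, to identify any subsequential limit $\mathbf{y}^\star$ with the Markov chain $\mathbf{y}$, translate $\mathfrak{R}$ through Feynman--Kac into
\[
\mathbb{E}_{\boldsymbol{z}_\epsilon}^\epsilon\!\left[\int_0^\infty e^{-\lambda s}\mathbf{g}(\mathbf{y}_\epsilon(s))\,ds\right]\;\longrightarrow\;\mathbf{f}(i)\;=\;\mathbb{E}_i\!\left[\int_0^\infty e^{-\lambda s}\mathbf{g}(\mathbf{y}(s))\,ds\right],
\]
valid for every bounded $\mathbf{g}$ on $\mathscr{V}$ and every $\lambda>0$; uniqueness of Laplace transforms in $\lambda$ then pins down the one-dimensional marginals of $\mathbf{y}^\star$, and the strong Markov property of $\boldsymbol{x}_\epsilon(\cdot)$ combined with the uniformity of $\mathfrak{R}$ in the starting point upgrades this to all finite-dimensional marginals.

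The main obstacle I foresee lies in the converse direction, specifically in reconciling the two time scales. The resolvent condition $\mathfrak{R}$ is formulated for the non-traced process $\boldsymbol{y}_\epsilon$, whereas $\mathfrak{C}$ concerns the order process $\mathbf{y}_\epsilon$ obtained by erasing excursions outside $\mathcal{E}$. Bridging the two requires controlling the random time change $t\mapsto\int_0^t\chi_{\mathcal{E}}(\boldsymbol{y}_\epsilon(s))\,ds$ and showing that it converges, uniformly on compact sets and in probability, to the identity; this feedback from step one into step three is where the technical heart of the argument lies and is, in my view, the delicate point of the proof.
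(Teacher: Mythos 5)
The paper does not prove this theorem; it is imported verbatim as \cite[Theorem 2.3]{LMS}, so there is no internal proof to compare against. I therefore evaluate your proposal on its own merits.

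Your architecture is sensible: the Feynman--Kac identity is indeed the bridge, the random time change $T^{\mathcal{E}}$ is exactly what connects the resolvent of the untraced speeded-up process to the order process, and you correctly locate the delicate point there. The observation that $\mathbf{g}\equiv\mathbf{1}$ yields the negligibility condition \eqref{eq:neg} via $e^{-\lambda T}\mathbb{E}\big[\int_0^T\chi_{\mathcal{E}^c}(\boldsymbol{y}_\epsilon(s))\,ds\big]\le\mathbb{E}\big[\int_0^\infty e^{-\lambda s}\chi_{\mathcal{E}^c}(\boldsymbol{y}_\epsilon(s))\,ds\big]\to 0$ is correct and is precisely how this step is usually done.

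The gap lies in the identification step of $\mathfrak{R}\Rightarrow\mathfrak{C}$. Having one-dimensional Laplace transforms agree does not pin down the finite-dimensional marginals of a subsequential limit $\mathbf{y}^\star$ unless you already know $\mathbf{y}^\star$ is Markov, which is not automatic for a Skorohod limit of time-changed processes. Your plan to ``upgrade'' to finite-dimensional distributions using the strong Markov property of $\boldsymbol{x}_\epsilon(\cdot)$ and uniformity of $\mathfrak{R}$ in the starting point is pointing in the right direction, but as written it conflates the conditional resolvent of the traced process (which depends on the precise location inside $\mathcal{E}_i$, not just the index $i$) with a resolvent on $\mathscr{V}$; closing that circle requires again invoking the time-change control uniformly over all conditioning times, which is a nontrivial iteration. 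The clean way this is done in \cite{LMS} is to work with the exact martingale
\[
M_\epsilon(t)\;=\;e^{-\lambda t}\,\phi_\epsilon^{\mathbf{g}}(\boldsymbol{y}_\epsilon(t))\;+\;\int_0^t e^{-\lambda s}\,G(\boldsymbol{y}_\epsilon(s))\,ds\,,
\]
replace $\phi_\epsilon^{\mathbf{g}}$ by $\mathbf{f}$ on $\mathcal{E}$ using $\mathfrak{R}$, pass to the trace via the time change, and control the error terms through \eqref{eq:neg}. The limiting process $e^{-\lambda t}\mathbf{f}(\mathbf{y}(t))+\int_0^t e^{-\lambda s}\mathbf{g}(\mathbf{y}(s))\,ds$ is then identified as a martingale, and the martingale problem for $\mathfrak{L}$ on the finite set $\mathscr{V}$ has a unique solution, which settles Markovness and the finite-dimensional distributions simultaneously. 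If you wish to keep the Laplace-transform route, you would in effect have to re-derive this martingale structure in order to make the conditional identification rigorous.

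Your tightness argument is also sketchy (``resolvents on indicator $\mathbf{g}$'s'' does not immediately control Aldous-type oscillation probabilities without converting resolvent bounds into hitting-time bounds), but that is a standard, repairable step once the martingale framework is in place.
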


\subsection{Structure of inductive proof}

We turn to the approach adopted in this article. For $p\in\llbracket1,\,\mathfrak{q}\rrbracket$,
let
\begin{equation}
\begin{gathered}{\color{blue}\mathfrak{R}^{(p)}}\ :=\ \mathfrak{R}(\theta_{\epsilon}^{(p)},\,\mathscr{E}^{(p)},\,{\bf y}^{(p)})\;,\quad{\color{blue}\mathfrak{\mathfrak{C}}^{(p)}}\ :=\ \mathfrak{C}(\theta_{\epsilon}^{(p)},\,\mathscr{E}^{(p)},\,{\bf y}^{(p)})\\
{\color{blue}\mathfrak{C}_{{\rm fdd}}^{(p)}}\ :=\ \mathfrak{C}_{{\rm fdd}}(\theta_{\epsilon}^{(p)},\,\mathscr{E}^{(p)},\,{\bf y}^{(p)})\;,\quad{\color{blue}\mathfrak{G}^{(p)}}\ :=\ \mathfrak{G}(\theta_{\epsilon}^{(p)},\,\mathscr{E}^{(p)},\,{\bf y}^{(p)})\;.
\end{gathered}
\label{12-1}
\end{equation}
where ${\color{blue}\mathscr{E}^{(p)}=\{\mathcal{E}(\mathcal{M}):\mathcal{M}\in\mathscr{V}^{(p)}\}}$.
In addition, write
\begin{equation}
{\color{blue}\mathcal{E}^{(p)}}\ :=\ \bigcup_{\mathcal{M}\in\mathscr{V}^{(p)}}\mathcal{E}(\mathcal{M})\;,\quad p\in\llbracket1,\,\mathfrak{q}+1\rrbracket\;.\label{07}
\end{equation}
Define one more condition which plays a crucial role in the proof.
\begin{defn}
For $p\in\llbracket1,\,\mathfrak{q}+1\rrbracket$, we say that the
condition $\mathfrak{H}^{(p)}$ ($\mathfrak{H}$ for hitting) holds
if both of the following conditions hold.
\begin{enumerate}
\item For all sequence $(\alpha_{\epsilon})_{\epsilon>0}$ such that $\alpha_{\epsilon}\succ\theta_{\epsilon}^{(p-1)}$
\begin{equation}
\lim_{\epsilon\to0}\,\max_{\mathcal{M}\in\mathscr{N}^{(p)}}\,\sup_{\boldsymbol{x}\in\mathcal{E}(\mathcal{M})}\,\mathbb{P}_{\boldsymbol{x}}^{\epsilon}\left[\,\tau_{\mathcal{E}^{(p)}}\,>\,\alpha_{\epsilon}\,\right]\ =\ 0\ .\label{eq:Hp-1}
\end{equation}
Since $\mathscr{N}^{(1)}=\varnothing$, this condition is automatically
in force for $p=1$.
\item For all $\mathcal{M}\in\mathscr{N}^{(p)}$ and $\mathcal{M}'\in\mathscr{V}^{(p)}$,
\begin{equation}
\lim_{\epsilon\to0}\,\sup_{\boldsymbol{x}\in\mathcal{E}(\mathcal{M})}\,\bigg|\,\mathbb{P}_{\boldsymbol{x}}^{\epsilon}\left[\,\tau_{\mathcal{E}^{(p)}}\,=\,\tau_{\mathcal{E}(\mathcal{M}')}\,\right]\,-\,\widehat{\mathcal{Q}}_{\mathcal{M}}^{(p)}\left[\,\tau_{\mathscr{V}^{(p)}}\,=\,\tau_{\mathcal{M}'}\,\right]\,\bigg|\ =\ 0\;,\label{eq:Hp-2}
\end{equation}
where we recall that $\widehat{\mathcal{Q}}_{\mathcal{M}}^{(p)}$
represents the law of the process $\widehat{{\bf y}}^{(p)}(\cdot)$
starting from $\mathcal{M}\in\mathscr{S}^{(p)}$. Note that, for $p=\mathfrak{q}+1$,
the estimate \eqref{eq:Hp-2} is immediate since $\mathscr{V}^{(\mathfrak{q}+1)}$
is singleton and hence both probabilities at \eqref{eq:Hp-2} are
$1$.
\end{enumerate}
\end{defn}

Summing up, the condition $\mathfrak{H}^{(p)}$ implies that the process
$\boldsymbol{x}_{\epsilon}(\cdot)$ starting at a negligible valley
in $\mathscr{N}^{(p)}$ hits a non-negligible valley in $\mathscr{V}^{(p)}$
at the time scale not larger than $\theta_{\epsilon}^{(p-1)}$, and
the hitting distribution is explained by that of the process $\widehat{\mathbf{y}}^{(p)}(\cdot)$,
respectively.
\begin{rem}
The estimate \eqref{eq:Hp-2} trivially holds when $\mathcal{M}\in\mathscr{V}^{(p)}$.
\end{rem}

\begin{rem}
With the notation \eqref{noteq1}, we can rewrite \eqref{eq:Hp-2}
as, for all $\boldsymbol{m}\in\mathcal{M}_{0}$ (cf. previous remark)
\[
\lim_{\epsilon\to0}\,\sup_{\boldsymbol{x}\in\mathcal{E}(\boldsymbol{m})}\,\left|\,\mathbb{P}_{\boldsymbol{x}}^{\epsilon}\left[\,\tau_{\mathcal{E}^{(p)}}\,=\,\tau_{\mathcal{E}(\mathcal{M}')}\,\right]\,-\,\mathfrak{a}^{(p-1)}(\boldsymbol{m},\,\mathcal{M}')\,\right|\ =\ 0\;.
\]
This estimate provides another explanation on $\mathfrak{a}^{(p-1)}(\boldsymbol{m},\,\mathcal{M}')$,
and in fact is the main reason that this term appears in Theorem \ref{t00}.
\end{rem}

Now we turn to the proof of Theorems \ref{t_res} and \ref{t_meta}.
The following three propositions are the main steps in the complicated
inductive proof of these results. We refer to Figure \ref{fig_pf_t_res}
for an illustration of the structure of our inductive argument.

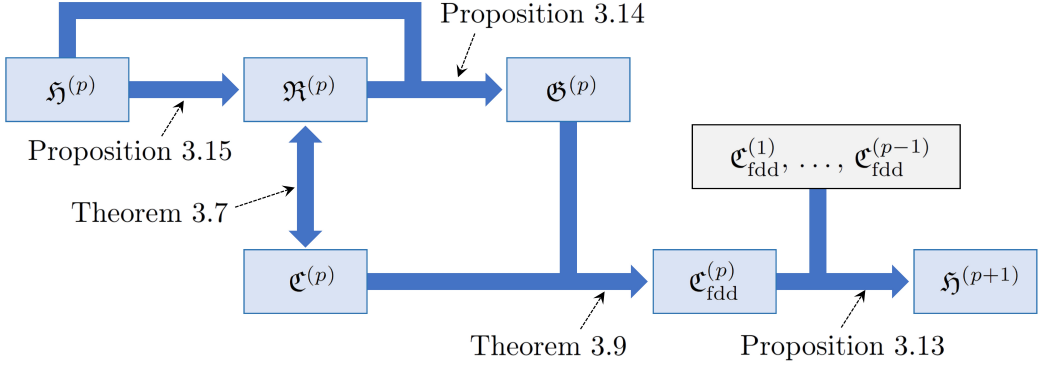
\begin{figure}
\begin{tikzpicture}

\fill[CornflowerBlue!30!white] (0,2.5) rectangle ++(1.5,0.8); \draw[CornflowerBlue] (0,2.5) rectangle ++(1.5,0.8); \draw (0.75,2.9) node{$\mathfrak{H}^{(p)}$};
\fill[CornflowerBlue!30!white] (3,2.5) rectangle ++(1.5,0.8); \draw[CornflowerBlue] (3,2.5) rectangle ++(1.5,0.8); \draw (3.75,2.9) node{$\mathfrak{R}^{(p)}$};
\fill[CornflowerBlue!30!white] (6,2.5) rectangle ++(1.5,0.8); \draw[CornflowerBlue] (6,2.5) rectangle ++(1.5,0.8); \draw (6.75,2.9) node{$\mathfrak{G}^{(p)}$};
\fill[CornflowerBlue!30!white] (3,0) rectangle ++(1.5,0.8); \draw[CornflowerBlue] (3,0) rectangle ++(1.5,0.8); \draw (3.75,0.4) node{$\mathfrak{C}^{(p)}$};
\fill[CornflowerBlue!30!white] (8,0) rectangle ++(1.5,0.8); \draw[CornflowerBlue] (8,0) rectangle ++(1.5,0.8); \draw (8.75,0.4) node{$\mathfrak{C}^{(p)}_{\rm{fdd}}$};
\fill[CornflowerBlue!30!white] (11,0) rectangle ++(1.5,0.8); \draw[CornflowerBlue] (11,0) rectangle ++(1.5,0.8); \draw (11.75,0.4) node{$\mathfrak{H}^{(p+1)}$};
\fill[CornflowerBlue!10!white] (8.5,1.7) rectangle ++(3.5,0.8); \draw[black] (8.5,1.7) rectangle ++(3.5,0.8); \draw (10.25,2.1) node{$\mathfrak{C}^{(1)}_{\rm{fdd}},\ \dots,\ \mathfrak{C}^{(p-1)}_{\rm{fdd}}$};

\draw [-latex, CornflowerBlue!70!Navy, line width=1mm] (1.5,2.9) to (2.95,2.9);
\draw [-latex, CornflowerBlue!70!Navy, line width=1mm] (4.5,2.9) to (5.95,2.9);
\draw [-latex, CornflowerBlue!70!Navy, line width=1mm] (4.5,0.4) to (7.95,0.4);
\draw [-latex, CornflowerBlue!70!Navy, line width=1mm] (9.5,0.4) to (10.95,0.4);
\draw [latex-latex, CornflowerBlue!70!Navy, line width=1mm] (3.75,0.8) to (3.75,2.5);
\draw [CornflowerBlue!70!Navy, line width=1mm] (0.75,3.3) to (0.75,3.8) to (5,3.8) to (5,2.9);
\draw [CornflowerBlue!70!Navy, line width=1mm] (6.75,2.5) to (6.75,0.4);
\draw [CornflowerBlue!70!Navy, line width=1mm] (10.25,1.7) to (10.25,0.4);

\draw (1.4,2) node{Proposition 3.14};
\draw (1.8,1.3) node{Theorem 3.9};
\draw (6.7,3.8) node{Proposition 3.15};
\draw (6.75,-0.4) node{Proposition 3.7};
\draw (10.25,-0.4) node{Proposition 3.13};

\draw [-latex, dash pattern=on 1.5pt off 1pt] (1.8,2.3) to (2.1,2.8);
\draw [-latex, dash pattern=on 1.5pt off 1pt] (3.1,1.5) to (3.65,1.65);
\draw [-latex, dash pattern=on 1.5pt off 1pt] (5.55,3.55) to (5.1,3);
\draw [-latex, dash pattern=on 1.5pt off 1pt] (6.75,-0.2) to (6.75,0.25);
\draw [-latex, dash pattern=on 1.5pt off 1pt] (10.25,-0.2) to (10.25,0.25);

\end{tikzpicture}
\caption{The structure of inductive proof}
\label{fig_pf_t_res}
\end{figure}

\begin{prop}
\label{prop_H}For $p\in\llbracket1,\,\mathfrak{q}\rrbracket$, conditions
$\mathfrak{C}_{{\rm fdd}}^{(1)},\,\dots,\,\mathfrak{C}_{{\rm fdd}}^{(p)}$
imply condition $\mathfrak{H}^{(p+1)}$.
\end{prop}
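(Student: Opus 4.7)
The plan is to argue by induction on $p$. The base case $p=1$ is essentially vacuous for the parts of $\mathfrak{H}^{(2)}$ that concern $\mathscr{N}^{(1)}$, since $\mathscr{N}^{(1)}=\varnothing$; only the transient states $\mathscr{T}^{(1)}\subset\mathscr{V}^{(1)}$ need be handled. For the inductive step I may assume $\mathfrak{H}^{(p)}$ is already established, since it follows from $\mathfrak{C}_{\mathrm{fdd}}^{(1)},\dots,\mathfrak{C}_{\mathrm{fdd}}^{(p-1)}$ by this very proposition applied at index $p-1$. Because $\mathscr{N}^{(p+1)}=\mathscr{N}^{(p)}\cup\mathscr{T}^{(p)}$, I would treat two cases according to whether the starting set $\mathcal{M}$ lies in $\mathscr{T}^{(p)}\subset\mathscr{V}^{(p)}$ (Case A) or in $\mathscr{N}^{(p)}$ (Case B).

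In Case A, $\mathcal{M}$ is a transient state of the finite-state chain $\mathbf{y}^{(p)}$, and the whole argument is powered by $\mathfrak{C}_{\mathrm{fdd}}^{(p)}$. Fix $\delta>0$ and choose $T$ so large that $\mathcal{Q}_{\mathcal{M}}^{(p)}[\mathbf{y}^{(p)}(T)\in\mathscr{R}^{(p)}]>1-\delta$, and note that each $\mathcal{M}'_j\in\mathscr{V}^{(p+1)}$ is the union of a recurrent class, i.e.\ $\mathcal{E}(\mathcal{M}'_j)=\bigcup_{\mathcal{M}''\in\mathscr{R}_j^{(p)}}\mathcal{E}(\mathcal{M}'')$. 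Summing the $n=1$ instance of \eqref{eq:fddc} over $\mathcal{M}''\in\mathscr{R}_j^{(p)}$ yields
\[
\lim_{\epsilon\to 0}\,\sup_{\boldsymbol{x}\in\mathcal{E}(\mathcal{M})}\,\Bigl|\,\mathbb{P}_{\boldsymbol{x}}^{\epsilon}\bigl[\boldsymbol{x}_{\epsilon}(T\theta_{\epsilon}^{(p)})\in\mathcal{E}(\mathcal{M}'_j)\bigr]\,-\,\mathcal{Q}_{\mathcal{M}}^{(p)}\bigl[\mathbf{y}^{(p)}(T)\in\mathscr{R}_j^{(p)}\bigr]\,\Bigr|\,=\,0.
\]
Summing in $j$ and using the trivial inclusion $\{\boldsymbol{x}_{\epsilon}(T\theta_{\epsilon}^{(p)})\in\mathcal{E}^{(p+1)}\}\subset\{\tau_{\mathcal{E}^{(p+1)}}\le T\theta_{\epsilon}^{(p)}\}$ proves \eqref{eq:Hp-1}, since any $\alpha_{\epsilon}\succ\theta_{\epsilon}^{(p)}$ eventually exceeds $T\theta_{\epsilon}^{(p)}$ and $\delta$ is arbitrary. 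For \eqref{eq:Hp-2}, the strong Markov property at $\tau_{\mathcal{E}^{(p+1)}}$ together with a second application of $\mathfrak{C}_{\mathrm{fdd}}^{(p)}$ from an arbitrary $\mathcal{M}''\in\mathscr{R}_j^{(p)}$ shows that, once inside a recurrent-class valley, the process stays in the same $\mathcal{E}(\mathcal{M}'_j)$ at time $T\theta_{\epsilon}^{(p)}$ with probability tending to $1$, because the recurrent class cannot be exited on this scale. Hence $\{\tau_{\mathcal{E}^{(p+1)}}=\tau_{\mathcal{E}(\mathcal{M}'_j)}\}$ and $\{\boldsymbol{x}_{\epsilon}(T\theta_{\epsilon}^{(p)})\in\mathcal{E}(\mathcal{M}'_j)\}$ agree up to a vanishing event, and the limiting probability equals $\mathcal{Q}_{\mathcal{M}}^{(p)}[\mathbf{y}^{(p)}(\infty)\in\mathscr{R}_j^{(p)}]$, which by the construction of $\widehat{\mathbf{y}}^{(p+1)}$ in Section \ref{sec4} coincides with $\widehat{\mathcal{Q}}_{\mathcal{M}}^{(p+1)}[\tau_{\mathscr{V}^{(p+1)}}=\tau_{\mathcal{M}'_j}]$.

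In Case B the induction hypothesis $\mathfrak{H}^{(p)}$ furnishes both $\tau_{\mathcal{E}^{(p)}}\prec\theta_{\epsilon}^{(p)}$ with high probability and the limiting entry distribution $\widehat{\mathcal{Q}}_{\mathcal{M}}^{(p)}[\tau_{\mathscr{V}^{(p)}}=\tau_{\mathcal{M}'''}]$ over $\mathcal{M}'''\in\mathscr{V}^{(p)}$. Conditioning on the landing point and applying the strong Markov property, the evolution from $\mathcal{E}(\mathcal{M}''')$ is handled either by Case A (when $\mathcal{M}'''\in\mathscr{T}^{(p)}$) or by the trivial observation $\mathcal{E}(\mathcal{M}''')\subset\mathcal{E}^{(p+1)}$ with $\tau_{\mathcal{E}^{(p+1)}}=\tau_{\mathcal{E}^{(p)}}$ (when $\mathcal{M}'''$ is already recurrent). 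Summing over $\mathcal{M}'''$ and invoking the cascade identity
\[
\widehat{\mathcal{Q}}_{\mathcal{M}}^{(p+1)}[\tau_{\mathscr{V}^{(p+1)}}=\tau_{\mathcal{M}'_j}]\,=\,\sum_{\mathcal{M}'''\in\mathscr{V}^{(p)}}\widehat{\mathcal{Q}}_{\mathcal{M}}^{(p)}[\tau_{\mathscr{V}^{(p)}}=\tau_{\mathcal{M}'''}]\,\mathcal{Q}_{\mathcal{M}'''}^{(p)}\bigl[\mathbf{y}^{(p)}(\infty)\in\mathscr{R}_j^{(p)}\bigr],
\]
built into the tree construction of Section \ref{sec4}, completes both parts of $\mathfrak{H}^{(p+1)}$.

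The hardest point is the strong-Markov step closing Case A. Because $\tau_{\mathcal{E}^{(p+1)}}$ is itself of order $\theta_{\epsilon}^{(p)}$, one cannot absorb the random time shift into the f.d.d.\ limit via Remark \ref{rem:fdd} alone; instead, a two-time-point application of $\mathfrak{C}_{\mathrm{fdd}}^{(p)}$ at times $\sigma$ and $T$, with $\sigma$ chosen large enough that entry into $\mathcal{E}^{(p+1)}$ is already likely and $T$ large enough that absorption is almost complete, is needed in order to rule out migration between distinct recurrent-class valleys on scale $\theta_{\epsilon}^{(p)}$. By contrast, the time shift $\tau_{\mathcal{E}^{(p)}}$ appearing in Case B is genuinely $\prec\theta_{\epsilon}^{(p)}$ and is directly covered by Remark \ref{rem:fdd}, which makes the transition from Case B to Case A smooth.
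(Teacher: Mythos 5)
Your overall strategy is close in spirit to the paper's (split according to whether $\mathcal{M}\in\mathscr{T}^{(p)}$ or $\mathcal{M}\in\mathscr{N}^{(p)}$, use the strong Markov property at the entry time into $\mathcal{E}^{(p+1)}$, take the long-time limit, and close the loop via the cascade/trace identity linking $\widehat{\mathbf{y}}^{(p)}$ to $\widehat{\mathbf{y}}^{(p+1)}$, which is exactly the paper's Lemma \ref{l: tau_V}). Your Case~A argument for \eqref{eq:Hp-1} is essentially what the paper's Lemma \ref{l2-03} delivers, and the cascade for Case~B is what the paper packages as Lemma \ref{l: pf_Cond_H}.

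There is, however, a genuine gap at the step you yourself flag as ``the hardest point,'' and the repair you propose does not close it. To identify $\{\tau_{\mathcal{E}^{(p+1)}}=\tau_{\mathcal{E}(\mathcal{M}'_j)}\}$ with $\{\boldsymbol{x}_{\epsilon}(T\theta_{\epsilon}^{(p)})\in\mathcal{E}(\mathcal{M}'_j)\}$ you must rule out, uniformly over the random entry point $\boldsymbol{y}=\boldsymbol{x}_{\epsilon}(\tau_{\mathcal{E}^{(p+1)}})\in\mathcal{E}(\mathcal{M}'_k)$ and the random remaining time $(T-\tau_{\mathcal{E}^{(p+1)}}/\theta_{\epsilon}^{(p)})\theta_{\epsilon}^{(p)}$, the event that the process subsequently reaches a different recurrent-class valley $\mathcal{E}(\mathcal{M}'_j)$, $j\ne k$. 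This is a \emph{hitting-time} estimate of the form
\[
\lim_{\epsilon\to0}\,\sup_{\boldsymbol{y}\in\mathcal{E}(\mathcal{M}'_k)}\mathbb{P}_{\boldsymbol{y}}^{\epsilon}\big[\,\tau_{\mathcal{E}(\mathcal{M}'_j)}\le T\theta_{\epsilon}^{(p)}\,\big]\,=\,0\;,
\]
and it does not follow from $\mathfrak{C}_{\mathrm{fdd}}^{(p)}$, even with arbitrarily many time points: the f.d.d.\ convergence constrains the law of the process at fixed multiples of $\theta_{\epsilon}^{(p)}$, whereas the hitting time is a supremum over a continuum of times and thus sensitive to excursions that the fixed-time marginals cannot detect. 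Your ``two-time-point'' remedy would only show that the probability of occupying two distinct recurrent classes at two \emph{fixed} times $\sigma<T$ vanishes; it says nothing about a short in-and-out excursion into $\mathcal{E}(\mathcal{M}'_k)$ before time $\sigma\theta_{\epsilon}^{(p)}$ followed by long sojourn in $\mathcal{E}(\mathcal{M}'_j)$, precisely the scenario that makes $\tau_{\mathcal{E}^{(p+1)}}=\tau_{\mathcal{E}(\mathcal{M}'_k)}$ while $\boldsymbol{x}_{\epsilon}(T\theta_{\epsilon}^{(p)})\in\mathcal{E}(\mathcal{M}'_j)$. The missing ingredient is the paper's Lemma \ref{lem:FW-type}: a Freidlin--Wentzell estimate, independent of any $\mathfrak{C}_{\mathrm{fdd}}$ hypothesis and resting on the landscape inequality $\Theta(\mathcal{M},\mathcal{M}')-U(\mathcal{M})>d^{(p)}$ proved there, which directly gives the required uniform-in-$\boldsymbol{y}$ bound on $\mathbb{P}_{\boldsymbol{y}}^{\epsilon}[\tau_{\mathcal{E}(\mathcal{M}')}\le\theta_{\epsilon}^{(p)}t]$. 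This lemma is invoked twice in the paper's Lemma \ref{l: tau_E}, exactly to dispose of the two ``migration'' events $E_2$, $E_3$ you implicitly face, and no combination of f.d.d.\ statements can substitute for it.
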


\begin{prop}
\label{prop_R}For $p\in\llbracket1,\,\mathfrak{q}\rrbracket$, the
condition $\mathfrak{H}^{(p)}$ implies $\mathfrak{R}^{(p)}$.
\end{prop}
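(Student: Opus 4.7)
The plan is to combine the stochastic representation
\[
\phi_\epsilon(\bm{x}) \;=\; \mathbb{E}_\bm{x}^\epsilon\!\left[\int_0^\infty e^{-\lambda s}\, G(\bm{x}_\epsilon(s\,\theta_\epsilon^{(p)}))\,ds\right], \qquad G \;=\; \sum_{\mathcal{M}\in\mathscr{V}^{(p)}}\bm{g}(\mathcal{M})\,\chi_{\mathcal{E}(\mathcal{M})},
\]
with the test-function / Dirichlet-form machinery of the resolvent approach of \cite{LMS,LLS-1st}. The representation yields the uniform bound $\|\phi_\epsilon\|_\infty \le \lambda^{-1}\|\bm{g}\|_\infty$, and existence/uniqueness for \eqref{e_res0} follow from standard arguments under the confining hypothesis \eqref{eq:growth}. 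A first step, not using $\mathfrak{H}^{(p)}$, is a \emph{local leveling} on each non-negligible valley: for every $\mathcal{M}\in\mathscr{V}^{(p)}$ one produces a scalar $\bm{h}_\epsilon(\mathcal{M})$ with $\sup_{\bm{x}\in\mathcal{E}(\mathcal{M})}|\phi_\epsilon(\bm{x})-\bm{h}_\epsilon(\mathcal{M})|\to 0$, by coupling two copies of $\bm{x}_\epsilon(\cdot)$ started from $\bm{x},\bm{y}\in\mathcal{E}(\mathcal{M})$ within a time-scale $\prec\theta_\epsilon^{(p)}$, using the extension to multiple local minima of the Barrera--Jara-type local ergodicity estimates recalled in \cite[Section~3]{LLS-1st}.

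\textbf{Identification of the limit via test functions.} To prove $\bm{h}_\epsilon(\mathcal{M})\to\bm{f}(\mathcal{M})$, I would build a family $\{Q_\mathcal{M}\}_{\mathcal{M}\in\mathscr{V}^{(p)}}$ of test functions of the new type announced in the introduction: approximately $\chi_{\mathcal{E}(\mathcal{M})}$ on the non-negligible valleys, approximately $\mathfrak{a}^{(p-1)}(\,\cdot\,,\mathcal{M})$ on each negligible valley $\mathcal{E}(\mathcal{M}')$ with $\mathcal{M}'\in\mathscr{N}^{(p)}$, and interpolated smoothly across saddles so that $\theta_\epsilon^{(p)}\,\mathscr{L}_\epsilon^{*}Q_\mathcal{M}$ encodes the off-diagonal entries of $\mathfrak{L}^{(p)}$. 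Multiplying \eqref{e_res} by $Q_\mathcal{M}$, integrating against $\mu_\epsilon$, and moving $\theta_\epsilon^{(p)}\mathscr{L}_\epsilon$ to the adjoint yields
\[
\lambda \!\int\! \phi_\epsilon\, Q_\mathcal{M}\,d\mu_\epsilon \,-\, \theta_\epsilon^{(p)}\!\int\!\phi_\epsilon\, \mathscr{L}_\epsilon^{*} Q_\mathcal{M}\,d\mu_\epsilon \;=\; \sum_{\mathcal{M}'\in\mathscr{V}^{(p)}}\bm{g}(\mathcal{M}')\!\int_{\mathcal{E}(\mathcal{M}')}\!Q_\mathcal{M}\,d\mu_\epsilon.
\]
Dividing by $\mu_\epsilon(\mathcal{E}(\mathcal{M}))$, the Laplace asymptotics for $\mu_\epsilon$ supply the weights $\nu(\bm{m})/\nu(\mathcal{M})$ from \eqref{eq:nu}, and combined with Step~1 give the discrete identity $(\lambda-\mathfrak{L}^{(p)})\bm{h}=\bm{g}$ on any subsequential limit. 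Uniqueness of the reduced resolvent forces $\bm{h}=\bm{f}$, proving \eqref{e_conphi}.

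\textbf{Main obstacle.} The crux is the control of $\theta_\epsilon^{(p)}\!\int\!\phi_\epsilon\, \mathscr{L}_\epsilon^{*}Q_\mathcal{M}\,d\mu_\epsilon$. Its integrand concentrates on the depth-$d^{(p)}$ saddles of $U$, where the exponentially small $\mu_\epsilon$-mass and the exponentially large prefactor $\theta_\epsilon^{(p)}$ balance to produce the Eyring--Kramers rates $\omega(\,\cdot\,,\cdot\,)$ of \eqref{eq:rate_30}. Contributions from saddles adjacent to a negligible well $\mathcal{E}(\mathcal{M}')$, $\mathcal{M}'\in\mathscr{N}^{(p)}$, involve the values $\phi_\epsilon|_{\mathcal{E}(\mathcal{M}')}$ which the local leveling of Step~1 does not control. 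This is precisely where $\mathfrak{H}^{(p)}$ enters: applying the strong Markov property to the representation formula at the short hitting time $\tau_{\mathcal{E}^{(p)}}$ and using parts~(1)--(2) of $\mathfrak{H}^{(p)}$, one shows that on $\mathcal{E}(\mathcal{M}')$ the function $\phi_\epsilon$ is asymptotically $\sum_{\mathcal{M}''\in\mathscr{V}^{(p)}}\mathfrak{a}^{(p-1)}(\mathcal{M}',\mathcal{M}'')\,\bm{h}(\mathcal{M}'')$, which is exactly the value at which $Q_\mathcal{M}$ was calibrated. Carrying out this cancellation saddle-by-saddle, consistently across all negligible wells and across all lower levels of the tree-structure of Section~\ref{sec4}, is the technical heart of the argument, and I expect the bulk of the work to lie here.
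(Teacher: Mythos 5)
Your architecture (local leveling followed by a test-function identification of the reduced resolvent equation $(\lambda-\mathfrak{L}^{(p)})\mathbf{f}=\mathbf{g}$, with the uniform bound and the uniqueness of the reduced equation closing the argument) matches the paper's, but your test function is built on a different principle and inherits a difficulty the paper's construction deliberately avoids. The paper's $Q_\epsilon^{\mathcal{M}}$, for a non-absorbing $\mathcal{M}\in\mathscr{V}^{(p)}$, is identically $1$ on the whole well $\mathcal{W}(\mathcal{M})$ --- including on every negligible valley $\mathcal{E}(\mathcal{M}')$, $\mathcal{M}'\in\mathscr{N}^{(p)}$, sitting inside it --- identically $0$ outside, and interpolated by the WKB profile \emph{only} across the depth-$d^{(p)}$ saddles $\mathcal{S}(\mathcal{M})$; every shallower saddle lies in a region where $\nabla Q_\epsilon^{\mathcal{M}}=0$ and contributes nothing. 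The hypothesis $\mathfrak{H}^{(p)}$ enters not through the test function but through Proposition~\ref{p_char_neg}, which expresses $\mathbf{f}_\epsilon$ on negligible valleys as a convex combination of $\mathbf{f}_\epsilon$ on non-negligible ones, after which the trace identity [BL1, Cor.~6.2] converts $\widehat{\mathfrak{L}}^{(p)}\mathbf{f}_\epsilon$ to $\mathfrak{L}^{(p)}\mathbf{f}_\epsilon$ algebraically. Absorbing states are treated separately with a smooth bump and no saddle analysis at all.

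Your $Q_{\mathcal{M}}$, by contrast, carries the value $\mathfrak{a}^{(p-1)}(\cdot,\mathcal{M})$ on each negligible valley and is interpolated across \emph{all} saddles. Across a saddle $\boldsymbol{\sigma}$ of depth $d^{(q)}$ with $q<p$, the jumps of $Q_{\mathcal{M}}$ and of $\phi_\epsilon$ are both of order one, so the raw size of the contribution to $\theta_\epsilon^{(p)}\int\phi_\epsilon\,\mathscr{L}_\epsilon^{*}Q_{\mathcal{M}}\,d\mu_\epsilon$ near $\boldsymbol{\sigma}$ is $\theta_\epsilon^{(p)}e^{-U(\boldsymbol{\sigma})/\epsilon}$, which is exponentially larger than $\mu_\epsilon(\mathcal{E}(\mathcal{M}))\sim e^{-U(\mathcal{M})/\epsilon}$. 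You therefore need a cancellation \emph{across} the fast saddles. The discrete harmonicity of $\mathfrak{a}^{(p-1)}$ kills the leading Eyring--Kramers terms, but the polynomial-in-$\epsilon$ errors of the WKB profile at those shallow saddles are magnified by $\theta_\epsilon^{(p)}$ and do not obviously cancel; you acknowledge this is ``the technical heart'' but supply no estimate. As written, that step is a genuine gap. The paper's construction avoids the issue entirely by making the test function locally constant across every fast saddle and moving the role of $\mathfrak{H}^{(p)}$ to the probabilistic/algebraic level.
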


\begin{prop}
\label{prop_NE} For $p\in\llbracket1,\,\mathfrak{q}\rrbracket$,
conditions $\mathfrak{R}^{(p)}$ and $\mathfrak{H}^{(p)}$ together
imply condition $\mathfrak{G}^{(p)}$.
\end{prop}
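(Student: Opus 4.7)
The plan is, via Theorem~\ref{t_res0}, to promote $\mathfrak{R}^{(p)}$ to $\mathfrak{C}^{(p)}$, thereby gaining the Skorohod convergence of the order process $\mathbf{y}_\epsilon^{(p)}(\cdot)$ and the integrated non-escape estimate \eqref{eq:neg}; then to use $\mathfrak{H}^{(p)}$ to upgrade these averaged/trace-time controls into the pointwise-in-original-time bound $\mathfrak{G}^{(p)}$. The crux is exactly this upgrade, which the integrated estimate from $\mathfrak{C}^{(p)}$ alone cannot achieve.

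Fix $\mathcal{M}\in\mathscr{V}^{(p)}$, $\boldsymbol{x}\in\mathcal{E}(\mathcal{M})$, $b>0$ small, and $t\in[b,2b]$. The starting point is the decomposition
\[
\mathbb{P}_{\boldsymbol{x}}^{\epsilon}\bigl[\boldsymbol{x}_\epsilon(\theta_\epsilon^{(p)}t)\notin\mathcal{E}(\mathcal{M})\bigr]\ \le\ \sum_{\mathcal{M}'\in\mathscr{V}^{(p)}\setminus\{\mathcal{M}\}}\mathbb{P}_{\boldsymbol{x}}^{\epsilon}\bigl[\boldsymbol{x}_\epsilon(\theta_\epsilon^{(p)}t)\in\mathcal{E}(\mathcal{M}')\bigr]\ +\ \mathbb{P}_{\boldsymbol{x}}^{\epsilon}\bigl[\boldsymbol{x}_\epsilon(\theta_\epsilon^{(p)}t)\notin\mathcal{E}^{(p)}\bigr].
\]
For the first sum, I would exploit the Skorohod convergence of $\mathbf{y}_\epsilon^{(p)}(\cdot)$ to $\mathbf{y}^{(p)}(\cdot)$ from $\mathfrak{C}^{(p)}$, uniform in $\boldsymbol{x}\in\mathcal{E}(\mathcal{M})$: since the limiting chain has bounded jump rates out of $\mathcal{M}$, the probability that it leaves $\mathcal{M}$ within trace time $2b$ is $O(b)$. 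The gap between the original and the trace time scales at which the sped-up process is observed is negligible thanks to \eqref{eq:neg} combined with \eqref{eq:Hp-1}, so one bounds the first sum by $O(b)+o_\epsilon(1)$.

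The second term is the principal obstacle. My strategy is to introduce the last visit $\tau^{\ast}:=\sup\{s\le\theta_\epsilon^{(p)}t:\boldsymbol{x}_\epsilon(s)\in\mathcal{E}^{(p)}\}$ of the process to $\mathcal{E}^{(p)}$ prior to $\theta_\epsilon^{(p)}t$. On the event $\{\boldsymbol{x}_\epsilon(\theta_\epsilon^{(p)}t)\notin\mathcal{E}^{(p)}\}$, one has $\tau^{\ast}<\theta_\epsilon^{(p)}t$, and the process is in a single ongoing excursion from $\mathcal{E}^{(p)}$ of duration $\theta_\epsilon^{(p)}t-\tau^{\ast}$. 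Applying the strong Markov property at $\tau^{\ast}$, and combining \eqref{eq:Hp-1} with the elementary fact that from any point of $\mathbb{R}^{d}$ the drift $\boldsymbol{b}$ carries the process into some valley $\mathcal{E}(\boldsymbol{m})$, $\boldsymbol{m}\in\mathcal{M}_0$, within $O(1)$ time (which follows from the Morse property of $U$, hypothesis \eqref{hyp2}, and the confinement \eqref{eq:growth}), one concludes that each excursion from $\mathcal{E}^{(p)}$ has duration $o(\theta_\epsilon^{(p-1)})$ with probability $1-o_\epsilon(1)$. Combined with a Fubini/Markov-inequality argument applied to \eqref{eq:neg} on a backward time window of length $\eta\,\theta_\epsilon^{(p)}$ ($\eta\downarrow 0$) preceding $\theta_\epsilon^{(p)}t$, this excursion-length bound yields the pointwise control $\mathbb{P}_{\boldsymbol{x}}^{\epsilon}[\boldsymbol{x}_\epsilon(\theta_\epsilon^{(p)}t)\notin\mathcal{E}^{(p)}]\to 0$, uniformly in $t\in[b,2b]$; taking the limit $b\downarrow 0$ after $\epsilon\downarrow 0$ completes the verification of $\mathfrak{G}^{(p)}$.

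The hardest point is this passage from integrated to pointwise estimates. Here $\mathfrak{H}^{(p)}$ is indispensable: without an a priori bound on the length of each individual excursion one cannot localize an excursion to a short window, and therefore one cannot convert the integrated negligibility of excursion time into a bound on the probability of being within an excursion at any fixed instant.
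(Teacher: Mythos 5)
Your high-level intuition — use $\mathfrak{R}^{(p)}\Rightarrow\mathfrak{C}^{(p)}$ to control transitions to other wells and $\mathfrak{H}^{(p)}$ to control excursion lengths outside $\mathcal{E}^{(p)}$ — is in the right spirit, but the implementation has a fatal flaw and the paper proceeds along a different route.

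The critical gap is your application of the strong Markov property at $\tau^{\ast}:=\sup\{s\le\theta_\epsilon^{(p)}t:\boldsymbol{x}_\epsilon(s)\in\mathcal{E}^{(p)}\}$. This is a \emph{last-exit} time: it depends on the future of the path up to $\theta_\epsilon^{(p)}t$ and is therefore not a stopping time, so the strong Markov property does not apply at $\tau^{\ast}$. Without it you cannot split the bound into ``excursion is short'' plus a Markov-start estimate as you describe, and the subsequent Fubini/Markov step on a backward window does not produce the pointwise estimate. (If one tries to fix this by replacing $\tau^{\ast}$ with a forward hitting time after $\theta_\epsilon^{(p)}t-\ell$, one must then control what happens after that hitting time, and one is essentially led to a stopping-time argument of the type the paper actually carries out.) A second, smaller inaccuracy: the claim that the drift carries any $\boldsymbol{x}\in\mathbb{R}^d$ into a valley of $\mathcal{E}^{(p)}$ in $O(1)$ time is not quite right. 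The drift carries the process to $\mathcal{E}(\mathcal{M}_0)$ (a neighborhood of \emph{any} local minimum), and then one needs $\mathfrak{H}^{(p)}$-(1) to reach $\mathcal{E}^{(p)}$, which gives a bound on time scales $\alpha_\epsilon\succ\theta_\epsilon^{(p-1)}$, not $o(\theta_\epsilon^{(p-1)})$; this is the content of Lemma~\ref{lem_hitting}, which requires additional landscape input beyond the ``$O(1)$'' heuristic.

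The paper's argument avoids last-exit times entirely. It first reduces $\mathfrak{G}^{(p)}$ (equivalently \eqref{conEp}) to the hitting-time estimate Proposition~\ref{prop: nojump2}, namely that the speeded-up process starting in $\mathcal{E}(\mathcal{M})$ does not reach the low-lying set $\mathcal{R}=\{U<U(\mathcal{M})+\eta\}\setminus\mathcal{E}^{(p)}$ within a short rescaled time $a$; this reduction is quoted from \cite[Proposition 10.4]{LLS-1st}. The proof of Proposition~\ref{prop: nojump2} then decomposes the event $\{\tau_{\mathcal{J}(\mathcal{A})}\le a\}$ according to whether the \emph{forward} return time $\sigma_{\mathcal{E}^{(p)}\cap\mathcal{J}(\mathcal{A})}$ (a genuine stopping time) is small or large; the first alternative is killed by Lemma~\ref{l: nojump} (a consequence of $\mathfrak{R}^{(p)}$), and the second by Lemma~\ref{lem_hitting} (a consequence of $\mathfrak{H}^{(p)}$), together with the geometric facts about $\mathcal{R}$ and $\mathcal{J}(\mathcal{A})$ proved in Lemma~\ref{l: nojump2-2}.
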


The proof of Propositions \ref{prop_H}, \ref{prop_R}, \ref{prop_NE}
are given in Sections \ref{sec7}, \ref{sec10}, \ref{sec11}, respectively.
Now we conclude the proof of Theorems \ref{t_res} and \ref{t_meta}.
\begin{proof}[Proof of Theorems \ref{t_res} and \ref{t_meta}]
We prove by induction that all conditions $\mathfrak{R}^{(p)}$,
$\mathfrak{C}^{(p)}$, $\mathfrak{G}^{(p)}$, and $\mathfrak{C}_{{\rm fdd}}^{(p)}$
are in force for all $p\in\llbracket1,\,\mathfrak{q}\rrbracket$.

The case $p=1$ is the content of \cite{LLS-1st}. More precisely,
condition $\mathfrak{R}^{(1)}$ is stated in \cite[Theorem 2.2]{LLS-1st},
condition $\mathfrak{C}^{(1)}$ in \cite[Theorem 9.3]{LLS-1st}, condition
$\mathfrak{G}^{(1)}$ in \cite[Proposition 10.4]{LLS-1st}, and condition
$\mathfrak{C}_{{\rm fdd}}^{(1)}$ in \cite[Proposition 10.2]{LLS-1st}.

Next, we fix $p\in\llbracket1,\,\mathfrak{q}-1\rrbracket$ and assume
that conditions $\mathfrak{R}^{(q)}$, $\mathfrak{C}^{(q)}$,
$\mathfrak{G}^{(q)}$, $\mathfrak{C}_{{\rm fdd}}^{(q)}$ hold for all
$q\in\llbracket1,\,p\rrbracket$.  By Proposition \ref{prop_H},
condition $\mathfrak{H}^{(p+1)}$ holds.  Then, by Proposition
\ref{prop_R} and Theorem \ref{t_res0}, conditions
$\mathfrak{R}^{(p+1)}$ and $\mathfrak{C}^{(p+1)}$ are inf force.  At
this point, by Proposition \ref{prop_NE}, condition
$\mathfrak{G}^{(p+1)}$ is fulfilled. Finally, by Proposition
\ref{prop_C_F0}, Condition $\mathfrak{C}_{{\rm fdd}}^{(p+1)}$
holds. This completes the induction step. Hence, the conditions
$\mathfrak{C}^{(p)}$, $\mathfrak{C}_{{\rm fdd}}^{(p)}$,
$\mathfrak{G}^{(p)}$, $\mathfrak{H}^{(p)}$, and $\mathfrak{R}^{(p)}$
hold for all $p\in\llbracket1,\,\mathfrak{q}\rrbracket$.  This
completes the proof of Theorems \ref{t_res} and \ref{t_meta} when
$\boldsymbol{m}\in\mathcal{M}_{0}$ is such that
$\mathcal{M}(p,\,\bm{m})\in\mathscr{V}^{(p)}$.

The proof of Theorems \ref{t_res} in the case where
$\boldsymbol{m}\in\mathcal{M}_{0}$ is such that
$\mathcal{M}(p,\,\bm{m})\in\mathscr{N}^{(p)}$ is the content of
Propositions \ref{p_flat}, \ref{p_char_neg}.  On the other hand, since
we proved $\mathfrak{C}_{{\rm fdd}}^{(p)}$ and $\mathfrak{H}^{(p)}$
for all $p\in\llbracket1,\,\mathfrak{q}\rrbracket$, Theorem
\ref{t_meta} follows from Proposition \ref{p_meta}.
\end{proof}

\subsection{Organization of article}

The remainder of the article is organized as follows.  In Sections
\ref{sec4}--\ref{sec6}, we explain the robust scheme leading to the
tree structure associated with the potential $U$. In Sections
\ref{sec7}--\ref{sec11}, we complete the induction argument used in
the proof of Theorems \ref{t_res} and \ref{t_meta} above.  In
particular, Sections \ref{sec8}--\ref{sec10} are devoted to the
analysis of the resolvent equation. In Sections \ref{sec12} and
\ref{sec13}, we complete the proof of Theorem \ref{t_meta} by allowing
the starting point to belong to the domain of attraction of a
negligible valley.

\section{\label{sec4}Construction of tree structure}

In this section, we present the explicit construction of the tree structure
introduced in Definition \ref{def:tree}. The construction is carried
out by inductively defining quintuples
\[
{\color{blue}\Lambda^{(n)}}\ :=\ \big(\,d^{(n)},\,\mathscr{V}^{(n)},\,\mathscr{N}^{(n)},\,\mathbf{\widehat{y}}^{(n)}(\cdot),\,\mathbf{y}^{(n)}(\cdot)\,\big)\;,\;\;\;n\in\llbracket1,\,\mathfrak{q}\rrbracket\;,
\]
consisting of the objects introduced in Definition \ref{def:tree} in
an manner, where the $n$th time scale will be defined by
$\theta_{\epsilon}^{(n)}=e^{d^{(n)}/\epsilon}$.  Note that
$\mathfrak{q}$, the number of different scales, is also a part of
construction. We continuously refer to Figure \ref{fig:potential} in
Section \ref{sec2_example} to explain concretely numerous notation
introduced below.

\subsection{\label{sec4.1}The first metastable scale}

To start the inductive construction, we have to explicitly define
$\Lambda^{(1)}$. To that end, we first introduce some notions which
are widely used in the study of energy landscape.

For each pair $\boldsymbol{m}'\neq\boldsymbol{m}''\in\mathcal{M}_{0}$,
denote by $\Theta(\boldsymbol{m}',\,\boldsymbol{m}'')$ the \emph{communication
height} between $\boldsymbol{m}'$ and $\boldsymbol{m}''$:
\begin{equation}
{\color{blue}\Theta(\boldsymbol{m}',\,\boldsymbol{m}'')}\ :=\ \inf_{\substack{\boldsymbol{z}:[0\,1]\rightarrow\mathbb{R}^{d}}
}\,\max_{t\in[0,\,1]}\,U(\boldsymbol{z}(t))\;,\label{Theta}
\end{equation}
where the infimum is carried over all continuous paths $\boldsymbol{z}(\cdot)$
such that $\boldsymbol{z}(0)=\boldsymbol{m}'$ and $\boldsymbol{z}(1)=\boldsymbol{m}''$.
Clearly, $\Theta(\boldsymbol{m}',\,\boldsymbol{m}'')=\Theta(\boldsymbol{m}'',\,\boldsymbol{m}')$.
For instance, in Figure \ref{fig:potential}, $\Theta(\boldsymbol{m}_{2},\,\boldsymbol{m}_{3})=U(\boldsymbol{\sigma}_{1})$
and $\Theta(\boldsymbol{m}_{1},\,\boldsymbol{m}_{10})=U(\boldsymbol{\sigma}_{3})$.

Define
\begin{equation}
{\color{blue}\Xi(\boldsymbol{m})}\ :=\ \inf\left\{ \,\Theta(\boldsymbol{m},\,\boldsymbol{m}'):\boldsymbol{m}'\in\mathcal{M}_{0}\setminus\{\boldsymbol{m}\}\text{ such that }U(\boldsymbol{m}')\le U(\boldsymbol{m})\,\right\} -U(\boldsymbol{m})\label{eq:Xi(m)}
\end{equation}
where we set $\Xi(\boldsymbol{m})=+\infty$ when $\boldsymbol{m}$
is the unique global minimum of $U$ so that we cannot find $\boldsymbol{m}'\neq\boldsymbol{m}$
such that $U(\boldsymbol{m}')\le U(\boldsymbol{m})$.

Now the minimal depth $d^{(1)}$ and the first time-scale $\theta_{\epsilon}^{(1)}$
are defined by
\begin{equation}
{\color{blue}d^{(1)}}\ :=\ \min_{\boldsymbol{m}\in\mathcal{M}_{0}}\Xi(\boldsymbol{m})\;,\quad{\color{blue}\theta_{\epsilon}^{(1)}}\ :=\ e^{d^{(1)}/\epsilon}\;.\label{eq:init1}
\end{equation}
This definition explain $d^{(1)}$in Figure \ref{fig:potential}.
For instance, in Figure \ref{fig:potential},
\[
\Xi(\bm{m}_{3})\;=\;\inf\left\{ \Theta(\boldsymbol{m}_{3},\,\boldsymbol{m}'):\bm{m}'\in\{\boldsymbol{m}_{1},\,\boldsymbol{m}_{2},\,\boldsymbol{m}_{8},\,\boldsymbol{m}_{9},\,\boldsymbol{m}_{10}\}\right\} -U(\boldsymbol{m}_{3})\;=\;U(\boldsymbol{\sigma}_{1})-U(\boldsymbol{m}_{3})\;.
\]
We can notice that $\Xi(\boldsymbol{m}_{i})\ge\Xi(\bm{m}_{3})$ for
all $i\in\llbracket1,\,10\rrbracket$ in this figure and therefore
$d^{(1)}=U(\boldsymbol{\sigma}_{1})-U(\boldsymbol{m}_{3})$.
\begin{rem}
\label{rem_Xi_Gamma}The definition of $d^{(1)}$ in \eqref{eq:init1}
looks slightly different from the one in \cite{LLS-1st} at which
\[
\Gamma(\boldsymbol{m})\ :=\ \min_{\boldsymbol{m}'\in\mathcal{M}_{0}\setminus\{\boldsymbol{m}\}}\,\Theta(\boldsymbol{m},\boldsymbol{m}')\,-\,U(\boldsymbol{m})
\]
is used instead of $\Xi(\boldsymbol{m})$. However, it can be easily
confirmed that the two definitions are equivalent.
\end{rem}

For the first-scale, we set (as in \eqref{eq:v1})
\begin{equation}
{\color{blue}\mathscr{V}^{(1)}}\ :=\ \left\{ \,\{\boldsymbol{m}\}:\boldsymbol{m}\in\mathcal{M}_{0}\,\right\} \;,\;\;{\color{blue}\mathscr{N}^{(1)}}\ :=\ \varnothing\;.\label{eq:init2}
\end{equation}
Note that, in this scale ${\color{blue}\mathscr{S}^{(1)}}:=\mathscr{V}^{(1)}\cup\mathscr{N}^{(1)}=\mathscr{V}^{(1)}$.

Now it remains to construct continuous Markov chains $\widehat{\mathbf{y}}^{(1)}(\cdot)$
on $\mathscr{S}^{(1)}$ and $\mathbf{y}^{(1)}(\cdot)$ on $\mathscr{V}^{(1)}$.
As $\mathscr{S}^{(1)}=\mathscr{V}^{(1)}$, they are defined on the
same space. In fact, they are the same Markov chain for the first
time-scale.

Let $\mathcal{S}^{(1)}(\boldsymbol{m})$ be the set of saddle points
connected to the local minimum $\boldsymbol{m}$:
\begin{gather*}
{\color{blue}\mathcal{S}^{(1)}(\boldsymbol{m})}\ :=\ \big\{\,\bm{\sigma}\in\mathcal{S}_{0}:\boldsymbol{\sigma}\curvearrowright\boldsymbol{m}\ ,\ \ U(\bm{\sigma})=U(\bm{m})+\Xi(\bm{m})\,\big\}\;.
\end{gather*}
In view of Remark \ref{rem_Xi_Gamma} and \cite[Lemma 9.2]{LLS-1st},
$\mathcal{S}^{(1)}(\bm{m})\ne\varnothing$ when $\Xi(\bm{m})=d^{(1)}$. Denote by
$\mathcal{S}(\boldsymbol{m},\boldsymbol{m}')$,
$\boldsymbol{m}'\neq\boldsymbol{m}$, the set of saddle points which
separate $\boldsymbol{m}$ from $\boldsymbol{m}'$:
\[
{\color{blue}\mathcal{S}(\boldsymbol{m},\boldsymbol{m}')}\ :=\ \big\{\,\boldsymbol{\sigma}\in\mathcal{S}^{(1)}(\boldsymbol{m}):\boldsymbol{\sigma}\curvearrowright\boldsymbol{m}\;,\;\;\boldsymbol{\sigma}\curvearrowright\boldsymbol{m}'\,\big\}\;.
\]
For instance, in Figure \ref{fig:potential},
$\mathcal{S}^{(1)}(\bm{m}_{7})=$$\mathcal{S}(\boldsymbol{m}_{7},\,\boldsymbol{m}_{8})=\{\boldsymbol{\sigma}_{1}\}$.

By \cite[Lemma 3.3]{LS-22}, for each saddle point $\boldsymbol{\sigma}\in\mathcal{S}_{0}$,
we observe that the matrix $(\nabla^{2}U)(\boldsymbol{\sigma})+(D\boldsymbol{\ell})(\boldsymbol{\sigma})$
(where $\nabla^{2}U$ and $D\boldsymbol{\ell}$ represents the Hessian
of $U$ and the Jacobian of $\boldsymbol{\ell}$, respectively) has
one negative eigenvalue, represented by ${\color{blue}-\mu_{\boldsymbol{\sigma}}<0}$.
For $\boldsymbol{\sigma}\in\mathcal{S}_{0}$, let the weight $\omega(\boldsymbol{\sigma})$,
so-called \textit{Eyring--Kramers constant}, be given by
\begin{equation}
{\color{blue}\omega(\boldsymbol{\sigma})}\ :=\ \frac{\mu(\boldsymbol{\sigma})}{2\pi\sqrt{-\,\det(\nabla^{2}U)(\boldsymbol{\sigma})}}\;,\label{eq:omega}
\end{equation}
and denote by $\omega(\boldsymbol{m},\boldsymbol{m}')$ the sum of
the weights of the saddle points in $\mathcal{S}(\boldsymbol{m},\boldsymbol{m}')$:
\begin{equation}
{\color{blue}\omega(\boldsymbol{m},\,\boldsymbol{m}')}\ :=\ \sum_{\boldsymbol{\sigma}\in\mathcal{S}(\boldsymbol{m},\boldsymbol{m}')}\,\omega(\boldsymbol{\sigma})\;.\label{eq:omega_2}
\end{equation}
Define
\begin{equation}
r^{(1)}(\{\boldsymbol{m}\},\{\boldsymbol{m}'\})\ :=\ \frac{1}{\nu(\boldsymbol{m})}\,\omega_{1}(\boldsymbol{m},\boldsymbol{m}')\;\;\text{where}\;\;{\color{blue}\omega_{1}(\boldsymbol{m},\boldsymbol{m}')}\ :=\ \omega(\boldsymbol{m},\boldsymbol{m}')\,\boldsymbol{1}\left\{ \,\Xi(\boldsymbol{m})=d^{(1)}\,\right\} \;,\label{40}
\end{equation}
and $\nu(\boldsymbol{m})$ is the weight defined in \eqref{eq:nu}.
Then, denote by \textcolor{blue}{$\mathfrak{L}^{(1)}$} the generator
given by
\[
(\mathfrak{L}^{(1)}\boldsymbol{h})\,(\{\boldsymbol{m}\})\;:=\;\sum_{\boldsymbol{m}'\in\mathcal{M}_{0}}\,r^{(1)}(\{\boldsymbol{m}\},\,\{\boldsymbol{m}'\})\,[\,\boldsymbol{h}(\{\boldsymbol{m}'\})\,-\,\boldsymbol{h}(\{\boldsymbol{m}\})\,]
\]
for $\boldsymbol{h}\colon\mathscr{V}^{(1)}\to\mathbb{R}$. Finally,
let $\widehat{\mathbf{y}}^{(1)}(\cdot)=\mathbf{y}^{(1)}(\cdot)$ be
the $\mathscr{S}^{(1)}=\mathscr{V}^{(1)}$-valued Markov chains induced
by the generator $\mathfrak{L}^{(1)}$.

Denote by $\mathfrak{n}_{0}$ the number of local minima of $U$,
${\color{blue}\mathfrak{n}_{0}=|\mathcal{M}_{0}|}$, and by ${\color{blue}\mathscr{R}_{1}^{(1)},\dots,\mathscr{R}_{\mathfrak{n}_{1}}^{(1)}}$,
${\color{blue}\mathscr{T}^{(1)}}$ be the closed irreducible classes
and the transient states of the Markov chain ${\bf y}^{(1)}(\cdot)$,
respectively. If $\mathfrak{n}_{1}=1$, the construction is completed,
$\mathfrak{q}=1$, and $\Lambda^{(1)}$ have been defined. If $\mathfrak{n}_{1}>1$,
we add a new layer to the tree according to the induction procedure
explained in the next subsection.

\subsection{\label{sec4.2}General metastable scale}

Now we suppose that the quintiples $\Lambda^{(1)},\,\dots,\,\Lambda^{(n)}$
are successfully constructed. Denote by \textcolor{blue}{$\mathscr{R}_{1}^{(n)},\dots,\mathscr{R}_{\mathfrak{n}_{n}}^{(n)}$,
$\mathscr{T}^{(n)}$} the closed irreducible classes and the transient
states of the Markov chain ${\bf y}^{(n)}(\cdot)$ on $\mathscr{V}^{(n)}$,
respectively. If $\mathfrak{n}_{n}=1$, the construction is completed
and $\mathfrak{q}=n$.

If $\mathfrak{n}_{n}\ge2$, we create a new layer.

\subsubsection*{Construction of $\mathscr{V}^{(n+1)}$ and $\mathscr{N}^{(n+1)}$}

We let
\begin{equation}
\mathcal{M}_{j}^{(n+1)}\ :=\ \bigcup_{\mathcal{M}\in\mathscr{R}_{j}^{(n)}}\mathcal{M}\;\;\;;\;j\in\llbracket1,\,\mathfrak{n}_{n}\rrbracket\;,\label{eq:V_p+1}
\end{equation}
and then define
\begin{equation}
{\color{blue}\mathscr{V}^{(n+1)}}\ :=\ \{\mathcal{M}_{1}^{(n+1)}\,\dots,\mathcal{M}_{\mathfrak{n}_{n}}^{(n+1)}\}\;,\quad{\color{blue}\mathscr{N}^{(n+1)}}\ :=\ \mathscr{N}^{(n)}\cup\mathscr{T}^{(n)}\;.\label{eq:VN_p+1}
\end{equation}

Then, we set ${\color{blue}\mathscr{S}^{(n+1)}}=\mathscr{V}^{(n+1)}\cup\mathscr{N}^{(n+1)}$.
It is immediate from this definition that, if $\mathscr{S}^{(n)}=\mathscr{V}^{(n)}\cup\mathscr{N}^{(n)}$
was a partition of $\mathcal{M}_{0}$, then $\mathscr{S}^{(n+1)}$
also is a partition of $\mathcal{M}_{0}$. The following remarks are
important and used throughout article frequently.
\begin{rem}
\label{rem:N_p+1}The inductive construction \eqref{eq:V_p+1} guarantees
that $\mathscr{N}^{(n+1)}\subset\mathscr{S}^{(n)}$. Moreover, by
\eqref{eq:VN_p+1}, we also have
\begin{equation}
\mathscr{N}^{(n+1)}\ =\ \mathscr{T}^{(1)}\cup\cdots\cup\mathscr{T}^{(n)}\;.\label{eq:N_p+1}
\end{equation}
Let $\mathcal{M}\in\mathscr{N}^{(n+1)}$. Then, \eqref{eq:N_p+1}
implies that we can find $k\in\llbracket1,\,n\rrbracket$ such that
$\mathcal{M}\in\mathscr{T}^{(k)}\subset\mathscr{V}^{(k)}$. Since
$\mathscr{T}^{(k)}\subset\mathscr{N}^{(k+1)}$, we also have $\mathcal{M}\in\mathscr{V}^{(k)}\cap\mathscr{N}^{(k+1)}$.
Namely, a set $\mathcal{M}\in\mathscr{N}^{(n+1)}$ was a member of
$\mathscr{V}^{(k)}$ but became a negligible one from the $(k+1)$-th
scale.
\end{rem}

\begin{rem}
\label{rem:stddec}By construction, each $\mathcal{M}\in\mathscr{V}^{(n+1)}$
is a union of elements of $\mathscr{V}^{(n)}$. We denote by $\mathscr{V}^{(n)}(\mathcal{M})$
these elements so that
\begin{equation}
{\color{blue}\mathscr{V}^{(n)}(\mathcal{M})}\ :=\ \{\mathcal{M}_{1},\,\dots,\,\mathcal{M}_{k}\}\;,\label{2-04}
\end{equation}
where $\{\mathcal{M}_{1},\,\dots,\,\mathcal{M}_{k}\}\subset\mathscr{V}^{(n)}$
is a recurrent class of chain $\mathbf{y}^{(n)}(\cdot)$ and $\mathcal{M}=\cup_{i\in\llbracket1,\,n\rrbracket}\mathcal{M}_{i}$.
\end{rem}

\begin{example*}
[Continuation of the example in Section \ref{sec2_example}]
We return to the example given in Figure \ref{fig:potential}.
The construction above implies that the jumps of the Markov chain
$\mathbf{y}^{(1)}(\cdot)=\widehat{\mathbf{y}}^{(1)}(\cdot)$ occur
along the following arrows.
\[
\boldsymbol{m}_{1}\leftrightarrow\boldsymbol{m}_{2}\leftrightarrow\boldsymbol{m}_{3}\leftarrow\boldsymbol{m}_{4}\rightarrow\boldsymbol{m}_{5}\leftrightarrow\boldsymbol{m}_{6}\;\;\;\;\boldsymbol{m}_{7}\rightarrow\boldsymbol{m}_{8}\;\;\;\;\boldsymbol{m}_{9}\leftrightarrow\boldsymbol{m}_{10}
\]
Hence, there are four recurrent classes $\{\boldsymbol{m}_{1},\,\boldsymbol{m}_{2},\,\boldsymbol{m}_{3}\}$,
$\{\boldsymbol{m}_{5},\,\boldsymbol{m}_{6}\}$, $\{\boldsymbol{m}_{8}\}$,
and $\{\boldsymbol{m}_{9},\,\boldsymbol{m}_{10}\}$, and therefore
\begin{align}
\mathscr{V}^{(2)} & \ =\ \left\{ \,\{\boldsymbol{m}_{1},\,\boldsymbol{m}_{2},\,\boldsymbol{m}_{3}\},\,\{\boldsymbol{m}_{5},\,\boldsymbol{m}_{6}\},\,\{\boldsymbol{m}_{8}\},\,\{\boldsymbol{m}_{9},\,\boldsymbol{m}_{10}\}\,\right\} \;,\label{eq:v2}\\
\mathscr{N}^{(2)} & \ =\ \left\{ \,\{\boldsymbol{m}_{4}\},\,\{\boldsymbol{m}_{7}\}\,\right\} \;.\nonumber
\end{align}
\end{example*}

\subsubsection*{Construction of $d^{(n+1)}$ and $\theta_{\epsilon}^{(n+1)}$}

We introduce several notions for analyzing the landscape of $U$.
\begin{itemize}
\item For two disjoint non-empty subsets $\mathcal{M}$ and $\mathcal{M}'$
of $\mathcal{M}_{0}$, let $\Theta(\mathcal{M},\,\mathcal{M}')$ be
the communication height between the two sets:
\[
{\color{blue}\Theta(\mathcal{M},\,\mathcal{M}')}\ :=\ \min_{\boldsymbol{m}\in\mathcal{M},\,\boldsymbol{m}'\in\mathcal{M}'}\,\Theta(\boldsymbol{m},\,\boldsymbol{m}')\;,
\]
with the convention that $\Theta(\mathcal{\mathcal{M}},\,\varnothing)=+\infty$.\smallskip{}
\item A set $\mathcal{M}\subset\mathcal{M}_{0}$ is said to be \textcolor{blue}{\emph{simple}}
if all its elements have the same depth, $U(\boldsymbol{m})=U(\boldsymbol{m}')$
for all $\boldsymbol{m}$, $\boldsymbol{m}'\in\mathcal{M}$, and that
we denote by \textcolor{blue}{$U(\mathcal{M})$ }the common value.
For instance, in Figure \ref{fig:potential}, $\{\boldsymbol{m}_{5},\,\boldsymbol{m}_{6}\}$
and $\{\boldsymbol{m}_{1},\,\boldsymbol{m}_{2},\,\boldsymbol{m}_{3}\}$
are simple sets. \smallskip
\item For a simple set $\mathcal{\mathcal{M}}\subset\mathcal{M}_{0}$, denote
by $\widetilde{\mathcal{M}}$ the set of local minima of $U$ which
do not belong to $\mathcal{M}$ and which have lower or equal energy
than $\mathcal{M}$:
\[
{\color{blue}\widetilde{\mathcal{M}}}\ :=\ \big\{\,\boldsymbol{m}\in\mathcal{M}_{0}\setminus\mathcal{M}:U(\boldsymbol{m})\le U(\mathcal{M})\,\big\}\;.
\]
Note that $\widetilde{\mathcal{\mathcal{M}}}=\varnothing$ if and
only if $\mathcal{M}$ contains all the global minima of $U$.\smallskip{}
\item For a simple set $\mathcal{M}\subset\mathcal{M}_{0}$, let $\Xi(\mathcal{M})$
be the depth of set $\mathcal{M}$:
\begin{equation}
{\color{blue}\Xi(\mathcal{M})}\ :=\ \Theta(\mathcal{\mathcal{M}},\,\widetilde{\mathcal{\mathcal{M}}})-U(\mathcal{M})\;.\label{e:Gamma(M)}
\end{equation}
\smallskip{}
\item A simple set $\mathcal{M}$ is said to be\textit{ }\textit{\textcolor{blue}{bound}}
if either $\mathcal{M}$ is a singleton or
\begin{equation}
\max_{\boldsymbol{m},\,\boldsymbol{m}'\in\mathcal{M}}\,\Theta(\boldsymbol{m},\,\boldsymbol{m}')\ <\ \Theta(\mathcal{M},\,\widetilde{\mathcal{M}})\ .\label{2-07}
\end{equation}
\end{itemize}
\emph{We will prove in Theorem \ref{t:tree} that all sets $\mathcal{M}\in\mathscr{S}^{(n+1)}$
are simple and bound }(cf. property $\mathfrak{P}_{1}(n+1)$ defined
below). As $\mathfrak{n}_{n}\ge2$, there exists $\mathcal{M}\in\mathscr{V}^{(n+1)}$
such that $\widetilde{\mathcal{\mathcal{M}}}\neq\varnothing$, so
that $\Xi(\mathcal{M})<\infty$. Based on this observation, define
the depth $d^{(n+1)}$ and the corresponding time-scale $\theta_{\epsilon}^{(n+1)}$
by
\begin{equation}
{\color{blue}d^{(n+1)}}\ :=\ \min_{\mathcal{M}\in\mathscr{V}^{(n+1)}}\,\Xi(\mathcal{M})\;,\quad{\color{blue}\theta_{\epsilon}^{(n+1)}}\ :=\ e^{d^{(n+1)}/\epsilon}\;.\label{eq:depth_p+1}
\end{equation}
Namely, we define $d^{(n+1)}$ as the depth of the shallowest valley
among $\mathscr{V}^{(n+1)}$.

\begin{example*} [Continuation of example in Section \ref{sec2_example}]
We note that the elements of $\mathscr{V}^{(2)}$
and $\mathscr{N}^{(2)}$ in \eqref{eq:v2} are indeed simple and bound.
This is not a coincidence but a consequence of property $\mathfrak{P}_{1}$
in Definition \ref{def:prop} below. As $\widetilde{\{\boldsymbol{m}_{5},\,\boldsymbol{m}_{6}\}}=\{\boldsymbol{m}_{1},\,\boldsymbol{m}_{2},\,\boldsymbol{m}_{3},\,\boldsymbol{m}_{8},\,\boldsymbol{m}_{9},\,\boldsymbol{m}_{10}\}$,
we can check that
\[
\Xi(\{\boldsymbol{m}_{5},\,\boldsymbol{m}_{6}\})\ =\ U(\boldsymbol{\sigma_{2}})-U(\boldsymbol{m}_{5})\;.
\]
Similarly, we can compute the depth of the elements of $\mathscr{V}^{(2)}$
given in \eqref{eq:v2}, and then check that $d^{(2)}=\Xi(\{\boldsymbol{m}_{5},\,\boldsymbol{m}_{6}\})=U(\boldsymbol{\sigma_{2}})-U(\boldsymbol{m}_{5})$
as illustrated in Figure \ref{fig:potential}.
\end{example*}

\subsubsection*{Construction of $\widehat{\mathbf{y}}^{(n+1)}(\cdot)$ and $\mathbf{y}^{(n+1)}(\cdot)$}

A saddle point $\boldsymbol{\sigma}\in\mathcal{S}_{0}$ is said to
be\emph{ }\textit{\textcolor{blue}{connected}} to a local minimum
$\boldsymbol{m}\in\mathcal{M}_{0}$ if $\boldsymbol{\sigma}\curvearrowright\boldsymbol{m}$
or if there exist $k\ge1$, $\boldsymbol{\sigma}_{1},\,\dots,\,\boldsymbol{\sigma}_{k}\in\mathcal{S}_{0}$
and $\boldsymbol{m}_{1}\,\,\dots,\,\boldsymbol{m}_{k}\in\mathcal{M}_{0}$
such that
\begin{equation}
\max\{U(\boldsymbol{\sigma}_{1}),\,\dots,\,U(\boldsymbol{\sigma}_{k})\,\}\ <\ U(\boldsymbol{\sigma})\;\;\;\text{and\;\;\;}\boldsymbol{\sigma}\curvearrowright\boldsymbol{m}_{1}\curvearrowleft\boldsymbol{\sigma}_{1}\curvearrowright\cdots\curvearrowright\boldsymbol{m}_{k}\curvearrowleft\boldsymbol{\sigma}_{k}\curvearrowright\boldsymbol{m}\;.\label{eq:approx}
\end{equation}
If $\boldsymbol{\sigma}\in\mathcal{S}_{0}$ is connected to
$\boldsymbol{m}\in\mathcal{M}_{0}$, we
write\textit{\textcolor{blue}{
$\boldsymbol{\sigma}\rightsquigarrow\boldsymbol{m}$}}.  For example,
in Figure \ref{fig:potential}, we have
$\boldsymbol{\sigma}_{3}\rightsquigarrow\boldsymbol{m}_{8}$ as
$\boldsymbol{\sigma}_{3}\curvearrowright\boldsymbol{m}_{7}\curvearrowleft\boldsymbol{\sigma}_{4}\curvearrowright\boldsymbol{m}_{8}$
and
$U(\boldsymbol{\sigma}_{4})<U(\boldsymbol{\sigma}_{3})$. Similarly, we
have $\boldsymbol{\sigma}_{3}\rightsquigarrow\boldsymbol{m}_{3}$.

For $\mathcal{\mathcal{M}}\subset\mathcal{M}_{0}$, write ${\color{blue}\boldsymbol{\sigma}\rightsquigarrow\mathcal{\mathcal{M}}}$
and \textcolor{blue}{$\bm{\sigma}\curvearrowright\mathcal{M}$} if
for some $\boldsymbol{m}\in\mathcal{\mathcal{M}}$, $\boldsymbol{\sigma}\rightsquigarrow\boldsymbol{m}$
and $\bm{\sigma}\curvearrowright\bm{m}$ , respectively.

Fix a non-empty simple bound set $\mathcal{\mathcal{M}}\subset\mathcal{M}_{0}$
such that $\widetilde{\mathcal{M}}\neq\varnothing$. A set $\mathcal{\mathcal{M}}'\subset\mathcal{M}_{0}$
such that $\mathcal{M}'\cap\mathcal{M}=\varnothing$ is said to be\textit{
}\textit{\textcolor{blue}{adjacent}} to $\mathcal{M}$ if there exists
$\boldsymbol{\sigma}\in\mathcal{S}_{0}$ such that
\begin{equation}
U(\boldsymbol{\sigma})\,=\,\Theta(\mathcal{M},\,\widetilde{\mathcal{M}})\,=\,\Theta(\mathcal{M},\,\mathcal{M}')\;\;\;\text{and}\;\;\;\mathcal{M}\,\leftsquigarrow\,\boldsymbol{\sigma}\,\curvearrowright\,\mathcal{M}'\;.\label{eq:con_gate}
\end{equation}
If $\mathcal{M}'$ is adjacent to $\mathcal{M}$, we
write\textit{\textcolor{blue}{
$\mathcal{M}\rightarrow\mathcal{M}'$}}. To emphasize the saddle point
$\boldsymbol{\sigma}$ between $\mathcal{M}$ and $\mathcal{M}'$ we
sometimes write
${\color{blue}\mathcal{M}\rightarrow_{\boldsymbol{\sigma}}\mathcal{M}'}$.
For example, in Figure \ref{fig:potential},
$\{\boldsymbol{m}_{1},\,\boldsymbol{m}_{2},\,\boldsymbol{m}_{3}\}\rightarrow_{\boldsymbol{\sigma}_{3}}\{\boldsymbol{m}_{7}\}$.

Denote by $\mathcal{S}(\mathcal{M},\,\mathcal{M}')$ the set of saddle
points $\boldsymbol{\sigma}\in\mathcal{S}_{0}$ satisfying \eqref{eq:con_gate},
\begin{equation}
{\color{blue}\mathcal{S}(\mathcal{M},\,\mathcal{M}')}\ :=\ \{\,\boldsymbol{\sigma}\in\mathcal{S}_{0}:\mathcal{M}\to_{\boldsymbol{\sigma}}\mathcal{M}'\,\}\ .\label{eq:saddle_gate}
\end{equation}
The set $\mathcal{S}(\mathcal{M},\,\mathcal{M}')$ represents the
collection of lowest connection points which separate $\mathcal{M}$
from $\mathcal{M}'$.

We denote by $\widehat{r}^{(n)}(\cdot,\,\cdot)$ the jump rates of
the $\mathscr{S}^{(n)}$-valued Markov chain $\widehat{\mathbf{y}}^{(n)}(\cdot)$.
Since $\mathscr{S}^{(n+1)}=\mathscr{V}^{(n+1)}\cup\mathscr{N}^{(n+1)}$,
we can divide the definition of the jump rate \textcolor{blue}{\emph{$\widehat{r}^{(n+1)}(\cdot,\,\cdot)$
}}of $\widehat{\mathbf{y}}^{(n+1)}(\cdot)$ into three cases:
\begin{itemize}
\item {[}\textbf{Case }1: $\mathcal{M}\in\mathscr{N}^{(n+1)}$ and $\mathcal{M}'\in\mathscr{N}^{(n+1)}${]}
Since $\mathcal{M},\,\mathcal{M}'\in\mathscr{S}^{(n)}$ by Remark
\ref{eq:N_p+1}, we set
\begin{equation}
\widehat{r}^{(n+1)}(\mathcal{M},\,\mathcal{M}')\ :=\ \widehat{r}^{(n)}(\mathcal{M},\,\mathcal{M}')\;.\label{eq:rate_1}
\end{equation}
\item {[}\textbf{Case }2: $\mathcal{M}\in\mathscr{N}^{(n+1)}$ and $\mathcal{M}'\in\mathscr{V}^{(n+1)}${]}
Since $\mathcal{M}\in\mathscr{S}^{(n)}$ by Remark \ref{eq:N_p+1}
and since $\mathcal{M}'$ is the union of elements (may be just one)
in $\mathscr{V}^{(n)}$ by \eqref{eq:V_p+1}, we set
\begin{equation}
\widehat{r}^{(n+1)}(\mathcal{M},\,\mathcal{M}')\ :=\ \sum_{\mathcal{M}''\in\mathscr{V}^{(n)}(\mathcal{M}')}\,\widehat{r}^{(n)}(\mathcal{M},\,\mathcal{M}'')\;.\label{eq:rate_2}
\end{equation}
\item {[}\textbf{Case 3}: $\mathcal{M}\in\mathscr{V}^{(n+1)}$ and $\mathcal{M}'\in\mathscr{S}^{(n+1)}${]}
Let
\begin{equation}
\boldsymbol{\omega}(\mathcal{M},\,\mathcal{M}')\ =\ \sum_{\boldsymbol{\sigma}\in\mathcal{S}(\mathcal{M},\,\mathcal{M}')}\omega(\boldsymbol{\sigma})\;,\quad{\color{blue}\omega_{n+1}(\mathcal{M},\,\mathcal{M}')}\ :=\ \omega(\mathcal{M},\,\mathcal{M}')\,\boldsymbol{1}\{\,\Xi(\mathcal{M})=d^{(n+1)}\,\}\;.\label{eq:rate_30}
\end{equation}
It is understood here that $\omega(\mathcal{M},\,\mathcal{M}')=0$
if the set $\mathcal{S}(\mathcal{M},\,\mathcal{M}')$ is empty, that
is if $\mathcal{M}'$ is not adjacent to $\mathcal{M}$. Then, we
set
\begin{equation}
\widehat{r}^{(n+1)}(\mathcal{M}',\,\mathcal{M}'')\ :=\ \frac{1}{\nu(\mathcal{M}')}\,\omega_{n+1}(\mathcal{M}',\,\mathcal{M}'')\;,\label{eq:rate_3}
\end{equation}
where the weight $\nu(\mathcal{M}')$ is given by \eqref{eq:nu}.
\end{itemize}
Denote by ${\color{blue}\widehat{\mathbf{y}}^{(n+1)}(\cdot)}$ the
$\mathscr{S}^{(n+1)}$-valued, continuous-time Markov process with jump
rates $\widehat{r}^{(n+1)}(\cdot,\,\cdot)$, and by
${\color{blue}\mathbf{y}^{(n+1)}(\cdot)}$ the trace of the process
$\widehat{\mathbf{y}}^{(n+1)}(\cdot)$ on $\mathscr{V}^{(n+1)}$
(cf. Appendix \ref{app:trace}. We note that the trace on
$\mathscr{V}^{(n+1)}$ can be defined because of Lemma
\ref{l_T_diverge}.

The jump rates
of the Markov chain $\mathbf{y}^{(n+1)}(\cdot)$ are represented by
${\color{blue}r^{(n+1)}(\cdot,\,\cdot)}$.

Let ${\color{blue}\widehat{\mathfrak{L}}^{(n+1)}}$ and ${\color{blue}\mathfrak{L}^{(n+1)}}$
be the generators of the Markov chains $\widehat{\mathbf{y}}^{(n+1)}(\cdot)$
and $\mathbf{y}^{(n+1)}(\cdot)$, respectively:
\begin{equation}
\begin{gathered}(\widehat{\mathfrak{L}}^{(n+1)}\boldsymbol{h})(\mathcal{M})\;:=\;\sum_{\mathcal{M}'\in\mathscr{S}^{(n+1)}}\,\widehat{r}^{(n+1)}(\mathcal{M},\mathcal{M}')\,[\,\boldsymbol{h}(\mathcal{M}')\,-\,\boldsymbol{h}(\mathcal{M})\,]\;,\\
(\mathfrak{L}^{(n+1)}\boldsymbol{h})(\mathcal{M})\;:=\;\sum_{\mathcal{M}'\in\mathscr{V}^{(n+1)}}\,r^{(n+1)}(\mathcal{M},\mathcal{M}')\,[\,\boldsymbol{h}(\mathcal{M}')\,-\,\boldsymbol{h}(\mathcal{M})\,]\;.
\end{gathered}
\label{eq:chain_p+1}
\end{equation}
By summing up \eqref{eq:VN_p+1}, \eqref{eq:depth_p+1}, and \eqref{eq:chain_p+1},
we can complete the inductive construction of $\Lambda^{(n+1)}$.

Denote by \textcolor{blue}{$\mathfrak{n}_{n+1}$}
the number of the irreducible classes of the Markov
chain ${\bf y}^{(n+1)}(\cdot)$ and by ${\color{blue}\mathscr{R}_{1}^{(n+1)},\dots,\mathscr{R}_{\mathfrak{n}_{n+1}}^{(n+1)}}$,
${\color{blue}\mathscr{T}^{(n+1)}}$ be the closed irreducible classes
and the transient states of ${\bf y}^{(n+1)}(\cdot)$, respectively.
If $\mathfrak{n}_{n+1}=1$, the construction is completed. If $\mathfrak{n}_{n+1}>1$,
we add a new layer in the same way.

\begin{example*}
[Conclusion of example in Section \ref{sec2_example}]
Based on the construction above, we can check that
the jumps of the Markov chain $\widehat{\mathbf{y}}^{(2)}(\cdot)$
occur along the following arrows:
\[
\{\boldsymbol{m}_{1},\,\boldsymbol{m}_{2},\,\boldsymbol{m}_{3}\}\leftarrow\{\boldsymbol{m}_{4}\}\leftrightarrow\{\boldsymbol{m}_{5},\,\boldsymbol{m}_{6}\}\;\;\;\;\{\boldsymbol{m}_{7}\}\rightarrow\{\boldsymbol{m}_{8}\}\leftrightarrow\{\boldsymbol{m}_{9},\,\boldsymbol{m}_{10}\}\;,
\]
and hence the jumps of the Markov chain $\mathbf{y}^{(2)}(\cdot)$
occur along
\[
\{\boldsymbol{m}_{1},\,\boldsymbol{m}_{2},\,\boldsymbol{m}_{3}\}\leftarrow\{\boldsymbol{m}_{5},\,\boldsymbol{m}_{6}\}\;\;\;\;\{\boldsymbol{m}_{8}\}\leftrightarrow\{\boldsymbol{m}_{9},\,\boldsymbol{m}_{10}\}\;.
\]
There are two recurrent classes $\{\boldsymbol{m}_{1},\,\boldsymbol{m}_{2},\,\boldsymbol{m}_{3}\}$
and $\left\{ \{\boldsymbol{m}_{8}\},\,\{\boldsymbol{m}_{9},\,\boldsymbol{m}_{10}\}\right\} $
and we get
\begin{align}
\mathscr{V}^{(3)} & \ =\ \left\{ \,\{\boldsymbol{m}_{1},\,\boldsymbol{m}_{2},\,\boldsymbol{m}_{3}\},\,\{\boldsymbol{m}_{8},\,\boldsymbol{m}_{9},\,\boldsymbol{m}_{10}\}\,\right\} \;,\label{eq:v^3}\\
\mathscr{N}^{(3)} & \ =\ \left\{ \,\{\boldsymbol{m}_{4}\},\,\{\boldsymbol{m}_{5},\,\boldsymbol{m}_{6}\},\,\{\boldsymbol{m}_{7}\}\,\right\} \;.\nonumber
\end{align}
One can readily check that $\Xi(\{\boldsymbol{m}_{1},\,\boldsymbol{m}_{2},\,\boldsymbol{m}_{3}\})=\Xi(\{\boldsymbol{m}_{8},\,\boldsymbol{m}_{9},\,\boldsymbol{m}_{10}\})=U(\boldsymbol{\sigma_{3}})-U(\boldsymbol{m}_{1})$,
and hence this depth is $d^{(3)}$, as illustrated in Figure \ref{fig:potential}.

We can further notice that the Markov chain $\mathbf{y}^{(3)}(\cdot)$
can oscillate between $\{\boldsymbol{m}_{1},\,\boldsymbol{m}_{2},\,\boldsymbol{m}_{3}\}$
and $\{\boldsymbol{m}_{8},\,\boldsymbol{m}_{9},\,\boldsymbol{m}_{10}\}$,
and hence there is only one recurrent class at this level and thus
we cannot proceed further. This implies that $\theta_{\epsilon}^{(3)}$
is the last metastable scale.
\end{example*}

\subsection{\label{sec4.3}Properties of tree structure}

We now present the main properties of the recursive construction carried
out above. We wish to prove the following properties for each metastable
scale.
\begin{defn}[Properties of $\Lambda^{(n)}$]
\label{def:prop}Suppose that $\Lambda^{(1)},\,\dots,\,\Lambda^{(n)}$
are constructed according to the procedure explained in Sections \ref{sec4.1}
and \ref{sec4.2}. Then, we define property $\mathfrak{P}_{i}(n)$,
$i\in\llbracket1,\,4\rrbracket$, of $\Lambda^{(n)}$ as follows.
\begin{itemize}
\item \textcolor{blue}{$\mathfrak{P}_{1}(n)$}: For all $\mathcal{M}\in\mathscr{S}^{(n)}$,
the set $\mathcal{M}$ is a simple bound set.
\item \textcolor{blue}{$\mathfrak{P}_{2}(n)$}: $d^{(n-1)}<d^{(n)}<\infty$,
where $d^{(0)}=0$.
\item \textcolor{blue}{$\mathfrak{P}_{3}(n)$}: For $\mathcal{M},\,\mathcal{M}'\in\mathscr{S}^{(n)}$,
we have $\widehat{r}^{(n)}(\mathcal{M},\,\mathcal{M}')>0$ if and
only if $\Xi(\mathcal{M})\le d^{(n)}$ and $\mathcal{M}\to\mathcal{M}'$.
\item \textcolor{blue}{$\mathfrak{P}_{4}(n)$}: $\mathcal{M}\in\mathscr{V}^{(n)}$\textbf{
}is an absorbing state of the Markov chain $\mathbf{y}^{(n)}(\cdot)$
if and only if $\Xi(\mathcal{M})>d^{(n)}$.
\end{itemize}
If the property $\mathfrak{P}_{i}(k)$, $i\in\llbracket1,\,4\rrbracket$,
holds for all $k\in\llbracket1,\,n\rrbracket$, we declare that the
property \textcolor{blue}{$\mathfrak{P}_{i}\llbracket n\rrbracket$
}holds. In addition, we say that the property \textcolor{blue}{$\mathfrak{P}_{i_{1},\,\dots,\,i_{k}}\llbracket n\rrbracket$}
holds if properties$\mathfrak{P}_{i_{1}}\llbracket n\rrbracket,\,\dots,\,\mathfrak{P}_{i_{k}}\llbracket n\rrbracket$
hold simultaneously. Finally, we use a shorthand ${\color{blue}\mathfrak{P}\llbracket n\rrbracket}=\mathfrak{P}_{1,\,2,\,3,\,4}\llbracket n\rrbracket$.
\end{defn}

\begin{rem}
\label{rem:nab}We can observe from definition \eqref{eq:depth_p+1}
that, for all $\mathcal{M}\in\mathscr{V}^{(n)}$, we have $\Xi(\mathcal{M})\ge d^{(n)}$.
Hence, $\mathfrak{P}_{4}(n)$ implies that $\mathcal{M}\in\mathscr{V}^{(n)}$\textbf{
}is a non-absorbing state of the Markov chain $\mathbf{y}^{(n)}(\cdot)$
if and only if $\Xi(\mathcal{M})=d^{(n)}$.
\end{rem}

We can first show that the properties given in Definition \ref{def:prop}
hold for the first scale.
\begin{prop}
\label{prop_init}The property $\mathfrak{P}(1)$ holds.
\end{prop}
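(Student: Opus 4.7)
The plan is to verify each of $\mathfrak{P}_{1}(1),\dots,\mathfrak{P}_{4}(1)$ directly from the definitions in Section \ref{sec4.1}, appealing to \eqref{hyp2} and to \cite[Lemma 9.2]{LLS-1st} only at one non-trivial step. Property $\mathfrak{P}_{1}(1)$ is immediate: every element of $\mathscr{S}^{(1)}=\mathscr{V}^{(1)}$ is a singleton, hence simple vacuously and bound by the singleton clause in \eqref{2-07}. For $\mathfrak{P}_{2}(1)$, the hypothesis $|\mathcal{M}_{0}|\ge 2$ produces a local minimum $\boldsymbol{m}$ that is not the unique global minimum of $U$ (either because it lies strictly above some global minimum, or because it is one of several global minima), so $\Xi(\boldsymbol{m})<\infty$ and hence $d^{(1)}<\infty$. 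The lower bound $d^{(1)}>0$ follows from the Morse assumption: each $\boldsymbol{m}\in\mathcal{M}_{0}$ is a strict local minimum, so $\Theta(\boldsymbol{m},\boldsymbol{m}')>U(\boldsymbol{m})$ whenever $\boldsymbol{m}\ne\boldsymbol{m}'$, and hence $\Xi(\boldsymbol{m})>0$ wherever it is finite.

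For $\mathfrak{P}_{3}(1)$, formula \eqref{40} shows that $\widehat{r}^{(1)}(\{\boldsymbol{m}\},\{\boldsymbol{m}'\})>0$ iff $\Xi(\boldsymbol{m})=d^{(1)}$ and $\mathcal{S}(\boldsymbol{m},\boldsymbol{m}')\ne\varnothing$; since $\Xi(\boldsymbol{m})\ge d^{(1)}$ always, the condition $\Xi(\boldsymbol{m})\le d^{(1)}$ collapses to $\Xi(\boldsymbol{m})=d^{(1)}$. Under this equality, $\Theta(\{\boldsymbol{m}\},\widetilde{\{\boldsymbol{m}\}})=U(\boldsymbol{m})+d^{(1)}$, so comparing \eqref{eq:con_gate} with the definition of $\mathcal{S}(\boldsymbol{m},\boldsymbol{m}')$ reduces the equivalence to verifying that every $\boldsymbol{\sigma}\in\mathcal{S}(\boldsymbol{m},\boldsymbol{m}')$ realises $\Theta(\boldsymbol{m},\boldsymbol{m}')=U(\boldsymbol{\sigma})$. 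The upper bound $\Theta(\boldsymbol{m},\boldsymbol{m}')\le U(\boldsymbol{\sigma})$ follows by concatenating the two heteroclinic orbits emanating from $\boldsymbol{\sigma}$. Strict inequality would be ruled out by noting that a path of maximal value strictly below $U(\boldsymbol{m})+d^{(1)}$ forces $U(\boldsymbol{m}')<U(\boldsymbol{m})+d^{(1)}$; then depending on whether $U(\boldsymbol{m}')\le U(\boldsymbol{m})$ or not, one contradicts either the definition of $\Xi(\boldsymbol{m})$ (via $\boldsymbol{m}'\in\widetilde{\{\boldsymbol{m}\}}$) or the minimality of $d^{(1)}$ applied to $\Xi(\boldsymbol{m}')$ (via $\boldsymbol{m}\in\widetilde{\{\boldsymbol{m}'\}}$).

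Property $\mathfrak{P}_{4}(1)$ follows from $\mathfrak{P}_{3}(1)$. If $\Xi(\boldsymbol{m})>d^{(1)}$, the indicator in \eqref{40} kills every outgoing rate and $\{\boldsymbol{m}\}$ is absorbing. The converse — that $\Xi(\boldsymbol{m})=d^{(1)}$ forces some $\boldsymbol{m}'$ with $r^{(1)}(\{\boldsymbol{m}\},\{\boldsymbol{m}'\})>0$ — is the only delicate point, since it is not a priori excluded that every saddle $\boldsymbol{\sigma}\in\mathcal{S}^{(1)}(\boldsymbol{m})$ has both of its heteroclinic orbits looping back to $\boldsymbol{m}$. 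This non-degeneracy is precisely what \cite[Lemma 9.2]{LLS-1st} provides: together with \eqref{hyp2}, it yields a saddle $\boldsymbol{\sigma}$ at height $U(\boldsymbol{m})+d^{(1)}$ lying on a minimax path to some $\boldsymbol{m}'\ne\boldsymbol{m}$ with $\boldsymbol{\sigma}\curvearrowright\boldsymbol{m}'$, and hence $\mathcal{S}(\boldsymbol{m},\boldsymbol{m}')\ne\varnothing$. I expect this existence statement, rather than any of the routine verifications above, to be the main obstacle of the proposition.
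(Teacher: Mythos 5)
Your proposal follows the same sequence as the paper (singleton clause for $\mathfrak{P}_1$, Morse non-degeneracy for $\mathfrak{P}_2$, reduce $\mathfrak{P}_3$ to an equality of communication heights via the dichotomy $U(\bm{m}')\le U(\bm{m})$ versus $U(\bm{m}')>U(\bm{m})$, and deduce $\mathfrak{P}_4$ from $\mathfrak{P}_3$ plus an existence statement). For $\mathfrak{P}_1,\mathfrak{P}_2,\mathfrak{P}_3$ the argument essentially matches the paper's.

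The gap is in $\mathfrak{P}_4(1)$, and it is precisely the point you flag as ``the main obstacle.'' You assert that \cite[Lemma 9.2]{LLS-1st} together with \eqref{hyp2} yields a saddle $\bm{\sigma}$ at height $U(\bm{m})+d^{(1)}$ with $\bm{\sigma}\curvearrowright\bm{m}'$ for some $\bm{m}'\ne\bm{m}$, but that is not what the lemma delivers. As the paper uses it (just after \eqref{40}), \cite[Lemma 9.2]{LLS-1st} gives only that $\mathcal{S}^{(1)}(\bm{m})\ne\varnothing$, i.e.\ some saddle $\bm{\sigma}$ with $\bm{\sigma}\curvearrowright\bm{m}$ at the correct height; it says nothing about the other heteroclinic orbit landing away from $\bm{m}$. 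The extra work the paper does is a level-set argument: let $\mathcal{W}_1$ be the connected component of $\{U<U(\bm{m})+d^{(1)}\}$ containing $\bm{m}$, and show first that $\bm{m}$ is the unique local minimum in $\mathcal{W}_1$ (if $\bm{m}''\in\mathcal{M}_0\cap\mathcal{W}_1\setminus\{\bm{m}\}$, then either $U(\bm{m}'')\le U(\bm{m})$ contradicts $\Theta(\bm{m},\widetilde{\bm{m}})=U(\bm{m})+d^{(1)}$ via Lemma \ref{lap01}-(1), or $U(\bm{m}'')>U(\bm{m})$ forces $\Xi(\bm{m}'')<d^{(1)}$). Then \cite[Lemma A.2]{LLS-1st} produces a second component $\mathcal{W}_2$ with $\overline{\mathcal{W}_1}\cap\overline{\mathcal{W}_2}\ne\varnothing$, Lemma \ref{l_cap_saddle} says any point there is a saddle at height $U(\bm{m})+d^{(1)}$, and Lemma \ref{l_assu_saddle}-(1) (which encapsulates the use of \eqref{hyp2}) gives local minima $\bm{m}_1\in\mathcal{W}_1$, $\bm{m}_2\in\mathcal{W}_2$ with $\bm{m}_1\curvearrowleft\bm{\sigma}\curvearrowright\bm{m}_2$. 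The uniqueness of $\bm{m}$ in $\mathcal{W}_1$ forces $\bm{m}_1=\bm{m}$, and $\bm{m}_2\in\mathcal{W}_2$ is automatically distinct from $\bm{m}$, which is what rules out ``both orbits loop back.'' Your proposal names the obstacle but does not supply this argument, and the lemma you cite does not, by itself, close the gap.
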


The proof of this proposition is given at the last subsection of the
current section. Now we prove the properties given in Definition \ref{def:prop}
for any scale in an inductive manner.

We prove in Section \ref{sec6} the following theorem. Recall that
$\mathfrak{n}_{n}$ denotes the number of recurrent classes of the
Markov chain $\mathbf{y}^{(n)}(\cdot)$. If $\mathfrak{n}_{n}\ge2$
we can construct the $(n+1)$-th scale as in the previous section,
and $\mathfrak{n}_{n}=|\mathscr{V}^{(n+1)}|$.
\begin{thm}
\label{t:tree}Suppose that $\Lambda^{(1)},\,\dots,\,\Lambda^{(n)}$
have been constructed by the procedure explained in Sections \ref{sec4.1}
and \ref{sec4.2}, and that the property $\mathfrak{P}\llbracket n\rrbracket$
is in force. Suppose that $\mathfrak{n}_{n}\ge2$. Then,
\begin{enumerate}
\item The property $\mathfrak{P}_{1}(n+1)$ holds and therefore we can construct
$\Lambda^{(n+1)}$
\item The property $\mathfrak{P}_{2,\,3,\,4}\llbracket n+1\rrbracket$ is
in force.
\item $\mathfrak{n}_{n+1}<\mathfrak{n}_{n}$
\end{enumerate}
\end{thm}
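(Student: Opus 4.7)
The plan is to verify the three assertions in order, with (1) being the technical cornerstone that legitimises the construction of $\Lambda^{(n+1)}$; (2) and (3) then follow by unpacking definitions, with the induction hypothesis $\mathfrak{P}\llbracket n\rrbracket$ controlling the landscape at scales $d^{(1)}<\dots<d^{(n)}$.

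For (1), I would split $\mathscr{S}^{(n+1)}=\mathscr{V}^{(n+1)}\cup\mathscr{N}^{(n+1)}$. Elements of $\mathscr{N}^{(n+1)}=\mathscr{N}^{(n)}\cup\mathscr{T}^{(n)}\subset\mathscr{S}^{(n)}$ are simple and bound directly by $\mathfrak{P}_1(n)$. For $\mathcal{M}=\bigcup_{\mathcal{M}'\in\mathscr{R}_j^{(n)}}\mathcal{M}'\in\mathscr{V}^{(n+1)}$, the singleton case is trivial; otherwise every $\mathcal{M}'\in\mathscr{R}_j^{(n)}$ is non-absorbing and $\Xi(\mathcal{M}')=d^{(n)}$ by $\mathfrak{P}_4(n)$ and Remark \ref{rem:nab}. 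To obtain \emph{simplicity}, I will show that every direct arrow $\mathcal{M}'\to\mathcal{M}''$ inside the class forces $U(\mathcal{M}'')=U(\mathcal{M}')$: if $U(\mathcal{M}'')>U(\mathcal{M}')$, the saddle $\sigma$ witnessing the arrow has height $U(\mathcal{M}')+d^{(n)}=\Theta(\mathcal{M}',\widetilde{\mathcal{M}'})$, so a further descent from $\mathcal{M}''$ following the optimal path reaches some $\bm{m}\in\widetilde{\mathcal{M}'}$ with $\Theta(\mathcal{M}'',\bm{m})\le U(\mathcal{M}')+d^{(n)}<U(\mathcal{M}'')+d^{(n)}$, contradicting $\Xi(\mathcal{M}'')=d^{(n)}$. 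Chaining along the connected recurrent class then yields a common energy $U(\mathcal{M})$. \emph{Boundness} is extracted from the closedness of $\mathscr{R}_j^{(n)}$: were $\Theta(\mathcal{M},\widetilde{\mathcal{M}})\le U(\mathcal{M})+d^{(n)}$, a parallel path-height argument would produce a saddle connecting some $\mathcal{M}'\in\mathscr{R}_j^{(n)}$ to a set $\mathcal{M}''\in\mathscr{S}^{(n)}\setminus\mathscr{R}_j^{(n)}$, yielding an arrow out of $\mathscr{R}_j^{(n)}$ in $\mathbf{y}^{(n)}$ and breaking closedness.

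With (1) in hand, (2) largely unpacks definitions. For $\mathfrak{P}_2(n+1)$, finiteness $d^{(n+1)}<\infty$ is immediate from $\mathfrak{n}_n\ge 2$, and the strict inequality $d^{(n+1)}>d^{(n)}$ is precisely the boundness from (1) (combined with $\mathfrak{P}_4(n)$ for absorbing singletons). For $\mathfrak{P}_3(n+1)$ I verify the three cases \eqref{eq:rate_1}--\eqref{eq:rate_3}: Case 3 is immediate from the indicator $\boldsymbol{1}\{\Xi(\mathcal{M})=d^{(n+1)}\}$ and the definition of $\mathcal{S}(\mathcal{M},\mathcal{M}')$; Cases 1 and 2 reduce to $\mathfrak{P}_3(n)$ via a short auxiliary lemma showing $\Xi(\mathcal{M})\le d^{(n)}$ for every $\mathcal{M}\in\mathscr{N}^{(n+1)}$ (since such $\mathcal{M}$ lies in some $\mathscr{T}^{(l)}$ with $l\le n$ and therefore has a positive outgoing $\widehat{r}^{(l)}$-arrow), together with a landscape check that level-$n$ adjacency $\mathcal{M}\to\mathcal{M}''$ lifts to level-$(n+1)$ adjacency $\mathcal{M}\to\mathcal{M}'$ whenever $\mathcal{M}''$ is a component of $\mathcal{M}'\in\mathscr{V}^{(n+1)}$, using that $\Theta(\mathcal{M},\widetilde{\mathcal{M}})$ depends only on the landscape. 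Finally, $\mathfrak{P}_4(n+1)$ follows from $\mathfrak{P}_3(n+1)$: $\mathcal{M}\in\mathscr{V}^{(n+1)}$ is absorbing for $\mathbf{y}^{(n+1)}$ iff it admits no exit to $\mathscr{V}^{(n+1)}\setminus\{\mathcal{M}\}$ in $\widehat{\mathbf{y}}^{(n+1)}$, which by $\mathfrak{P}_3(n+1)$ is equivalent to $\Xi(\mathcal{M})>d^{(n+1)}$.

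For (3), recall $|\mathscr{V}^{(n+1)}|=\mathfrak{n}_n$. By \eqref{eq:depth_p+1} some $\mathcal{M}\in\mathscr{V}^{(n+1)}$ has $\Xi(\mathcal{M})=d^{(n+1)}$, so by $\mathfrak{P}_4(n+1)$ this $\mathcal{M}$ is non-absorbing in $\mathbf{y}^{(n+1)}$; hence $\mathcal{M}$ either merges with another element of $\mathscr{V}^{(n+1)}$ into a recurrent class of size $\ge 2$ or is transient, and in both cases $\mathfrak{n}_{n+1}<|\mathscr{V}^{(n+1)}|=\mathfrak{n}_n$. The principal obstacle in this plan is the simplicity-and-boundness step of (1): extracting a common energy for all members of a non-trivial recurrent class, and a strict external barrier for their union, requires a delicate landscape analysis that leverages both the Morse--Smale hypothesis \eqref{hyp2} and the tight matching between direct adjacency $\mathcal{M}'\to\mathcal{M}''$ and the external barrier $\Theta(\mathcal{M}',\widetilde{\mathcal{M}'})$.
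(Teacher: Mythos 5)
Your plan identifies the right overall architecture—exploit $\mathfrak{P}_4(n)$ to get $\Xi(\mathcal{M}')=d^{(n)}$ for all members of a non-singleton recurrent class, compare energies via the saddle height, and get boundness from closedness—and parts (2) and (3) are broadly in line with the paper. But two steps in the core of part (1) do not go through as written, and the $\mathfrak{P}_4(n+1)$ step inherits the same error.

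For simplicity, you argue only about a \emph{direct} landscape arrow $\mathcal{M}'\to\mathcal{M}''$ between two elements of $\mathscr{R}_j^{(n)}$ and then "chain along the connected recurrent class." However, $\mathscr{R}_j^{(n)}$ is a recurrent class of the \emph{trace} chain $\mathbf{y}^{(n)}(\cdot)$; two of its members may communicate in $\widehat{\mathbf{y}}^{(n)}(\cdot)$ only via a path of the form $\mathcal{M}_1\to\mathcal{M}_1'\to\cdots\to\mathcal{M}_b'\to\mathcal{M}_2$ with all intermediate sets in $\mathscr{N}^{(n)}$, and there need not exist a direct arrow $\mathcal{M}_1\to\mathcal{M}_2$ at all. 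Your energy comparison therefore has to be propagated along such arrow-chains with low-depth intermediaries; this is precisely the paper's Lemma \ref{l_M->M}, proved by a separate induction along the path using $\Xi(\mathcal{M}_i')\le d^{(n)}$, and it is not a formal consequence of the single-arrow case.

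The boundness step has a sharper problem. You claim that if $\Theta(\mathcal{M},\widetilde{\mathcal{M}})\le U(\mathcal{M})+d^{(n)}$ then a saddle of that height gives an arrow out of $\mathscr{R}_j^{(n)}$, contradicting closedness. But such a saddle may lead only into a set in $\mathscr{N}^{(n)}$, and from that negligible set the process may well be \emph{forced back} into $\mathscr{R}_j^{(n)}$—in which case closedness of the recurrent class is not contradicted. To conclude, one must show that an escape at this height eventually lands in a different recurrent class, which requires the level-set machinery: construct the connected component $\mathcal{K}$ of $\{U\le U(\mathcal{M})+d^{(n)}\}$ containing $\mathcal{M}$, decompose it into wells, and use Lemma \ref{lem_wells1}, Lemma \ref{lem_wells2} and Lemma \ref{l_irred_negli} to show $\widetilde{\mathcal{M}}\cap\mathcal{K}=\varnothing$ (the paper's Lemma \ref{lem:rec}-(3)), from which Lemma \ref{lem_p2pre} gives the strict inequality $\Xi(\mathcal{M})>d^{(n)}$. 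The same gap reappears in your claim that $\mathfrak{P}_4(n+1)$ "follows from $\mathfrak{P}_3(n+1)$": $\mathfrak{P}_3(n+1)$ only guarantees a $\widehat{r}^{(n+1)}$-arrow out of $\mathcal{M}$, which may point into $\mathscr{N}^{(n+1)}$ and then return exclusively to $\mathcal{M}$, leaving $\mathcal{M}$ absorbing for the trace chain $\mathbf{y}^{(n+1)}(\cdot)$. Establishing genuine non-absorption again requires the level-set decomposition (Lemma \ref{lem_wells2} applied at level $n+1$, legitimate once $\mathfrak{P}_{1,2,3}(n+1)$ are known, cf.\ Remark \ref{rem:p4}).
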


In particular, by the induction, we can verify that the properties
$\mathfrak{P}_{1}(n)$-$\mathfrak{P}_{4}(n)$ hold for all metastable
scale constructed above. The proof of this theorem will be given in
Sections \ref{sec5} and \ref{sec6}. In the remainder of the section,
we assume this theorem and discuss its important consequences.

According to Theorem \ref{t:tree}-(3), we can find $n_{0}$ such
that, we can construct $\Lambda^{(1)},\,\dots,\,\Lambda^{(n_{0})}$
as in in Sections \ref{sec4.1} and \ref{sec4.2}, and
\[
\mathfrak{n}_{1}\,>\,\cdots\,>\,\mathfrak{n}_{n_{0}-1}\,>\,\mathfrak{n}_{n_{0}}\,=\,1\;.
\]
We let \textcolor{blue}{$\mathfrak{q}=n_{0}$} so that $\Lambda^{(\mathfrak{q})}$
corresponds to the \emph{last metastable scale}. For this scale, as
$\mathfrak{n}_{\mathfrak{q}}=1$, the chain $\mathbf{y}^{(\mathfrak{q})}(\cdot)$
has only one closed irreducible class and the construction explained
in Section \ref{sec4.2} cannot proceed further (cf. Section \ref{sec4.5}
below). Now, we can combine Proposition \ref{prop_init} and Theorem
\ref{t:tree} in the following manner.
\begin{cor}
\label{cor:tree}We can construct $\Lambda^{(1)},\,\dots,\,\Lambda^{(\mathfrak{q})}$
and the property $\mathfrak{P}\llbracket\mathfrak{q}\rrbracket$ holds.
\end{cor}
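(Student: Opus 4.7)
The plan is to carry out a straightforward induction on the scale index, using Proposition~\ref{prop_init} as the base case and Theorem~\ref{t:tree} as the inductive step. The inductive statement to be proved reads: for every $n \geq 1$ at which the construction has not yet terminated, the quintuple $\Lambda^{(n)}$ is well-defined by the recipe of Sections~\ref{sec4.1}--\ref{sec4.2} and satisfies $\mathfrak{P}\llbracket n \rrbracket$. The base case $n=1$ is Proposition~\ref{prop_init}, which asserts exactly $\mathfrak{P}(1) = \mathfrak{P}\llbracket 1\rrbracket$.

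For the inductive step, I would assume $\Lambda^{(1)},\dots,\Lambda^{(n)}$ have been built and that $\mathfrak{P}\llbracket n \rrbracket$ holds, then split on the value of $\mathfrak{n}_n$. If $\mathfrak{n}_n = 1$, the construction terminates at scale $n$ and there is nothing more to do at this step. If $\mathfrak{n}_n \geq 2$, Theorem~\ref{t:tree}-(1) provides $\mathfrak{P}_1(n+1)$, which is precisely what is needed to justify that the definitions in Section~\ref{sec4.2}---in particular the depth $d^{(n+1)}$ in~\eqref{eq:depth_p+1} and the rates in~\eqref{eq:rate_30}--\eqref{eq:rate_3}---are well-posed, and hence to construct $\Lambda^{(n+1)}$. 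Theorem~\ref{t:tree}-(2) then delivers $\mathfrak{P}_{2,3,4}\llbracket n+1 \rrbracket$; combining this with $\mathfrak{P}_1(n+1)$ and the inductive hypothesis $\mathfrak{P}_1\llbracket n\rrbracket$ yields $\mathfrak{P}\llbracket n+1 \rrbracket$, closing the induction.

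It remains to define $\mathfrak{q}$ and ensure the induction terminates in finitely many steps. By Theorem~\ref{t:tree}-(3), whenever $\mathfrak{n}_n \geq 2$ we have $\mathfrak{n}_{n+1} < \mathfrak{n}_n$, so $(\mathfrak{n}_n)$ is a strictly decreasing sequence of positive integers as long as the construction proceeds. Since $\mathfrak{n}_1 = |\mathcal{M}_0|<\infty$, there is a smallest $n_0 \geq 1$ with $\mathfrak{n}_{n_0} = 1$, and I would set $\mathfrak{q} := n_0$. The preceding induction then gives $\Lambda^{(1)},\dots,\Lambda^{(\mathfrak{q})}$ and $\mathfrak{P}\llbracket\mathfrak{q}\rrbracket$, which is the claim. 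The corollary is essentially a packaging of Theorem~\ref{t:tree} into an iterative statement; no real obstacle arises here, as all the substantive work---verifying that $\mathscr{V}^{(n+1)}$ consists of simple bound sets, that the depths strictly increase, and that the number of recurrent classes strictly drops---is encapsulated in Theorem~\ref{t:tree}, whose proof is deferred to Sections~\ref{sec5}--\ref{sec6}.
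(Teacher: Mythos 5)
Your proof is correct and takes essentially the same route as the paper, which simply states that the corollary follows by induction from Proposition~\ref{prop_init} (base case) and Theorem~\ref{t:tree} (inductive step, with part~(3) guaranteeing termination). You have merely spelled out the induction that the paper leaves implicit.
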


\begin{proof}
The proof is straightforward from the induction based on Proposition
\ref{prop_init} and Theorem \ref{t:tree}.
\end{proof}
The next result which fully characterizes the elements in $\mathcal{M}\in\mathscr{S}^{(n)}$
in terms of the depth $\Xi(\mathcal{M})$ is a straightforward consequence
of the previous corollary.
\begin{prop}
\label{prop:depth} Let $n\in\llbracket1,\,\mathfrak{q}\rrbracket$
and $\mathcal{M}\in\mathscr{S}^{(n)}$. Then,
\begin{equation}
\begin{cases}
\Xi(\mathcal{M})\ <\ d^{(n)} & \text{iff }\text{\ensuremath{\mathcal{M}\in}}\ensuremath{\mathscr{N}^{(n)}}\;,\\
\Xi(\mathcal{M})\ =\ d^{(n)} & \text{iff }\text{\ensuremath{\mathcal{M}\in}}\ensuremath{\mathscr{V}^{(n)}}\;\text{and}\ \mathcal{M}\ \text{is not an absorbing state of \ensuremath{\mathbf{y}^{(n)}(\cdot)}}\;,\\
\Xi(\mathcal{M})\ >\ d^{(n)} & \text{iff }\text{\ensuremath{\mathcal{M}\in}}\ensuremath{\mathscr{V}^{(n)}}\;\text{and}\ \mathcal{M}\ \text{is an absorbing state of \ensuremath{\mathbf{y}^{(n)}(\cdot)}}\;.
\end{cases}\label{char}
\end{equation}
In particular, the set of absorbing states of ${\bf y}^{(n)}(\cdot)$
and that of $\widehat{{\bf y}}^{(n)}(\cdot)$ are identical.
\end{prop}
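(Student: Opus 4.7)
The proof will be a direct consequence of Corollary \ref{cor:tree}, which gives us $\mathfrak{P}\llbracket\mathfrak{q}\rrbracket$, and in particular all properties $\mathfrak{P}_{2}(n),\,\mathfrak{P}_{3}(n),\,\mathfrak{P}_{4}(n)$ for $n\in\llbracket1,\,\mathfrak{q}\rrbracket$. The plan is to check the three equivalences by splitting $\mathcal{M}\in\mathscr{S}^{(n)}$ according to whether it lies in $\mathscr{V}^{(n)}$ or $\mathscr{N}^{(n)}$, using the partition of $\mathscr{V}^{(n)}$ into absorbing and non-absorbing states of $\mathbf{y}^{(n)}(\cdot)$ and then combining $\mathfrak{P}_{4}(n)$ with the strict monotonicity of the depths from $\mathfrak{P}_{2}$.

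For $\mathcal{M}\in\mathscr{V}^{(n)}$, the definition \eqref{eq:depth_p+1} of $d^{(n)}$ gives $\Xi(\mathcal{M})\ge d^{(n)}$, and property $\mathfrak{P}_{4}(n)$ together with Remark \ref{rem:nab} yields the dichotomy that $\Xi(\mathcal{M})=d^{(n)}$ precisely when $\mathcal{M}$ is not absorbing in $\mathbf{y}^{(n)}(\cdot)$, while $\Xi(\mathcal{M})>d^{(n)}$ precisely when $\mathcal{M}$ is absorbing. For $\mathcal{M}\in\mathscr{N}^{(n)}$ (only possible when $n\ge2$), Remark \ref{rem:N_p+1} provides $k\in\llbracket1,\,n-1\rrbracket$ with $\mathcal{M}\in\mathscr{T}^{(k)}\subset\mathscr{V}^{(k)}$, so $\mathcal{M}$ is a transient, hence non-absorbing, state of $\mathbf{y}^{(k)}(\cdot)$; invoking $\mathfrak{P}_{4}(k)$ and Remark \ref{rem:nab} gives $\Xi(\mathcal{M})=d^{(k)}$, and the chain of strict inequalities from $\mathfrak{P}_{2}$ yields $d^{(k)}\le d^{(n-1)}<d^{(n)}$. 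Since the three sub-cases exhaust $\mathscr{S}^{(n)}$ disjointly and the three value regimes $\Xi(\mathcal{M})<d^{(n)}$, $=d^{(n)}$, $>d^{(n)}$ are themselves mutually exclusive and exhaustive, all three ``iff'' statements in \eqref{char} follow at once.

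For the final ``in particular'' assertion about absorbing states of the two chains, one direction is supplied directly by $\mathfrak{P}_{3}(n)$: if $\mathcal{M}\in\mathscr{V}^{(n)}$ is absorbing in $\mathbf{y}^{(n)}(\cdot)$, then by the characterization just proved $\Xi(\mathcal{M})>d^{(n)}$, hence $\widehat{r}^{(n)}(\mathcal{M},\,\mathcal{M}')=0$ for every $\mathcal{M}'\in\mathscr{S}^{(n)}$, so $\mathcal{M}$ is absorbing in $\widehat{\mathbf{y}}^{(n)}(\cdot)$. Conversely, any absorbing state of $\widehat{\mathbf{y}}^{(n)}(\cdot)$ necessarily lies in $\mathscr{V}^{(n)}$ (elements of $\mathscr{N}^{(n)}$ are transient by construction, cf.\ Remark \ref{rem:N_p+1}), and since $\mathbf{y}^{(n)}(\cdot)$ is the trace of $\widehat{\mathbf{y}}^{(n)}(\cdot)$ on $\mathscr{V}^{(n)}$ (cf.\ Appendix \ref{app:trace}), a state that is absorbing for $\widehat{\mathbf{y}}^{(n)}(\cdot)$ is automatically absorbing for its trace, giving the reverse inclusion.

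The argument is essentially bookkeeping, and there is no substantial obstacle: the entire content of the proposition is encoded in the properties $\mathfrak{P}_{2}$--$\mathfrak{P}_{4}$ already established in Corollary \ref{cor:tree} and in the transient-state description of $\mathscr{N}^{(n)}$ from Remark \ref{rem:N_p+1}. The only mild subtlety worth flagging is handling the case $n=1$, for which $\mathscr{N}^{(1)}=\varnothing$ and so the first equivalence in \eqref{char} is vacuous; this is consistent with the convention $d^{(0)}=0$ in $\mathfrak{P}_{2}$ and requires no separate treatment.
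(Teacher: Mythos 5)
Your proof of the displayed equivalence \eqref{char} follows the paper's own argument essentially verbatim: for $\mathcal{M}\in\mathscr{V}^{(n)}$ use \eqref{eq:depth_p+1} and $\mathfrak{P}_{4}(n)$, for $\mathcal{M}\in\mathscr{N}^{(n)}$ use Remark \ref{rem:N_p+1} to drop to the earlier scale $k$ where $\mathcal{M}$ is a non-absorbing state of $\mathbf{y}^{(k)}(\cdot)$ and then apply $\mathfrak{P}_{2}$, and finally observe that the three value regimes are mutually exclusive and exhaustive. This part is correct.

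For the final ``in particular'' assertion, your converse direction contains a small citation gap. You assert that an absorbing state of $\widehat{\mathbf{y}}^{(n)}(\cdot)$ must lie in $\mathscr{V}^{(n)}$ because ``elements of $\mathscr{N}^{(n)}$ are transient by construction, cf.\ Remark \ref{rem:N_p+1}''. But Remark \ref{rem:N_p+1} only says that each $\mathcal{M}\in\mathscr{N}^{(n)}$ was a transient state of the \emph{earlier} chain $\mathbf{y}^{(k)}(\cdot)$ for some $k<n$; it does not, on its own, rule out $\mathcal{M}$ being absorbing for $\widehat{\mathbf{y}}^{(n)}(\cdot)$, since that concerns the jump rates $\widehat{r}^{(n)}$ at scale $n$. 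What you actually need is that $\widehat{\mathbf{y}}^{(n)}(\cdot)$ has no recurrent class contained in $\mathscr{N}^{(n)}$ — in particular no singleton absorbing state there — which is precisely Lemma \ref{l_irred_negli}. With that citation replaced, your argument (including the observation that absorbing states of $\widehat{\mathbf{y}}^{(n)}(\cdot)$ are automatically absorbing for the trace $\mathbf{y}^{(n)}(\cdot)$, and the use of $\mathfrak{P}_{3}(n)$ for the forward direction) is complete and consistent with the paper's terse remark that the last assertion follows from the first.
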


\begin{proof}
Fix $n\in\llbracket1,\,\mathfrak{q}\rrbracket$ and $\mathcal{M}\in\mathscr{S}^{(n)}$.
Assume that $\mathcal{M}\in\mathscr{V}^{(n)}$. By \eqref{eq:depth_p+1},
$\Xi(\mathcal{M})\ge d^{(n)}$. The second and third claims of \eqref{char}
follow from $\text{\ensuremath{\mathfrak{P}_{4}(n)}}$. Suppose that
$\mathcal{M}\in\mathscr{N}^{(n)}$. By Remark \ref{rem:N_p+1}, $\mathcal{M}\in\mathscr{T}^{(k)}$
for some $k<n$. Hence, $\mathcal{M}\in\mathscr{V}^{(k)}$ and $\mathcal{M}$
is not an absorbing state of ${\bf y}^{(k)}(\cdot)$, and thus by
the second claim of \eqref{char} which is already proven, we have
\[
\Xi(\mathcal{M})\ =\ d^{(k)}\ <\ d^{(n)}\;,
\]
where the last inequality follows from $\mathfrak{P}_{2}\llbracket\mathfrak{q}\rrbracket$.
The last assertion of the Corollary follows from the first one.
\end{proof}
\begin{rem}
We can readily check that, if $\mathcal{M}\subset\mathcal{M}_{0}$
is a simple bound set such that $\widetilde{\mathcal{M}}\ne\varnothing$,
then there exists $p\in\llbracket1,\,\mathfrak{q}\rrbracket$ such
that $\mathcal{M}\in\mathscr{V}^{(p)}$ and $\Xi(\mathcal{M})=d^{(p)}$.
This explains that our construction completely captures all the metastable
regime.
\end{rem}

\subsection{\label{sec4.5} Construction at level $\mathfrak{q}+1$}

One can notice that, the construction carried out in Section \ref{sec4.2}
can be continued to $n=\mathfrak{q}$ to some extent, since the fact
$\mathfrak{n}_{n}\ge2$ was not used in the construction of $\mathscr{V}^{(n+1)}$
and $\mathscr{N}^{(n+1)}$. We denote by $\mathscr{R}_{1}^{(\mathfrak{q})}$
and $\mathscr{T}^{(\mathfrak{q})}$ the unique recurrent class (as
$n_{\mathfrak{q}}=1$) and the transient states of the Markov chain
${\bf y}^{(\mathfrak{q})}(\cdot)$. Then, we set
\begin{equation}
{\color{blue}\mathscr{V}^{(\mathfrak{q}+1)}}\ :=\ \Big\{\,\cup_{\mathcal{M}\in\mathscr{R}_{1}^{(\mathfrak{q})}}\mathcal{M}\,\Big\}\;\;\;\text{and}\;\;\;{\color{blue}\mathscr{N}^{(\mathfrak{q}+1)}}\ :=\ \mathscr{N}^{(\mathfrak{q})}\cup\mathscr{T}^{(\mathfrak{q})}\label{eq:def_q+1_1}
\end{equation}
as in Section \ref{sec4.2}. Write ${\color{blue}\mathscr{S}^{(\mathfrak{q}+1)}}=\mathscr{V}^{(\mathfrak{q}+1)}\cup\mathscr{N}^{(\mathfrak{q}+1)}$.
With these definitions, we can define the property $\mathfrak{P}_{1}(\mathfrak{q}+1)$
as in Definition \ref{def:prop}.

One can note from \eqref{eq:v^3} that the unique
element of $\mathscr{V}^{(4)}$ in that example is $\{\boldsymbol{m}_{1},\,\boldsymbol{m}_{2},\,\boldsymbol{m}_{3},\,\boldsymbol{m}_{8},\,\boldsymbol{m}_{9},\,\boldsymbol{m}_{10}\}$
which is the set of all global minima of $U$. This is the consequence
of the next proposition.
\begin{prop}
\label{prop_global_min}The following hold.
\begin{enumerate}
\item The property $\mathfrak{P}_{1}(\mathfrak{q}+1)$ holds.
\item $\mathscr{V}^{(\mathfrak{q}+1)}=\{\mathcal{M}_{\star}\}$, where we
recall that $\mathcal{M}_{\star}$ denotes the set of all global minima
of $U$.
\end{enumerate}
\end{prop}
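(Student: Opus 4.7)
The plan is to combine the tree-structure properties of Corollary \ref{cor:tree} with an inductive argument tracking how the set $\mathcal{M}_{\star}$ of global minima propagates through the levels of the tree. Throughout, set $u^{\ast} := \min_{\mathbb{R}^{d}} U$.

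For part (1), I would argue by cases on the component of $\mathscr{S}^{(\mathfrak{q}+1)}$ containing $\mathcal{M}$. If $\mathcal{M} \in \mathscr{N}^{(\mathfrak{q}+1)}$, then by Remark \ref{rem:N_p+1} there is $k \in \llbracket 1, \mathfrak{q} \rrbracket$ with $\mathcal{M} \in \mathscr{V}^{(k)}$, and $\mathfrak{P}_{1}(k)$ from Corollary \ref{cor:tree} immediately yields that $\mathcal{M}$ is simple and bound. For the unique element $\mathcal{M}^{\dagger} := \bigcup_{\mathcal{M} \in \mathscr{R}_{1}^{(\mathfrak{q})}} \mathcal{M}$ of $\mathscr{V}^{(\mathfrak{q}+1)}$, I would reuse the portion of the proof of Theorem \ref{t:tree}-(1) (carried out in Section \ref{sec6}) that establishes the simple-and-bound property of the union of any recurrent class of $\mathbf{y}^{(n)}(\cdot)$; that sub-argument applies verbatim to $\mathscr{R}_{1}^{(\mathfrak{q})}$ and does not rely on the hypothesis $\mathfrak{n}_{\mathfrak{q}} \ge 2$, which in Theorem \ref{t:tree} is used only to allow further construction of the next-level depth and chain.

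For part (2), I would prove $\mathcal{M}^{\dagger} = \mathcal{M}_{\star}$ by establishing, by strong induction on $k \in \llbracket 1, \mathfrak{q}+1 \rrbracket$, the stronger claim that \textbf{no global minimum of $U$ lies in any set belonging to $\mathscr{N}^{(k)}$}. The base case $\mathscr{N}^{(1)} = \varnothing$ is trivial, and the inductive step, using $\mathscr{N}^{(k+1)} = \mathscr{N}^{(k)} \cup \mathscr{T}^{(k)}$, reduces to excluding a global minimum from any transient set $\mathcal{M}' \in \mathscr{T}^{(k)}$. Suppose for contradiction that $\boldsymbol{m}^{\ast} \in \mathcal{M}_{\star}$ lies in such $\mathcal{M}'$: then by $\mathfrak{P}_{1}(k)$ the set $\mathcal{M}'$ is simple with $U(\mathcal{M}') = u^{\ast}$, and by Remark \ref{rem:nab} combined with $\mathfrak{P}_{4}(k)$, $\Xi(\mathcal{M}') = d^{(k)}$. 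Using $\mathfrak{P}_{3}(k)$ and the adjacency relation \eqref{eq:con_gate} at saddle height $u^{\ast} + d^{(k)}$, the symmetry of the communication height $\Theta$, and the cascade of adjacency arrows inherited from the auxiliary processes $\widehat{\mathbf{y}}^{(j)}(\cdot)$, $j < k$, on the $\mathscr{N}^{(k)}$-detours, I would construct a return trace transition in $\mathbf{y}^{(k)}(\cdot)$ from any successor of $\mathcal{M}'$ back to $\mathcal{M}'$, producing a closed communicating class and contradicting the transience of $\mathcal{M}'$. The induction then gives $\mathcal{M}_{\star} \subseteq \bigcup_{\mathcal{M} \in \mathscr{V}^{(\mathfrak{q}+1)}} \mathcal{M} = \mathcal{M}^{\dagger}$. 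Combined with the simplicity of $\mathcal{M}^{\dagger}$ from part (1) and the fact that $\mathcal{M}^{\dagger}$ now contains a global minimum, this forces $U(\mathcal{M}^{\dagger}) = u^{\ast}$ and hence $\mathcal{M}^{\dagger} \subseteq \mathcal{M}_{\star}$, completing the equality.

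The main obstacle is the return-cycle construction in the inductive step of part (2). The difficulty is that direct reversibility of adjacency arrows in $\widehat{\mathbf{y}}^{(k)}(\cdot)$ fails when the forward arrow targets a set in $\mathscr{N}^{(k)}$ at strictly higher $U$-value or strictly lower saddle height, because the condition \eqref{eq:con_gate} imposes $\Theta$-equality but not $U$-monotonicity on the target. The resolution exploits the cascade structure of the tree: each $\mathscr{N}^{(k)}$-excursion is itself governed by the adjacency at a lower level $j < k$ inherited through \eqref{eq:rate_1}--\eqref{eq:rate_2}, and the levelwise saddle-matching embedded in Section \ref{sec6} ensures that the excursion descends back to the $u^{\ast}$-sublandscape via an arrow realizable at level $k$, closing the cycle.
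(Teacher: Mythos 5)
Your part (1) argument is exactly the paper's: the proof of Theorem \ref{t:tree}-(1) explicitly does not use $\mathfrak{n}_{n}\ge2$, so it applies to $\mathscr{V}^{(\mathfrak{q}+1)}$ as well. Your part (2) logical scaffolding (show $\mathcal{M}_{\star}\subseteq\mathcal{M}^{\dagger}$ via the inductive claim, then use simplicity for the reverse inclusion) is sound, and it is a genuinely different route from the paper's, which argues \emph{once} at the top level: the paper assumes $\widetilde{\mathcal{M}^{\dagger}}\ne\varnothing$, takes the level set decomposition of the connected component of $\{U\le\Theta(\mathcal{M}^{\dagger},\widetilde{\mathcal{M}^{\dagger}})\}$, and exploits $\Xi(\mathcal{M}^{\dagger})>d^{(\mathfrak{q})}$ from Lemma \ref{lem_p2pre} together with Lemma \ref{lem_wells1}-(3) to force a recurrent class of $\mathbf{y}^{(\mathfrak{q})}(\cdot)$ inside a well disjoint from the unique recurrent class, a contradiction. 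The strict inequality $\Xi(\mathcal{M}^{\dagger})>d^{(\mathfrak{q})}$ is the lever that makes this work, and it is specific to the top level.

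There is, however, a genuine gap in your inductive step. You correctly identify that the one thing left to prove is that a transient set $\mathcal{M}'\in\mathscr{T}^{(k)}$ cannot satisfy $U(\mathcal{M}')=u^{\ast}$, and you correctly deduce $\Xi(\mathcal{M}')=d^{(k)}$. But you then openly acknowledge that direct reversal of adjacency arrows fails, and the ``resolution'' you sketch, appealing to a ``cascade structure'' and ``levelwise saddle-matching embedded in Section \ref{sec6}'', is not an argument: no lemma in the paper provides, at the point where Proposition \ref{prop_global_min} must be proved, a way to close the return cycle. (The one result that would do the job, the local reversibility of Proposition \ref{p_rev_y}, is proven later in Section \ref{sec13} and itself rests on the full tree construction, so invoking it here would be circular.) Moreover, in contrast to the paper's top-level argument, at an intermediate level $k$ one only has the non-strict $\Xi(\mathcal{M}')=d^{(k)}$, which is exactly the boundary case of Lemma \ref{lem_wells1}-(2) and does \emph{not} produce a recurrent class in a neighbouring well; so the paper's contradiction mechanism does not transfer to your setting. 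As it stands, the inductive step requires a substantial new argument that has not been supplied.
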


The proof of this proposition is given in Section \ref{sec6.1}. In
view of assertion (2) of the last proposition, we cannot define the
depth $d^{(\mathfrak{q}+1)}$ as in \eqref{eq:depth_p+1}, as $\Xi(\mathcal{M}_{\star})=\infty$.

\subsection{Proof of Proposition \ref{prop_init}}

We conclude this section with proof of Proposition \ref{prop_init}.
We remark that the proof replies on several results proved in our
companion paper \cite{LLS-1st} which indeed handles the first time
scale of the current problem. We summarized the mainly referred results
at Section \ref{sec:paper-1}.
\begin{proof}[Proof of Proposition \ref{prop_init}]
The property $\mathfrak{P}_{1}(1)$ is in force because the elements
of $\mathscr{V}^{(1)}$ are singletons (as we declared that singletons
are bound). The property $\mathfrak{P}_{2}(1)$ is also immediate
from the definition of $d^{(1)}$ and non-degeneracy of the potential
function $U$.

For the property $\mathfrak{P}_{3}(1)$, we can observe
from \eqref{eq:omega_2} and \eqref{40} that $\widehat{r}^{(1)}(\{\boldsymbol{m}\},\{\boldsymbol{m}'\})>0$
if and only if $\Xi(\boldsymbol{m})=d^{(1)}$ and $\boldsymbol{\sigma}\curvearrowright\boldsymbol{m}\;,\;\;\boldsymbol{\sigma}\curvearrowright\boldsymbol{m}'$
for some $\bm{\sigma}\in\mathcal{S}^{(1)}(\bm{m})$. Therefore, it
suffices to prove $\Xi(\boldsymbol{m})=d^{(1)}$ and $\bm{m}\curvearrowleft\boldsymbol{\sigma}\curvearrowright\boldsymbol{m}'$
for some $\bm{\sigma}\in\mathcal{S}^{(1)}(\bm{m})$ if and only if
$\Xi(\boldsymbol{m})=d^{(1)}$ and $\bm{m}\to\bm{m}'$.

Suppose that $\Xi(\boldsymbol{m})=d^{(1)}$ and $\bm{m}\curvearrowleft\boldsymbol{\sigma}\curvearrowright\boldsymbol{m}'$.
We claim $U(\bm{m})<U(\bm{m}')$. Indeed, if not,
\[
\Xi(\bm{m}')\ =\ \Theta(\bm{m}',\,\widetilde{\bm{m}'})-U(\bm{m}')\ <\ \Theta(\bm{m}',\,\bm{m})-U(\bm{m})\ .
\]
By Lemma \ref{lem_not}-(3), $\Theta(\bm{m}',\,\bm{m})\le U(\bm{\sigma})$
so that since $\bm{\sigma}\in\mathcal{S}^{(1)}(\bm{m})$ satisfies
$U(\bm{\sigma})=U(\bm{m})+\Xi(\bm{m})$,
\[
\Theta(\bm{m}',\,\bm{m})-U(\bm{m})\ \le\ U(\bm{\sigma})-U(\bm{m})\ =\ \Xi(\bm{m})\ ,
\]
which contradicts to the definition of $d^{(1)}$. Therefore, $U(\bm{m})\ge U(\bm{m}')$.
Since $\bm{\sigma}\in\mathcal{S}^{(1)}(\bm{m})$ and $U(\bm{m})\ge U(\bm{m}')$,
we have
\[
U(\bm{\sigma})\ =\ \Theta(\bm{m},\,\widetilde{\bm{m}})\ \le\ \Theta(\bm{m},\,\bm{m}')\ \le\ U(\bm{\sigma})\ ,
\]
which implies $\bm{m}\to\bm{m}'$.

On the other hand, suppose that $\Xi(\boldsymbol{m})=d^{(1)}$
and $\bm{m}\to\bm{m}'$. Then, by definition, $\bm{m}\to_{\bm{\sigma}}\bm{m}'$
for some $\bm{\sigma}\in\mathcal{S}_{0}$ so that
\[
\boldsymbol{\sigma}\curvearrowright\boldsymbol{m}\;\text{\;\;and}\ \;\ U(\bm{\sigma})\ =\ \Theta(\bm{m},\,\widetilde{\bm{m}})\ =\ U(\bm{m})+\Xi(\bm{m})\ .
\]
Therefore, we can conclude that $\bm{\sigma}\in\mathcal{S}^{(1)}(\bm{m})$.

It remains to prove $\mathfrak{P}_{4}(1)$. It is clear from \eqref{40}
that $\{\boldsymbol{m}\}\in\mathscr{V}^{(1)}$ is an absorbing state
of the Markov chain $\mathbf{y}^{(1)}(\cdot)=\mathbf{\widehat{y}}^{(1)}(\cdot)$
if $\Xi(\{\boldsymbol{m}\})>d^{(1)}$. Thus, the next claim completes
the proof of the proposition.

\smallskip{}
\textbf{\textit{Claim}}: Suppose that $\{\boldsymbol{m}\}\in\mathscr{V}^{(1)}$
satisfies $\Xi(\{\boldsymbol{m}\})=d^{(1)}$. Then, $\{\boldsymbol{m}\}$
is not an absorbing state of chain $\mathbf{y}^{(1)}(\cdot)$. \smallskip{}

\noindent Let $\mathcal{W}_{1}$ be the connected component $\{\,U<\Theta(\boldsymbol{m},\,\widetilde{\{\boldsymbol{m}\}})\,\}$
containing $\boldsymbol{m}$. We affirm that $\boldsymbol{m}$ is
the unique local minimum in $\mathcal{W}_{1}$. Suppose that there
is another local minimum $\boldsymbol{m}'$ in $\mathcal{W}_{1}$.
By Lemma \ref{lap01}-(1),
\begin{equation}
\Theta(\boldsymbol{m},\,\boldsymbol{m}')\ <\ \Theta(\boldsymbol{m},\,\widetilde{\{\boldsymbol{m}\}})\;.\label{etta}
\end{equation}
If $U(\boldsymbol{m})\ge U(\boldsymbol{m}')$, we get a contradiction
to \eqref{etta} in view of Lemma \ref{lem_not}-(1) since $\boldsymbol{m}'\in\widetilde{\{\boldsymbol{m}\}}$.
On the other hand, suppose that $U(\boldsymbol{m})<U(\boldsymbol{m}')$.
In this case, $\boldsymbol{m}\in\widetilde{\{\boldsymbol{m}'\}}$
so that, by Lemma \ref{lem_not}-(1),
\begin{align*}
\Xi(\boldsymbol{m}')\ =\ \Theta(\boldsymbol{m}',\,\widetilde{\{\boldsymbol{m}'\}})-U(\boldsymbol{m}') & \ <\ \Theta(\boldsymbol{m}',\,\boldsymbol{m})-U(\boldsymbol{m})\\
 & \ <\ \Theta(\boldsymbol{m},\,\widetilde{\{\boldsymbol{m}\}})-U(\boldsymbol{m})\,=\,\Xi(\boldsymbol{m})\,=\,d^{(1)}\;,
\end{align*}
where the last inequality follows from \eqref{etta}. Therefore, we
get $\Xi(\boldsymbol{m}')<d^{(1)}$, in contradiction with the definition
of $d^{(1)}$. Hence, $\boldsymbol{m}$ is the unique local minimum
in $\mathcal{W}_{1}$.

By \cite[Lemma A.2]{LLS-1st}, there exists a connected component
of $\{U<\Theta(\boldsymbol{m},\,\widetilde{\boldsymbol{m}})\}$, denoted
by of $\mathcal{W}_{2}$, such that
\[
\mathcal{W}_{1}\cap\mathcal{W}_{2}\ =\ \varnothing\ \ ,\ \ \overline{\mathcal{W}_{1}}\cap\overline{\mathcal{W}_{2}}\ \ne\ \varnothing\ .
\]
Let $\boldsymbol{\sigma}\in\overline{\mathcal{W}_{1}}\cap\overline{\mathcal{W}_{2}}$.
By Lemma \ref{l_cap_saddle}, $\boldsymbol{\sigma}$ is a saddle point
connecting $\mathcal{W}_{1}$ and $\mathcal{W}_{2}$ such that $U(\boldsymbol{\sigma})=\Theta(\boldsymbol{m},\,\widetilde{\boldsymbol{m}})$.
Furthermore, by Lemma \ref{l_assu_saddle}-(1), there are two local
minima $\boldsymbol{m}_{1}\in\mathcal{W}_{1}$ and $\boldsymbol{m}_{2}\in\mathcal{W}_{2}$
such that $\boldsymbol{m}_{1}\curvearrowleft\boldsymbol{\sigma}\curvearrowright\boldsymbol{m}_{2}$.
Since $\boldsymbol{m}$ is the unique local minimum in $\mathcal{W}_{1}$
and $\bm{m}_{2}\in\mathcal{W}_{2}$, $\boldsymbol{m}_{1}=\boldsymbol{m}$
and $\boldsymbol{m}_{2}\ne\bm{m}$. Hence, by $\mathfrak{P}_{3}(1)$
proved above, we get $r^{(1)}(\boldsymbol{m},\,\boldsymbol{m}_{2})=\widehat{r}^{(1)}(\boldsymbol{m},\,\boldsymbol{m}_{2})>0$
and thus $\boldsymbol{m}$ is not an absorbing state of chain $\mathbf{y}^{(1)}(\cdot)$.
This proves the claim.
\end{proof}

\section{Analysis of level sets\label{sec5}}

In this section, we fix $n\ge2$ and suppose that $\Lambda^{(1)},\,\dots,\,\Lambda^{(n)}$
are constructed and the property $\mathfrak{P}_{1,\,2,\,3}\llbracket n\rrbracket$
holds and investigate several consequences of these assumptions. In
the next section, under the same assumptions, we construct $\Lambda^{(n+1)}$
and prove that the property $\mathfrak{P}\llbracket n+1\rrbracket$
holds as well.

More precisely, our purpose in this section is, before proceeding
to prove $\mathfrak{P}_{i}(n+1)$, $i\in\llbracket1,\,4\rrbracket$,
to analyze the level sets of the form \eqref{eq:level}.
\begin{defn}
\label{def1}The following notions will be frequently used throughout
the article and are main concern of the current section.
\begin{enumerate}
\item We say that a set $\mathcal{A\subset\mathbb{R}}^{d}$\textcolor{blue}{\emph{
does not separates $(n)$-states}} if, for all $\mathcal{M}\in\mathscr{S}^{(n)}$,
either $\mathcal{M}\subset\mathcal{A}$ or $\mathcal{M}\subset\mathcal{A}^{c}$.
\item For a set $\mathcal{A\subset\mathbb{R}}^{d}$, we write $\mathcal{M}^{*}(\mathcal{A})$
the set of minima of $U$ in $\mathcal{A}$, i.e.,
\[
{\color{blue}\mathcal{M}^{*}(\mathcal{A})}\ :=\ \bigg\{\,\boldsymbol{m}\in\mathcal{M}_{0}:U(\boldsymbol{m})=\min_{\boldsymbol{x}\in\mathcal{A}}U(\boldsymbol{x})\,\bigg\}\;.
\]
Note that, if $\mathcal{A}$ is a non-empty level set given in \eqref{eq:level},
the set $\mathcal{M}^{*}(\mathcal{A})$ is non-empty.
\end{enumerate}
\end{defn}

\subsection{Preliminary results}

The next lemma investigate properties of recurrent classes of Markov
chain $\mathbf{y}^{(n)}(\cdot)$.
\begin{lem}
\label{lem_stddec}Let $\{\mathcal{M}_{1},\,\dots,\,\mathcal{M}_{a}\}$
for some $a\ge2$ be a recurrent class of Markov chain $\mathbf{y}^{(n)}(\cdot)$.
Then, the followings hold.
\begin{enumerate}
\item We have $\Xi(\mathcal{M}_{1})=\cdots=\Xi(\mathcal{M}_{a})=d^{(n)}$.
\item We have $U(\mathcal{M}_{1})=\cdots=U(\mathcal{M}_{a})$. In particular,
$\mathcal{M}$ is simple.
\item For all $i\neq j\in\llbracket1,\,a\rrbracket$,
\[
\Theta(\mathcal{M}_{i},\,\mathcal{M}_{j})\ =\ \Theta(\mathcal{M}_{i},\,\widetilde{\mathcal{M}_{i}})\ =\ U(\mathcal{M}_{i})+d^{(n)}\;.
\]
\end{enumerate}
\end{lem}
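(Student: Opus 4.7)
The plan is to establish (1), (2), (3) in order, using property $\mathfrak{P}_{1,2,3}\llbracket n\rrbracket$, the symmetry of $\Theta$, and the definition of $\widetilde{\mathcal{M}}$. Throughout I write $u_i=U(\mathcal{M}_i)$, which is well-defined by $\mathfrak{P}_1(n)$ since each $\mathcal{M}_i$ is simple.

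For (1), because $a\ge 2$ and the class is irreducible, every $\mathcal{M}_i$ must emit at least one jump in $\mathbf{y}^{(n)}(\cdot)$; by the trace construction this forces an outgoing jump in $\widehat{\mathbf{y}}^{(n)}(\cdot)$, and by $\mathfrak{P}_3(n)$ that requires $\Xi(\mathcal{M}_i)\le d^{(n)}$. The reverse inequality $\Xi(\mathcal{M}_i)\ge d^{(n)}$ follows from $\mathcal{M}_i\in\mathscr{V}^{(n)}$ and the definition \eqref{eq:depth_p+1}, yielding $\Xi(\mathcal{M}_i)=d^{(n)}$.

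For (2), set $u_{\star}=\min_i u_i$ and $I=\{i:u_i=u_{\star}\}$. I aim to show that $I$ is closed under direct transitions of $\mathbf{y}^{(n)}$ inside the class; combined with irreducibility this forces $I=\{1,\ldots,a\}$. Suppose $i\in I$ and $\mathcal{M}_i\to\mathcal{M}_j$ in $\mathbf{y}^{(n)}$ with $u_j>u_{\star}$. Since $\mathscr{S}^{(n)}$ partitions $\mathcal{M}_0$ and $u_i<u_j$, we have $\mathcal{M}_i\subset\widetilde{\mathcal{M}_j}$, and (1) gives
\[
\Theta(\mathcal{M}_i,\mathcal{M}_j)\;=\;\Theta(\mathcal{M}_j,\mathcal{M}_i)\;\ge\;\Theta(\mathcal{M}_j,\widetilde{\mathcal{M}_j})\;=\;u_j+d^{(n)}\;>\;u_{\star}+d^{(n)}\;.
\]
A matching upper bound $\Theta(\mathcal{M}_i,\mathcal{M}_j)\le u_{\star}+d^{(n)}$ would contradict this and thus prove the closure. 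Once (2) is in hand, (3) follows at once: by (2) $\mathcal{M}_j\subset\widetilde{\mathcal{M}_i}$, so $\Theta(\mathcal{M}_i,\mathcal{M}_j)\ge\Theta(\mathcal{M}_i,\widetilde{\mathcal{M}_i})=U(\mathcal{M}_i)+d^{(n)}$, and the upper bound supplied in (2) gives the reverse.

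The main obstacle is the upper bound $\Theta(\mathcal{M}_i,\mathcal{M}_j)\le u_{\star}+d^{(n)}$. A direct transition of the trace chain $\mathbf{y}^{(n)}$ generally corresponds to a $\widehat{\mathbf{y}}^{(n)}$-path $\mathcal{M}_i=\mathcal{B}_0\to\mathcal{B}_1\to\cdots\to\mathcal{B}_q=\mathcal{M}_j$ with intermediate $\mathcal{B}_l\in\mathscr{N}^{(n)}$; by $\mathfrak{P}_3(n)$ each step is a landscape edge with saddle of height $U(\mathcal{B}_{l-1})+\Xi(\mathcal{B}_{l-1})$. Concatenating continuous paths through each saddle with paths inside the bound sets $\mathcal{B}_l$ (whose internal heights are controlled by $\mathfrak{P}_1(n)$) and applying the ultrametric inequality for $\Theta$ yields
\[
\Theta(\mathcal{M}_i,\mathcal{M}_j)\;\le\;\max_{0\le l\le q-1}\bigl(U(\mathcal{B}_l)+\Xi(\mathcal{B}_l)\bigr)\;.
\]
The $l=0$ term equals exactly $u_{\star}+d^{(n)}$, while for $l\ge 1$ one has $\Xi(\mathcal{B}_l)=d^{(k_l)}<d^{(n)}$ for some $k_l<n$, so the remaining task is the bound $U(\mathcal{B}_l)+d^{(k_l)}\le u_{\star}+d^{(n)}$. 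I plan to achieve this by strong induction on $n$, invoking the lemma at the earlier levels $k_l$ to control the $U$-values of the sets into which the chain is absorbed inside $\mathscr{N}^{(n)}$, combined with the strict descent $U(\mathcal{B}_l)<U(\sigma_{l-1})$ along each heteroclinic orbit emanating from a saddle, which together allow the height of any excursion through $\mathscr{N}^{(n)}$ to be bounded by that of its initial saddle.
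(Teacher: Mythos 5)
Your part (1) and the overall structure of (2)--(3) are sound: the lower bound $\Theta(\mathcal{M}_i,\mathcal{M}_j)\ge u_j+d^{(n)}>u_\star+d^{(n)}$ is correct, and you have correctly reduced everything to the upper bound $\Theta(\mathcal{M}_i,\mathcal{M}_j)\le u_\star+d^{(n)}$, which you further reduce (via the ultrametric inequality of Lemma~\ref{lem_not}-(2)) to showing $U(\mathcal{B}_l)+\Xi(\mathcal{B}_l)\le u_\star+d^{(n)}$ for each intermediate state $\mathcal{B}_l$ of the $\widehat{\mathbf{y}}^{(n)}$-path. This is exactly Lemma~\ref{l_M->M}-(1) of the paper, and the paper closes the argument by invoking Lemma~\ref{l_M->M}-(2) directly at this point (obtaining both $U(\mathcal{M}_1)\ge U(\mathcal{M}_2)$ and $\Theta(\mathcal{M}_1,\widetilde{\mathcal{M}_1})=\Theta(\mathcal{M}_1,\mathcal{M}_2)$, then interchanging roles).

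However, the mechanism you sketch to prove this remaining bound --- ``strong induction on $n$, invoking the lemma at the earlier levels $k_l$, combined with the strict descent $U(\mathcal{B}_l)<U(\boldsymbol{\sigma}_{l-1})$'' --- does not work as stated, and this is a genuine gap. The descent inequality only yields $U(\mathcal{B}_l)<U(\mathcal{B}_{l-1})+\Xi(\mathcal{B}_{l-1})$, so for instance $U(\mathcal{B}_1)<u_\star+d^{(n)}$, from which $U(\mathcal{B}_1)+\Xi(\mathcal{B}_1)<u_\star+d^{(n)}+d^{(n-1)}$: the error accumulates and the target $u_\star+d^{(n)}$ is overshot. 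Nor does applying this lemma at level $k_l<n$ give control on $U(\mathcal{B}_l)$ itself; only $\Xi(\mathcal{B}_l)$ is bounded by $d^{(k_l)}<d^{(n)}$. The correct argument (this is precisely the proof of Lemma~\ref{l_M->M}-(1)) is an induction \emph{along the chain} $\mathcal{B}_0\to\mathcal{B}_1\to\cdots$ rather than on the level $n$, with a case split at each $l$: if $U(\mathcal{B}_l)\ge u_\star$ then $\mathcal{B}_0\subset\widetilde{\mathcal{B}_l}$, so $\Theta(\mathcal{B}_l,\widetilde{\mathcal{B}_l})\le\Theta(\mathcal{B}_l,\mathcal{B}_0)\le\max_{m<l}\Theta(\mathcal{B}_m,\mathcal{B}_{m+1})$, which is handled by the induction hypothesis; whereas if $U(\mathcal{B}_l)<u_\star$ then $U(\mathcal{B}_l)+\Xi(\mathcal{B}_l)<u_\star+d^{(n)}$ holds outright since $\Xi(\mathcal{B}_l)\le d^{(n)}$. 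Once you either import Lemma~\ref{l_M->M} or reproduce this two-case induction, your closure argument for $I=\{i:u_i=u_\star\}$ completes (2), and your derivation of (3) from (2) is correct.
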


\begin{proof}
(1) Since $a\ge2$, each $\mathcal{M}_{i}$ is not an absorbing state
of $\mathbf{y}^{(n)}(\cdot)$ and consequently, it is not an absorbing
state for $\widehat{{\bf y}}^{(n)}(\cdot)$ either. Hence, $\Xi(\mathcal{M}_{i})\le d^{(n)}$
by $\mathfrak{P}_{3}(n)$. The assertion follows the definition \eqref{eq:depth_p+1}
of $d^{(n)}$.

\smallskip{}
(2, 3) Since $\{\mathcal{M}_{1},\,\dots,\,\mathcal{M}_{a}\}$ forms
a recurrent class of chain $\mathbf{y}^{(n)}(\cdot)$, there exist
$\mathcal{M}_{1}',\,\dots,\,\mathcal{M}_{b}'\in\mathscr{S}^{(n)}$
such that
\[
\widehat{r}^{(n)}(\mathcal{M}_{1},\mathcal{M}'_{1})\,>\,0\;,\;\;\widehat{r}^{(n)}(\mathcal{M}'_{i},\mathcal{M}'_{i+1})\,>\,0\text{ for }i\in\llbracket1,\,b-1\rrbracket\;,\text{ and \;\;}\widehat{r}^{(n)}(\mathcal{M}'_{b},\mathcal{M}_{2})\,>\,0\;.
\]
Thus, by $\mathfrak{P}_{3}(n)$, we have
\[
\mathcal{M}_{1}\,\to\,\mathcal{M}_{1}'\,\to\,\cdots\,\to\,\mathcal{M}_{b}'\,\to\,\mathcal{M}_{2}\;,
\]
and $\Xi(\mathcal{M}_{i}')\le d^{(n)}$ for all $i\in\llbracket1,\,b\rrbracket$.
Since $\Xi(\mathcal{M}_{1})=\Xi(\mathcal{M}_{2})=d^{(n)}$ by assertion
(1), by Lemma \ref{l_M->M}-(2),
\[
U(\mathcal{M}_{1})\ \ge\ U(\mathcal{M}_{2})\;\;\;\;\text{and\;\;\;\;}\Theta(\mathcal{M}_{1},\,\widetilde{\mathcal{M}_{1}})\ =\ \Theta(\mathcal{M}_{1},\,\mathcal{M}_{2})\;.
\]
Assertion (3) follows from the last one. Assertion (2) follows from
the first one since we are free to interchange the role of $\mathcal{M}_{1}$
and $\mathcal{M}_{2}$ to get $U(\mathcal{M}_{1})\le U(\mathcal{M}_{2})$.
\end{proof}
Next we prove that $\mathcal{M}\in\mathscr{V}^{(n)}$ is not only
bound but also satisfies a stronger inequality under $\mathfrak{P}_{2}(n)$.
\begin{lem}
\label{lem:bound_dn}For $\mathcal{M}\in\mathscr{V}^{(n)}$ such that
$|\mathcal{M}|\ge2$, we have that
\begin{equation}
\max_{\boldsymbol{m},\,\boldsymbol{m}'\in\mathcal{M}}\,\Theta(\boldsymbol{m},\,\boldsymbol{m}')\ \le\ U(\mathcal{M})+d^{(n-1)}\;.\label{eq:bound_dn}
\end{equation}
\end{lem}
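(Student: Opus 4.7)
The plan is to exploit the recursive decomposition of $\mathcal{M}\in\mathscr{V}^{(n)}$ provided by Remark \ref{rem:stddec}: write $\mathcal{M}=\bigcup_{i=1}^{a}\mathcal{M}_i$, where $\{\mathcal{M}_1,\dots,\mathcal{M}_a\}\subset\mathscr{V}^{(n-1)}$ is a recurrent class of the chain $\mathbf{y}^{(n-1)}(\cdot)$. The hypothesis $|\mathcal{M}|\ge 2$ forces either $a\ge 2$ or $|\mathcal{M}_i|\ge 2$ for some $i$. The first step is to read off from Lemma \ref{lem_stddec} the three identities $U(\mathcal{M}_i)=U(\mathcal{M})$, $\Xi(\mathcal{M}_i)=d^{(n-1)}$ for every $i$, and $\Theta(\mathcal{M}_i,\mathcal{M}_j)=U(\mathcal{M})+d^{(n-1)}$ whenever $i\ne j$.

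Next I would settle the intra-component case: for distinct $\boldsymbol{m},\boldsymbol{m}'\in\mathcal{M}_i$ with $|\mathcal{M}_i|\ge 2$, the set $\mathcal{M}_i$ belongs to $\mathscr{V}^{(n-1)}\subset\mathscr{S}^{(n-1)}$ and is therefore simple and bound by $\mathfrak{P}_{1}(n-1)$; hence
\[
\Theta(\boldsymbol{m},\boldsymbol{m}')\ <\ \Theta(\mathcal{M}_i,\widetilde{\mathcal{M}_i})\ =\ U(\mathcal{M}_i)+\Xi(\mathcal{M}_i)\ =\ U(\mathcal{M})+d^{(n-1)}.
\]
When $|\mathcal{M}_i|=1$ this case is vacuous, as any two distinct points of $\mathcal{M}$ lying in the singleton $\mathcal{M}_i$ would coincide.

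Finally, for distinct $\boldsymbol{m}\in\mathcal{M}_i$ and $\boldsymbol{m}'\in\mathcal{M}_j$ with $i\ne j$, I would pick representatives $\boldsymbol{n}_i\in\mathcal{M}_i$ and $\boldsymbol{n}_j\in\mathcal{M}_j$ realising the infimum $\Theta(\mathcal{M}_i,\mathcal{M}_j)=U(\mathcal{M})+d^{(n-1)}$, and concatenate an almost-optimal path from $\boldsymbol{m}$ to $\boldsymbol{n}_i$ inside $\mathcal{M}_i$, an almost-optimal path from $\boldsymbol{n}_i$ to $\boldsymbol{n}_j$, and an almost-optimal path from $\boldsymbol{n}_j$ to $\boldsymbol{m}'$ inside $\mathcal{M}_j$. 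Since the maximum of $U$ along a concatenation equals the maximum of its maxima over the pieces,
\[
\Theta(\boldsymbol{m},\boldsymbol{m}')\ \le\ \max\bigl\{\Theta(\boldsymbol{m},\boldsymbol{n}_i),\,\Theta(\boldsymbol{n}_i,\boldsymbol{n}_j),\,\Theta(\boldsymbol{n}_j,\boldsymbol{m}')\bigr\},
\]
with the convention that any term $\Theta(\boldsymbol{x},\boldsymbol{x})$ arising from coinciding endpoints is read as $U(\boldsymbol{x})=U(\mathcal{M})$. The middle term equals $U(\mathcal{M})+d^{(n-1)}$ by the choice of representatives, while the outer two are bounded by $U(\mathcal{M})+d^{(n-1)}$ either trivially (coinciding endpoints) or by the preceding paragraph. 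No step is genuinely hard: Lemma \ref{lem_stddec}, the bound property from $\mathfrak{P}_{1}(n-1)$, and the elementary path-concatenation bound for $\Theta$ together carry the argument, and the only small subtlety is keeping track of the degenerate cases $|\mathcal{M}_i|=1$ when invoking the bound property.
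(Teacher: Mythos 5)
There is a genuine gap. You decompose $\mathcal{M}$ at level $n-1$ and invoke Lemma \ref{lem_stddec} to conclude $\Xi(\mathcal{M}_i)=d^{(n-1)}$ and $\Theta(\mathcal{M}_i,\mathcal{M}_j)=U(\mathcal{M})+d^{(n-1)}$, but Lemma \ref{lem_stddec} is stated only for recurrent classes with $a\ge 2$ elements. That hypothesis can fail here: a set $\mathcal{M}\in\mathscr{V}^{(n)}$ with $|\mathcal{M}|\ge2$ may already have been present as an \emph{absorbing} state of $\mathbf{y}^{(n-1)}(\cdot)$, in which case $\mathscr{V}^{(n-1)}(\mathcal{M})=\{\mathcal{M}\}$ and $a=1$. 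By $\mathfrak{P}_{4}(n-1)$ one then has $\Xi(\mathcal{M})>d^{(n-1)}$, so your intra-component bound $\Theta(\boldsymbol{m},\boldsymbol{m}')<U(\mathcal{M})+\Xi(\mathcal{M}_1)$ overshoots $U(\mathcal{M})+d^{(n-1)}$ and the argument does not close. You flag $|\mathcal{M}_i|=1$ as the only subtlety, but that case is harmless; the missing case is $a=1$ with $|\mathcal{M}_1|\ge2$. For a concrete instance, in the example of Section \ref{sec2_example} the set $\{\boldsymbol{m}_1,\boldsymbol{m}_2,\boldsymbol{m}_3\}$ belongs to both $\mathscr{V}^{(2)}$ and $\mathscr{V}^{(3)}$, and its depth at level $3$ is $d^{(3)}>d^{(2)}$, so the level-$(n-1)$ decomposition is trivial and your estimate fails.

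The paper avoids this by not decomposing at level $n-1$ but at the \emph{first} level $k\in\llbracket1,n-1\rrbracket$ with $\mathcal{M}\notin\mathscr{V}^{(k)}$ and $\mathcal{M}\in\mathscr{V}^{(k+1)}$. At that level the decomposition $\mathscr{V}^{(k)}(\mathcal{M})$ automatically has $a\ge 2$, so Lemma \ref{lem_stddec} applies with $d^{(k)}$ in place of $d^{(n-1)}$, and the intra-component terms are controlled by $\Theta(\mathcal{M}_i,\widetilde{\mathcal{M}_i})=U(\mathcal{M})+d^{(k)}$. The conclusion then follows since $d^{(k)}\le d^{(n-1)}$ by $\mathfrak{P}_2\llbracket n\rrbracket$. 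If you insist on decomposing one level down, you would instead need to set up an induction on $n$ so that the case $a=1$ can be resolved by appealing to the lemma at level $n-1$; as written, no such induction is in place.
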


\begin{proof}
Fix $\mathcal{M}\in\mathscr{V}^{(n)}$ such that $|\mathcal{M}|\ge2$.
By the construction, there is $k\in\llbracket1,\,n-1\rrbracket$ such
that $\mathcal{M}\notin\mathscr{V}^{(k)}$ and $\mathcal{M}\in\mathscr{V}^{(k+1)}$.
Write $\mathscr{V}^{(k)}(\mathcal{M})=\{\mathcal{M}_{1},\,\dots,\,\mathcal{M}_{a}\}$
(cf. Remark \ref{rem:stddec}). Since $\mathcal{M}\notin\mathscr{V}^{(k)}$,
we have $a\ge2$. Then,
\[
\max_{\boldsymbol{m},\boldsymbol{m}'\in\mathcal{M}}\,\Theta(\boldsymbol{m},\,\boldsymbol{m}')\ =\ \max\bigg\{\,\max_{i,\,j\in\llbracket1,\,a\rrbracket}\,\Theta(\mathcal{M}_{i},\,\mathcal{M}_{j}),\,\max_{i\in\llbracket1,\,a\rrbracket}\,\max_{\boldsymbol{m},\boldsymbol{m}'\in\mathcal{M}_{i}}\,\Theta(\boldsymbol{m},\,\boldsymbol{m}')\,\bigg\}\;.
\]
By Lemma \ref{lem_stddec}-(3) (with $k$ instead of $n$), we have
\[
\Theta(\mathcal{M}_{i},\,\mathcal{M}_{j})\ =\ U(\mathcal{M})+d^{(k)}\;\;\;\text{for all }i,\,j\in\llbracket1,\,a\rrbracket\;.
\]
On the other hand, by $\mathfrak{P}_{2}\llbracket n\rrbracket$ and
Lemma \ref{lem_stddec}-(2, 3), we have, for all $i\in\llbracket1,\,a\rrbracket$
and $\boldsymbol{m},\,\boldsymbol{m}'\in\mathcal{M}_{i}$,
\[
\Theta(\boldsymbol{m},\,\boldsymbol{m}')\ <\ \Theta(\mathcal{M}_{i},\,\widetilde{\mathcal{M}_{i}})\ =\ U(\mathcal{M}_{i})+d^{(k)}\ \le\ U(\mathcal{M})+d^{(n-1)}\;.
\]
Summing up three displays above completes the proof.
\end{proof}

\subsection{Analysis of open level sets}

Recall that $\mathbb{R}^{d}$ is a locally connected space. Therefore,
the connected components of an open set are open. On the other hand,
open subsets of $\mathbb{R}^{d}$ are connected if, and only if, they
are path connected. These facts are used below without further notice.

In this subsection, we fix $H\in\mathbb{R}$ such that the level set
$\{U<H\}$ is not empty. Then, we fix a connected component\textcolor{blue}{
$\mathcal{H}$ }of the level set $\{U<H\}$. The
purpose is to analyze the properties of $\mathcal{H}$. We note here
that $\mathcal{M}^{*}(\mathcal{H})\neq\varnothing$ by the definition
of $\mathcal{H}$.

The first lemma analyze the situation where $\mathcal{H}$ separates
an $(n)$-state. Remind that we assume that the property $\mathfrak{P}\llbracket n\rrbracket$
is in force throughout the section.
\begin{lem}
\label{lem_levmin}Suppose that $\mathcal{H}$ separate an $(n)$-state,
i.e., there exists $\mathcal{M}\in\mathscr{S}^{(n)}$ such that $\mathcal{M}\cap\mathcal{H}\neq\varnothing$
and $\mathcal{M}\cap\mathcal{H}^{c}\neq\varnothing$. Then, $\mathcal{M}^{*}(\mathcal{H})\subset\mathcal{M}.$
\end{lem}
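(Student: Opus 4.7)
The plan is to exploit the fact that $\mathcal{M} \in \mathscr{S}^{(n)}$ is simple and bound (by $\mathfrak{P}_{1}(n)$) together with the connectedness of $\mathcal{H}$ in the level set $\{U<H\}$. The core observation will be that the separation hypothesis forces the communication height between two elements of $\mathcal{M}$ to be at least $H$, while the bound property strictly separates this from $\Theta(\mathcal{M},\widetilde{\mathcal{M}})$, giving enough room to trap any candidate minimum in $\mathcal{H}$ inside $\mathcal{M}$.

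First I would fix $\boldsymbol{m}_{1}\in\mathcal{M}\cap\mathcal{H}$ and $\boldsymbol{m}_{2}\in\mathcal{M}\cap\mathcal{H}^{c}$ from the hypothesis; in particular $|\mathcal{M}|\ge2$. Since $\mathcal{M}$ is simple, $U(\boldsymbol{m}_{1})=U(\boldsymbol{m}_{2})=U(\mathcal{M})$, and since $\boldsymbol{m}_{1}\in\mathcal{H}\subset\{U<H\}$ we have $U(\mathcal{M})<H$, so $\boldsymbol{m}_{2}$ lies in a connected component of $\{U<H\}$ distinct from $\mathcal{H}$. Any continuous path from $\boldsymbol{m}_{1}$ to $\boldsymbol{m}_{2}$ must exit $\{U<H\}$, so $\Theta(\boldsymbol{m}_{1},\boldsymbol{m}_{2})\ge H$. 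Combining with the bound condition \eqref{2-07}, applied to $\mathcal{M}$ (which is not a singleton), yields
\[
H\ \le\ \Theta(\boldsymbol{m}_{1},\boldsymbol{m}_{2})\ \le\ \max_{\boldsymbol{m},\boldsymbol{m}'\in\mathcal{M}}\Theta(\boldsymbol{m},\boldsymbol{m}')\ <\ \Theta(\mathcal{M},\widetilde{\mathcal{M}})\;.
\]

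Now pick $\boldsymbol{m}^{*}\in\mathcal{M}^{*}(\mathcal{H})$. Since $\boldsymbol{m}_{1}\in\mathcal{H}$, $U(\boldsymbol{m}^{*})=\min_{\mathcal{H}}U\le U(\boldsymbol{m}_{1})=U(\mathcal{M})$. I will use the growth condition \eqref{eq:growth} to ensure $\overline{\mathcal{H}}$ is compact and therefore $U$ attains its minimum on $\mathcal{H}$ at an interior point, which by the Morse assumption is a local minimum; this element of $\mathcal{M}^{*}(\mathcal{H})\cap\mathcal{H}$ can be used in place of $\boldsymbol{m}^{*}$. Suppose now, for contradiction, that $\boldsymbol{m}^{*}\notin\mathcal{M}$. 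Then $\boldsymbol{m}^{*}\in\mathcal{M}_{0}\setminus\mathcal{M}$ with $U(\boldsymbol{m}^{*})\le U(\mathcal{M})$, i.e. $\boldsymbol{m}^{*}\in\widetilde{\mathcal{M}}$. Since $\boldsymbol{m}_{1},\boldsymbol{m}^{*}\in\mathcal{H}$ and $\mathcal{H}$ is open and path connected, there is a continuous path from $\boldsymbol{m}_{1}$ to $\boldsymbol{m}^{*}$ inside $\mathcal{H}$; on its compact image $U$ attains a value strictly less than $H$, so $\Theta(\boldsymbol{m}_{1},\boldsymbol{m}^{*})<H$. Therefore
\[
\Theta(\mathcal{M},\widetilde{\mathcal{M}})\ \le\ \Theta(\boldsymbol{m}_{1},\boldsymbol{m}^{*})\ <\ H\ <\ \Theta(\mathcal{M},\widetilde{\mathcal{M}})\;,
\]
a contradiction. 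Hence $\boldsymbol{m}^{*}\in\mathcal{M}$.

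The only subtle step is the initial lower bound $\Theta(\boldsymbol{m}_{1},\boldsymbol{m}_{2})\ge H$, which relies on the topological observation that $\boldsymbol{m}_{2}$ sits in a different connected component of $\{U<H\}$ than $\mathcal{H}$; everything else is a direct application of the definitions of simple and bound sets and the strict inequality $H<\Theta(\mathcal{M},\widetilde{\mathcal{M}})$ obtained from the bound condition. No induction machinery beyond the standing hypothesis $\mathfrak{P}_{1}(n)$ is needed.
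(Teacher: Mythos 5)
Your proof is correct and follows essentially the same route as the paper's: you use $\mathfrak{P}_1(n)$ (simple and bound) together with Lemma \ref{lap01}-(1),(2) to compare $\Theta(\bm{m}_1,\bm{m}^*)<H$, $\Theta(\bm{m}_1,\bm{m}_2)\ge H$, and the bound inequality, arriving at the same contradiction via $\Theta(\mathcal{M},\widetilde{\mathcal{M}})$. The only cosmetic difference is that you first extract the strict inequality $H<\Theta(\mathcal{M},\widetilde{\mathcal{M}})$ as a separate step and add an unnecessary aside on compactness of $\overline{\mathcal{H}}$ to justify $\mathcal{M}^*(\mathcal{H})\subset\mathcal{H}$, which is already built into the intended reading of Definition \ref{def1}-(2) and is taken for granted in the paper's argument.
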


\begin{proof}
Let $\boldsymbol{m}\in\mathcal{M}\cap\mathcal{H}$ and let $\boldsymbol{m}'\in\mathcal{M}\cap\mathcal{H}^{c}$.
Suppose, by contradiction, that there exists $\boldsymbol{m}''\in\mathcal{M}^{*}(\mathcal{H})$
that is not an element of $\mathcal{M}$. By $\mathfrak{P}_{1}(n)$,
the set $\mathcal{M}$ is simple and we have
\[
U(\boldsymbol{m}'')\,\le\,U(\boldsymbol{m})\,=\,U(\mathcal{M})
\]
where the first inequality holds since $\boldsymbol{m}\in\mathcal{H}$
and $\boldsymbol{m}''\in\mathcal{M}^{*}(\mathcal{H})$. Thus, Lemma
\ref{lem_not}-(1), we have
\[
\Theta(\mathcal{M},\,\widetilde{\mathcal{M}})\ \le\ \Theta(\mathcal{M},\,\boldsymbol{m}'')\ \le\ \Theta(\boldsymbol{m},\,\boldsymbol{m}'')\ <\ H\ \le\ \Theta(\boldsymbol{m},\,\boldsymbol{m}')\;,
\]
where the last two inequalities follow from Lemma \ref{lap01}-(1)
and (2), respectively. Thus, we get a contradiction to the fact that
$\mathcal{M}$ is bound by $\mathfrak{P}_{1}(n)$.
\end{proof}
Next, we seek sufficient conditions for $\mathcal{H}$ not to separate
the $(n)$-states.
\begin{lem}
\label{lem_separate}The level set $\mathcal{H}$ does not separate
$(n)$-states if one of the following hold:
\begin{enumerate}
\item $\min_{\boldsymbol{x}\in\mathcal{H}}U(\boldsymbol{x})\le H-d^{(n)}$
\item $\mathcal{M}\subset\mathcal{H}$ for some $\mathcal{M}\in\mathscr{V}^{(n)}$
\end{enumerate}
\end{lem}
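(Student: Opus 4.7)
The plan is to argue by contradiction in both parts, extracting first a single quantitative bound that dispatches case (1) outright and supplies the key ingredient for case (2). Suppose $\mathcal{H}$ separates some $\mathcal{M}\in\mathscr{S}^{(n)}$; singletons cannot be separated, so $|\mathcal{M}|\ge 2$. By Lemma \ref{lem_levmin}, $\mathcal{M}^{*}(\mathcal{H})\subset\mathcal{M}$, and the simplicity of $\mathcal{M}$ (from $\mathfrak{P}_{1}(n)$) upgrades this to the identity $\min_{\mathcal{H}}U=U(\mathcal{M})$.

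The universal bound I would then establish is $H-U(\mathcal{M})\le d^{(n-1)}<d^{(n)}$. Choose $\boldsymbol{m}_{1}\in\mathcal{M}\cap\mathcal{H}$ and $\boldsymbol{m}_{2}\in\mathcal{M}\cap\mathcal{H}^{c}$. Lemma \ref{lap01}-(2) gives $\Theta(\boldsymbol{m}_{1},\boldsymbol{m}_{2})\ge H$, while Lemma \ref{lem:bound_dn} (invoked at level $n$ if $\mathcal{M}\in\mathscr{V}^{(n)}$, or at the level $k<n$ provided by Remark \ref{rem:N_p+1} if $\mathcal{M}\in\mathscr{N}^{(n)}$) yields $\Theta(\boldsymbol{m}_{1},\boldsymbol{m}_{2})\le U(\mathcal{M})+d^{(n-1)}$. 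Combining these with $\mathfrak{P}_{2}\llbracket n\rrbracket$ delivers the claim. Case (1) follows at once: its hypothesis rewrites as $H-U(\mathcal{M})\ge d^{(n)}$, in direct contradiction.

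For case (2), let $\mathcal{M}'\in\mathscr{V}^{(n)}$ with $\mathcal{M}'\subset\mathcal{H}$. Partition disjointness of $\mathscr{S}^{(n)}$ forces $\mathcal{M}\neq\mathcal{M}'$, and $\min_{\mathcal{H}}U=U(\mathcal{M})$ together with $\mathcal{M}'\subset\mathcal{H}$ gives $U(\mathcal{M})\le U(\mathcal{M}')$, hence $\mathcal{M}\subset\widetilde{\mathcal{M}'}$. Picking $\boldsymbol{m}''\in\mathcal{M}^{*}(\mathcal{H})\subset\mathcal{M}\cap\mathcal{H}$ and any $\boldsymbol{m}_{0}\in\mathcal{M}'\subset\mathcal{H}$, both lie in the same connected component of $\{U<H\}$, so Lemma \ref{lap01}-(1) yields $\Theta(\boldsymbol{m}'',\boldsymbol{m}_{0})<H$. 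Therefore $\Xi(\mathcal{M}')\le\Theta(\mathcal{M}',\mathcal{M})-U(\mathcal{M}')<H-U(\mathcal{M}')\le H-U(\mathcal{M})<d^{(n)}$, contradicting $\Xi(\mathcal{M}')\ge d^{(n)}$, which holds for every member of $\mathscr{V}^{(n)}$ by the definition \eqref{eq:depth_p+1}.

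The only subtlety is the applicability of Lemma \ref{lem:bound_dn} when $\mathcal{M}\in\mathscr{N}^{(n)}$, since that lemma is stated at level $n$. Remark \ref{rem:N_p+1} locates a level $k<n$ at which $\mathcal{M}\in\mathscr{V}^{(k)}$, and the resulting bound $U(\mathcal{M})+d^{(k-1)}$ is dominated by $U(\mathcal{M})+d^{(n-1)}$ through $\mathfrak{P}_{2}\llbracket n\rrbracket$, so the step is harmless and the remainder of the argument proceeds identically.
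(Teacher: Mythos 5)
Your proof is correct and follows essentially the same route as the paper's: both rest on Lemma \ref{lem_levmin}, the bound $\Theta(\boldsymbol{m}_1,\boldsymbol{m}_2)\ge H$ from Lemma \ref{lap01}-(2), and the bound $\Theta(\boldsymbol{m}_1,\boldsymbol{m}_2)\le U(\mathcal{M})+d^{(n-1)}$ from Lemma \ref{lem:bound_dn}. Your reorganization around the extracted inequality $H-U(\mathcal{M})\le d^{(n-1)}<d^{(n)}$ is a mild streamlining, and your closing of case (2) via $\Xi(\mathcal{M}')<d^{(n)}$ (contradicting $\mathcal{M}'\in\mathscr{V}^{(n)}$) is a cosmetic variant of the paper's, which instead pushes $\Theta(\boldsymbol{m},\boldsymbol{m}')-U(\cdot)>d^{(n-1)}$ back against Lemma \ref{lem:bound_dn}; the underlying inequality chain is identical. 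One point in your favor: the paper invokes Lemma \ref{lem:bound_dn} on a set in $\mathscr{S}^{(n)}$ without remarking that the lemma is stated only for $\mathscr{V}^{(n)}$; your explicit reduction to level $k<n$ via Remark \ref{rem:N_p+1} when the separated set lies in $\mathscr{N}^{(n)}$ correctly fills that gap and is worth keeping.
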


\begin{proof}
Suppose to the contrary that there exists $\mathcal{M}'\in\mathscr{S}^{(n)}$
containing two minima $\boldsymbol{m},\,\boldsymbol{m}'\in\mathcal{M}'$
such that $\boldsymbol{m}\in\mathcal{H}$ and $\boldsymbol{m}'\notin\mathcal{H}$.
By Lemma \ref{lem_levmin}, $\mathcal{M}^{*}(\mathcal{H})\subset\mathcal{M}'$.

\smallskip{}

\noindent (1) Since $\mathcal{M}'$ is simple by $\mathfrak{P}_{1}(n)$
and $\mathcal{M}^{*}(\mathcal{H})\subset\mathcal{M}'$, we have
\[
U(\mathcal{M}')\ =\ U(\boldsymbol{m})\ \le\ H-d^{(n)}
\]
by the assumption of part (1). Then, since $\Theta(\boldsymbol{m},\,\boldsymbol{m}')\ge H$
by Lemma \ref{lap01}-(2), by $\mathfrak{P}_{2}(n)$, we have
\[
\Theta(\boldsymbol{m},\,\boldsymbol{m}')-U(\mathcal{M}')\ \ge\ d^{(n)}\ >\ d^{(n-1)}
\]
which contradicts to Lemma  \ref{lem:bound_dn}.

\smallskip{}

\noindent (2) Since $\mathcal{M}^{*}(\mathcal{H})\subset\mathcal{M}'$,
we must have $U(\mathcal{M})>U(\mathcal{M}')$. As $\mathcal{M}\in\mathscr{V}^{(n)}$,
by definition \eqref{eq:depth_p+1} and by Lemma \ref{lap01}-(1),
\[
U(\mathcal{M}')+d^{(n)}\ \le\ U(\mathcal{M})+\Xi(\mathcal{M})\ =\ \Theta(\mathcal{M},\,\widetilde{\mathcal{M}})\ \le\ \Theta(\mathcal{M},\,\boldsymbol{m})\ <\ H\;.
\]
On the other hand, by Lemma \ref{lap01}-(2) and the previous bound
and $\mathfrak{P}_{2}(n)$, we get
\[
\Theta(\boldsymbol{m},\,\boldsymbol{m}')-U(\mathcal{M}')\ \ge\ H-U(\mathcal{M}')\ >\ d^{(n)}\ >\ d^{(n-1)}
\]
which contradicts to Lemma  \ref{lem:bound_dn}.
\end{proof}
Finally, we investigate property of $\mathcal{H}$ when it does not
separate $(n)$-states.

For $\mathcal{A}\subset\mathbb{R}^{d}$, let
\[
{\color{blue}\mathscr{S}^{(n)}(\mathcal{A})}\;:=\;\{\,\mathcal{M}\in\mathscr{S}^{(n)}\,:\,\mathcal{M}\subset\mathcal{A}\,\}
\]
and then define ${\color{blue}\mathscr{V}^{(n)}(\mathcal{A})}$ and
\textcolor{blue}{$\mathscr{N}^{(n)}(\mathcal{A})$ }in the same manner.
Note that this definition includes the one defined in Remark \ref{rem:stddec}.
\begin{lem}
\label{lem_escape}Suppose that $\mathcal{H}$ does not separate $(n)$-states.
\begin{enumerate}
\item Suppose that $\mathcal{M},\,\mathcal{M}'\in\mathscr{S}^{(n)}$ satisfy
$\mathcal{M}\subset\mathcal{H}$, $\mathcal{M}'\subset\mathcal{H}^{c}$,
and $\mathcal{M}\rightarrow\mathcal{M}'$. Then, $\mathcal{M}=\mathcal{M}^{*}(\mathcal{H})$.
\item Let $\mathcal{M}=\mathcal{M}^{*}(\mathcal{H})$ and suppose that $\Xi(\mathcal{M})\le d^{(n)}$.
Then, $\mathscr{V}^{(n)}(\mathcal{H})=\{\mathcal{M}\}$ or $\mathscr{V}^{(n)}(\mathcal{H})=\varnothing$.
If $\mathscr{V}^{(n)}(\mathcal{H})=\{\mathcal{M}\}$, $\Xi(\mathcal{M})=d^{(n)}$.
\item If $\mathscr{V}^{(n)}(\mathcal{H})=\varnothing$, $\mathcal{M}^{*}(\mathcal{H})\in\mathscr{N}^{(n)}(\mathcal{H})$.
\end{enumerate}
\end{lem}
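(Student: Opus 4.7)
The unifying tool is the elementary \emph{path principle}: any two points in the open connected set $\mathcal{H}\subset\{U<H\}$ are joined by a compact path on which $U$ attains its maximum strictly below $H$, so that $\Theta(\cdot,\cdot)<H$ whenever both arguments lie in $\mathcal{H}$. I will use this in all three parts in combination with $\mathfrak{P}_{1,2,3}\llbracket n\rrbracket$ and Lemma \ref{lem:bound_dn}.

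For part (1), the adjacency $\mathcal{M}\to\mathcal{M}'$ supplies a saddle $\boldsymbol{\sigma}$ at height $U(\boldsymbol{\sigma})=\Theta(\mathcal{M},\widetilde{\mathcal{M}})=\Theta(\mathcal{M},\mathcal{M}')$. Since $\mathcal{M}\subset\mathcal{H}$ and $\mathcal{M}'\subset\mathcal{H}^{c}$ are separated by the boundary of $\mathcal{H}$, this height is at least $H$. Any $\boldsymbol{m}^{*}\in\mathcal{M}^{*}(\mathcal{H})\setminus\mathcal{M}$ would satisfy $U(\boldsymbol{m}^{*})\le U(\mathcal{M})$ and hence belong to $\widetilde{\mathcal{M}}\cap\mathcal{H}$; the path principle then forces $\Theta(\mathcal{M},\widetilde{\mathcal{M}})<H$, a contradiction. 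Thus $\mathcal{M}^{*}(\mathcal{H})\subseteq\mathcal{M}$, and simplicity of $\mathcal{M}$ via $\mathfrak{P}_{1}(n)$ yields the reverse inclusion.

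For part (2), I first establish the key inequality $U(\mathcal{M})\ge H-d^{(n)}$: because $\mathcal{M}=\mathcal{M}^{*}(\mathcal{H})$, one has $\widetilde{\mathcal{M}}\cap\mathcal{H}=\varnothing$, so $\Theta(\mathcal{M},\widetilde{\mathcal{M}})\ge H$, which combined with the hypothesis $\Xi(\mathcal{M})\le d^{(n)}$ gives the inequality. Now pick $\mathcal{M}''\in\mathscr{V}^{(n)}(\mathcal{H})$; simplicity forces $U(\mathcal{M}'')\ge U(\mathcal{M})$ and $\mathscr{V}^{(n)}$-membership forces $\Xi(\mathcal{M}'')\ge d^{(n)}$. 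If $U(\mathcal{M}'')>U(\mathcal{M})$, then $\mathcal{M}\subset\widetilde{\mathcal{M}''}$, and the path principle gives $\Xi(\mathcal{M}'')<H-U(\mathcal{M}'')<H-U(\mathcal{M})\le d^{(n)}$, a contradiction. If $U(\mathcal{M}'')=U(\mathcal{M})$, simplicity yields $\mathcal{M}''\subseteq\mathcal{M}$, and any proper inclusion would again produce an element of $\widetilde{\mathcal{M}''}$ inside $\mathcal{H}$ with the same contradictory bound. Hence $\mathcal{M}''=\mathcal{M}$, which establishes both uniqueness and the identity $\Xi(\mathcal{M})=d^{(n)}$.

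For part (3), I argue by contradiction. Fix $\boldsymbol{m}^{*}\in\mathcal{M}^{*}(\mathcal{H})$ and let $\mathcal{M}^{(n)}\in\mathscr{S}^{(n)}$ contain it; by no-separation $\mathcal{M}^{(n)}\subset\mathcal{H}$, by the hypothesis $\mathcal{M}^{(n)}\in\mathscr{N}^{(n)}$, and by $\mathfrak{P}_{1}(n)$ it is simple at level $u^{*}:=\min_{\mathcal{H}}U$, whence $\mathcal{M}^{(n)}\subseteq\mathcal{M}^{*}(\mathcal{H})$. Assume the inclusion is strict and pick $\boldsymbol{m}'\in(\mathcal{M}^{*}(\mathcal{H})\setminus\mathcal{M}^{(n)})\subset\widetilde{\mathcal{M}^{(n)}}\cap\mathcal{H}$. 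By Remark \ref{rem:N_p+1}, $\mathcal{M}^{(n)}\in\mathscr{T}^{(k^{*})}\subset\mathscr{V}^{(k^{*})}$ for some $k^{*}<n$; using $\mathfrak{P}_{3}(k^{*})$ together with the definition of $d^{(k^{*})}$, one gets $\Xi(\mathcal{M}^{(n)})=d^{(k^{*})}$, and the path principle applied to $\boldsymbol{m}'$ gives $u^{*}+d^{(k^{*})}<H$. Hence the escape saddle of $\mathcal{M}^{(n)}$ at scale $k^{*}$ lies strictly below $H$, and since $U$ is strictly decreasing along the heteroclinic orbits (via $\nabla U\cdot\boldsymbol{\ell}\equiv0$, so that $\tfrac{d}{dt}U(\phi(t))=-|\nabla U(\phi(t))|^{2}$), both the saddle and the adjacent $\mathscr{S}^{(k^{*})}$-set are confined to $\mathcal{H}$. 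Propagating through the chain $\widehat{\mathbf{y}}^{(k^{*})}$, one eventually reaches a closed irreducible class inside $\mathscr{V}^{(k^{*})}(\mathcal{H})$, producing a $\mathscr{V}^{(k^{*}+1)}$-set contained in $\mathcal{H}$; lifting this set through the scales $k^{*}+1,\ldots,n$ while respecting no-separation, its scale-$n$ successor lands in $\mathscr{V}^{(n)}(\mathcal{H})$, contradicting the emptiness assumption. The main obstacle is to make this scale-by-scale lifting rigorous, carefully tracking how the $\mathscr{V}^{(\cdot)}(\mathcal{H})$-confinement is preserved under the merging of recurrent classes and the processing of transient states at each intermediate level.
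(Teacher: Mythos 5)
Your parts (1) and (2) are correct and essentially reproduce the paper's argument: part (1) is identical (adjacency gives $\Theta(\mathcal{M},\widetilde{\mathcal{M}})\ge H$, a point of $\mathcal{M}^{*}(\mathcal{H})\setminus\mathcal{M}$ would force $\Theta(\mathcal{M},\widetilde{\mathcal{M}})<H$); part (2) runs the same chain of inequalities $\Xi(\mathcal{M}'')<H-U(\mathcal{M}'')\le d^{(n)}$, and your explicit handling of the degenerate case $U(\mathcal{M}'')=U(\mathcal{M})$ is a small gain in care over the paper's wording, though the paper's inequality chain still goes through there.

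Part (3) is where the problem is. The paper does not propagate across scales at all. It observes that if $\mathscr{V}^{(n)}(\mathcal{H})=\varnothing$ then $\mathscr{S}^{(n)}(\mathcal{H})=\mathscr{N}^{(n)}(\mathcal{H})\ne\varnothing$; by part (1) it suffices to find $\mathcal{N}\in\mathscr{N}^{(n)}(\mathcal{H})$ and $\mathcal{N}'\in\mathscr{S}^{(n)}(\mathcal{H}^{c})$ with $\mathcal{N}\to\mathcal{N}'$; and if no such pair existed then, via $\mathfrak{P}_{3}(n)$, the chain $\widehat{\mathbf{y}}^{(n)}(\cdot)$ started in $\mathscr{N}^{(n)}(\mathcal{H})$ would be trapped there, producing a recurrent class of $\widehat{\mathbf{y}}^{(n)}(\cdot)$ contained entirely in $\mathscr{N}^{(n)}$, contradicting Lemma~\ref{l_irred_negli}. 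This works wholly at scale $n$.

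Your proposed "scale-by-scale lifting" has two genuine gaps, not merely a missing polish. First, to extract "a closed irreducible class inside $\mathscr{V}^{(k^{*})}(\mathcal{H})$" from the chain $\widehat{\mathbf{y}}^{(k^{*})}$ started at $\mathcal{M}^{(n)}$, you must show that chain is confined to $\mathscr{S}^{(k^{*})}(\mathcal{H})$; you only showed that the \emph{first} escape saddle of $\mathcal{M}^{(n)}$ lies strictly below $H$, which controls one step, not the entire trajectory. Second, and more fundamentally, a set $\mathcal{M}_{1}\in\mathscr{V}^{(k^{*}+1)}$ with $\mathcal{M}_{1}\subset\mathcal{H}$ need \emph{not} survive as a $\mathscr{V}^{(n)}$-set: at each intermediate level the tree either merges a set into a larger recurrent class or moves it into $\mathscr{N}$ as a transient state, and once it enters $\mathscr{N}^{(m)}$ it stays in $\mathscr{N}^{(m')}$ for all $m'\ge m$. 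So "its scale-$n$ successor lands in $\mathscr{V}^{(n)}(\mathcal{H})$" is simply false in general, and no amount of bookkeeping will salvage this inference; it would only land in $\mathscr{S}^{(n)}(\mathcal{H})$, which you already know contains $\mathscr{N}^{(n)}$-elements and thus gives no contradiction with $\mathscr{V}^{(n)}(\mathcal{H})=\varnothing$. The correct repair is the paper's: use part (1) to identify $\mathcal{M}^{*}(\mathcal{H})$ as the unique possible exit state from $\mathcal{H}$ for $\widehat{\mathbf{y}}^{(n)}(\cdot)$, and invoke Lemma~\ref{l_irred_negli} to guarantee that an exit must occur.
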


The proof of this lemma requires two lemmata.
\begin{lem}
\label{lem_charDelta}Let $\mathcal{M}\in\mathscr{S}^{(n)}$. Then,
$\mathcal{M}\in\mathscr{N}^{(n)}$ if and only if $\Xi(\mathcal{M})\le d^{(n-1)}$.
In addition, the last condition can be relaxed to $\Xi(\mathcal{M})<d^{(n)}$,
instead of $\Xi(\mathcal{M})\le d^{(n-1)}$.
\end{lem}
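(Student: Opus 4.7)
The plan is to prove the equivalence and its strengthened form as two direct implications using only $\mathfrak{P}_{1,2,3}\llbracket n\rrbracket$; the extra clause claiming $\Xi(\mathcal{M})<d^{(n)}$ works just as well will drop out automatically from $\mathfrak{P}_{2}(n)$.

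For the forward implication, I will start from $\mathcal{M}\in\mathscr{N}^{(n)}$ and invoke the decomposition \eqref{eq:N_p+1} to pick $k\in\llbracket 1,\,n-1\rrbracket$ with $\mathcal{M}\in\mathscr{T}^{(k)}\subset\mathscr{V}^{(k)}$. Since $\mathcal{M}$ is a transient state of $\mathbf{y}^{(k)}(\cdot)$, it is in particular not an absorbing state of $\mathbf{y}^{(k)}(\cdot)$. The key observation is that absorbing states in $\mathscr{V}^{(k)}$ coincide for $\mathbf{y}^{(k)}(\cdot)$ and $\widehat{\mathbf{y}}^{(k)}(\cdot)$: if $\widehat{\mathbf{y}}^{(k)}(\cdot)$ started at $\mathcal{M}$ never left $\mathcal{M}$, neither would its trace on $\mathscr{V}^{(k)}$. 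Consequently $\widehat{r}^{(k)}(\mathcal{M},\,\mathcal{M}')>0$ for some $\mathcal{M}'\in\mathscr{S}^{(k)}$, and then property $\mathfrak{P}_{3}(k)$ forces $\Xi(\mathcal{M})\le d^{(k)}$. Monotonicity of the depths from $\mathfrak{P}_{2}\llbracket n\rrbracket$ gives $d^{(k)}\le d^{(n-1)}<d^{(n)}$, which yields both $\Xi(\mathcal{M})\le d^{(n-1)}$ and the stronger $\Xi(\mathcal{M})<d^{(n)}$.

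For the reverse implication, I will argue by contradiction. Assume $\mathcal{M}\in\mathscr{S}^{(n)}$ satisfies $\Xi(\mathcal{M})\le d^{(n-1)}$ (or the weaker $\Xi(\mathcal{M})<d^{(n)}$) but $\mathcal{M}\notin\mathscr{N}^{(n)}$, i.e.\ $\mathcal{M}\in\mathscr{V}^{(n)}$. The definition \eqref{eq:depth_p+1} of $d^{(n)}$ as the minimum of $\Xi$ over $\mathscr{V}^{(n)}$ gives $\Xi(\mathcal{M})\ge d^{(n)}$, contradicting the hypothesis once we apply $\mathfrak{P}_{2}(n)$ to obtain $d^{(n-1)}<d^{(n)}$. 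Hence $\mathcal{M}\in\mathscr{N}^{(n)}$, and the equivalence in both forms is established.

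I do not anticipate any serious obstacle: the lemma is a bookkeeping statement extracting information from the inductive structure already encoded in $\mathfrak{P}_{1,2,3}\llbracket n\rrbracket$, together with the trivial fact that a state absorbing for a continuous-time Markov chain on $\mathscr{S}^{(k)}$ remains absorbing for its trace on any subset of $\mathscr{S}^{(k)}$ containing it.
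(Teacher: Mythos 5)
Your proposal is correct and follows essentially the same route as the paper's proof: the reverse implication is exactly the trivial depth comparison, and the forward implication uses the same decomposition $\mathcal{M}\in\mathscr{T}^{(k)}\subset\mathscr{V}^{(k)}$ via Remark \ref{rem:N_p+1}, then passes from non-absorbing for $\mathbf{y}^{(k)}(\cdot)$ to non-absorbing for $\widehat{\mathbf{y}}^{(k)}(\cdot)$ and applies $\mathfrak{P}_{3}(k)$ and $\mathfrak{P}_{2}\llbracket n\rrbracket$. The only cosmetic difference is that the paper cites the explicit rate formulas \eqref{eq:rate_30}--\eqref{eq:rate_3} to get the sharper $\Xi(\mathcal{M})=d^{(k)}$, while you invoke $\mathfrak{P}_{3}(k)$ to get $\Xi(\mathcal{M})\le d^{(k)}$, which suffices.
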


\begin{proof}
Let $\mathcal{M}\in\mathscr{S}^{(n)}$ and suppose that $\Xi(\mathcal{M})\le d^{(n-1)}$
so that $\Xi(\mathcal{M})<d^{(n)}$ by $\mathfrak{P}_{2}(n)$. Then,
it is clear from the definition of \eqref{eq:depth_p+1} of $d^{(n)}$
that $\mathcal{M}\not\in\mathscr{V}^{(n)}$, and hence $\mathcal{M}\in\mathscr{N}^{(n)}$.

Conversely, assume that $\mathcal{M}\in\mathscr{N}^{(n)}$ so that
by Remark \ref{rem:N_p+1}, $\mathcal{M}\in\mathscr{T}^{(k)}$ for
some $k\in\llbracket1,\,n-1\rrbracket$. This means that $\mathcal{M}$
belongs to $\mathscr{V}^{(k)}$ (and thus is not an element of $\mathscr{N}^{(k)}$),
and is a transient state for the $\mathscr{V}^{(k)}$-valued reduced
Markov chain ${\bf y}^{(k)}(\cdot)$. This implies that $\mathcal{M}$
is also a transient state for the $\mathscr{S}^{(k)}$-valued Markov
chain $\widehat{{\bf y}}^{(k)}(\cdot)$. In particular, there exists
$\mathcal{M}'\in\mathscr{S}^{(k)}$ such that $\widehat{r}^{(k)}(\mathcal{M},\,\mathcal{M}')>0$.
As $\mathcal{M}\in\mathscr{V}^{(k)}$, by \eqref{eq:rate_30} and
\eqref{eq:rate_3}, we should have $\Xi(\mathcal{M})=d^{(k)}$. By
$\mathfrak{P}_{2}\llbracket n\rrbracket$, since $k\in\llbracket1,\,n-1\rrbracket$
we can conclude that $\Xi(\mathcal{M})\le d^{(n-1)}$. We note that
a careful reading of the proof reveals that the second assertion of
the lemma is also proven.
\end{proof}
\begin{lem}
\label{l_irred_negli}The process $\widehat{{\bf y}}^{(n)}(\cdot)$
has no recurrent class consisting only of elements of $\mathscr{N}^{(n)}$.
\end{lem}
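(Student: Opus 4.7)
The plan is to argue by induction on $n$, assuming for contradiction that $\widehat{\mathbf{y}}^{(n)}(\cdot)$ admits a closed irreducible class $\mathscr{R}\subset\mathscr{N}^{(n)}$, and deriving a contradiction by examining the element of $\mathscr{R}$ that became negligible latest.

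The base case $n=1$ is immediate since $\mathscr{N}^{(1)}=\varnothing$. For the inductive step I would invoke the lemma at all levels $j<n$, which guarantees that each trace chain $\mathbf{y}^{(j)}$ appearing in the construction is well-defined. By \eqref{eq:N_p+1}, every $\mathcal{M}\in\mathscr{R}$ admits a unique $k(\mathcal{M})\in\llbracket1,\,n-1\rrbracket$ with $\mathcal{M}\in\mathscr{T}^{(k(\mathcal{M}))}\subset\mathscr{V}^{(k(\mathcal{M}))}$, and the key step is to fix $\mathcal{M}^{*}\in\mathscr{R}$ maximizing $k^{*}:=k(\mathcal{M}^{*})$ and exploit the fact that $\mathcal{M}^{*}$ is transient in the well-defined trace chain $\mathbf{y}^{(k^{*})}(\cdot)$.

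From this transience I would extract a finite positive-probability trajectory $\mathcal{M}^{*}=\mathcal{A}_{0}\to\mathcal{A}_{1}\to\cdots\to\mathcal{A}_{\ell}$ of $\widehat{\mathbf{y}}^{(k^{*})}(\cdot)$, where $\mathcal{A}_{\ell}$ is the \emph{first} element of a recurrent class $\mathscr{R}_{j}^{(k^{*})}$ visited along the way. By this choice, each intermediate state $\mathcal{A}_{0},\dots,\mathcal{A}_{\ell-1}$ lies in $\mathscr{T}^{(k^{*})}\cup\mathscr{N}^{(k^{*})}=\mathscr{N}^{(k^{*}+1)}$ and therefore remains unchanged through the partition refinements at layers $k^{*}+1,\dots,n$. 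Iterating the rate-inheritance relations \eqref{eq:rate_1} and \eqref{eq:rate_2}, the same sequence lifts to a positive-probability path in $\widehat{\mathbf{y}}^{(n)}(\cdot)$ ending at the unique element $\mathcal{B}\in\mathscr{S}^{(n)}$ that contains $\mathcal{A}_{\ell}$.

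To conclude I would split cases on the position of $\mathcal{B}$. If $\mathcal{B}\in\mathscr{V}^{(n)}$, then the positive-probability escape from $\mathscr{R}$ directly contradicts its closedness. If $\mathcal{B}\in\mathscr{N}^{(n)}$, as forced by closedness, then since $\mathcal{A}_{\ell}\in\mathscr{R}_{j}^{(k^{*})}$ is absorbed at layer $k^{*}+1$ into $\bigcup\mathscr{R}_{j}^{(k^{*})}\in\mathscr{V}^{(k^{*}+1)}$, the first layer at which $\mathcal{B}$ becomes negligible is at least $k^{*}+1$, giving $k(\mathcal{B})\ge k^{*}+1$ and contradicting the maximality of $k^{*}$. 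The main obstacle I anticipate is the rate-inheritance step: one must carefully iterate \eqref{eq:rate_1}--\eqref{eq:rate_2} along the path to ensure no intermediate $\mathcal{A}_{i}$ is re-merged into a $\mathscr{V}$-element at any higher layer, which is precisely ruled out by the fact that $\mathcal{A}_{i}\in\mathscr{N}^{(k^{*}+1)}$ stays in $\mathscr{N}$ at all subsequent layers.
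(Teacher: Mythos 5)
Your proof is correct, but it takes a genuinely different route from the paper's. The paper also identifies the maximal level $j=k^{*}$ at which a member of $\mathscr{R}$ was still non-negligible, but it then descends: using the rate-inheritance relations \eqref{eq:rate_1}--\eqref{eq:rate_2} it shows by downward induction that the entire set $\mathscr{R}$ is already a recurrent class of $\widehat{\mathbf{y}}^{(j)}(\cdot)$, hence $\mathscr{R}\cap\mathscr{V}^{(j)}$ is a recurrent class of the trace $\mathbf{y}^{(j)}(\cdot)$; since $\mathscr{R}\subset\mathscr{N}^{(j+1)}=\mathscr{N}^{(j)}\cup\mathscr{T}^{(j)}$ forces $\mathscr{R}\cap\mathscr{V}^{(j)}\subset\mathscr{T}^{(j)}$, this set would consist only of transient states, a contradiction. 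You instead ascend: you fix a single extremal $\mathcal{M}^{*}$, use its transience at level $k^{*}$ to extract a positive-probability escape path, and lift that path upward to level $n$ via the same rate relations. Both arguments turn on the same inheritance mechanism; the paper's version is a bit leaner because it works with the whole class $\mathscr{R}$ at once and avoids having to argue separately about intermediate states and the set $\mathcal{B}$ that absorbs $\mathcal{A}_{\ell}$.

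Two minor points worth tightening in your write-up. First, your opening remark about invoking the lemma at levels $j<n$ to guarantee well-definedness of the trace chains is unnecessary here: Section \ref{sec5} already works under the standing assumption that $\Lambda^{(1)},\dots,\Lambda^{(n)}$ have been constructed, which presupposes all lower-level trace chains exist. Second, in your final case split the clause \textquotedblleft If $\mathcal{B}\in\mathscr{N}^{(n)}$, as forced by closedness\textquotedblright{} quietly uses that closedness forces every state on the lifted path, including $\mathcal{B}$, to lie in $\mathscr{R}$; state this explicitly (a closed class contains everything reachable by a positive-rate path from any of its members), since the maximality contradiction requires $\mathcal{B}\in\mathscr{R}$, not merely $\mathcal{B}\in\mathscr{N}^{(n)}$.
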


\begin{proof}
Suppose that $\mathscr{U}=\{\mathcal{M}_{1},\,\dots,\,\mathcal{M}_{a}\}\subset\mathscr{N}^{(n)}$
is a recurrent class of $\widehat{{\bf y}}^{(n)}(\cdot)$. Since $\mathscr{N}^{(1)}=\varnothing$,
there exists $j\in\llbracket1,\,n-1\rrbracket$ such that
\[
\mathscr{U}\not\subset\mathscr{N}^{(j)}\;\;\text{and}\;\;\mathscr{U}\subset\mathscr{N}^{(j+1)}\;.
\]
Also, since $\mathscr{N}^{(j+1)}=\mathscr{N}^{(j)}\cup\mathscr{T}^{(j)}\subset\mathscr{S}^{(j)}$,
$\mathscr{U}\subset\mathscr{S}^{(j)}$.\smallskip{}
\textbf{\emph{Claim:}} \emph{If $\mathscr{U}$ is a recurrent class
of $\widehat{{\bf y}}^{(i+1)}(\cdot)$ for some $i\in\llbracket j,\,n-1\rrbracket$,
then it is a recurrent class of $\widehat{{\bf y}}^{(i)}(\cdot)$. }

\noindent To prove this claim, first note that, by \eqref{eq:rate_1}
and the fact that $\mathscr{U}\subset\mathscr{N}^{(i+1)}$,
\begin{equation}
\widehat{r}^{(i)}(\mathcal{M}',\,\mathcal{M}'')\ =\ \widehat{r}^{(i+1)}(\mathcal{M}',\,\mathcal{M}'')\;\;\;\text{for all }\mathcal{M}',\,\mathcal{M}''\in\mathscr{U}\;.\label{err0}
\end{equation}
Since $\mathscr{U}$ is a recurrent class of $\widehat{{\bf y}}^{(i+1)}(\cdot)$,
we have
\begin{equation}
\widehat{r}^{(i+1)}(\mathcal{M}',\,\mathcal{M}'')\ =\ 0\;\;\;\text{for all }\mathcal{M}'\in\mathscr{U}\;\text{and }\mathcal{M}''\notin\mathscr{U}\ .\label{eq:err0}
\end{equation}
By \eqref{eq:rate_1}, \eqref{eq:rate_2}, and \eqref{eq:err0}, we
get
\begin{equation}
\widehat{r}^{(i)}(\mathcal{M}',\,\mathcal{M}'')\ =\ 0\;\;\;\text{for all }\mathcal{M}'\in\mathscr{U}\;\text{and }\mathcal{M}''\notin\mathscr{U}\;.\label{err1-1}
\end{equation}
The proof of the claim follows from \eqref{err0} and \eqref{err1-1}.

By the claim and the hypothesis that $\mathscr{U}$ is a recurrent
class of $\widehat{{\bf y}}^{(n)}(\cdot)$, $\mathscr{U}$ is a recurrent
class of $\widehat{{\bf y}}^{(j)}(\cdot)$. Since $\mathbf{y}^{(j)}(\cdot)$
is the trace process of $\widehat{{\bf y}}^{(j)}(\cdot)$ on $\mathscr{V}^{(j)}$,
$\mathscr{U}\cap\mathscr{V}^{(j)}$ is a recurrent class of $\mathbf{y}^{(j)}(\cdot)$.
On the other hand, as $\mathscr{U}\subset\mathscr{N}^{(j+1)}=\mathscr{N}^{(j)}\cup\mathscr{T}^{(j)}$,
$\mathscr{U}\cap\mathscr{V}^{(j)}\subset\mathscr{T}^{(j)}$. Hence,
$\mathscr{U}\cap\mathscr{V}^{(j)}$ consists only of transient states
of $\mathbf{y}^{(j)}(\cdot)$ and therefore cannot be a recurrent
class of $\mathbf{y}^{(j)}(\cdot)$. This yields a contradiction.
\end{proof}
Now we are ready to prove Lemma \ref{lem_escape}.
\begin{proof}[Proof of Lemma \ref{lem_escape}]
 (1) Since $\mathcal{M}\to\mathcal{M}'$,
\begin{equation}
\Theta(\mathcal{M},\,\widetilde{\mathcal{M}})\ =\ \Theta(\mathcal{M},\,\mathcal{M}')\ \ge\ H\;,\label{eq:e4101}
\end{equation}
where the last inequality follows from Lemma \ref{lap01}-(2). Suppose,
by contradiction, that there exists $\boldsymbol{m}\in\mathcal{M}^{*}(\mathcal{H})\setminus\mathcal{M}$
so that $U(\boldsymbol{m})\le U(\mathcal{M})$. Then, since $\boldsymbol{m}\in\widetilde{\mathcal{M}}$,
by Lemmata \ref{lem_not}-(1) and \ref{lap01}-(1),
\[
\Theta(\mathcal{M},\,\widetilde{\mathcal{M}})\ \le\ \Theta(\mathcal{M},\,\boldsymbol{m})\ <\ H\;,
\]
in contradiction with \eqref{eq:e4101}. Therefore, $\mathcal{M}=\mathcal{M}^{*}(\mathcal{H})$.
\smallskip{}

\noindent (2) Let $\mathcal{M}=\mathcal{M}^{*}(\mathcal{H})$. Then,
since $\widetilde{\mathcal{M}}\subset\mathcal{H}^{c}$ (it can be
$\widetilde{\mathcal{M}}=\varnothing$), by Lemma \ref{lap01}-(2),
we have
\begin{equation}
\Theta(\mathcal{M},\,\widetilde{\mathcal{M}})\ \ge\ H\;.\label{eq:css1}
\end{equation}
Let $\mathcal{M}''\in\mathscr{S}^{(n)}(\mathcal{H})$, $\mathcal{M}''\neq\mathcal{M}$.
Since $U(\mathcal{M})<U(\mathcal{M}'')$, we have $\mathcal{M}\subset\widetilde{\mathcal{M}''}$
so that by Lemmata \ref{lem_not}-(1) and \ref{lap01}-(1),
\begin{equation}
\Theta(\mathcal{M}'',\,\widetilde{\mathcal{M}''})\ \le\ \Theta(\mathcal{M}'',\,\mathcal{M})\ <\ H\;.\label{eq:css2}
\end{equation}
Thus, by \eqref{eq:css1} and \eqref{eq:css2}, we get $\Theta(\mathcal{M},\,\widetilde{\mathcal{M}})>\Theta(\mathcal{M}'',\,\widetilde{\mathcal{M}''})$.
Combining this fact with $U(\mathcal{M})<U(\mathcal{M}'')$, by assumption
$\Xi(\mathcal{M})\le d^{(n)}$, we get
\[
d^{(n)}\ \ge\ \Xi(\mathcal{M})\ =\ \Theta(\mathcal{M},\,\widetilde{\mathcal{M}})-U(\mathcal{M})\ >\ \Theta(\mathcal{M}'',\,\widetilde{\mathcal{M}''})-U(\mathcal{M}'')\ =\ \Xi(\mathcal{M}'')\;.
\]
Hence, $\Xi(\mathcal{M}'')<d^{(n)}$, and by Lemma \ref{lem_charDelta}
we get $\mathcal{M}''\in\mathscr{N}^{(n)}$. Therefore, in $\mathcal{H}$,
only $\mathcal{M}$ can be an element of $\mathscr{V}^{(n)}$. In
other words, either $\mathscr{V}^{(n)}(\mathcal{H})=\{\mathcal{M}\}$
or $\mathscr{V}^{(n)}=\varnothing$. In the former case, by \eqref{eq:depth_p+1},
$\Xi(\mathcal{M})\ge d^{(n)}$, and therefore combining with the hypothesis
$\Xi(\mathcal{M})\le d^{(n)}$, we get $\Xi(\mathcal{M})=d^{(n)}$.

\noindent \smallskip{}
(3) By the definition of $\mathcal{H}$, there exists a local minimum
of $U$ belonging to $\mathcal{H}$. Since $\mathscr{S}^{(n)}$ is
a partition of $\mathcal{M}_{0}$ and $\mathcal{H}$ does not separate
$(n)$-states, the set $\mathscr{S}^{(n)}(\mathcal{H})$ cannot be
an empty set. In particular, by the hypothesis $\mathscr{V}^{(n)}(\mathcal{H})=\varnothing$,
we have $\mathscr{N}^{(n)}(\mathcal{H})=\mathscr{S}^{(n)}(\mathcal{H})\neq\varnothing$.

Now by (1), it suffices to prove that there exists $\mathcal{N\in\mathscr{N}}^{(n)}(\mathcal{H}),\,\mathcal{N}'\in\mathscr{S}^{(n)}(\mathcal{H}^{c})$
such that $\mathcal{N}\rightarrow\mathcal{N}'$. Suppose this is false,
i.e., for each $\mathcal{N\in\mathscr{N}}^{(n)}(\mathcal{H})$ and
$\mathcal{N}'\in\mathscr{S}^{(n)}(\mathcal{H}^{c})$, we cannot have
$\mathcal{N}\rightarrow\mathcal{N}'$ and thus $\widehat{r}^{(n)}(\mathcal{N},\,\mathcal{N}'')=0$
by $\mathfrak{P}_{3}(n)$. This implies the process $\widehat{{\bf y}}^{(n)}(\cdot)$
starting from $\mathscr{N}^{(n)}(\mathcal{H})$ cannot escape from
there. Therefore, there exists a recurrent class of the process $\widehat{{\bf y}}^{(n)}(\cdot)$
in $\mathscr{N}^{(n)}(\mathcal{H})$. This contradicts to Lemma \ref{l_irred_negli}.
\end{proof}

\subsection{\label{sec5.2}Analysis of closed level sets}

\begin{figure}
\includegraphics[scale=0.2]{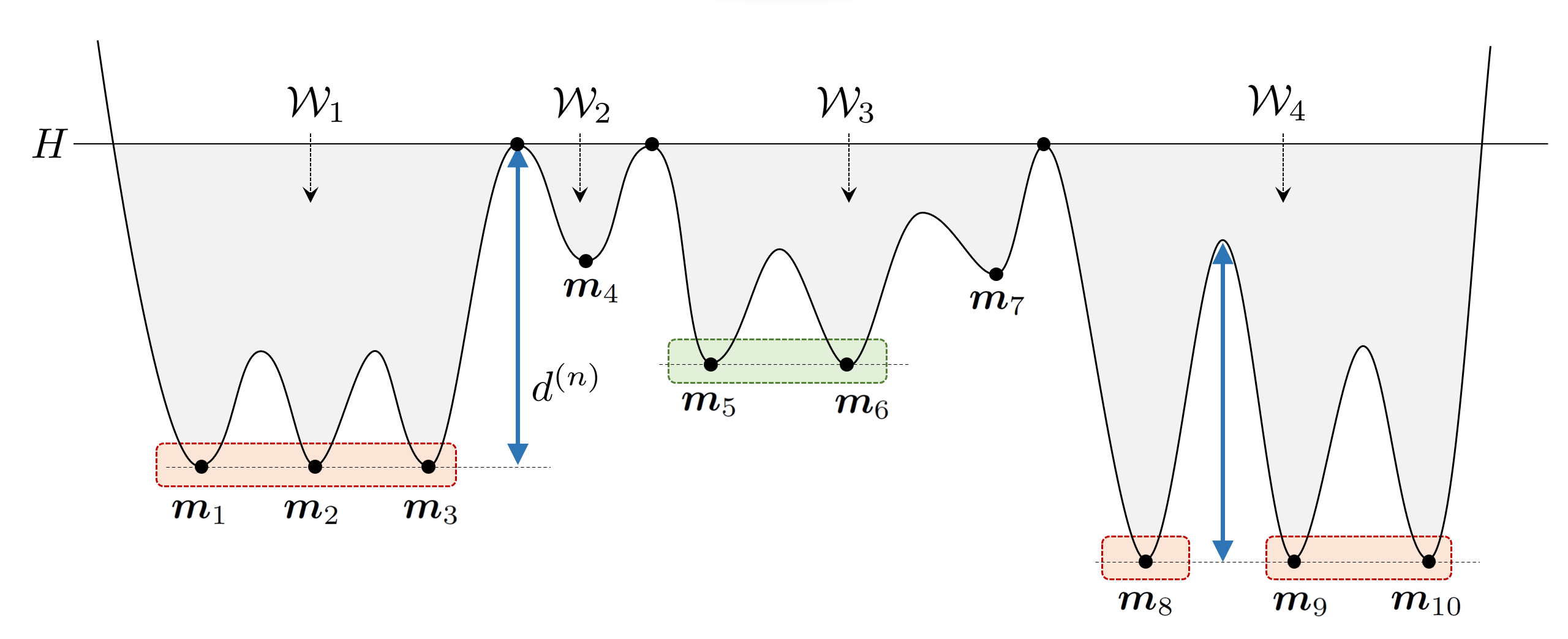}

\caption{\label{fig_l_wells} An example of level set investigated in Section
\ref{sec5.2}. In this figure, $\mathcal{K}$ is the connected component
of level set $\{U\le H\}$ containing $\boldsymbol{m}_{1},\,\dots,\,\boldsymbol{m}_{10}$
and $\{\mathcal{W}_{1},\,\mathcal{W}_{2},\,\mathcal{W}_{3},\,\mathcal{W}_{4}\}$
is a level set decomposition of $\mathcal{K}$.}
\end{figure}

In this subsection, we fix a saddle point $\boldsymbol{\sigma}\in\mathbb{R}^{d}$
denote by \textcolor{blue}{$\mathcal{K}$ }the connected component
of the closed level set $\{U\le U(\boldsymbol{\sigma})\}$ containing
$\boldsymbol{\sigma}$.Mind that since $\bm{\sigma}$
is a saddle point, $\mathcal{K}$ is not a singleton.

We denote by $\mathcal{W}_{1},\,\dots,\,\mathcal{W}_{a}$ all the
connected components of the open level set $\{U<U(\bm{\sigma})\}$
intersecting with $\mathcal{K}$. Then, it is proven in Lemma \ref{2la1-0}
that
\[
\overline{\bigcup_{i\in\llbracket1,\,a\rrbracket}\mathcal{W}_{i}}=\mathcal{K}\;.
\]
We say that $\{\mathcal{W}_{1},\,\dots,\,\mathcal{W}_{a}\}$ is a\textcolor{blue}{{}
}\textit{\textcolor{blue}{level set decomposition}} of $\mathcal{K}$.

This subsection is devoted to analyze the landscape of $\mathcal{K}$
in terms of $\mathcal{W}_{1},\,\dots,\,\mathcal{W}_{a}$. The first
result aims to characterize the elements of set $\mathscr{S}^{(n)}$
included in each $\mathcal{W}_{i}$. We refer to Figure \ref{fig_l_wells}
an illustration of the lemmata below.
\begin{lem}
\label{lem_wells1}Suppose that $a\ge2$ and that
\begin{equation}
\min_{\boldsymbol{x}\in\mathcal{\mathcal{K}}}U(\boldsymbol{x})\ \le\ U(\boldsymbol{\sigma})-d^{(n)}\ .\label{emin}
\end{equation}
Then, the sets $\mathcal{W}_{i}$, $i\in\llbracket1,\,a\rrbracket$,
do not separate $(n)$-states. Moreover, for $i\in\llbracket1,\,a\rrbracket$,
the following hold.
\begin{enumerate}
\item If $\min_{\boldsymbol{x}\in\mathcal{W}_{i}}U(\boldsymbol{x})>U(\boldsymbol{\sigma})-d^{(n)}$,
then $\mathscr{V}^{(n)}(\mathcal{W}_{i})=\varnothing$ and $\mathcal{M}^{*}(\mathcal{W}_{i})\in\mathscr{N}^{(n)}$.
\item If $\min_{\boldsymbol{x}\in\mathcal{W}_{i}}U(\boldsymbol{x})=U(\boldsymbol{\sigma})-d^{(n)}$,
then $\mathscr{V}^{(n)}(\mathcal{W}_{i})=\{\mathcal{M}^{*}(\mathcal{W}_{i})\}$.
\item If $\min_{\boldsymbol{x}\in\mathcal{W}_{i}}U(\boldsymbol{x})<U(\boldsymbol{\sigma})-d^{(n)}$,
then $\mathscr{V}^{(n)}(\mathcal{W}_{i})\neq\varnothing$ and furthermore
there exists a recurrent class $\mathscr{R}$ of chain $\mathbf{y}^{(n)}(\cdot)$
such that $\mathscr{R}\subset\mathscr{V}^{(n)}(\mathcal{W}_{i})$.
\end{enumerate}
\end{lem}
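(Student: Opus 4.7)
The plan is to establish the global non-separation claim first by a contradiction based on the depth monotonicity $d^{(k-1)}<d^{(k)}$, and then read off the three case-wise conclusions from Lemma~\ref{lem_escape}, with the recurrent class in case~(3) demanding an additional combinatorial argument on $\mathbf{y}^{(n)}(\cdot)$.

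For the non-separation claim I would suppose, for contradiction, that some $\mathcal{W}_i$ separates an $(n)$-state $\mathcal{M}$. Lemma~\ref{lem_levmin} together with the simplicity granted by $\mathfrak{P}_1(n)$ gives $U(\mathcal{M})=\min_{\mathcal{W}_i}U$, and the subcase $U(\mathcal{M})\le U(\bm{\sigma})-d^{(n)}$ is immediately excluded by Lemma~\ref{lem_separate}(1). Hence $U(\mathcal{M})>U(\bm{\sigma})-d^{(n)}$, and assumption~\eqref{emin} supplies $\bar{\bm{m}}\in\mathcal{K}\cap\widetilde{\mathcal{M}}$ with $U(\bar{\bm{m}})\le U(\bm{\sigma})-d^{(n)}$. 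Since $\mathcal{K}\subset\{U\le U(\bm{\sigma})\}$ is connected, this forces $\Theta(\mathcal{M},\widetilde{\mathcal{M}})\le U(\bm{\sigma})$ and hence $\Xi(\mathcal{M})<d^{(n)}$, so Lemma~\ref{lem_charDelta} puts $\mathcal{M}\in\mathscr{T}^{(k)}$ for some $k\in\llbracket 2,n-1\rrbracket$ (the case $k=1$ is excluded because singletons in $\mathscr{V}^{(1)}$ cannot be separated). Property $\mathfrak{P}_4(k)$ then forces $U(\mathcal{M})\le U(\bm{\sigma})-d^{(k)}$, whereas applying Lemma~\ref{lem:bound_dn} at level $k$ to the separating pair $\bm{m},\bm{m}'\in\mathcal{M}$ and combining with $\Theta(\bm{m},\bm{m}')\ge U(\bm{\sigma})$ (Lemma~\ref{lap01}(2)) yields $U(\mathcal{M})\ge U(\bm{\sigma})-d^{(k-1)}$. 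Chaining the two bounds gives $d^{(k)}\le d^{(k-1)}$, contradicting $\mathfrak{P}_2$.

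Once non-separation holds, $\mathcal{M}^{*}:=\mathcal{M}^{*}(\mathcal{W}_i)$ is a disjoint union of full $(n)$-states inside $\mathcal{W}_i$ of common energy $\min_{\mathcal{W}_i}U$. The inclusion $\widetilde{\mathcal{M}^{*}}\subset\mathcal{W}_i^{c}$ together with Lemma~\ref{lap01}(2) furnishes the universal lower bound $\Theta(\mathcal{M}^{*},\widetilde{\mathcal{M}^{*}})\ge U(\bm{\sigma})$, and whenever $U(\mathcal{M}^{*})>\min_{\mathcal{K}}U$ the bridge through $\mathcal{K}$ to $\bar{\bm{m}}\in\widetilde{\mathcal{M}^{*}}$ delivers the matching upper bound. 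Accordingly $\Xi(\mathcal{M}^{*})$ falls in $(0,d^{(n)})$, $\{d^{(n)}\}$, or $(d^{(n)},\infty)$ in cases~(1), (2), (3) respectively. A short argument based on a low-energy path inside the open set $\mathcal{W}_i$ (in the spirit of Lemma~\ref{lem_stddec}) rules out a non-trivial decomposition of $\mathcal{M}^{*}$ in cases~(1) and~(2), so $\mathcal{M}^{*}\in\mathscr{S}^{(n)}$ there; the conclusions $\mathscr{V}^{(n)}(\mathcal{W}_i)=\varnothing$ with $\mathcal{M}^{*}\in\mathscr{N}^{(n)}$ and $\mathscr{V}^{(n)}(\mathcal{W}_i)=\{\mathcal{M}^{*}\}$ now follow from Lemma~\ref{lem_escape}(2)--(3) combined with Lemma~\ref{lem_charDelta}.

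Case~(3) is the delicate one. Here $U(\mathcal{M}^{*})+d^{(n)}<U(\bm{\sigma})$, so any non-absorbing $\mathcal{M}\in\mathscr{V}^{(n)}$ with $\mathcal{M}\subset\mathcal{W}_i$ and $U(\mathcal{M})=U(\mathcal{M}^{*})$ can only jump through saddles of height $U(\mathcal{M})+d^{(n)}<U(\bm{\sigma})$; by $\mathfrak{P}_3(n)$ and the definition of the adjacency $\mathcal{M}\to\mathcal{M}'$, any such transition keeps both the connecting saddle and the target $\mathcal{M}'$ inside $\mathcal{W}_i$, at the same minimal energy. Iterating this observation, the ``bottom layer'' of $\mathscr{V}^{(n)}(\mathcal{W}_i)$ forms a non-empty finite subset of $\mathscr{V}^{(n)}$ which is forward-closed under $\mathbf{y}^{(n)}(\cdot)$, so it must contain a recurrent class $\mathscr{R}\subset\mathscr{V}^{(n)}(\mathcal{W}_i)$. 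I expect this forward-closure step --- precisely translating the saddle-height bound into a combinatorial closure under the reduced chain via the rate identification of $\mathfrak{P}_3(n)$, while keeping track of the trace operation onto $\mathscr{V}^{(n)}$ --- to be the principal technical obstacle.
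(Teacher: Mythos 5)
Your plan covers the right ground, but two steps are more subtle than the write-up lets on, and one step as written would not quite close.

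For the non-separation claim, your route is valid but genuinely different from the paper's. You descend to the level $k<n$ at which the separated state $\mathcal{M}$ is transient, squeeze $U(\mathcal{M})$ between $U(\boldsymbol{\sigma})-d^{(k-1)}$ (Lemma~\ref{lem:bound_dn} at level $k$ plus Lemma~\ref{lap01}-(2)) and $U(\boldsymbol{\sigma})-d^{(k)}$ (via $\mathfrak{P}_4(k)$, Remark~\ref{rem:nab}, and the bridge $\bar{\boldsymbol{m}}\in\mathcal{K}$), and contradict $\mathfrak{P}_2$. This works, but it buys nothing over the paper's shorter route, which never leaves level $n$: it simply chains $U(\boldsymbol{\sigma})\le\Theta(\boldsymbol{m},\boldsymbol{m}')<\Theta(\mathcal{M},\widetilde{\mathcal{M}})\le\Theta(\boldsymbol{m},\bar{\boldsymbol{m}})\le U(\boldsymbol{\sigma})$, using that $\mathcal{M}$ is bound (from $\mathfrak{P}_1(n)$) and the low-energy bridge. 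Your version also leans on $\mathfrak{P}_4(k)$ for $k<n$ and on $\mathscr{V}^{(1)}$ being singletons, both of which are fine but add moving parts.

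For cases~(1) and~(2), the ``short argument \ldots ruling out a non-trivial decomposition of $\mathcal{M}^{*}$'' is where the real work hides, and it is not of the Lemma~\ref{lem_stddec} type: two $(n)$-states of the same minimal energy inside $\mathcal{W}_i$ are locally consistent and nothing elementary about low-energy paths prevents it. The paper's route is indirect: it first shows, by a plain $\Xi$-estimate, that every $\mathcal{M}\in\mathscr{S}^{(n)}(\mathcal{W}_i)$ different from $\mathcal{M}^{*}(\mathcal{W}_i)$ lies in $\mathscr{N}^{(n)}$; hence if $\mathcal{M}^{*}(\mathcal{W}_i)\notin\mathscr{S}^{(n)}$ one gets $\mathscr{V}^{(n)}(\mathcal{W}_i)=\varnothing$, and then Lemma~\ref{lem_escape}-(3), whose very conclusion requires $\mathcal{M}^{*}(\mathcal{W}_i)$ to be a single $(n)$-state, produces the contradiction. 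Since you do invoke Lemma~\ref{lem_escape}-(2)--(3) at the end, the logic can be repaired, but you should be aware that the identification $\mathcal{M}^{*}(\mathcal{W}_i)\in\mathscr{S}^{(n)}$ is an output of that escape lemma, not of a direct path argument, and that $\Xi(\mathcal{M}^{*}(\mathcal{W}_i))$ is only meaningful after this identification.

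For case~(3), the ``bottom layer'' framing does not work: a trace jump of $\mathbf{y}^{(n)}(\cdot)$ from a minimal-energy state can, after passing through higher-energy negligible states, terminate at a higher-energy element of $\mathscr{V}^{(n)}(\mathcal{W}_i)$, so the bottom layer is not forward-closed. The set that is forward-closed is all of $\mathscr{V}^{(n)}(\mathcal{W}_i)$: for any $\mathcal{M}\in\mathscr{S}^{(n)}(\mathcal{W}_i)$ other than $\mathcal{M}^{*}(\mathcal{W}_i)$, one has $\Theta(\mathcal{M},\widetilde{\mathcal{M}})\le\Theta(\mathcal{M},\mathcal{M}^{*}(\mathcal{W}_i))<U(\boldsymbol{\sigma})$, so by $\mathfrak{P}_3(n)$ its $\widehat{\mathbf{y}}^{(n)}$-jumps stay inside $\mathcal{W}_i$; while $\mathcal{M}^{*}(\mathcal{W}_i)$, once known to lie in $\mathscr{S}^{(n)}$, is absorbing because $\Xi(\mathcal{M}^{*}(\mathcal{W}_i))>d^{(n)}$. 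Both the non-emptiness of $\mathscr{V}^{(n)}(\mathcal{W}_i)$ and the fact that $\mathcal{M}^{*}(\mathcal{W}_i)\in\mathscr{S}^{(n)}$ again require the Lemma~\ref{lem_escape}-(3) route. The paper packages all of this in one contradiction: if no recurrent class sits inside $\mathcal{W}_i$, $\widehat{\mathbf{y}}^{(n)}$ must escape, Lemma~\ref{lem_escape}-(1) pins the exit state as $\mathcal{M}^{*}(\mathcal{W}_i)$, and the $\Xi$-estimate makes that state absorbing. I would recommend adopting that packaging rather than the forward-closure phrasing, precisely because it resolves the $\mathcal{M}^{*}(\mathcal{W}_i)\in\mathscr{S}^{(n)}$ question and the closure question in a single stroke.
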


\begin{rem}
In Figure \ref{fig_l_wells}, the sets $\mathcal{W}_{2},\,\mathcal{W}_{3}$
correspond to the assertion (1) and hence $\{\boldsymbol{m}_{5},\,\boldsymbol{m}_{6}\}\in\mathscr{N}^{(n)}$.
The sets $\mathcal{W}_{1}$ and $\mathcal{W}_{4}$ correspond to the
assertions (2) and (3), respectively. In particular, $\{\boldsymbol{m}_{1},\,\boldsymbol{m}_{2},\,\boldsymbol{m}_{3}\}\in\mathscr{V}^{(n)}$.
There is a recurrent class $\{\{\boldsymbol{m}_{8}\},\,\{\boldsymbol{m}_{9},\,\boldsymbol{m}_{10}\}\}\subset\mathscr{V}^{(n)}$
of the chain $\mathbf{y}^{(n)}(\cdot)$ contained in $\mathcal{W}_{4}$.
\end{rem}

\begin{proof}
\noindent We fix $i\in\llbracket1,\,a\rrbracket$. We first prove
that $\mathcal{W}_{i}$ does not separate $(n)$-states. Suppose,
by contradiction, that there exists $\mathcal{M}\in\mathscr{S}^{(n)}$
with two elements $\boldsymbol{m},\,\boldsymbol{m}'\in\mathcal{M}$
such that $\boldsymbol{m}\in\mathcal{W}_{i}$ and $\boldsymbol{m}'\notin\mathcal{W}_{i}$.
By $\mathfrak{P}_{1}(n)$, $U(\boldsymbol{m})=U(\boldsymbol{m}')$.
Then, we have
\[
U(\boldsymbol{\sigma})\ \le\ \Theta(\boldsymbol{m},\,\boldsymbol{m}')\ <\ U(\mathcal{M})+d^{(n)}\ =\ U(\boldsymbol{m})+d^{(n)}
\]
where the first inequality follows from Lemma \ref{lap01}-(1), and
the second inequality and the last equality follows from $\mathfrak{P}_{1}(n)$,
and therefore we have $U(\boldsymbol{m})>U(\boldsymbol{\sigma})-d^{(n)}$.
Hence, by \eqref{emin}, there exists $\boldsymbol{m}''\in\mathcal{K}\cap\mathcal{M}_{0}$
such that $U(\boldsymbol{m}'')<U(\boldsymbol{m})$. Thus, we get a
contradiction since
\begin{equation}
U(\boldsymbol{\sigma})\ \ge\ \Theta(\boldsymbol{m},\,\boldsymbol{m}'')\ \ge\ \Theta(\mathcal{M},\,\widetilde{\mathcal{M}})\ >\ \Theta(\boldsymbol{m},\,\boldsymbol{m}')\ \ge\ U(\boldsymbol{\sigma})\ .\label{epp3}
\end{equation}
where four inequalities follow from path-connectedness of $\mathcal{K}$,
\ref{lem_not}-(1), $\mathfrak{P}_{1}(n)$ (so that $\mathcal{M}$
is a bound set), and Lemma \ref{lap01}-(2) respectively.

Since there exists a local minimum in each set $\mathcal{W}_{i}$
and $\mathcal{W}_{i}$ does not separate $(n)$-states, the set $\mathscr{S}^{(n)}(\mathcal{W}_{i})$
is non-empty.

Next we turn to the second assertion of the lemma. Suppose that
\begin{equation}
\min_{\boldsymbol{x}\in\mathcal{W}_{i}}U(\boldsymbol{x})\ \ge\ U(\boldsymbol{\sigma})-d^{(n)}\;.\label{eq:conWi}
\end{equation}
We prove that if $\mathcal{M}\in\mathscr{S}^{(n)}(\mathcal{W}_{i})$
satisfies $\mathcal{M}\neq\mathcal{M}^{*}(\mathcal{W}_{i}),$ then
$\mathcal{M}\in\mathscr{N}^{(n)}(\mathcal{W}_{i})$. To this end,
let us take $\boldsymbol{m}\in\mathcal{M}^{*}(\mathcal{W}_{i})\setminus\mathcal{M}$.
Since $U(\boldsymbol{m})\le U(\mathcal{M})$, we have $\boldsymbol{m}\in\widetilde{\mathcal{M}}$.
Therefore, as $\{\boldsymbol{m}\},\,\mathcal{M}\subset\mathcal{W}_{i}$,
we get
\[
\Theta(\mathcal{M},\,\widetilde{\mathcal{M}})\ \le\ \Theta(\mathcal{M},\,\boldsymbol{m})\ <\ U(\boldsymbol{\sigma})\ ,
\]
where two inequalities follow from Lemmata \ref{lem_not}-(1) and
\ref{lap01}-(1), respectively. On the other hand, by \eqref{eq:conWi},
\[
\Theta(\mathcal{M},\,\widetilde{\mathcal{M}})\ =\ U(\mathcal{M})+\Xi(\mathcal{M})\ \ge\ U(\boldsymbol{\sigma})-d^{(n)}+\Xi(\mathcal{M})\;.
\]
Combining the two previous estimates and then apply Lemma \ref{lem_charDelta},
we get $\mathcal{M}\in\mathscr{N}^{(n)}$. Hence, if $\mathcal{M}^{*}(\mathcal{W}_{i})\notin\mathscr{S}^{(n)}$,
we must have $\mathscr{V}^{(n)}(\mathcal{W}_{i})=\varnothing$, and
therefore by Lemma \ref{lem_escape}-(3), it holds that $\mathcal{M}^{*}(\mathcal{W}_{i})\in\mathscr{N}^{(n)}$
which contradicts to the assumption $\mathcal{M}^{*}(\mathcal{W}_{i})\notin\mathscr{S}^{(n)}$.
Thus, $\mathcal{M}^{*}(\mathcal{W}_{i})\in\mathscr{S}^{(n)}$. From
now on, we write $\mathcal{M}':=\mathcal{M}^{*}(\mathcal{W}_{i})\in\mathscr{S}^{(n)}$.

\smallskip{}

\noindent \textbf{Case 1. }Suppose first that the equality in \eqref{eq:conWi}
holds. Then, since $\widetilde{\mathcal{M}'}\cap\mathcal{W}_{i}=\emptyset$,
by Lemma \ref{lap01}-(2),
\[
\Xi(\mathcal{M}')\ =\ \Theta(\mathcal{M}',\,\widetilde{\mathcal{M}}')-U(\mathcal{M}')\ \ge\ U(\boldsymbol{\sigma})-U(\mathcal{M}')\ =\ d^{(n)}\;.
\]
Thus, by Lemma \ref{lem_charDelta}, $\mathcal{M}'\in\mathscr{V}^{(n)}(\mathcal{W}_{i})$.

\smallskip{}

\noindent \textbf{Case 2. }On the other hand, if the strict inequality
in \eqref{eq:conWi} holds, since $\widetilde{\mathcal{M}'}\cap\mathcal{W}_{i}=\emptyset$
and $\widetilde{\mathcal{M}'}\cap\mathcal{W}_{j}\neq\emptyset$ for
some $j\neq i$ by \eqref{emin}, we can apply Lemmata \ref{lap01}-(2)
and \ref{2-la1}-(1) to get
\[
\Xi(\mathcal{M}')\ =\ \Theta(\mathcal{M}',\,\widetilde{\mathcal{M}}')-U(\mathcal{M}')\ =\ U(\boldsymbol{\sigma})-U(\mathcal{M}')\ <\ d^{(n)}\;.
\]
Hence, by Lemma \ref{lem_charDelta}, $\mathcal{M}'\in\mathscr{N}^{(n)}(\mathcal{W}_{i})$.

\smallskip{}

\noindent \textbf{Case 3. }Finally, we consider the case
\begin{equation}
\min_{\boldsymbol{x}\in\mathcal{W}_{i}}\,U(\boldsymbol{x})\ <\ U(\boldsymbol{\sigma})-d^{(n)}\;.\label{eq:conWi-1}
\end{equation}
Suppose to the contrary that there is no recurrent class $\mathscr{R}$
of $\mathbf{y}^{(n)}(\cdot)$ such that $\mathscr{R}\subset\mathscr{V}^{(n)}(\mathcal{W}_{i})$.
Take $\boldsymbol{m}\in\mathcal{W}_{i}$ and let $\mathcal{M}=\mathcal{M}(n,\,\bm{m})$.
By the first assertion of the lemma, we have $\mathcal{M}\in\mathscr{S}^{(n)}(\mathcal{W}_{i})$.
Since there is no recurrent class which is a subset of $\mathcal{W}_{i}$,
the process $\widehat{\mathbf{y}}^{(n)}(\cdot)$ starting at $\mathcal{M}$
should escape from $\mathscr{S}^{(n)}(\mathcal{W}_{i})$. Then, by
Lemma \ref{lem_escape}, the last state should be $\mathcal{M}'=\mathcal{M}^{*}(\mathcal{W}_{i})$,
and hence $\mathcal{M}'\in\mathscr{S}^{(n)}$. As $\widetilde{\mathcal{M}'}\cap\mathcal{W}_{i}=\varnothing$,
by Lemma \ref{lap01}-(2), we have $\text{\ensuremath{\Theta(\mathcal{M}',\,}}\widetilde{\mathcal{M}'})\ge U(\boldsymbol{\sigma})$,
and therefore by \eqref{eq:conWi-1}, we get
\[
\Xi(\mathcal{M}')\ =\ \text{\ensuremath{\Theta(\mathcal{M}',\,}}\widetilde{\mathcal{M}'})-U(\mathcal{M}')\ >\ U(\boldsymbol{\sigma})-(U(\boldsymbol{\sigma})-d^{(n)})\ =\ d^{(n)}\;.
\]
Hence, $\mathcal{M}'$ is an absorbing point of the chain $\widehat{\mathbf{y}}^{(n)}(\cdot)$
by $\mathfrak{P}_{3}(n)$. This contradicts to the fact that the process
$\widehat{\mathbf{y}}^{(n)}(\cdot)$ leaves $\mathscr{S}^{(n)}(\mathcal{W}_{i})$
at $\mathcal{M}'$.
\end{proof}
For $\mathcal{M},\,\mathcal{M}'\in\mathscr{S}^{(n)}$, we write \textcolor{blue}{$\mathcal{M}\Rightarrow^{(n)}\mathcal{M}'$}
if the Markov chain $\widehat{\mathbf{y}}^{(n)}(\cdot)$ starting
at $\mathcal{M}$ can reach $\mathcal{M}'$ with positive probability.
\begin{lem}
\label{lem_wells2}Suppose that $a\ge2$ and that
\begin{equation}
\min_{\boldsymbol{x}\in\mathcal{W}_{1}}U(\boldsymbol{x})\ =\ U(\boldsymbol{\sigma})-d^{(n)}\;.\label{twowells}
\end{equation}
Let $b\in\llbracket2,\,a\rrbracket$ and suppose that $\overline{\mathcal{W}_{i}}\cap\overline{\mathcal{W}_{i+1}}\neq\varnothing$
for all $i\in\llbracket1,\,b-1\rrbracket$, and that
\begin{equation}
\begin{cases}
\min_{\boldsymbol{x}\in\mathcal{W}_{i}}U(\boldsymbol{x})\ >\ U(\boldsymbol{\sigma})-d^{(n)}\;\;\text{for}\;i\in\llbracket2,\,b-1\rrbracket\;,\\
\min_{\boldsymbol{x}\in\mathcal{W}_{b}}U(\boldsymbol{x})\ \le\ U(\boldsymbol{\sigma})-d^{(n)}\;.
\end{cases}\label{econdU}
\end{equation}
Then, $\mathcal{M}^{*}(\mathcal{W}_{1})\Rightarrow^{(n)}\mathcal{M}'$
for some $\mathcal{M}'\in\mathscr{V}^{(n)}(\mathcal{W}_{b})$ (Note
that $\mathcal{M}^{*}(\mathcal{W}_{1})\in\mathscr{S}^{(n)}$ by \eqref{twowells}
and Lemma \ref{lem_wells1}).
\end{lem}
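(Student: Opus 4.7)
The plan is to construct, by induction on the length of the well chain, a sequence of transitions of $\widehat{\mathbf{y}}^{(n)}$ linking $\mathcal{M}^{*}(\mathcal{W}_1)$ to some element of $\mathscr{V}^{(n)}(\mathcal{W}_b)$, alternating cross-well jumps through saddles at height $U(\boldsymbol{\sigma})$ with within-well navigation.

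First, for each $i \in \llbracket 1, b-1 \rrbracket$, applying Lemma \ref{l_cap_saddle} to a point of $\overline{\mathcal{W}_i} \cap \overline{\mathcal{W}_{i+1}}$ together with Lemma \ref{l_assu_saddle}-(1) produces a saddle $\boldsymbol{\sigma}_i$ at height $U(\boldsymbol{\sigma})$ whose two heteroclinic branches terminate at local minima $\boldsymbol{m}_i \in \mathcal{W}_i$ and $\boldsymbol{m}_i' \in \mathcal{W}_{i+1}$. By Lemma \ref{lem_wells1}, $\mathcal{N}_i := \mathcal{M}(n, \boldsymbol{m}_i) \subset \mathcal{W}_i$ and $\mathcal{N}_i' := \mathcal{M}(n, \boldsymbol{m}_i') \subset \mathcal{W}_{i+1}$. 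If $\boldsymbol{\sigma}_i$ is chosen so that $\boldsymbol{m}_i \in \mathcal{M}^{*}(\mathcal{W}_i)$, then $\Theta(\mathcal{M}^{*}(\mathcal{W}_i), \widetilde{\mathcal{M}^{*}(\mathcal{W}_i)}) = U(\boldsymbol{\sigma})$, combining the lower bound $\widetilde{\mathcal{M}^{*}(\mathcal{W}_i)} \cap \mathcal{W}_i = \varnothing$ with the upper bound via the explicit path through $\mathcal{W}_{i+1}, \ldots, \mathcal{W}_b$ reaching the deep minimum in $\mathcal{W}_b$; by $\mathfrak{P}_3(n)$ this delivers the cross-well transition $\mathcal{M}^{*}(\mathcal{W}_i) \to \mathcal{N}_i'$.

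For the within-well step with $i \in \llbracket 2, b-1 \rrbracket$, I claim that $\mathcal{A} \Rightarrow^{(n)} \mathcal{M}^{*}(\mathcal{W}_i)$ for every $\mathcal{A} \in \mathscr{S}^{(n)}(\mathcal{W}_i)$. Indeed, by Lemma \ref{lem_wells1}-(1), $\mathscr{S}^{(n)}(\mathcal{W}_i) \subset \mathscr{N}^{(n)}$, with $\Xi < d^{(n)}$ by Lemma \ref{lem_charDelta}, so $\mathfrak{P}_3(n)$ forces transitions from $\mathcal{A}$ to use saddles strictly below height $U(\boldsymbol{\sigma})$ and hence to remain inside $\mathcal{W}_i$. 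Since $\mathcal{M}^{*}(\mathcal{W}_i)$ is the unique state of $\mathscr{S}^{(n)}(\mathcal{W}_i)$ whose $\Theta$-depth attains $U(\boldsymbol{\sigma})$ (every other $\mathcal{N}$ satisfies $\mathcal{M}^{*}(\mathcal{W}_i) \in \widetilde{\mathcal{N}}$ with $\Theta(\mathcal{N}, \mathcal{M}^{*}(\mathcal{W}_i)) < U(\boldsymbol{\sigma})$ via a within-well path), any subgraph of transitions closed inside $\mathscr{N}^{(n)}(\mathcal{W}_i) \setminus \{\mathcal{M}^{*}(\mathcal{W}_i)\}$ would give a recurrent class of $\widehat{\mathbf{y}}^{(n)}$ entirely in $\mathscr{N}^{(n)}$, contradicting Lemma \ref{l_irred_negli}. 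The terminal step inside $\mathcal{W}_b$ is analogous: Lemma \ref{lem_wells1}-(2)/(3) gives $\mathscr{V}^{(n)}(\mathcal{W}_b) \ne \varnothing$, and the same non-trapping argument forces the chain to enter $\mathscr{V}^{(n)}(\mathcal{W}_b)$ once it lands in $\mathcal{W}_b$.

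The main obstacle is the choice of the saddle $\boldsymbol{\sigma}_i$ so that $\boldsymbol{m}_i \in \mathcal{M}^{*}(\mathcal{W}_i)$, since \eqref{hyp2} only guarantees termination at some local minimum in each adjacent well, not specifically at the deepest one. I plan to resolve this by a Morse-theoretic basin argument: the basin of attraction of $\mathcal{M}^{*}(\mathcal{W}_i)$ under the flow $\dot{\boldsymbol{x}} = \boldsymbol{b}(\boldsymbol{x})$ is an open subset of $\mathcal{W}_i$ whose topological boundary must meet $\partial \mathcal{W}_i$, and any critical point of height $U(\boldsymbol{\sigma})$ lying on this portion is a saddle whose heteroclinic orbit into $\mathcal{W}_i$ limits to $\mathcal{M}^{*}(\mathcal{W}_i)$. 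If the resulting saddle borders a well $\mathcal{W}_j$ different from $\mathcal{W}_{i+1}$, one reroutes the induction through $\mathcal{W}_j$, relying on a pigeonhole argument over the finite collection of wells inside $\mathcal{K}$ to recover a valid path to $\mathcal{W}_b$.
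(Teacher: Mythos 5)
Your plan has the right architecture (cross-well jump, within-well navigation, terminal step), and your within-well reasoning via Lemma \ref{l_irred_negli} and Lemma \ref{lem_escape}-(1) is essentially the paper's Claims B and C. But the cross-well step (your first paragraph) contains a genuine gap that you yourself flag in the last paragraph, and the speculative ``Morse-theoretic basin / pigeonhole'' patch is both unnecessary and unlikely to close it.

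The obstacle you identify --- that a heteroclinic orbit of the boundary saddle $\boldsymbol{\sigma}'\in\overline{\mathcal{W}_{i}}\cap\overline{\mathcal{W}_{i+1}}$ might land at a local minimum $\boldsymbol{m}'\in\mathcal{W}_{i}$ other than $\mathcal{M}^{*}(\mathcal{W}_{i})$ --- is a non-obstacle, because the adjacency relation $\mathcal{M}\to_{\boldsymbol{\sigma}}\mathcal{M}'$ in \eqref{eq:con_gate} only requires $\mathcal{M}\leftsquigarrow\boldsymbol{\sigma}$, i.e., the \emph{weak} relation $\boldsymbol{\sigma}\rightsquigarrow\mathcal{M}$ defined in \eqref{eq:approx}, not a direct heteroclinic orbit $\boldsymbol{\sigma}\curvearrowright\mathcal{M}$. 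Concretely, the paper takes $\boldsymbol{m}_{i}\in\mathcal{M}_{i}=\mathcal{M}^{*}(\mathcal{W}_{i})$, takes \emph{any} heteroclinic endpoint $\boldsymbol{m}'\in\mathcal{W}_{i}$ of $\boldsymbol{\sigma}'$, and then invokes Lemma \ref{l_assu_saddle}-(2) to produce a chain $\boldsymbol{m}_{i}\curvearrowleft\boldsymbol{\sigma}_{1}\curvearrowright\cdots\curvearrowright\boldsymbol{m}'$ through saddles strictly inside $\mathcal{W}_{i}$, which by construction are at height $<U(\boldsymbol{\sigma}')$; this delivers $\boldsymbol{\sigma}'\rightsquigarrow\boldsymbol{m}_{i}$ directly. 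Then $\Theta(\mathcal{M}_{i},\,\widetilde{\mathcal{M}_{i}})=\Theta(\mathcal{M}_{i},\,\mathcal{M}(n,\boldsymbol{m}''))=U(\boldsymbol{\sigma}')$ (Lemma \ref{2-la1}-(1)) closes Claim A. No basin-boundary or rerouting argument is needed, and indeed your proposed rerouting would itself be problematic: the hypotheses \eqref{econdU} only control the chain $\mathcal{W}_{1},\dots,\mathcal{W}_{b}$, so if the saddle you pick borders a well $\mathcal{W}_{j}$ off the chain, you have no control over $\min_{\mathcal{W}_{j}}U$ and the inductive scheme breaks. The fix is to read the definition of $\to$ carefully, not to strengthen the saddle-selection.

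A secondary, smaller issue: your Claim B reasoning asserts that every $\mathcal{A}\in\mathscr{S}^{(n)}(\mathcal{W}_{i})$ reaches $\mathcal{M}^{*}(\mathcal{W}_{i})$; the paper proves only the weaker and sufficient statement that the \emph{specific} entrance state $\mathcal{M}_{i}'$ (obtained in Claim A) satisfies $\mathcal{M}_{i}'=\mathcal{M}_{i}$ or $\mathcal{M}_{i}'\Rightarrow^{(n)}\mathcal{M}_{i}$, by following a non-absorbed trajectory of $\widehat{\mathbf{y}}^{(n)}(\cdot)$ and using Lemma \ref{lem_escape}-(1) to identify the exit state of $\mathcal{W}_{i}$ as $\mathcal{M}_{i}$. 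Your stronger universal claim is plausible but not needed, and your justification for it (``the unique state whose $\Theta$-depth attains $U(\boldsymbol{\sigma})$'') should be tied precisely to Lemma \ref{lem_escape}-(1) rather than asserted.
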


\begin{rem}
Figure \ref{fig_l_wells} illustrates the situation described in Lemma
\ref{lem_wells2} with $a=b=4$. In this situation, we can readily
check that
\[
\mathcal{M}^{*}(\mathcal{W}_{1})\ =\ \{\boldsymbol{m}_{1},\,\boldsymbol{m}_{2},\,\boldsymbol{m}_{3}\}\ \Rightarrow^{(n)}\ \{\boldsymbol{m}_{8}\}\in\mathscr{V}^{(n)}(\mathcal{W}_{4})\;.
\]
\end{rem}

\begin{proof}
We simply write $\mathcal{M}_{i}=\mathcal{M}^{*}(\mathcal{W}_{i})$,
$i\in\llbracket2,\,a\rrbracket$. By \eqref{twowells}, \eqref{econdU},
and Lemma \ref{lem_wells1}, we have $\mathcal{M}_{1}\in\mathscr{V}^{(n)}$
and for $i\in\llbracket2,\,b-1\rrbracket$, $\mathcal{M}_{i}\in\mathscr{N}^{(n)}$
and $\mathscr{V}^{(n)}(\mathcal{W}_{i})=\varnothing$.

\smallskip{}
\textbf{\textit{Claim A}}: For each $i\in\llbracket1,\,b-1\rrbracket$,
there exists $\mathcal{M}_{i+1}'\in\mathscr{S}^{(n)}(\mathcal{W}_{i+1})$
such that $\mathcal{M}_{i}\rightarrow\mathcal{M}_{i+1}'$. \smallskip{}

\noindent Let $\boldsymbol{m}_{i}\in\mathcal{M}_{i}$ and let $\boldsymbol{\sigma}'\in\overline{\mathcal{W}_{i}}\cap\overline{\mathcal{W}_{i+1}}$.
By Lemma \ref{l_assu_saddle}-(1), there exist $\boldsymbol{m}'\in\mathcal{W}_{i}$
and $\boldsymbol{m}''\in\mathcal{W}_{i+1}$ such that $\boldsymbol{m}'\curvearrowleft\boldsymbol{\sigma}'\curvearrowright\boldsymbol{m}''$.
By Lemma \ref{l_assu_saddle}-(2), there exist saddle points $\boldsymbol{\sigma}_{1},\,\dots,\,\boldsymbol{\sigma}_{k}$
in $\mathcal{W}_{i}$ and local minima $\boldsymbol{m}_{1}',\,\dots,\,\boldsymbol{m}_{k-1}'$
in $\mathcal{W}_{i}$ such that
\[
\boldsymbol{m}_{i}\curvearrowleft\boldsymbol{\sigma}_{1}\curvearrowright\boldsymbol{m}_{1}'\curvearrowleft\cdots\curvearrowleft\boldsymbol{\sigma}_{k}\curvearrowright\boldsymbol{m}'\;.
\]
Since $U(\boldsymbol{\sigma}_{j})<U(\boldsymbol{\sigma})=U(\boldsymbol{\sigma}')$
for $j\in\llbracket1,\,k\rrbracket$ as $\boldsymbol{\sigma}_{1},\,\dots\,,\ \boldsymbol{\sigma}_{k}\in\mathcal{W}_{i}$,
we can conclude that $\boldsymbol{\sigma}'\rightsquigarrow\boldsymbol{m}_{i}$,
so that $\boldsymbol{\sigma}'\rightsquigarrow\mathcal{M}_{i}$. Hence,
if we denote by $\mathcal{M}_{i+1}'=\mathcal{M}(n,\,\bm{m}'')$, we
have $\mathcal{M}_{i+1}'\in\mathscr{S}^{(n)}(\mathcal{W}_{i+1})$
by Lemma \ref{lem_wells1} and $\mathcal{M}_{i}\leftsquigarrow\boldsymbol{\sigma}'\curvearrowright\mathcal{M}_{i+1}'$
by the observations above. Since $\Theta(\mathcal{M}_{i},\,\mathcal{M}_{i+1}')=\Theta(\mathcal{M}_{i},\,\widetilde{\mathcal{M}_{i}})=U(\boldsymbol{\sigma}')$
by Lemma \ref{2-la1}-(1), we get $\mathcal{M}_{i}\rightarrow\mathcal{M}_{i+1}'$.

\smallskip{}
\textbf{\textit{Claim B}}: For each $i\in\llbracket2,\,b-1\rrbracket$,
either $\mathcal{M}_{i}'=\mathcal{M}_{i}$ or $\mathcal{M}_{i}'\Rightarrow^{(n)}\mathcal{M}_{i}$.
\smallskip{}

\noindent By Lemma \ref{lem_wells1}, $\mathcal{M}'_{i}\in\mathscr{N}^{(n)}$.
Therefore, by Lemma \ref{l_irred_negli}, there exist
\begin{equation}
p_{k}\ge1\;\;\;\text{and\;\;\;\ensuremath{\mathcal{N}_{1}},\,\ensuremath{\dots},\,\ensuremath{\mathcal{N}_{p_{k}-1}\in\mathscr{N}^{(n)}},\,}\mathcal{N}_{p_{k}}\in\mathscr{V}^{(n)}\label{eq:seq}
\end{equation}
such that
\begin{equation}
\widehat{r}^{(n)}(\mathcal{M}'_{i},\,\mathcal{N}_{1})\ >\ 0\;\;\text{and}\;\;\widehat{r}^{(n)}(\mathcal{N}_{i},\,\mathcal{N}_{i+1})\ >\ 0\;\text{for all}\ j\in\llbracket1,\,p_{k}-1\rrbracket\;.\label{eq:seq2}
\end{equation}
Since $\mathscr{V}^{(n)}(\mathcal{W}_{i})=\varnothing$ by Lemma \ref{lem_wells1},
we have $\mathcal{N}_{p_{k}}\notin\mathcal{W}_{i}$. Thus, writing
$\mathcal{N}_{0}=\mathcal{M}'_{i}$, we can find
\begin{equation}
\ell\ =\ \max\left\{ \,j\in\llbracket0,\,p_{k}-1\rrbracket:\mathcal{N}_{j}\subset\mathcal{W}_{i}\,\right\} \;.\label{eq:maxc}
\end{equation}
By Lemma \ref{lem_escape}-(1), we have $\mathcal{N}_{\ell}=\mathcal{M}^{*}(\mathcal{W}_{i})=\mathcal{M}_{i}$.
If $\ell=0$, we get $\mathcal{M}_{i}'=\mathcal{M}_{i}$, and if $\ell\ge1$,
by \eqref{eq:seq2}, we get $\mathcal{M}_{i}'\Rightarrow^{(n)}\mathcal{M}_{i}$.

\noindent \smallskip{}
\textbf{\textit{Claim C}}: Either $\mathcal{M}_{b}'\in\mathscr{V}^{(n)}(\mathcal{W}_{b})$
or $\mathcal{M}_{b}'\Rightarrow^{(n)}\mathcal{M}'$ for some $\mathcal{M}'\in\mathscr{V}^{(n)}(\mathcal{W}_{b})$.
\smallskip{}

\noindent Suppose that $\mathcal{M}_{b}'\in\mathscr{N}^{(n)}(\mathcal{W}_{b})$.
As in the proof of Claim B, we can find \eqref{eq:seq} satisfying
\eqref{eq:seq2}. It remains to show that $\mathcal{N}_{p_{k}}\subset\mathcal{W}_{b}$.
Suppose this is not the case. Then, as in \eqref{eq:maxc}, we can
find $\ell\in\llbracket0,\,p_{k}-1\rrbracket$ such that $\mathcal{N}_{\ell}=\mathcal{M}^{*}(\mathcal{W}_{b})=\mathcal{M}_{b}$.
Then, $\mathcal{M}_{b}=\mathcal{N}_{\ell}\in\mathscr{N}^{(n)}$ by
\eqref{eq:seq}. On the other hand, since $\widetilde{\mathcal{M}_{b}}\cap\mathcal{W}_{b}=\varnothing$,
by Lemma \ref{lap01}-(2), we have $\Theta(\mathcal{M}_{b},\,\widetilde{\mathcal{M}_{b}})\ge U(\boldsymbol{\sigma})$.
Thus,
\[
\Xi(\mathcal{M}_{b})\,=\,\Theta(\mathcal{M}_{b},\,\widetilde{\mathcal{M}_{b}})-U(\mathcal{M}_{b})\,\ge\,U(\boldsymbol{\sigma})-\min_{\boldsymbol{x}\in\mathcal{W}_{b}}\,U(\boldsymbol{x})\,>\,d^{(n)}\;.
\]
Hence, by Lemma \ref{lem_charDelta}, $\mathcal{M}_{b}\in\mathscr{V}^{(n)}$,
in contradiction with the fact that $\mathcal{M}_{b}\in\mathscr{N}^{(n)}$
derived in the previous paragraph.

\smallskip{}
Now, combining Claims A, B and C proves the assertion of the lemma.
\end{proof}
\begin{rem}
\label{rem:p4}We emphasize here that, in this section, the property
$\mathfrak{P}_{4}(n)$ is not used. Hence, lemmata in this section
hold under $\mathfrak{P}\llbracket n-1\rrbracket$ and $\mathfrak{P}_{1,\,2,\,3}(n)$
only. This observation is crucial in the proof of $\mathfrak{P}_{4}(n+1)$
given in Section \ref{sec6}.
\end{rem}

\section{\label{sec6}Proof of Theorem \ref{t:tree}}

In this section, we prove Theorem \ref{t:tree}. Thus, we suppose
that $\Lambda^{(1)},\,\dots,\,\Lambda^{(n)}$ are constructed, the
property $\mathfrak{P}\llbracket n\rrbracket$ holds throughout the
section. We remark that Proposition \ref{prop_global_min} will also
be proven in this section.

\subsection{\label{sec6.1}Proof of $\mathfrak{P}_{1}(n+1)$ and Proposition
\ref{prop_global_min}}

Note that, in the current subsection, we do not assume $n_{\mathfrak{n}}\ge2$
but instead allows $n_{\mathfrak{n}}=1$. We start from a lemma on
the recurrent class of the chain $\mathbf{y}^{(n)}(\cdot)$.
\begin{lem}
\label{lem:rec}Let $\{\mathcal{M}_{1},\,\dots,\,\mathcal{M}_{a}\}\subset\mathscr{V}^{(n)}$
for some $a\ge2$ be a recurrent class of $\mathbf{y}^{(n)}(\cdot)$
and let $\mathcal{M}=\cup_{i=1}^{a}\mathcal{M}_{i}\in\mathscr{V}^{(n+1)}$.
\begin{enumerate}
\item There exists a connected component $\mathcal{K}$ of level set $\{U\le\Theta(\mathcal{M}_{1},\,\widetilde{\mathcal{M}_{1}})\}$
containing $\mathcal{M}$.
\item Write $\{\mathcal{W}_{1},\,\dots,\,\mathcal{W}_{b}\}$ be a level
set decomposition of $\mathcal{K}$. Then, for each $i\in\llbracket1,\,a\rrbracket$,
there exists $r(i)\in\llbracket1,\,b\rrbracket$ such that $\mathcal{M}_{i}=\mathcal{M}^{*}(\mathcal{W}_{r(i)})$.
\item $\widetilde{\mathcal{M}}\cap\mathcal{K}=\emptyset$.
\end{enumerate}
\end{lem}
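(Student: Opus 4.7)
The plan is to handle the three claims in order, leveraging Lemma~\ref{lem_stddec} which gives that $\mathcal{M}$ is simple with $U(\mathcal{M}_1) = \cdots = U(\mathcal{M}_a) = U(\mathcal{M})$ and
\[
\Theta(\mathcal{M}_i,\widetilde{\mathcal{M}_i}) \,=\, \Theta(\mathcal{M}_i,\mathcal{M}_j) \,=\, U(\mathcal{M}) + d^{(n)} \,=:\, H
\]
for all $i \ne j$. For (1), two path constructions connect the elements of $\mathcal{M}$ inside $\{U \le H\}$: within each $\mathcal{M}_i$, Lemma~\ref{lem:bound_dn} combined with $\mathfrak{P}_{2}(n)$ (so $d^{(n-1)} < d^{(n)}$) produces a path in $\{U < H\}$; between distinct $\mathcal{M}_i$ and $\mathcal{M}_j$, the equality $\Theta(\mathcal{M}_i,\mathcal{M}_j) = H$ together with the Morse structure yields a path through a saddle of height exactly $H$, hence in $\{U \le H\}$. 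Concatenating, all of $\mathcal{M}$ lies in a single connected component $\mathcal{K}$ of $\{U \le H\}$.

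For (2), fix $i$; the within-$\mathcal{M}_i$ construction above shows that $\mathcal{M}_i$ sits in a single connected component $\mathcal{W}_{r(i)}$ of $\{U<H\}$, and $\mathcal{W}_{r(i)} \subset \mathcal{K}$ since it meets $\mathcal{K}$. I would then prove $\min_{\mathcal{W}_{r(i)}} U = U(\mathcal{M})$ and $\mathcal{M}^{\ast}(\mathcal{W}_{r(i)}) = \mathcal{M}_i$ by a single contradiction argument: a strict inequality $\min < U(\mathcal{M})$, or any extra local minimum of value $U(\mathcal{M})$ in $\mathcal{W}_{r(i)} \setminus \mathcal{M}_i$, would yield a point $\bm{m}'' \in \widetilde{\mathcal{M}_i} \cap \mathcal{W}_{r(i)}$ connectable to $\mathcal{M}_i$ inside $\{U<H\}$, forcing $\Theta(\mathcal{M}_i,\widetilde{\mathcal{M}_i}) < H$ and contradicting Lemma~\ref{lem_stddec}(3).

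For (3), suppose by contradiction that $\bm{m}' \in \widetilde{\mathcal{M}} \cap \mathcal{K}$, and let $\mathcal{W}_j$ be the component of $\{U<H\}$ containing $\bm{m}'$. Since $\min_{\mathcal{W}_j} U \le U(\bm{m}') \le H - d^{(n)}$, the component $\mathcal{W}_j$ is ``deep''. If $\mathcal{W}_j = \mathcal{W}_{r(k)}$ for some $k$, part~(2) forces $U(\bm{m}') = U(\mathcal{M})$ and $\bm{m}' \in \mathcal{M}_k \subset \mathcal{M}$, contradicting $\bm{m}' \notin \mathcal{M}$. Otherwise, pick an adjacency path $\mathcal{W}_{r(1)} = \mathcal{W}_{i_0}, \mathcal{W}_{i_1}, \ldots, \mathcal{W}_{i_L} = \mathcal{W}_j$ inside $\mathcal{K}$ and apply Lemma~\ref{lem_wells2} to the sub-path terminating at the first subsequent deep component $\mathcal{W}_{i_{k^{\ast}}}$: this produces $\mathcal{M}_1 \Rightarrow^{(n)} \mathcal{M}^{\star}$ for some $\mathcal{M}^{\star} \in \mathscr{V}^{(n)}(\mathcal{W}_{i_{k^{\ast}}})$. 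If $\mathcal{W}_{i_{k^{\ast}}} = \mathcal{W}_{r(k)}$ for some $k$, part~(2) combined with Lemma~\ref{lem_wells1}(2) pins down $\mathcal{M}^{\star} = \mathcal{M}_k$, and I restart the argument from $\mathcal{W}_{r(k)}$ along the remaining path; otherwise $\mathcal{M}^{\star}$ is an element of $\mathscr{V}^{(n)}$ not contained in any $\mathcal{W}_{r(k)}$, so $\mathcal{M}^{\star} \notin \{\mathcal{M}_1, \ldots, \mathcal{M}_a\}$. Transitivity of $\Rightarrow^{(n)}$ together with the fact that $\mathbf{y}^{(n)}$ is the trace of $\widehat{\mathbf{y}}^{(n)}$ on $\mathscr{V}^{(n)}$ then shows $\mathbf{y}^{(n)}$ starting at $\mathcal{M}_1$ reaches $\mathcal{M}^{\star}$, contradicting the closedness of the recurrent class. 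The iteration must terminate in the latter case at some step (at worst at $\mathcal{W}_j$ itself), yielding the desired contradiction. The main obstacle is orchestrating this induction cleanly---ensuring that each invocation of Lemma~\ref{lem_wells2} starts at a component with $\min U = H - d^{(n)}$ so that Lemma~\ref{lem_wells1}(2) identifies its sole $\mathscr{V}^{(n)}$-element as the corresponding $\mathcal{M}_k$---which is exactly what part~(2) guarantees.
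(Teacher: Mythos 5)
Your proof is correct and follows essentially the same route as the paper's. Parts (1) and (2) rest on the same ideas (all $\mathcal{M}_i$ fall into one component of $\{U\le H\}$ because $\Theta(\mathcal{M}_i,\mathcal{M}_j)=H$; any local minimum of height $\le U(\mathcal{M})$ in $\mathcal{W}_{r(i)}\setminus\mathcal{M}_i$ would force $\Theta(\mathcal{M}_i,\widetilde{\mathcal{M}_i})<H$, contradicting Lemma~\ref{lem_stddec}-(3)), though the paper routes (1) through Lemmas~\ref{lem_level} and~\ref{l_105a-2} rather than your direct path constructions, and routes the ``does not separate $(n)$-states'' step of (2) through Lemma~\ref{lem_wells1}. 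For part (3) you both apply Lemma~\ref{lem_wells2} plus closedness of the recurrent class; the difference is that the paper compresses the path selection into a single ``reindexing'' assertion---choosing an adjacency path from some $\mathcal{W}_{r(\ell)}$ directly to a component meeting $\widetilde{\mathcal{M}}$ with all intermediate components shallow (equivalently, taking a shortest such path)---and then applies Lemma~\ref{lem_wells2} once, whereas you instead iterate Lemma~\ref{lem_wells2} and restart whenever the first deep component hit is some $\mathcal{W}_{r(k)}$. The iteration terminates because each restart strictly shortens the remaining path and $\mathcal{W}_j$ itself is already excluded from being any $\mathcal{W}_{r(k)}$, so your version is a valid (if slightly longer) unfolding of what the paper leaves implicit.
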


\begin{proof}
By Lemma \ref{lem_stddec}-(3), $\Theta(\mathcal{M}_{1},\,\widetilde{\mathcal{M}_{1}})=\Theta(\mathcal{M}_{i},\,\widetilde{\mathcal{M}_{i}})=\Theta(\mathcal{M}_{i},\,\mathcal{M}_{j})$
for all $i\ne j\in\llbracket1,\,a\rrbracket$.

\noindent (1) For each $i\in\llbracket1,\,a\rrbracket$, denote by
$\mathcal{K}_{i}$ the connected components of $\{U\le\Theta(\mathcal{M}_{1},\,\widetilde{\mathcal{M}_{1}})\}$
containing $\mathcal{M}_{i}$, whose existences are guaranteed by
Lemma \ref{lem_level}-(1). Fix $j\in\llbracket2,\,a\rrbracket$ and
pick $\bm{m}_{1,\,j}\in\mathcal{M}_{1}$ and $\bm{m}_{j}\in\mathcal{M}_{j}$
such that $\Theta(\bm{m}_{1,\,j},\,\bm{m}_{j})=\Theta(\mathcal{M}_{1},\,\mathcal{M}_{j})=\Theta(\mathcal{M}_{1},\,\widetilde{\mathcal{M}_{1}})$.
By Lemma \ref{l_105a-2}, there exists a connected component of $\{U\le\Theta(\mathcal{M}_{1},\,\widetilde{\mathcal{M}_{1}})\}$
containing both $\bm{m}_{1,\,j}$ and $\bm{m}_{j}$. Since $\bm{m}_{1,\,j}\in\mathcal{K}_{1}$,
this connected component is $\mathcal{K}_{1}$. On the other hand,
since $\bm{m}_{j}\in\mathcal{K}_{1}$, we obtain $\mathcal{K}_{1}=\mathcal{K}_{j}$.
Hence, $\mathcal{K}_{1}=\mathcal{K}_{i}$ for all $i\in\llbracket2,\,a\rrbracket$
so that $\mathcal{K}:=\mathcal{K}_{1}$ contains $\mathcal{M}$.

\smallskip{}

\noindent (2) By Lemma \ref{lem_level}-(3), there exists $\boldsymbol{\sigma}\in\mathcal{S}_{0}\cap\mathcal{K}$
such that $U(\boldsymbol{\sigma})=\Theta(\mathcal{M}_{1},\,\widetilde{\mathcal{M}_{1}})$.
Since $\Xi(\mathcal{M}_{1})=d^{(n)}$ by Remark \ref{rem:nab}, we
have
\begin{equation}
\min_{\boldsymbol{x}\in\mathcal{\mathcal{K}}}U(\boldsymbol{x})\ \le\ U(\mathcal{M}_{1})\ =\ \Theta(\mathcal{M}_{1},\,\widetilde{\mathcal{M}_{1}})-\Xi(\mathcal{M}_{1})\ =\ U(\boldsymbol{\sigma})-d^{(n)}\;.\label{eq:minK}
\end{equation}
Hence, by Lemma \ref{lem_wells1}, each $\mathcal{W}_{i}$, $i\in\llbracket1,\,a\rrbracket$,
does not separate $(n)$-states and therefore each $\mathcal{M}_{i}$
is contained in one of the sets $\mathcal{W}_{1},\,\dots,\,\mathcal{W}_{b}$.
Denote this set by $\mathcal{W}_{r(i)}$. If there exists $\boldsymbol{m}\in\mathcal{W}_{r(i)}\setminus\mathcal{M}_{i}$
such that $U(\boldsymbol{m})\le U(\mathcal{M}_{i})$, then by Lemma
\ref{lem_not}-(1), Lemma \ref{lap01}-(1), and Lemma \ref{lem_stddec}-(3),
we get
\[
\Theta(\mathcal{M}_{i},\,\widetilde{\mathcal{M}_{i}})\ \le\ \Theta(\mathcal{M}_{i},\,\boldsymbol{m})\ <\ \Theta(\mathcal{M}_{1},\,\widetilde{\mathcal{M}_{1}})\ =\ \Theta(\mathcal{M}_{i},\,\widetilde{\mathcal{M}_{i}})\ ,
\]
which is a contradiction. Hence, $\mathcal{M}_{i}=\mathcal{M}^{*}(\mathcal{W}_{r(i)})$.

\smallskip{}
\textit{\emph{(3)}} Suppose, by contradiction, that $\widetilde{\mathcal{M}}\cap\mathcal{K}\ne\varnothing$.
Then, thanks to Lemma \ref{2-la1}, by reindexing the sets $\mathcal{W}_{i}$
if necessary, there exists $k_{0}\in\llbracket2,\,b\rrbracket$ such
that $\overline{\mathcal{W}_{i}}\cap\overline{\mathcal{W}_{i+1}}\neq\varnothing$
for $i\in\llbracket1,\,k_{0}-1\rrbracket$ and
\[
\mathcal{M}_{\ell}\in\mathcal{W}_{1}\;,\;\;\;\mathcal{W}_{k_{0}}\cap\widetilde{\mathcal{M}}\neq\varnothing\;,\;\;\;\min_{i\in\llbracket2,\,k_{0}-1\rrbracket}\,\min_{\boldsymbol{x}\in\mathcal{W}_{i}}\,U(\boldsymbol{x})\ >\ U(\mathcal{M})
\]
for some $\ell\in\llbracket1,\,a\rrbracket$. Then, by Lemma \ref{lem_wells2},
there exist $\mathcal{M}'\in\mathscr{V}^{(n)}(\mathcal{W}_{k_{0}})$
such that $\mathcal{M}_{\ell}\Rightarrow^{(n)}\mathcal{M}'$. Therefore,
$\mathcal{M}_{\ell}$ and $\mathcal{M}'$ belong to the same recurrent
class of $\mathbf{y}^{(n)}(\cdot)$, i.e., $\mathcal{M}'=\mathcal{M}_{i'}$
for some $i'$. By the second assertion, $\mathcal{M}_{i'}=\mathcal{M}^{*}(\mathcal{W}_{k_{0}})$.
This contradicts the fact that $\mathcal{W}_{k_{0}}\cap\widetilde{\mathcal{M}}\ne\varnothing$,
and completes the proof.
\end{proof}
We next prove that each $\mathcal{M}\in\mathscr{V}^{(n+1)}$ has depth
larger than $d^{(n)}$. Note that we can measure the depth of $\mathcal{M}\in\mathscr{V}^{(n+1)}$
in the next lemma since we proved that each $\mathcal{M}\in\mathscr{V}^{(n+1)}$
is simple in Lemma \ref{lem_stddec}-(2).
\begin{lem}
\label{lem_p2pre} For $\mathcal{M}\in\mathscr{V}^{(n+1)}$, we have
$\Xi(\mathcal{M})>d^{(n)}$.
\end{lem}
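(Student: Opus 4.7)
The plan is to split into two cases depending on how the element $\mathcal{M}\in\mathscr{V}^{(n+1)}$ is produced by the construction \eqref{eq:V_p+1}.

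\textbf{Case 1.} If the recurrent class $\mathscr{R}_j^{(n)}$ generating $\mathcal{M}$ is a singleton, then $\mathcal{M}\in\mathscr{V}^{(n)}$ and $\mathcal{M}$ is an absorbing state of $\mathbf{y}^{(n)}(\cdot)$. Property $\mathfrak{P}_4(n)$ then immediately yields $\Xi(\mathcal{M})>d^{(n)}$.

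\textbf{Case 2.} If $\mathcal{M}=\bigcup_{i=1}^{a}\mathcal{M}_i$ with $\{\mathcal{M}_1,\dots,\mathcal{M}_a\}\subset\mathscr{V}^{(n)}$ a recurrent class of size $a\ge 2$, I will combine Lemma \ref{lem_stddec} with Lemma \ref{lem:rec}. By Lemma \ref{lem_stddec}-(1,2), each $\mathcal{M}_i$ is simple of common energy $U(\mathcal{M}_i)=U(\mathcal{M})$ with $\Xi(\mathcal{M}_i)=d^{(n)}$, so that the threshold $h:=\Theta(\mathcal{M}_1,\widetilde{\mathcal{M}_1})$ equals $U(\mathcal{M})+d^{(n)}$. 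By Lemma \ref{lem:rec}-(1), there is a connected component $\mathcal{K}$ of $\{U\le h\}$ with $\mathcal{M}\subset\mathcal{K}$, and by Lemma \ref{lem:rec}-(3), $\widetilde{\mathcal{M}}\cap\mathcal{K}=\varnothing$.

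It then remains to upgrade this separation into the strict inequality $\Theta(\mathcal{M},\widetilde{\mathcal{M}})>h$. Fix arbitrary $\boldsymbol{m}\in\mathcal{M}\subset\mathcal{K}$ and $\boldsymbol{m}'\in\widetilde{\mathcal{M}}\subset\mathcal{K}^{c}$. Any continuous path from $\boldsymbol{m}$ to $\boldsymbol{m}'$ must leave the connected component $\mathcal{K}$ of the closed sublevel set $\{U\le h\}$, and hence must pass through points with $U>h$; I will invoke the standard characterization of communication heights for Morse functions (as used in Lemma \ref{lap01}-(2) and in \cite{LLS-1st}), namely that $\Theta(\boldsymbol{m},\boldsymbol{m}')\le h$ is equivalent to $\boldsymbol{m}$ and $\boldsymbol{m}'$ lying in the same connected component of $\{U\le h\}$, to deduce $\Theta(\boldsymbol{m},\boldsymbol{m}')>h$. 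Taking the minimum over the finite set $\mathcal{M}\times\widetilde{\mathcal{M}}$ preserves the strict inequality, giving $\Theta(\mathcal{M},\widetilde{\mathcal{M}})>U(\mathcal{M})+d^{(n)}$, i.e. $\Xi(\mathcal{M})>d^{(n)}$.

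The only delicate point is ensuring the \emph{strict} inequality at the pair level, since connectedness thresholds a priori only yield $\Theta\ge h$. This is where the Morse assumption is essential: merging of distinct connected components of $\{U\le h\}$ in the Morse case occurs exactly at saddle levels, so if $\Theta(\boldsymbol{m},\boldsymbol{m}')$ were equal to $h$, a saddle at height $h$ would connect $\boldsymbol{m}$ to $\boldsymbol{m}'$ within $\{U\le h\}$ via heteroclinic orbits, contradicting $\boldsymbol{m}'\notin\mathcal{K}$. Everything else in the argument is a mechanical assembly of Lemmas \ref{lem_stddec} and \ref{lem:rec}, and will be written in a few lines.
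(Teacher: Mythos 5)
Your proposal is correct, and for the case $a\ge 2$ it takes a genuinely simpler route than the paper. Both arguments begin identically: Case~1 (singleton recurrent class) via $\mathfrak{P}_4(n)$, and for $a\ge 2$ the invocation of Lemma~\ref{lem_stddec} to set $h=U(\mathcal{M})+d^{(n)}$ and Lemma~\ref{lem:rec} to produce the connected component $\mathcal{K}$ of $\{U\le h\}$ with $\mathcal{M}\subset\mathcal{K}$ and $\widetilde{\mathcal{M}}\cap\mathcal{K}=\varnothing$. The paper then obtains the strict inequality by separating the disjoint compact sets $\mathcal{K},\mathcal{G}_1,\dots,\mathcal{G}_m$ (the components of $\{U\le h\}$ meeting $\widetilde{\mathcal{M}}$) into pairwise disjoint open sets, invoking \cite[Lemma~A.14]{LLS-1st} to find a uniform $\kappa>0$ for which the enlarged components of $\{U\le h+\kappa\}$ stay disjoint, and then applying Lemma~\ref{lap01}-(2). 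You instead apply the contrapositive of Lemma~\ref{l_105a-2} directly: since $\bm{m}\in\mathcal{K}$, $\bm{m}'\notin\mathcal{K}$ for every $\bm{m}\in\mathcal{M}$, $\bm{m}'\in\widetilde{\mathcal{M}}$, that lemma forces $\Theta(\bm{m},\bm{m}')>h$, and the minimum over a finite set preserves strictness. Your version avoids the compactness/uniform-$\kappa$ step entirely; the paper's version is more robust in the sense that it does not rely on the strict sublevel-set characterization but only on the topological separation of compact sets. Your closing paragraph about merging at saddle levels is a correct heuristic, but since Lemma~\ref{l_105a-2} is already available in the paper's toolkit you do not need to re-derive it.
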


\begin{proof}
Fix $\mathcal{M}\in\mathscr{V}^{(n+1)}$ and write $\mathscr{V}^{(n)}(\mathcal{M})=\{\mathcal{M}_{1},\,\dots,\,\mathcal{M}_{a}\}$
(cf. Remark \ref{rem:stddec}).

If $a=1$, then $\mathcal{M}=\mathcal{M}_{1}\in\mathscr{V}^{(n)}$
is an absorbing state of the chain ${\bf y}^{(n)}(\cdot)$ so that,
by $\mathfrak{P}_{4}(n)$, we get $\Xi(\mathcal{M})=\Xi(\mathcal{M}_{1})>d^{(n)}$.

Next we consider the case $a\ge2$. Write $H=\Theta(\mathcal{M}_{1},\,\widetilde{\mathcal{M}_{1})}$
and let $\mathcal{K}$ be the connected components of the set $\{U\le H\}$
containing $\mathcal{M}$ (cf. Lemma \ref{lem:rec}). Let $\mathcal{G}_{1},\,\dots.,\,\mathcal{G}_{m}$
be the connected components of $\{U\le\Theta(\mathcal{M}_{1},\,\widetilde{\mathcal{M}_{1}})\}$
which intersect $\widetilde{\mathcal{M}}$. By Lemma \ref{lem:rec}-(3),
we have $\mathcal{G}_{i}\neq\mathcal{K}$ for all $i$. In particular,
the compact sets $\mathcal{K},\,\mathcal{G}_{1},\,\dots.,\,\mathcal{G}_{m}$
are disjoint, and there exist pairwise disjoint open sets $\mathcal{U}_{0},\,\dots,\,\mathcal{U}_{m}$
such that
\[
\mathcal{K}\subset\mathcal{U}_{0}\ \;\;\text{and\;\;}\ \mathcal{G}_{i}\subset\mathcal{U}_{i}\;\quad i\in\llbracket1,\,m\rrbracket\;.
\]
By \cite[Lemma A.14]{LLS-1st}, there exists small enough $\kappa>0$
such that a connected components of the $\{U\le H+\kappa\}$ containing
$\mathcal{\mathcal{K}}$ (resp. $\mathcal{G}_{i}$) is contained in
$\mathcal{U}_{0}$ (resp. $\mathcal{U}_{i}$). Hence, by Lemma \ref{lap01}-(2),
\[
\Theta(\mathcal{M},\,\widetilde{\mathcal{M}})\ \ge\ H+\kappa\ >\ \Theta(\mathcal{M}_{1},\,\widetilde{\mathcal{M}_{1}})\ .
\]
Therefore,
\[
\Xi(\mathcal{M})\ =\ \Theta(\mathcal{M},\,\widetilde{\mathcal{M}})-U(\mathcal{M})\ >\ \Theta(\mathcal{M}_{1},\,\widetilde{\mathcal{M}_{1}})-U(\mathcal{M}_{1})\ =\ d^{(n)}\ .
\]
\end{proof}
We are now ready to prove $\mathfrak{P}_{1}(n+1)$, i.e., part (1)
of Theorem \ref{t:tree}. Notice here that the proof below does not
assume $\mathfrak{n}_{n}\ge2$.
\begin{proof}[Proof of Theorem \ref{t:tree}-(1)]
In view of Remark \ref{rem:N_p+1} and $\mathfrak{P}\llbracket n\rrbracket$,
it suffices to prove that elements in $\mathscr{V}^{(n+1)}$ are simple
and bound.

Fix $\mathcal{M}\in\mathscr{V}^{(n+1)}$ and write $\mathscr{V}^{(n)}(\mathcal{M})=\{\mathcal{M}_{1},\,\dots,\,\mathcal{M}_{a}\}$.
The set $\mathcal{M}$ is simple because of Lemma \ref{lem_stddec}-(2).
It suffices to prove that $\mathcal{M}$ is bound. If $a=1$, then
$\mathcal{M}=\mathcal{M}_{1}$ is bound by $\mathfrak{P}_{1}(n)$.
If $a\ge2$, the same argument as in Lemma \ref{lem:bound_dn} yields
that
\[
\max_{\boldsymbol{m},\boldsymbol{m}'\in\mathcal{M}}\,\Theta(\boldsymbol{m},\,\boldsymbol{m}')\ \le\ U(\mathcal{M})+d^{(n)}\;.
\]
Inserting $U(\mathcal{M})=\Theta(\mathcal{M},\,\widetilde{\mathcal{M}})-\Xi(\mathcal{M})$
to the previous bound, we get
\[
\max_{\boldsymbol{m},\boldsymbol{m}'\in\mathcal{M}}\,\Theta(\boldsymbol{m},\,\boldsymbol{m}')\ \le\ \Theta(\mathcal{M},\,\widetilde{\mathcal{M}})+d^{(n)}-\Xi(\mathcal{M})\ <\ \Theta(\mathcal{M},\,\widetilde{\mathcal{M}})
\]
where the last inequality follows from Lemma \ref{lem_p2pre}. This
proves that $\mathcal{M}$ is bound.
\end{proof}
We can also prove Proposition \ref{prop_global_min} with the results
suggested above.
\begin{proof}[Proof of Proposition \ref{prop_global_min}]
Note that, in the proof of Theorem \ref{t:tree}-(1) above, the condition
$\mathfrak{n}_{n}\ge2$ is not assumed, and therefore the proof therein
also demonstrates that $\mathcal{M}\in\mathscr{V}^{(\mathfrak{q}+1)}$
is simple and bound. This proves assertion (1).

Now we write $\mathscr{V}^{(\mathfrak{q}+1)}=\{\mathcal{M}\}$ and
prove that $\mathcal{M}=\mathcal{M}_{\star}$. Suppose not, so that
$\widetilde{\mathcal{M}}\neq\varnothing$. By Lemma \ref{lem_level},
there exists a connected component $\mathcal{K}$ of $\{U\le\Theta(\mathcal{M},\,\widetilde{\mathcal{M}})\}$
containing $\mathcal{M}$ such that $\mathcal{K}\cap\widetilde{\mathcal{M}}\neq\varnothing$
and there exists $\bm{\sigma}\in\mathcal{S}_{0}\cap\mathcal{K}$ such
that $U(\bm{\sigma})=\Theta(\mathcal{M},\,\widetilde{\mathcal{M}})$.
Morover, let $\{\mathcal{W}_{1},\,\dots,\,\mathcal{W}_{b}\}$ be a
level set decomposition of $\mathcal{K}$. Here, $b\ge2$ by Lemma
\ref{lem_level}.

Let $\mathcal{V}$ be the connected component of $\{U<\Theta(\mathcal{M},\,\widetilde{\mathcal{M}})\}$
containing $\mathcal{M}$, whose existence is guaranteed by Lemma
\ref{l_bound_conn}. Without loss of generality, suppose that $\mathcal{W}_{1}=\mathcal{V}$
and $\mathcal{W}_{2}\cap\widetilde{\mathcal{M}}\ne\varnothing$.

By Lemma \ref{lem_p2pre}, we have $\Xi(\mathcal{M})>d^{(\mathfrak{q})}$
so that
\begin{equation}
\min_{\bm{x}\in\mathcal{K}}\,U(\bm{x})\ \le\ U(\mathcal{M})\ =\ \Theta(\mathcal{M},\,\widetilde{\mathcal{M}})-\Xi(\mathcal{M})\ <\ \Theta(\mathcal{M},\,\widetilde{\mathcal{M}})-d^{(\mathfrak{q})}\ .\label{pf_global_min}
\end{equation}
Therefore, by Lemma \ref{lem_wells1} each $\mathcal{W}_{i}$ does
not separate $(\mathfrak{q})$-states. Since $\mathscr{V}^{(\mathfrak{q})}(\mathcal{M})\subset\mathcal{W}_{1}$
is the unique recurrent class of the chain ${\bf y}^{(\mathfrak{q})}(\cdot)$,
there is no recurrent class in $\mathcal{W}_{2}$ so that by Lemma
\ref{lem_wells1}-(3),
\[
\min_{\bm{x}\in\mathcal{W}_{2}}\,U(\bm{x})\ \ge\ U(\bm{\sigma})-d^{(\mathfrak{q})}\ =\ \Theta(\mathcal{M},\,\widetilde{\mathcal{M}})-d^{(\mathfrak{q})}\;.
\]
On the other hand, since $\widetilde{\mathcal{M}}\cap\mathcal{W}_{2}\ne\varnothing$,
by \eqref{pf_global_min},
\[
\min_{\bm{x}\in\mathcal{W}_{2}}\,U(\bm{x})\ \le\ U(\mathcal{M})\ <\ \Theta(\mathcal{M},\,\widetilde{\mathcal{M}})-d^{(\mathfrak{q})}\ .
\]
Two bounds obtained above contradict each other, and thus we get $\widetilde{\mathcal{M}}=\varnothing$.
Therefore, we can conclude that $\mathcal{M}=\mathcal{M}_{\star}$.
\end{proof}

\subsection{\label{sec6.2}Proof of $\mathfrak{P}_{2}(n+1)$, $\mathfrak{P}_{3}(n+1)$,
and $\mathfrak{P}_{4}(n+1)$}

We from now on assume $\mathfrak{n}_{n}\ge2$ so that we constructed
$\Lambda^{(n+1)}$ according to the procedure given in Section \ref{sec4.2}
thanks to the results obtained in the previous section. In particular,
we will implicitly assume that $\mathfrak{P}_{1}(n+1)$ holds, i.e.,
$\mathcal{M}\in\mathscr{S}^{(n+1)}$ is a simple bound set, since
we already proved it. We first establish several preliminary lemmata.
\begin{lem}
\label{l_adjacent0}Suppose that $\mathcal{M}\in\mathscr{N}^{(n+1)}$
and $\mathcal{M}'\in\mathscr{V}^{(n)}\cup\mathscr{V}^{(n+1)}$ satisfy
$\mathcal{M}\to\mathcal{M}'$. Then,
\[
U(\mathcal{M})\ \ge\ U(\mathcal{M}')\ .
\]
\end{lem}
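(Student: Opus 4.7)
The plan is to argue by contradiction: assume $U(\mathcal{M}) < U(\mathcal{M}')$ and derive a contradiction with the chain of values of the depth $\Xi(\cdot)$. Under the assumption, every element of $\mathcal{M}$ has strictly smaller potential than every element of the simple set $\mathcal{M}'$, and since $\mathcal{M} \to \mathcal{M}'$ forces $\mathcal{M} \cap \mathcal{M}' = \varnothing$, we have $\mathcal{M} \subset \widetilde{\mathcal{M}'}$.

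Next, I will convert this inclusion into a comparison of depths. From the definition \eqref{eq:con_gate} of $\mathcal{M}\to\mathcal{M}'$, one has $\Theta(\mathcal{M},\widetilde{\mathcal{M}}) = \Theta(\mathcal{M},\mathcal{M}')$, hence $\Xi(\mathcal{M}) = \Theta(\mathcal{M},\mathcal{M}') - U(\mathcal{M})$. On the other hand, Lemma \ref{lem_not}-(1) applied with $\mathcal{M}\subset\widetilde{\mathcal{M}'}$ yields $\Theta(\mathcal{M}',\widetilde{\mathcal{M}'}) \le \Theta(\mathcal{M}',\mathcal{M})$, so
\[
\Xi(\mathcal{M}') \;\le\; \Theta(\mathcal{M}',\mathcal{M}) - U(\mathcal{M}') \;<\; \Theta(\mathcal{M},\mathcal{M}') - U(\mathcal{M}) \;=\; \Xi(\mathcal{M}),
\]
where the strict inequality uses precisely the contradictory hypothesis $U(\mathcal{M}) < U(\mathcal{M}')$.

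Finally, I will locate $\Xi(\mathcal{M})$ and $\Xi(\mathcal{M}')$ on the grid of depths $d^{(1)} < \cdots < d^{(n)}$. By Remark \ref{rem:N_p+1}, the assumption $\mathcal{M} \in \mathscr{N}^{(n+1)}$ gives an index $k \in \llbracket 1,n\rrbracket$ with $\mathcal{M} \in \mathscr{T}^{(k)} \subset \mathscr{V}^{(k)}$, i.e.\ $\mathcal{M}$ is a non-absorbing state of $\mathbf{y}^{(k)}(\cdot)$; by Remark \ref{rem:nab} together with $\mathfrak{P}_4(k)$ this forces $\Xi(\mathcal{M}) = d^{(k)}$, and by $\mathfrak{P}_2\llbracket n\rrbracket$ we obtain $\Xi(\mathcal{M}) \le d^{(n)}$. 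For $\mathcal{M}'$: if $\mathcal{M}' \in \mathscr{V}^{(n)}$, the very definition \eqref{eq:depth_p+1} of $d^{(n)}$ yields $\Xi(\mathcal{M}') \ge d^{(n)}$; if $\mathcal{M}' \in \mathscr{V}^{(n+1)}$, Lemma \ref{lem_p2pre} (already proved in this section) yields the strict $\Xi(\mathcal{M}') > d^{(n)}$. Either way, $\Xi(\mathcal{M}') \ge d^{(n)} \ge \Xi(\mathcal{M})$, contradicting the strict inequality derived above.

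The only subtle point — not really a serious obstacle — is that at this stage of the induction neither $\mathfrak{P}_2(n+1)$ nor $\mathfrak{P}_4(n+1)$ is yet available, so the bound $\Xi(\mathcal{M}) = d^{(k)}$ must be extracted at level $k \le n$ (via Remark \ref{rem:N_p+1}), and the bound on $\Xi(\mathcal{M}')$ at level $n+1$ must come from the independently established Lemma \ref{lem_p2pre} rather than from $\mathfrak{P}_4(n+1)$.
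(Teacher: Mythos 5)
Your proof is correct and follows essentially the same route as the paper: assume $U(\mathcal{M})<U(\mathcal{M}')$, use $\mathcal{M}\to\mathcal{M}'$ to get $\Xi(\mathcal{M})=\Theta(\mathcal{M},\mathcal{M}')-U(\mathcal{M})$, compare with $\Xi(\mathcal{M}')\le\Theta(\mathcal{M}',\mathcal{M})-U(\mathcal{M}')$ to obtain the strict inequality $\Xi(\mathcal{M}')<\Xi(\mathcal{M})$, and contradict against the grid bounds $\Xi(\mathcal{M}')\ge d^{(n)}\ge\Xi(\mathcal{M})$. The only (cosmetic) difference is that you obtain $\Xi(\mathcal{M})\le d^{(n)}$ uniformly via Remark \ref{rem:N_p+1} and $\mathfrak{P}_4(k)$ for some $k\le n$, whereas the paper splits $\mathscr{N}^{(n+1)}=\mathscr{T}^{(n)}\cup\mathscr{N}^{(n)}$ and contradicts $\mathfrak{P}_4(n)$ in the first case and Lemma \ref{lem_charDelta} in the second — both lead to the same conclusion, and your care about which properties are available at this point of the induction matches the paper's.
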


\begin{proof}
Suppose, by contradiction, that $U(\mathcal{M})<U(\mathcal{M}')$.
Since $\mathcal{M}\to\mathcal{M}'$, by \eqref{eq:con_gate}, we can
find $\boldsymbol{\sigma}\in\mathcal{S}(\mathcal{M},\,\mathcal{M}')$
such that
\[
U(\boldsymbol{\sigma})\ =\ \Theta(\mathcal{M},\,\widetilde{\mathcal{M}})\ =\ \Theta(\mathcal{M},\,\mathcal{M}')\;.
\]
On the other hand, by Lemma \ref{lem_not}, we have
\[
\Theta(\mathcal{M}',\,\widetilde{\mathcal{M}'})\ \le\ \Theta(\mathcal{M}',\,\mathcal{M})
\]
and therefore, by summing up, we get
\[
\Xi(\mathcal{M})\ =\ U(\boldsymbol{\sigma})-U(\mathcal{M})\ >\ \Theta(\mathcal{M}',\,\widetilde{\mathcal{M}'})-U(\mathcal{M}')\ =\ \Xi(\mathcal{M}')\ .
\]
Since $\Xi(\mathcal{M}')\ge d^{(n)}$ as $\mathcal{M}'\in\mathscr{V}^{(n)}\cup\mathscr{V}^{(n+1)}$,
we have $\Xi(\mathcal{M})>d^{(n)}$. Since $\mathscr{N}^{(n+1)}=\mathscr{T}^{(n)}\cup\mathscr{N}^{(n)}$,
this yields a contradiction to $\mathfrak{P}_{4}(n)$ if $\mathcal{M}\in\mathscr{T}^{(n)}$
and to Lemma \ref{lem_charDelta} if $\mathcal{M}\in\mathscr{N}^{(n)}$.
\end{proof}
\begin{lem}
\label{l_adjacent}Let $\mathcal{M}\in\mathscr{N}^{(n+1)}$ and $\mathcal{M}'\in\mathscr{V}^{(n+1)}$.
Then, $\mathcal{M}\to\mathcal{M}'$ if and only if $\mathcal{M}\to\mathcal{M}''$
for some $\mathcal{M}''\in\mathscr{V}^{(n)}(\mathcal{M}')$.
\end{lem}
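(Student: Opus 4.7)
The statement is an equivalence, so I treat the two directions separately. Both rely on the elementary observation that since $\mathcal{M}'$ is, by construction \eqref{eq:V_p+1}, the union of the elements of $\mathscr{V}^{(n)}(\mathcal{M}')$, any path from $\mathcal{M}$ to $\mathcal{M}'$ ends at exactly one $\mathcal{M}''\in\mathscr{V}^{(n)}(\mathcal{M}')$, and any path from $\mathcal{M}$ to $\mathcal{M}''$ is automatically a path from $\mathcal{M}$ to $\mathcal{M}'$. In particular,
\[
\Theta(\mathcal{M},\,\mathcal{M}')\ \le\ \Theta(\mathcal{M},\,\mathcal{M}'')\;\;\text{for every }\mathcal{M}''\in\mathscr{V}^{(n)}(\mathcal{M}')\;.
\]

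\textbf{Forward direction.} Assume $\mathcal{M}\rightarrow_{\bm{\sigma}}\mathcal{M}'$, so that $U(\bm{\sigma})=\Theta(\mathcal{M},\,\widetilde{\mathcal{M}})=\Theta(\mathcal{M},\,\mathcal{M}')$ and $\mathcal{M}\leftsquigarrow\bm{\sigma}\curvearrowright\bm{m}$ for some $\bm{m}\in\mathcal{M}'$. Let $\mathcal{M}''\in\mathscr{V}^{(n)}(\mathcal{M}')$ be the (unique) element containing $\bm{m}$, so that $\bm{\sigma}\curvearrowright\mathcal{M}''$ and, since $\mathcal{M}\cap\mathcal{M}'=\varnothing$, also $\mathcal{M}\cap\mathcal{M}''=\varnothing$. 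From the displayed inequality together with the trivial bound $\Theta(\mathcal{M},\,\mathcal{M}'')\le U(\bm{\sigma})$ (use the path through $\bm{\sigma}$) we obtain
\[
\Theta(\mathcal{M},\,\mathcal{M}'')\ =\ U(\bm{\sigma})\ =\ \Theta(\mathcal{M},\,\widetilde{\mathcal{M}})\;,
\]
which is exactly \eqref{eq:con_gate} for the pair $(\mathcal{M},\,\mathcal{M}'')$, hence $\mathcal{M}\rightarrow\mathcal{M}''$.

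\textbf{Backward direction.} Assume $\mathcal{M}\rightarrow_{\bm{\sigma}}\mathcal{M}''$ for some $\mathcal{M}''\in\mathscr{V}^{(n)}(\mathcal{M}')\subset\mathscr{V}^{(n)}$. Then $\bm{\sigma}\curvearrowright\bm{m}$ for some $\bm{m}\in\mathcal{M}''\subset\mathcal{M}'$, so $\mathcal{M}\leftsquigarrow\bm{\sigma}\curvearrowright\mathcal{M}'$, and $\mathcal{M}\cap\mathcal{M}'=\varnothing$ because $\mathscr{N}^{(n+1)}\cap\mathscr{V}^{(n+1)}=\varnothing$. Only the height identity $U(\bm{\sigma})=\Theta(\mathcal{M},\,\widetilde{\mathcal{M}})=\Theta(\mathcal{M},\,\mathcal{M}')$ requires work. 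The first equality is by hypothesis, and the displayed inequality above already gives $\Theta(\mathcal{M},\,\mathcal{M}')\le\Theta(\mathcal{M},\,\mathcal{M}'')=U(\bm{\sigma})=\Theta(\mathcal{M},\,\widetilde{\mathcal{M}})$. For the reverse, I show $\mathcal{M}'\subset\widetilde{\mathcal{M}}$, from which Lemma \ref{lem_not} yields $\Theta(\mathcal{M},\,\mathcal{M}')\ge\Theta(\mathcal{M},\,\widetilde{\mathcal{M}})$. Indeed, by Lemma \ref{l_adjacent0} applied to the adjacency $\mathcal{M}\rightarrow\mathcal{M}''$ one has $U(\mathcal{M})\ge U(\mathcal{M}'')$, and by simplicity of $\mathcal{M}'$ (Lemma \ref{lem_stddec}-(2) if $|\mathscr{V}^{(n)}(\mathcal{M}')|\ge 2$, $\mathfrak{P}_1(n)$ otherwise) we have $U(\mathcal{M}')=U(\mathcal{M}'')\le U(\mathcal{M})$; combined with $\mathcal{M}'\cap\mathcal{M}=\varnothing$ this is precisely $\mathcal{M}'\subset\widetilde{\mathcal{M}}$.

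\textbf{Main obstacle.} The argument is entirely bookkeeping around definitions, and the only non-tautological input is the step $U(\mathcal{M})\ge U(\mathcal{M}')$ in the backward direction. This is where the preparatory Lemma \ref{l_adjacent0} combined with the simplicity of $\mathcal{M}'\in\mathscr{V}^{(n+1)}$ (which itself depends on the already-established property $\mathfrak{P}_1(n+1)$ and Lemma \ref{lem_stddec}) is essential; without this we would not be able to upgrade an adjacency at level $n$ to one at level $n+1$ because $\widetilde{\mathcal{M}}$ depends only on the depth of $\mathcal{M}$, not of its neighbours.
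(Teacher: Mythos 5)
Your proof is correct and takes essentially the same route as the paper's. The paper's forward direction invokes Lemma \ref{lem_not}-(4)/(5) as a package, whereas you unpack those into their ingredients (Lemma \ref{lem_not}-(1) and (3)) and verify the defining condition \eqref{eq:con_gate} directly; both backward directions hinge on the same key step, Lemma \ref{l_adjacent0} giving $U(\mathcal{M})\ge U(\mathcal{M}'')=U(\mathcal{M}')$, which is precisely the non-tautological input you correctly single out.
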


\begin{proof}
Suppose that $\mathcal{M}\to\mathcal{M}'$. By Lemma \ref{lem_not}-(4),
there exists $\boldsymbol{m}'\in\mathcal{M}'$ such that $\mathcal{M}\to\boldsymbol{m}'$.
Recall the definition of $\mathcal{M}(n,\,\bm{m}')$ from the beginning
of Section \ref{sec2.3}. Since $\mathcal{M}\to\mathcal{M}'$ and
$\mathcal{M}(n,\,\bm{m}')\subset\mathcal{M}'$, by \eqref{eq:con_gate}
and Lemma \ref{lem_not}-(1), we have
\[
\Theta(\mathcal{M},\,\widetilde{\mathcal{M}})\ =\ \Theta(\mathcal{M},\,\mathcal{M}')\ \le\ \Theta\left(\mathcal{M},\,\mathcal{M}(n,\,\bm{m}')\right)\;.
\]
Thus, by Lemma \ref{lem_not}-(5), we have $\mathcal{M}\to\mathcal{M}(n,\,\bm{m}')$.

Conversely, suppose that $\mathcal{M}\to\mathcal{M}''$ for some $\mathcal{M}''\in\mathscr{V}^{(n)}(\mathcal{M}')$.
By Lemma \ref{l_adjacent0}, we have $U(\mathcal{M})\ge U(\mathcal{M}'')=U(\mathcal{M}')$.
Hence, $\mathcal{M}'\subset\widetilde{\mathcal{M}}$ so that we have
$\Theta(\mathcal{M},\,\widetilde{\mathcal{M}})\le\Theta(\mathcal{M},\,\mathcal{M}')$
by Lemma \ref{lem_not}-(1). By Lemma \ref{lem_not}-(4), there is
$\boldsymbol{m}'\in\mathcal{M}''$ such that $\mathcal{M}\to\boldsymbol{m}'$.
Since $\Theta(\mathcal{M},\,\widetilde{\mathcal{M}})\le\Theta(\mathcal{M}\,,\mathcal{M}')$,
by Lemma \ref{lem_not}-(5), $\mathcal{M}\to\mathcal{M}'$, as claimed.
\end{proof}
Now we are ready to complete the proof of Theorem \ref{t:tree}.
\begin{proof}[Proof Theorem \ref{t:tree}-(2) and (3)]

\noindent 1. \emph{Proof of $\mathfrak{P}_{2}(n+1)$}. By \eqref{eq:depth_p+1}
and Lemma \ref{lem_p2pre}, we have
\[
d^{(n+1)}\ =\ \min_{\mathcal{M}\in\mathscr{V}^{(n+1)}}\,\Xi(\mathcal{M})\ >\ d^{(n)}\;.
\]
\smallskip{}

\noindent 2. \emph{Proof of $\mathfrak{P}_{3}(n+1)$}. By \eqref{eq:rate_30}
and \eqref{eq:rate_3}, property $\mathfrak{P}_{3}(n+1)$ immediately
holds for $\mathcal{M}\in\mathscr{V}^{(n+1)}$ and $\mathcal{M}'\in\mathscr{S}^{(n+1)}$.
Hence, it suffices to consider the case $\mathcal{M}\in\mathscr{N}^{(n+1)}$
and $\mathcal{M}'\in\mathscr{S}^{(n+1)}$. By Lemma \ref{lem_charDelta}
and $\mathfrak{P}_{2}(n+1)$ proven above, we have $\Xi(\mathcal{M})\le d^{(n)}$
for this case.

Assume first that $\widehat{r}^{(n+1)}(\mathcal{M},\,\mathcal{M}')>0$.
If $\mathcal{M}'\in\mathscr{N}^{(n+1)}$, we get $\mathcal{M}\to\mathcal{M}'$
by \eqref{eq:rate_1} and $\mathfrak{P}_{3}(n)$. On the other hand,
if $\mathcal{M}'\in\mathscr{V}^{(n+1)}$, by \eqref{eq:rate_2}, we
have $\widehat{r}^{(n)}(\mathcal{M},\,\mathcal{M}'')>0$ for some
$\mathcal{M}''\in\mathscr{V}^{(n)}(\mathcal{M}')$ and thus by $\mathfrak{P}_{3}(n)$
we have $\mathcal{M}\to\mathcal{M}''$. Hence, by Lemma \ref{l_adjacent},
$\mathcal{M}\to\mathcal{M}'$.

Conversely, assume that $\mathcal{M}\rightarrow\mathcal{M}'$ (Mind
that $\Xi(\mathcal{M})\le d^{(n)}$). If $\mathcal{M}'\in\mathscr{N}^{(n+1)}$,
we get $\widehat{r}^{(n+1)}(\mathcal{M},\,\mathcal{M}')>0$ by \eqref{eq:rate_1}
and $\mathfrak{P}_{3}(n)$. On the other hand, if $\mathcal{M}'\in\mathscr{V}^{(n+1)}$,
by Lemma \ref{l_adjacent}, we have $\mathcal{M}\rightarrow\mathcal{M}''$
for some $\mathcal{M}''\in\mathscr{V}^{(n)}(\mathcal{M}')$. Since
$\widehat{r}^{(n)}(\mathcal{M},\,\mathcal{M}'')>0$ by $\mathfrak{P}_{3}(n)$,
we get $\widehat{r}^{(n+1)}(\mathcal{M},\,\mathcal{M}')>0$ by \eqref{eq:rate_2}.

\smallskip{}

\noindent 3. \emph{Proof of $\mathfrak{P}_{4}(n+1)$}.\textbf{ }Fix
$\mathcal{M}\in\mathscr{V}^{(n+1)}$ such that $\Xi(\mathcal{M})>d^{(n+1)}$.
By \eqref{eq:rate_30} and \eqref{eq:rate_3}, $\mathcal{M}$ is an
absorbing state of the process $\widehat{{\bf y}}^{(n+1)}(\cdot)$,
and therefore of the Markov chain $\mathbf{y}^{(n+1)}(\cdot)$.

Conversely, fix $\mathcal{M}\in\mathscr{V}^{(n+1)}$ such that $\Xi(\mathcal{M})\le d^{(n+1)}$
and hence $\Xi(\mathcal{M})=d^{(n+1)}$ by \eqref{eq:depth_p+1}.
We claim that $\mathcal{M}\in\mathscr{V}^{(n+1)}$ is not an absorbing
state of the Markov chain $\mathbf{y}^{(n+1)}(\cdot)$.

By Lemma \ref{lem_level}-(1), there exists a connected component
$\mathcal{K}$ of the level set $\{U\le\Theta(\mathcal{M},\,\widetilde{\mathcal{M}})\}$
containing $\mathcal{M}$ and intersecting with $\widetilde{\mathcal{M}}.$
Then, by Lemma \ref{lem_level}-(3), we can find
$\boldsymbol{\sigma}\in\mathcal{S}_{0}\cap\mathcal{K}$ such that
$U(\boldsymbol{\sigma})=\Theta(\mathcal{M},\,\widetilde{\mathcal{M}})$.
As in Section \eqref{sec5.2}, let $\{\mathcal{W}_{1},\,\dots,\,\mathcal{W}_{b}\}$
be a level set decomposition of $\mathcal{K}$. By Lemma \ref{lem_level}-(2),
we have $b\ge2$, and by Lemma \ref{2-la1}, relablelling the indices
suitably, we can find $k_{0}\in\llbracket2,\,b\rrbracket$ such that
$\overline{\mathcal{W}_{i}}\cap\overline{\mathcal{W}_{i+1}}\neq\varnothing$
for all $i\in\llbracket1,\,k_{0}-1\rrbracket$, and that
\[
\mathcal{M}\subset\mathcal{W}_{1}\;,\;\;\;\widetilde{\mathcal{M}}\cap\mathcal{W}_{k_{0}}\neq\varnothing\;,\text{ and \;\;\;}\min_{i\in\llbracket2,\,k_{0}-1\rrbracket}\,\min_{\boldsymbol{x}\in\mathcal{W}_{i}}\,U(\boldsymbol{x})\ >\ U(\mathcal{M})\;.
\]
By Lemma \ref{lem_wells2}-(2) (with $n+1$ instead of $n$), which
can be used here since we established $\mathfrak{P}_{1,2,3}(n+1)$
(cf. Remark \ref{rem:p4}), there exists $\mathcal{M}'\in\mathscr{V}^{(n+1)}(\mathcal{W}_{k_{0}})$
such that $\mathcal{M}\Rightarrow^{(n+1)}\mathcal{M}'$. Therefore,
the set $\mathcal{M}$ is not an absorbing state of $\mathbf{y}^{(n+1)}(\cdot)$.

\smallskip{}

\noindent 4. \emph{Proof of assertion (3)}. By \eqref{eq:depth_p+1},
there exists $\mathcal{M}\in\mathscr{V}^{(n+1)}$ such that $\Xi(\mathcal{M})=d^{(n+1)}$.
By $\mathfrak{P}_{4}(n+1)$, the set $\mathcal{M}$ is not an absorbing
state of the Markov chain $\mathbf{y}^{(n+1)}(\cdot)$. It is therefore,
either transient or it belongs to a recurrent class with at least
two elements. In any case, $\mathfrak{n}_{n+1}$, the number of recurrent
classes of $\mathbf{y}^{(n+1)}(\cdot)$, should be strictly smaller
than $|\mathscr{V}^{(n+1)}|=\mathfrak{n}_{n}$.
\end{proof}

\section{\label{sec7}Proof of Proposition \ref{prop_H}}

In this section, we fix $p\in\llbracket1,\,\mathfrak{q}\rrbracket$
and suppose that $\mathfrak{C}_{{\rm fdd}}^{(1)},\dots,\mathfrak{C}_{{\rm fdd}}^{(p)}$
are in force and he purpose is to prove $\mathfrak{H}^{(p+1)}$, namely
\eqref{eq:Hp-1} and \eqref{eq:Hp-2} for $p+1$ instead of $p$.
We emphasize that, we will from now on assume that the tree structure
is constructed up to the last scale $\mathfrak{q}$ and $\mathfrak{P}\llbracket\mathfrak{q}\rrbracket$
has been verified as in Corollary \ref{cor:tree}.

\subsection{Proof of \eqref{eq:Hp-1} of condition $\mathfrak{H}^{(p+1)}$}

We start from a Freidlin-Wentzell type result which is independent
of $\mathfrak{C}_{{\rm fdd}}^{(1)},\dots,\mathfrak{C}_{{\rm fdd}}^{(p)}$.
\begin{lem}
\label{lem:FW-type} For any $\mathcal{M}\in\mathscr{V}^{(p+1)}$
and $\mathcal{M}'\in\mathscr{V}^{(p)}$ such that $\mathcal{M}'\not\subset\mathcal{M}$,
\[
\Theta(\mathcal{M},\,\mathcal{M}')-U(\mathcal{M})\ >\ d^{(p)}\;.
\]
In particular, for all $t>0$,
\[
\sup_{\boldsymbol{x}\in\mathcal{E}(\mathcal{M})}\,\mathbb{P}_{\boldsymbol{x}}^{\epsilon}\left[\,\tau_{\mathcal{E}(\mathcal{M}')}\le\theta_{\epsilon}^{(p)}t\,\right]\ =\ 0\;.
\]
\end{lem}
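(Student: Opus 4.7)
The plan is to establish the algebraic inequality $\Theta(\mathcal{M},\mathcal{M}')-U(\mathcal{M})>d^{(p)}$ by a two-case argument, and then deduce the hitting-probability statement as an essentially standard Freidlin--Wentzell consequence.

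First I would record that, by the construction in Section \ref{sec4.2}, $\mathcal{M}\in\mathscr{V}^{(p+1)}$ is a disjoint union of elements of $\mathscr{V}^{(p)}$, so the hypothesis $\mathcal{M}'\not\subset\mathcal{M}$ upgrades to $\mathcal{M}'\cap\mathcal{M}=\varnothing$. Both sets are simple by $\mathfrak{P}_1\llbracket\mathfrak{q}\rrbracket$, so the common values $U(\mathcal{M})$ and $U(\mathcal{M}')$ are well defined. I would then split on the sign of $U(\mathcal{M}')-U(\mathcal{M})$.

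If $U(\mathcal{M}')\le U(\mathcal{M})$, disjointness plus simplicity give $\mathcal{M}'\subset\widetilde{\mathcal{M}}$, hence $\Theta(\mathcal{M},\mathcal{M}')\ge\Theta(\mathcal{M},\widetilde{\mathcal{M}})=U(\mathcal{M})+\Xi(\mathcal{M})$, and Lemma \ref{lem_p2pre} (applied with $n=p$) yields the strict bound $\Xi(\mathcal{M})>d^{(p)}$. In the symmetric case $U(\mathcal{M}')>U(\mathcal{M})$ the roles reverse: $\mathcal{M}\subset\widetilde{\mathcal{M}'}$ gives $\Theta(\mathcal{M},\mathcal{M}')\ge U(\mathcal{M}')+\Xi(\mathcal{M}')\ge U(\mathcal{M}')+d^{(p)}>U(\mathcal{M})+d^{(p)}$, where $\Xi(\mathcal{M}')\ge d^{(p)}$ holds because $\mathcal{M}'\in\mathscr{V}^{(p)}$ and $d^{(p)}=\min_{\mathscr{V}^{(p)}}\Xi$ by \eqref{eq:depth_p+1}.

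For the hitting-probability bound, I would pick $\delta>0$ with $\Theta(\mathcal{M},\mathcal{M}')-U(\mathcal{M})>d^{(p)}+\delta$ and invoke a standard Freidlin--Wentzell upper bound on the hitting time of $\mathcal{E}(\mathcal{M}')$ starting from the well $\mathcal{E}(\mathcal{M})$, of the form $\sup_{\boldsymbol{x}\in\mathcal{E}(\mathcal{M})}\mathbb{P}_{\boldsymbol{x}}^{\epsilon}[\tau_{\mathcal{E}(\mathcal{M}')}\le e^{(d^{(p)}+\delta/2)/\epsilon}]\to 0$, as developed in \cite{FW} and made quantitative in the companion paper \cite{LLS-1st}. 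Since $\theta_{\epsilon}^{(p)}t=t\,e^{d^{(p)}/\epsilon}$ is exponentially smaller than $e^{(d^{(p)}+\delta/2)/\epsilon}$, monotonicity in $t$ gives the conclusion. I do not foresee a serious difficulty here; the only subtlety is purely bookkeeping, namely that the strict inequality $\Xi(\mathcal{M})>d^{(p)}$ available in the first case is compensated in the symmetric branch by the strict ordering $U(\mathcal{M}')>U(\mathcal{M})$, so that the overall inequality is strict in both branches.
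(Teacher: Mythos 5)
Your proposal is correct and follows essentially the same two-case argument as the paper: one branch uses $\mathcal{M}'\subset\widetilde{\mathcal{M}}$ together with Lemma \ref{lem_p2pre} to get $\Xi(\mathcal{M})>d^{(p)}$, and the other uses $\mathcal{M}\subset\widetilde{\mathcal{M}'}$ together with $\Xi(\mathcal{M}')\ge d^{(p)}$ and the strict ordering of $U$-values; the hitting-time estimate then follows from the standard Freidlin--Wentzell bound (Proposition \ref{p_FW}), exactly as in the paper.
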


\begin{proof}
Suppose, first, that $U(\mathcal{M})<U(\mathcal{M}')$. In this case,
by Lemma \ref{lem_not}-(1),
\[
\Theta(\mathcal{M},\,\mathcal{M}')-U(\mathcal{M})\ \ge\ \Theta(\widetilde{\mathcal{M}'},\,\mathcal{M}')-U(\mathcal{M})\ >\ \Theta(\widetilde{\mathcal{M}'},\,\mathcal{M}')-U(\mathcal{M}')\ =\ \Xi(\mathcal{M}')\;.
\]
Since $\mathcal{M}'$ belongs to $\mathscr{V}^{(p)}$, by Proposition
\ref{prop:depth}, $\Xi(\mathcal{M}')\ge d^{(p)}$, so that $\Theta(\mathcal{M},\,\mathcal{M}')-U(\mathcal{M})>d^{(p)}$,
as claimed.

Suppose that $U(\mathcal{M})\ge U(\mathcal{M}')$. In this case, $\mathcal{M}'\subset\widetilde{\mathcal{M}}$.
Hence, by Lemma \ref{lem_p2pre},
\[
\Theta(\mathcal{M},\,\mathcal{M}')-U(\mathcal{M})\ \ge\ \Theta(\mathcal{M},\,\widetilde{\mathcal{M}})-U(\mathcal{M})\ =\ \Xi(\mathcal{M})\ >\ d^{(p)}\;.
\]
This proves the first assertion of the lemma.

The second assertion follows from the first one and standard Freidlin-Wentzell
estimate. See, e.g., Proposition \ref{p_FW}.
\end{proof}
Denote by $\mathscr{R}^{(p)}\subset\mathscr{V}^{(p)}$ the union of
all the recurrent classes of the Markov chain $\mathbf{y}^{(p)}(\cdot)$,
\begin{equation}
{\color{blue}\mathscr{R}^{(p)}}\ :=\;\mathscr{R}_{1}^{(p)}\cup\cdots\cup\mathscr{R}_{\mathfrak{n}_{p}}^{(p)}\;,\label{eq:Rp}
\end{equation}
so that $\mathscr{V}^{(p)}=\mathscr{T}^{(p)}\cup\mathscr{R}^{(p)}$.
Note that $\mathcal{E}^{(p+1)}=\bigcup_{\mathcal{M}\in\mathscr{R}^{(p)}}\mathcal{E}(\mathcal{M})$
and therefore the next lemma is a natural result.
\begin{lem}
\label{l2-03} Suppose that $\mathfrak{C}_{{\rm fdd}}^{(p)}$ holds.
Then, for all $\mathcal{M}\in\mathscr{V}^{(p)}$ and $t>0$,
\[
\lim_{\epsilon\to0}\,\sup_{\boldsymbol{x}\in\mathcal{E}(\mathcal{M})}\,\Big|\,\mathbb{P}_{\boldsymbol{x}}^{\epsilon}\left[\,\tau_{\mathcal{E}^{(p+1)}}>\theta_{\epsilon}^{(p)}t\,\right]-\mathcal{Q}_{\mathcal{M}}^{(p)}\left[\,\tau_{\mathscr{R}^{(p)}}>t\,\right]\,\Big|\ =\ 0
\]
\end{lem}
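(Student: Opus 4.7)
The plan is to show that the hitting-time event $\{\tau_{\mathcal{E}^{(p+1)}} > \theta_\epsilon^{(p)} t\}$ is, up to $o(1)$ as $\epsilon \to 0$, equivalent to the terminal-time event $\{\boldsymbol{y}_\epsilon(t) \notin \mathcal{E}^{(p+1)}\}$, where $\boldsymbol{y}_\epsilon(s) := \boldsymbol{x}_\epsilon(\theta_\epsilon^{(p)} s)$, and then to apply $\mathfrak{C}_{\mathrm{fdd}}^{(p)}$ directly to this single-time event. When $\mathcal{M} \in \mathscr{R}^{(p)}$, both sides of the claimed identity vanish because $\mathcal{E}(\mathcal{M}) \subset \mathcal{E}^{(p+1)}$ and $\mathcal{M}$ belongs to a closed class of $\mathbf{y}^{(p)}(\cdot)$; so I reduce to $\mathcal{M} \in \mathscr{T}^{(p)}$. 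Recalling that $\mathcal{E}^{(p+1)} = \bigcup_{\mathcal{M}' \in \mathscr{R}^{(p)}} \mathcal{E}(\mathcal{M}')$, the inclusion $\{\boldsymbol{y}_\epsilon(t) \in \mathcal{E}^{(p+1)}\} \subset \{\tau_{\mathcal{E}^{(p+1)}} \le \theta_\epsilon^{(p)} t\}$ is automatic.

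The heart of the argument is the uniform bound
\begin{equation*}
\sup_{\boldsymbol{x} \in \mathcal{E}(\mathcal{M})} \mathbb{P}^\epsilon_{\boldsymbol{x}}\bigl[\,\tau_{\mathcal{E}^{(p+1)}} \le \theta_\epsilon^{(p)} t,\ \boldsymbol{y}_\epsilon(t) \notin \mathcal{E}^{(p+1)}\,\bigr] \ \xrightarrow{\epsilon \to 0}\ 0.
\end{equation*}
Split this event according to whether $\boldsymbol{y}_\epsilon(t) \notin \mathcal{E}^{(p)}$ or $\boldsymbol{y}_\epsilon(t) \in \mathcal{E}(\mathcal{M}')$ for some $\mathcal{M}' \in \mathscr{T}^{(p)}$. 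The first piece is $o(1)$ by Remark \ref{rem:fddneg} (a direct consequence of $\mathfrak{C}_{\mathrm{fdd}}^{(p)}$). For the second, apply the strong Markov property at $\tau = \tau_{\mathcal{E}^{(p+1)}}$: at time $\tau$ the process sits in $\mathcal{E}(\mathcal{M}^{(p+1)})$ for some $\mathcal{M}^{(p+1)} \in \mathscr{V}^{(p+1)}$, and since $\mathcal{M}' \in \mathscr{T}^{(p)}$ is disjoint from the recurrent-class union $\mathcal{M}^{(p+1)}$ (so $\mathcal{M}' \not\subset \mathcal{M}^{(p+1)}$), Lemma \ref{lem:FW-type} gives $\sup_{\boldsymbol{y} \in \mathcal{E}(\mathcal{M}^{(p+1)})} \mathbb{P}^\epsilon_{\boldsymbol{y}}[\tau_{\mathcal{E}(\mathcal{M}')} \le \theta_\epsilon^{(p)} t] \to 0$. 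Summing over the finitely many pairs $(\mathcal{M}^{(p+1)}, \mathcal{M}')$ closes this step.

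Combining the bound above with the automatic inclusion yields $\mathbb{P}^\epsilon_{\boldsymbol{x}}[\tau_{\mathcal{E}^{(p+1)}} > \theta_\epsilon^{(p)} t] = \mathbb{P}^\epsilon_{\boldsymbol{x}}[\boldsymbol{y}_\epsilon(t) \notin \mathcal{E}^{(p+1)}] + o(1)$. Now apply $\mathfrak{C}_{\mathrm{fdd}}^{(p)}$ at the single time $t$ to each term of $\mathbb{P}^\epsilon_{\boldsymbol{x}}[\boldsymbol{y}_\epsilon(t) \in \mathcal{E}(\mathcal{M}')]$ over $\mathcal{M}' \in \mathscr{R}^{(p)}$; combined with $\mathbb{P}^\epsilon_{\boldsymbol{x}}[\boldsymbol{y}_\epsilon(t) \notin \mathcal{E}^{(p)}] = o(1)$, the right-hand side converges to $1 - \mathcal{Q}^{(p)}_{\mathcal{M}}[\mathbf{y}^{(p)}(t) \in \mathscr{R}^{(p)}]$. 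Since each $\mathscr{R}_j^{(p)}$ is a closed irreducible class of $\mathbf{y}^{(p)}(\cdot)$, the set $\mathscr{R}^{(p)}$ is absorbing, so $\{\mathbf{y}^{(p)}(t) \in \mathscr{R}^{(p)}\} = \{\tau_{\mathscr{R}^{(p)}} \le t\}$ under $\mathcal{Q}^{(p)}_{\mathcal{M}}$, and the claim follows. The main obstacle is the second paragraph: since $\mathfrak{G}^{(p)}$ is not yet available, excluding ``enter-exit'' excursions through $\mathcal{E}^{(p+1)}$ relies on the sharp Freidlin--Wentzell barrier of Lemma \ref{lem:FW-type}, applied uniformly over starting points in $\mathcal{E}(\mathcal{M}^{(p+1)})$, which is exactly what the strong Markov decomposition demands.
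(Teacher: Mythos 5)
Your proposal is correct and follows essentially the same route as the paper's proof: replace $\{\tau_{\mathcal{E}^{(p+1)}}>\theta_\epsilon^{(p)}t\}$ by the single-time event $\{\boldsymbol{x}_\epsilon(\theta_\epsilon^{(p)}t)\in\mathcal{E}^{(p)}\setminus\mathcal{E}^{(p+1)}\}$ up to $o(1)$ via Remark \ref{rem:fddneg}, control the enter-exit excursion by the strong Markov property at $\tau_{\mathcal{E}^{(p+1)}}$ together with Lemma \ref{lem:FW-type}, and then invoke $\mathfrak{C}_{\rm fdd}^{(p)}$ at a single time plus $\{\mathbf{y}^{(p)}(t)\in\mathscr{T}^{(p)}\}=\{\tau_{\mathscr{R}^{(p)}}>t\}$. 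The only cosmetic differences are that you split off the trivial case $\mathcal{M}\in\mathscr{R}^{(p)}$ and you algebraically decompose the error term slightly differently; the paper treats all $\mathcal{M}\in\mathscr{V}^{(p)}$ uniformly, but both arguments hinge on the same two ingredients and are of identical strength.
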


\begin{proof}
By $\mathfrak{C}_{{\rm fdd}}^{(p)}$, it suffices to prove
\begin{equation}
\lim_{\epsilon\to0}\,\sup_{\boldsymbol{x}\in\mathcal{E}(\mathcal{M})}\,\left|\,\mathbb{P}_{\boldsymbol{x}}^{\epsilon}\left[\,\tau_{\mathcal{E}^{(p+1)}}>\theta_{\epsilon}^{(p)}t\,,\ \boldsymbol{x}_{\epsilon}(\theta_{\epsilon}^{(p)}t)\in\mathcal{E}^{(p)}\setminus\mathcal{E}^{(p+1)}\,\right]-\mathcal{Q}_{\mathcal{M}}^{(p)}\left[\,\tau_{\mathscr{V}^{(p+1)}}>t\,\right]\,\right|\ =\ 0\ .\label{e_tar0}
\end{equation}
The first probability can be written as
\[
\mathbb{P}_{\boldsymbol{x}}^{\epsilon}\left[\,\boldsymbol{x}_{\epsilon}(\theta_{\epsilon}^{(p)}t)\in\mathcal{E}^{(p)}\setminus\mathcal{E}^{(p+1)}\,\right]\,-\,\mathbb{P}_{\boldsymbol{x}}^{\epsilon}\left[\,\tau_{\mathcal{E}^{(p+1)}}\le\theta_{\epsilon}^{(p)}t\,,\ \boldsymbol{x}_{\epsilon}(\theta_{\epsilon}^{(p)}t)\in\mathcal{E}^{(p)}\setminus\mathcal{E}^{(p+1)}\,\right]\;.
\]
By the strong Markov property, the second term is bounded by
\[
\sup_{\boldsymbol{y}\in\mathcal{E}^{(p+1)}}\,\mathbb{P}_{\boldsymbol{y}}^{\epsilon}\left[\,\tau_{\mathcal{E}^{(p)}\setminus\mathcal{E}^{(p+1)}}\le\theta_{\epsilon}^{(p)}t\,\right]\;\le\;\sum_{\mathcal{M}''\in\mathscr{T}^{(p)}}\,\max_{\mathcal{M}'\in\mathscr{V}^{(p+1)}}\,\sup_{\boldsymbol{y}\in\mathcal{E}(\mathcal{M}')}\mathbb{P}_{\boldsymbol{y}}^{\epsilon}\left[\,\tau_{\mathcal{E}(\mathcal{M}'')}\le\theta_{\epsilon}^{(p)}t\,\right]\;.
\]
By Lemma \ref{lem:FW-type}, this expression vanishes as $\epsilon\to0$.

It remains to prove that
\[
\lim_{\epsilon\to0}\,\sup_{\boldsymbol{x}\in\mathcal{E}(\mathcal{M})}\,\left|\,\mathbb{P}_{\boldsymbol{x}}^{\epsilon}\left[\,\boldsymbol{x}_{\epsilon}(\theta_{\epsilon}^{(p)}t)\in\mathcal{E}^{(p)}\setminus\mathcal{E}^{(p+1)}\,\right]\,-\,\mathcal{Q}_{\mathcal{M}}^{(p)}\left[\,\tau_{\mathscr{V}^{(p+1)}}>t\,\right]\,\right|\ =\ 0\ .
\]
By $\mathfrak{C}_{{\rm fdd}}^{(p)}$,
\[
\lim_{\epsilon\to0}\,\sup_{\boldsymbol{x}\in\mathcal{E}(\mathcal{M})}\,\left|\,\mathbb{P}_{\boldsymbol{x}}^{\epsilon}\left[\,\boldsymbol{x}_{\epsilon}(\theta_{\epsilon}^{(p)}t)\in\mathcal{E}^{(p)}\setminus\mathcal{E}^{(p+1)}\,\right]\,-\,\mathcal{Q}_{\mathcal{M}}^{(p)}\left[\,{\bf y}^{(p)}(t)\in\mathscr{T}^{(p)}\,\right]\,\right|\ =\ 0\ .
\]
Since $\mathscr{T}^{(p)}$ and $\mathscr{R}^{(p)}$ denote the set
of transient and recurrent states of the Markov chain $\mathbf{y}^{(p)}(\cdot)$,
respectively,
\[
\mathcal{Q}_{\mathcal{M}}^{(p)}\left[\,{\bf y}^{(p)}(t)\in\mathscr{T}^{(p)}\,\right]\ =\ \mathcal{Q}_{\mathcal{M}}^{(p)}\left[\,\tau_{\mathscr{R}^{(p)}}>t\,\right]\;,
\]
which completes the proof.
\end{proof}
Now let us turn to $\mathfrak{H}^{(p+1)}$-(1). Note that we only
need $\mathfrak{C}_{\textup{fdd}}^{(p)}$ (instead of $\mathfrak{C}_{\textup{fdd}}^{(1)}$,
..., $\mathfrak{C}_{\textup{fdd}}^{(p)}$) to prove $\mathfrak{H}^{(p+1)}$-(1).
\begin{proof}[Proof of $\mathfrak{H}^{(p+1)}$-(1) by assuming $\mathfrak{C}_{\textup{fdd}}^{(p)}$]
Let us fix a sequence $(\alpha_{\epsilon})_{\epsilon>0}$ such that
$\alpha_{\epsilon}\succ\theta_{\epsilon}^{(p)}$. Since (cf. Remark
\ref{rem:N_p+1})
\[
\mathscr{N}^{(p+1)}\ =\ \bigcup_{n=1}^{p}\mathscr{T}^{(n)}\;,
\]
it is enough to prove that, for all $r\in\llbracket1,\,p\rrbracket$
and $\mathcal{M}\in\mathscr{T}^{(r)}$,
\begin{equation}
\lim_{\epsilon\to0}\,\sup_{\boldsymbol{x}\in\mathcal{E}(\mathcal{M})}\,\mathbb{P}_{\boldsymbol{x}}^{\epsilon}\left[\,\tau_{\mathcal{E}^{(p+1)}}>\alpha_{\epsilon}\,\right]\ =\ 0\ .\label{eq:Hp-1-1}
\end{equation}
This is proved by a reversed induction on $r$.

Suppose first that $r=p$ and $\mathcal{M}\in\mathscr{T}^{(p)}$.
Since $\alpha_{\epsilon}\succ\theta_{\epsilon}^{(p)}$, for all $t>0$
and $\epsilon>0$ small enough,
\[
\sup_{\boldsymbol{x}\in\mathcal{E}(\mathcal{M})}\,\mathbb{P}_{\boldsymbol{x}}^{\epsilon}\left[\,\tau_{\mathcal{E}^{(p+1)}}>\alpha_{\epsilon}\,\right]\;\le\;\sup_{\boldsymbol{x}\in\mathcal{E}(\mathcal{M})}\mathbb{P}_{\boldsymbol{x}}^{\epsilon}\left[\,\tau_{\mathcal{E}^{(p+1)}}>\theta_{\epsilon}^{(p)}t\,\right]
\]
and therefore by Lemma \ref{l2-03},
\[
\limsup_{\epsilon\to0}\,\sup_{\boldsymbol{x}\in\mathcal{E}(\mathcal{M})}\,\mathbb{P}_{\boldsymbol{x}}^{\epsilon}\left[\,\tau_{\mathcal{E}^{(p+1)}}>\alpha_{\epsilon}\,\right]\ \le\ \mathcal{Q}_{\mathcal{M}}^{(p)}\left[\,\tau_{\mathscr{R}^{(p)}}>t\,\right]\ .
\]
Letting $t\rightarrow\infty$, we get \eqref{eq:Hp-1-1} because $\mathscr{R}^{(p)}$
is the union of all recurrent classes of the Markov chain $\mathbf{y}^{(p)}(\cdot)$.

Next we let $r<p$, and suppose that \eqref{eq:Hp-1-1} has been proved
for all $m\in\llbracket r+1,\,p\rrbracket$ and $\mathcal{M}'\in\mathscr{T}^{(m)}$.

Fix $\mathcal{M}\in\mathscr{T}^{(r)}$ and $\boldsymbol{x}\in\mathcal{M}$.
For all $t>0$, we can write
\begin{equation}
\mathbb{P}_{\boldsymbol{x}}^{\epsilon}\left[\,\tau_{\mathcal{E}^{(p+1)}}>\alpha_{\epsilon}\,\right]\ \le\ \mathbb{P}_{\boldsymbol{x}}^{\epsilon}\left[\,\tau_{\mathcal{E}^{(r+1)}}>\theta_{\epsilon}^{(r)}t\,\right]+\mathbb{P}_{\boldsymbol{x}}^{\epsilon}\left[\,\tau_{\mathcal{E}^{(p+1)}}>\alpha_{\epsilon}\,,\,\tau_{\mathcal{E}^{(r+1)}}\le\theta_{\epsilon}^{(r)}t\,\right]\;.\label{eq:pgn3}
\end{equation}
By Lemma \ref{l2-03}, as $\epsilon\rightarrow0$, the first term
at the right-hand side converges to $\mathcal{Q}_{\mathcal{M}}^{(r)}[\tau_{\mathscr{R}^{(r)}}>t]$.
One the other hand, since $\mathcal{E}^{(p+1)}\subset\mathcal{E}^{(r+1)}$,
we have $\tau_{\mathcal{E}^{(r+1)}}\le\tau_{\mathcal{E}^{(p+1)}}$
and thus by the strong Markov property, the second term is bounded
by
\[
\sup_{\boldsymbol{y}\in\mathcal{E}^{(r+1)}}\,\mathbb{P}_{\boldsymbol{y}}^{\epsilon}\left[\,\tau_{\mathcal{E}^{(p+1)}}>\alpha_{\epsilon}-\theta_{\epsilon}^{(r)}t\,\right]\;=\;\sup_{\mathcal{M}\in\mathscr{T}^{(r+1)}\cup\cdots\cup\mathscr{T}^{(p)}\cup\mathscr{V}^{(p+1)}}\,\sup_{\boldsymbol{y}\in\mathcal{E}(\mathcal{M})}\mathbb{P}_{\boldsymbol{y}}^{\epsilon}\left[\,\tau_{\mathcal{E}^{(p+1)}}>\alpha_{\epsilon}-\theta_{\epsilon}^{(r)}t\,\right]\;.
\]
The last line vanishes as $\epsilon\rightarrow0$ since we have assume
that \eqref{eq:Hp-1-1} holds for $m\in\llbracket r+1,\,p\rrbracket$
and since the last probability is trivially $0$ if $\mathcal{M}\in\mathscr{V}^{(p+1)}$
as in that case $\tau_{\mathcal{E}^{(p+1)}}=0$. Summing up, we have
\[
\sup_{\boldsymbol{x}\in\mathcal{E}(\mathcal{M})}\,\mathbb{P}_{\boldsymbol{x}}^{\epsilon}\left[\,\tau_{\mathcal{E}^{(p+1)}}>\alpha_{\epsilon}\,\right]\ \le\ \mathcal{Q}_{\mathcal{M}}^{(r)}\left[\,\tau_{\mathscr{R}^{(r)}}>t\,\right]\;.
\]
The proof is completed by letting $t\to\infty$ as in the previous
case.
\end{proof}

\subsection{Proof of \eqref{eq:Hp-2} of condition $\mathfrak{H}^{(p+1)}$}

The proof of $\mathfrak{H}^{(p+1)}$-(2) requires further estimates.
The next lemma which is independent of $\mathfrak{C}_{\textup{fdd}}^{(1)},\,\dots,\,\mathfrak{C}_{\textup{fdd}}^{(p)}$,
presents a relation between the processes $\widehat{{\bf y}}^{(p)}(\cdot)$
and $\widehat{{\bf y}}^{(p+1)}(\cdot)$. Recall from \eqref{2-04}
the definition of the set $\mathscr{V}^{(p)}(\mathcal{M}')$ for $\mathcal{M}'$$\in\mathscr{V}^{(p+1)}$
and from \eqref{eq:Rp} the set $\mathscr{R}^{(p)}$.
\begin{lem}
\label{l: tau_V}For all $\mathcal{M}\in\mathscr{N}^{(p+1)}$ and
$\mathcal{M}'\in\mathscr{V}^{(p+1)}$,
\begin{equation}
\widehat{\mathcal{Q}}_{\mathcal{M}}^{(p)}\left[\,\tau_{\mathscr{R}^{(p)}}=\tau_{\mathscr{V}^{(p)}(\mathcal{M}')}\,\right]\ =\ \widehat{\mathcal{Q}}_{\mathcal{M}}^{(p+1)}\left[\,\tau_{\mathscr{V}^{(p+1)}}=\tau_{\mathcal{M}'}\,\right]\;.\label{eq:esc}
\end{equation}
\end{lem}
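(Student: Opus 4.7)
The plan is to reduce the identity to a uniqueness argument for a Dirichlet problem governed by the generator $\widehat{\mathfrak{L}}^{(p+1)}$, exploiting the fact that the rate definitions \eqref{eq:rate_1}--\eqref{eq:rate_2} are designed precisely so that this reduction goes through. The key structural observation is that $\mathscr{N}^{(p+1)} = \mathscr{N}^{(p)} \cup \mathscr{T}^{(p)}$ forces $\mathscr{S}^{(p)} \setminus \mathscr{N}^{(p+1)} = \mathscr{R}^{(p)}$, and, by the construction \eqref{eq:V_p+1} of $\mathscr{V}^{(p+1)}$, the family $\{\mathscr{V}^{(p)}(\mathcal{M}''):\mathcal{M}'' \in \mathscr{V}^{(p+1)}\}$ partitions $\mathscr{R}^{(p)}$. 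In particular, the event on the left-hand side of \eqref{eq:esc} equals $\{\widehat{\mathbf{y}}^{(p)}(\tau_{\mathscr{R}^{(p)}}) \in \mathscr{V}^{(p)}(\mathcal{M}')\}$.

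I would then extend the left-hand side to a function on $\mathscr{S}^{(p)}$ by setting $h(\mathcal{N}) := \widehat{\mathcal{Q}}_{\mathcal{N}}^{(p)}[\tau_{\mathscr{R}^{(p)}} = \tau_{\mathscr{V}^{(p)}(\mathcal{M}')}]$ on $\mathscr{N}^{(p+1)}$ and $h(\mathcal{M}'') := \mathbf{1}\{\mathcal{M}'' \in \mathscr{V}^{(p)}(\mathcal{M}')\}$ on $\mathscr{R}^{(p)}$, so that by first-step analysis $\widehat{\mathfrak{L}}^{(p)} h \equiv 0$ on $\mathscr{N}^{(p+1)}$. The right-hand side of \eqref{eq:esc} is analogously the restriction to $\mathscr{N}^{(p+1)}$ of the unique $\widehat{\mathfrak{L}}^{(p+1)}$-harmonic function $g$ on $\mathscr{N}^{(p+1)}$ with boundary value $\mathbf{1}\{\,\cdot\, = \mathcal{M}'\}$ on $\mathscr{V}^{(p+1)}$; uniqueness holds because starting from any state in $\mathscr{N}^{(p+1)}$ the chain $\widehat{\mathbf{y}}^{(p+1)}$ reaches $\mathscr{V}^{(p+1)}$ almost surely, which one checks by a direct adaptation of Lemma \ref{l_irred_negli} ruling out a recurrent class of $\widehat{\mathbf{y}}^{(p+1)}$ inside $\mathscr{N}^{(p+1)}$.

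The crux is to verify that the function $\widetilde{h}$ on $\mathscr{S}^{(p+1)}$ which agrees with $h$ on $\mathscr{N}^{(p+1)}$ and with $\mathbf{1}\{\,\cdot\,=\mathcal{M}'\}$ on $\mathscr{V}^{(p+1)}$ is $\widehat{\mathfrak{L}}^{(p+1)}$-harmonic on $\mathscr{N}^{(p+1)}$. Splitting $\widehat{\mathfrak{L}}^{(p+1)} \widetilde{h}(\mathcal{N})$ according to whether the target lies in $\mathscr{N}^{(p+1)}$ or $\mathscr{V}^{(p+1)}$, \eqref{eq:rate_1} handles the first piece verbatim, while \eqref{eq:rate_2} lets one rewrite each term $\widehat{r}^{(p+1)}(\mathcal{N},\mathcal{M}'')\big[\mathbf{1}\{\mathcal{M}''=\mathcal{M}'\} - h(\mathcal{N})\big]$ as $\sum_{\mathcal{M}''' \in \mathscr{V}^{(p)}(\mathcal{M}'')} \widehat{r}^{(p)}(\mathcal{N},\mathcal{M}''')\big[h(\mathcal{M}''') - h(\mathcal{N})\big]$, using that $h$ is identically $\mathbf{1}\{\mathcal{M}''=\mathcal{M}'\}$ on each block $\mathscr{V}^{(p)}(\mathcal{M}'')$. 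Summing over $\mathcal{M}'' \in \mathscr{V}^{(p+1)}$ and invoking the partition of $\mathscr{R}^{(p)}$ by these blocks collapses the whole expression into $\sum_{\mathcal{M} \in \mathscr{S}^{(p)}} \widehat{r}^{(p)}(\mathcal{N},\mathcal{M})[h(\mathcal{M}) - h(\mathcal{N})] = \widehat{\mathfrak{L}}^{(p)} h(\mathcal{N}) = 0$. Uniqueness of $g$ then forces $\widetilde{h} \equiv g$, and reading this on $\mathscr{N}^{(p+1)}$ is precisely \eqref{eq:esc}.

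The only genuine subtlety is the bookkeeping in the last rate comparison together with the (elementary) transience verification needed to justify uniqueness of the Dirichlet problem; no sharp asymptotics or properties of the underlying diffusion $\boldsymbol{x}_{\epsilon}(\cdot)$ are needed, since the lemma is a purely combinatorial statement about the nested Markov chains.
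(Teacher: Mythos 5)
Your proof is correct, but it takes a genuinely different route from the paper. The paper's argument is a direct coupling: it observes that by \eqref{eq:rate_1} the chains $\widehat{\mathbf{y}}^{(p)}(\cdot)$ and $\widehat{\mathbf{y}}^{(p+1)}(\cdot)$ can be run jointly so that they jump together while inside $\mathscr{N}^{(p+1)}$, and then uses \eqref{eq:rate_2} to extend the coupling through the exit step, so that $\widehat{\mathbf{y}}^{(p+1)}(\cdot)$ exits to $\mathcal{M}'\in\mathscr{V}^{(p+1)}$ if and only if $\widehat{\mathbf{y}}^{(p)}(\cdot)$ exits to some set in $\mathscr{V}^{(p)}(\mathcal{M}')$; the identity then follows because $\mathscr{S}^{(p)}\setminus\mathscr{N}^{(p+1)}=\mathscr{R}^{(p)}$. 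You instead recast both sides as solutions of discrete Dirichlet problems: you extend the left-hand side to a $\widehat{\mathfrak{L}}^{(p)}$-harmonic function $h$ on $\mathscr{N}^{(p+1)}$, verify (via \eqref{eq:rate_1}--\eqref{eq:rate_2} and constancy of $h$ on each block $\mathscr{V}^{(p)}(\mathcal{M}'')$) that the lifted function $\widetilde{h}$ is $\widehat{\mathfrak{L}}^{(p+1)}$-harmonic on $\mathscr{N}^{(p+1)}$ with the right boundary data, and conclude by uniqueness of the Dirichlet problem, which follows from Lemma \ref{l_irred_negli} applied at level $p+1$ (no adaptation is actually required, since that lemma is stated for all $n$). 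The coupling argument is shorter and more visibly probabilistic about \emph{why} the two chains agree; the Dirichlet-problem argument cleanly packages the rate bookkeeping as a single generator computation and needs no joint construction, at the cost of having to invoke the transience/uniqueness ingredient explicitly. Both are valid and both correctly recognize the statement as a purely combinatorial identity about the nested Markov chains, independent of the diffusion $\boldsymbol{x}_{\epsilon}(\cdot)$.
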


\begin{rem}
Note that the probability at the left-hand side of \eqref{eq:esc}
makes sense since
\begin{equation}
\mathscr{N}^{(p+1)}\ =\ \mathscr{N}^{(p)}\cup\mathscr{T}^{(p)}\ \subset\ \mathscr{N}^{(p)}\cup\mathscr{V}^{(p)}\ =\ \mathscr{S}^{(p)}\;.\label{eq:np+1}
\end{equation}
\end{rem}

\begin{proof}
By the definition \eqref{eq:rate_1} of the jump rates,
\[
\widehat{r}^{(p+1)}(\mathcal{M},\,\mathcal{M}')\ =\ \widehat{r}^{(p)}(\mathcal{M},\,\mathcal{M}')\quad\text{for}\;\;\mathcal{M},\,\mathcal{M}'\in\mathscr{N}^{(p+1)}\;.
\]
Therefore, starting from a state in $\mathscr{N}^{(p+1)}$, the processes
$\widehat{\mathbf{y}}^{(p)}(\cdot)$ and $\mathbf{\widehat{\mathbf{y}}}^{(p+1)}(\cdot)$
can be coupled to jump together until they leave the set $\mathscr{N}^{(p+1)}$.
Furthermore, by \eqref{eq:rate_2}, for all $\mathcal{M}\in\mathscr{N}^{(p+1)}$,
$\mathcal{M}'\in(\mathscr{N}^{(p+1)})^{c}=\mathscr{V}^{(p+1)}$,
\[
\widehat{r}^{(p+1)}(\mathcal{M},\,\mathcal{M}')\ =\ \widehat{r}^{(p)}(\mathcal{M},\,\mathscr{V}^{(p)}(\mathcal{M}'))\;,
\]
and therefore we can extend the coupling up to the time the processes
leave the set $\mathscr{N}^{(p+1)}$ in such a way that $\mathbf{\widehat{\mathbf{y}}}^{(p+1)}(\cdot)$
exit $\mathscr{N}^{(p+1)}$ by hitting a state $\mathcal{M}'\in\mathscr{V}^{(p+1)}$
if and only if $\mathbf{\widehat{\mathbf{y}}}^{(p)}(\cdot)$ exit
$\mathscr{N}^{(p+1)}$ by hitting a set in $\mathscr{V}^{(p)}(\mathcal{M}')$.
The assertion of the lemma follows since $\mathscr{S}^{(p)}\setminus\mathscr{N}^{(p+1)}=\mathscr{R}^{(p)}$.
\end{proof}
Next lemma establishes a relation between  $\mathfrak{C}_{{\rm fdd}}^{(p)}$
and the hitting time of recurrent classes.
\begin{lem}
\label{lem_Qcom}For all $\mathcal{M}\in\mathscr{V}^{(p)}$, $\mathcal{M}'\in\mathscr{V}^{(p+1)}$,
and $t\ge0$,
\[
\mathcal{Q}_{\mathcal{M}}^{(p)}\left[\,{\bf y}^{(p)}(t)\in\mathscr{V}^{(p)}(\mathcal{M}')\,\right]\ =\ \mathcal{Q}_{\mathcal{M}}^{(p)}\left[\,\tau_{\mathscr{R}^{(p+1)}}\le t,\,{\bf y}^{(p)}(\tau_{\mathscr{R}^{(p+1)}})\in\mathscr{V}^{(p)}(\mathcal{M}')\,\right]
\]
\end{lem}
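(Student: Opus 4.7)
The plan is to interpret $\mathscr{V}^{(p)}(\mathcal{M}')$ as a recurrent class of the chain $\mathbf{y}^{(p)}(\cdot)$ and then to exploit the standard fact that once a continuous-time Markov chain enters a closed irreducible class, it stays there forever. Recall from the construction in Section \ref{sec4.2}, in particular from \eqref{eq:V_p+1} and Remark \ref{rem:stddec}, that each $\mathcal{M}'\in\mathscr{V}^{(p+1)}$ is obtained as the union of the sets belonging to exactly one of the recurrent classes $\mathscr{R}_{j}^{(p)}$ of $\mathbf{y}^{(p)}(\cdot)$. In fact $\mathscr{V}^{(p)}(\mathcal{M}')$ \emph{is} that recurrent class $\mathscr{R}_{j}^{(p)}$, so $\mathscr{V}^{(p)}(\mathcal{M}')\subset\mathscr{R}^{(p)}$ and the map $\mathcal{M}'\mapsto\mathscr{V}^{(p)}(\mathcal{M}')$ provides a bijection between $\mathscr{V}^{(p+1)}$ and the collection $\{\mathscr{R}_1^{(p)},\dots,\mathscr{R}_{\mathfrak{n}_p}^{(p)}\}$ of closed irreducible classes of $\mathbf{y}^{(p)}(\cdot)$.

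The key observation is then the following dichotomy for the sample path of $\mathbf{y}^{(p)}(\cdot)$. On the event $\{\tau_{\mathscr{R}^{(p)}}>t\}$ (understanding $\tau_{\mathscr{R}^{(p+1)}}$ as $\tau_{\mathscr{R}^{(p)}}$ via the identification above), we have $\mathbf{y}^{(p)}(t)\in\mathscr{T}^{(p)}$ and therefore $\mathbf{y}^{(p)}(t)\notin\mathscr{V}^{(p)}(\mathcal{M}')$, since $\mathscr{V}^{(p)}(\mathcal{M}')\subset\mathscr{R}^{(p)}$. On the event $\{\tau_{\mathscr{R}^{(p)}}\le t\}$, the chain has entered $\mathscr{R}^{(p)}$ at a point $\mathbf{y}^{(p)}(\tau_{\mathscr{R}^{(p)}})$ lying in exactly one of the recurrent classes $\mathscr{V}^{(p)}(\mathcal{M}'')$, $\mathcal{M}''\in\mathscr{V}^{(p+1)}$, and because each such class is absorbing for $\mathbf{y}^{(p)}(\cdot)$, we have $\mathbf{y}^{(p)}(s)\in\mathscr{V}^{(p)}(\mathcal{M}'')$ for every $s\ge\tau_{\mathscr{R}^{(p)}}$. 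In particular, $\mathbf{y}^{(p)}(t)\in\mathscr{V}^{(p)}(\mathcal{M}')$ if and only if $\mathbf{y}^{(p)}(\tau_{\mathscr{R}^{(p)}})\in\mathscr{V}^{(p)}(\mathcal{M}')$.

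Putting the two cases together yields the pathwise identity
\[
\big\{\mathbf{y}^{(p)}(t)\in\mathscr{V}^{(p)}(\mathcal{M}')\big\}\ =\ \big\{\tau_{\mathscr{R}^{(p)}}\le t,\ \mathbf{y}^{(p)}(\tau_{\mathscr{R}^{(p)}})\in\mathscr{V}^{(p)}(\mathcal{M}')\big\}\;,
\]
which, upon taking $\mathcal{Q}_{\mathcal{M}}^{(p)}$-probabilities, is exactly the asserted equality. Notice this works regardless of whether the initial state $\mathcal{M}\in\mathscr{V}^{(p)}$ is transient or already recurrent: in the latter case $\tau_{\mathscr{R}^{(p)}}=0$ a.s.\ and both sides reduce to $\mathbf{1}\{\mathcal{M}\in\mathscr{V}^{(p)}(\mathcal{M}')\}$. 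There is no genuine obstacle here; the only point that requires care is the identification $\mathscr{V}^{(p)}(\mathcal{M}')=\mathscr{R}_j^{(p)}$ from the tree-structure construction, which is precisely what makes $\mathscr{V}^{(p)}(\mathcal{M}')$ absorbing for $\mathbf{y}^{(p)}(\cdot)$ and permits the reduction to the entrance distribution at $\tau_{\mathscr{R}^{(p)}}$.
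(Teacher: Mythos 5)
Your proof is correct and follows essentially the same route as the paper: you identify $\mathscr{V}^{(p)}(\mathcal{M}')$ as a closed irreducible class of $\mathbf{y}^{(p)}(\cdot)$ (the paper's \eqref{eq:esc_rec}), observe the resulting absorption/dichotomy on sample paths, and deduce the pathwise equality of events. You also correctly read the subscript $\mathscr{R}^{(p+1)}$ in the lemma's statement as $\mathscr{R}^{(p)}$ (the union of recurrent classes of $\mathbf{y}^{(p)}(\cdot)$), which is consistent with the usage in the surrounding Lemmas \ref{l2-03} and \ref{l: tau_E}.
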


\begin{proof}
For $\mathcal{M}'\in\mathscr{V}^{(p+1)}$, since $\mathscr{V}^{(p)}(\mathcal{M}')$
is a recurrent class of the chain $\mathbf{y}^{(p)}(\cdot)$ we immediately
have that
\begin{equation}
\mathcal{Q}_{\mathcal{M}''}^{(p)}\left[\,\tau_{(\mathscr{V}^{(p)}(\mathcal{M}'))^{c}}=\infty\,\right]\ =\ 1\;\;\;\text{for all }\mathcal{M}''\in\mathscr{V}^{(p)}(\mathcal{M}')\;.\label{eq:esc_rec}
\end{equation}
Thus, for $\mathcal{M}\in\mathscr{V}^{(p)}$ and $t\ge0$, $\mathcal{Q}_{\mathcal{M}}^{(p)}$-almost
surely, the event $\left\{ {\bf y}^{(p)}(t)\in\mathscr{V}^{(p)}(\mathcal{M}')\right\} $
corresponds to the event
\[
\left\{ \,\tau_{\mathscr{R}^{(p+1)}}\le t\;\;\text{and}\;\;{\bf y}^{(p)}(\tau_{\mathscr{R}^{(p+1)}})\in\mathscr{V}^{(p)}(\mathcal{M}')\,\right\}
\]
and the proof is completed.
\end{proof}
The following two lemmata is a key ingredient to prove $\mathfrak{H}^{(p+1)}$-(2).
\begin{lem}
\label{l: tau_E}Suppose that $\mathfrak{C}_{{\rm fdd}}^{(p)}$ holds.
Then, for all $\mathcal{M}\in\mathscr{V}^{(p)}$ and $\mathcal{M}'\in\mathscr{V}^{(p+1)}$,
\begin{equation}
\lim_{\epsilon\to0}\sup_{\boldsymbol{x}\in\mathcal{E}(\mathcal{M})}\left|\,\mathbb{P}_{\boldsymbol{x}}^{\epsilon}\left[\,\tau_{\mathcal{E}^{(p+1)}}=\tau_{\mathcal{E}(\mathcal{M}')}\,\right]-\widehat{\mathcal{Q}}_{\mathcal{M}}^{(p)}\left[\,\tau_{\mathscr{R}^{(p)}}=\tau_{\mathscr{V}^{(p)}(\mathcal{M}')}\,\right]\,\right|\ =\ 0\ .\label{e_tt}
\end{equation}
\end{lem}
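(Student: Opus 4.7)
The strategy is to identify both sides of \eqref{e_tt} with the large-time quantity $\lim_{t\to\infty}\lim_{\epsilon\to 0}\mathbb{P}^{\epsilon}_{\boldsymbol{x}}[\mathbf{x}_{\epsilon}(\theta_{\epsilon}^{(p)}t)\in\mathcal{E}(\mathcal{M}')]$, exploiting that $\mathscr{V}^{(p)}(\mathcal{M}')$ is a recurrent class of $\mathbf{y}^{(p)}(\cdot)$, so that once the trajectory enters $\mathcal{E}(\mathcal{M}')$ it cannot escape at the scale $\theta_{\epsilon}^{(p)}$. Since $\mathcal{M}\in\mathscr{V}^{(p)}$, the laws $\widehat{\mathcal{Q}}^{(p)}_{\mathcal{M}}$ and $\mathcal{Q}^{(p)}_{\mathcal{M}}$ assign equal probabilities to events depending only on the successive visits to $\mathscr{V}^{(p)}$, so the right-hand side of \eqref{e_tt} equals $\mathcal{Q}^{(p)}_{\mathcal{M}}[\tau_{\mathscr{R}^{(p)}}=\tau_{\mathscr{V}^{(p)}(\mathcal{M}')}]$; by Lemma \ref{lem_Qcom} and the absorbing character of the recurrent class, this equals $\lim_{t\to\infty}\mathcal{Q}^{(p)}_{\mathcal{M}}[\mathbf{y}^{(p)}(t)\in\mathscr{V}^{(p)}(\mathcal{M}')]$. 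On the diffusion side, since $\mathcal{E}(\mathcal{M}')=\bigcup_{\widetilde{\mathcal{M}}\in\mathscr{V}^{(p)}(\mathcal{M}')}\mathcal{E}(\widetilde{\mathcal{M}})$, summing the convergence in $\mathfrak{C}^{(p)}_{\rm fdd}$ over these valleys yields, for each fixed $t>0$,
\[
\lim_{\epsilon\to 0}\sup_{\boldsymbol{x}\in\mathcal{E}(\mathcal{M})}\Bigl|\,\mathbb{P}^{\epsilon}_{\boldsymbol{x}}\bigl[\mathbf{x}_{\epsilon}(\theta_{\epsilon}^{(p)}t)\in\mathcal{E}(\mathcal{M}')\bigr]-\mathcal{Q}^{(p)}_{\mathcal{M}}\bigl[\mathbf{y}^{(p)}(t)\in\mathscr{V}^{(p)}(\mathcal{M}')\bigr]\,\Bigr|\,=\,0\;.
\]

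It remains to bridge the large-time quantity with the first-hit probability, i.e.\ to show that
\[
\lim_{t\to\infty}\,\limsup_{\epsilon\to 0}\,\sup_{\boldsymbol{x}\in\mathcal{E}(\mathcal{M})}\,\Bigl|\,\mathbb{P}^{\epsilon}_{\boldsymbol{x}}[\tau_{\mathcal{E}^{(p+1)}}=\tau_{\mathcal{E}(\mathcal{M}')}]-\mathbb{P}^{\epsilon}_{\boldsymbol{x}}[\mathbf{x}_{\epsilon}(\theta_{\epsilon}^{(p)}t)\in\mathcal{E}(\mathcal{M}')]\,\Bigr|\,=\,0\;.
\]
Writing $\tau=\tau_{\mathcal{E}^{(p+1)}}$ and applying the strong Markov property at $\tau$, the large-time probability decomposes as a sum over $\mathcal{M}''\in\mathscr{V}^{(p+1)}$ of $\mathbb{E}^{\epsilon}_{\boldsymbol{x}}[\mathbf{1}_{\{\tau=\tau_{\mathcal{E}(\mathcal{M}'')}\le\theta_{\epsilon}^{(p)}t\}}\,\mathbb{P}^{\epsilon}_{\mathbf{x}_{\epsilon}(\tau)}[\mathbf{x}_{\epsilon}(\theta_{\epsilon}^{(p)}t-\tau)\in\mathcal{E}(\mathcal{M}')]]$, plus a remainder bounded by $\mathbb{P}^{\epsilon}_{\boldsymbol{x}}[\tau>\theta_{\epsilon}^{(p)}t]$, which is negligible as $t\to\infty$ by Lemma \ref{l2-03}. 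For each $\mathcal{M}''\ne\mathcal{M}'$ the inner probability vanishes by Lemma \ref{lem:FW-type}, applied to the pair $\mathcal{E}(\mathcal{M}'')$ and $\mathcal{E}(\widetilde{\mathcal{M}})$ for each $\widetilde{\mathcal{M}}\in\mathscr{V}^{(p)}(\mathcal{M}')$, since such $\widetilde{\mathcal{M}}$ is not contained in $\mathcal{M}''$. Only the term $\mathcal{M}''=\mathcal{M}'$ survives, and the task reduces to showing that, starting from $\boldsymbol{y}\in\mathcal{E}(\mathcal{M}')$, the process is still inside $\mathcal{E}(\mathcal{M}')$ at time $\theta_{\epsilon}^{(p)}t-\tau$ with overwhelming probability.

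The main obstacle is the uniformity of this last statement in the random time gap $(\theta_{\epsilon}^{(p)}t-\tau)/\theta_{\epsilon}^{(p)}\in[0,t]$. I handle it by shifting the reference time from $t$ to $t+1$, so the gap belongs to the compact interval $[1,t+1]$. For each fixed ratio $r\in[1,t+1]$ and any $\boldsymbol{y}\in\mathcal{E}(\widetilde{\mathcal{M}})$ with $\widetilde{\mathcal{M}}\in\mathscr{V}^{(p)}(\mathcal{M}')$, $\mathfrak{C}^{(p)}_{\rm fdd}$ gives $\mathbb{P}^{\epsilon}_{\boldsymbol{y}}[\mathbf{x}_{\epsilon}(\theta_{\epsilon}^{(p)}r)\in\mathcal{E}(\mathcal{M}')]\to\mathcal{Q}^{(p)}_{\widetilde{\mathcal{M}}}[\mathbf{y}^{(p)}(r)\in\mathscr{V}^{(p)}(\mathcal{M}')]=1$, because the recurrent class is closed for $\mathbf{y}^{(p)}(\cdot)$. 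To upgrade this to uniformity in $r$, I bound
\[
1-\mathbb{P}^{\epsilon}_{\boldsymbol{y}}\bigl[\mathbf{x}_{\epsilon}(\theta_{\epsilon}^{(p)}r)\in\mathcal{E}(\mathcal{M}')\bigr]\;\le\;\mathbb{P}^{\epsilon}_{\boldsymbol{y}}\bigl[\tau_{\mathcal{E}^{(p)}\setminus\mathcal{E}(\mathcal{M}')}\le\theta_{\epsilon}^{(p)}r\bigr]+\mathbb{P}^{\epsilon}_{\boldsymbol{y}}\bigl[\mathbf{x}_{\epsilon}(\theta_{\epsilon}^{(p)}r)\notin\mathcal{E}^{(p)}\bigr]\;,
\]
the first term being zero by Lemma \ref{lem:FW-type} applied to $\mathcal{E}(\mathcal{M}')$ and each $\mathcal{E}(\widetilde{\mathcal{M}}'')$ with $\widetilde{\mathcal{M}}''\in\mathscr{V}^{(p)}\setminus\mathscr{V}^{(p)}(\mathcal{M}')$, while the second vanishes by Remark \ref{rem:fddneg}, with the equicontinuity in $r$ on $[1,t+1]$ supplied by Remark \ref{rem:fdd}. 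Sending first $\epsilon\to 0$ and then $t\to\infty$ in the chain of approximations above concludes the proof.
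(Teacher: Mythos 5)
Your overall strategy mirrors the paper's: both arguments bridge between the first-hit probability and the fixed-time marginal $\mathbb{P}^{\epsilon}_{\boldsymbol{x}}[\boldsymbol{x}_{\epsilon}(\theta_{\epsilon}^{(p)}t)\in\mathcal{E}(\mathcal{M}')]$, then send $\epsilon\to0$ followed by $t\to\infty$, using the same tools (condition $\mathfrak{C}^{(p)}_{\rm fdd}$, Lemma~\ref{lem:FW-type}, Lemma~\ref{lem_Qcom}, Lemma~\ref{l2-03}, and the trace-process identity $\mathcal{Q}^{(p)}_{\mathcal{M}}=\widehat{\mathcal{Q}}^{(p)}_{\mathcal{M}}$ on events measurable with respect to the visit sequence in $\mathscr{V}^{(p)}$).

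The weak point is your justification of the uniformity in the random gap time. You bound $1-\mathbb{P}^{\epsilon}_{\boldsymbol{y}}[\boldsymbol{x}_{\epsilon}(\theta_{\epsilon}^{(p)}r)\in\mathcal{E}(\mathcal{M}')]$ by a hitting-time term plus $\mathbb{P}^{\epsilon}_{\boldsymbol{y}}[\boldsymbol{x}_{\epsilon}(\theta_{\epsilon}^{(p)}r)\notin\mathcal{E}^{(p)}]$, and claim the second vanishes uniformly in $r\in[1,t+1]$ via Remark~\ref{rem:fddneg} ``with equicontinuity in $r$ supplied by Remark~\ref{rem:fdd}.'' That remark only controls shifts $s/\theta_{\epsilon}$ with $|s|\le\gamma_{\epsilon}\prec\theta_{\epsilon}$, i.e.\ $o(1)$ shifts in macroscopic time; it does not give equicontinuity on a fixed compact interval like $[1,t+1]$, and a $\delta$-net argument would require uniformity over shifts of a fixed positive size, which neither remark provides. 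The claim you need is true, but the clean justification is a compactness/subsequence argument: if uniformity failed, extract $\epsilon_k\to0$, $r_k\to r_*\in[1,t+1]$, $\boldsymbol{y}_k\in\mathcal{E}(\widetilde{\mathcal{M}})$ with fixed $\widetilde{\mathcal{M}}\in\mathscr{V}^{(p)}(\mathcal{M}')$, and apply Remark~\ref{rem:fddneg} along that subsequence to get a contradiction.

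The paper sidesteps this issue altogether by never applying $\mathfrak{C}^{(p)}_{\rm fdd}$ at the random gap time. It decomposes the bad event $\{\boldsymbol{x}_{\epsilon}(\theta_{\epsilon}^{(p)}t)\notin\mathcal{E}(\mathcal{M}')\}$ at the \emph{deterministic} time $\theta_{\epsilon}^{(p)}t$ into $\{\cdot\in\mathcal{E}^{(p)}\setminus\mathcal{E}(\mathcal{M}')\}\cup\{\cdot\notin\mathcal{E}^{(p)}\}$, kills the second piece directly by $\mathfrak{C}^{(p)}_{\rm fdd}$ at the fixed time, and for the first piece observes that, on $\{\tau_{\mathcal{E}^{(p+1)}}<\theta_{\epsilon}^{(p)}t,\ \boldsymbol{x}_{\epsilon}(\tau_{\mathcal{E}^{(p+1)}})\in\mathcal{E}(\mathcal{M}')\}$, being in $\mathcal{E}^{(p)}\setminus\mathcal{E}(\mathcal{M}')$ at time $\theta_{\epsilon}^{(p)}t$ forces a hitting of some $\mathcal{E}(\mathcal{M}'')$, $\mathcal{M}''\in\mathscr{V}^{(p)}\setminus\mathscr{V}^{(p)}(\mathcal{M}')$, within time $\theta_{\epsilon}^{(p)}t$ of $\tau_{\mathcal{E}^{(p+1)}}$. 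Since hitting times are monotone, the resulting bound $\sup_{\boldsymbol{y}\in\mathcal{E}(\mathcal{M}')}\mathbb{P}^{\epsilon}_{\boldsymbol{y}}[\tau_{\mathcal{E}(\mathcal{M}'')}<\theta_{\epsilon}^{(p)}t]$ is automatically uniform in the random gap, with no compactness argument required. Replacing your citation of Remark~\ref{rem:fdd} by either the subsequence argument or the paper's monotone-hitting-time reformulation closes the gap.
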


\begin{proof}
The statement of lemma is immediate if $\mathcal{M}\in\mathscr{R}^{(p)}$
since in that case both probabilities are equal to $\mathbf{1}\left\{ \mathcal{M}\subset\mathcal{M}'\right\} $.

We turn to the case $\mathcal{M}\in\mathscr{T}^{(p)}$. Fix $\mathcal{M}'\in\mathscr{V}^{(p+1)}$
and $t>0$. Since $\mathcal{E}(\mathcal{M}')\subset\mathcal{E}^{(p+1)}$,
for all $\boldsymbol{x}\in\mathcal{E}(\mathcal{M})$,
\[
\begin{aligned}\mathbb{P}_{\boldsymbol{x}}^{\epsilon}\left[\,\boldsymbol{x}_{\epsilon}(\theta_{\epsilon}^{(p)}t)\in\mathcal{E}(\mathcal{M}')\,\right]\;= & \;\mathbb{P}_{\boldsymbol{x}}^{\epsilon}\left[\,\boldsymbol{x}_{\epsilon}(\theta_{\epsilon}^{(p)}t)\in\mathcal{E}(\mathcal{M}'),\,\tau_{\mathcal{E}^{(p+1)}}<\theta_{\epsilon}^{(p)}t,\,\boldsymbol{x}_{\epsilon}(\tau_{\mathcal{E}^{(p+1)}})\in\mathcal{E}(\mathcal{M}')\,\right]\\
 & \;+\;\mathbb{P}_{\boldsymbol{x}}^{\epsilon}\left[\,\boldsymbol{x}_{\epsilon}(\theta_{\epsilon}^{(p)}t)\in\mathcal{E}(\mathcal{M}'),\,\tau_{\mathcal{E}^{(p+1)}}<\theta_{\epsilon}^{(p)}t,\,\boldsymbol{x}_{\epsilon}(\tau_{\mathcal{E}^{(p+1)}})\notin\mathcal{E}(\mathcal{M}')\,\right]\;.
\end{aligned}
\]
By Lemma \ref{lem:FW-type} and the strong Markov property at time
$\tau_{\mathcal{E}^{(p+1)}}$, the second probability at the right-hand
side is negligible. On the other hand, the first probability can be
written as
\begin{align*}
 & \mathbb{P}_{\boldsymbol{x}}^{\epsilon}\left[\,\boldsymbol{x}_{\epsilon}(\tau_{\mathcal{E}^{(p+1)}})\in\mathcal{E}(\mathcal{M}')\,\right]\ -\ \mathbb{P}_{\boldsymbol{x}}^{\epsilon}\left[\,\boldsymbol{x}_{\epsilon}(\tau_{\mathcal{E}^{(p+1)}})\in\mathcal{E}(\mathcal{M}'),\,\tau_{\mathcal{E}^{(p+1)}}>\theta_{\epsilon}^{(p)}t\,\right]\\
 & \qquad-\mathbb{P}_{\boldsymbol{x}}^{\epsilon}\left[\,\tau_{\mathcal{E}^{(p+1)}}<\theta_{\epsilon}^{(p)}t,\,\boldsymbol{x}_{\epsilon}(\tau_{\mathcal{E}^{(p+1)}})\in\mathcal{E}(\mathcal{M}'),\,\boldsymbol{x}_{\epsilon}(\theta_{\epsilon}^{(p)}t)\notin\mathcal{E}(\mathcal{M}')\,\right]\ .
\end{align*}
We claim that the last term is negligible. Indeed, by $\mathfrak{C}_{{\rm fdd}}^{(p)}$,
we have
\begin{align*}
 & \limsup_{\epsilon\rightarrow0}\,\sup_{\boldsymbol{x}\in\mathcal{E}(\mathcal{M})}\,\mathbb{P}_{\boldsymbol{x}}^{\epsilon}\left[\,\tau_{\mathcal{E}^{(p+1)}}<\theta_{\epsilon}^{(p)}t,\,\boldsymbol{x}_{\epsilon}(\tau_{\mathcal{E}^{(p+1)}})\in\mathcal{E}(\mathcal{M}'),\,\boldsymbol{x}_{\epsilon}(\theta_{\epsilon}^{(p)}t)\notin\mathcal{E}(\mathcal{M}')\,\right]\\
 & \quad=\ \limsup_{\epsilon\rightarrow0}\,\sup_{\boldsymbol{x}\in\mathcal{E}(\mathcal{M})}\,\mathbb{P}_{\boldsymbol{x}}^{\epsilon}\left[\,\tau_{\mathcal{E}^{(p+1)}}<\theta_{\epsilon}^{(p)}t,\,\boldsymbol{x}_{\epsilon}(\tau_{\mathcal{E}^{(p+1)}})\in\mathcal{E}(\mathcal{M}'),\,\boldsymbol{x}_{\epsilon}(\theta_{\epsilon}^{(p)}t)\in\mathcal{E}^{(p)}\setminus\mathcal{E}(\mathcal{M}')\,\right]\\
 & \quad\le\ |\mathscr{V}^{(p)}|\,\limsup_{\epsilon\rightarrow0}\max_{\mathcal{M}''\in\mathscr{V}^{(p)}\setminus\mathscr{V}^{(p)}(\mathcal{M}')}\,\sup_{\boldsymbol{y}\in\mathcal{E}(\mathcal{M}')}\,\mathbb{P}_{\boldsymbol{y}}^{\epsilon}\left[\,\tau_{\mathcal{E}(\mathcal{M}'')}<\theta_{\epsilon}^{(p)}t\,\right]
\end{align*}
where we applied the strong Markov property at time $\tau_{\mathcal{E}^{(p+1)}}$
at the last inequality. The last line is equal to $0$ by Lemma \ref{lem:FW-type}.
In conclusion, we get
\begin{align*}
 & \limsup_{\epsilon\rightarrow0}\,\sup_{\boldsymbol{x}\in\mathcal{E}(\mathcal{M})}\,\mathbb{P}_{\boldsymbol{x}}^{\epsilon}\left[\boldsymbol{x}_{\epsilon}(\theta_{\epsilon}^{(p)}t)\in\mathcal{E}(\mathcal{M}')\right]\\
 & =\limsup_{\epsilon\rightarrow0}\,\sup_{\boldsymbol{x}\in\mathcal{E}(\mathcal{M})}\,\left\{ \,\mathbb{P}_{\boldsymbol{x}}^{\epsilon}\left[\boldsymbol{x}_{\epsilon}(\tau_{\mathcal{E}^{(p+1)}})\in\mathcal{E}(\mathcal{M}')\right]\,-\,\mathbb{P}_{\boldsymbol{x}}^{\epsilon}\left[\boldsymbol{x}_{\epsilon}(\tau_{\mathcal{E}^{(p+1)}})\in\mathcal{E}(\mathcal{M}'),\,\tau_{\mathcal{E}^{(p+1)}}>\theta_{\epsilon}^{(p)}t\right]\,\right\} \;.
\end{align*}

We turn to term at \eqref{e_tt} involving the probability $\mathcal{Q}_{\mathcal{M}}^{(p)}$.
By Lemma \ref{lem_Qcom}, we can write
\begin{align*}
 & \mathcal{Q}_{\mathcal{M}}^{(p)}\left[\,{\bf y}^{(p)}(t)\in\mathscr{V}^{(p)}(\mathcal{M}')\,\right]\\
 & =\ \mathcal{Q}_{\mathcal{M}}^{(p)}\left[\,{\bf y}^{(p)}(\tau_{\mathscr{R}^{(p)}})\in\mathscr{V}^{(p)}(\mathcal{M}')\,\right]\,-\,\mathcal{Q}_{\mathcal{M}}^{(p)}\left[\,\tau_{\mathscr{R}^{(p)}}>t,\,{\bf y}^{(p)}(\tau_{\mathscr{R}^{(p)}})\in\mathscr{V}^{(p)}(\mathcal{M}')\,\right]\;.
\end{align*}
By the two previous displayed equations,  $\mathfrak{C}_{{\rm fdd}}^{(p)}$
and Lemma \ref{l2-03},
\begin{equation}
\limsup_{\epsilon\to0}\,\sup_{\boldsymbol{x}\in\mathcal{E}(\mathcal{M})}\,\left|\,\mathbb{P}_{\boldsymbol{x}}^{\epsilon}\left[\,\tau_{\mathcal{E}^{(p+1)}}=\tau_{\mathcal{E}(\mathcal{M}')}\,\right]\,-\,\mathcal{Q}_{\mathcal{M}}^{(p)}\left[\,\tau_{\mathscr{R}^{(p)}}=\tau_{\mathscr{V}^{(p)}(\mathcal{M}')}\,\right]\,\right|\ \le\ 2\mathcal{Q}_{\mathcal{M}}^{(p)}\left[\,\tau_{\mathscr{R}^{(p)}}>t\,\right]\;.\label{eqPQ}
\end{equation}
Since $\mathbf{y}^{(p)}(\cdot)$ is the trace of the process $\widehat{\mathbf{y}}^{(p)}(\cdot)$
on $\mathscr{V}^{(p)}$, we have
\[
\mathcal{Q}_{\mathcal{M}}^{(p)}\left[\,\tau_{\mathscr{R}^{(p)}}=\tau_{\mathscr{V}^{(p)}(\mathcal{M}')}\,\right]\ =\ \widehat{\mathcal{Q}}_{\mathcal{M}}^{(p)}\left[\,\tau_{\mathscr{R}^{(p)}}=\tau_{\mathscr{V}^{(p)}(\mathcal{M}')}\,\right]\;,
\]
and therefore by letting $t\rightarrow\infty$ in \eqref{eqPQ} completes
the proof as $\mathscr{R}^{(p)}$ is the union of all recurrent class
of the chain $\mathbf{y}^{(p)}(\cdot)$.
\end{proof}
The next lemma is a generalization of the previous one.
\begin{lem}
\label{l: pf_Cond_H} Fix $r,\,s\in\llbracket1,\,\mathfrak{q}+1\rrbracket$
such that $r<s$ and suppose that $\mathfrak{C}_{{\rm fdd}}^{(r)},\,\dots,\,\mathfrak{C}_{{\rm fdd}}^{(s-1)}$
hold. Then, for all $\mathcal{M}\in\mathscr{V}^{(r)}$ and $\mathcal{M}'\in\mathscr{V}^{(s)}$,
\[
\lim_{\epsilon\to0}\,\sup_{\boldsymbol{x}\in\mathcal{E}(\mathcal{M})}\,\left|\,\mathbb{P}_{\boldsymbol{x}}^{\epsilon}\left[\,\tau_{\mathcal{E}^{(s)}}=\tau_{\mathcal{E}(\mathcal{M}')}\,\right]\,-\,\widehat{\mathcal{Q}}_{\mathcal{M}}^{(s)}\left[\,\tau_{\mathscr{V}^{(s)}}=\tau_{\mathcal{M}'}\,\right]\,\right|\ =\ 0\;.
\]
\end{lem}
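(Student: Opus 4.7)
The plan is to prove the statement by induction on the gap $s-r \ge 1$, using Lemma \ref{l: tau_E} as the base case and the strong Markov property to cascade through intermediate layers.

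For the base case $s = r+1$, I would combine Lemma \ref{l: tau_E} with Lemma \ref{l: tau_V}. When $\mathcal{M} \in \mathscr{V}^{(r)}$ is transient at level $r$, so that $\mathcal{M} \in \mathscr{T}^{(r)} \subset \mathscr{N}^{(r+1)}$, Lemma \ref{l: tau_E} yields uniform convergence of $\mathbb{P}_{\bm{x}}^{\epsilon}[\tau_{\mathcal{E}^{(r+1)}} = \tau_{\mathcal{E}(\mathcal{M}')}]$ to $\widehat{\mathcal{Q}}_{\mathcal{M}}^{(r)}[\tau_{\mathscr{R}^{(r)}} = \tau_{\mathscr{V}^{(r)}(\mathcal{M}')}]$, and Lemma \ref{l: tau_V} rewrites this latter quantity as $\widehat{\mathcal{Q}}_{\mathcal{M}}^{(r+1)}[\tau_{\mathscr{V}^{(r+1)}} = \tau_{\mathcal{M}'}]$. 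When $\mathcal{M}$ instead belongs to a recurrent class of ${\bf y}^{(r)}(\cdot)$, either $\mathcal{M}$ is itself an element of $\mathscr{V}^{(r+1)}$ or $\mathcal{E}(\mathcal{M}) \subset \mathcal{E}(\tilde{\mathcal{M}}) \subset \mathcal{E}^{(r+1)}$ for the unique $\tilde{\mathcal{M}} \in \mathscr{V}^{(r+1)}$ containing $\mathcal{M}$, and in both sub-cases both sides of the claimed identity reduce to the indicator $\mathbf{1}\{\mathcal{M} \subset \mathcal{M}'\}$ by direct inspection.

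For the inductive step, fix $s-r \ge 2$ and assume the statement for all smaller gaps. Because the tree is nested, each $\mathcal{M}' \in \mathscr{V}^{(s)}$ is the disjoint union of certain elements of $\mathscr{V}^{(r+1)}$, so $\mathcal{E}^{(s)} \subset \mathcal{E}^{(r+1)}$ and $\tau_{\mathcal{E}^{(r+1)}} \le \tau_{\mathcal{E}^{(s)}}$. I would decompose the event $\{\tau_{\mathcal{E}^{(s)}} = \tau_{\mathcal{E}(\mathcal{M}')}\}$ according to the entrance valley $\mathcal{M}'' \in \mathscr{V}^{(r+1)}$ and apply the strong Markov property at $\tau_{\mathcal{E}^{(r+1)}}$; the residual probability of never entering $\mathcal{E}^{(r+1)}$ within the relevant time window is negligible thanks to $\mathfrak{H}^{(r+1)}$-(1), already established in this section. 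If $\mathcal{M}''$ remains non-negligible all the way up to level $s$ (hence $\mathcal{M}'' \subset \mathcal{M}'''$ for a unique $\mathcal{M}''' \in \mathscr{V}^{(s)}$), the inner probability is trivially $\mathbf{1}\{\mathcal{M}'' \subset \mathcal{M}'\}$; if instead $\mathcal{M}'' \in \mathscr{N}^{(s)}$, the induction hypothesis applied with start level $r+1$ supplies uniform convergence of the inner probability to $\widehat{\mathcal{Q}}_{\mathcal{M}''}^{(s)}[\tau_{\mathscr{V}^{(s)}} = \tau_{\mathcal{M}'}]$. The base case then controls the outer weight, giving uniform convergence of $\mathbb{P}_{\bm{x}}^{\epsilon}[\tau_{\mathcal{E}^{(r+1)}} = \tau_{\mathcal{E}(\mathcal{M}'')}]$ to $\widehat{\mathcal{Q}}_{\mathcal{M}}^{(r+1)}[\tau_{\mathscr{V}^{(r+1)}} = \tau_{\mathcal{M}''}]$.

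To close, I must recognize the resulting limit $\sum_{\mathcal{M}''} \widehat{\mathcal{Q}}_{\mathcal{M}}^{(r+1)}[\tau_{\mathscr{V}^{(r+1)}} = \tau_{\mathcal{M}''}]\,\widehat{\mathcal{Q}}_{\mathcal{M}''}^{(s)}[\tau_{\mathscr{V}^{(s)}} = \tau_{\mathcal{M}'}]$ (with the convention that the second factor reads $\mathbf{1}\{\mathcal{M}'' \subset \mathcal{M}'\}$ whenever $\mathcal{M}'' \in \mathscr{V}^{(s)}$) as $\widehat{\mathcal{Q}}_{\mathcal{M}}^{(s)}[\tau_{\mathscr{V}^{(s)}} = \tau_{\mathcal{M}'}]$. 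This is a purely Markov-chain identity generalizing Lemma \ref{l: tau_V}: the jump-rate prescriptions \eqref{eq:rate_1}--\eqref{eq:rate_3} allow the processes $\widehat{\mathbf{y}}^{(r+1)}(\cdot), \dots, \widehat{\mathbf{y}}^{(s)}(\cdot)$ to be coupled step by step while they remain in the nested negligible portions $\mathscr{N}^{(r+1)} \subset \cdots \subset \mathscr{N}^{(s)}$, and the strong Markov property applied to $\widehat{\mathbf{y}}^{(s)}(\cdot)$ at time $\tau_{\mathscr{V}^{(r+1)}}$ (where the two chains still agree) reassembles the two-factor sum. The main obstacle is exactly this bookkeeping: one has to distinguish carefully between the $\mathcal{M}''$ that persist in $\mathscr{V}^{(s)}$ and those demoted to $\mathscr{N}^{(s)}$ and verify that each summand contributes in the correct form. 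Once the identity is checked, the uniform convergence of every ingredient preserves uniformity in $\bm{x} \in \mathcal{E}(\mathcal{M})$ and completes the induction.
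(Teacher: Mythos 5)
Your proposal is a valid, mirror-image version of the paper's induction. The paper fixes $r$ and inducts forward on $s$, peeling off the \emph{last} layer: it applies the strong Markov property at $\tau_{\mathcal{E}^{(r+k)}}$, uses the induction hypothesis for the outer weight $\mathbb{P}^{\epsilon}_{\bm{x}}[\tau_{\mathcal{E}^{(r+k)}}=\tau_{\mathcal{E}(\mathcal{M}'')}]$, and handles the inner probability with the single-step Lemmata~\ref{l: tau_V} and~\ref{l: tau_E} at level $p=r+k$. The closing identity is then a \emph{single-chain} Chapman--Kolmogorov computation for $\widehat{\mathbf{y}}^{(r+k)}(\cdot)$, followed by one application of Lemma~\ref{l: tau_V}. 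You instead peel off the \emph{first} layer: apply the strong Markov property at $\tau_{\mathcal{E}^{(r+1)}}$, control the outer weight by the base case, and control the inner probability by the induction hypothesis with start level $r+1$. The price of your ordering is that the final probabilistic identity is a \emph{cross-chain} identity relating $\widehat{\mathbf{y}}^{(r+1)}(\cdot)$ to $\widehat{\mathbf{y}}^{(s)}(\cdot)$, which is a genuine multi-step generalization of Lemma~\ref{l: tau_V}. It is true (one checks by repeated use of \eqref{eq:rate_1} and \eqref{eq:rate_2} that, starting from $\mathscr{N}^{(r+1)}$, the rates of $\widehat{\mathbf{y}}^{(s)}(\cdot)$ agree with those of $\widehat{\mathbf{y}}^{(r+1)}(\cdot)$ until the first exit from $\mathscr{N}^{(r+1)}$, and at exit the $\widehat{\mathbf{y}}^{(s)}$-state is the unique $\mathscr{S}^{(s)}$-element containing the hit $\mathscr{V}^{(r+1)}$-element), but proving it costs you roughly the same effort the paper's chosen ordering saves. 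Two small cleanups your write-up would need: (i) the decomposition at $\tau_{\mathcal{E}^{(r+1)}}$ is exact since $\mathcal{E}^{(s)}\subset\mathcal{E}^{(r+1)}$ forces $\tau_{\mathcal{E}^{(r+1)}}\le\tau_{\mathcal{E}^{(s)}}$ pathwise, so no appeal to $\mathfrak{H}^{(r+1)}$-(1) or a ``time window'' is required; (ii) the convention stated in your last paragraph should read ``whenever $\mathcal{M}''\notin\mathscr{N}^{(s)}$'' (equivalently, $\mathcal{M}''$ is contained in some element of $\mathscr{V}^{(s)}$), not ``whenever $\mathcal{M}''\in\mathscr{V}^{(s)}$'', since elements of $\mathscr{V}^{(r+1)}$ are generally strict subsets of elements of $\mathscr{V}^{(s)}$ rather than members of $\mathscr{V}^{(s)}$ — you do treat this case correctly in the body of the inductive step, so this is purely a slip in the summary.
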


\begin{proof}
We fix $r$ and the the proof is carried out by induction on $s$.
For $s=r+1$, the assertion of lemma is a direct consequence of Lemmata
\ref{l: tau_V} and \ref{l: tau_E}. Now we assume that the lemma
has been proven for $s=r+1,,\,\dots,\,r+k$ and investigate the case
$s=r+k+1$.

Fix $\mathcal{M}\in\mathscr{V}^{(r)}$ and $\mathcal{M}'\in\mathscr{V}^{(r+k+1)}$.
Then, as $\mathcal{M}'$ is a disjoint union of elements of $\mathscr{V}^{(r+k)}$,
by the induction hypothesis, we have
\[
\lim_{\epsilon\to0}\,\sup_{\boldsymbol{x}\in\mathcal{E}(\mathcal{M})}\,\left|\,\mathbb{P}_{\boldsymbol{x}}^{\epsilon}\left[\,\tau_{\mathcal{E}^{(r+k)}}=\tau_{\mathcal{E}(\mathcal{M}')}\,\right]\,-\,\widehat{\mathcal{Q}}_{\mathcal{M}}^{(r+k)}\left[\,\tau_{\mathscr{V}^{(r+k)}}=\tau_{\mathcal{M}'}\,\right]\,\right|\ =\ 0\ .
\]
Since $\mathcal{E}^{(r+k+1)}\subset\mathcal{E}^{(r+k)}$, by the strong
Markov property,
\[
\begin{aligned} & \mathbb{P}_{\boldsymbol{x}}^{\epsilon}\left[\,\tau_{\mathcal{E}^{(r+k+1)}}=\tau_{\mathcal{E}(\mathcal{M}')}\,\right]\,=\,\mathbb{P}_{\boldsymbol{x}}^{\epsilon}\left[\,\mathbb{P}_{\boldsymbol{x}_{\epsilon}(\tau_{\mathcal{E}^{(r+k)}})}^{\epsilon}\left[\,\tau_{\mathcal{E}^{(r+k+1)}}=\tau_{\mathcal{E}(\mathcal{M}')}\,\right]\,\right]\\
 & =\ \sum_{\mathcal{M}''\in\mathscr{V}^{(r+k)}}\mathbb{E}_{\boldsymbol{x}}^{\epsilon}\left[\,\mathbb{P}_{\boldsymbol{x}_{\epsilon}(\tau_{\mathcal{E}^{(r+k)}})}\left[\,\tau_{\mathcal{E}^{(r+k+1)}}=\tau_{\mathcal{E}(\mathcal{M}')}\,\right]\,{\bf 1}\left\{ \,\tau_{\mathcal{E}^{(r+k)}}=\tau_{\mathcal{E}(\mathcal{M}'')}\,\right\} \,\right]\;.
\end{aligned}
\]
For $\mathcal{M}''\in\mathscr{V}^{(r+k)}$, by Lemmata \ref{l: tau_V}
and \ref{l: tau_E} for $p=r+k$, we have
\begin{align*}
\limsup_{\epsilon\rightarrow0}\,\sup_{\boldsymbol{x}\in\mathcal{E}(\mathcal{M})}\,\Big|\, & \mathbb{E}_{\boldsymbol{x}}^{\epsilon}\left[\,\mathbb{P}_{\boldsymbol{x}_{\epsilon}(\tau_{\mathcal{E}^{(r+k)}})}\left[\,\tau_{\mathcal{E}^{(r+k+1)}}=\tau_{\mathcal{E}(\mathcal{M}')}\,\right]\,{\bf 1}\left\{ \,\tau_{\mathcal{E}^{(r+k)}}=\tau_{\mathcal{E}(\mathcal{M}'')}\,\right\} \,\right]\\
 & \qquad-\widehat{\mathcal{Q}}_{\mathcal{M}''}^{(r+k)}\left[\,\tau_{\mathscr{R}^{(r+k)}}=\tau_{\mathscr{V}^{(r+k)}(\mathcal{M}')}\,\right]\,\mathbb{P}_{\boldsymbol{x}}^{\epsilon}\left[\,\tau_{\mathcal{E}^{(r+k)}}=\tau_{\mathcal{E}(\mathcal{M}'')}\,\right]\,\Big|\ =\ 0\ .
\end{align*}
Combining the previous estimates, and applying the induction hypothesis
to estimate the term $\mathbb{P}_{\boldsymbol{x}}^{\epsilon}[\,\tau_{\mathcal{E}^{(r+k)}}=\tau_{\mathcal{E}(\mathcal{M}'')}\,]$
yields that
\[
\lim_{\epsilon\to0}\,\sup_{\boldsymbol{x}\in\mathcal{E}(\mathcal{M})}\,\Big|\,\mathbb{P}_{\boldsymbol{x}}^{\epsilon}\left[\,\tau_{\mathcal{E}^{(r+k+1)}}=\tau_{\mathcal{E}(\mathcal{M}')}\,\right]\,-\,\sum_{\mathcal{M}''\in\mathscr{V}^{(r+k)}}\,\widehat{\mathcal{Q}}_{\mathcal{M}''}^{(r+k)}\left[\,\tau_{\mathscr{R}^{(r+k)}}=\tau_{\mathscr{V}^{(r+k)}(\mathcal{M}')}\,\right]\,\widehat{\mathcal{Q}}_{\mathcal{M}}^{(r+k)}\left[\,\tau_{\mathscr{V}^{(r+k)}}=\tau_{\mathcal{M}''}\,\right]\,\Big|\ =\ 0\ .
\]
By the strong Markov property, as $\mathscr{R}^{(r+k)}\subset\mathscr{V}^{(r+k)}$,
we have
\[
\sum_{\mathcal{M}''\in\mathscr{V}^{(r+k)}}\,\widehat{\mathcal{Q}}_{\mathcal{M}}^{(r+k)}\left[\,\tau_{\mathscr{V}^{(r+k)}}=\tau_{\mathcal{M}''}\,\right]\,\widehat{\mathcal{Q}}_{\mathcal{M}''}^{(r+k)}\left[\,\tau_{\mathscr{R}^{(r+k)}}=\tau_{\mathscr{V}^{(r+k)}(\mathcal{M}')}\,\right]\ =\ \widehat{\mathcal{Q}}_{\mathcal{M}}^{(r+k)}\left[\,\tau_{\mathscr{R}^{(r+k)}}=\tau_{\mathscr{V}^{(r+k)}(\mathcal{M}')}\,\right]\;.
\]
By Lemma \ref{l: tau_V}, the last expression is equal to $\widehat{\mathcal{Q}}_{\mathcal{M}''}^{(r+k+1)}\left[\,\tau_{\mathscr{V}^{(r+k+1)}}=\tau_{\mathcal{M}'}\,\right]$.
This completes the proof of the induction step, and we are done.
\end{proof}
Now we are ready to prove $\mathfrak{H}^{(p+1)}$-(2).
\begin{proof}[Proof of $\mathfrak{H}^{(p+1)}$-(2) under $\mathfrak{C}_{{\rm fdd}}^{(1)},\dots,\mathfrak{C}_{{\rm fdd}}^{(p)}$]
We need to prove, for $\mathcal{M}\in\mathscr{N}^{(p+1)}$ and $\mathcal{M}'\in\mathscr{V}^{(p+1)}$,
\begin{equation}
\lim_{\epsilon\to0}\,\sup_{\boldsymbol{x}\in\mathcal{E}(\mathcal{M})}\,\left|\,\mathbb{P}_{\boldsymbol{x}}^{\epsilon}\left[\,\tau_{\mathcal{E}^{(p+1)}}=\tau_{\mathcal{E}(\mathcal{M}')}\,\right]-\widehat{\mathcal{Q}}_{\mathcal{M}}^{(p+1)}\left[\,\tau_{\mathscr{V}^{(p+1)}}=\tau_{\mathcal{M}'}\,\right]\,\right|\ =\ 0\;.\label{eq:Hp-2-1}
\end{equation}
By Remark \ref{rem:N_p+1}, there exists $n\in\llbracket1,\,p\rrbracket$
such that $\mathcal{M}\in\mathscr{V}^{(n)}$. Thus, the estimate \eqref{eq:Hp-2-1}
follows from Lemma \ref{l: pf_Cond_H} with $r=n$ and $s=p+1$.
\end{proof}

\section{Analysis of resolvent equation: flatness of solution\label{sec8}}

Fix $p\in\llbracket1,\,\mathfrak{q}\rrbracket$ and $\mathbf{g}:\mathscr{V}^{(p)}\rightarrow\mathbb{R}$.
Denote by $G=G^{p,{\bf \,g}}\colon\mathbb{R}^{d}\to\mathbb{R}$ the
function given by
\[
{\color{blue}G}\ :=\ \sum_{\mathcal{M}\in\mathscr{V}^{(p)}}{\bf g}(\mathcal{M})\,\chi_{_{\mathcal{E}(\mathcal{M})}}\ .
\]
For $\lambda>0$, denote by ${\color{blue}\phi_{\epsilon}=\phi_{\epsilon}^{p,{\bf \,g},\,\lambda}}\colon\mathbb{R}^{d}\to\mathbb{R}$
the solution of the resolvent equation
\begin{equation}
\left(\lambda-\theta_{\epsilon}^{(p)}\mathscr{L}_{\epsilon}\right)\,\phi_{\epsilon}\,=\,G\;.\label{res}
\end{equation}
It is well-known that $\phi_{\epsilon}$ can be represented as
\begin{equation}
\phi_{\epsilon}(\boldsymbol{x})\ =\ \mathbb{E}_{\boldsymbol{x}}^{\epsilon}\left[\,\int_{0}^{\infty}e^{-\lambda s}\,G(\boldsymbol{x}_{\epsilon}(\theta_{\epsilon}^{(p)}s))\,ds\,\right]\ .\label{eq:probex}
\end{equation}
In particular, if ${\color{blue}\|f\|_{\infty}}$ represents the $L^{\infty}$-norm
of a function $f$ defined on $\mathbb{R}^{d}$ or on $\mathscr{V}^{(p)}$,
\begin{equation}
\|\phi_{\epsilon}\|_{\infty}\ \le\ \frac{\Vert G\Vert_{\infty}}{\lambda}\ =\ \frac{\Vert\mathbf{g}\Vert_{\infty}}{\lambda}\ .\label{e: bound_Feps}
\end{equation}

The main result of this sections establishes that the solution of
the resolvent equation is asymptotically constant in a neighborhood
of each set $\mathcal{M}\in\mathscr{S}^{(p)}$. Mind that we include
the sets in $\mathscr{N}^{(p)}$. It reads as follows. Let $\mathbf{f}_{\epsilon}\colon\mathscr{S}^{(p)}\to\mathbb{R}$
be given by
\begin{equation}
{\color{blue}\mathbf{f}_{\epsilon}(\mathcal{M})}\ :=\ \frac{1}{|\mathcal{M}|}\,\sum_{\boldsymbol{m}\in\mathcal{M}}\phi_{\epsilon}(\boldsymbol{m})\;.\label{05}
\end{equation}

\begin{prop}
\label{p_flat} For each $p\in\llbracket1,\,\mathfrak{q}\rrbracket$,
$\mathbf{g}:\mathscr{V}^{(p)}\rightarrow\mathbb{R}$, and $\mathcal{M}\in\mathscr{S}^{(p)}$,
\[
\lim_{\epsilon\to0}\,\sup_{\boldsymbol{x}\in\mathcal{E}(\mathcal{M})}\,\left|\,\phi_{\epsilon}(\boldsymbol{x})-\mathbf{f}_{\epsilon}(\mathcal{M})\,\right|\ =\ 0\;.
\]
\end{prop}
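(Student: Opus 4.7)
The plan is to exploit the probabilistic representation \eqref{eq:probex} and reduce flatness of $\phi_{\epsilon}$ on $\mathcal{E}(\mathcal{M})$ to equidistribution of the diffusion at a well chosen small time. For $\boldsymbol{x},\,\boldsymbol{y}\in\mathcal{E}(\mathcal{M})$, the Markov property at time $\theta_{\epsilon}^{(p)}t_{\epsilon}$ for a vanishing sequence $t_{\epsilon}\to 0$ gives, using the a priori bound \eqref{e: bound_Feps},
\[
\phi_{\epsilon}(\boldsymbol{x})-\phi_{\epsilon}(\boldsymbol{y})\;=\;O(t_{\epsilon})\,+\,e^{-\lambda t_{\epsilon}}\,\bigl(\,\mathbb{E}_{\boldsymbol{x}}^{\epsilon}[\phi_{\epsilon}(\boldsymbol{x}_{\epsilon}(\theta_{\epsilon}^{(p)}t_{\epsilon}))]\,-\,\mathbb{E}_{\boldsymbol{y}}^{\epsilon}[\phi_{\epsilon}(\boldsymbol{x}_{\epsilon}(\theta_{\epsilon}^{(p)}t_{\epsilon}))]\,\bigr)\;.
\]
Hence it suffices to show that the law of $\boldsymbol{x}_{\epsilon}(\theta_{\epsilon}^{(p)}t_{\epsilon})$ tested against the uniformly bounded function $\phi_{\epsilon}$ is asymptotically independent of the starting point in $\mathcal{E}(\mathcal{M})$.

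\textbf{Case 1: $\mathcal{M}\in\mathscr{V}^{(p)}$.} If $\mathcal{M}=\{\boldsymbol{m}\}$, flatness on the single well $\mathcal{E}(\boldsymbol{m})$ follows from fast intra-well mixing: a Barrera--Jara type local ergodicity statement (extended in \cite[Section~3]{LLS-1st}) shows that on any time scale $1\prec\sigma_{\epsilon}\prec\theta_{\epsilon}^{(p)}$ the process started from two different points of $\mathcal{E}(\boldsymbol{m})$ has total variation distance tending to $0$, while the exit probability is negligible since exit requires time $\theta_{\epsilon}^{(p)}$. If $|\mathcal{M}|\ge 2$, then $\mathcal{M}$ is simple and bound by $\mathfrak{P}_{1}(p)$, and Lemma \ref{lem:bound_dn} yields
\[
\max_{\boldsymbol{m},\boldsymbol{m}'\in\mathcal{M}}\Theta(\boldsymbol{m},\boldsymbol{m}')\;\le\;U(\mathcal{M})+d^{(p-1)}\;,
\]
so internal transitions within $\mathcal{M}$ occur on scale $\theta_{\epsilon}^{(p-1)}\prec\theta_{\epsilon}^{(p)}$, while exit from $\mathcal{E}(\mathcal{M})$ requires time at least $\theta_{\epsilon}^{(p)}$ because $\Xi(\mathcal{M})\ge d^{(p)}$. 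A generalized Barrera--Jara argument applied to the compound valley $\mathcal{E}(\mathcal{M})$ then provides the equilibration among the sub-wells and therefore flatness of $\phi_{\epsilon}$ on $\mathcal{E}(\mathcal{M})$.

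\textbf{Case 2: $\mathcal{M}\in\mathscr{N}^{(p)}$.} Here we appeal to the hitting condition $\mathfrak{H}^{(p)}$, which is in force at the stage of the induction where Proposition \ref{p_flat} is invoked. Choose $\alpha_{\epsilon}$ with $\theta_{\epsilon}^{(p-1)}\prec\alpha_{\epsilon}\prec\theta_{\epsilon}^{(p)}$. By $\mathfrak{H}^{(p)}$-(1), $\tau_{\mathcal{E}^{(p)}}\le\alpha_{\epsilon}$ with probability tending to $1$ uniformly in $\boldsymbol{x}\in\mathcal{E}(\mathcal{M})$; the strong Markov property at $\tau_{\mathcal{E}^{(p)}}$ then yields
\[
\phi_{\epsilon}(\boldsymbol{x})\;=\;\sum_{\mathcal{M}'\in\mathscr{V}^{(p)}}\,\mathbb{E}_{\boldsymbol{x}}^{\epsilon}\bigl[\,\mathbf{1}\{\tau_{\mathcal{E}^{(p)}}=\tau_{\mathcal{E}(\mathcal{M}')}\}\,\phi_{\epsilon}(\boldsymbol{x}_{\epsilon}(\tau_{\mathcal{E}^{(p)}}))\,\bigr]\,+\,o(1)\;,
\]
where the $o(1)$ absorbs the integral over $[0,\alpha_{\epsilon}/\theta_{\epsilon}^{(p)}]$, bounded by $(\alpha_{\epsilon}/\theta_{\epsilon}^{(p)})\|\mathbf{g}\|_{\infty}$. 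Combining Case 1 applied to each $\mathcal{E}(\mathcal{M}')$ with $\mathfrak{H}^{(p)}$-(2), which asserts that $\mathbb{P}_{\boldsymbol{x}}^{\epsilon}[\tau_{\mathcal{E}^{(p)}}=\tau_{\mathcal{E}(\mathcal{M}')}]$ converges to $\widehat{\mathcal{Q}}_{\mathcal{M}}^{(p)}[\tau_{\mathscr{V}^{(p)}}=\tau_{\mathcal{M}'}]$ uniformly in $\boldsymbol{x}\in\mathcal{E}(\mathcal{M})$, one concludes that $\phi_{\epsilon}(\boldsymbol{x})$ is asymptotically independent of $\boldsymbol{x}\in\mathcal{E}(\mathcal{M})$, hence equal to $\mathbf{f}_{\epsilon}(\mathcal{M})$ in the limit.

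The main obstacle is the refined local ergodicity required in Case 1 when $|\mathcal{M}|\ge 2$: we need a quantitative equilibration statement for the diffusion on a connected sub-landscape containing several local minima linked by strictly lower saddles, uniform in the starting point of $\mathcal{E}(\mathcal{M})$, and at a time scale squeezed between $\theta_{\epsilon}^{(p-1)}$ and $\theta_{\epsilon}^{(p)}$. This is precisely the generalization of \cite{BJ} announced among the novelties of the paper; once it is established, both Case 1 and (through it) Case 2 follow by the outline above.
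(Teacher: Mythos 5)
The paper proves this proposition \emph{uniformly} for all $\mathcal{M}\in\mathscr{S}^{(p)}$ (including $\mathcal{M}\in\mathscr{N}^{(p)}$) without invoking $\mathfrak{H}^{(p)}$, whereas you split into cases and appeal to $\mathfrak{H}^{(p)}$ for the negligible wells. More importantly, your reduction is stronger than what is needed and therefore hard to justify: for $|\mathcal{M}|\ge 2$, you ask for a ``generalized Barrera--Jara'' local ergodicity on the \emph{compound} valley $\mathcal{E}(\mathcal{M})$, at a time scale squeezed between $\theta_{\epsilon}^{(p-1)}$ and $\theta_{\epsilon}^{(p)}$. Note that $\mathcal{E}(\mathcal{M})=\bigcup_{\boldsymbol{m}\in\mathcal{M}}\mathcal{W}^{r_{0}}(\boldsymbol{m})$ is a disjoint union of small neighborhoods, not a connected domain, so no off-the-shelf local ergodicity applies to it; one would have to prove genuine equilibration of the diffusion across a multi-well sub-landscape, which the paper never does.

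The paper's route is more economical. Proposition \ref{p_flat-1} (the only place where Barrera--Jara local ergodicity enters) gives flatness of $\phi_{\epsilon}$ on each \emph{individual} small well $\mathcal{E}(\boldsymbol{m})$. Then Lemma \ref{l_flat-1} establishes a mere \emph{hitting-time} bound: for $\mathcal{M}=\{\boldsymbol{m}_{1},\dots,\boldsymbol{m}_{a}\}\in\mathscr{S}^{(p)}$ there is $\alpha_{\epsilon}\prec\theta_{\epsilon}^{(p)}$ with
\[
\mathbb{P}_{\boldsymbol{m}_{i}}^{\epsilon}\big[\,\tau_{B(\boldsymbol{m}_{j},\,\epsilon)}>\alpha_{\epsilon}\,\big]\ \to\ 0\;,
\]
proved by a capacity estimate for the diffusion reflected in $\mathcal{W}^{U(\mathcal{M})+A+\xi}(\mathcal{M})$, using $\mathfrak{P}_{1}(p)$ and Lemma \ref{lem:bound_dn} to choose $A<\min\{\Xi(\mathcal{M}),d^{(p)}\}$ with $\max_{\boldsymbol{m},\boldsymbol{m}'\in\mathcal{M}}\Theta(\boldsymbol{m},\boldsymbol{m}')-U(\mathcal{M})<A$. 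Crucially, this choice is valid whether $\mathcal{M}\in\mathscr{V}^{(p)}$ or $\mathcal{M}\in\mathscr{N}^{(p)}$, which is why no case split is needed. Corollary \ref{l_flat-2} then converts the hitting estimate into $|\phi_{\epsilon}(\boldsymbol{m}_{i})-\phi_{\epsilon}(\boldsymbol{m}_{j})|=o_{\epsilon}(1)$ via the probabilistic representation \eqref{eq:probex}, the strong Markov property at $\tau_{B(\boldsymbol{m}_{j},\epsilon)}$, and Proposition \ref{p_flat-1}. No inter-well equidistribution is ever proved, only that the process can reach the other sub-wells in time $\prec\theta_{\epsilon}^{(p)}$.

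So the concrete gap in your proposal is the passage ``a generalized Barrera--Jara argument applied to the compound valley $\mathcal{E}(\mathcal{M})$ then provides the equilibration among the sub-wells.'' This is precisely the result that is neither available in the literature nor proved in the paper; it is replaced by the much weaker Lemma \ref{l_flat-1} plus the observation that, in the resolvent representation, a hitting time of order $\prec\theta_{\epsilon}^{(p)}$ already costs only $o_{\epsilon}(1)$ after the time change $s\mapsto s\theta_{\epsilon}^{(p)}$. Your Case 2, while not logically incorrect once $\mathfrak{H}^{(p)}$ is in force, mixes the flatness statement with the characterization of the constant $\mathbf{f}_{\epsilon}(\mathcal{M})$ (which the paper handles separately in Proposition \ref{p_char_neg}), making the inductive bookkeeping harder to follow and introducing an unnecessary dependency.
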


We prove this proposition in the remainder of the section.

The following definition will be frequently used in the argument.
\begin{defn}
\label{def61} Let $\mathcal{A}\subset\mathbb{R}^{d}$ be a bounded
connected domain with smooth boundary $\partial\mathcal{A}$. Denote
by \textcolor{blue}{$\boldsymbol{x}_{\epsilon}(\cdot\,;\mathcal{A})$}
the diffusion process $\boldsymbol{x}_{\epsilon}(\cdot)$ reflected
at the boundary $\partial\mathcal{A}$.
\begin{enumerate}
\item Let \textcolor{blue}{$\mathbb{P}_{\boldsymbol{z}}^{\epsilon,\,\mathcal{A}}$}
be the law of the process $\boldsymbol{x}_{\epsilon}(\cdot\,;\mathcal{A})$
starting from $\boldsymbol{z}\in\ensuremath{\mathcal{A}}$, and
\item \textcolor{blue}{$p_{\epsilon}^{\mathcal{A}}(\boldsymbol{z},\,\boldsymbol{y};t)$}
the transition kernel of the process $\boldsymbol{x}_{\epsilon}(\cdot\,;\mathcal{A})$.
Thus, for all $\boldsymbol{z}\in\mathcal{A}$ and $\mathcal{B}\subseteq\mathcal{A}$,
\[
\int_{\mathcal{B}}\,p_{\epsilon}^{\mathcal{A}}(\boldsymbol{z},\,\boldsymbol{y};t)\,d\boldsymbol{y}\ =\ \mathbb{P}_{\boldsymbol{z}}^{\epsilon,\,\mathcal{A}}\left[\,\boldsymbol{x}_{\epsilon}(t\,;\mathcal{A})\in\mathcal{B}\,\right]\;.
\]
\end{enumerate}
\end{defn}

\subsection{Consequences of Freidlin--Wentzell theory}

Denote by \textcolor{blue}{$\tau_{\mathcal{A}}$} the time the process
$\boldsymbol{x}_{\epsilon}(\cdot)$ hits the set $\mathcal{A}\subset\mathbb{R}^{d}$.
Next result is \cite[Proposition 4.2]{LLS-1st} and a direct consequence
of Fredlin--Wentzell theory \cite{FW}.
\begin{prop}
\label{p_FW}Fix $h<H$, and denote by $\mathcal{A}$, $\mathcal{B}$
connected components of the set $\{\boldsymbol{x}\in\mathbb{R}^{d}:U(\boldsymbol{x})<h\}$,
$\{\boldsymbol{x}\in\mathbb{R}^{d}:U(\boldsymbol{x})<H\}$, respectively.
Assume that $\mathcal{A}\subset\mathcal{B}$. Suppose that all critical
points $\boldsymbol{c}$ of $U$ in $\mathcal{A}$ are such that $U(\boldsymbol{c})\le h_{0}$
for some $h_{0}<h$. Then, for all $\eta>0$,
\[
\limsup_{\epsilon\rightarrow0}\,\sup_{\boldsymbol{x}\in\mathcal{A}}\,\mathbb{P}_{\boldsymbol{x}}^{\epsilon}\left[\,\tau_{\partial\mathcal{B}}<e^{(H-h_{0}-\eta)/\epsilon}\,\right]\ =\ 0\;.
\]
In particular, for all $\boldsymbol{m}\in\mathcal{M}_{0}$ and $\eta>0$,
\[
\limsup_{\epsilon\rightarrow0}\,\sup_{\boldsymbol{x}\in\mathcal{E}(\boldsymbol{m})}\,\mathbb{P}_{\boldsymbol{x}}^{\epsilon}\left[\,\tau_{\partial\mathcal{W}^{2r_{0}}(\boldsymbol{m})}<e^{(r_{0}-\eta)/\epsilon}\,\right]\ =\ 0\;.
\]
\end{prop}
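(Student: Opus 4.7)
My plan is to establish this Freidlin-Wentzell-type estimate in two stages: first a descent step reducing the supremum over $\boldsymbol{x}\in\mathcal{A}$ to one over small neighborhoods $\mathcal{E}(\boldsymbol{m})$ of the local minima $\boldsymbol{m}\in\mathcal{M}_0\cap\mathcal{A}$, and then the classical exit-time lower bound from $\mathcal{B}$ of the process started near such a minimum.

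For the descent step, I use that $\tfrac{d}{dt}U(\boldsymbol{x}(t))=-|\nabla U(\boldsymbol{x}(t))|^2$ along the deterministic flow $\dot{\boldsymbol{x}}=\boldsymbol{b}(\boldsymbol{x})$, thanks to the orthogonality $\boldsymbol{\ell}\cdot\nabla U\equiv 0$ in \eqref{eq:decb}. Hence every deterministic orbit issued from $\mathcal{A}$ stays trapped in $\mathcal{A}$ and, since by hypothesis all critical points of $U$ in $\mathcal{A}$ lie below $h_0<h$, it accumulates in $\mathcal{M}_0\cap\mathcal{A}$. Combined with the standard small-noise closeness of $\boldsymbol{x}_\epsilon(\cdot)$ to the deterministic flow on any fixed interval, and with a large deviation estimate for escape from a thin tubular neighborhood of the stable manifolds of the saddles contained in $\mathcal{A}$, this yields a time $T_0$ depending only on the geometry of $U$ in $\mathcal{A}$ (and independent of $\epsilon$) and a measurable selection $\boldsymbol{m}(\boldsymbol{x})\in\mathcal{M}_0\cap\mathcal{A}$ such that
\[
\lim_{\epsilon\to 0}\,\inf_{\boldsymbol{x}\in\mathcal{A}}\,\mathbb{P}_{\boldsymbol{x}}^{\epsilon}\!\left[\,\tau_{\mathcal{E}(\boldsymbol{m}(\boldsymbol{x}))}\leq T_0\text{ and }\boldsymbol{x}_\epsilon(t)\in\mathcal{A}\text{ for every }t\leq T_0\,\right]\;=\;1.
\]
A strong Markov argument at time $\tau_{\mathcal{E}(\boldsymbol{m}(\boldsymbol{x}))}$ then reduces the proposition to
\[
\limsup_{\epsilon\to 0}\,\max_{\boldsymbol{m}\in\mathcal{M}_0\cap\mathcal{A}}\,\sup_{\boldsymbol{y}\in\mathcal{E}(\boldsymbol{m})}\,\mathbb{P}_{\boldsymbol{y}}^{\epsilon}\bigl[\,\tau_{\partial\mathcal{B}} < e^{(H-h_0-\eta)/\epsilon}\,\bigr] \;=\; 0.
\]

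For the reduced statement I invoke the Freidlin-Wentzell theory for the SDE \eqref{sde}, whose rate functional is $I_T(\varphi)=\tfrac14\int_0^T|\dot\varphi-\boldsymbol{b}(\varphi)|^2\,dt$. Taking the trial path $\dot\varphi=\nabla U(\varphi)-\boldsymbol{\ell}(\varphi)$, a direct computation using $\boldsymbol{\ell}\cdot\nabla U\equiv 0$ gives $|\dot\varphi-\boldsymbol{b}(\varphi)|^2=|2\nabla U(\varphi)|^2=4|\nabla U(\varphi)|^2$ and $\dot\varphi\cdot\nabla U=|\nabla U|^2$, so that $I_T(\varphi)$ attains the value $U(\varphi(T))-U(\varphi(0))$; the classical theory (see \cite[Ch.~4, \S3]{FW}) then identifies the quasi-potential as $V(\boldsymbol{m},\boldsymbol{y})=U(\boldsymbol{y})-U(\boldsymbol{m})$ for every $\boldsymbol{y}$ in the basin of $\boldsymbol{m}$. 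Consequently $V(\boldsymbol{m},\partial\mathcal{B})=H-U(\boldsymbol{m})\geq H-h_0$, and the Freidlin-Wentzell exit-time lower bound \cite[Ch.~4, Theorem 4.2]{FW} delivers the required estimate; uniformity in $\boldsymbol{y}\in\mathcal{E}(\boldsymbol{m})$ follows from the uniform validity of the LDP upper bound on compact subsets of a basin of attraction.

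The main obstacle lies in the uniformity of the descent step near the stable manifolds of the saddle points contained in $\mathcal{A}$, where deterministic orbits take arbitrarily long to reach a valley. One handles this by showing that, for any fixed $\delta>0$, the stochastic process started in a $\delta$-tube about such a stable manifold exits the tube transversally in time at most $O(\log(1/\epsilon))$ with probability tending to $1$, thanks to the exponential instability of the saddle in the normal direction combined with the diffusive fluctuations of order $\sqrt{\epsilon}$. The particular case stated at the end of the proposition follows from the general one by choosing $\mathcal{A}=\mathcal{W}^{r_0}(\boldsymbol{m})$, $\mathcal{B}=\mathcal{W}^{2r_0}(\boldsymbol{m})$, $h=U(\boldsymbol{m})+r_0$, $h_0=U(\boldsymbol{m})$ and $H=U(\boldsymbol{m})+2r_0$, since by the definition of $r_0$ at \eqref{e_Em} the point $\boldsymbol{m}$ is the unique critical point of $U$ in $\mathcal{W}^{3r_0}(\boldsymbol{m})\supset\mathcal{B}$.
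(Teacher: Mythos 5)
The paper does not supply an argument for this proposition: it is lifted verbatim from \cite[Proposition 4.2]{LLS-1st} with the remark that it is ``a direct consequence'' of Freidlin--Wentzell theory. So there is no in-paper proof to match against; what can be assessed is whether your sketch is sound. Your overall two-step structure (descend to a valley, then apply the exit-time lower bound) is the right one, and your quasi-potential computation is correct and worth having on record: writing $\boldsymbol{b}=-(\nabla U+\boldsymbol{\ell})$ and using $\boldsymbol{\ell}\cdot\nabla U\equiv0$, the identity $|\dot\varphi-\boldsymbol{b}|^2-4\,\dot\varphi\cdot\nabla U=|\dot\varphi-\nabla U+\boldsymbol{\ell}|^2\ge0$ gives $I_T(\varphi)\ge U(\varphi(T))-U(\varphi(0))$, which is the lower bound you actually need.

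There is, however, a genuine gap in the descent step as you have stated it. You assert a time $T_0$ independent of $\epsilon$ and
\[
\lim_{\epsilon\to 0}\,\inf_{\boldsymbol{x}\in\mathcal{A}}\,\mathbb{P}_{\boldsymbol{x}}^{\epsilon}\Big[\,\tau_{\mathcal{E}(\boldsymbol{m}(\boldsymbol{x}))}\leq T_0\;\text{and}\;\boldsymbol{x}_\epsilon(t)\in\mathcal{A}\;\forall\, t\leq T_0\,\Big]\;=\;1\,,
\]
but this fails for two independent reasons. First, for $\boldsymbol{x}\in\mathcal{A}$ at distance of order $\epsilon$ from $\partial\mathcal{A}$, the quasi-potential from $\boldsymbol{x}$ to $\partial\mathcal{A}$ is of order $\epsilon$, so $\mathbb{P}_{\boldsymbol{x}}^{\epsilon}[\tau_{\partial\mathcal{A}}<T_0]$ is bounded below by a positive constant; the infimum over $\boldsymbol{x}\in\mathcal{A}$ is therefore bounded away from one. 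The correct statement replaces $\mathcal{A}$ by a slightly enlarged component $\mathcal{A}'$ of $\{U<h+\epsilon_0\}$ (with $\epsilon_0>0$ fixed and no critical values in $(h,h+\epsilon_0]$), for which the action from any point of $\mathcal{A}$ to $\partial\mathcal{A}'$ is at least $\epsilon_0$, so confinement holds uniformly. Second, as you yourself note, deterministic orbits near stable manifolds of saddles inside $\mathcal{A}$ take an unbounded time to reach a valley, so $T_0$ cannot be taken independent of $\epsilon$; you then suggest an $O(\log(1/\epsilon))$ escape estimate from a tube around the stable manifold, which contradicts your earlier ``$T_0$ depending only on the geometry of $U$.'' The write-up should commit to one or the other: either $T_0=T_0(\epsilon)\to\infty$ polylogarithmically, or descend only to the set $\{U\le h_0+\delta\}\cap\mathcal{A}'$ (which the flow reaches in genuinely bounded time, bypassing the stable-manifold issue) and run the exit-time argument from there.

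One further caution: $\mathcal{B}$ need not be a domain of attraction of a single equilibrium, so the citation of \cite[Ch.~4, Theorem 4.2]{FW} does not apply as stated; one needs the renewal/cycle argument in the spirit of Chapter 6 of \cite{FW} (returning repeatedly to the sub-level set $\{U\le h_0+\delta\}$ and bounding the per-cycle escape probability by $e^{-(H-h_0-\delta)/\epsilon}$, then iterating over the $\sim e^{(H-h_0-\eta)/\epsilon}$ cycles). Finally, a small bookkeeping remark on the particular case: $\mathcal{W}^{r_0}(\boldsymbol{m})$ and $\mathcal{W}^{2r_0}(\boldsymbol{m})$ are defined in the paper as closed sub-level sets, so one should set $\mathcal{A}$ and $\mathcal{B}$ equal to the corresponding open components; with $h_0=U(\boldsymbol{m})$ your choices in fact give the stronger exponent $2r_0-\eta$, of which the stated $r_0-\eta$ is a corollary.
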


\subsection{Proof of Proposition \ref{p_flat}\label{subsec_pf_p_flat}}

We start by proving that the solution is flat on a neighborhood of
each minimum.
\begin{prop}
\label{p_flat-1} For each $\boldsymbol{m}\in\mathcal{M}_{0}$,
\[
\lim_{\epsilon\to0}\,\sup_{\boldsymbol{x},\,\boldsymbol{y}\in\mathcal{E}(\boldsymbol{m})}\,\left|\,\phi_{\epsilon}(\boldsymbol{x})-\phi_{\epsilon}(\boldsymbol{y})\right|\ =\ 0\;.
\]
\end{prop}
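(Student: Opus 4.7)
I will start from the probabilistic representation \eqref{eq:probex} together with the uniform bound \eqref{e: bound_Feps}. For a cutoff $T_\epsilon>0$ to be chosen polynomially in $\epsilon^{-1}$, the Markov property at the (unscaled) time $T_\epsilon$ gives, for $\boldsymbol{x},\boldsymbol{y}\in\mathcal{E}(\boldsymbol{m})$,
\[
\phi_\epsilon(\boldsymbol{x})-\phi_\epsilon(\boldsymbol{y})\ =\ R_\epsilon(\boldsymbol{x},\boldsymbol{y})\,+\,e^{-\lambda T_\epsilon/\theta_\epsilon^{(p)}}\,\bigl\{\,\mathbb{E}_{\boldsymbol{x}}^{\epsilon}[\phi_\epsilon(\boldsymbol{x}_\epsilon(T_\epsilon))]\,-\,\mathbb{E}_{\boldsymbol{y}}^{\epsilon}[\phi_\epsilon(\boldsymbol{x}_\epsilon(T_\epsilon))]\,\bigr\}\;,
\]
where the short-time remainder $R_\epsilon$ collects the integrals of $G$ over $[0,T_\epsilon/\theta_\epsilon^{(p)}]$ and is bounded in absolute value by $2\Vert\boldsymbol{g}\Vert_\infty\,T_\epsilon/\theta_\epsilon^{(p)}$. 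Since $\theta_\epsilon^{(p)}=e^{d^{(p)}/\epsilon}$ is exponential while $T_\epsilon$ is polynomial, $R_\epsilon$ tends to $0$ uniformly; the proof therefore reduces to controlling the bracketed quantity.

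To control that quantity I replace $\boldsymbol{x}_\epsilon(\cdot)$ on $[0,T_\epsilon]$ by the reflected process $\boldsymbol{x}_\epsilon(\cdot;\mathcal{W}^{2r_0}(\boldsymbol{m}))$ of Definition \ref{def61}. Proposition \ref{p_FW} asserts that, for any $\eta>0$, the exit time of $\boldsymbol{x}_\epsilon(\cdot)$ from $\mathcal{W}^{2r_0}(\boldsymbol{m})$ exceeds $e^{(r_0-\eta)/\epsilon}$ with probability tending to $1$, uniformly for starting points in $\mathcal{E}(\boldsymbol{m})=\mathcal{W}^{r_0}(\boldsymbol{m})$. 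Consequently the two processes coincide on $[0,T_\epsilon]$ with overwhelming probability, and the replacement error is bounded by $\Vert\phi_\epsilon\Vert_\infty$ times this vanishing exit probability.

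The decisive ingredient is the local ergodicity of the reflected process inside $\mathcal{W}^{2r_0}(\boldsymbol{m})$, which is exactly the multi-well extension of the Barrera--Jara estimate established in \cite[Section~3]{LLS-1st}. It furnishes a polynomial rate $T_\epsilon$ for which
\[
\lim_{\epsilon\to 0}\,\sup_{\boldsymbol{x}\in\mathcal{E}(\boldsymbol{m})}\,\bigl\Vert\,p_\epsilon^{\mathcal{W}^{2r_0}(\boldsymbol{m})}(\boldsymbol{x},\,\cdot\,;T_\epsilon)\,-\,\mu_\epsilon^{\mathcal{W}^{2r_0}(\boldsymbol{m})}\,\bigr\Vert_{\mathrm{TV}}\ =\ 0\;,
\]
where $\mu_\epsilon^{\mathcal{W}^{2r_0}(\boldsymbol{m})}$ is the invariant probability of the reflected process. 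Combined with the uniform bound on $\phi_\epsilon$, this implies that both $\mathbb{E}_{\boldsymbol{x}}^\epsilon[\phi_\epsilon(\boldsymbol{x}_\epsilon(T_\epsilon))]$ and $\mathbb{E}_{\boldsymbol{y}}^\epsilon[\phi_\epsilon(\boldsymbol{x}_\epsilon(T_\epsilon))]$ converge to the same limit $\int\phi_\epsilon\,d\mu_\epsilon^{\mathcal{W}^{2r_0}(\boldsymbol{m})}$, uniformly in $\boldsymbol{x},\boldsymbol{y}\in\mathcal{E}(\boldsymbol{m})$, finishing the proof.

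The main obstacle is the simultaneous choice of $T_\epsilon$: it must dominate the mixing time of the reflected process on $\mathcal{W}^{2r_0}(\boldsymbol{m})$ (polynomial in $\epsilon^{-1}$), while remaining much smaller than both $e^{r_0/\epsilon}$ (so Freidlin--Wentzell applies) and $\theta_\epsilon^{(p)}$ (so $R_\epsilon$ vanishes). Any polynomial $T_\epsilon$ meets these requirements comfortably; the non-trivial work is in verifying that the Barrera--Jara-type estimate from \cite[Section~3]{LLS-1st} indeed applies uniformly for initial data in the inner well $\mathcal{E}(\boldsymbol{m})\subset\mathcal{W}^{2r_0}(\boldsymbol{m})$, which is precisely the regime addressed there.
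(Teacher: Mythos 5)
Your proposal is correct and follows essentially the same route the paper takes. The paper's proof of Proposition~\ref{p_flat-1} is a single sentence citing \cite[Theorem 4.1]{LLS-1st}, whose proof has exactly the structure you reconstructed: truncate the probabilistic representation \eqref{eq:probex} at a polynomial time $T_\epsilon$ (so the discount factor is close to $1$ and the short-time remainder vanishes because $\theta_\epsilon^{(p)}$ is exponential), couple with the process reflected in $\mathcal{W}^{2r_0}(\boldsymbol{m})$ on the overwhelming-probability event of Proposition~\ref{p_FW}, and then invoke the local ergodicity estimate of \cite[Theorem 3.1]{LLS-1st} — precisely the ingredient the paper singles out — to show that both conditional expectations at time $T_\epsilon$ approach the same average uniformly over initial points in $\mathcal{E}(\boldsymbol{m})$. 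Your bookkeeping (the identity from the Markov property, the bound $|R_\epsilon|\le 2\|\boldsymbol{g}\|_\infty T_\epsilon/\theta_\epsilon^{(p)}$, and the use of the a priori bound \eqref{e: bound_Feps} to control the coupling error) is accurate, and your observation that any polynomial $T_\epsilon$ sits comfortably between the mixing time of the reflected process and the Freidlin--Wentzell exit scale $e^{(r_0-\eta)/\epsilon}$ is the right point to stress.
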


\begin{proof}
The proof is identitcal to that of \cite[Theorem 4.1]{LLS-1st}. The
key ingredient is \cite[Theorem 3.1]{LLS-1st}.
\end{proof}
Denote by \textcolor{blue}{$B(\boldsymbol{x},\,r)$} the open ball
of radius $r$ centered at $\boldsymbol{x}$.
\begin{lem}
\label{l_flat-1} Fix $p\in\llbracket1,\,\mathfrak{q}\rrbracket$,
and $\mathcal{M}=\{\boldsymbol{m}_{1},\,\dots,\,\boldsymbol{m}_{a}\}\in\mathscr{S}^{(p)}$.
Then, there exists $\alpha_{\epsilon}=\alpha_{\epsilon}(\mathcal{M})>0$
such that $\alpha_{\epsilon}\prec\theta_{\epsilon}^{(p)}$ and for
all $i,\,j\in\llbracket1,\,a\rrbracket$,

\[
\lim_{\epsilon\to0}\,\mathbb{P}_{\boldsymbol{m}_{i}}^{\epsilon}\left[\,\tau_{B(\boldsymbol{m}_{j},\,\epsilon)}>\alpha_{\epsilon}\,\right]\ =\ 0\ .
\]
\end{lem}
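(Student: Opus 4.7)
The argument will split according to $a:=|\mathcal{M}|$. When $a=1$, the lemma is trivial since $\tau_{B(\boldsymbol{m}_1,\epsilon)}=0$ under $\mathbb{P}^\epsilon_{\boldsymbol{m}_1}$, and any sequence $\alpha_\epsilon\prec\theta_\epsilon^{(p)}$ works; in particular this settles the case $p=1$, where $\mathscr{S}^{(1)}$ consists only of singletons. I henceforth assume $a\ge 2$ and $p\ge 2$. By Remark~\ref{rem:N_p+1}, there exists $k\in\llbracket 2,p\rrbracket$ such that $\mathcal{M}\in\mathscr{V}^{(k)}$: one may take $k=p$ if $\mathcal{M}\in\mathscr{V}^{(p)}$, or the unique $k<p$ with $\mathcal{M}\in\mathscr{T}^{(k)}\subset\mathscr{V}^{(k)}$ if $\mathcal{M}\in\mathscr{N}^{(p)}$. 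Applying Lemma~\ref{lem:bound_dn} at this level $k$ together with the monotonicity $d^{(k-1)}\le d^{(p-1)}$ from $\mathfrak{P}_{2}\llbracket p\rrbracket$ yields
\[
\max_{\boldsymbol{m},\boldsymbol{m}'\in\mathcal{M}}\,\Theta(\boldsymbol{m},\boldsymbol{m}')\ \le\ U(\mathcal{M})+d^{(k-1)}\ \le\ U(\mathcal{M})+d^{(p-1)}\ <\ U(\mathcal{M})+d^{(p)}\,.
\]

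Based on this separation of barrier heights, I plan to fix $\delta>0$ with $d^{(k-1)}+\delta<d^{(p)}$ and to set $\alpha_\epsilon:=e^{(d^{(k-1)}+\delta)/\epsilon}$, so that $\alpha_\epsilon\prec\theta_\epsilon^{(p)}$ automatically. The hitting time $\tau_{B(\boldsymbol{m}_j,\epsilon)}$ will be controlled in two stages via the strong Markov property applied at $\tau_{\mathcal{E}(\boldsymbol{m}_j)}$. First, the display above shows that $\boldsymbol{m}_i$ and $\boldsymbol{m}_j$ belong to a common connected component of the sub-level set $\{U<U(\mathcal{M})+d^{(k-1)}+\delta/2\}$, and the Freidlin--Wentzell quasi-potential cost to move from $\boldsymbol{m}_i$ to any fixed neighborhood of $\boldsymbol{m}_j$ is at most $d^{(k-1)}$. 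The standard Freidlin--Wentzell upper bounds on hitting times for $\boldsymbol{x}_\epsilon(\cdot)$ (cf.\ \cite{FW}) therefore give $\mathbb{P}^\epsilon_{\boldsymbol{m}_i}\bigl[\tau_{\mathcal{E}(\boldsymbol{m}_j)}>\tfrac12\alpha_\epsilon\bigr]\to 0$. Second, once inside $\mathcal{E}(\boldsymbol{m}_j)=\mathcal{W}^{r_0}(\boldsymbol{m}_j)$, which contains $\boldsymbol{m}_j$ as its only critical point, I will invoke the local ergodicity machinery of \cite[Section~3]{LLS-1st}---the multi-minimum extension of Barrera--Jara \cite{BJ} emphasized in Section~1.5---which shows that entering the microscopic ball $B(\boldsymbol{m}_j,\epsilon)$ takes only time polynomial in $1/\epsilon$, uniformly in the starting point in $\mathcal{E}(\boldsymbol{m}_j)$. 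Since any polynomial in $1/\epsilon$ is $\prec\alpha_\epsilon$, combining the two estimates closes the argument.

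The delicate step is the second one. The target ball $B(\boldsymbol{m}_j,\epsilon)$ lies well below the natural fluctuation scale $\sqrt{\epsilon}$ of the quasi-stationary Gaussian measure near $\boldsymbol{m}_j$, so its local equilibrium weight is only $O(\epsilon^{d/2})$; crude large-deviations bounds cannot turn the heuristic return time $O(\epsilon^{-d/2})$ into a uniform probabilistic hitting-time estimate, and the refined local mixing analysis of \cite{LLS-1st} is what makes this rigorous. The first step, by contrast, is a soft consequence of the tree structure, in that Lemma~\ref{lem:bound_dn} already provides the critical depth separation $d^{(k-1)}<d^{(p)}$ needed to apply Freidlin--Wentzell.
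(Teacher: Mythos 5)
Your proposal takes a genuinely different route from the paper for the delicate step, and it is precisely at that step that your argument has a gap you have not closed.

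The paper's proof does not split into a mesoscopic step plus a local-ergodicity step. After establishing the depth separation $\max_{\boldsymbol{m},\boldsymbol{m}'\in\mathcal{M}}\Theta(\boldsymbol{m},\boldsymbol{m}')-U(\mathcal{M})<\min\{\Xi(\mathcal{M}),d^{(p)}\}$ and choosing $A$ strictly between these two quantities with $\alpha_\epsilon=e^{A/\epsilon}$, it confines the process to a well $\mathcal{W}=\mathcal{W}^{U(\mathcal{M})+A+\xi}(\mathcal{M})$ via Proposition \ref{p_FW}, replaces $\boldsymbol{x}_\epsilon(\cdot)$ by the process reflected at $\partial\mathcal{W}$, and then controls $\mathbb{E}_{\boldsymbol{m}_i}^{\epsilon,\mathcal{W}}[\tau_{B(\boldsymbol{m}_j,\epsilon)}]$ directly by potential theory: \cite[Proposition 7.1]{LS-22} bounds the expected hitting time by $1/\mathrm{cap}$, \cite[Lemma 9.2]{LS-22} gives a lower bound on $\mathrm{cap}_\epsilon^{\mathcal{W}}(B(\boldsymbol{m}_i,\epsilon),B(\boldsymbol{m}_j,\epsilon))$ involving only a polynomial prefactor times $e^{-[\Theta(\boldsymbol{m}_i,\boldsymbol{m}_j)-U(\mathcal{M})]/\epsilon}$, and Markov's inequality then gives a decaying bound because $\Theta(\boldsymbol{m}_i,\boldsymbol{m}_j)-U(\mathcal{M})<A$. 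The entire issue of "how does the process find a ball of radius $\epsilon$" is absorbed into the capacity lower bound with no separate mixing argument needed.

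Your step 2 replaces this by the local ergodicity of \cite[Section 3]{LLS-1st} and asserts it yields that $B(\boldsymbol{m}_j,\epsilon)$ is hit in time polynomial in $1/\epsilon$, uniformly from $\mathcal{E}(\boldsymbol{m}_j)$. That is not what those mixing results give. They yield total-variation convergence of $\mathrm{Law}(\boldsymbol{x}_\epsilon(t))$ to a local equilibrium on a time scale like $\epsilon^{-1/6}$ with an error that is $o_\epsilon(1)$ but with no stated polynomial rate, whereas the local-equilibrium mass of the ball $B(\boldsymbol{m}_j,\epsilon)$ is of order $\epsilon^{d/2}$. As you yourself flag, the naive argument needs the total-variation error to be negligible relative to $\epsilon^{d/2}$, and the machinery you cite does not obviously supply that; asserting that "the refined local mixing analysis of \cite{LLS-1st} is what makes this rigorous" does not establish it. If you wanted to stay with the two-stage plan, the clean way to finish stage 2 would again be a capacity estimate (the barrier from $\partial\mathcal{E}(\boldsymbol{m}_j)$ down to $B(\boldsymbol{m}_j,\epsilon)$ has no exponential cost, so $1/\mathrm{cap}$ is polynomial), but that is precisely the tool the paper uses globally, making the two-stage decomposition unnecessary. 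Your stage 1 (Freidlin--Wentzell bound on $\tau_{\mathcal{E}(\boldsymbol{m}_j)}$, after the corrected application of Lemma \ref{lem:bound_dn} at the appropriate level $k$) and the choice of $\alpha_\epsilon$ are sound; the gap is entirely in the passage from the mesoscopic valley to the $\epsilon$-ball.
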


The proof is postponed to the next subsection. Write $a_{\epsilon}={\color{blue}o_{\epsilon}(1)}$
if $\lim_{\epsilon\to0}|a_{\epsilon}|=0$ uniformly.
\begin{cor}
\label{l_flat-2} Fix $p\in\llbracket1,\,\mathfrak{q}\rrbracket$,
and $\mathcal{M}=\{\boldsymbol{m}_{1},\,\dots,\,\boldsymbol{m}_{a}\}\in\mathscr{S}^{(p)}$.
Then, for all $i,\,j\in\llbracket1,\,a\rrbracket$,
\[
\lim_{\epsilon\to0}\,\left|\,\phi_{\epsilon}(\boldsymbol{m}_{i})-\phi_{\epsilon}(\boldsymbol{m}_{j})\,\right|\ =\ 0\;.
\]
\end{cor}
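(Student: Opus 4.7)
The plan is to combine the hitting-time estimate from Lemma \ref{l_flat-1} with the stochastic representation \eqref{eq:probex} of $\phi_\epsilon$ via the strong Markov property, and then use the local flatness from Proposition \ref{p_flat-1} to finish.

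Fix $i,\,j\in\llbracket 1,\,a\rrbracket$, let $\alpha_\epsilon=\alpha_\epsilon(\mathcal{M})$ be the time scale furnished by Lemma \ref{l_flat-1}, and set $\tau=\tau_{B(\boldsymbol{m}_j,\,\epsilon)}$. First, I split the integral in \eqref{eq:probex} at the random time $\tau/\theta_\epsilon^{(p)}$ and apply the strong Markov property to obtain the identity
\[
\phi_\epsilon(\boldsymbol{m}_i)\;=\;\mathbb{E}_{\boldsymbol{m}_i}^{\epsilon}\!\left[\int_{0}^{\tau/\theta_\epsilon^{(p)}}\!\!e^{-\lambda s}\,G(\boldsymbol{x}_\epsilon(\theta_\epsilon^{(p)}s))\,ds\right]\,+\,\mathbb{E}_{\boldsymbol{m}_i}^{\epsilon}\!\left[e^{-\lambda\tau/\theta_\epsilon^{(p)}}\,\phi_\epsilon(\boldsymbol{x}_\epsilon(\tau))\right].
\]
On the event $\{\tau\le\alpha_\epsilon\}$, which by Lemma \ref{l_flat-1} has probability $1-o_\epsilon(1)$, I use that $\alpha_\epsilon\prec\theta_\epsilon^{(p)}$: the first expectation is bounded by $\|G\|_\infty\,\alpha_\epsilon/\theta_\epsilon^{(p)}=o_\epsilon(1)$, and the factor $e^{-\lambda\tau/\theta_\epsilon^{(p)}}$ is $1+o_\epsilon(1)$ uniformly. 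On the complementary event, everything is absorbed into $o_\epsilon(1)$ thanks to the uniform bound \eqref{e: bound_Feps}.

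For small enough $\epsilon$ one has $B(\boldsymbol{m}_j,\,\epsilon)\subset\mathcal{E}(\boldsymbol{m}_j)$, since $\boldsymbol{m}_j$ lies in the interior of $\mathcal{W}^{r_0}(\boldsymbol{m}_j)=\mathcal{E}(\boldsymbol{m}_j)$. Hence $\boldsymbol{x}_\epsilon(\tau)\in\overline{B(\boldsymbol{m}_j,\,\epsilon)}\subset\mathcal{E}(\boldsymbol{m}_j)$ almost surely, and Proposition \ref{p_flat-1} yields $|\phi_\epsilon(\boldsymbol{x}_\epsilon(\tau))-\phi_\epsilon(\boldsymbol{m}_j)|=o_\epsilon(1)$. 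Plugging all the pieces into the identity above gives $\phi_\epsilon(\boldsymbol{m}_i)=\phi_\epsilon(\boldsymbol{m}_j)+o_\epsilon(1)$, which is the claim. There is no serious obstacle here: the work has already been done in Lemma \ref{l_flat-1} (providing the short-time connection between the minima of $\mathcal{M}$) and in Proposition \ref{p_flat-1} (providing the flatness of $\phi_\epsilon$ on each $\mathcal{E}(\boldsymbol{m}_j)$); the present proof is just their synthesis via the resolvent representation.
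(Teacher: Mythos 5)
Your proof is correct and follows essentially the same route as the paper: both start from the probabilistic representation \eqref{eq:probex}, split the integral at $\tau_{B(\boldsymbol{m}_j,\epsilon)}$, apply the strong Markov property, control the before-$\tau$ contribution and the discount factor using Lemma \ref{l_flat-1} and $\alpha_\epsilon\prec\theta_\epsilon^{(p)}$, and conclude with Proposition \ref{p_flat-1}. Your version is slightly more explicit about keeping the exact identity with the factor $e^{-\lambda\tau/\theta_\epsilon^{(p)}}$ and then estimating each piece on $\{\tau\le\alpha_\epsilon\}$ and its complement separately, whereas the paper inserts the indicator $\mathbf{1}\{\tau\le\alpha_\epsilon\}$ from the outset and absorbs the rest into $o_\epsilon(1)$ in one step; the content is the same.
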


\begin{proof}
Let $\alpha_{\epsilon}$ be given by Lemma \ref{l_flat-1}. Since
$\alpha_{\epsilon}\prec\theta_{\epsilon}^{(p)}$ and $G$ is bounded,
\[
\phi_{\epsilon}(\boldsymbol{m}_{i})\ =\ \mathbb{E}_{\boldsymbol{m}_{i}}^{\epsilon}\left[\,\int_{\tau_{B(\boldsymbol{m}_{j},\,\epsilon)}/\theta_{\epsilon}^{(k)}}^{\infty}e^{-\lambda s}\,G(\boldsymbol{x}_{\epsilon}(\theta_{\epsilon}^{(p)}s))\,ds{\bf 1}\{\,\tau_{B(\boldsymbol{m}_{j},\,\epsilon)}\le\alpha_{\epsilon}\,\}\,\right]+o_{\epsilon}(1)\;.
\]
Changing variables, as $\alpha_{\epsilon}\prec\theta_{\epsilon}^{(p)}$
and $G$ is bounded, the previous expectation is equal to
\begin{align*}
 & \mathbb{E}_{\boldsymbol{m}_{i}}^{\epsilon}\left[\,\int_{0}^{\infty}e^{-\lambda s}\,G(\boldsymbol{x}_{\epsilon}(\theta_{\epsilon}^{(p)}s+\tau_{B(\boldsymbol{m}_{j},\,\epsilon)}))\,ds{\bf 1}\{\,\tau_{B(\boldsymbol{m}_{j},\,\epsilon)}\le\alpha_{\epsilon}\,\}\,\right]+o_{\epsilon}(1)\\
 & \quad=\;\mathbb{E}_{\boldsymbol{m}_{i}}^{\epsilon}\left[\,\int_{0}^{\infty}e^{-\lambda s}\,G(\boldsymbol{x}_{\epsilon}(\theta_{\epsilon}^{(p)}s+\tau_{B(\boldsymbol{m}_{j},\,\epsilon)}))\,ds\,\right]+o_{\epsilon}(1)\;.
\end{align*}
By strong Markov property, this expression is equal to
\[
\mathbb{E}_{\boldsymbol{m}_{i}}^{\epsilon}\bigg[\,\mathbb{E}_{\boldsymbol{x}_{\epsilon}(\tau_{B(\boldsymbol{m}_{j},\,\epsilon)})}\left[\,\int_{0}^{\infty}e^{-\lambda s}\,G(\boldsymbol{x}_{\epsilon}(\theta_{\epsilon}^{(p)}s))\,ds\,\right]\,\bigg]+o_{\epsilon}(1)\;=\;\mathbb{E}_{\boldsymbol{m}_{i}}^{\epsilon}\left[\,\phi_{\epsilon}(\boldsymbol{x}_{\epsilon}(\tau_{B(\boldsymbol{m}_{j},\,\epsilon)}))\,\right]+o_{\epsilon}(1)\ .
\]
Hence, by Proposition \ref{p_flat-1},
\[
\phi_{\epsilon}(\boldsymbol{m}_{i})\;=\;\phi_{\epsilon}(\boldsymbol{m}_{j})+o_{\epsilon}(1)\ .
\]
\end{proof}
\begin{proof}[Proof of Proposition \ref{p_flat}]
Fix $p\in\llbracket1,\,\mathfrak{q}\rrbracket$, and $\mathcal{M}\in\mathscr{S}^{(p)}$.
By definition \eqref{05} and Corollary \ref{l_flat-2}, for all $\boldsymbol{m}\in\mathcal{M}$,
\[
\left|\,\mathbf{f}_{\epsilon}(\mathcal{M})-\phi_{\epsilon}(\boldsymbol{m})\,\right|\ \le\ \frac{1}{|\mathcal{M}|}\,\sum_{\boldsymbol{m}'\in\mathcal{M}}\left|\,\phi_{\epsilon}(\boldsymbol{m}')-\phi_{\epsilon}(\boldsymbol{m})\,\right|\,=\,o_{\epsilon}(1)\ .
\]
Hence,
\begin{align*}
\sup_{\boldsymbol{x}\in\mathcal{E}(\mathcal{M})}\,\left|\,\phi_{\epsilon}(\boldsymbol{x})-{\bf f}_{\epsilon}(\mathcal{M})\,\right|\  & =\ \max_{\boldsymbol{m}'\in\mathcal{M}}\,\sup_{\boldsymbol{x}\in\mathcal{E}(\boldsymbol{m}')}\,\left|\,\phi_{\epsilon}(\boldsymbol{x})-{\bf f}_{\epsilon}(\mathcal{M})\,\right|\\
 & \le\ \max_{\boldsymbol{m}'\in\mathcal{M}}\,\sup_{\boldsymbol{x}\in\mathcal{E}(\boldsymbol{m}')}\,\left|\,\phi_{\epsilon}(\boldsymbol{x})-\phi_{\epsilon}(\boldsymbol{m}')\,\right|\,+\,o_{\epsilon}(1)\ .
\end{align*}
By Proposition \ref{p_flat-1}, this expression vanishes as $\epsilon\to0$.
\end{proof}

\subsection{Proof of Lemma \ref{l_flat-1}}

We are now ready to prove Lemma \ref{l_flat-1}.
\begin{proof}[Proof of Lemma \ref{l_flat-1}]
Fix $p\in\llbracket1,\,\mathfrak{q}\rrbracket$ and $\mathcal{M}=\{\boldsymbol{m}_{1},\,\dots,\,\boldsymbol{m}_{a}\}\in\mathscr{S}^{(p)}$.
By $\mathfrak{P}_{1}(p)$,
\[
\max_{\boldsymbol{m},\,\boldsymbol{m}'\in\mathcal{M}}\,\Theta(\boldsymbol{m},\,\boldsymbol{m}')-U(\mathcal{M})\ <\ \Theta(\mathcal{M},\,\widetilde{\mathcal{M}})-U(\mathcal{M})\ =\ \Xi(\mathcal{M})\ .
\]
On the other hand, by Lemma \ref{lem:bound_dn} for $n=p$,
\[
\max_{\boldsymbol{m},\,\boldsymbol{m}'\in\mathcal{M}}\,\Theta(\boldsymbol{m},\,\boldsymbol{m}')-U(\mathcal{M})\;<\;d^{(p)}\ .
\]
Choose $A$ is such that
\begin{equation}
\max_{\boldsymbol{m},\,\boldsymbol{m}'\in\mathcal{M}}\,\Theta(\boldsymbol{m},\,\boldsymbol{m}')-U(\mathcal{M})\ <\ A\ <\ \min\left\{ \,\Xi(\mathcal{M}),\,d^{(p)}\,\right\} \;.\label{e_def_alpha}
\end{equation}
and let $\alpha_{\epsilon}:=e^{A/\epsilon}$.

Note that $\alpha_{\epsilon}\prec\theta_{\epsilon}^{(p)}$ since $A<d^{(p)}$.
Let $0<\xi<\min\{\Xi(\mathcal{M}),\,d^{(p)}\}-A$ be a small number
so that there is no critical point $\boldsymbol{c}$ such that $U(\boldsymbol{c})\in(U(\mathcal{M}),\,U(\mathcal{M})+\xi)$.
By Lemma \ref{l_bound_conn}, there exists a connected component of
$\{U<U(\mathcal{M})+A+\xi\}$ which contains $\mathcal{M}$. Denote
it by $\mathcal{W}=\mathcal{W}^{U(\mathcal{M})+A+\xi}(\mathcal{M})$.
Since $A+\xi\le\Xi(\mathcal{M})$, by Lemma \ref{l_bound_conn}, $\mathcal{M}=\mathcal{M}^{*}(\mathcal{W})$.
Fix $\boldsymbol{m}_{i},\,\boldsymbol{m}_{j}\in\mathcal{M}$. By Proposition
\ref{p_FW} with $H=U(\mathcal{M})+A+\xi$, $h_{0}=U(\mathcal{M})+\xi/2$
and $\eta=\xi/2$,
\[
\mathbb{P}_{\boldsymbol{m}_{i}}^{\epsilon}[\,\tau_{B(\boldsymbol{m}_{j},\,\epsilon)}>\alpha_{\epsilon}\,]\ =\ \mathbb{P}_{\boldsymbol{m}_{i}}^{\epsilon}[\,\tau_{B(\boldsymbol{m}_{j},\,\epsilon)}>\alpha_{\epsilon},\,\tau_{\partial\mathcal{W}}>\alpha_{\epsilon}\,]+o_{\epsilon}(1)\ .
\]
On the event where the boundary of $\mathcal{W}$ is not attained
before time $\alpha_{\epsilon}$, we may couple the original diffusion
with the one reflected at the boundary of $\mathcal{W}$ in such a
way that they coincide up to time $\alpha_{\epsilon}$. The previous
probability is thus equal to
\[
\mathbb{P}_{\boldsymbol{m}_{i}}^{\epsilon,\,\mathcal{W}}[\,\tau_{B(\boldsymbol{m}_{j},\,\epsilon)}>\alpha_{\epsilon},\,\tau_{\partial\mathcal{W}}>\alpha_{\epsilon}\,]\ =\ \mathbb{P}_{\boldsymbol{m}_{i}}^{\epsilon,\,\mathcal{W}}[\,\tau_{B(\boldsymbol{m}_{j},\,\epsilon)}>\alpha_{\epsilon}\,]\,+\,o_{\epsilon}(1)\ \le\ \frac{1}{\alpha_{\epsilon}}\,\mathbb{E}_{\boldsymbol{m}_{i}}^{\epsilon,\,\mathcal{W}}[\,\tau_{B(\boldsymbol{m}_{j},\,\epsilon)}\,]\,+\,o_{\epsilon}(1)\;,
\]
where we adopted the notation introduced at the beginning of this
section, and $\mathbb{E}_{\boldsymbol{m}_{i}}^{\epsilon,\,\mathcal{W}}$
represents the expectation with respect to $\mathbb{P}_{\boldsymbol{m}_{i}}^{\epsilon,\,\mathcal{W}}$.

Recall from \cite[Section C]{LLS-1st}, the definition of the capacity
between two sets, and denote by ${\rm cap}$ the capacity between
$B(\boldsymbol{m}_{i},\,\epsilon)$ and $B(\boldsymbol{m}_{j},\,\epsilon)$:
\begin{gather*}
{\rm cap}\ :=\ {\rm cap}_{\epsilon}^{\mathcal{W}}(B(\boldsymbol{m}_{i},\,\epsilon),\,B(\boldsymbol{m}_{j},\,\epsilon))\;.
\end{gather*}
By \cite[Proposition 7.1]{LS-22}, which clearly holds for reflected
processes, and since the equilibrium potential is bounded by $1$,
\[
\mathbb{E}_{\boldsymbol{m}_{i}}^{\epsilon,\,\mathcal{W}}[\,\tau_{B(\boldsymbol{m}_{j},\,\epsilon)}\,]\ \le\ [\,1+o_{\epsilon}(1)\,]\,\frac{1}{{\rm cap}}\ .
\]
Let
\[
Z_{\epsilon}^{\mathcal{W}}\ =\ \int_{\mathcal{W}}e^{-U(\boldsymbol{x})/\epsilon}\,d\boldsymbol{x}\;\le\;C_{0}\,\epsilon^{d/2}\,e^{-U(\mathcal{M})/\epsilon}\ ,
\]
for some finite constant $C_{0}$ independent of $\epsilon$ and whose
value may change from line to line. To derive this bound we expanded
$U$ around $\boldsymbol{m}_{i}$. By \cite[Lemma 9.2]{LS-22},
\[
{\rm cap}\ \ge\ \frac{\epsilon^{d}}{C_{0}}\,\frac{1}{Z_{\epsilon}^{\mathcal{W}}}\,e^{-\Theta(\boldsymbol{m}_{i},\,B(\boldsymbol{m}_{j},\,\epsilon))/\epsilon}\ \ge\ \frac{\epsilon^{d/2}}{C_{0}}\,e^{-\,[\,\Theta(\boldsymbol{m}_{i},\,B(\boldsymbol{m}_{j},\,\epsilon))\,-\,U(\mathcal{M})\,]/\epsilon}
\]
so that
\[
\mathbb{E}_{\boldsymbol{m}_{i}}^{\epsilon,\,\mathcal{W}}[\,\tau_{B(\boldsymbol{m}_{j},\,\epsilon)}\,]\ \le\ [\,1+o_{\epsilon}(1)\,]\,C_{0}\,\epsilon^{-d/2}\,e^{[\,\Theta(\boldsymbol{m}_{i},\,B(\boldsymbol{m}_{j},\,\epsilon))\,-\,U(\mathcal{M})\,]/\epsilon}\ .
\]
Therefore, by definition \eqref{e_def_alpha} of $\alpha_{\epsilon}$,
and since $\Theta(\boldsymbol{m}_{i},\,B(\boldsymbol{m}_{j},\,\epsilon))=\Theta(\boldsymbol{m}_{i},\,\boldsymbol{m}_{j})$
for $\epsilon$ small enough,
\[
\frac{1}{\alpha_{\epsilon}}\,\mathbb{E}_{\boldsymbol{m}_{i}}^{\epsilon,\,\mathcal{W}}[\,\tau_{B(\boldsymbol{m}_{j},\,\epsilon)}\,]\ \le\ C_{0}\,e^{-c_{1}/\epsilon}
\]
for some $c_{1}>0$, which completes the proof of the lemma.
\end{proof}

\section{\label{sec9}Analysis of resolvent equation: test function}

For $p\in\llbracket1,\,\mathfrak{q}\rrbracket$, we define $\mathscr{V}_{{\rm nab}}^{(p)}$
and $\mathscr{V}_{{\rm ab}}^{(p)}$ as
\[
{\color{blue}\mathscr{V}_{{\rm nab}}^{(p)}}\ :=\ \{\mathcal{M}\in\mathscr{V}^{(p)}\,:\,\Xi(\mathcal{M})=d^{(p)}\}\ \ {\color{blue}\mathscr{V}_{{\rm ab}}^{(p)}}\ :=\ \{\mathcal{M}\in\mathscr{V}^{(p)}\,:\,\Xi(\mathcal{M})>d^{(p)}\}
\]
so that by Proposition \ref{prop:depth}, $\mathscr{V}_{{\rm ab}}^{(p)}$,
$\mathscr{V}_{{\rm nab}}^{(p)}$ are the absorbing and non-absorbing
states of ${\bf y}^{(p)}(\cdot)$ (and of $\widehat{\mathbf{y}}^{(p)}(\cdot)$),
respectively.

In this section, for each $p\in\llbracket1,\,\mathfrak{q}\rrbracket$
and $\mathcal{M}\in\mathscr{V}_{{\rm nab}}^{(p)}$, we construct the
test functions which will be used in Section \ref{sec10} to estimate
the solution of the resolvent equation. These test
functions are similar to the ones appeared before in \cite{BEGK,LMS2,LS-22,LS-22b,RS}
as an approximation of suitable harmonic function, but our application
of this function is quite different to these previous articles.  For
this reason we just present its definition and main properties, and
refer the reader to \cite{LS-22b} for proofs.

\subsection{Neighborhoods of saddle points}

Fix a saddle point $\boldsymbol{\sigma}$ of $U$ such that $\boldsymbol{m}\curvearrowleft\boldsymbol{\sigma}\curvearrowright\boldsymbol{m}'$
for distinct local minima $\boldsymbol{m}$, $\boldsymbol{m}'$ of
$U$. Let ${\color{blue}\mathbb{H}^{\boldsymbol{\sigma}}=\nabla^{2}U(\boldsymbol{\sigma})}$,
${\color{blue}\mathbb{L}^{\boldsymbol{\sigma}}=(D\boldsymbol{\ell})(\boldsymbol{\sigma})}$.
By the paragraph above \eqref{hyp2}, $\mathbb{H}^{\boldsymbol{\sigma}}$
has a unique negative eigenvalue. Denote by ${\color{blue}-\lambda_{1},\,\lambda_{2},\,\dots,\,\lambda_{d}}$
the eigenvalues of $\mathbb{H}^{\boldsymbol{\sigma}}$, where $-\lambda_{1}$
represents the unique negative eigenvalue. Mind that we omit the dependence
on $\boldsymbol{\sigma}$ which is fixed. Let ${\color{blue}\boldsymbol{e}_{1}}$,
${\color{blue}\boldsymbol{e}_{k}}$, $k\ge2$, be the unit eigenvector
associated with the eigenvalue $-\lambda_{1}$, $\lambda_{k}$, respectively.
Choose $\boldsymbol{e}_{1}$ pointing towards $\boldsymbol{m}$: for
all sufficiently small $a>0$, $\boldsymbol{\sigma}+a\boldsymbol{e}_{1}$
belongs to the domain of attraction of $\boldsymbol{m}$. For $\boldsymbol{x}\in\mathbb{R}^{d}$
and $k=1,\,\dots,\,d$, write ${\color{blue}\hat{x}_{k}=(\boldsymbol{x}-\boldsymbol{\sigma})\cdot\boldsymbol{e}_{k}}$,
so that $\boldsymbol{x}=\boldsymbol{\sigma}+\sum_{m=1}^{d}\hat{x}_{m}\boldsymbol{e}_{m}$.

Let
\[
{\color{blue}\delta}\ =\ \delta(\epsilon)\ :=\ (\epsilon\log\frac{1}{\epsilon})^{1/2}\ .
\]
Fix a large constant $J>0$ to be chosen later, and denote by $\mathcal{A}_{\epsilon}^{\pm}$,
$\mathcal{C}_{\epsilon}$ the $d$-dimensional rectangles defined
by
\begin{gather*}
{\color{blue}\mathcal{A}_{\epsilon}^{-}}\ :=\ \Big\{\,\boldsymbol{x}\in\mathbb{R}^{d}\,:\,\hat{x}_{1}\in\Big[\,-\frac{J\delta}{\sqrt{\lambda_{1}}}-\epsilon^{2},\,-\frac{J\delta}{\sqrt{\lambda_{1}}}\,\Big]\,,\,\hat{x}_{k}\in\Big[\,-\frac{2J\delta}{\sqrt{\lambda_{k}}},\,\frac{2J\delta}{\sqrt{\lambda_{k}}}\,\Big]\,,\,2\leq k\leq d\,\Big\}\\
{\color{blue}\mathcal{C}_{\epsilon}}\ :=\ \Big\{\,\boldsymbol{x}\in\mathbb{R}^{d}\,:\,\hat{x}_{1}\in\Big[\,-\frac{J\delta}{\sqrt{\lambda_{1}}},\,\frac{J\delta}{\sqrt{\lambda_{1}}}\,\Big]\,,\,\hat{x}_{k}\in\Big[\,-\frac{2J\delta}{\sqrt{\lambda_{k}}},\,\frac{2J\delta}{\sqrt{\lambda_{k}}}\,\Big]\,,\,2\leq k\leq d\,\Big\}\\
{\color{blue}\mathcal{A}_{\epsilon}^{+}}\ :=\ \Big\{\,\boldsymbol{x}\in\mathbb{R}^{d}\,:\,\hat{x}_{1}\in\Big[\,\frac{J\delta}{\sqrt{\lambda_{1}}},\,\frac{J\delta}{\sqrt{\lambda_{1}}}+\epsilon^{2}\,\Big]\,,\,\hat{x}_{k}\in\Big[\,-\frac{2J\delta}{\sqrt{\lambda_{k}}},\,\frac{2J\delta}{\sqrt{\lambda_{k}}}\,\Big]\,,\,2\leq k\leq d\,\Big\}\ .
\end{gather*}
Figure \ref{fig1} illustrates these definitions and the next ones.

\begin{figure}
\includegraphics[scale=0.2]{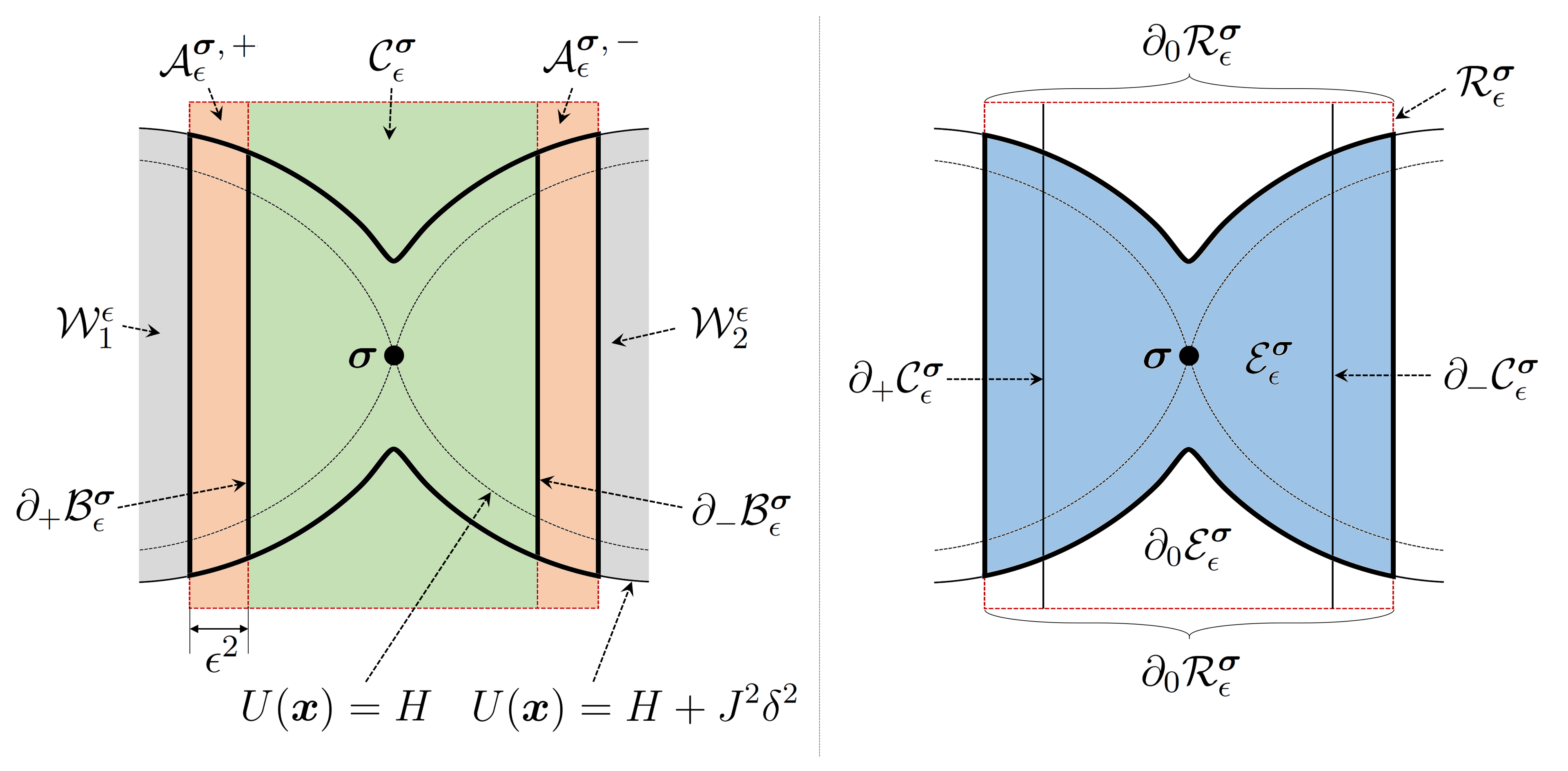} \caption{The sets around a saddle point $\boldsymbol{\sigma}$ appearing in
the definition of the test function.}
\label{fig1}
\end{figure}

Recall from \eqref{eq:omega} that $\mathbb{H}^{\boldsymbol{\sigma}}+\mathbb{L^{\boldsymbol{\sigma}}}$
has a unique negative eigenvalue, denoted by ${\color{blue}-\mu}$.
Denote by $\mathbb{A}^{\dagger}$ the transpose of a matrix $\mathbb{A}$.
By \cite[display (8.1)]{LS-22}, the matrix $\mathbb{H}^{\boldsymbol{\sigma}}-(\mathbb{L^{\boldsymbol{\sigma}}})^{\dagger}$
also has a unique negative eigenvalue equal to $-\mu$. Denote by
\textcolor{blue}{$\boldsymbol{v}$} the unit eigenvector of $\mathbb{H}^{\boldsymbol{\sigma}}-(\mathbb{L^{\boldsymbol{\sigma}}})^{\dagger}$
associated with $-\mu$. By \cite[Lemma 8.1]{LS-22}, $\boldsymbol{v}\cdot\boldsymbol{e}_{1}\neq0$.
We assume that $\boldsymbol{v}\cdot\boldsymbol{e}_{1}>0$, as we can
take $-\boldsymbol{v}$ instead of $\boldsymbol{v}$ if this inner
product is negative.

Let $p_{\epsilon}\colon\mathcal{C}_{\epsilon}\to\mathbb{R}$ be given
by
\begin{equation}
{\color{blue}p_{\epsilon}(\boldsymbol{x})}\ :=\ \frac{1}{M_{\epsilon}}\int_{-\infty}^{(\boldsymbol{x}-\boldsymbol{\sigma})\cdot\boldsymbol{v}}\,e^{-\frac{\mu}{2\epsilon}t^{2}}\,dt\;,\label{e_pesB}
\end{equation}
where the normalizing constant $M_{\epsilon}$ is given by
\begin{equation}
{\color{blue}M_{\epsilon}}\ =\ \int_{-\infty}^{\infty}\,e^{-\frac{\mu}{2\epsilon}t^{2}}\,dt\,=\,\sqrt{\frac{2\pi\epsilon}{\mu}}\;.\label{e_Ces}
\end{equation}

We extend continuously the function $p_{\epsilon}$ to the $d$-dimensional
rectangle ${\color{blue}\mathcal{R}_{\epsilon}=\mathcal{A}_{\epsilon}^{-}\cup\mathcal{C}_{\epsilon}\cup\mathcal{A}_{\epsilon}^{+}}$
as follows. For $\bm{x}=\boldsymbol{\sigma}+\sum_{k=1}^{d}\widehat{x}_{k}\boldsymbol{e}_{k}\in\mathcal{A}_{\epsilon}^{+}$,
let
\begin{equation}
\overline{\boldsymbol{x}}_{r}\ =\ \boldsymbol{\sigma}\,+\,\frac{J\delta}{\sqrt{\lambda_{1}}}\,\boldsymbol{e}_{1}\,+\,\sum_{k=2}^{d}\widehat{x}_{k}\,\boldsymbol{e}_{k}\;.\label{33}
\end{equation}
We define $\overline{\boldsymbol{x}}_{l}$ similarly for $\boldsymbol{x}\in\mathcal{A}_{\epsilon}^{-}$,
replacing on the right-hand side of the previous formula the first
plus sign by a minus sign. Clearly, $\overline{\boldsymbol{x}}_{r}$
and $\overline{\boldsymbol{x}}_{l}$ belong to $\mathcal{C}_{\epsilon}$.
We extend the definition of $p_{\epsilon}$ to $\mathcal{R}_{\epsilon}$
by setting $p_{\epsilon}\colon\mathcal{A}_{\epsilon}^{-}\cup\mathcal{A}_{\epsilon}^{+}\to\mathbb{R}$
as
\begin{equation}
\begin{gathered}p_{\epsilon}^{\boldsymbol{\sigma}}(\boldsymbol{x})\ =\ 1\,+\,\epsilon^{-2}\,\Big[\,\hat{x}_{1}-\frac{J\delta}{\sqrt{\lambda_{1}}}-\epsilon^{2}\,\Big]\,[\,1-p_{\epsilon}(\overline{\boldsymbol{x}}_{r})\,]\;,\quad\boldsymbol{x}\in\mathcal{A}_{\epsilon}^{+}\;,\\
p_{\epsilon}^{\boldsymbol{\sigma}}(\boldsymbol{x})\ =\ \epsilon^{-2}\,\Big[\,\hat{x}_{1}+\frac{J\delta}{\sqrt{\lambda_{1}}}+\epsilon^{2}\,\Big]\,p_{\epsilon}(\overline{\boldsymbol{x}}_{l})\;,\quad\boldsymbol{x}\in\mathcal{A}_{\epsilon}^{-}\;.
\end{gathered}
\label{e_pes}
\end{equation}

The function $p_{\epsilon}$ is an approximating solution of the Dirichlet
problem $\mathscr{L}_{\epsilon}^{*}f=0$, where \textcolor{blue}{$\mathscr{L}_{\epsilon}^{*}$}
is an adjoint of $\mathscr{L}_{\epsilon}$ with respect to $\mu_{\epsilon}(d\bm{x})$.
in $\mathcal{R}_{\epsilon}$ with boundary conditions $f=1$ on the
points of $\mathcal{R}_{\epsilon}$ where $\hat{x}_{1}=J\delta/\sqrt{\lambda_{1}}+\epsilon^{2}$
and $f=0$ on the ones such that $\hat{x}_{1}=-J\delta/\sqrt{\lambda_{1}}-\epsilon^{2}$.
This is the content of \cite[Proposition 6.2]{LS-22b}, which states
that the integral of $\,|\mathscr{L}_{\epsilon}^{*}f|$ on a set slightly
smaller than $\mathcal{R}_{\epsilon}$ divided by the measure of the
set where it is integrated vanishes as $\epsilon\to0$. This result
also justifies the definition of the test function $p_{\epsilon}$.

The test function $p_{\epsilon}(\cdot)$ constructed above depends
on $\boldsymbol{\sigma}$ and $\boldsymbol{m}$. To stress this fact,
it is sometimes represented by ${\color{blue}p_{\epsilon}^{\boldsymbol{\sigma},\boldsymbol{m}}(\cdot)}$.

\subsection{Wells and saddle gates}

In this subsection, for each $\mathcal{M}\in\mathscr{V}_{\textrm{nab}}^{(p)}$,
or more generally for each simple and bound $\mathcal{M}\subset\mathcal{M}_{0}$,
we rigorously define the well and the saddle gate associated with
$\mathcal{M}$.

Let $\mathcal{M}\subset\mathcal{M}_{0}$ be a simple
and bound set such that $\widetilde{\mathcal{M}}\ne\varnothing$.
By Lemma \ref{l_bound_conn}, there exists a connected component of
$\{U<\Theta(\mathcal{M},\,\widetilde{\mathcal{M}})\}$ containing
$\mathcal{M}$. Denote this set by $\mathcal{W}(\mathcal{M})$:
\begin{equation}
{\color{blue}\mathcal{W}(\mathcal{M})}\ \text{is the connected component of}\ \{U<\Theta(\mathcal{M},\,\widetilde{\mathcal{M}})\}\;\text{containing \ensuremath{\mathcal{M}}}\;.\label{e_W(M)}
\end{equation}
By Lemma \ref{l_bound_conn}, $\mathcal{M}^{*}\left(\mathcal{W}(\mathcal{M})\right)=\mathcal{M}$.
Let ${\color{blue}\mathcal{S}(\mathcal{M})}$ be the set of saddle
points $\boldsymbol{\sigma}\in\partial\mathcal{W}(\mathcal{M})\cap\mathcal{S}_{0}$
for which there exists $\delta_{0}>0$ such that
\begin{equation}
\mathcal{B}(\boldsymbol{\sigma},\,\delta)\cap\{\,U(\boldsymbol{x})<\Theta(\mathcal{M},\,\widetilde{\mathcal{M}})\,\}\;\text{is not contained in}\;\mathcal{W}(\mathcal{M})\;\text{for all}\;\delta\in(0,\delta_{0})\ .\label{eq:SM}
\end{equation}
By Lemma \ref{l_level_boundary}, as $\boldsymbol{\sigma}\in\partial\mathcal{W}(\mathcal{M})$,
$U(\boldsymbol{\sigma})=\Theta(\mathcal{M},\,\widetilde{\mathcal{M}})$
for all $\boldsymbol{\sigma}\in\mathcal{S}(\mathcal{M})$.
\begin{lem}
\label{l_gate}Let $\mathcal{M}\subset\mathcal{M}_{0}$ be a simple
bound set satisfying $\widetilde{\mathcal{M}}\ne\varnothing$. Then,
\begin{equation}
\mathcal{S}(\mathcal{M})\ =\ \left\{ \,\boldsymbol{\sigma}\in\partial\mathcal{W}(\mathcal{M})\cap\mathcal{S}_{0}:\boldsymbol{\sigma}\curvearrowright\boldsymbol{m}\;\;\text{for some}\;\;\boldsymbol{m}\notin\mathcal{W}(\mathcal{M})\,\right\} \ .\label{e_gate}
\end{equation}
\end{lem}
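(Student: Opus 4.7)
\smallskip
\noindent\textbf{Proof proposal.} The inclusion is an equivalence of two local characterizations of the saddle points on $\partial\mathcal{W}(\mathcal{M})$, and both directions boil down to the local structure of $\{U<U(\boldsymbol{\sigma})\}$ near a Morse saddle together with hypothesis \eqref{hyp2}. I would first record two general facts: (i) since $\boldsymbol{\sigma}\in\partial\mathcal{W}(\mathcal{M})$, Lemma \ref{l_level_boundary} (already invoked before the statement) gives $U(\boldsymbol{\sigma})=\Theta(\mathcal{M},\widetilde{\mathcal{M}})$; (ii) by the Morse lemma at $\boldsymbol{\sigma}$, there exists $\delta_{0}>0$ such that $\mathcal{B}(\boldsymbol{\sigma},\delta)\cap\{U<U(\boldsymbol{\sigma})\}$ has exactly two connected components for every $\delta\in(0,\delta_{0})$, separated by the local stable manifold of $\boldsymbol{\sigma}$; call these the two \emph{local wedges}. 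By the Hartman--Grobman theorem (as recalled before \eqref{hyp2}), each local wedge contains, for arbitrarily small $t<0$, points of the form $\phi^{\pm}(t)$ where $\phi^{\pm}$ are the two heteroclinic orbits leaving $\boldsymbol{\sigma}$; under \eqref{hyp2} these orbits terminate at local minima $\boldsymbol{m}_{\boldsymbol{\sigma}}^{\pm}\in\mathcal{M}_{0}$.

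For the inclusion ``$\subset$'', suppose $\boldsymbol{\sigma}\in\mathcal{S}(\mathcal{M})$. Then for every $\delta\in(0,\delta_{0})$ the set $\mathcal{B}(\boldsymbol{\sigma},\delta)\cap\{U<U(\boldsymbol{\sigma})\}$ contains a point of $\mathcal{W}(\mathcal{M})^{c}$. On the other hand, as $\boldsymbol{\sigma}\in\partial\mathcal{W}(\mathcal{M})$, it also contains points of $\mathcal{W}(\mathcal{M})$. Since $\mathcal{W}(\mathcal{M})$ is a connected component of $\{U<U(\boldsymbol{\sigma})\}$ (so both open and closed in that sublevel set), one of the two local wedges at $\boldsymbol{\sigma}$ must lie entirely in $\mathcal{W}(\mathcal{M})^{c}$. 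Picking points $\phi^{\pm}(t)$ for $t$ sufficiently negative in that wedge shows that the corresponding heteroclinic orbit initially lives in $\mathcal{W}(\mathcal{M})^{c}$; by connectedness of the orbit and the fact that $\mathcal{W}(\mathcal{M})$ is a connected component of the open set $\{U<U(\boldsymbol{\sigma})\}$, the entire orbit stays in $\mathcal{W}(\mathcal{M})^{c}$, and in particular its endpoint $\boldsymbol{m}_{\boldsymbol{\sigma}}^{\pm}\notin\mathcal{W}(\mathcal{M})$. This yields the desired $\boldsymbol{m}$.

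For the inclusion ``$\supset$'', assume $\boldsymbol{\sigma}\in\partial\mathcal{W}(\mathcal{M})\cap\mathcal{S}_{0}$ with $\boldsymbol{\sigma}\curvearrowright\boldsymbol{m}$ for some $\boldsymbol{m}\notin\mathcal{W}(\mathcal{M})$. Let $\phi\colon\mathbb{R}\to\mathbb{R}^{d}$ be the corresponding heteroclinic orbit. Since $U$ is strictly decreasing along $\phi$ (this is the computation performed after \eqref{2-13}), $\phi(t)\in\{U<U(\boldsymbol{\sigma})\}$ for every $t\in\mathbb{R}$. Because $\phi$ is a continuous connected path from points arbitrarily close to $\boldsymbol{\sigma}$ to $\boldsymbol{m}\notin\mathcal{W}(\mathcal{M})$, and because $\mathcal{W}(\mathcal{M})$ is a connected component of the open set $\{U<U(\boldsymbol{\sigma})\}$, the orbit $\phi(\mathbb{R})$ is disjoint from $\mathcal{W}(\mathcal{M})$. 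Consequently, taking $t\to-\infty$, we find in every ball $\mathcal{B}(\boldsymbol{\sigma},\delta)$ a point of $\{U<U(\boldsymbol{\sigma})\}\setminus\mathcal{W}(\mathcal{M})$, which is exactly the defining property \eqref{eq:SM}.

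The only delicate ingredient is the Morse-lemma local description of $\{U<U(\boldsymbol{\sigma})\}$ near a saddle, and the identification of the two local wedges with the initial pieces of the two unstable heteroclinic orbits guaranteed by \eqref{hyp2}; this is the step I expect to require the most care, as it must be done in the full Morse--Smale setting of \eqref{eq:decb} rather than for pure gradient flows, but it is standard and available from the Hartman--Grobman linearization of \eqref{eq:x(t)} at $\boldsymbol{\sigma}$.
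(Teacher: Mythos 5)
Your proof is correct and follows essentially the same route as the paper: both directions rest on the local two-wedge description of $\{U<U(\boldsymbol{\sigma})\}$ near the Morse saddle (the content of Lemma \ref{l_path_saddle}, covering the full drift field rather than only gradient flows), together with the fact that a connected subset of the open sublevel set $\{U<U(\boldsymbol{\sigma})\}$ either lies in $\mathcal{W}(\mathcal{M})$ or misses it entirely. The only variation worth noting is in the ``$\supset$'' direction: the paper invokes Lemma \ref{l_path_saddle} to identify the wedge $\mathcal{A}^{+}$ lying in the component $\mathcal{H}^{+}\ne\mathcal{W}(\mathcal{M})$, whereas you simply follow the given heteroclinic orbit $\phi$ backward to $\boldsymbol{\sigma}$, noting that $\phi(\mathbb{R})$ is connected, contained in $\{U<U(\boldsymbol{\sigma})\}$, and disjoint from $\mathcal{W}(\mathcal{M})$ because its endpoint $\boldsymbol{m}$ is; this is a mild streamlining that avoids choosing between the two heteroclinics, but it amounts to the same argument.
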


\begin{proof}
Denote by $\widetilde{\mathcal{S}}(\mathcal{M})$ the right-hand side
of \eqref{e_gate}. Fix $\boldsymbol{\sigma}\in\mathcal{S}(\mathcal{M})$.
By Lemma \ref{l_level_boundary}, as $\boldsymbol{\sigma}\in\partial\mathcal{W}(\mathcal{M})$,
$U(\boldsymbol{\sigma})=\Theta(\mathcal{M},\,\widetilde{\mathcal{M}})$.
Moreover, by Lemma \ref{l_path_saddle}-(1), there exists small $r'>0$
such that
\[
B(\boldsymbol{\sigma},\,r')\cap\{U<U(\boldsymbol{\sigma})\}
\]
has two connected components. Since $\boldsymbol{\sigma}$ satisfies
\eqref{eq:SM}, one of such components, denoted by $\mathcal{A}$,
is not contained in $\mathcal{W}(\mathcal{M})$. Thus, by Lemma \ref{l_level_connected},
$\mathcal{A}\cap\mathcal{W}(\mathcal{M})=\varnothing$.

By Lemma \ref{l_path_saddle}-(2), (3), there exist a local minimum
$\boldsymbol{m}$, a connected component $\mathcal{H}$ of $\{U<U(\boldsymbol{\sigma})\}$
containing $\boldsymbol{m}$ and $\mathcal{A}$, and a continuous
path $\phi:\mathbb{R}\to\mathbb{R}^{d}$ satisfying
\[
\dot{\phi}(t)\,=\,\boldsymbol{b}(\phi(t))\ \ ,\ \ \phi(\mathbb{R})\,\subset\,\mathcal{H}\ \ ,\ \ \lim_{t\to-\infty}\phi(t)\,=\,\boldsymbol{\sigma}\ ,\ \ \lim_{t\to\infty}\phi(t)\,=\,\boldsymbol{m}\ .
\]
In particular, $\boldsymbol{\sigma}\curvearrowright\boldsymbol{m}$.
Since $\mathcal{A}\subset\mathcal{H}$ and $\mathcal{A}\cap\mathcal{W}(\mathcal{M})=\varnothing$,
by Lemma \ref{l_level_connected}, $\mathcal{H}\cap\mathcal{W}(\mathcal{M})=\varnothing$
so that $\boldsymbol{m}\notin\mathcal{W}(\mathcal{M})$. Therefore,
$\boldsymbol{\sigma}\in\widetilde{\mathcal{S}}(\mathcal{M})$.

To prove the converse, fix $\boldsymbol{\sigma}\in\widetilde{\mathcal{S}}(\mathcal{M})$.
By Lemma \ref{l_level_boundary}, $U(\boldsymbol{\sigma})=\Theta(\mathcal{M},\,\widetilde{\mathcal{M}})$.
Since $\boldsymbol{\sigma}\in\widetilde{\mathcal{S}}(\mathcal{M})$,
there exists $\boldsymbol{m}_{0}\in\mathcal{M}_{0}\cap\mathcal{W}(\mathcal{M})^{c}$
such that $\boldsymbol{\sigma}\curvearrowright\boldsymbol{m}_{0}$.
On the other hand, by Lemma \ref{l_path_saddle}, there exist two
connected components $\mathcal{H}^{\pm}$ of $\{U<U(\boldsymbol{\sigma})\}$,
two local minima $\boldsymbol{m}^{\pm}$, and two continuous path
$\phi^{\pm}:\mathbb{R}\to\mathbb{R}^{d}$ such that
\begin{equation}
\dot{\phi}^{\pm}(t)\,=\,\boldsymbol{b}(\phi^{\pm}(t))\ \ ,\ \ \phi^{\pm}(\mathbb{R})\,\subset\,\mathcal{H}^{\pm}\ \ ,\ \ \lim_{t\to-\infty}\phi^{\pm}(t)\,=\,\boldsymbol{\sigma}\ \ ,\ \ \lim_{t\to\infty}\phi^{\pm}(t)\,=\,\boldsymbol{m}^{\pm}\ .\label{eapn4}
\end{equation}
Moreover, the set $B(\boldsymbol{\sigma},\,r')\cap\{U<U(\boldsymbol{\sigma})\}$
has two connected components for some $r'>0$, denoted by $\mathcal{A}^{-}$
and $\mathcal{A}^{+}$, and $\mathcal{A}^{\pm}\subset\mathcal{H}^{\pm}$.

By \eqref{eapn4} and definition \eqref{2-13}, $\bm{\sigma}\curvearrowright\bm{m}^{\pm}$.
Assume without loss of generality, that $\boldsymbol{m}_{0}=\boldsymbol{m}^{+}$.
Since $\boldsymbol{m}_{0}=\boldsymbol{m}^{+}\in\mathcal{H}^{+}\setminus\mathcal{W}(\mathcal{M})$,
$\mathcal{H}^{+}$ and $\mathcal{W}(\mathcal{M})$ are two distinct
connected component of the set $\{U<U(\boldsymbol{\sigma})\}$. Since
$\mathcal{A}^{+}\subset\mathcal{H}^{+}$, $\mathcal{A}^{+}\not\subset\mathcal{W}(\mathcal{M})$
so that $\boldsymbol{\sigma}\in\mathcal{S}(\mathcal{M})$, as claimed.
\end{proof}

\subsection{Construction of test function }

In this subsection, we fix $p\in\llbracket1,\,\mathfrak{q}\rrbracket$
and $\mathcal{M}\in\mathscr{V}_{{\rm nab}}^{(p)}$. Then, we shall
define a test function $Q_{\epsilon}=Q_{\epsilon}^{\mathcal{M}}$
on $\mathbb{R}^{d}$ associated with $\mathcal{M}$ with the help
of the test functions $p_{\epsilon}^{\boldsymbol{\sigma},\,\boldsymbol{m}}$
introduced in the previous subsection.

Write \textcolor{blue}{$H=\Theta(\mathcal{M},\,\widetilde{\mathcal{M}})=U(\mathcal{M})+d^{(p)}$}.
By Lemma \ref{l_level_boundary}, $U(\bm{\sigma})=H$ for $\bm{\sigma}\in\mathcal{S}(\mathcal{M})$
and Lemma \ref{l_exist_saddle}, $\mathcal{S}(\mathcal{M})\ne\varnothing$.

Fix $\eta>0$ small enough so that there is no critical point $\boldsymbol{c}$
of $U$ with height in the interval $(H,\,H+2\eta)$. Let ${\color{blue}\Omega(\mathcal{M})}$,
${\color{blue}\mathcal{K}_{\epsilon}(\mathcal{M})}$ be the connected
component of the sets $\left\{ U\le H+\eta\right\} $, $\left\{ U<H+J^{2}\delta^{2}\right\} $
which contains $\mathcal{W}(\mathcal{M})$, respectively. Denote by
\textcolor{blue}{$\partial_{0}\mathcal{R}_{\epsilon}^{\boldsymbol{\sigma}}$},
$\boldsymbol{\sigma}\in\mathcal{S}(\mathcal{M})$, the boundary of
the set $\mathcal{R}_{\epsilon}^{\boldsymbol{\sigma}}$, introduced
in the previous subsection, given by
\begin{gather*}
\partial_{0}\mathcal{R}_{\epsilon}^{\boldsymbol{\sigma}}\ =\ \Big\{\,\boldsymbol{x}\in\mathcal{C}_{\epsilon}^{\boldsymbol{\sigma}}\,:\,\hat{x}_{k}=\pm\frac{2J\delta}{\sqrt{\lambda_{k}}}\ \ \text{for some}\ 2\leq k\leq d\,\Big\}\;.
\end{gather*}
By the proof of \cite[Lemma 8.3]{LS-22},
\begin{equation}
U(\boldsymbol{x})\ \geq\ U(\boldsymbol{\sigma})+\frac{3}{2}\,J^{2}\,\delta^{2}\,[\,1+o_{\epsilon}(1)\,]\label{23}
\end{equation}
for all $\boldsymbol{x}\in\partial_{0}\mathcal{R}_{\epsilon}^{\boldsymbol{\sigma}}$.
In particular, $\partial_{0}\mathcal{R}_{\epsilon}^{\boldsymbol{\sigma}}$
is contained in the complement of $\mathcal{K}_{\epsilon}(\mathcal{M})$
provided that $\epsilon$ is sufficiently small.

Let ${\color{blue}\mathcal{E}_{\epsilon}^{\boldsymbol{\sigma}}=\mathcal{E}_{\epsilon}^{\boldsymbol{\sigma}}(\mathcal{M})\,:=\,\mathcal{R}_{\epsilon}^{\boldsymbol{\sigma}}\cap\mathcal{K}_{\epsilon}(\mathcal{M})}$,
$\boldsymbol{\sigma}\in\mathcal{S}(\mathcal{M})$. Denote by \textcolor{blue}{$\mathcal{W}_{1}^{\epsilon}(\mathcal{M})$}
the connected component of $\mathcal{K}_{\epsilon}(\mathcal{M})\setminus(\bigcup_{\boldsymbol{\sigma}\in\mathcal{S}(\mathcal{M})}\mathcal{E}_{\epsilon}^{\boldsymbol{\sigma}})$
which intersects $\mathcal{W}(\mathcal{M})$, and let $${\color{blue}\mathcal{W}_{2}^{\epsilon}(\mathcal{M})}:=\mathcal{K}_{\epsilon}(\mathcal{M})\setminus(\mathcal{W}_{1}^{\epsilon}\cup\bigcup_{\boldsymbol{\sigma}\in\mathcal{S}(\mathcal{M})}\mathcal{E}_{\epsilon}^{\boldsymbol{\sigma}})\;.
$$
With this notation,
\begin{equation}
\Omega(\mathcal{M})\;=\;\bigcup_{\boldsymbol{\sigma}\in\mathcal{S}(\mathcal{M})}\mathcal{E}_{\epsilon}^{\boldsymbol{\sigma}}\,\cup\,\mathcal{W}_{1}^{\epsilon}(\mathcal{M})\,\cup\,\mathcal{W}_{2}^{\epsilon}(\mathcal{M})\,\cup\,\big(\Omega(\mathcal{M})\setminus\mathcal{K}_{\epsilon}(\mathcal{M})\,\big)\;.\label{36}
\end{equation}
From now on, we omit $\mathcal{M}$ from the notation as much as possible.

Fix $\boldsymbol{\sigma}\in\mathcal{S}(\mathcal{M})\subset\partial\mathcal{W}(\mathcal{M})\cap\mathcal{S}_{0}$.
By Lemma \ref{l_assu_saddle}, there exists a local minimum in $\mathcal{W}(\mathcal{M})$,
denoted by ${\color{blue}\boldsymbol{m}_{\boldsymbol{\sigma}}}$,
such that $\boldsymbol{\sigma}\curvearrowright\boldsymbol{m}_{\boldsymbol{\sigma}}$.
Recall the notation introduced at the end of the previous subsection,
and let ${\color{blue}q^{\boldsymbol{\sigma}}=p^{\boldsymbol{\sigma},\boldsymbol{m}_{\boldsymbol{\sigma}}}}$.
Consider the test function ${\color{blue}Q_{\epsilon}\colon\mathcal{K}_{\epsilon}\to\mathbb{R}}$
given by
\begin{equation}
\begin{gathered}Q_{\epsilon}(\boldsymbol{x})\,=\,1\;,\;\;\boldsymbol{x}\in\mathcal{W}_{1}^{\epsilon}\;;\quad Q_{\epsilon}(\boldsymbol{y})\,=\,0\;,\;\;\boldsymbol{y}\in\mathcal{W}_{2}^{\epsilon}\;;\\
Q_{\epsilon}(\boldsymbol{x})\,=\,q_{\epsilon}^{\boldsymbol{\sigma}}(\boldsymbol{x})\;,\;\;\boldsymbol{x}\in\mathcal{E}_{\epsilon}^{\boldsymbol{\sigma}},\,\boldsymbol{\sigma}\in\mathcal{S}(\mathcal{M})\;.
\end{gathered}
\label{2-02}
\end{equation}

By \eqref{e_pes}. the function $Q_{\epsilon}$ is continuous on $\mathcal{K}_{\epsilon}$.
Moreover, if $\mathcal{G}_{\epsilon}$ represents the open set formed
by the union of the interiors of the set $\mathcal{E}_{\epsilon}^{\boldsymbol{\sigma}}$,
$\boldsymbol{\sigma}\in\mathcal{S}(\mathcal{M})$, and the interior
of the sets $\mathcal{W}_{i}^{\epsilon}$, $i=1$, $2$,
\[
\lVert\nabla Q_{\epsilon}\rVert_{L^{\infty}(\mathcal{G}_{\epsilon})}\,=\,O(\epsilon^{-1/2})\;\ \text{and}\;\ \|\Delta Q_{\epsilon}\|_{L^{\infty}(\mathcal{G}_{\epsilon})}\,=\,O(\epsilon^{-3/2})\ .
\]
We can extend $Q_{\epsilon}$ to $\Omega$ keeping these bounds out
of a $(d-1)$ dimensional manifold:
\begin{equation}
\lVert Q_{\epsilon}\rVert_{L^{\infty}(\Omega_{0})}\,\le\,1\;,\ \ \lVert\nabla Q_{\epsilon}\rVert_{L^{\infty}(\Omega_{0})}\,=\,O(\epsilon^{-1/2})\;,\ \;\text{and\;\;}\|\Delta Q_{\epsilon}\|_{L^{\infty}(\Omega_{0})}\,=\,O(\epsilon^{-3/2})\ ,\label{18}
\end{equation}
where $\Omega_{0}=\Omega\setminus\mathfrak{M}$, and $\mathfrak{M}$
is $(d-1)$ dimensional manifold at which the gradient of $Q_{\epsilon}$
is discontinuous. We further impose the condition that $Q_{\epsilon}$
vanishes away from $\Omega(\mathcal{M})$:
\begin{equation}
Q_{\epsilon}\ \equiv\ 0\ \text{ on }\ \bigg\{\,\boldsymbol{x}\in\mathbb{R}^{d}:U(\boldsymbol{x})>H+\frac{\eta}{2}\,\bigg\}\;,\label{2-03}
\end{equation}
respecting the previous bounds. The function $Q_{\epsilon}$ is the
test function associated to the well $\mathcal{W}(\mathcal{M})$.

Now we turn to the main estimate regarding the test function $Q_{\epsilon}$.

Let $\boldsymbol{\sigma}\in\mathcal{S}(\mathcal{M})$.
Since $B(\boldsymbol{\sigma},\,\delta)\cap\{U<H\}$ has two connected
components, there is a connected component $\mathcal{V}\ne\mathcal{W}(\mathcal{M})$
of $\{U<H\}$ intersecting with the ball $B(\boldsymbol{\sigma},\,\delta)$.
Hence, as $\boldsymbol{\sigma}\in\partial\mathcal{V}\cap\partial\mathcal{W}(\mathcal{M})$,
by Lemma \ref{l_assu_saddle}, there exist ${\color{blue}\boldsymbol{m}_{\boldsymbol{\sigma}}^{+},\,\boldsymbol{m}_{\boldsymbol{\sigma}}^{-}}\in\mathcal{M}_{0}$
such that $\boldsymbol{m}_{\boldsymbol{\sigma}}^{+}\in\mathcal{W}(\mathcal{M})$,
$\boldsymbol{m}_{\boldsymbol{\sigma}}^{-}\in\mathcal{V}$, and $\boldsymbol{m}_{\boldsymbol{\sigma}}^{-}\curvearrowleft\boldsymbol{\sigma}\curvearrowright\boldsymbol{m}_{\boldsymbol{\sigma}}^{+}$.

From now on, without loss of generality, we assume
that $\min_{\boldsymbol{x}\in\mathbb{R}^{d}}U(\boldsymbol{x})=0$,
since adding a constant to $U$ does not change the result.
\begin{lem}
\label{l2-01} For all ${\bf g}:\mathscr{V}^{(p)}\to\mathbb{R}$,
we have
\begin{align*}
 & \int_{\mathbb{R}^{d}}Q_{\epsilon}\phi_{\epsilon}\,d\mu_{\epsilon}\ =\ \big\{\,{\bf f}_{\epsilon}(\mathcal{M})\,+\,o_{\epsilon}(1)\,\big\}\,\mu_{\epsilon}(\mathcal{E}(\mathcal{M}))\;,\;\;\text{and}\\
 & \theta_{\epsilon}^{(p)}\int_{\mathbb{R}^{d}}Q_{\epsilon}(-\mathscr{L}_{\epsilon}\phi_{\epsilon})\,d\mu_{\epsilon}\ =\ \left(\frac{1}{2\pi\nu_{\star}}\,\sum_{\boldsymbol{\sigma}\in\mathcal{S}(\mathcal{M})}\,[\phi_{\epsilon}(\boldsymbol{m}_{\boldsymbol{\sigma}}^{+})-\phi_{\epsilon}(\boldsymbol{m}_{\boldsymbol{\sigma}}^{-})]\,\frac{\mu^{\boldsymbol{\sigma}}}{\sqrt{-\det\mathbb{H}^{\boldsymbol{\sigma}}}}+o_{\epsilon}(1)\right)\,e^{-U(\mathcal{M})/\epsilon}\ ,
\end{align*}
where ${\bf f}_{\epsilon}(\cdot)$ is defined in \eqref{05}.
\end{lem}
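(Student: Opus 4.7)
\medskip

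The plan is to establish both identities by careful localization, exploiting that $Q_\epsilon$ is piecewise constant away from the saddle neighborhoods, that $\phi_\epsilon$ is asymptotically flat near local minima (Proposition \ref{p_flat-1} and Corollary \ref{l_flat-2}), and that the Gibbs measure $\mu_\epsilon$ concentrates sharply near $\mathcal{M}_0$. Throughout I shall use the decomposition \eqref{36} of $\Omega(\mathcal{M})$ and the support constraint \eqref{2-03}.

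For the first identity, I would split $\int Q_\epsilon\phi_\epsilon\, d\mu_\epsilon$ along the decomposition \eqref{36}. The integral over the set $\{U>H+\eta/2\}$ vanishes by \eqref{2-03}. Over $\Omega\setminus\mathcal{K}_\epsilon$ and over each saddle neighborhood $\mathcal{E}_\epsilon^{\boldsymbol{\sigma}}$, the potential satisfies $U\ge H+O(\delta^2)$ by \eqref{23} and the definition of $\mathcal{K}_\epsilon$, so after using \eqref{e: bound_Feps} and the Laplace estimate $\mu_\epsilon(\{U\ge U(\mathcal{M})+c\})=o_\epsilon(1)\mu_\epsilon(\mathcal{E}(\mathcal{M}))$ for any $c>0$, these contributions are $o_\epsilon(1)\mu_\epsilon(\mathcal{E}(\mathcal{M}))$. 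The set $\mathcal{W}_2^\epsilon$ contributes nothing since $Q_\epsilon\equiv 0$ there. On $\mathcal{W}_1^\epsilon$ where $Q_\epsilon\equiv 1$, a standard Laplace asymptotic around each $\boldsymbol{m}\in\mathcal{M}$ together with Proposition \ref{p_flat-1} gives $\int_{\mathcal{W}_1^\epsilon}\phi_\epsilon\,d\mu_\epsilon=\sum_{\boldsymbol{m}\in\mathcal{M}}\phi_\epsilon(\boldsymbol{m})\,\mu_\epsilon(\mathcal{E}(\boldsymbol{m}))(1+o_\epsilon(1))$; by Corollary \ref{l_flat-2}, all the $\phi_\epsilon(\boldsymbol{m})$ are asymptotically equal, so this collapses to ${\bf f}_\epsilon(\mathcal{M})\mu_\epsilon(\mathcal{E}(\mathcal{M}))(1+o_\epsilon(1))$.

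For the second identity, the key step is to transfer the operator by integration by parts with respect to $\mu_\epsilon$. Writing $\mathscr{L}_\epsilon^{*}f=(-\nabla U+\boldsymbol{\ell})\cdot\nabla f+\epsilon\Delta f$ (the $\mu_\epsilon$-adjoint, valid because $\nabla\cdot\boldsymbol{\ell}=\nabla U\cdot\boldsymbol{\ell}=0$), and using $Q_\epsilon=0$ outside $\Omega(\mathcal{M})$ together with the $L^\infty$ bounds \eqref{18} on $Q_\epsilon$ and its derivatives, standard elliptic boundary arguments (cf.\ the analogous computation in \cite{LS-22b}) yield
\[
\int_{\mathbb{R}^d}Q_\epsilon(-\mathscr{L}_\epsilon\phi_\epsilon)\,d\mu_\epsilon\;=\;\int_{\Omega(\mathcal{M})}(-\mathscr{L}_\epsilon^{*}Q_\epsilon)\,\phi_\epsilon\,d\mu_\epsilon\,+\,o_\epsilon(1)\,e^{-U(\mathcal{M})/\epsilon}/\theta_\epsilon^{(p)}\,,
\]
where the error swallows the $(d-1)$-manifold of discontinuity of $\nabla Q_\epsilon$ since that manifold sits at height $\ge H+\eta/2$. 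On $\mathcal{W}_1^\epsilon\cup\mathcal{W}_2^\epsilon$ we have $\mathscr{L}_\epsilon^{*}Q_\epsilon=0$, and on the region $\{U\in[H+\eta/4,H+\eta/2]\}$ the exponential weight kills everything; hence the integral reduces to a sum over $\boldsymbol{\sigma}\in\mathcal{S}(\mathcal{M})$ of integrals over $\mathcal{E}_\epsilon^{\boldsymbol{\sigma}}$ against $-\mathscr{L}_\epsilon^{*}q_\epsilon^{\boldsymbol{\sigma}}$.

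On each $\mathcal{E}_\epsilon^{\boldsymbol{\sigma}}$, I would invoke the approximate $\mathscr{L}_\epsilon^{*}$-harmonicity of $q_\epsilon^{\boldsymbol{\sigma}}$ from \cite[Proposition 6.2]{LS-22b}, which controls $\int|\mathscr{L}_\epsilon^{*}q_\epsilon^{\boldsymbol{\sigma}}|\,d\mu_\epsilon$, and convert the resulting flux into an explicit Gaussian integral. The standard Eyring--Kramers computation at the saddle---diagonalizing $\mathbb{H}^{\boldsymbol{\sigma}}$ and evaluating the transverse Gaussian together with the longitudinal profile \eqref{e_pesB}---produces exactly the prefactor $\mu^{\boldsymbol{\sigma}}/[2\pi\nu_\star\sqrt{-\det\mathbb{H}^{\boldsymbol{\sigma}}}]\,e^{-U(\mathcal{M})/\epsilon}/\theta_\epsilon^{(p)}$ (the factor $\theta_\epsilon^{(p)}=e^{d^{(p)}/\epsilon}$ multiplying the integral kills the exponent $e^{-H/\epsilon}=e^{-(U(\mathcal{M})+d^{(p)})/\epsilon}$ up to the desired $e^{-U(\mathcal{M})/\epsilon}$). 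Pulling the value of $\phi_\epsilon$ outside the integral and replacing it by $\phi_\epsilon(\boldsymbol{m}_{\boldsymbol{\sigma}}^{+})$ on the $\{q_\epsilon^{\boldsymbol{\sigma}}\text{ increases}\}$ side and $\phi_\epsilon(\boldsymbol{m}_{\boldsymbol{\sigma}}^{-})$ on the other side uses Proposition \ref{p_flat-1} combined with a Freidlin--Wentzell escape estimate (Proposition \ref{p_FW}) to travel from the interior of the saddle tube to the respective local minima. The main obstacle will be this last point: making rigorous that the function $\phi_\epsilon$ is well-approximated by its value at the appropriate minimum \emph{along the whole transverse slice} of $\mathcal{E}_\epsilon^{\boldsymbol{\sigma}}$, uniformly, so that the Gaussian integration cleanly produces the difference $\phi_\epsilon(\boldsymbol{m}_{\boldsymbol{\sigma}}^{+})-\phi_\epsilon(\boldsymbol{m}_{\boldsymbol{\sigma}}^{-})$; this requires combining the flatness result with a uniform continuity argument on the transverse slices, analogous to the strategy carried out in \cite{LS-22b,LLS-1st}.
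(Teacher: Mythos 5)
The paper's proof is a direct citation to the corresponding computation in the companion paper \cite{LLS-1st} (display (9.3) and Lemma 8.5 there), so you are effectively being asked to reconstruct that argument. Your sketch of the first identity is sound: localize to the decomposition \eqref{36}, kill everything outside $\mathcal{W}_1^\epsilon$ by Laplace asymptotics, and then use Proposition~\ref{p_flat-1} and Corollary~\ref{l_flat-2}. The problems are in the second identity.

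The claim that ``the $(d-1)$-manifold of discontinuity of $\nabla Q_\epsilon$ sits at height $\ge H+\eta/2$'' is false. By construction, $\mathfrak{M}$ includes the interfaces between the saddle tubes $\mathcal{E}_\epsilon^{\boldsymbol{\sigma}}$ and the constant regions $\mathcal{W}_1^\epsilon$, $\mathcal{W}_2^\epsilon$, which lie inside $\mathcal{K}_\epsilon=\{U<H+J^2\delta^2\}$; since $J^2\delta^2=J^2\epsilon\log(1/\epsilon)\to 0$, these interfaces sit at heights $\lesssim H$, \emph{not} at $H+\eta/2$. After multiplying by $\theta_\epsilon^{(p)}=e^{(H-U(\mathcal{M}))/\epsilon}$, contributions at height $H$ are a priori of order $e^{-U(\mathcal{M})/\epsilon}$ — the same order as the main term — so they cannot be discarded by a height argument alone. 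Worse, your plan creates a genuine tension: you propose to reduce to $\int_{\mathcal{E}_\epsilon^{\boldsymbol{\sigma}}}(-\mathscr{L}_\epsilon^*q_\epsilon^{\boldsymbol{\sigma}})\phi_\epsilon\,d\mu_\epsilon$ and then invoke \cite[Proposition 6.2]{LS-22b}, whose whole point is that $\mathscr{L}_\epsilon^*q_\epsilon^{\boldsymbol{\sigma}}$ is \emph{small}. If both the bulk term and the discarded boundary terms were negligible, the entire left-hand side would be $o_\epsilon(1)e^{-U(\mathcal{M})/\epsilon}$, contradicting the nonzero right-hand side. In fact the leading Eyring--Kramers coefficient $\mu^{\boldsymbol{\sigma}}/(2\pi\sqrt{-\det\mathbb{H}^{\boldsymbol{\sigma}}})$ arises precisely from the flux across the saddle — that is, from the boundary terms your argument suppresses. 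The cleaner route, as in \cite{LLS-1st,LS-22,LS-22b}, is a \emph{single} integration by parts yielding $\theta_\epsilon^{(p)}\epsilon\int\nabla Q_\epsilon\cdot\nabla\phi_\epsilon\,d\mu_\epsilon - \theta_\epsilon^{(p)}\int\phi_\epsilon\,\boldsymbol{\ell}\cdot\nabla Q_\epsilon\,d\mu_\epsilon$, which requires only that $Q_\epsilon$ be Lipschitz with compact support; one then performs the second integration by parts only inside the saddle tubes and keeps explicit track of the fluxes. Your concern in the last sentence — that $\phi_\epsilon$ must be controlled uniformly on the transverse slices of $\mathcal{E}_\epsilon^{\boldsymbol{\sigma}}$ — is a legitimate difficulty, but you offer no mechanism to resolve it; the cited references handle it by combining the a priori bound \eqref{e: bound_Feps} with the concentration of the surface measure $e^{-U/\epsilon}\,dS$ near the bottom of the tube boundaries that abut $\mathcal{W}(\mathcal{M})$ (where the flatness of Proposition~\ref{p_flat-1} applies) and by showing the remaining slices contribute $o_\epsilon(1)$.
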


\begin{proof}
The proof is identical to that of \cite[display (9.3) and Lemma 8.5]{LLS-1st}.
To perceive that the second identity corresponds to \cite[Lemma 8.5]{LLS-1st},
observe that we have $e^{H/\epsilon}=\theta_{\epsilon}^{(1)}\,e^{U(\mathcal{M})/\epsilon}$
in the situation of \cite[Lemma 8.5]{LLS-1st} because $H$ is the
height of the saddle point so that $\theta_{\epsilon}^{(1)}=e^{[H-U(\mathcal{M})]/\epsilon}$.
\end{proof}

\section{\label{sec10}Analysis of resolvent equation: characterization}

In this section, we prove Proposition \ref{prop_R}.

Recall from Section \ref{sec3} the definition of $\mathfrak{H}^{(p)}$,
and from \eqref{05} the definition of the function ${\bf f}_{\epsilon}\colon\mathscr{S}^{(p)}\to\mathbb{R}$.
We prove in this section the following proposition.
\begin{prop}
\label{p_res} Fix $p\in\llbracket1,\,\mathfrak{q}\rrbracket$, and
assume that $\mathfrak{H}^{(p)}$ holds. Then, for all $\lambda>0$
and ${\bf g}:\mathscr{V}^{(p)}\to\mathbb{R}$,
\begin{equation}
\left(\lambda-\mathfrak{L}^{(p)}\right){\bf f}_{\epsilon}\ =\ {\bf g}+o_{\epsilon}(1)\ .\label{06}
\end{equation}
\end{prop}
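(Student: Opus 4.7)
The plan is to treat non-absorbing and absorbing states of $\mathbf{y}^{(p)}(\cdot)$ separately. For $\mathcal{M}\in\mathscr{V}_{\textrm{ab}}^{(p)}$ one has $(\mathfrak{L}^{(p)}\mathbf{f}_\epsilon)(\mathcal{M})=0$, so the required identity reduces to $\lambda\mathbf{f}_\epsilon(\mathcal{M})=\mathbf{g}(\mathcal{M})+o_\epsilon(1)$. Since $\Xi(\mathcal{M})>d^{(p)}$, a direct Freidlin--Wentzell argument as in Lemma \ref{lem:FW-type} shows that the diffusion started at any $\boldsymbol{m}\in\mathcal{M}$ does not leave $\mathcal{E}(\mathcal{M})$ within time $\theta_\epsilon^{(p)}t$ for any fixed $t>0$, with probability $1-o_\epsilon(1)$. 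Plugging this into the probabilistic representation \eqref{eq:probex} and letting $t\to\infty$ after $\epsilon\to 0$ yields $\phi_\epsilon(\boldsymbol{m})\to\mathbf{g}(\mathcal{M})/\lambda$ uniformly in $\boldsymbol{m}\in\mathcal{M}$, whence the claim follows by averaging.

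The main work concerns $\mathcal{M}\in\mathscr{V}_{\textrm{nab}}^{(p)}$. Multiplying the resolvent equation by the test function $Q_\epsilon=Q_\epsilon^{\mathcal{M}}$ constructed in Section \ref{sec9} and integrating against $\mu_\epsilon$ gives
\[
\lambda\int Q_\epsilon\phi_\epsilon\,d\mu_\epsilon\;+\;\theta_\epsilon^{(p)}\int Q_\epsilon\,(-\mathscr{L}_\epsilon\phi_\epsilon)\,d\mu_\epsilon\;=\;\int Q_\epsilon G\,d\mu_\epsilon\;.
\]
Lemma \ref{l2-01} evaluates the two left-hand integrals: the first equals $(\mathbf{f}_\epsilon(\mathcal{M})+o_\epsilon(1))\mu_\epsilon(\mathcal{E}(\mathcal{M}))$ and the second equals $\bigl(\sum_{\boldsymbol{\sigma}\in\mathcal{S}(\mathcal{M})}\omega(\boldsymbol{\sigma})[\phi_\epsilon(\boldsymbol{m}_{\boldsymbol{\sigma}}^+)-\phi_\epsilon(\boldsymbol{m}_{\boldsymbol{\sigma}}^-)]+o_\epsilon(1)\bigr)e^{-U(\mathcal{M})/\epsilon}$. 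For the right-hand side, $Q_\epsilon$ is essentially the indicator of the well $\mathcal{W}(\mathcal{M})$; since by $\mathfrak{P}_1(p)$ the lowest minima of $U$ in $\mathcal{W}(\mathcal{M})$ are precisely those of $\mathcal{M}$, a standard Laplace expansion gives $\int Q_\epsilon G\,d\mu_\epsilon=\mathbf{g}(\mathcal{M})\mu_\epsilon(\mathcal{E}(\mathcal{M}))(1+o_\epsilon(1))$ (contributions from minima in $\mathcal{W}(\mathcal{M})\setminus\mathcal{M}$ are exponentially suppressed). After dividing through by $\mu_\epsilon(\mathcal{E}(\mathcal{M}))\sim Z_\epsilon^{-1}(2\pi\epsilon)^{d/2}\nu(\mathcal{M})e^{-U(\mathcal{M})/\epsilon}$, the saddle prefactor $\omega(\boldsymbol{\sigma})$ combines with $1/\nu(\mathcal{M})$ to produce exactly the jump rates $\widehat{r}^{(p)}$ appearing in \eqref{eq:rate_30}--\eqref{eq:rate_3}.

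To identify the resulting saddle sum with $-(\mathfrak{L}^{(p)}\mathbf{f}_\epsilon)(\mathcal{M})$, I would apply Proposition \ref{p_flat}, crucially for \emph{all} $\mathcal{M}'\in\mathscr{S}^{(p)}$ (including $\mathscr{N}^{(p)}$), to replace each $\phi_\epsilon(\boldsymbol{m}_{\boldsymbol{\sigma}}^\pm)$ by $\mathbf{f}_\epsilon(\mathcal{M}(p,\boldsymbol{m}_{\boldsymbol{\sigma}}^\pm))$ modulo $o_\epsilon(1)$. Since $\boldsymbol{m}_{\boldsymbol{\sigma}}^+\in\mathcal{W}(\mathcal{M})$ and $\mathcal{M}^*(\mathcal{W}(\mathcal{M}))=\mathcal{M}$, either $\boldsymbol{m}_{\boldsymbol{\sigma}}^+\in\mathcal{M}$ (giving $\mathbf{f}_\epsilon(\mathcal{M})$) or it lies in some $\mathcal{M}''\in\mathscr{N}^{(p)}\cap\mathcal{W}(\mathcal{M})$; in the latter case, a Freidlin--Wentzell argument confined to $\mathcal{W}(\mathcal{M})$ combined with $\mathfrak{H}^{(p)}$ forces the hitting distribution of $\widehat{\mathbf{y}}^{(p)}$ on $\mathscr{V}^{(p)}$ to concentrate on $\mathcal{M}$, so $\mathbf{f}_\epsilon(\mathcal{M}(p,\boldsymbol{m}_{\boldsymbol{\sigma}}^+))=\mathbf{f}_\epsilon(\mathcal{M})+o_\epsilon(1)$ in either case. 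For the $-$ side, if $\mathcal{M}(p,\boldsymbol{m}_{\boldsymbol{\sigma}}^-)\in\mathscr{V}^{(p)}$ the value is immediate; if it lies in $\mathscr{N}^{(p)}$, then $\mathfrak{H}^{(p)}$, combined with \eqref{eq:probex} and Proposition \ref{p_flat}, expresses it as the convex combination $\sum_{\mathcal{M}'\in\mathscr{V}^{(p)}}\widehat{\mathcal{Q}}^{(p)}_{\mathcal{M}(p,\boldsymbol{m}_{\boldsymbol{\sigma}}^-)}[\tau_{\mathscr{V}^{(p)}}=\tau_{\mathcal{M}'}]\mathbf{f}_\epsilon(\mathcal{M}')$.

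The main obstacle lies in the final combinatorial identification. After regrouping the saddles $\boldsymbol{\sigma}\in\mathcal{S}(\mathcal{M})$ by the set $\mathcal{M}''\in\mathscr{S}^{(p)}$ containing $\boldsymbol{m}_{\boldsymbol{\sigma}}^-$, the rate $\omega(\mathcal{M},\mathcal{M}'')/\nu(\mathcal{M})=\widehat{r}^{(p)}(\mathcal{M},\mathcal{M}'')$ emerges by \eqref{eq:rate_30}--\eqref{eq:rate_3}, and one must check that summing these rates against the absorption weights $\widehat{\mathcal{Q}}^{(p)}_{\mathcal{M}''}[\tau_{\mathscr{V}^{(p)}}=\tau_{\mathcal{M}'}]$ coming from $\mathfrak{H}^{(p)}$ reproduces exactly the trace jump rate $r^{(p)}(\mathcal{M},\mathcal{M}')$ of $\mathbf{y}^{(p)}(\cdot)$. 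This is precisely the standard formula for the generator of a trace process on $\mathscr{V}^{(p)}$ (cf. Appendix \ref{app:trace}), so the role of $\mathfrak{H}^{(p)}$ is to translate the saddle-level PDE quantities produced by $Q_\epsilon$ into the Markov-chain-level rates of the reduced process.
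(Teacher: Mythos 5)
Your treatment of the non-absorbing case is essentially the paper's argument (Lemmata \ref{l2-01}, \ref{l_p_res-0}, \ref{l2-02}, \ref{l_p_res-3}): multiply the resolvent equation by $Q_\epsilon$, evaluate the $-$ endpoint via Proposition \ref{p_char_neg}, show the $+$ endpoint coincides with $\mathbf{f}_\epsilon(\mathcal{M})$ because any negligible valley inside $\mathcal{W}(\mathcal{M})$ is absorbed at $\mathcal{M}$ by $\widehat{\mathbf{y}}^{(p)}(\cdot)$, regroup saddles via $\mathcal{S}(\mathcal{M})=\bigcup_{\mathcal{M}'}\mathcal{S}(\mathcal{M},\mathcal{M}')$, and pass from $\widehat{\mathfrak{L}}^{(p)}$ to $\mathfrak{L}^{(p)}$ by the trace formula. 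One caveat: the step you describe as ``a Freidlin--Wentzell argument confined to $\mathcal{W}(\mathcal{M})$'' is really a combinatorial statement about the finite-state chain $\widehat{\mathbf{y}}^{(p)}(\cdot)$ --- the relevant input is Lemma \ref{lem_escape}-(1) together with $\mathcal{M}^*(\mathcal{W}(\mathcal{M}))=\mathcal{M}$, not a large-deviations estimate --- but the conclusion you reach is correct.

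The absorbing case, however, has a genuine gap. You assert that the diffusion started in $\mathcal{E}(\mathcal{M})$ ``does not leave $\mathcal{E}(\mathcal{M})$ within time $\theta_\epsilon^{(p)}t$.'' This is false: $\mathcal{E}(\mathcal{M})=\mathcal{W}^{r_0}(\mathcal{M})$ is a microscopic well, and the process exits it after a time much shorter than $\theta_\epsilon^{(p)}$, making repeated excursions inside the much larger set $\mathcal{W}(\mathcal{M})$, possibly visiting neighborhoods of local minima in $\mathscr{N}^{(p)}\cap\mathcal{W}(\mathcal{M})$ where $G\equiv 0$. Lemma \ref{lem:FW-type} only controls hitting the \emph{other} sets $\mathcal{E}(\mathcal{M}')$, $\mathcal{M}'\in\mathscr{V}^{(p)}$, not excursions out of $\mathcal{E}(\mathcal{M})$ itself. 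To save your probabilistic argument one must show that $\mathbb{P}^{\epsilon}_{\boldsymbol{m}}\big[\boldsymbol{x}_\epsilon(\theta_\epsilon^{(p)}s)\notin\mathcal{E}(\mathcal{M})\big]\to 0$ for a.e. $s>0$, which requires a local-ergodicity / mixing argument inside the deep well (as used, e.g., in the $p=\mathfrak{q}+1$ case of Lemma \ref{l_4141} via \cite[Theorem 3.1]{LLS-1st}); it is not ``direct Freidlin--Wentzell,'' and establishing it here risks circularity with the conditions $\mathfrak{G}^{(p)}$ being proved downstream. The paper sidesteps this entirely: Lemma \ref{l09} uses a \emph{smooth} cutoff $Q$ supported in a shallow level set $\mathcal{A}_{4a}$ around $\mathcal{M}$, integrates the resolvent equation against $Q\,d\mu_\epsilon$, and applies the divergence theorem; because $\Xi(\mathcal{M})>d^{(p)}$, the boundary terms are bounded by $\theta_\epsilon^{(p)}\mu_\epsilon(\mathcal{A}_{4a}\setminus\mathcal{A}_a)=o_\epsilon(1)\,\mu_\epsilon(\mathcal{E}(\mathcal{M}))$ and the identity $\lambda\mathbf{f}_\epsilon(\mathcal{M})=\mathbf{g}(\mathcal{M})+o_\epsilon(1)$ drops out without any probabilistic input. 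You should replace your FW claim with this analytic argument (or supply the missing ergodicity step).
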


Now we assume this proposition and prove Proposition \ref{prop_R}.
\begin{proof}[Proof of Proposition \ref{prop_R}]
The assertion follows from two observations. The sequence ${\bf f}_{\epsilon}$
is uniformly bounded and the equation $(\lambda\,-\,\mathfrak{L}^{(p)})\,{\bf f}\;=\;{\bf g}$
has a unique solution.
\end{proof}
In the remainder of the section, we focus on the proof of Proposition
\ref{p_res}. \emph{Thus, we fix $p\in\llbracket1,\,\mathfrak{q}\rrbracket$and
assume that $\mathfrak{H}^{(p)}$ is in force throughout the remainder
of the section. }

\subsection{Characterization on absorbing states}

In this subsection, we first show that \eqref{06} holds on the set
$\mathscr{V}_{{\rm ab}}^{(p)}$.
\begin{lem}
\label{l09} For all $\mathcal{M}\in\mathscr{V}_{{\rm ab}}^{(p)}$.
\[
\lim_{\epsilon\to0}\,{\bf f}_{\epsilon}(\mathcal{M})\ =\ {\bf f}(\mathcal{M})\ =\ \frac{1}{\lambda}\,{\bf g}(\mathcal{M})\ .
\]
\end{lem}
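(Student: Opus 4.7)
The plan is to prove Lemma \ref{l09} by testing the resolvent equation \eqref{res} against a smooth cut-off supported on $\mathcal{W}(\mathcal{M})$, in the spirit of Lemma \ref{l2-01}, but the argument is considerably simpler than for non-absorbing states because the condition $\Xi(\mathcal{M})>d^{(p)}$ makes the flux across $\partial\mathcal{W}(\mathcal{M})$ exponentially negligible relative to $\theta_{\epsilon}^{(p)}=e^{d^{(p)}/\epsilon}$. Concretely, fix $\eta\in(0,\Xi(\mathcal{M})-d^{(p)})$ and let $Q_{\epsilon}\in C^{2}(\mathbb{R}^{d})$ satisfy $0\le Q_{\epsilon}\le 1$, with $Q_{\epsilon}\equiv 1$ on the connected component of $\{U<U(\mathcal{M})+\Xi(\mathcal{M})-\eta\}$ containing $\mathcal{M}$, $Q_{\epsilon}\equiv 0$ on $\{U>U(\mathcal{M})+\Xi(\mathcal{M})-\eta/2\}$, with $\|\nabla Q_{\epsilon}\|_{\infty}$ and $\|\Delta Q_{\epsilon}\|_{\infty}$ bounded uniformly in $\epsilon$.

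Integrating \eqref{res} against $Q_{\epsilon}\,d\mu_{\epsilon}$ and moving $\mathscr{L}_{\epsilon}$ onto $Q_{\epsilon}$ via its $L^{2}(\mu_{\epsilon})$-adjoint $\mathscr{L}_{\epsilon}^{*}$ gives
\[
\lambda\int Q_{\epsilon}\phi_{\epsilon}\,d\mu_{\epsilon}\;-\;\theta_{\epsilon}^{(p)}\int\phi_{\epsilon}\,\mathscr{L}_{\epsilon}^{*}Q_{\epsilon}\,d\mu_{\epsilon}\;=\;\int Q_{\epsilon}G\,d\mu_{\epsilon}\;.
\]
Since $\mathscr{L}_{\epsilon}^{*}Q_{\epsilon}$ is supported on the shell where $U\ge U(\mathcal{M})+\Xi(\mathcal{M})-\eta$, combining the $L^{\infty}$ bounds on $Q_{\epsilon}$, $\nabla Q_{\epsilon}$, $\Delta Q_{\epsilon}$ with \eqref{e: bound_Feps} and the Laplace lower bound $\mu_{\epsilon}(\mathcal{E}(\mathcal{M}))\ge c\,\epsilon^{d/2}e^{-U(\mathcal{M})/\epsilon}/Z_{\epsilon}$ yields
\[
\frac{\theta_{\epsilon}^{(p)}}{\mu_{\epsilon}(\mathcal{E}(\mathcal{M}))}\left|\int\phi_{\epsilon}\,\mathscr{L}_{\epsilon}^{*}Q_{\epsilon}\,d\mu_{\epsilon}\right|\;\le\;C\,\epsilon^{-d/2}\,e^{-(\Xi(\mathcal{M})-d^{(p)}-\eta)/\epsilon}\;\longrightarrow\;0
\]
because $\eta<\Xi(\mathcal{M})-d^{(p)}$.

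For the main terms, Lemma \ref{l_bound_conn} gives $\mathcal{M}=\mathcal{M}^{*}(\mathcal{W}(\mathcal{M}))$, so every critical point of $U$ lying in $\mathcal{W}(\mathcal{M})\setminus\mathcal{M}$ has energy strictly greater than $U(\mathcal{M})$. A Laplace expansion around each such critical point then yields $\mu_{\epsilon}\bigl(\mathcal{W}(\mathcal{M})\setminus\mathcal{E}(\mathcal{M})\bigr)=o_{\epsilon}(1)\,\mu_{\epsilon}(\mathcal{E}(\mathcal{M}))$. Combining this with Proposition \ref{p_flat} on each $\mathcal{E}(\mathcal{M}'')$ with $\mathcal{M}''\in\mathscr{S}^{(p)}$, $\mathcal{M}''\subset\mathcal{W}(\mathcal{M})$, and with the uniform bound \eqref{e: bound_Feps}, one obtains
\[
\int Q_{\epsilon}\phi_{\epsilon}\,d\mu_{\epsilon}=[\,\mathbf{f}_{\epsilon}(\mathcal{M})+o_{\epsilon}(1)\,]\,\mu_{\epsilon}(\mathcal{E}(\mathcal{M}))\;,\quad\int Q_{\epsilon}G\,d\mu_{\epsilon}=[\,\mathbf{g}(\mathcal{M})+o_{\epsilon}(1)\,]\,\mu_{\epsilon}(\mathcal{E}(\mathcal{M}))\;.
\]
Dividing the tested identity by $\mu_{\epsilon}(\mathcal{E}(\mathcal{M}))$ and letting $\epsilon\to 0$ produces $\lambda\mathbf{f}_{\epsilon}(\mathcal{M})\to\mathbf{g}(\mathcal{M})$, matching the value $\mathbf{f}(\mathcal{M})=\mathbf{g}(\mathcal{M})/\lambda$ obtained from the reduced equation $(\lambda-\mathfrak{L}^{(p)})\mathbf{f}=\mathbf{g}$ at the absorbing state $\mathcal{M}$.

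The main technical hurdle will be justifying the Laplace asymptotics showing that the integrals over $\mathcal{W}(\mathcal{M})\setminus\mathcal{E}(\mathcal{M})$ are negligible relative to $\mu_{\epsilon}(\mathcal{E}(\mathcal{M}))$, uniformly in $\phi_{\epsilon}$ and $G$. This amounts to a careful partition of $\mathcal{W}(\mathcal{M})$ into the wells $\mathcal{E}(\mathcal{M}'')$ around each higher-energy local minimum together with a thin transition region around saddles, exploiting the strict inequality $U(\mathcal{M}'')>U(\mathcal{M})$ for every $\mathcal{M}''\in\mathscr{S}^{(p)}$ with $\mathcal{M}''\subset\mathcal{W}(\mathcal{M})\setminus\{\mathcal{M}\}$ to secure a uniform exponential gain of the form $e^{-c/\epsilon}$.
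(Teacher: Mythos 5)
Your proposal is correct and takes essentially the same approach as the paper's proof: both test the resolvent equation against a smooth cut-off supported in a sublevel set around $\mathcal{M}$, integrate by parts (you via $\mathscr{L}_{\epsilon}^{*}$, the paper via two applications of the divergence theorem, which is the same thing), and exploit the exponential gap $\Xi(\mathcal{M})>d^{(p)}$ to kill the flux term after multiplying by $\theta_{\epsilon}^{(p)}$ and dividing by $\mu_{\epsilon}(\mathcal{E}(\mathcal{M}))$. The only cosmetic difference is the choice of cut-off height: the paper uses the connected component $\mathcal{A}_{4a}$ of $\{U\le U(\mathcal{M})+d^{(p)}+4a\}$ with $4a<\Xi(\mathcal{M})-d^{(p)}$, which by Lemma~\ref{lem:FW-type} immediately gives $\mathcal{A}_{4a}\cap\mathcal{E}^{(p)}=\mathcal{E}(\mathcal{M})$, while you cut near the top of the well at $U(\mathcal{M})+\Xi(\mathcal{M})-\eta$ and instead observe that any other $\mathcal{E}(\mathcal{M}')$, $\mathcal{M}'\in\mathscr{V}^{(p)}$, captured in the larger domain has $U(\mathcal{M}')>U(\mathcal{M})$ and is therefore Laplace-negligible — both are correct and interchangeable.
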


\begin{proof}
Fix $\mathcal{M}\in\mathscr{V}_{{\rm ab}}^{(p)}$. Denote by $\mathcal{A}_{b}$,
$b>0$, the connected component of the set $\{U\le U(\mathcal{M})+d^{(p)}+b\}$
which contains the set $\mathcal{M}$. Let $a>0$ be a small enough
constant satisfying
\[
4a\ <\ \Xi(\mathcal{M})-d^{(p)}\ .
\]
Then, by Lemma \ref{l_bound_conn}, $\mathcal{A}_{4a}$ is well-defined
and $\mathcal{M}=\mathcal{M}^{*}(\mathcal{A}_{4a})$. Also since $\mathcal{M}\in\mathscr{V}^{(p+1)}$,
by Lemma \ref{lem:FW-type} and \ref{lap01}, $\mathcal{A}_{4a}\cap\mathcal{E}^{(p)}=\mathcal{E}(\mathcal{M})$.

Let $Q\colon\mathbb{R}^{d}\to\mathbb{R}_{+}$be a smooth function
such that
\[
Q(\boldsymbol{x})\,=\,1\;,\;\;Q(\boldsymbol{y})\,=\,0\;,\quad\boldsymbol{x}\in\mathscr{A}_{2a}\;,\;\;\boldsymbol{y}\in\mathcal{A}_{4a}^{c}\;.
\]
By \eqref{res},
\[
\int_{\mathcal{A}_{4a}}\,Q\,\left(\lambda-\theta_{\epsilon}^{(p)}\,\mathscr{L}_{\epsilon}\right)\,\phi_{\epsilon}\,d\mu_{\epsilon}\;=\;\sum_{\mathcal{\mathcal{M}'}\in\mathscr{V}^{(p)}}{\bf g}(\mathcal{M}')\,\int_{\mathcal{A}_{4a}\cap\mathcal{E}(\mathcal{M}')}\,Q\,d\mu_{\epsilon}\;.
\]
Since $\mathcal{A}_{4a}\cap\mathcal{E}^{(p)}=\mathcal{E}(\mathcal{M})$,
the right-hand side is equal to
\[
{\bf g}(\mathcal{M})\,\mu_{\epsilon}(\mathcal{E}(\mathcal{M}))\;=\;\lambda\,{\bf f}(\mathcal{M})\,\mu_{\epsilon}(\mathcal{E}(\mathcal{M}))\;,
\]
where the last identity follows from the definition of $\boldsymbol{g}$
and the fact that $(\mathfrak{L}^{(p)}{\bf f})(\mathcal{M})=0$ because
$\mathcal{M}$ is an absorbing point.

We turn to the left-hand side. Since $Q$ vanishes at the boundary
of $\mathcal{A}_{4a}$, by the divergence theorem and the properties
\eqref{eq:decb} of the divergence free field $\boldsymbol{\ell}$,
it is equal to
\[
\lambda\int_{\mathcal{A}_{4a}}Q\,\phi_{\epsilon}\,d\mu_{\epsilon}\,-\,\epsilon\,\theta_{\epsilon}^{(p)}\,\int_{\mathscr{A}_{4a}}\nabla Q\cdot\nabla\phi_{\epsilon}\;d\mu_{\epsilon}\,+\,\theta_{\epsilon}^{(p)}\,\int_{\mathscr{A}_{4a}}\nabla Q\cdot\boldsymbol{\ell}\;\phi_{\epsilon}\;d\mu_{\epsilon}\ .
\]

Since $\mathcal{M}=\mathcal{M}^{*}(\mathcal{A}_{4a})$, it is clear
from Laplace asymptotics that $\mu_{\epsilon}(\mathcal{A}_{4a}\setminus\mathcal{E}(\mathcal{M}))\prec\mu_{\epsilon}(\mathcal{E}(\mathcal{M}))$.
Therefore by definition of $Q$ and Proposition \ref{p_flat}, the
first term is equal to
\[
\mu_{\epsilon}(\mathcal{E}(\mathcal{M}))\,[\,\lambda\,{\bf f}_{\epsilon}(\mathcal{M})+o_{\epsilon}(1)\,]\;.
\]
As $Q$ is smooth and is constant on $\mathcal{A}_{2a}$, the absolute
value of the third term is bounded by
\begin{equation}
C_{0}\,\theta_{\epsilon}^{(p)}\,\mu_{\epsilon}(\mathcal{A}_{4a}\setminus\mathcal{A}_{2a})\label{12}
\end{equation}
for some finite constant $C_{0}$ independent of $\epsilon$.

We turn to the second term. By the divergence theorem and since $\nabla Q$
vanishes at the boundary of $\mathcal{A}_{a}$, it is equal to
\[
\theta_{\epsilon}^{(p)}\,\int_{(\mathcal{A}_{a})^{c}}\phi_{\epsilon}\,\big\{\,\epsilon\,\Delta Q\,-\,\nabla Q\cdot\nabla U\,\big\}\;d\mu_{\epsilon}\;.
\]
This expression is bounded by $C_{0}\,\theta_{\epsilon}^{(p)}\,\mu_{\epsilon}(\mathcal{A}_{4a}\setminus\mathcal{A}_{a})$.
By definition of $\theta_{\epsilon}^{(p)}$ and the sets $\mathcal{A}_{b}$,
this expression, as well as \eqref{12}, is equal to $o_{\epsilon}(1)\,\mu_{\epsilon}(\mathcal{E}(\mathcal{M}))$.
Putting together the previous estimates yields that
\[
\int_{\mathscr{A}_{4a}}\,Q\,\left(\lambda-\theta_{\epsilon}^{(p)}\,\mathscr{L}_{\epsilon}\right)\,\phi_{\epsilon}\,d\mu_{\epsilon}\;=\;\mu_{\epsilon}(\mathcal{E}(\mathcal{M}))\,[\,\lambda\,{\bf f}_{\epsilon}(\mathcal{M})+o_{\epsilon}(1)\,]\;.
\]

In conclusion, we proved that
\[
{\bf g}(\mathcal{M})\,\mu_{\epsilon}(\mathcal{E}(\mathcal{M}))\;=\;\lambda\,{\bf f}(\mathcal{M})\,\mu_{\epsilon}(\mathcal{E}(\mathcal{M}))\;=\;\mu_{\epsilon}(\mathcal{E}(\mathcal{M}))\,[\,\lambda\,{\bf f}_{\epsilon}(\mathcal{M})+o_{\epsilon}(1)\,]\;.
\]
The assertion of the lemma follows from this identity.
\end{proof}

\subsection{Characterization on negligible valleys }

Note that since $\mathscr{N}^{(1)}=\varnothing$, the following proposition
is immediate for $p=1$.
\begin{prop}
\label{p_char_neg} For all $\mathcal{M}\in\mathscr{N}^{(p)}$,
\[
{\bf f}_{\epsilon}(\mathcal{M})\;=\;\sum_{\mathcal{M}'\in\mathscr{V}^{(p)}}\,\widehat{\mathcal{Q}}_{\mathcal{M}}^{(p)}\left[\,\tau_{\mathscr{V}^{(p)}}=\tau_{\mathcal{M}'}\,\right]\,{\bf f}_{\epsilon}(\mathcal{M}')+o_{\epsilon}(1)\ .
\]
\end{prop}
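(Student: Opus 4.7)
The plan is to combine the probabilistic representation \eqref{eq:probex} of $\phi_{\epsilon}$ with the hitting-time estimates of $\mathfrak{H}^{(p)}$ and the flatness result of Proposition \ref{p_flat}. Fix $\mathcal{M}\in\mathscr{N}^{(p)}$ and $\boldsymbol{m}\in\mathcal{M}$, and set $\tau=\tau_{\mathcal{E}^{(p)}}$. Since $\mathcal{M}\cap\mathscr{V}^{(p)}=\varnothing$, we have $\boldsymbol{m}\notin\mathcal{E}^{(p)}$, so the source term $G$ vanishes along the trajectory $\boldsymbol{x}_{\epsilon}(\cdot)$ on $[0,\tau)$. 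Applying the strong Markov property at $\tau$ to \eqref{eq:probex} therefore yields
\[
\phi_{\epsilon}(\boldsymbol{m})\;=\;\mathbb{E}_{\boldsymbol{m}}^{\epsilon}\!\left[\,e^{-\lambda\tau/\theta_{\epsilon}^{(p)}}\,\phi_{\epsilon}(\boldsymbol{x}_{\epsilon}(\tau))\,\right]\;.
\]

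The next step is to remove the exponential factor. Pick any sequence $(\alpha_{\epsilon})$ with $\theta_{\epsilon}^{(p-1)}\prec\alpha_{\epsilon}\prec\theta_{\epsilon}^{(p)}$. Condition $\mathfrak{H}^{(p)}$-(1) gives $\mathbb{P}_{\boldsymbol{m}}^{\epsilon}[\tau>\alpha_{\epsilon}]=o_{\epsilon}(1)$ uniformly in $\boldsymbol{m}\in\mathcal{E}(\mathcal{M})$; on the complementary event, $\tau/\theta_{\epsilon}^{(p)}\le\alpha_{\epsilon}/\theta_{\epsilon}^{(p)}=o_{\epsilon}(1)$. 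Combined with the uniform bound $\|\phi_{\epsilon}\|_{\infty}\le\|\boldsymbol{g}\|_{\infty}/\lambda$ from \eqref{e: bound_Feps}, this reduces the previous identity to
\[
\phi_{\epsilon}(\boldsymbol{m})\;=\;\mathbb{E}_{\boldsymbol{m}}^{\epsilon}\!\left[\,\phi_{\epsilon}(\boldsymbol{x}_{\epsilon}(\tau))\,\right]\,+\,o_{\epsilon}(1)\;.
\]
Decomposing the expectation according to the valley first hit, invoking Proposition \ref{p_flat} to replace $\phi_{\epsilon}(\boldsymbol{x}_{\epsilon}(\tau))$ by ${\bf f}_{\epsilon}(\mathcal{M}')$ on $\{\tau=\tau_{\mathcal{E}(\mathcal{M}')}\}$, and then using $\mathfrak{H}^{(p)}$-(2) to replace $\mathbb{P}_{\boldsymbol{m}}^{\epsilon}[\tau=\tau_{\mathcal{E}(\mathcal{M}')}]$ by $\widehat{\mathcal{Q}}_{\mathcal{M}}^{(p)}[\tau_{\mathscr{V}^{(p)}}=\tau_{\mathcal{M}'}]$, one arrives at
\[
\phi_{\epsilon}(\boldsymbol{m})\;=\;\sum_{\mathcal{M}'\in\mathscr{V}^{(p)}}\widehat{\mathcal{Q}}_{\mathcal{M}}^{(p)}\!\left[\,\tau_{\mathscr{V}^{(p)}}=\tau_{\mathcal{M}'}\,\right]{\bf f}_{\epsilon}(\mathcal{M}')\,+\,o_{\epsilon}(1)\;.
\]
Averaging over $\boldsymbol{m}\in\mathcal{M}$ in \eqref{05} yields the claim.

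The only delicate point is bookkeeping for the $o_{\epsilon}(1)$ error. Uniformity in the starting point $\boldsymbol{m}\in\mathcal{M}$ is already built into both parts of $\mathfrak{H}^{(p)}$ and into Proposition \ref{p_flat} (all three statements are phrased as suprema over $\boldsymbol{x}\in\mathcal{E}(\mathcal{M})$), and the uniform $L^{\infty}$ bound on $\phi_{\epsilon}$ (hence on ${\bf f}_{\epsilon}$) ensures that multiplying an $o_{\epsilon}(1)$ probability by $\phi_{\epsilon}$ or by a weight ${\bf f}_{\epsilon}(\mathcal{M}')$ remains $o_{\epsilon}(1)$. Aside from this routine verification, the whole argument is essentially a single application of the strong Markov property at the hitting time of $\mathcal{E}^{(p)}$, with the two halves of $\mathfrak{H}^{(p)}$ carrying out exactly the two tasks they were designed for: controlling the time and the place of entry into $\mathcal{E}^{(p)}$.
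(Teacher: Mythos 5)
Your proof is correct and follows essentially the same route as the paper: the probabilistic representation of $\phi_{\epsilon}$, the strong Markov property at $\tau_{\mathcal{E}^{(p)}}$, $\mathfrak{H}^{(p)}$-(1) to control the discounting factor and restrict to $\{\tau\le\alpha_{\epsilon}\}$, Proposition~\ref{p_flat} to flatten $\phi_{\epsilon}$ at the hitting point, and $\mathfrak{H}^{(p)}$-(2) to identify the hitting distribution. The only cosmetic difference is that you obtain an exact identity first by noting $G$ vanishes before $\tau$ and average over $\boldsymbol{m}\in\mathcal{M}$ via \eqref{05} at the end, whereas the paper starts from an arbitrary $\boldsymbol{x}\in\mathcal{E}(\mathcal{M})$, invokes Proposition~\ref{p_flat} once more at the start, and absorbs the pre-hitting contribution into the $o_{\epsilon}(1)$ directly.
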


\begin{proof}
Fix $\mathcal{M}\in\mathscr{N}^{(p)}$. Let $\alpha_{\epsilon}$ be
a sequence such that $\theta_{\epsilon}^{(p-1)}\prec\alpha_{\epsilon}\prec\theta_{\epsilon}^{(p)}$.
Fix $\boldsymbol{x}\in\mathcal{E}(\mathcal{M})$. By Proposition \ref{p_flat},
\[
\phi_{\epsilon}(\boldsymbol{x})\;=\;{\bf f}_{\epsilon}(\mathcal{M})\,+\,o_{\epsilon}(1)\;.
\]
On the other hand, since $G$ is bounded and $\theta_{\epsilon}^{(p-1)}\prec\alpha_{\epsilon}\prec\theta_{\epsilon}^{(p)}$,
by $\mathfrak{H}^{(p)}$-(1) and by the strong Markov property,
\begin{align*}
\phi_{\epsilon}(\boldsymbol{x}) & \;=\;\mathbb{E}_{\boldsymbol{x}}^{\epsilon}\left[\,\int_{0}^{\infty}e^{-\lambda s}\,G(\boldsymbol{x}_{\epsilon}(\theta_{\epsilon}^{(p)}s))\,ds\,{\bf 1}\{\,\tau_{\mathcal{E}^{(p)}}\le\alpha_{\epsilon}\,\}\,\right]\,+\,o_{\epsilon}(1)\\
 & \;=\;\mathbb{E}_{\boldsymbol{x}}^{\epsilon}\left[\,\mathbb{E}_{\boldsymbol{x}_{\epsilon}(\tau_{\mathcal{E}^{(p)}})}^{\epsilon}\Big[\,\int_{0}^{\infty}e^{-\lambda s}\,G(\boldsymbol{x}_{\epsilon}(\theta_{\epsilon}^{(p)}s))\,ds\,\Big]\,{\bf 1}\{\,\tau_{\mathcal{E}^{(p)}}\le\alpha_{\epsilon}\,\}\,\right]\,+\,o_{\epsilon}(1)\ .
\end{align*}
By $\mathfrak{H}^{(p)}$-(1), we may remove the indicator in the previous
expectation and rewrite the right-hand side as
\[
\sum_{\mathcal{M}'\in\mathcal{E}^{(p)}}\,\mathbb{E}_{\boldsymbol{x}}^{\epsilon}\left[\,\phi_{\epsilon}(\boldsymbol{x}_{\epsilon}(\tau_{\mathcal{E}(\mathcal{M}')}))\,{\bf 1}\{\,\tau_{\mathcal{E}^{(p)}}=\tau_{\mathcal{E}(\mathcal{M}')}\,\}\,\right]\,+\,o_{\epsilon}(1)\ .
\]
By Proposition \ref{p_flat} for $\boldsymbol{y}\in\mathcal{M}'$
instead of $\boldsymbol{x}\in\mathcal{M}$, this expression is equal
to

\[
\sum_{\mathcal{M}'\in\mathcal{E}^{(p)}}\,{\bf f}_{\epsilon}(\mathcal{M}')\,\mathbb{P}_{\boldsymbol{x}}^{\epsilon}\left[\,\tau_{\mathcal{E}^{(p)}}=\tau_{\mathcal{E}(\mathcal{M}')}\,\right]\,+\,o_{\epsilon}(1)\ .
\]
Applying $\mathfrak{H}^{(p)}$-(2) completes the proof of the proposition.
\end{proof}

\subsection{Characterization on non-absorbing states}

Fix $\mathcal{M}\in\mathscr{V}_{{\rm nab}}^{(p)}$ in this subsection.
Recall the definition of the sets $\mathcal{W}(\mathcal{M})$, $\Omega(\mathcal{M})$
and $\mathcal{S}(\mathcal{M})$ introduced in Section \ref{sec9},
and the definition of the test function $Q_{\epsilon}$.
\begin{lem}
\label{l_p_res-0} For all $\boldsymbol{m}\in\mathcal{W}(\mathcal{M})\cap\mathcal{M}_{0}$,
\[
\lim_{\epsilon\to0}\,\left|\,\phi_{\epsilon}(\boldsymbol{m})-{\bf f}_{\epsilon}(\mathcal{M})\,\right|\;=\;0\ .
\]
\end{lem}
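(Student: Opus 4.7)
My plan is to show that $\phi_{\epsilon}(\boldsymbol{m})$ is asymptotically equal to the value at the unique element of $\mathscr{V}^{(p)}$ contained in the well $\mathcal{W}(\mathcal{M})$, namely $\mathcal{M}$ itself. First, I would establish the structural fact $\mathscr{V}^{(p)}(\mathcal{W}(\mathcal{M}))=\{\mathcal{M}\}$: $\mathcal{W}(\mathcal{M})$ is a connected component of $\{U<U(\mathcal{M})+d^{(p)}\}$ with $\mathcal{M}^{*}(\mathcal{W}(\mathcal{M}))=\mathcal{M}$ by \eqref{e_W(M)}. Since $\mathcal{M}\in\mathscr{V}^{(p)}$ and $\mathcal{M}\subset\mathcal{W}(\mathcal{M})$, Lemma \ref{lem_separate}-(2) guarantees that $\mathcal{W}(\mathcal{M})$ does not separate $(p)$-states, and since $\Xi(\mathcal{M})=d^{(p)}$ (because $\mathcal{M}\in\mathscr{V}_{{\rm nab}}^{(p)}$), Lemma \ref{lem_escape}-(2) yields $\mathscr{V}^{(p)}(\mathcal{W}(\mathcal{M}))=\{\mathcal{M}\}$. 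Denote by $\mathcal{M}''$ the element of $\mathscr{S}^{(p)}$ containing $\boldsymbol{m}$; non-separation forces $\mathcal{M}''\subset\mathcal{W}(\mathcal{M})$, so either $\mathcal{M}''=\mathcal{M}$ or $\mathcal{M}''\in\mathscr{N}^{(p)}$.

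When $\mathcal{M}''=\mathcal{M}$, Proposition \ref{p_flat} directly yields $\phi_{\epsilon}(\boldsymbol{m})={\bf f}_{\epsilon}(\mathcal{M})+o_{\epsilon}(1)$. When $\mathcal{M}''\in\mathscr{N}^{(p)}$ (which forces $p\ge2$), Proposition \ref{p_flat} still yields $\phi_{\epsilon}(\boldsymbol{m})={\bf f}_{\epsilon}(\mathcal{M}'')+o_{\epsilon}(1)$, while Proposition \ref{p_char_neg} expresses ${\bf f}_{\epsilon}(\mathcal{M}'')$ as a convex combination of ${\bf f}_{\epsilon}(\mathcal{M}')$, $\mathcal{M}'\in\mathscr{V}^{(p)}$, weighted by the hitting distribution $\widehat{\mathcal{Q}}_{\mathcal{M}''}^{(p)}[\,\tau_{\mathscr{V}^{(p)}}=\tau_{\mathcal{M}'}\,]$. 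The matter thus reduces to showing that the auxiliary chain $\widehat{\mathbf{y}}^{(p)}$ starting from $\mathcal{M}''$ almost surely hits $\mathscr{V}^{(p)}$ for the first time at $\mathcal{M}$.

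The main (though mild) obstacle will be to verify this concentration of the hitting distribution on $\mathcal{M}$. For this, I plan to prove by induction on the jumps of $\widehat{\mathbf{y}}^{(p)}$ that every state visited before the first entry into $\mathscr{V}^{(p)}$ is a subset of $\mathcal{W}(\mathcal{M})$; it suffices to show that whenever $\mathcal{N}\in\mathscr{N}^{(p)}$ satisfies $\mathcal{N}\subset\mathcal{W}(\mathcal{M})$ and $\widehat{r}^{(p)}(\mathcal{N},\mathcal{N}')>0$, then $\mathcal{N}'\subset\mathcal{W}(\mathcal{M})$. By $\mathfrak{P}_{3}(p)$ such a transition requires a saddle $\boldsymbol{\sigma}\in\mathcal{S}(\mathcal{N},\mathcal{N}')$ with $U(\boldsymbol{\sigma})=\Theta(\mathcal{N},\widetilde{\mathcal{N}})$ and $\mathcal{N}\leftsquigarrow\boldsymbol{\sigma}\curvearrowright\mathcal{N}'$. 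Since $\mathcal{M}$ and $\mathcal{N}$ are disjoint simple sets inside $\mathcal{W}(\mathcal{M})$ and $\mathcal{M}=\mathcal{M}^{*}(\mathcal{W}(\mathcal{M}))$, we have $U(\mathcal{M})<U(\mathcal{N})$ and $\mathcal{M}\subset\widetilde{\mathcal{N}}$; Lemma \ref{lem_not}-(1) together with the fact that $\mathcal{N}$ and $\mathcal{M}$ lie in the same component of $\{U<U(\mathcal{M})+d^{(p)}\}$ give
\[
U(\boldsymbol{\sigma})\ \le\ \Theta(\mathcal{N},\mathcal{M})\ <\ U(\mathcal{M})+d^{(p)}\ .
\]
Because $U$ strictly decreases along each heteroclinic orbit issuing from $\boldsymbol{\sigma}$ (owing to $\nabla U\cdot\boldsymbol{\ell}\equiv0$), both orbits lie in $\{U<U(\mathcal{M})+d^{(p)}\}$; the orbit ending in $\mathcal{N}\subset\mathcal{W}(\mathcal{M})$ then puts $\boldsymbol{\sigma}$ in $\mathcal{W}(\mathcal{M})$, and the other orbit places its endpoint, a local minimum in $\mathcal{N}'$, inside $\mathcal{W}(\mathcal{M})$ as well; non-separation then yields $\mathcal{N}'\subset\mathcal{W}(\mathcal{M})$. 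Iterating, the chain remains among subsets of $\mathcal{W}(\mathcal{M})$ until it hits $\mathscr{V}^{(p)}$, which therefore must be $\mathcal{M}$. Substituting back into the identity from Proposition \ref{p_char_neg} gives ${\bf f}_{\epsilon}(\mathcal{M}'')={\bf f}_{\epsilon}(\mathcal{M})+o_{\epsilon}(1)$, and the lemma follows.
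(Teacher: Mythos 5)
Your proof follows essentially the same route as the paper: establish $\mathscr{V}^{(p)}(\mathcal{W}(\mathcal{M}))=\{\mathcal{M}\}$ via Lemmata \ref{lem_separate} and \ref{lem_escape}-(2), then reduce to showing the auxiliary chain $\widehat{\mathbf{y}}^{(p)}$ starting from $\mathcal{M}''\in\mathscr{N}^{(p)}(\mathcal{W}(\mathcal{M}))$ hits $\mathscr{V}^{(p)}$ exactly at $\mathcal{M}$, and conclude via Propositions \ref{p_flat} and \ref{p_char_neg}.

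The one place you diverge is in the proof of the concentration claim $\widehat{\mathcal{Q}}_{\mathcal{M}''}^{(p)}[\tau_{\mathscr{V}^{(p)}}=\tau_{\mathcal{M}}]=1$. The paper invokes Lemma \ref{lem_escape}-(1) (any adjacency from inside $\mathcal{W}(\mathcal{M})$ to outside must originate at $\mathcal{M}^{*}(\mathcal{W}(\mathcal{M}))=\mathcal{M}$) to produce a contradiction, whereas you re-derive the needed containment property directly via the landscape: $U(\boldsymbol{\sigma})<U(\mathcal{M})+d^{(p)}$ forces both heteroclinic orbits from $\boldsymbol{\sigma}$ to lie in $\{U<U(\mathcal{M})+d^{(p)}\}$, so the chain cannot escape $\mathcal{W}(\mathcal{M})$ from a state in $\mathscr{N}^{(p)}$. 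Your argument is sound and morally equivalent to the proof of Lemma \ref{lem_escape}-(1) itself, so this is a stylistic choice rather than a genuinely different strategy; citing that lemma would be shorter. One small omission: you should note, as the paper does, that Lemma \ref{l_irred_negli} guarantees $\widehat{\mathbf{y}}^{(p)}$ starting from $\mathcal{M}''\in\mathscr{N}^{(p)}$ must eventually reach $\mathscr{V}^{(p)}$ almost surely; without this the hitting distribution $\widehat{\mathcal{Q}}_{\mathcal{M}''}^{(p)}[\tau_{\mathscr{V}^{(p)}}=\tau_{\cdot}]$ might not sum to one, and your invariance argument would only yield concentration on $\{\mathcal{M}\}\cup\{\text{chain never hits }\mathscr{V}^{(p)}\}$.
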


\begin{proof}
Since $\mathcal{M}\in\mathscr{V}_{{\rm nab}}^{(p)}$, $\Xi(\mathcal{M})=d^{(p)}$.
By Lemma \ref{l_bound_conn} and definition, $\mathcal{W}(\mathcal{M})$
is the connected component of the set $\{U<U(\mathcal{M})+d^{(p)}\}$
which contains $\mathcal{M}$ and $\mathcal{M}=\mathcal{M}^{*}\left(\mathcal{W}(\mathcal{M})\right)$.
By Lemma \ref{lem_separate}, the set $\mathcal{W}(\mathcal{M})$
does not separate $(p)$-states. Hence, by Lemma \ref{lem_escape}-(2),
$\mathscr{V}^{(p)}\left(\mathcal{W}(\mathcal{M})\right)=\{\mathcal{M}\}$.

Fix $\boldsymbol{m}\in\mathcal{W}(\mathcal{M})\cap\mathcal{M}_{0}$.
If $\boldsymbol{m}\in\mathcal{M}$, the assertion follows from Proposition
\ref{p_flat}. Suppose that $\boldsymbol{m}\in(\mathcal{W}(\mathcal{M})\cap\mathcal{M}_{0})\setminus\mathcal{M}$.
Let $\mathcal{M}'=\mathcal{M}(p,\,\bm{m})$. Since $\mathcal{W}(\mathcal{M})$
does not separate $(p)$-states, $\mathcal{M}'\in\mathscr{S}^{(p)}\left(\mathcal{W}(\mathcal{M})\right)$.
Since $\boldsymbol{m}\not\in\mathcal{M}$ and $\mathscr{V}^{(p)}\left(\mathcal{W}(\mathcal{M})\right)=\{\mathcal{M}\}$,
$\mathcal{M}'\in\mathscr{N}^{(p)}\left(\mathcal{W}(\mathcal{M})\right)$.

We claim that
\begin{equation}
\widehat{\mathcal{Q}}_{\mathcal{M}'}^{(p)}[\,\tau_{\mathscr{V}^{(p)}}=\tau_{\mathcal{M}}\,]\;=\;1\;.\label{2-01}
\end{equation}
Indeed, by Lemma \ref{l_irred_negli} for $\ell=p-1$, the Markov
chain $\widehat{{\bf y}}^{(p)}(\cdot)$ starting from $\mathcal{M}'\in\mathscr{N}^{(p)}$
must visit an element of $\mathscr{V}^{(p)}$. Suppose that $\widehat{{\bf y}}^{(p)}$
can hit $\mathcal{M}_{1}\in\mathscr{V}^{(p)}$, $\mathcal{M}_{1}\neq\mathcal{M}$,
before visiting $\mathcal{M}$. Since $\mathcal{M}_{1}\subset\left(\mathcal{W}(\mathcal{M})\right)^{c}$,
$\widehat{{\bf y}}^{(p)}$ escapes from $\mathscr{S}^{(p)}\left(\mathcal{W}(\mathcal{M})\right)$
before visiting $\mathcal{M}$. Let $\mathcal{M}_{2}$ be the last
state in $\mathscr{S}^{(p)}\left(\mathcal{W}(\mathcal{M})\right)$
and $\mathcal{M}_{3}$ be the first state in $\mathscr{S}^{(p)}\left(\left(\mathcal{W}(\mathcal{M})\right)^{c}\right)$
visited by $\widehat{{\bf y}}^{(p)}(\cdot)$ in its path from $\mathcal{M}'$
to $\mathcal{M}_{1}$ without hitting $\mathcal{M}$. In particular,
$\widehat{r}^{(p)}(\mathcal{M}_{2},\,\mathcal{M}_{3})>0$ so that,
by $\mathfrak{P}_{3}(p)$, $\mathcal{M}_{2}\to\mathcal{M}_{3}$. By
Lemma \ref{lem_escape}-(1) and since $\mathcal{M}=\mathcal{M}^{*}\left(\mathcal{W}(\mathcal{M})\right)$,
$\mathcal{M}_{2}=\mathcal{M}$ which is a contradiction.

By \eqref{2-01} and Lemma \ref{p_char_neg},
\[
{\bf f}_{\epsilon}(\mathcal{M}')\;=\;{\bf f}_{\epsilon}(\mathcal{M})+o_{\epsilon}(1)\ .
\]
At this point, the assertion of the lemma follows from Proposition
\ref{p_flat}.
\end{proof}
By the previous result, Proposition \ref{p_flat} and Lemma \ref{l2-01},
\begin{equation}
\theta_{\epsilon}^{(p)}\int_{\mathbb{R}^{d}}\,Q_{\epsilon}\,(-\mathscr{L}_{\epsilon}\phi_{\epsilon})\,d\mu_{\epsilon}\;=\;\left(\,\frac{1}{2\pi\nu_{\star}}\sum_{\boldsymbol{\sigma}\in\mathcal{S}(\mathcal{M})}\,[{\bf f}_{\epsilon}(\mathcal{M})-{\bf f}_{\epsilon}(\mathcal{M}_{\boldsymbol{\sigma}})]\,\frac{\mu^{\boldsymbol{\sigma}}}{\sqrt{-\det\mathbb{H}^{\boldsymbol{\sigma}}}}+o_{\epsilon}(1)\,\right)e^{-U(\mathcal{M})/\epsilon}\ ,\label{l_p_res-1}
\end{equation}
where $\mathcal{M}_{\boldsymbol{\sigma}}=\mathcal{M}(p,\,\boldsymbol{m}_{\boldsymbol{\sigma}}^{-})$.

Recall from \eqref{eq:chain_p+1} that we denote by $\widehat{\mathfrak{L}}^{(p)}$
the generator of the Markov chain $\widehat{{\bf y}}^{(p)}$.
\begin{lem}
\label{l2-02} For all ${\bf g}:\mathscr{V}^{(p)}\to\mathbb{R}$ and
$\mathcal{M}\in\mathscr{V}_{{\rm nab}}^{(p)}$,
\[
\theta_{\epsilon}^{(p)}\int_{\mathbb{R}^{d}}\,Q_{\epsilon}(-\mathscr{L}_{\epsilon}\phi_{\epsilon})\,d\mu_{\epsilon}\;=\;-\,\left[\,\frac{\nu(\mathcal{M})}{\nu_{\star}}\,\big(\widehat{\mathfrak{L}}^{(p)}{\bf f}_{\epsilon}\big)(\mathcal{M})\,+\,o_{\epsilon}(1)\,\right]\,e^{-U(\mathcal{M})/\epsilon}\ .
\]
\end{lem}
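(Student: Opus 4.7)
The plan is to start from the explicit expression in \eqref{l_p_res-1} and match it term by term with the right-hand side of the lemma by unfolding the definition of $\widehat{\mathfrak{L}}^{(p)}$. Since $\mathcal{M}\in\mathscr{V}_{{\rm nab}}^{(p)}$ means $\Xi(\mathcal{M})=d^{(p)}$, formulas \eqref{eq:rate_30}--\eqref{eq:rate_3} give $\widehat{r}^{(p)}(\mathcal{M},\mathcal{M}')=\nu(\mathcal{M})^{-1}\sum_{\boldsymbol{\sigma}\in\mathcal{S}(\mathcal{M},\mathcal{M}')}\omega(\boldsymbol{\sigma})$, so that by the definition \eqref{eq:chain_p+1} of $\widehat{\mathfrak{L}}^{(p)}$ together with \eqref{eq:omega},
\[
-\,\frac{\nu(\mathcal{M})}{\nu_{\star}}\,\bigl(\widehat{\mathfrak{L}}^{(p)}{\bf f}_{\epsilon}\bigr)(\mathcal{M})\ =\ \frac{1}{2\pi\nu_{\star}}\sum_{\mathcal{M}'\in\mathscr{S}^{(p)}}\sum_{\boldsymbol{\sigma}\in\mathcal{S}(\mathcal{M},\,\mathcal{M}')}\frac{\mu^{\boldsymbol{\sigma}}}{\sqrt{-\det\mathbb{H}^{\boldsymbol{\sigma}}}}\,\bigl[{\bf f}_{\epsilon}(\mathcal{M})-{\bf f}_{\epsilon}(\mathcal{M}')\bigr]\;.
\]
Comparing this with the right-hand side of \eqref{l_p_res-1}, the lemma will follow once we show that the map $\boldsymbol{\sigma}\mapsto\mathcal{M}(p,\boldsymbol{m}_{\boldsymbol{\sigma}}^{-})$ realises a bijection between $\mathcal{S}(\mathcal{M})$ and the disjoint union $\bigsqcup_{\mathcal{M}'\in\mathscr{S}^{(p)}}\mathcal{S}(\mathcal{M},\mathcal{M}')$; the identification ${\bf f}_{\epsilon}(\mathcal{M}_{\boldsymbol{\sigma}})={\bf f}_{\epsilon}(\mathcal{M}')$ is then tautological.

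For the core combinatorial step, I would proceed as follows. Given $\boldsymbol{\sigma}\in\mathcal{S}(\mathcal{M})$, Lemma \ref{l_gate} combined with Lemma \ref{l_assu_saddle} produces $\boldsymbol{m}_{\boldsymbol{\sigma}}^{+}\in\mathcal{W}(\mathcal{M})$ and $\boldsymbol{m}_{\boldsymbol{\sigma}}^{-}\notin\mathcal{W}(\mathcal{M})$ with $\boldsymbol{m}_{\boldsymbol{\sigma}}^{-}\curvearrowleft\boldsymbol{\sigma}\curvearrowright\boldsymbol{m}_{\boldsymbol{\sigma}}^{+}$ and $U(\boldsymbol{\sigma})=\Theta(\mathcal{M},\widetilde{\mathcal{M}})=U(\mathcal{M})+d^{(p)}$. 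Set $\mathcal{M}':=\mathcal{M}(p,\boldsymbol{m}_{\boldsymbol{\sigma}}^{-})$. Since $\mathcal{M}$ is bound by $\mathfrak{P}_{1}(p)$ and all minima lying in $\mathcal{W}(\mathcal{M})$ are connected through saddles of height strictly below $U(\boldsymbol{\sigma})$ (by the argument already used in the proof of Lemma \ref{l_p_res-0}), we have $\boldsymbol{\sigma}\rightsquigarrow\boldsymbol{m}$ for every $\boldsymbol{m}\in\mathcal{M}$, so $\mathcal{M}\leftsquigarrow\boldsymbol{\sigma}\curvearrowright\mathcal{M}'$. The height identity $U(\boldsymbol{\sigma})=\Theta(\mathcal{M},\mathcal{M}')$ will be verified by a two-way comparison using Lemma \ref{lap01} and Lemma \ref{lem_not}, distinguishing whether $U(\mathcal{M}')\le U(\mathcal{M})$ (in which case $\mathcal{M}'\subset\widetilde{\mathcal{M}}$ gives it directly) or $U(\mathcal{M}')>U(\mathcal{M})$ (in which case $\mathcal{M}$ and $\mathcal{M}'$ play symmetric roles after invoking the bound property of $\mathcal{M}'$ via $\mathfrak{P}_{1}(p)$ or Lemma \ref{lem:bound_dn}). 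Hence $\boldsymbol{\sigma}\in\mathcal{S}(\mathcal{M},\mathcal{M}')$. Conversely, any $\boldsymbol{\sigma}\in\mathcal{S}(\mathcal{M},\mathcal{M}')$ for some $\mathcal{M}'\in\mathscr{S}^{(p)}$ lies on $\partial\mathcal{W}(\mathcal{M})$ because $U(\boldsymbol{\sigma})=\Theta(\mathcal{M},\widetilde{\mathcal{M}})$ together with $\boldsymbol{\sigma}\curvearrowright\mathcal{M}$ places $\boldsymbol{\sigma}$ at the boundary of the component $\mathcal{W}(\mathcal{M})$, and the heteroclinic orbit reaching $\mathcal{M}'\not\subset\mathcal{W}(\mathcal{M})$ verifies the escape condition of Lemma \ref{l_gate}; disjointness of the union is immediate since $\boldsymbol{m}_{\boldsymbol{\sigma}}^{-}$ determines a unique $\mathcal{M}(p,\boldsymbol{m}_{\boldsymbol{\sigma}}^{-})\in\mathscr{S}^{(p)}$.

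Substituting the bijection into \eqref{l_p_res-1} transforms its right-hand side into the expression displayed above for $-(\nu(\mathcal{M})/\nu_{\star})(\widehat{\mathfrak{L}}^{(p)}{\bf f}_{\epsilon})(\mathcal{M})$, which concludes the proof. No appeal to Proposition \ref{p_char_neg} is needed here, since ${\bf f}_{\epsilon}$ is defined on all of $\mathscr{S}^{(p)}$ by \eqref{05} and $\widehat{\mathfrak{L}}^{(p)}$ already involves both $\mathscr{V}^{(p)}$ and $\mathscr{N}^{(p)}$ targets. I expect the main obstacle to lie in verifying the height identity $U(\boldsymbol{\sigma})=\Theta(\mathcal{M},\mathcal{M}')$ uniformly in the case $\mathcal{M}'\in\mathscr{N}^{(p)}$ with $U(\mathcal{M}')>U(\mathcal{M})$: there $\mathcal{M}'$ need not lie in $\widetilde{\mathcal{M}}$, and one must combine Lemma \ref{lap01} with the bound structure inherited from $\mathfrak{P}_{1}(p)$ and Lemma \ref{lem_charDelta} to rule out shortcuts through lower saddles.
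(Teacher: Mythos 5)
Your overall strategy mirrors the paper's: start from \eqref{l_p_res-1}, unfold $\widehat{\mathfrak{L}}^{(p)}$ via \eqref{eq:rate_30}--\eqref{eq:rate_3}, and reduce the lemma to the combinatorial identity $\mathcal{S}(\mathcal{M})=\bigsqcup_{\mathcal{M}'\in\mathscr{S}^{(p)}}\mathcal{S}(\mathcal{M},\mathcal{M}')$ together with the identification $\mathcal{M}_{\boldsymbol{\sigma}}=\mathcal{M}'$ for $\boldsymbol{\sigma}\in\mathcal{S}(\mathcal{M},\mathcal{M}')$. The paper records this decomposition separately as Lemma~\ref{l_SM}. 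Your observation that $\mathbf{f}_\epsilon$ is already defined on all of $\mathscr{S}^{(p)}$ and that Proposition~\ref{p_char_neg} is not needed here is correct.

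The gap is exactly where you flag it: the height identity $U(\boldsymbol{\sigma})=\Theta(\mathcal{M},\mathcal{M}')$ with $\mathcal{M}'=\mathcal{M}(p,\boldsymbol{m}_{\boldsymbol{\sigma}}^{-})$. Your proposed two-way comparison distinguishing $U(\mathcal{M}')\le U(\mathcal{M})$ from $U(\mathcal{M}')>U(\mathcal{M})$ does not close the second case: when $U(\mathcal{M}')>U(\mathcal{M})$, $\mathcal{M}'\not\subset\widetilde{\mathcal{M}}$, so Lemma~\ref{lem_not}-(1) gives no lower bound on $\Theta(\mathcal{M},\mathcal{M}')$, and invoking ``symmetry'' or the bound structure of $\mathcal{M}'$ does not obviously rule out a low saddle connecting $\mathcal{M}$ to $\mathcal{M}'$ inside $\mathcal{W}(\mathcal{M})$. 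The paper's Lemma~\ref{l_SM} handles all cases uniformly with a \emph{topological} rather than $U$-level argument: since $\boldsymbol{m}_{\boldsymbol{\sigma}}^{-}=\boldsymbol{m}_1\notin\mathcal{W}(\mathcal{M})$ and $\mathcal{W}(\mathcal{M})$ does not separate $(p)$-states (Lemma~\ref{lem_separate}), one has $\mathcal{M}'\subset\mathcal{W}(\mathcal{M})^{c}$, whence Lemma~\ref{lap01}-(2) yields $\Theta(\mathcal{M},\widetilde{\mathcal{M}})\le\Theta(\mathcal{M},\mathcal{M}')$ with no case split, and $\Theta(\mathcal{M},\mathcal{M}')\le\Theta(\mathcal{M},\boldsymbol{m}_1)\le U(\boldsymbol{\sigma})=\Theta(\mathcal{M},\widetilde{\mathcal{M}})$ follows from Lemma~\ref{lem_not}-(1),(3). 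Replace your $U$-value dichotomy by this no-separation argument and the plan goes through; without it, the case $U(\mathcal{M}')>U(\mathcal{M})$ is not established.
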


\begin{proof}
By \eqref{l_p_res-1} and the definition of $\omega(\boldsymbol{\sigma})$
given in \eqref{eq:omega}, it suffices to show that
\[
\sum_{\boldsymbol{\sigma}\in\mathcal{S}(\mathcal{M})}\,[\,{\bf f}_{\epsilon}(\mathcal{M}_{\boldsymbol{\sigma}})-{\bf f}_{\epsilon}(\mathcal{M})\,]\,\omega(\boldsymbol{\sigma})\;=\;\nu(\mathcal{M})\,(\widehat{\mathfrak{L}}^{(p)}{\bf f}_{\epsilon})\,(\mathcal{M})\ .
\]

By the definition \eqref{eq:rate_3} of the generator $\widehat{\mathfrak{L}}^{(p)}$
, since $\Xi(\mathcal{M})=d^{(p)}$,
\[
\nu(\mathcal{M})\,(\widehat{\mathfrak{L}}^{(p)}{\bf f}_{\epsilon})\,(\mathcal{M})\;=\;\sum_{\mathcal{M}'\in\mathscr{S}^{(p)}}\,\omega_{p}(\mathcal{M},\,\mathcal{M}')\,\left[\,{\bf f}_{\epsilon}(\mathcal{M}')-{\bf f}_{\epsilon}(\mathcal{M})\,\right]\ .
\]
By \eqref{eq:rate_30}, we may restrict the sum to sets $\mathcal{M}'$
such that $\mathcal{M}\to\mathcal{M}'$ and rewrite the previous expression
as
\[
\sum_{\mathcal{M}'\in\mathscr{S}^{(p)}}\,\sum_{\boldsymbol{\sigma}\in\mathcal{S}(\mathcal{M},\,\mathcal{M}')}\,\omega(\boldsymbol{\sigma})\,\left[\,{\bf f}_{\epsilon}(\mathcal{M}')-{\bf f}_{\epsilon}(\mathcal{M})\,\right]\ .
\]
For $\boldsymbol{\sigma}\in\mathcal{S}(\mathcal{M},\,\mathcal{M}')$,
the set $\mathcal{M}_{\boldsymbol{\sigma}}$ appearing in equation
\eqref{l_p_res-1} is equal to $\mathcal{M}'$. To complete the proof,
it remains to prove that
\[
\mathcal{S}(\mathcal{M})\;=\;\bigcup_{\mathcal{M}'\in\mathscr{S}^{(p)}}\mathcal{S}(\mathcal{M},\,\mathcal{M}')\ ,
\]
which is a content of the next lemma.
\end{proof}
\begin{lem}
\label{l_SM}For each $\mathcal{M}\in\mathscr{V}_{{\rm nab}}^{(p)}$,
\begin{equation}
\mathcal{S}(\mathcal{M})\;=\;\bigcup_{\mathcal{M}'\in\mathscr{S}^{(p)}}\mathcal{S}(\mathcal{M},\,\mathcal{M}')\ ,\label{eq:l_SM}
\end{equation}
where the right-hand side represents a disjoint union.
\end{lem}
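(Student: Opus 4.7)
The strategy is to prove both inclusions in \eqref{eq:l_SM} and then the disjointness of the union, with all three resting on the same dichotomy: for any $\boldsymbol{\sigma}\in\mathcal{S}(\mathcal{M})$, exactly one of its two heteroclinic descendants (granted by \eqref{hyp2}) sits inside $\mathcal{W}(\mathcal{M})$ and the other sits outside. Here $H:=\Theta(\mathcal{M},\widetilde{\mathcal{M}})=U(\mathcal{M})+d^{(p)}$ since $\mathcal{M}\in\mathscr{V}_{\mathrm{nab}}^{(p)}$. The two structural inputs are: (i) $\mathcal{W}(\mathcal{M})$ does not separate $(p)$-states by Lemma \ref{lem_separate}-(2); and (ii) $\mathscr{V}^{(p)}(\mathcal{W}(\mathcal{M}))=\{\mathcal{M}\}$ by Lemma \ref{lem_escape}-(2). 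The heteroclinic characterization of $\mathcal{S}(\mathcal{M})$ provided by Lemma \ref{l_gate} will bridge the two sides of \eqref{eq:l_SM}.

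For the inclusion $\supseteq$, I would fix $\boldsymbol{\sigma}\in\mathcal{S}(\mathcal{M},\mathcal{M}')$; the definition \eqref{eq:con_gate} supplies $\boldsymbol{m}'\in\mathcal{M}'$ with $\boldsymbol{\sigma}\curvearrowright\boldsymbol{m}'$ and $U(\boldsymbol{\sigma})=H$. Were $\boldsymbol{m}'\in\mathcal{W}(\mathcal{M})$, (i) would force $\mathcal{M}'\subset\mathcal{W}(\mathcal{M})$; then a continuous path from a point of $\mathcal{M}$ to $\boldsymbol{m}'$ inside the open connected set $\mathcal{W}(\mathcal{M})$ is a compact subset of $\{U<H\}$, yielding $\Theta(\mathcal{M},\mathcal{M}')<H$ and contradicting $\Theta(\mathcal{M},\mathcal{M}')=H$. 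Hence $\boldsymbol{m}'\notin\mathcal{W}(\mathcal{M})$, and Lemma \ref{l_gate} places $\boldsymbol{\sigma}$ in $\mathcal{S}(\mathcal{M})$.

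For the reverse inclusion, fix $\boldsymbol{\sigma}\in\mathcal{S}(\mathcal{M})$ and, invoking \eqref{hyp2}, label its two heteroclinic descendants $\boldsymbol{m}_{\boldsymbol{\sigma}}^{\pm}$ so that $\boldsymbol{m}_{\boldsymbol{\sigma}}^{+}\in\mathcal{W}(\mathcal{M})$ (via Lemma \ref{l_assu_saddle}-(1), since $\boldsymbol{\sigma}\in\partial\mathcal{W}(\mathcal{M})$) and $\boldsymbol{m}_{\boldsymbol{\sigma}}^{-}\notin\mathcal{W}(\mathcal{M})$ (via Lemma \ref{l_gate}). I would then set $\mathcal{M}':=\mathcal{M}(p,\boldsymbol{m}_{\boldsymbol{\sigma}}^{-})\in\mathscr{S}^{(p)}$; by (i), $\mathcal{M}'$ lies entirely outside $\mathcal{W}(\mathcal{M})$, so $\mathcal{M}'\cap\mathcal{M}=\varnothing$. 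The relation $\boldsymbol{\sigma}\curvearrowright\mathcal{M}'$ is immediate, the identity $\Theta(\mathcal{M},\widetilde{\mathcal{M}})=H=U(\boldsymbol{\sigma})$ is definitional, and $\Theta(\mathcal{M},\mathcal{M}')=H$ follows by combining the upper bound from the two-segment heteroclinic path through $\boldsymbol{\sigma}$ with the lower bound supplied by Lemma \ref{lap01}-(2) together with $\mathcal{M}'\subset\mathcal{W}(\mathcal{M})^{c}$.

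The only delicate clause remaining is $\mathcal{M}\leftsquigarrow\boldsymbol{\sigma}$, and this is the main obstacle: $\boldsymbol{m}_{\boldsymbol{\sigma}}^{+}$ need not itself belong to $\mathcal{M}$, because at the $p$-th scale $\mathcal{W}(\mathcal{M})$ may harbor shallower local minima forming sets of $\mathscr{N}^{(p)}$, in which case the direct relation $\boldsymbol{\sigma}\curvearrowright\mathcal{M}$ fails. I would handle this by prepending the heteroclinic orbit $\boldsymbol{\sigma}\curvearrowright\boldsymbol{m}_{\boldsymbol{\sigma}}^{+}$ to a chain inside $\mathcal{W}(\mathcal{M})$ connecting $\boldsymbol{m}_{\boldsymbol{\sigma}}^{+}$ to a chosen $\boldsymbol{m}\in\mathcal{M}$, built via the iterated heteroclinic construction of Lemma \ref{l_assu_saddle}-(2) (the same tool used in Claim A of the proof of Lemma \ref{lem_wells2}). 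Since every intermediate saddle $\boldsymbol{\sigma}_{j}$ of such a chain lies in $\mathcal{W}(\mathcal{M})\subset\{U<H\}$, one automatically has $U(\boldsymbol{\sigma}_{j})<H=U(\boldsymbol{\sigma})$, so \eqref{eq:approx} is satisfied and $\boldsymbol{\sigma}\rightsquigarrow\boldsymbol{m}$. Finally, disjointness is a by-product of the argument just given: for any $\boldsymbol{\sigma}\in\mathcal{S}(\mathcal{M})$, the set $\mathcal{M}'\in\mathscr{S}^{(p)}$ with $\boldsymbol{\sigma}\in\mathcal{S}(\mathcal{M},\mathcal{M}')$ must contain the unique heteroclinic descendant of $\boldsymbol{\sigma}$ lying outside $\mathcal{W}(\mathcal{M})$ (the one inside having been ruled out by the $\Theta=H$ constraint as in paragraph two), and this descendant lies in a single element of the partition $\mathscr{S}^{(p)}$, so $\mathcal{M}'$ is uniquely determined.
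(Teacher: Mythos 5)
Your proof follows the paper's two-inclusion strategy and arrives at the same dichotomy based on the two heteroclinic descendants of a saddle; your handling of the $\mathcal{S}(\mathcal{M})\subset\bigcup$ inclusion and of disjointness is sound and essentially matches the paper's (with a minor stylistic difference: you unpack the chain construction of Lemma \ref{l_assu_saddle}-(2) where the paper simply cites Lemma \ref{l_assu_saddle}-(3), and you use a path-compactness argument where the paper cites Lemma \ref{lap01}).

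However, the $\supseteq$ inclusion has a genuine gap. After establishing that the heteroclinic descendant $\boldsymbol{m}'$ of $\boldsymbol{\sigma}\in\mathcal{S}(\mathcal{M},\mathcal{M}')$ lies outside $\mathcal{W}(\mathcal{M})$, you conclude ``Lemma \ref{l_gate} places $\boldsymbol{\sigma}$ in $\mathcal{S}(\mathcal{M})$.'' But the characterization \eqref{e_gate} is a conjunction: it requires $\boldsymbol{\sigma}\in\partial\mathcal{W}(\mathcal{M})\cap\mathcal{S}_0$ \emph{and} a descendant outside $\mathcal{W}(\mathcal{M})$. You have only checked the second clause. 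Membership $\boldsymbol{\sigma}\in\mathcal{S}(\mathcal{M},\mathcal{M}')$ does not automatically place $\boldsymbol{\sigma}$ on $\partial\mathcal{W}(\mathcal{M})$; \eqref{eq:con_gate} merely asserts $\mathcal{M}\leftsquigarrow\boldsymbol{\sigma}$, which is a chain relation, not direct adjacency. To close the gap one must argue, as the paper does, that $\boldsymbol{\sigma}\rightsquigarrow\boldsymbol{m}_3$ for some $\boldsymbol{m}_3\in\mathcal{M}$, so that Lemma \ref{l_squig_saddle} places $\boldsymbol{\sigma}$ on the boundary of the connected component of $\{U<U(\boldsymbol{\sigma})\}$ containing $\boldsymbol{m}_3$, and since $U(\boldsymbol{\sigma})=\Theta(\mathcal{M},\widetilde{\mathcal{M}})$ that component is exactly $\mathcal{W}(\mathcal{M})$. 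This is a short step but it is a needed one, and its absence means your $\supseteq$ direction does not yet reach the conclusion.
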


\begin{proof}
Since the disjointness of the union at the right-hand side of \eqref{eq:l_SM}
is immediate as a saddle cannot be connected with three different
minima, we focus only on the proof of \eqref{eq:l_SM}.

Fix $\boldsymbol{\sigma}\in\mathcal{S}(\mathcal{M})$. By \eqref{e_gate},
$\boldsymbol{\sigma}\in\partial\mathcal{W}(\mathcal{M})$ and there
exists $\boldsymbol{m}_{1}\notin\mathcal{W}(\mathcal{M})$ such that
$\boldsymbol{\sigma}\curvearrowright\boldsymbol{m}_{1}$. Therefore,
by Lemma \ref{l_level_boundary},
\[
U(\boldsymbol{\sigma})\;=\;\Theta(\mathcal{M},\,\widetilde{\mathcal{M}})\;=\;U(\mathcal{M})+\Xi(\mathcal{M})\;.
\]
As $\boldsymbol{m}_{1}\notin\mathcal{W}(\mathcal{M})$ and $\boldsymbol{\sigma}\curvearrowright\boldsymbol{m}_{1}$,
$\Theta(\mathcal{M},\,\boldsymbol{m}_{1})=U(\boldsymbol{\sigma})$
so that
\[
\Theta(\mathcal{M},\,\boldsymbol{m}_{1})\;=\;\Theta(\mathcal{M},\,\widetilde{\mathcal{M}})\ .
\]

Let $\mathcal{M}'=\mathcal{M}(p,\,\bm{m}_{1})$. We claim that
\begin{equation}
\Theta(\mathcal{M},\,\mathcal{M}')\,=\,U(\boldsymbol{\sigma})\quad\text{and that}\quad\mathcal{M}\leftsquigarrow\boldsymbol{\sigma}\curvearrowright\mathcal{M}'\ .\label{09}
\end{equation}
By Lemma \ref{l_assu_saddle}-(3), $\mathcal{M}\leftsquigarrow\boldsymbol{\sigma}$.
Hence, as $\boldsymbol{\sigma}\curvearrowright\boldsymbol{m}_{1}\in\mathcal{M}'$,
it remains to show that $\Theta(\mathcal{M},\,\mathcal{M}')=U(\boldsymbol{\sigma})$.
As $\boldsymbol{m}_{1}\in\mathcal{M}'$,
\[
\Theta(\mathcal{M},\,\mathcal{M}')\,\le\,\Theta(\mathcal{M},\,\boldsymbol{m}_{1})\,=\,\Theta(\mathcal{M},\,\widetilde{\mathcal{M}})\ .
\]
Since $\boldsymbol{m}_{1}\notin\mathcal{W}(\mathcal{M})$ and $\mathcal{M}'\in\mathscr{S}^{(p)}$,
by Lemma \ref{lem_separate}, $\mathcal{M}'\subset\mathcal{W}(\mathcal{M})^{c}$.
Hence, by Lemma \ref{lap01}-(2), $\Theta(\mathcal{M},\,\widetilde{\mathcal{M}})\le\Theta(\mathcal{M},\,\mathcal{M}')$.
This completes the proof of \eqref{09} and shows that $\boldsymbol{\sigma}\in\mathcal{S}(\mathcal{M},\,\mathcal{M}')$.
Therefore, left-hand side of \eqref{eq:l_SM} is a subset of the right-hand
side.

To prove the reversed inclusion, fix $\boldsymbol{\sigma}\in\mathcal{S}(\mathcal{M},\,\mathcal{M}')$
for some $\mathcal{M}'\in\mathscr{S}^{(p)}$. By definition of $\mathcal{S}(\mathcal{M},\,\mathcal{M}')$
and by Lemma \ref{lem_not}-(4), there exists $\boldsymbol{m}_{2}\in\mathcal{M}'$
such that $\mathcal{M}\leftsquigarrow\boldsymbol{\sigma}\curvearrowright\boldsymbol{m}_{2}$
and $U(\boldsymbol{\sigma})=\Theta(\mathcal{M},\,\widetilde{\mathcal{M}})=\Theta(\mathcal{M},\,\boldsymbol{m}_{2})$.
By Lemma \ref{lap01}-(1), $\boldsymbol{m}_{2}\notin\mathcal{W}(\mathcal{M})$.
It remains to prove $\bm{\sigma}\in\partial\mathcal{W}(\mathcal{M})$.
Let $\boldsymbol{m}_{3}\in\mathcal{M}$ be such that $\boldsymbol{\sigma}\rightsquigarrow\boldsymbol{m}_{3}$.
Since $\mathcal{W}(\mathcal{M})$ is a connected component of $\{U<U(\boldsymbol{\sigma})\}$
containing $\boldsymbol{m}_{3}$ and $U(\boldsymbol{\sigma})=\Theta(\mathcal{M},\,\widetilde{\mathcal{M}})$,
by Lemma \ref{l_squig_saddle}, we have $\boldsymbol{\sigma}\in\partial\mathcal{W}(\mathcal{M})$
so that $\boldsymbol{\sigma}\in\mathcal{S}(\mathcal{M})$ and thus
the proof is completed.
\end{proof}
\begin{rem}
\label{rem_assu_saddle} A careful reading, shows that assumption
\eqref{hyp2} is not needed for all saddle points, but only for those
in $\bigcup_{p=1}^{\mathfrak{q}}\bigcup_{\mathcal{M}\in\mathscr{V}_{{\rm nab}}^{(p)}}\mathcal{S}(\mathcal{M})$.
A rigorous proof of this claim is, however, much longer.
\end{rem}

\subsection{Proof of Proposition \ref{p_res}\label{subsec_pf_t_res-gen}}

The proof of Proposition \ref{p_res} relies on the following result.
\begin{lem}
\label{l_p_res-3} For all ${\bf g}:\mathscr{V}^{(p)}\to\mathbb{R}$
and $\mathcal{M}\in\mathscr{V}^{(p)}$,
\[
(\widehat{\mathfrak{L}}^{(p)}{\bf f}_{\epsilon})\,(\mathcal{M})\;=\;(\mathfrak{L}^{(p)}{\bf f}_{\epsilon})\,(\mathcal{M})\,+\,o_{\epsilon}(1)\ .
\]
\end{lem}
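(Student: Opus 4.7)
The plan is to combine the general formula relating the trace generator to the full generator with the identification of $\mathbf{f}_\epsilon$ on negligible valleys provided by Proposition \ref{p_char_neg}. Since $\mathbf{y}^{(p)}(\cdot)$ is by definition the trace of $\widehat{\mathbf{y}}^{(p)}(\cdot)$ on $\mathscr{V}^{(p)}$, there is a standard identity (which should be recorded in Appendix \ref{app:trace}) that for any $\mathbf{h}\colon\mathscr{V}^{(p)}\to\mathbb{R}$ and $\mathcal{M}\in\mathscr{V}^{(p)}$,
\[
(\mathfrak{L}^{(p)}\mathbf{h})(\mathcal{M})\;=\;(\widehat{\mathfrak{L}}^{(p)}\widehat{\mathbf{h}})(\mathcal{M})\;,
\]
where $\widehat{\mathbf{h}}\colon\mathscr{S}^{(p)}\to\mathbb{R}$ is the harmonic extension
\[
\widehat{\mathbf{h}}(\mathcal{M}')\;=\;\mathbf{h}(\mathcal{M}')\;\;\text{for}\;\;\mathcal{M}'\in\mathscr{V}^{(p)}\;,\quad\widehat{\mathbf{h}}(\mathcal{M}')\;=\;\sum_{\mathcal{M}''\in\mathscr{V}^{(p)}}\widehat{\mathcal{Q}}_{\mathcal{M}'}^{(p)}\bigl[\,\tau_{\mathscr{V}^{(p)}}=\tau_{\mathcal{M}''}\,\bigr]\,\mathbf{h}(\mathcal{M}'')\;\;\text{for}\;\;\mathcal{M}'\in\mathscr{N}^{(p)}\;.
\]

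Taking $\mathbf{h}=\mathbf{f}_\epsilon|_{\mathscr{V}^{(p)}}$ and comparing $\widehat{\mathbf{h}}$ with $\mathbf{f}_\epsilon$ on $\mathscr{S}^{(p)}$, the two functions agree trivially on $\mathscr{V}^{(p)}$, while on $\mathscr{N}^{(p)}$ Proposition \ref{p_char_neg} gives precisely
\[
\mathbf{f}_\epsilon(\mathcal{M}')\;=\;\widehat{\mathbf{h}}(\mathcal{M}')\;+\;o_\epsilon(1)\;,\quad\mathcal{M}'\in\mathscr{N}^{(p)}\;.
\]
Since $\mathscr{S}^{(p)}$ is finite and the jump rates $\widehat{r}^{(p)}(\mathcal{M},\,\cdot)$ are bounded, this yields
\[
(\widehat{\mathfrak{L}}^{(p)}\mathbf{f}_\epsilon)(\mathcal{M})\,-\,(\widehat{\mathfrak{L}}^{(p)}\widehat{\mathbf{h}})(\mathcal{M})\;=\;\sum_{\mathcal{M}'\in\mathscr{N}^{(p)}}\widehat{r}^{(p)}(\mathcal{M},\,\mathcal{M}')\,\bigl[\,\mathbf{f}_\epsilon(\mathcal{M}')-\widehat{\mathbf{h}}(\mathcal{M}')\,\bigr]\;=\;o_\epsilon(1)\;.
\]
Combining this with the trace identity $(\widehat{\mathfrak{L}}^{(p)}\widehat{\mathbf{h}})(\mathcal{M})=(\mathfrak{L}^{(p)}\mathbf{f}_\epsilon)(\mathcal{M})$ finishes the proof.

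There is essentially no obstacle: the only non-trivial input is Proposition \ref{p_char_neg}, which asserts that $\mathbf{f}_\epsilon$ on $\mathscr{N}^{(p)}$ behaves asymptotically like the discrete harmonic extension of its boundary values on $\mathscr{V}^{(p)}$. The rest is bookkeeping based on the general theory of trace Markov chains, so the only point requiring a little care is citing (or quickly verifying from the appendix) the trace-generator identity in the exact form above.
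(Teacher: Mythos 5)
Your proof is correct and follows essentially the same route as the paper's: both hinge on Proposition \ref{p_char_neg} together with the trace-rate identity from \cite[Corollary 6.2]{BL1}. The paper substitutes the expression from Proposition \ref{p_char_neg} directly into the definition of $\widehat{\mathfrak{L}}^{(p)}{\bf f}_{\epsilon}$ and then interchanges sums to recognize $r^{(p)}(\mathcal{M},\,\cdot)$; you repackage the same sum interchange as the harmonic-extension identity $(\mathfrak{L}^{(p)}\mathbf{h})(\mathcal{M})=(\widehat{\mathfrak{L}}^{(p)}\widehat{\mathbf{h}})(\mathcal{M})$ and then control $\mathbf{f}_{\epsilon}-\widehat{\mathbf{h}}$ on $\mathscr{N}^{(p)}$, which is a modest but clean reorganization of the identical computation.
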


\begin{proof}
Fix $\mathcal{M}\in\mathscr{V}^{(p)}$. By definition,
\[
(\widehat{\mathfrak{L}}^{(p)}{\bf f}_{\epsilon})\,(\mathcal{M})\,=\,\sum_{\mathcal{M}'\in\mathscr{S}^{(p)}}\widehat{r}^{(p)}(\mathcal{M},\,\mathcal{M}')\,\left(\,{\bf f}_{\epsilon}(\mathcal{M}')-{\bf f}_{\epsilon}(\mathcal{M})\,\right)\ .
\]
By Proposition \ref{p_char_neg}, for all $\mathcal{M}'\in\mathscr{N}^{(p)}$,
\[
{\bf f}_{\epsilon}(\mathcal{M}')\;=\;\sum_{\mathcal{M}''\in\mathscr{V}^{(p)}}\,\widehat{\mathcal{Q}}_{\mathcal{M}'}^{(p)}\left[\,\tau_{\mathscr{V}^{(p)}}=\tau_{\mathcal{M}''}\,\right]\,{\bf f}_{\epsilon}(\mathcal{M}'')+o_{\epsilon}(1)\ ,
\]
For $\mathcal{M}'\in\mathscr{V}^{(p)}$ this identity also holds.
It is therefore valid for all $\mathcal{M}'\in\mathscr{S}^{(p)}$.
Therefore,
\[
(\widehat{\mathfrak{L}}^{(p)}{\bf f}_{\epsilon})\,(\mathcal{M})\;=\;\sum_{\mathcal{M}'\in\mathscr{S}^{(p)}}\,\sum_{\mathcal{M}''\in\mathscr{V}^{(p)}}\,\widehat{r}^{(p)}(\mathcal{M},\,\mathcal{M}')\,\widehat{\mathcal{Q}}_{\mathcal{M}'}^{(p)}\left[\,\tau_{\mathscr{V}^{(p)}}=\tau_{\mathcal{M}''}\,\right]\left(\,{\bf f}_{\epsilon}(\mathcal{M}'')-{\bf f}_{\epsilon}(\mathcal{M})\,\right)+o_{\epsilon}(1)\;.
\]
Interchanging the sums, by \cite[Corollary 6.2]{BL1},
\[
(\widehat{\mathfrak{L}}^{(p)}{\bf f}_{\epsilon})\,(\mathcal{M})\;=\;\sum_{\mathcal{M}''\in\mathscr{V}^{(p)}}\,r^{(p)}(\mathcal{M},\,\mathcal{M}'')\,\left(\,{\bf f}_{\epsilon}(\mathcal{M}'')-{\bf f}_{\epsilon}(\mathcal{M})\,\right)+o_{\epsilon}(1)\;=\;(\mathfrak{L}^{(p)}{\bf f}_{\epsilon})\,(\mathcal{M})\,+\,o_{\epsilon}(1)\;,
\]
as claimed.
\end{proof}
Now we are ready to prove Proposition \ref{p_res}
\begin{proof}[Proof of Proposition \ref{p_res}]
Since $(\mathfrak{L}^{(p)}{\bf h})(\mathcal{M})=0$ for all $\mathcal{M}\in\mathscr{V}_{{\rm ab}}^{(p)}$,
${\bf h}\colon\mathscr{V}^{(p)}\to\mathbb{R}$, by Lemma \ref{l09},
the assertion of the proposition holds for $\mathcal{M}\in\mathscr{V}_{{\rm ab}}^{(p)}$.

Fix $\mathcal{M}\in\mathscr{V}_{{\rm nab}}^{(p)}$. Multiply both
sides of the equation \eqref{res} by the test function $Q_{\epsilon}$
introduced in \eqref{2-02} and integrate with respect to $\mu_{\epsilon}$.
By Lemmata \ref{l2-01}, \ref{l2-02} and \ref{l_p_res-3},
\begin{align*}
 & \big\{\,\lambda\,{\bf f}_{\epsilon}(\mathcal{M})\,+\,o_{\epsilon}(1)\,\big\}\,\mu_{\epsilon}(\mathcal{E}(\mathcal{M}))\;-\;\left[\,\frac{\nu(\mathcal{M})}{\nu_{\star}}\,\big(\mathfrak{L}^{(p)}{\bf f}_{\epsilon}\big)\,(\mathcal{M})\,+\,o_{\epsilon}(1)\,\right]\,e^{-U(\mathcal{M})/\epsilon}\\
 & \quad\;=\;\sum_{\mathcal{M}'\in\mathscr{V}^{(p)}}\,{\bf g}(\mathcal{M}')\int_{\mathbb{R}^{d}}\,Q_{\epsilon}\,d\mu_{\epsilon}\;.
\end{align*}
By definition of $Q_{\epsilon}$ and since $\mathcal{M}=\mathcal{M}^{*}\left(\mathcal{W}(\mathcal{M})\right)$,
the right-hand side is equal to $(1+o_{\epsilon}(1))\,{\bf g}(\mathcal{M})\,\mu_{\epsilon}\left(\mathcal{E}(\mathcal{M})\right)$.
On the other hand, by definition of $\nu(\mathcal{M})$, $\nu_{\star}$,
$[\nu(\mathcal{M})/\nu_{\star}]\,e^{-U(\mathcal{M})/\epsilon}=(1+o_{\epsilon}(1))\,\mu_{\epsilon}\left(\mathcal{E}(\mathcal{M})\right)$.
Hence, dividing the previous equation by $\mu_{\epsilon}\left(\mathcal{E}(\mathcal{M})\right)$
yields that
\[
\lambda\,{\bf f}_{\epsilon}(\mathcal{M})\,-\,\big(\mathfrak{L}^{(p)}{\bf f}_{\epsilon}\big)\,(\mathcal{M})\;=\;{\bf g}(\mathcal{M})\,+\,o_{\epsilon}(1)\;,
\]
as claimed.
\end{proof}

\section{\label{sec11}Proof of Proposition \ref{prop_NE}}

In this section, we fix $p\in\llbracket1,\,\mathfrak{q}\rrbracket$
and show that $\mathfrak{R}^{(p)}$ and $\mathfrak{H}^{(p)}$ imply
$\mathfrak{G}^{(p)}$, as claimed in Proposition \ref{prop_NE}. The
proof is similar to that of \cite[Proposition 10.4]{LLS-1st}, but
we present a complete proof in sake of completeness as the setting
and notation are slightly different.

\subsection{Reduction to hitting time estimate}

Denote by $\boldsymbol{y}_{\epsilon}^{(p)}(\cdot)$ the process $\boldsymbol{x}_{\epsilon}(\cdot)$
speeded-up by $\theta_{\epsilon}^{(p)}$:
\[
{\color{blue}\boldsymbol{y}_{\epsilon}^{(p)}(t)}\;:=\;\boldsymbol{x}_{\epsilon}(\theta_{\epsilon}^{(p)}t)\;\;\;\;;\;t\ge0\;,
\]
and denote by ${\color{blue}\mathbb{Q}_{\boldsymbol{x}}^{\epsilon,p}}$
the law of the process $\boldsymbol{y}_{\epsilon}^{(p)}(\cdot)$ starting
from $\boldsymbol{x}$. With these notations, we can restate $\mathfrak{G}^{(p)}$
as,
\begin{equation}
\limsup_{b\rightarrow0}\,\limsup_{\epsilon\rightarrow0}\,\sup_{\boldsymbol{x}\in\mathcal{E}(\mathcal{M})}\,\sup_{t\in[2b,\,4b]}\,\mathbb{Q}_{\boldsymbol{x}}^{\epsilon,p}\big[\,\boldsymbol{y}_{\epsilon}^{(p)}(t)\notin\mathcal{E}^{(p)}\,\big]\ =\ 0\;\;\;\text{for all}\ \mathcal{M}\in\mathscr{V}^{(p)}\;,\label{conEp}
\end{equation}
where the set $\mathcal{E}^{(p)}$ has been introduced in \eqref{07}.

Since our purpose is to prove \eqref{conEp} for all $\mathcal{M}\in\mathscr{V}^{(p)}$,
in the remainder of the section, we fix a set $\mathcal{M}\in\mathscr{V}^{(p)}$.
In addition, we fix a constant $\eta$ such that
\begin{equation}
\eta\in\left(\,0,\,\min\{r_{0}/2,\,d^{(p)}/2,\,(d^{(p)}-d^{(p-1)})\}\,\right)\label{etac}
\end{equation}
so that there is no critical point $\boldsymbol{c}$ of $U$ satisfying
\begin{equation}
U(\boldsymbol{c})\,\in\,\bigcup_{\boldsymbol{m}\in\mathcal{M}_{0}}\left(\,U(\boldsymbol{m}),\,U(\boldsymbol{m})+2\eta\,\right]\,\cup\,\left[\,U(\mathcal{M})+d^{(p)}-\eta,\,U(\mathcal{M})+d^{(p)}\,\right)\;.\label{etac2}
\end{equation}

\begin{figure}
\includegraphics[scale=0.12]{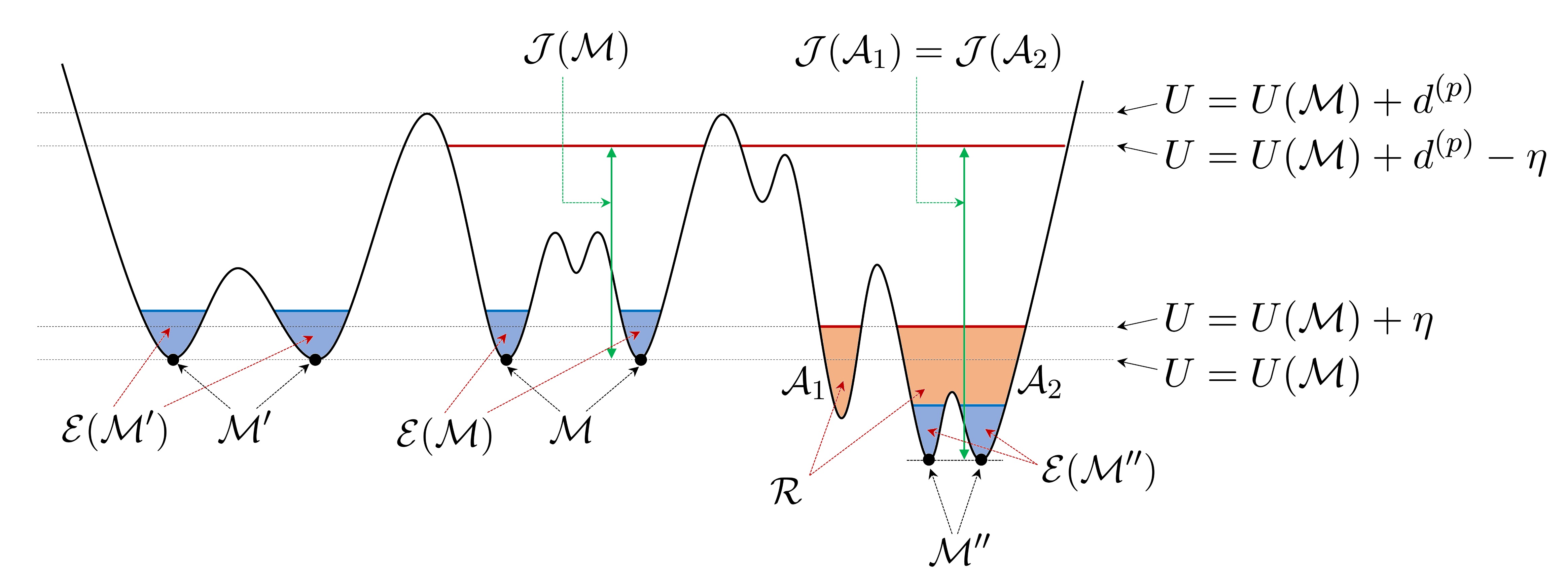} \caption{Illustration of the sets used in Section \ref{sec11}. In this figure,
we illustrated the situation when $\mathscr{V}^{(p)}=\{\mathcal{M},\,\mathcal{M}',\,\mathcal{M}''\}$
where $\mathcal{M}''$ is the only absorbing state. The set $\mathcal{R}$
is defined by $\{U<U(\mathcal{M})+\eta\}\setminus\mathcal{E}^{(p)}$
and thus in this figure $\mathcal{R}$ has two connected components
$\mathcal{A}_{1}$ and $\mathcal{A}_{2}$. We also illustrated the
sets $\mathcal{J}(\mathcal{M})$, $\mathcal{J}(\mathcal{A}_{1})$,
and $\mathcal{J}(\mathcal{A}_{2})$ as well. We note that in this
situation it holds that $\mathcal{J}(\mathcal{A}_{1})=\mathcal{J}(\mathcal{A}_{2})$.}
\label{fig_W_A}
\end{figure}

Define the set $\mathcal{R}=\mathcal{R}(\mathcal{M})$ as (cf. Figure
\ref{fig_W_A})
\begin{equation}
{\color{blue}\mathcal{R}}\ :=\ \big\{\,U<U(\mathcal{M})+\eta\,\big\}\,\setminus\,\mathcal{E}^{(p)}\;.\label{2-04b}
\end{equation}
The set $\mathcal{R}$ is disjoint with $\mathcal{M}$ since $\mathcal{M}\subset\mathcal{E}^{(p)}$.
When $\mathcal{M}\subset\mathcal{M}_{\star}$, $\mathcal{R}$ is empty
because $\eta<r_{0}$. The main ingredient of the proof of \eqref{conEp}
is the following lemma.
\begin{prop}
\label{prop: nojump2} Assume $\mathfrak{R}^{(p)}$ and $\mathfrak{H}^{(p)}$.
Then,
\[
\limsup_{a\rightarrow0}\,\limsup_{\epsilon\rightarrow0}\,\sup_{\boldsymbol{x}\in\mathcal{E}(\mathcal{M})}\,\mathbb{Q}_{\boldsymbol{x}}^{\epsilon,\,p}\left[\,\tau_{\mathcal{R}}\le a\,\right]\,=\,0\;.
\]
\end{prop}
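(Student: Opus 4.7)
My plan is to convert the hitting-time estimate into an occupation-time bound via a combination of Freidlin--Wentzell confinement and the leveling property $\mathfrak{R}^{(p)}$, and then to upgrade it back to a hitting bound using $\mathfrak{H}^{(p)}$.

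First, I would prepare the geometry: decompose $\mathcal{R}$ into its connected components $\mathcal{A}_{1},\ldots,\mathcal{A}_{k}$, and for each $\mathcal{A}_{i}$ take $\mathcal{J}(\mathcal{A}_{i})$ to be the connected component of $\{U<U(\mathcal{M})+d^{(p)}-\eta/2\}$ containing $\mathcal{A}_{i}$, with $\mathcal{J}(\mathcal{M})$ defined analogously around $\mathcal{E}(\mathcal{M})$. By the choice of $\eta$ in \eqref{etac}--\eqref{etac2}, every local minimum inside $\mathcal{A}_{i}$ has depth strictly less than $d^{(p)}$ and hence lies in $\mathscr{N}^{(p)}$; moreover, by Proposition \ref{p_FW}, the process $\boldsymbol{y}_{\epsilon}^{(p)}(\cdot)$ remains in the confining set $\mathcal{D}:=\mathcal{J}(\mathcal{M})\cup\bigcup_{i}\mathcal{J}(\mathcal{A}_{i})$ up to speeded-up time $a$ with probability $1-o_{\epsilon}(1)$, since the effective barrier (of height $\eta/2$) yields an exit time $\succ e^{(\eta/2-\delta)/\epsilon}$ in the original scale.

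The core estimate comes from $\mathfrak{R}^{(p)}$ applied with $\boldsymbol{g}\equiv1$: this gives $\phi_{\epsilon}\to1/\lambda$ uniformly on $\mathcal{E}^{(p)}$, and via the probabilistic representation \eqref{eq:probex} translates into
\[
\mathbb{E}_{\boldsymbol{x}}^{\epsilon}\!\left[\,\int_{0}^{\infty}e^{-\lambda s}\,\chi_{\mathbb{R}^{d}\setminus\mathcal{E}^{(p)}}\!\left(\boldsymbol{y}_{\epsilon}^{(p)}(s)\right)ds\,\right]\ =\ o_{\epsilon}(1)
\]
uniformly for $\boldsymbol{x}\in\mathcal{E}(\mathcal{M})$, in particular showing that the expected occupation time of $\mathcal{R}$ on $[0,a]$ is $o_{\epsilon}(1)$. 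To convert this to a hitting-probability bound, I would combine the strong Markov property at the first entry into $\mathcal{R}$ with a uniform lower bound on the expected sojourn per visit. For this, entries into components $\mathcal{A}_{i}$ containing an $\mathscr{N}^{(p)}$-valley are handled by $\mathfrak{H}^{(p)}$, while components with no local minimum admit a direct Freidlin--Wentzell crossing estimate inside $\mathcal{J}(\mathcal{A}_{i})$.

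The main obstacle is precisely this last conversion from expected occupation to hitting probability: a naive Markov inequality is too crude, since individual visits to $\mathcal{R}$ may a priori be very short in the speeded-up scale. The resolution requires a careful analysis of the reflected diffusion on $\mathcal{D}$, combining both parts of $\mathfrak{H}^{(p)}$ to control the transition dynamics through the $\mathscr{N}^{(p)}$-valleys and a refined estimate quantifying the minimum ``commitment time'' before the process returns to $\mathcal{E}^{(p)}$. Once these lower bounds are in place, multiplying them by the hitting probability and comparing with the occupation-time bound above yields the claim after sending $\epsilon\to0$ followed by $a\to0$.
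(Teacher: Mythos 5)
Your core strategy---bound the expected occupation time of $\mathbb{R}^d\setminus\mathcal{E}^{(p)}$ via $\mathfrak{R}^{(p)}$ with $\boldsymbol{g}\equiv 1$, and divide by a per-visit ``commitment time''---does not close and, as set up, cannot close. The occupation-time bound you correctly extract is $o_\epsilon(1)$ (in speeded-up time); but the minimal time a visit to $\mathcal{R}$ commits the process to the complement of $\mathcal{E}^{(p)}$ is also $o_\epsilon(1)$: by $\mathfrak{H}^{(p)}$, once in $\mathcal{J}(\mathcal{A})$ the process hits $\mathcal{E}^{(p)}$ within $\alpha_\epsilon$ with $\theta_\epsilon^{(p-1)}\prec\alpha_\epsilon\prec\theta_\epsilon^{(p)}$, i.e.\ within $\alpha_\epsilon/\theta_\epsilon^{(p)}\to 0$ in speeded-up time. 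Dividing one $o_\epsilon(1)$ by the other gives no control on $\mathbb{Q}_{\boldsymbol{x}}^{\epsilon,p}[\tau_{\mathcal{R}}\le a]$, and the double limit $\epsilon\to 0$ then $a\to 0$ does not rescue this---the occupation bound is already $a$-independent. Your last paragraph acknowledges the difficulty, but the suggested ``refined minimum commitment time'' estimate is precisely what cannot hold, since $\mathfrak{H}^{(p)}$ makes the process leave $\mathcal{R}$ almost instantly on the $\theta_\epsilon^{(p)}$-scale.

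A second issue: the Freidlin--Wentzell confinement claim is wrong. The barrier from $\mathcal{E}(\mathcal{M})$ to $\partial\mathcal{J}(\mathcal{M})$ is $d^{(p)}-\eta$, not $\eta/2$; the exit time is $\asymp e^{(d^{(p)}-\eta)/\epsilon}=\theta_\epsilon^{(p)}e^{-\eta/\epsilon}\prec\theta_\epsilon^{(p)}$, so in the speeded-up scale the process escapes $\mathcal{J}(\mathcal{M})$ (and hence $\mathcal{D}$) instantaneously with high probability, not after time $\succ a$. There is no naive FW confinement to $\mathcal{D}$.

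The missing ingredient---and the reason the paper does not route through occupation time at all---is Lemma \ref{l: nojump}, which extracts directly from $\mathfrak{R}^{(p)}$ (via the general resolvent result \cite[Lemma 4.2]{LMS}) the sharp \emph{hitting}-time bound
\[
\limsup_{a\to 0}\,\limsup_{\epsilon\to 0}\,\sup_{\boldsymbol{x}\in\mathcal{E}(\mathcal{M})}\,\mathbb{Q}_{\boldsymbol{x}}^{\epsilon,p}\big[\,\tau_{\mathcal{E}^{(p)}\setminus\mathcal{E}(\mathcal{M})}\le a\,\big]\;=\;0\;.
\]
This cannot be deduced from the leveling of $\phi_\epsilon^{\boldsymbol{g}}$ at a single $\boldsymbol{g}\equiv 1$; it uses the full strength of $\mathfrak{R}^{(p)}$ (all $\boldsymbol{g}$) and the structure of the trace process. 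Given this lemma, the paper then uses $\mathfrak{H}^{(p)}$ exactly as you envisioned in your second step: upon entering $\mathcal{J}(\mathcal{A})$, the process hits $\mathcal{E}^{(p)}\cap\mathcal{J}(\mathcal{A})\subset\mathcal{E}^{(p)}\setminus\mathcal{E}(\mathcal{M})$ within $\alpha_\epsilon$ (Lemma \ref{lem_hitting}, supported by Lemma \ref{l: nojump2-2} which guarantees $\mathscr{V}^{(p)}(\mathcal{J}(\mathcal{A}))\ne\varnothing$), so $\{\tau_{\mathcal{J}(\mathcal{A})}\le a\}$ essentially forces $\{\tau_{\mathcal{E}^{(p)}\setminus\mathcal{E}(\mathcal{M})}\le 2a\}$, which is then controlled by the hitting-time lemma. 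That transfer step from $\tau_{\mathcal{R}}$ to $\tau_{\mathcal{E}^{(p)}\setminus\mathcal{E}(\mathcal{M})}$ is close to your intuition; the gap is the first half.
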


It is proven in \cite[Proposition 10.4]{LLS-1st} that Proposition
\ref{prop: nojump2} implies \eqref{conEp}. The same proof applies
here without any modification. We turn to the proof of Proposition
\ref{prop: nojump2}.
\begin{rem}
\label{rem_empty} When , as observed above, $\mathcal{R}=\varnothing$,
Proposition \ref{prop: nojump2} trivially holds. Therefore, in the
remainder of the section, we not only fix $\mathcal{M}\in\mathscr{V}^{(p)}$
but also assume $\mathcal{M}\ne\mathcal{M}_{\star}$ does not contain
any global minimum of $U$.
\end{rem}

\subsection{Further anaysis on open level sets }

Consider the following sets illustrated in Figure \ref{fig_W_A}.
\begin{enumerate}
\item The set $\mathcal{J}$ is the level set
\begin{equation}
{\color{blue}\mathcal{J}}\ :=\ \big\{\,U<U(\mathcal{M})+d^{(p)}-\eta\,\big\}\;.\label{levset}
\end{equation}
\item By $\mathfrak{P}_{1}(p)$, Lemma \ref{lem:bound_dn}, and the definition
of $\eta$, the set $\mathcal{M}$ is contained in a connected component
of $\mathcal{J}$, denoted by \textcolor{blue}{$\mathcal{J}(\mathcal{M})$}.
This set can also be written as $\mathcal{W}^{U(\mathcal{M})+d^{(p)}-\eta}(\mathcal{M})$
according to previous notation.
\item Let ${\color{blue}\mathcal{A}}$ be a connected component of $\mathcal{R}$.
By Lemma \ref{l_level_connected} and since $\eta<d^{(p)}-\eta$ (cf.
\eqref{etac}), the set $\mathcal{A}$ is contained in a connected
component of $\mathcal{J}$, denoted by \textcolor{blue}{$\mathcal{J}(\mathcal{A})$}.
\end{enumerate}
The next lemma provides a characterization of the landscape of $U$.
\begin{lem}
\label{l: nojump2-2} Let $\mathcal{A}$ be a connected component
of $\mathcal{R}$. Then,
\begin{enumerate}
\item $\mathcal{M}=\mathcal{M}^{*}\left(\mathcal{J}(\mathcal{M})\right)$.
\item $\mathcal{J}(\mathcal{A})$ contains a minimum $\boldsymbol{m}$ of
$U$ such that $U(\boldsymbol{m})\le U(\mathcal{M})$.
\item $\mathcal{J}(\mathcal{M})\cap\mathcal{J}(\mathcal{A})=\varnothing$.
\item $\mathcal{J}(\mathcal{M})$ and $\mathcal{J}(\mathcal{A})$ do not
separate $(p)$-states.
\item $\mathscr{V}^{(p)}(\mathcal{J}(\mathcal{A}))\ne\varnothing$.
\end{enumerate}
\end{lem}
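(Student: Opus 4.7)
The plan is to exploit the specific choice of $\eta$ in \eqref{etac}--\eqref{etac2} together with the tree properties $\mathfrak{P}\llbracket p\rrbracket$ and the level-set analysis of Section \ref{sec5}. For (1), the key point is that $\mathcal{M}\in\mathscr{V}^{(p)}$ satisfies $\Xi(\mathcal{M})\ge d^{(p)}>d^{(p)}-\eta$, so the connected component $\mathcal{W}(\mathcal{M})$ of $\{U<U(\mathcal{M})+d^{(p)}\}$ containing $\mathcal{M}$ has $\mathcal{M}^{*}(\mathcal{W}(\mathcal{M}))=\mathcal{M}$ by Lemma \ref{l_bound_conn}; since $\mathcal{J}(\mathcal{M})\subset\mathcal{W}(\mathcal{M})$ and Lemma \ref{lem:bound_dn} forces all points of $\mathcal{M}$ to be connected at level $U(\mathcal{M})+d^{(p-1)}<U(\mathcal{M})+d^{(p)}-\eta$ (using $\eta<d^{(p)}-d^{(p-1)}$), statement (1) follows. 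For (2), I denote by $\mathcal{A}'$ the connected component of $\{U<U(\mathcal{M})+\eta\}$ containing $\mathcal{A}$, which by $2\eta<d^{(p)}$ satisfies $\mathcal{A}'\subset\mathcal{J}(\mathcal{A})$, and pick any local minimum $\boldsymbol{m}$ in $\mathcal{A}'$ (obtained via gradient descent). If $U(\boldsymbol{m})>U(\mathcal{M})$, then $\boldsymbol{m}$ would be a critical point with height in $(U(\mathcal{M}),U(\mathcal{M})+2\eta]$, contradicting \eqref{etac2}; hence $U(\boldsymbol{m})\le U(\mathcal{M})$.

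For (3), I argue by contradiction: assume $\mathcal{J}(\mathcal{M})=\mathcal{J}(\mathcal{A})$. The minimum $\boldsymbol{m}$ from (2) lies in $\mathcal{J}(\mathcal{M})$ with $U(\boldsymbol{m})\le U(\mathcal{M})$, so by (1) we must have $\boldsymbol{m}\in\mathcal{M}$ and $U(\boldsymbol{m})=U(\mathcal{M})$. Since $\eta<r_{0}$, the connected component of $\{U<U(\boldsymbol{m})+\eta\}$ containing $\boldsymbol{m}$, which coincides with $\mathcal{A}'$, is contained in $\mathcal{W}^{r_{0}}(\boldsymbol{m})=\mathcal{E}(\boldsymbol{m})\subset\mathcal{E}^{(p)}$. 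This forces $\mathcal{A}\subset\mathcal{A}'\subset\mathcal{E}^{(p)}$, contradicting the definition of $\mathcal{A}$.

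For (5), let $\mathcal{M}':=\mathcal{M}(p,\boldsymbol{m})\in\mathscr{S}^{(p)}$. If $\mathcal{M}'\in\mathscr{V}^{(p)}$, Lemma \ref{lem:bound_dn} gives
\[
\max_{\boldsymbol{m}_{1},\boldsymbol{m}_{2}\in\mathcal{M}'}\Theta(\boldsymbol{m}_{1},\boldsymbol{m}_{2})\ \le\ U(\mathcal{M}')+d^{(p-1)}\ \le\ U(\mathcal{M})+d^{(p-1)}\ <\ U(\mathcal{M})+d^{(p)}-\eta\;,
\]
so $\mathcal{M}'$ is contained in the connected component of $\mathcal{J}$ containing $\boldsymbol{m}$, namely $\mathcal{J}(\mathcal{A})$. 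If instead $\mathcal{M}'\in\mathscr{N}^{(p)}$, Lemma \ref{l_irred_negli} produces a finite path $\mathcal{M}'=\mathcal{M}'_{0}\to\cdots\to\mathcal{M}'_{k}$ in $\widehat{\mathbf{y}}^{(p)}(\cdot)$ with $\mathcal{M}'_{k}\in\mathscr{V}^{(p)}$ and $\mathcal{M}'_{i}\in\mathscr{N}^{(p)}$ for $i<k$. By $\mathfrak{P}_{3}(p)$ each step is an adjacency $\mathcal{M}'_{i}\to\mathcal{M}'_{i+1}$; Lemma \ref{l_adjacent0} gives $U(\mathcal{M}'_{i+1})\le U(\mathcal{M}'_{i})\le U(\mathcal{M})$, and Lemma \ref{lem_charDelta} gives $\Xi(\mathcal{M}'_{i})\le d^{(p-1)}$ for $i<k$, so the saddle joining $\mathcal{M}'_{i}$ and $\mathcal{M}'_{i+1}$ sits at height $U(\mathcal{M}'_{i})+\Xi(\mathcal{M}'_{i})\le U(\mathcal{M})+d^{(p-1)}<U(\mathcal{M})+d^{(p)}-\eta$. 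Hence the whole path lies inside the single component $\mathcal{J}(\mathcal{A})$, giving $\mathcal{M}'_{k}\in\mathscr{V}^{(p)}(\mathcal{J}(\mathcal{A}))$. Then (4) follows at once from Lemma \ref{lem_separate}-(2), applied to $\mathcal{J}(\mathcal{M})$ via $\mathcal{M}\in\mathscr{V}^{(p)}\cap\mathcal{J}(\mathcal{M})$ and to $\mathcal{J}(\mathcal{A})$ via the element exhibited in (5).

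The main obstacle I anticipate is exactly the negligible case of (5): controlling simultaneously the energies $U(\mathcal{M}'_{i})$ along the chain (via the monotonicity in Lemma \ref{l_adjacent0}) and the saddle heights crossed (via the sharp bound $\Xi(\mathcal{M}'_{i})\le d^{(p-1)}$ from Lemma \ref{lem_charDelta}), and combining this with the strict inequality $\eta<d^{(p)}-d^{(p-1)}$ encoded in \eqref{etac}. All other parts reduce to straightforward level-set topology once $\eta$ has been fixed so that no critical point sits in $(U(\mathcal{M}),U(\mathcal{M})+2\eta]$.
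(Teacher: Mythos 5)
Your proofs of parts (1), (2) and (3) are correct. For (3) you take a different route from the paper: you exploit the local structure of the valley $\mathcal{E}(\boldsymbol{m})=\mathcal{W}^{r_{0}}(\boldsymbol{m})$ and the choice $\eta<r_{0}/2$ to conclude $\mathcal{A}\subset\mathcal{A}'\subset\mathcal{E}^{(p)}$, a direct contradiction with the definition of $\mathcal{R}$; the paper instead shows that $\mathcal{J}(\mathcal{M})\cap\mathcal{J}(\mathcal{A})\neq\varnothing$ would force $\Theta(\mathcal{M},\widetilde{\mathcal{M}})<U(\mathcal{M})+d^{(p)}-\eta$, contradicting the identity $\Theta(\mathcal{M},\widetilde{\mathcal{M}})=U(\mathcal{M})+\Xi(\mathcal{M})$ with $\Xi(\mathcal{M})\ge d^{(p)}$. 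Both are valid; yours is more geometric and arguably more transparent.

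There is, however, a genuine gap in your proof of (5), and hence in (4), which you derive from (5). In the negligible case you apply Lemma \ref{l_adjacent0} to each step $\mathcal{M}'_{i}\to\mathcal{M}'_{i+1}$ to conclude $U(\mathcal{M}'_{i+1})\le U(\mathcal{M}'_{i})$. But Lemma \ref{l_adjacent0} (with $n+1=p$) requires the target set to lie in $\mathscr{V}^{(p-1)}\cup\mathscr{V}^{(p)}$. Along your path the intermediate sets $\mathcal{M}'_{i+1}$ for $i+1<k$ belong to $\mathscr{N}^{(p)}=\mathscr{N}^{(p-1)}\cup\mathscr{T}^{(p-1)}$; if $\mathcal{M}'_{i+1}\in\mathscr{N}^{(p-1)}$ (rather than $\mathscr{T}^{(p-1)}\subset\mathscr{V}^{(p-1)}$) the hypothesis of Lemma \ref{l_adjacent0} is not met, because the proof of that lemma crucially uses the lower bound $\Xi(\mathcal{M}')\ge d^{(p-1)}$, which fails for elements of $\mathscr{N}^{(p-1)}$ (there one only has $\Xi\le d^{(p-2)}$). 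Without this monotonicity, the chain of estimates bounding the saddle heights by $U(\mathcal{M})+d^{(p-1)}$ is unsupported, and the conclusion that the whole path stays inside $\mathcal{J}(\mathcal{A})$ does not follow from what you have written.

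The correct logical order is the reverse of yours: first prove (4), then deduce (5). For (4), set $\widehat{\mathcal{J}}(\mathcal{A})$ to be the connected component of $\{U<U(\mathcal{M})+d^{(p)}\}$ containing $\mathcal{J}(\mathcal{A})$. Part (2) supplies a local minimum $\boldsymbol{m}$ with $U(\boldsymbol{m})\le U(\mathcal{M})=\bigl(U(\mathcal{M})+d^{(p)}\bigr)-d^{(p)}$, so Lemma \ref{lem_separate}-(1) gives that $\widehat{\mathcal{J}}(\mathcal{A})$ does not separate $(p)$-states; Lemma \ref{lem_H,H+a} combined with \eqref{etac2} shows $\mathcal{M}_{0}\cap\widehat{\mathcal{J}}(\mathcal{A})=\mathcal{M}_{0}\cap\mathcal{J}(\mathcal{A})$, so $\mathcal{J}(\mathcal{A})$ inherits the property. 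Once (4) is available, (5) follows by contradiction: take $\mathcal{M}'=\mathcal{M}^{*}\bigl(\mathcal{J}(\mathcal{A})\bigr)$; Lemma \ref{lem_escape}-(3) (which requires (4)) puts $\mathcal{M}'\in\mathscr{N}^{(p)}$, hence $\Xi(\mathcal{M}')\le d^{(p-1)}$, but Lemma \ref{lap01}-(2) gives $\Theta(\mathcal{M}',\widetilde{\mathcal{M}'})\ge U(\mathcal{M})+d^{(p)}-\eta\ge U(\mathcal{M}')+d^{(p)}-\eta$, so $\Xi(\mathcal{M}')\ge d^{(p)}-\eta>d^{(p-1)}$ by \eqref{etac}, a contradiction.
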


\begin{proof}
(1) The assertion follows from Lemma \ref{l_bound_conn} since we
have
\begin{equation}
U(\mathcal{M})+d^{(p)}-\eta\ <\ U(\mathcal{M})+\Xi(\mathcal{M})\ =\ \Theta(\mathcal{M},\,\widetilde{\mathcal{M}})\ .\label{ctdc}
\end{equation}
\smallskip{}

\noindent (2) By Lemma \ref{l_level_connected}, the set $\mathcal{A}$
is contained in a connected component of the set $\{U<U(\mathcal{M})+\eta\}$,
denoted by $\widetilde{\mathcal{A}}$. As $\eta<d^{(p)}-\eta$ (cf.
\eqref{etac}), we have $\widetilde{\mathcal{A}}\subset\mathcal{J}(\mathcal{A})$.
Since $\widetilde{\mathcal{A}}$ is a level set, it contains a local
minimum, say $\boldsymbol{m}$, and by definition of $\widetilde{\mathcal{A}}$,
$U(\boldsymbol{m})<U(\mathcal{M})+\eta$. As there is no critical
point $\boldsymbol{c}$ of $U$ such that $U(\boldsymbol{c})\in(U(\mathcal{M}),\,U(\mathcal{M})+2\eta)$
by the definition of $\eta$, $U(\boldsymbol{m})\le U(\mathcal{M})$.
This completes the proof because $\widetilde{\mathcal{A}}\subset\mathcal{J}(\mathcal{A})$.

\smallskip{}
(3) Suppose, by contradiction, that
\[
\boldsymbol{z}_{0}\in\ \mathcal{J}(\mathcal{M})\cap\mathcal{J}(\mathcal{A})\;.
\]
Since $\mathcal{J}(\mathcal{M})$ and $\mathcal{J}(\mathcal{A})$
are connected component of the level set $\mathcal{J}$ defined in
\eqref{levset},
\[
\Theta(\boldsymbol{x},\,\boldsymbol{y})\,<\,U(\mathcal{M})+d^{(p)}-\eta\ \text{for all}\ \boldsymbol{x},\,\boldsymbol{y}\in\mathcal{J}(\mathcal{A})\;\;\text{or}\;\;\boldsymbol{x},\,\boldsymbol{y}\in\mathcal{J}(\mathcal{M})\;.
\]
Therefore,
\[
\Theta(\mathcal{M},\,\boldsymbol{z}_{0}),\,\Theta(\boldsymbol{m},\,\boldsymbol{z}_{0})\ <\ U(\mathcal{M})+d^{(p)}-\eta\;,
\]
where $\boldsymbol{m}$ is the local minimum of $U$ in $\mathcal{J}(\mathcal{A})$
such that $U(\boldsymbol{m})\le U(\mathcal{M})$. Therefore,
\[
\Theta(\mathcal{M},\,\widetilde{\mathcal{M}})\,\le\,\Theta(\mathcal{M},\,\boldsymbol{m})\,<\,\max\big\{\,\Theta(\mathcal{M},\,\boldsymbol{z}_{0}),\,\Theta(\boldsymbol{z}_{0},\,\boldsymbol{m})\,\big\}\,<\,U(\mathcal{M})+d^{(p)}-\eta\ ,
\]
which is not possible, as seen in \eqref{ctdc}.

\noindent \smallskip{}
(4) Let $\widehat{\mathcal{J}}(\mathcal{A})$ be the connected component
of $\{U<U(\mathcal{M})+d^{(p)}\}$ containing $\mathcal{J}(\mathcal{A})$.
By Lemma \ref{lem_H,H+a} and \eqref{etac2},
\begin{equation}
\mathcal{M}_{0}\cap\widehat{\mathcal{J}}(\mathcal{A})\ =\ \mathcal{M}_{0}\cap\mathcal{J}(\mathcal{A})\;.\label{jhatj}
\end{equation}
By part (2), there exists a local minimum $\boldsymbol{m}$ in $\widehat{\mathcal{J}}(\mathcal{A})$
such that $U(\boldsymbol{m})\le U(\mathcal{M})$. By Lemma \ref{lem_separate},
$\widehat{\mathcal{J}}(\mathcal{A})$ does not separate $(p)$-states.
Therefore, by \eqref{jhatj}, the set $\mathcal{J}(\mathcal{A})$
does not separate $(p)$-states as well. The proof for $\mathcal{J}(\mathcal{M})$
is the same.

\noindent \smallskip{}
(5) Suppose, by contradiction, that $\mathscr{V}^{(p)}(\mathcal{J}(\mathcal{A}))=\varnothing$.
Let $\mathcal{M}'=\mathcal{M}^{*}\left(\mathcal{J}(\mathcal{A})\right)$.
By Lemma \ref{lem_escape}-(3), $\mathcal{M}'\in\mathscr{N}^{(p)}$,
so that, by Lemma \ref{lem_charDelta}, $\Xi(\mathcal{M}')\le d^{(p-1)}$.

\noindent By the definition of $\mathcal{J}(\mathcal{A})$, Lemma
\ref{lap01}-(2), and part (2),
\[
\Theta(\mathcal{M}',\widetilde{\mathcal{M}'})\ \ge\ U(\mathcal{M})+d^{(p)}-\eta\ \ge\ U(\mathcal{M}')+d^{(p)}-\eta\;.
\]
By \eqref{etac}, the previous estimate, and Lemma \ref{lap01}-(1),
\[
\Xi(\mathcal{M}')\ =\ \Theta(\mathcal{M}',\,\widetilde{\mathcal{M}'})-U(\mathcal{M}')\ \ge\ d^{(p)}-\eta\ >\ d^{(p-1)}\ ,
\]
in contradiction with the estimate obtained in the previous paragraph.
\end{proof}

\subsection{Analysis of hitting times}

In this subsection, we derive a refined version of the hitting time
estimate \cite[Corollary 6.2]{LLS-1st} obtained in our companion
paper.

In this section, we fix\textcolor{blue}{{} $H\in\mathbb{R}$} and such
that there is no critical point $\boldsymbol{c}$ of $U$ satisfying
$U(\boldsymbol{c})=H$, and fix a connected component \textcolor{blue}{$\mathcal{H}$
}of the level set $\{U<H\}$. Assume, in addition, that
\begin{equation}
\mathscr{V}^{(p)}(\mathcal{H})\,\neq\,\varnothing\;\;\text{and}\;\;\mathscr{V}^{(p)}(\mathcal{H}^{c})\,\neq\,\varnothing\;.\label{nempt}
\end{equation}
Then, by Lemma \ref{lem_separate} and \eqref{nempt}, the set $\mathcal{H}$
does not separate $(p)$-states.
\begin{lem}
\label{l:nesc}For all $\mathcal{M}\in\mathscr{S}^{(p)}(\mathcal{H})$
and $\mathcal{M}'\in\mathscr{V}^{(p)}(\mathcal{H}^{c})$,
\[
\widehat{\mathcal{Q}}_{\mathcal{M}}^{(p)}\left[\,\tau_{\mathscr{V}^{(p)}}=\tau_{\mathcal{M}'}\,\right]\ =\ 0\;.
\]
\end{lem}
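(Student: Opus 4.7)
The plan is to proceed by case analysis on whether the starting set $\mathcal{M}$ lies in $\mathscr{V}^{(p)}(\mathcal{H})$ or in $\mathscr{N}^{(p)}(\mathcal{H})$, and in the latter case to derive a contradiction by following a putative realizing path and locating the exact jump at which it crosses out of $\mathcal{H}$. First, if $\mathcal{M}\in\mathscr{V}^{(p)}(\mathcal{H})$, then $\tau_{\mathscr{V}^{(p)}}=0$ under $\widehat{\mathcal{Q}}_{\mathcal{M}}^{(p)}$, so the event $\{\tau_{\mathscr{V}^{(p)}}=\tau_{\mathcal{M}'}\}$ forces $\widehat{\mathbf{y}}^{(p)}(0)=\mathcal{M}'$; but $\mathcal{M}\subset\mathcal{H}$ and $\mathcal{M}'\subset\mathcal{H}^{c}$ are disjoint, so the event is impossible.

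Assume therefore that $\mathcal{M}\in\mathscr{N}^{(p)}(\mathcal{H})$ and suppose, for contradiction, that the stated probability is positive. Then there exists a finite sequence $\mathcal{M}=\mathcal{N}_{0},\,\mathcal{N}_{1},\,\dots,\,\mathcal{N}_{k-1},\,\mathcal{N}_{k}=\mathcal{M}'$ in $\mathscr{S}^{(p)}$ with $\mathcal{N}_{0},\,\dots,\,\mathcal{N}_{k-1}\in\mathscr{N}^{(p)}$ along which each jump rate $\widehat{r}^{(p)}(\mathcal{N}_{i},\,\mathcal{N}_{i+1})$ is positive; by $\mathfrak{P}_{3}(p)$ this entails $\mathcal{N}_{i}\to\mathcal{N}_{i+1}$ and $\Xi(\mathcal{N}_{i})\le d^{(p)}$. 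Next, I claim that $\mathcal{H}$ does not separate $(p)$-states: this follows from Lemma \ref{lem_separate}-(2), applied after picking any $\mathcal{M}''\in\mathscr{V}^{(p)}(\mathcal{H})$, whose existence is exactly the first half of hypothesis \eqref{nempt}. Consequently each $\mathcal{N}_{i}$ sits entirely in $\mathcal{H}$ or entirely in $\mathcal{H}^{c}$, and since $\mathcal{N}_{0}\subset\mathcal{H}$ while $\mathcal{N}_{k}\subset\mathcal{H}^{c}$, there is a largest index $j\in\llbracket0,\,k-1\rrbracket$ with $\mathcal{N}_{j}\subset\mathcal{H}$ and $\mathcal{N}_{j+1}\subset\mathcal{H}^{c}$. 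Applying Lemma \ref{lem_escape}-(1) to the pair $(\mathcal{N}_{j},\,\mathcal{N}_{j+1})$ yields $\mathcal{N}_{j}=\mathcal{M}^{*}(\mathcal{H})$, which in particular shows $\mathcal{M}^{*}(\mathcal{H})\in\mathscr{S}^{(p)}$.

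The decisive step, which I expect to be the main obstacle, is to show that in fact $\mathcal{M}^{*}(\mathcal{H})\in\mathscr{V}^{(p)}$; this contradicts $\mathcal{N}_{j}\in\mathscr{N}^{(p)}$ (as $j\le k-1$) and completes the proof. To establish the claim I split on the size of $\Xi(\mathcal{M}^{*}(\mathcal{H}))$. If $\Xi(\mathcal{M}^{*}(\mathcal{H}))>d^{(p)}$, Proposition \ref{prop:depth} immediately places $\mathcal{M}^{*}(\mathcal{H})$ in $\mathscr{V}^{(p)}$. Otherwise $\Xi(\mathcal{M}^{*}(\mathcal{H}))\le d^{(p)}$, and Lemma \ref{lem_escape}-(2) forces $\mathscr{V}^{(p)}(\mathcal{H})$ to be either $\{\mathcal{M}^{*}(\mathcal{H})\}$ or $\varnothing$; the non-emptiness clause of \eqref{nempt} rules out the second alternative, and we again conclude $\mathcal{M}^{*}(\mathcal{H})\in\mathscr{V}^{(p)}(\mathcal{H})\subset\mathscr{V}^{(p)}$. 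In both sub-cases we obtain the desired contradiction with $\mathcal{N}_{j}=\mathcal{M}^{*}(\mathcal{H})\in\mathscr{N}^{(p)}$, thereby establishing the lemma.
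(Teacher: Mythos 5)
Your proof is correct, and the skeleton matches the paper's: assume the probability is positive, extract a realizing path through $\mathscr{N}^{(p)}$, locate a jump across $\partial\mathcal{H}$, apply Lemma~\ref{lem_escape}-(1) to conclude that the jump departs from $\mathcal{M}^{*}(\mathcal{H})\in\mathscr{N}^{(p)}$, and then contradict this. Where you diverge is the final step. The paper derives the contradiction by directly proving $\Xi(\mathcal{M}^{*}(\mathcal{H}))>d^{(p)}$: it picks a third set $\mathcal{M}_{3}\in\mathscr{V}^{(p)}(\mathcal{H})$ from \eqref{nempt}, shows via Lemma~\ref{lap01}-(1) and Proposition~\ref{prop:depth} that $U(\mathcal{M}^{*}(\mathcal{H}))+d^{(p)}<H$, and then via Lemma~\ref{lap01}-(2) that $\Xi(\mathcal{M}^{*}(\mathcal{H}))\ge H-U(\mathcal{M}^{*}(\mathcal{H}))>d^{(p)}$, which Proposition~\ref{prop:depth} rules out for a negligible state. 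You instead split on the value of $\Xi(\mathcal{M}^{*}(\mathcal{H}))$: in one case you invoke Proposition~\ref{prop:depth} exactly as the paper does, and in the other you route the argument through the already-established Lemma~\ref{lem_escape}-(2) together with \eqref{nempt}. Both routes are valid; yours is more modular in that it leans on existing machinery (and does not need to determine which sub-case actually obtains), while the paper's direct computation is shorter and, as a by-product, pins down the sharper fact that $\Xi(\mathcal{M}^{*}(\mathcal{H}))>d^{(p)}$ on the contradiction event. Your extra Case 1 (when $\mathcal{M}\in\mathscr{V}^{(p)}(\mathcal{H})$) is handled implicitly in the paper by the observation that $\mathcal{M}^{*}(\mathcal{H})\in\mathscr{V}^{(p)}$ is incompatible with $\tau_{\mathscr{V}^{(p)}}=\tau_{\mathcal{M}'}$; making it explicit, as you do, is harmless.
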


\begin{proof}
Suppose, by contradiction, that $\widehat{\mathcal{Q}}_{\mathcal{M}}^{(p)}\left[\tau_{\mathscr{V}^{(p)}}=\tau_{\mathcal{M}'}\right]>0$.
Then, the process $\widehat{\mathbf{y}}^{(p)}(\cdot)$ can jump from
$\mathcal{H}$ to $\mathcal{H}^{c}$. By Lemma \ref{lem_escape}-(1),
this jump is from $\mathcal{M}_{1}=\mathcal{M}^{*}(\mathcal{H})$
to $\mathcal{M}_{2}\in\mathscr{S}^{(p)}(\mathcal{H}^{c})$. Note here
that, if $\mathcal{M}_{1}\in\mathscr{V}^{(p)}$, we cannot have $\tau_{\mathscr{V}^{(p)}}=\tau_{\mathcal{M}'}$.
Thus, $\mathcal{M}_{1}\in\mathscr{N}^{(p)}$.

By \eqref{nempt}, there exists $\mathcal{M}_{3}\in\mathscr{V}^{(p)}(\mathcal{H})$,
and, by Lemma \ref{lap01}-(1),
\[
\Theta(\mathcal{M}_{3},\,\widetilde{\mathcal{M}_{3}})\ \le\ \Theta(\mathcal{M}_{3},\,\mathcal{M}_{1})\ <\ H\;.
\]
By Proposition \ref{prop:depth}, $\Xi(\mathcal{M}_{3})\ge d^{(p)}$.
Thus, as $U(\mathcal{M}_{3})>U(\mathcal{M}_{1})$,
\[
U(\mathcal{M}_{1})+d^{(p)}\ <\ U(\mathcal{M}_{3})+\Xi(\mathcal{M}_{3})\ =\ \Theta(\mathcal{M}_{3},\,\widetilde{\mathcal{M}_{3}})\ <\ H\;.
\]
Since $\mathcal{M}_{1}=\mathcal{M}^{*}(\mathcal{H})$, by Lemma \ref{lap01}-(2),
\[
\Xi(\mathcal{M}_{1})\ =\ \Theta(\mathcal{M}_{1},\,\widetilde{\mathcal{M}_{1}})-U(\mathcal{M}_{1})\ \ge\ H-U(\mathcal{M}_{1})\ >\ d^{(p)}\;,
\]
which contradicts Proposition \ref{prop:depth} since $\mathcal{M}_{1}\in\mathscr{N}^{(p)}$.
\end{proof}
We next show that the process starting from a point in $\mathcal{H}$
attains the set $\mathcal{E}^{(p)}$ at a point in $\mathcal{H}$.
\begin{lem}
\label{lem:hit001} Assume $\mathfrak{H}^{(p)}$. Then,
\[
\lim_{\epsilon\to0}\,\sup_{\boldsymbol{x}\in\mathcal{H}}\,\mathbb{P}_{\boldsymbol{x}}^{\epsilon}\left[\,\tau_{\mathcal{E}^{(p)}}=\tau_{\mathcal{E}^{(p)}\cap\mathcal{H}^{c}}\,\right]\ =\ 0\;.
\]
\end{lem}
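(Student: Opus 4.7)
The plan is to split the trajectory into a short \emph{descent phase} during which the process falls from the arbitrary starting point $\boldsymbol{x}\in\mathcal{H}$ into some valley $\mathcal{E}(\mathcal{M})$ with $\mathcal{M}\in\mathscr{S}^{(p)}(\mathcal{H})$ while staying inside $\mathcal{H}$, followed by an \emph{exit phase} handled by $\mathfrak{H}^{(p)}$-(2) together with Lemma \ref{l:nesc}. Since \eqref{nempt} holds, Lemma \ref{lem_separate} implies that $\mathcal{H}$ does not separate $(p)$-states, so every $\mathcal{M}\in\mathscr{S}^{(p)}$ lies either in $\mathcal{H}$ or in $\mathcal{H}^{c}$ and the local minimum $\boldsymbol{m}\in\mathcal{M}_{0}\cap\mathcal{H}$ reached at the end of the descent always belongs to some $\mathcal{M}\in\mathscr{S}^{(p)}(\mathcal{H})$.

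For the descent phase I would prove that there exists $T_{0}<\infty$, depending only on $\mathcal{H}$, such that
\[
\lim_{\epsilon\to0}\,\sup_{\boldsymbol{x}\in\mathcal{H}}\,\mathbb{P}_{\boldsymbol{x}}^{\epsilon}\!\left[\,\boldsymbol{x}_{\epsilon}(\cdot)\ \text{exits}\ \mathcal{H}\ \text{on}\ [0,T_{0}]\ \text{or}\ \tau_{\mathcal{E}^{(1)}\cap\mathcal{H}}>T_{0}\,\right]\ =\ 0\;.
\]
Indeed, the confining assumption \eqref{eq:growth} makes $\overline{\mathcal{H}}$ compact, $H$ is not a critical value of $U$ by assumption, and $U$ is strictly decreasing off $\mathcal{C}$ along the flow \eqref{eq:x(t)} because $\boldsymbol{\ell}\cdot\nabla U\equiv 0$; hence every deterministic orbit starting in $\mathcal{H}$ remains in $\mathcal{H}$ and converges to a critical point in $\overline{\mathcal{H}}\cap\mathcal{C}$. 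Proposition \ref{p_FW}, applied to a slightly smaller sublevel subset of $\mathcal{H}$, produces the Freidlin--Wentzell lower bound of order $e^{c/\epsilon}$ on the exit time from $\mathcal{H}$, while a short-time ODE analysis combined with a standard noise-kick argument near unstable equilibria yields the entrance to $\mathcal{E}^{(1)}\cap\mathcal{H}$ in bounded time.

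On the favourable event, the strong Markov property at $\tau:=\tau_{\mathcal{E}^{(1)}\cap\mathcal{H}}$ reduces the problem to a starting point $\boldsymbol{y}\in\mathcal{E}(\boldsymbol{m})$ for some $\boldsymbol{m}\in\mathcal{M}_{0}\cap\mathcal{H}$. Letting $\mathcal{M}=\mathcal{M}(p,\boldsymbol{m})\in\mathscr{S}^{(p)}(\mathcal{H})$, if $\mathcal{M}\in\mathscr{V}^{(p)}(\mathcal{H})$ then $\boldsymbol{y}\in\mathcal{E}^{(p)}\cap\mathcal{H}$ and so $\tau_{\mathcal{E}^{(p)}}\le\tau<\tau_{\mathcal{E}^{(p)}\cap\mathcal{H}^{c}}$. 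If instead $\mathcal{M}\in\mathscr{N}^{(p)}(\mathcal{H})$, condition $\mathfrak{H}^{(p)}$-(2) gives
\[
\sup_{\boldsymbol{y}\in\mathcal{E}(\mathcal{M})}\,\mathbb{P}_{\boldsymbol{y}}^{\epsilon}\!\left[\,\tau_{\mathcal{E}^{(p)}}=\tau_{\mathcal{E}^{(p)}\cap\mathcal{H}^{c}}\,\right]\ \le\ \sum_{\mathcal{M}'\in\mathscr{V}^{(p)}(\mathcal{H}^{c})}\widehat{\mathcal{Q}}_{\mathcal{M}}^{(p)}\!\left[\,\tau_{\mathscr{V}^{(p)}}=\tau_{\mathcal{M}'}\,\right]\,+\,o_{\epsilon}(1)\;,
\]
and each term on the right vanishes by Lemma \ref{l:nesc}, since $\mathcal{M}\in\mathscr{S}^{(p)}(\mathcal{H})$ and $\mathcal{M}'\in\mathscr{V}^{(p)}(\mathcal{H}^{c})$. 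Combining the descent bound with this estimate yields the lemma.

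The main obstacle is the uniformity in $\boldsymbol{x}\in\mathcal{H}$ of the descent estimate: $\mathcal{H}$ is open, so points near $\partial\mathcal{H}$ or on stable manifolds of saddles in $\mathcal{H}$ are not controlled by a single ODE orbit. Both issues are technical but routine; the former is handled by a preliminary step that pushes the process into a compact sublevel subset of $\mathcal{H}$ strictly below height $H$ before invoking the Freidlin--Wentzell barrier estimate, and the latter by the fact that stable manifolds of saddles have codimension at least one and are escaped almost instantly by the diffusion.
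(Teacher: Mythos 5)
The overall architecture of your argument — a descent phase landing in a valley inside $\mathcal{H}$, followed by an application of $\mathfrak{H}^{(p)}$-(2) together with Lemma \ref{l:nesc} for the exit phase — is exactly the paper's, and the treatment of the exit phase is correct. The gap is in the descent phase: the claim that there exists a fixed $T_{0}<\infty$, depending only on $\mathcal{H}$, for which
\[
\lim_{\epsilon\to0}\,\sup_{\boldsymbol{x}\in\mathcal{H}}\,\mathbb{P}_{\boldsymbol{x}}^{\epsilon}\Big[\,\boldsymbol{x}_{\epsilon}(\cdot)\ \text{exits}\ \mathcal{H}\ \text{on}\ [0,T_{0}]\ \text{or}\ \tau_{\mathcal{E}^{(1)}\cap\mathcal{H}}>T_{0}\,\Big]\ =\ 0
\]
is false. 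Take $\boldsymbol{x}=\boldsymbol{\sigma}$ for any saddle $\boldsymbol{\sigma}\in\mathcal{H}\cap\mathcal{S}_{0}$, or any point on the stable manifold of such a saddle: on the fixed time window $[0,T_{0}]$, $\boldsymbol{x}_{\epsilon}(\cdot)$ concentrates around the deterministic orbit, which at time $T_{0}$ has not yet entered any $\mathcal{E}(\boldsymbol{m})$, so $\mathbb{P}_{\boldsymbol{x}}^{\epsilon}[\tau_{\mathcal{E}^{(1)}\cap\mathcal{H}}>T_{0}]\to 1$ as $\epsilon\to 0$. In particular the ``noise-kick'' escape from a saddle is not ``almost instant''; its characteristic time scale is of order $\log(1/\epsilon)$ (Kifer), which diverges. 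This is precisely why the paper invokes \cite[Corollary 6.2]{LLS-1st} and works on the diverging time scale $\epsilon^{-1}$: one first reduces, via that corollary, to estimating
\[
\sup_{\boldsymbol{x}\in\mathcal{H}}\,\mathbb{P}_{\boldsymbol{x}}^{\epsilon}\left[\,\tau_{\mathcal{E}(\mathcal{M}_{0})\cap\mathcal{H}^{c}}<\epsilon^{-1}\,\right]\,,
\]
and then controls the exit from $\mathcal{H}$ on the window $[0,\epsilon^{-1}]$ via Proposition \ref{p_FW}, applied not to a smaller sublevel set of $\mathcal{H}$ but to a slightly \emph{larger} one $\mathcal{H}'\supset\mathcal{H}$ with $\mathcal{M}_{0}\cap\mathcal{H}'=\mathcal{M}_{0}\cap\mathcal{H}$ (Lemma \ref{lem_H,H+a}). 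Your sketch can be salvaged by replacing the fixed $T_{0}$ with the time scale $\epsilon^{-1}$ and citing the descent estimate of \cite[Corollary 6.2]{LLS-1st} rather than attempting to rederive it from a bounded-time ODE argument; once that substitution is made, the rest of your argument goes through and coincides with the paper's.
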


\begin{proof}
We first claim that
\begin{equation}
\lim_{\epsilon\to0}\,\sup_{\mathcal{M}\in\mathscr{N}^{(p)}(\mathcal{H})}\,\sup_{\boldsymbol{x}\in\mathcal{E}(\mathcal{M})}\,\mathbb{P}_{\boldsymbol{x}}^{\epsilon}\left[\,\tau_{\mathcal{E}^{(p)}}=\tau_{\mathcal{E}^{(p)}\cap\mathcal{H}^{c}}\,\right]\ =\ 0\ .\label{hit001}
\end{equation}
By \eqref{eq:Hp-2}, for $\boldsymbol{x}\in\mathcal{E}(\mathcal{M})$,
$\mathcal{M}\in\mathscr{N}^{(p)}(\mathcal{H})$,
\[
\lim_{\epsilon\to0}\,\sup_{\mathcal{M}\in\mathscr{N}^{(p)}(\mathcal{H})}\,\sup_{\boldsymbol{x}\in\mathcal{E}(\mathcal{M})}\,\Big|\,\mathbb{P}_{\boldsymbol{x}}^{\epsilon}\left[\,\tau_{\mathcal{E}^{(p)}}=\tau_{\mathcal{E}^{(p)}\cap\mathcal{H}^{c}}\,\right]-\sum_{\mathcal{M}'\in\mathscr{V}^{(p)}(\mathcal{H}^{c})}\,\widehat{\mathcal{Q}}_{\mathcal{M}}^{(p)}\left[\,\tau_{\mathscr{V}^{(p)}}=\tau_{\mathcal{M}'}\,\right]\,\Big|\ =\ 0\;.
\]
To complete the proof of the claim it is enough to recall the assertion
of Lemma \ref{l:nesc}.

Since $\mathcal{H}$ does not separate $(p)$-states, for all $\bm{m}\in\mathcal{M}_{0}\cap\mathcal{H}$,
$\mathcal{M}(p,\,\bm{m})$ belongs to $\mathscr{N}^{(p)}(\mathcal{H})$
or to $\mathscr{V}^{(p)}(\mathcal{H})$. Therefore, by the strong
Markov property, it suffices to show that
\[
\lim_{\epsilon\to0}\,\sup_{\boldsymbol{x}\in\mathcal{H}}\,\mathbb{P}_{\boldsymbol{x}}^{\epsilon}\left[\,\tau_{\mathcal{E}(\mathcal{M}_{0})}=\tau_{\mathcal{E}(\mathcal{M}_{0})\cap\mathcal{H}^{c}}\,\right]\ =\ 0\;.
\]
By \cite[Corollary 6.2]{LLS-1st}, we have
\begin{align*}
\limsup_{\epsilon\to0}\,\sup_{\boldsymbol{x}\in\mathcal{H}}\,\mathbb{P}_{\boldsymbol{x}}^{\epsilon}\left[\tau_{\mathcal{E}(\mathcal{M}_{0})}=\tau_{\mathcal{E}(\mathcal{M}_{0})\cap\mathcal{H}^{c}}\right]\  & =\ \limsup_{\epsilon\to0}\,\sup_{\boldsymbol{x}\in\mathcal{H}}\,\mathbb{P}_{\boldsymbol{x}}^{\epsilon}\left[\,\tau_{\mathcal{E}(\mathcal{M}_{0})}=\tau_{\mathcal{E}(\mathcal{M}_{0})\cap\mathcal{H}^{c}},\,\tau_{\mathcal{E}(\mathcal{M}_{0})}<\epsilon^{-1}\,\right]\\
 & \le\ \limsup_{\epsilon\to0}\,\sup_{\boldsymbol{x}\in\mathcal{H}}\,\mathbb{P}_{\boldsymbol{x}}^{\epsilon}\left[\,\tau_{\mathcal{E}(\mathcal{M}_{0})\cap\mathcal{H}^{c}}<\epsilon^{-1}\,\right]\;.
\end{align*}
Let $a>0$ be a small number such that there is no critical point
$\boldsymbol{c}$ satisfying $U(\boldsymbol{c})\in[H,\,H+a)$. Let
$\mathcal{H}'$ be a connected component of $\{U<H+a\}$ containing
$\mathcal{H}$. Then, by Lemma \ref{lem_H,H+a}, $\mathcal{M}_{0}\cap\mathcal{H}=\mathcal{M}_{0}\cap\mathcal{H}'$.
Hence, by Proposition \ref{p_FW},
\[
\limsup_{\epsilon\to0}\,\sup_{\boldsymbol{x}\in\mathcal{H}}\,\mathbb{P}_{\boldsymbol{x}}^{\epsilon}\left[\,\tau_{\mathcal{E}(\mathcal{M}_{0})\cap\mathcal{H}^{c}}<\epsilon^{-1}\,\right]\ \le\ \limsup_{\epsilon\to0}\,\sup_{\boldsymbol{x}\in\mathcal{H}}\,\mathbb{P}_{\boldsymbol{x}}^{\epsilon}\left[\,\tau_{\partial\mathcal{H}'}<\epsilon^{-1}\,\right]\ =\ 0\ .
\]
This completes the proof.
\end{proof}
The next result is a refinement of \cite[Theorem 6.1]{LLS-1st}.
\begin{lem}
\label{lem_hitting}Assume $\mathfrak{H}^{(p)}$. Then, for all sequences
$(\alpha_{\epsilon})_{\epsilon>0}$ such that $\alpha_{\epsilon}\succ\theta_{\epsilon}^{(p-1)}$
if $p\ge2$ and $\alpha_{\epsilon}\succ\frac{1}{\epsilon}$ if $p=1$,
\[
\lim_{\epsilon\to0}\,\sup_{\boldsymbol{x}\in\mathcal{H}}\,\mathbb{P}_{\boldsymbol{x}}^{\epsilon}\left[\,\tau_{\mathcal{E}^{(p)}\cap\mathcal{H}}>\alpha_{\epsilon}\,\right]\ =\ 0\;.
\]
\end{lem}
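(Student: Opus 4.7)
The plan is to split the statement into a purely temporal bound and a spatial localization. The temporal bound I wish to establish is
\[
\lim_{\epsilon\to0}\sup_{\boldsymbol{x}\in\mathcal{H}}\,\mathbb{P}_{\boldsymbol{x}}^{\epsilon}\!\left[\,\tau_{\mathcal{E}^{(p)}}>\alpha_{\epsilon}\,\right]\ =\ 0.
\]
The spatial localization is already available: Lemma \ref{lem:hit001} yields $\sup_{\boldsymbol{x}\in\mathcal{H}}\mathbb{P}_{\boldsymbol{x}}^{\epsilon}[\tau_{\mathcal{E}^{(p)}}=\tau_{\mathcal{E}^{(p)}\cap\mathcal{H}^{c}}]\to0$. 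Since the event $\{\tau_{\mathcal{E}^{(p)}\cap\mathcal{H}}>\alpha_{\epsilon}\}$ is contained in the union $\{\tau_{\mathcal{E}^{(p)}}>\alpha_{\epsilon}\}\cup\{\tau_{\mathcal{E}^{(p)}}=\tau_{\mathcal{E}^{(p)}\cap\mathcal{H}^{c}}\}$, combining the two estimates gives the lemma.

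For the temporal bound I will use a two-step hitting strategy. By \cite[Corollary 6.2]{LLS-1st}, uniformly in the starting point,
\[
\mathbb{P}_{\boldsymbol{x}}^{\epsilon}\!\left[\,\tau_{\mathcal{E}(\mathcal{M}_{0})}>\epsilon^{-1}\,\right]\ \longrightarrow\ 0.
\]
When $p=1$, one has $\mathcal{E}^{(1)}=\mathcal{E}(\mathcal{M}_{0})$, and since $\alpha_{\epsilon}\succ\epsilon^{-1}$ the temporal bound is immediate. When $p\ge2$, at the hitting time $\tau_{\mathcal{E}(\mathcal{M}_{0})}$ the process lies in $\mathcal{E}(\mathcal{M})$ for some (random) $\mathcal{M}\in\mathscr{S}^{(p)}=\mathscr{V}^{(p)}\cup\mathscr{N}^{(p)}$; on the event $\{\mathcal{M}\in\mathscr{V}^{(p)}\}$ the process has already hit $\mathcal{E}^{(p)}$, while on $\{\mathcal{M}\in\mathscr{N}^{(p)}\}$ I apply the strong Markov property at $\tau_{\mathcal{E}(\mathcal{M}_{0})}$ together with $\mathfrak{H}^{(p)}$-(1), invoked with the auxiliary sequence $\beta_{\epsilon}:=\alpha_{\epsilon}-\epsilon^{-1}$. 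One checks $\beta_{\epsilon}\succ\theta_{\epsilon}^{(p-1)}$ because $\theta_{\epsilon}^{(p-1)}\succ\epsilon^{-1}$ for $p\ge2$, so $\mathfrak{H}^{(p)}$-(1) gives $\tau_{\mathcal{E}^{(p)}}-\tau_{\mathcal{E}(\mathcal{M}_{0})}\le\beta_{\epsilon}$ with probability tending to one, uniformly over $\mathcal{M}\in\mathscr{N}^{(p)}$ and the starting point in $\mathcal{E}(\mathcal{M})$. Putting the two halves together yields the desired uniform bound on $\tau_{\mathcal{E}^{(p)}}$.

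I do not expect a substantive obstacle. The argument is a bookkeeping combination of the uniform Freidlin--Wentzell-type hitting estimate of \cite[Corollary 6.2]{LLS-1st}, the already-established condition $\mathfrak{H}^{(p)}$-(1), and Lemma \ref{lem:hit001}. The only point requiring care is that each ingredient be applied uniformly in $\boldsymbol{x}\in\mathcal{H}$, but all three statements are already in the appropriate uniform form, so the strong Markov splicing is routine.
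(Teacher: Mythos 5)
Your proposal is correct and uses the same three ingredients as the paper's proof: the Freidlin--Wentzell hitting estimate from \cite{LLS-1st}, condition $\mathfrak{H}^{(p)}$-(1), and Lemma~\ref{lem:hit001}. The only difference is bookkeeping: you decompose the bad event as
\[
\{\tau_{\mathcal{E}^{(p)}\cap\mathcal{H}}>\alpha_{\epsilon}\}\subset\{\tau_{\mathcal{E}^{(p)}}>\alpha_{\epsilon}\}\cup\{\tau_{\mathcal{E}^{(p)}}=\tau_{\mathcal{E}^{(p)}\cap\mathcal{H}^{c}}\}
\]
up front (so that for the temporal part you only need the non-localized hitting estimate, \cite[Corollary 6.2]{LLS-1st}), whereas the paper first reduces to starting points in $\mathcal{E}(\boldsymbol{m})$, $\boldsymbol{m}\in\mathcal{M}_{0}\cap\mathcal{H}$, via the localized hitting estimate \cite[Theorem 6.1]{LLS-1st} and then inserts Lemma~\ref{lem:hit001} inside the probability; the substance is identical.
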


\begin{proof}
The case $p=1$ is the contents of \cite[Corollary 6.2]{LLS-1st}.

Suppose now that $p\ge2$. By the strong Markov property and by \cite[Theorem 6.1]{LLS-1st},
which asserts that the process starting from $\mathcal{H}$ hits the
set $\mathcal{E}(\mathcal{M}_{0})\cap\mathcal{H}$ within time $1/\epsilon$
with overwhelming probability, it suffices to prove
\[
\lim_{\epsilon\to0}\,\sup_{\boldsymbol{m}\in\mathcal{M}_{0}\cap\mathcal{H}}\,\sup_{\boldsymbol{y}\in\mathcal{E}(\boldsymbol{m})}\,\mathbb{P}_{\boldsymbol{y}}\left[\,\tau_{\mathcal{E}^{(p)}\cap\mathcal{H}}>\alpha_{\epsilon}\,\right]\ =\ 0\;.
\]
If $\mathcal{M}(p,\,\bm{m})\in\mathscr{V}^{(p)}(\mathcal{H})$, the
assertion is clear. It remains to show that
\[
\lim_{\epsilon\to0}\,\sup_{\mathcal{M}\in\mathscr{N}^{(p)}(\mathcal{H})}\,\sup_{\boldsymbol{y}\in\mathcal{E}(\mathcal{M})}\,\mathbb{P}_{\boldsymbol{y}}\left[\,\tau_{\mathcal{E}^{(p)}\cap\mathcal{H}}>\alpha_{\epsilon}\,\right]\ =\ 0\;.
\]
By \eqref{eq:Hp-1} of $\mathfrak{H}^{(p)}$ and Lemma \ref{lem:hit001},
\begin{align*}
 & \limsup_{\epsilon\to0}\,\sup_{\mathcal{M}\in\mathscr{N}^{(p)}(\mathcal{H})}\,\sup_{\boldsymbol{y}\in\mathcal{E}(\mathcal{M})}\,\mathbb{P}_{\boldsymbol{y}}\left[\,\tau_{\mathcal{E}^{(p)}\cap\mathcal{H}}>\alpha_{\epsilon}\,\right]\\
 & =\limsup_{\epsilon\to0}\,\sup_{\mathcal{M}\in\mathscr{N}^{(p)}(\mathcal{H})}\,\sup_{\boldsymbol{y}\in\mathcal{E}(\mathcal{M})}\,\mathbb{P}_{\boldsymbol{y}}\left[\,\tau_{\mathcal{E}^{(p)}\cap\mathcal{H}}>\alpha_{\epsilon},\,\tau_{\mathcal{E}^{(p)}}=\tau_{\mathcal{E}^{(p)}\cap\mathcal{H}}\,\right]\\
 & =\limsup_{\epsilon\to0}\,\sup_{\mathcal{M}\in\mathscr{N}^{(p)}(\mathcal{H})}\,\sup_{\boldsymbol{y}\in\mathcal{E}(\mathcal{M})}\,\mathbb{P}_{\boldsymbol{y}}\left[\,\tau_{\mathcal{E}^{(p)}}>\alpha_{\epsilon}\,\right]\ =\ 0
\end{align*}
which completes the proof.
\end{proof}

\subsection{Proof of Proposition \ref{prop: nojump2}}

The next result guarantees that jumps between valleys $\mathcal{E}(\mathcal{M})$,
$\mathcal{M}\in\mathscr{V}^{(p)}$, cannot happen in time-scales shorter
than $\theta_{\epsilon}^{(p)}$. It is proven in \cite[Lemma 4.2]{LMS}
in a general set-up.
\begin{lem}
\label{l: nojump} Assume $\mathfrak{R}^{(p)}$. Then, for all $\mathcal{M}\in\mathscr{V}^{(p)}$,
\begin{equation}
\limsup_{a\rightarrow0}\,\limsup_{\epsilon\rightarrow0}\,\sup_{\boldsymbol{x}\in\mathcal{E}(\mathcal{M})}\,\mathbb{Q}_{\boldsymbol{x}}^{\epsilon,\,p}\left[\,\tau_{\mathcal{E}^{(p)}\setminus\mathcal{E}(\mathcal{M})}\le a\,\right]\ =\ 0\;.\label{e:nojump}
\end{equation}
\end{lem}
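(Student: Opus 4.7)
The plan is to adapt the resolvent-based argument of \cite[Lemma 4.2]{LMS}. A preliminary observation: although $\mathfrak{R}^{(p)}$ is stated at the single fixed $\lambda$, combining it with Theorem \ref{t_res0} yields $\mathfrak{C}^{(p)}$, which is $\lambda$-free, and running the equivalence in the opposite direction shows that the resolvent convergence \eqref{e_conphi} is available at every $\lambda>0$. For each such $\lambda$ I would apply it to the source $\boldsymbol{g}\colon\mathscr{V}^{(p)}\to\mathbb{R}$ defined by $\boldsymbol{g}(\mathcal{M})=0$ and $\boldsymbol{g}(\mathcal{M}')=1$ for $\mathcal{M}'\neq\mathcal{M}$, so that the forcing in \eqref{e_res0} becomes $\chi_{\mathcal{E}^{(p)}\setminus\mathcal{E}(\mathcal{M})}$. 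Writing $\phi_\epsilon^{(\lambda)}$ for the corresponding solution of \eqref{e_res0}, $\boldsymbol{f}^{(\lambda)}$ for the reduced solution, and setting $\sigma:=\tau_{\mathcal{E}^{(p)}\setminus\mathcal{E}(\mathcal{M})}$ for the speeded-up process $\boldsymbol{y}_\epsilon^{(p)}$, the forcing vanishes on $\mathcal{E}(\mathcal{M})\cup(\mathcal{E}^{(p)})^c$, so the strong Markov property at $\sigma$ gives the clean identity
\[
\phi_\epsilon^{(\lambda)}(\boldsymbol{x})\;=\;\mathbb{E}_{\boldsymbol{x}}^\epsilon\!\left[\,e^{-\lambda\sigma}\,\phi_\epsilon^{(\lambda)}(\boldsymbol{y}_\epsilon^{(p)}(\sigma))\,\right]
\]
for every $\boldsymbol{x}\in\mathcal{E}(\mathcal{M})$.

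Next, restricting the right-hand side to the event $\{\sigma\leq a\}$, bounding $e^{-\lambda\sigma}\geq e^{-\lambda a}$ there, and invoking the resolvent convergence at $\lambda$ on both sides yields
\[
\boldsymbol{f}^{(\lambda)}(\mathcal{M})+o_\epsilon(1)\;\geq\;e^{-\lambda a}\,\Big[\,\min_{\mathcal{M}'\neq\mathcal{M}}\boldsymbol{f}^{(\lambda)}(\mathcal{M}')\,-\,o_\epsilon(1)\,\Big]\;\mathbb{Q}_{\boldsymbol{x}}^{\epsilon,\,p}\!\left[\,\sigma\leq a\,\right]\;.
\]
The two reduced resolvent values are estimated via the probabilistic formula $\boldsymbol{f}^{(\lambda)}(\mathcal{M}'')=\mathbb{E}_{\mathcal{M}''}\!\int_0^\infty\! e^{-\lambda s}\boldsymbol{g}(\mathbf{y}^{(p)}(s))\,ds$. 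Letting $\Lambda$ denote the largest total exit rate of the chain $\mathbf{y}^{(p)}$, a short calculation with the exponential holding times yields
\[
\boldsymbol{f}^{(\lambda)}(\mathcal{M})\;\leq\;\Lambda/\lambda^2\;,\qquad\min_{\mathcal{M}'\neq\mathcal{M}}\boldsymbol{f}^{(\lambda)}(\mathcal{M}')\;\geq\;1/(\Lambda+\lambda)\;,
\]
the first bound because the chain started at $\mathcal{M}$ must wait an exponential time before it can visit a state where $\boldsymbol{g}=1$, the second because the chain started at $\mathcal{M}'\neq\mathcal{M}$ spends a positive exponential time there, where $\boldsymbol{g}=1$.

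Combining these with the previous display and taking $\limsup_{\epsilon\to 0}$ delivers
\[
\limsup_{\epsilon\to 0}\,\sup_{\boldsymbol{x}\in\mathcal{E}(\mathcal{M})}\mathbb{Q}_{\boldsymbol{x}}^{\epsilon,\,p}\!\left[\,\sigma\leq a\,\right]\;\leq\;e^{\lambda a}\,\frac{\Lambda(\Lambda+\lambda)}{\lambda^2}\;.
\]
The closing move is to couple $\lambda$ to $a$ by the choice $\lambda=a^{-1/2}$, so that $\lambda a=\sqrt a\to 0$ and $\Lambda(\Lambda+\lambda)/\lambda^2=\Lambda^2 a+\Lambda\sqrt a\to 0$, producing a right-hand side that vanishes as $a\to 0$, which is \eqref{e:nojump}. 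This is the main subtlety of the argument: for any \emph{fixed} $\lambda$ the ratio $\boldsymbol{f}^{(\lambda)}(\mathcal{M})/\min_{\mathcal{M}'\neq\mathcal{M}}\boldsymbol{f}^{(\lambda)}(\mathcal{M}')$ is a strictly positive constant independent of $a$ and the argument does not close; vanishing arises only because $\boldsymbol{f}^{(\lambda)}(\mathcal{M})$ decays like $\lambda^{-2}$ while the denominator decays only like $\lambda^{-1}$, and this spare factor of $\lambda^{-1}$ is converted into $a^{1/2}$ by the coupling $\lambda=a^{-1/2}$. Recognising that $\mathfrak{R}^{(p)}$ must be invoked with $\lambda\to\infty$ as $a\to 0$, rather than at the ambient fixed $\lambda$, is the key conceptual point.
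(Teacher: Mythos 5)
Your proof is correct and is essentially a reconstruction of \cite[Lemma 4.2]{LMS}, which is exactly what the paper offers as its proof of this lemma (the paper simply states ``It is proven in \cite[Lemma 4.2]{LMS} in a general set-up'' and gives no further argument). The chain of reasoning — upgrading $\mathfrak{R}^{(p)}$ from the ambient fixed $\lambda$ to all $\lambda>0$ via the equivalence with $\mathfrak{C}^{(p)}$, the choice of indicator source, the strong-Markov identity $\phi_\epsilon(\boldsymbol{x})=\mathbb{E}_{\boldsymbol{x}}^\epsilon[e^{-\lambda\sigma}\phi_\epsilon(\boldsymbol{y}_\epsilon^{(p)}(\sigma))]$, the exponential-holding-time bounds $\boldsymbol{f}^{(\lambda)}(\mathcal{M})\le\Lambda/\lambda^2$ and $\min_{\mathcal{M}'\neq\mathcal{M}}\boldsymbol{f}^{(\lambda)}(\mathcal{M}')\ge 1/(\Lambda+\lambda)$, and the coupling $\lambda=a^{-1/2}$ — is the LMS argument, and each step is sound.
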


\begin{proof}[Proof of Proposition \ref{prop: nojump2}]
The proof is similar to, but slightly more complicate than, that
of \cite[Lemma 10.5]{LLS-1st}. Clearly,
\[
\mathbb{Q}_{\boldsymbol{x}}^{\epsilon,\,p}\left[\,\tau_{\mathcal{R}}\le a\,\right]\ \le\ \max_{\mathcal{A}\subset\mathcal{R}}\,\mathbb{Q}_{\boldsymbol{x}}^{\epsilon,\,p}\left[\,\tau_{\mathcal{J}(\mathcal{A})}\le a\,\right]\;,
\]
where the maximum is carried out over all connected components of
$\mathcal{R}$ and $\mathcal{J}(\mathcal{A})$ is defined below Remark
\ref{rem_empty}. Thus, by Lemma \ref{l: nojump}, it suffices to
show
\[
\limsup_{\epsilon\rightarrow0}\,\sup_{\boldsymbol{x}\in\mathcal{E}(\mathcal{M})}\,\mathbb{Q}_{\boldsymbol{x}}^{\epsilon,\,p}\left[\,\tau_{\mathcal{J}(\mathcal{A})}\le a\,\right]\ \le\ \limsup_{\epsilon\rightarrow0}\,\sup_{\boldsymbol{x}\in\mathcal{E}(\mathcal{M})}\,\mathbb{Q}_{\boldsymbol{x}}^{\epsilon,\,p}\left[\,\tau_{\mathcal{E}^{(p)}\setminus\mathcal{E}(\mathcal{M})}\le2a\,\right]
\]
for all connected components $\mathcal{A}$ of $\mathcal{R}$.

Fix a connected component $\mathcal{A}$ of $\mathcal{R}$. By Lemma
\ref{l: nojump2-2}, the set $\mathcal{J}(\mathcal{A})$ does not
separate $(p)$-states and $\mathscr{V}^{(p)}(\mathcal{J}(\mathcal{A}))\ne\varnothing$.
Let $\theta_{\epsilon}^{(p-1)}\prec\alpha_{\epsilon}\prec\theta_{\epsilon}^{(p)}$,
set $\iota_{\epsilon}=a+\alpha_{\epsilon}/\theta_{\epsilon}^{(p)}$,
and write
\begin{align}
\mathbb{Q}_{\boldsymbol{x}}^{\epsilon,\,p}\left[\,\tau_{\mathcal{J}(\mathcal{A})}\le a\,\right]\ =\; & \mathbb{Q}_{\boldsymbol{x}}^{\epsilon,\,p}\left[\,\tau_{\mathcal{J}(\mathcal{A})}\le a,\,\sigma_{\mathcal{E}^{(p)}\cap\mathcal{J}(\mathcal{A})}<\iota_{\epsilon}\,\right]\nonumber \\
 & \qquad+\mathbb{Q}_{\boldsymbol{x}}^{\epsilon,\,p}\left[\,\tau_{\mathcal{J}(\mathcal{A})}\le a,\,\sigma_{\mathcal{E}^{(p)}\cap\mathcal{J}(\mathcal{A})}\ge\iota_{\epsilon}\,\right]\ ,\label{e:hit003}
\end{align}
where $\sigma_{\mathcal{Z}}$, $\mathcal{Z}\subset\mathbb{R}^{d}$,
is the first time after $\tau_{\mathcal{J}(\mathcal{A})}$ that the
process visits $\mathcal{Z}$:
\[
\sigma_{\mathcal{Z}}\ :=\ \inf\Big\{\,t>\tau_{\mathcal{J}(\mathcal{A})}:\boldsymbol{y}_{\epsilon}^{(p)}(t)\in\mathcal{Z}\,\Big\}\ .
\]

Since $\iota_{\epsilon}<2a$ for sufficiently small $\epsilon>0$
and,, $\mathcal{J}(\mathcal{A})\cap\mathcal{E}(\mathcal{M})=\varnothing$
by Lemma \ref{l: nojump2-2}-(3), the first term at the right-hand
side of \eqref{e:hit003} can be bounded by
\[
\mathbb{Q}_{\boldsymbol{x}}^{\epsilon,\,p}\left[\,\tau_{\mathcal{E}^{(p)}\cap\mathcal{J}(\mathcal{A})}<\iota_{\epsilon}\,\right]\ \le\ \mathbb{Q}_{\boldsymbol{x}}^{\epsilon,\,p}\left[\,\tau_{\mathcal{E}^{(p)}\setminus\mathcal{E}(\mathcal{M})}\le2a\,\right]\;.
\]
By the strong Markov property, the second term of the right-hand is
bounded by
\[
\sup_{\boldsymbol{y}\in\mathcal{J}(\mathcal{A})}\,\mathbb{P}_{\boldsymbol{y}}^{\epsilon}\left[\,\tau_{\mathcal{E}^{(p)}\cap\mathcal{J}(\mathcal{A})}>\alpha_{\epsilon}\,\right]\ .
\]
By Lemma \ref{lem_hitting}, this term converges to $0$ as $\epsilon\to0$.
Combining the previous estimates completes the proof.
\end{proof}

\section{\label{sec12}Proof of Theorem \ref{t00}}

In this section, we finally prove Theorem \ref{t00}. The proof of
Theorem \ref{t00} is based on three propositions proved in this section.
\begin{prop}
\label{p_fdd01} Fix $p\in\llbracket1,\,\mathfrak{q}\rrbracket$ and
$\mathcal{M}\in\mathscr{S}^{(p)}$. Then, for all $\boldsymbol{m}\in\mathcal{M}$,
$\boldsymbol{x}\in\mathcal{D}(\boldsymbol{m})$, $t>0$ and bounded
continuous function $F\colon\mathbb{R}^{d}\to\mathbb{R}$,
\begin{align*}
 & \lim_{\epsilon\rightarrow0}\,\mathbb{E}_{\boldsymbol{x}}^{\epsilon}\bigg[\,\,F(\boldsymbol{x}_{\epsilon}(\theta_{\epsilon}^{(p)}t))\,\bigg]\\
 & =\ \sum_{\mathcal{M}'\in\mathscr{V}^{(p)}}\,\sum_{\mathcal{M}''\in\mathscr{V}^{(p)}}\,\sum_{\boldsymbol{m}'\in\mathcal{M}''}\,\widehat{\mathcal{Q}}_{\mathcal{M}}^{(p)}\left[\,\tau_{\mathscr{V}^{(p)}}=\tau_{\mathcal{M}'}\,\right]\,\mathcal{Q}_{\mathcal{M}'}^{(p)}\left[{\bf y}^{(p)}(t)=\mathcal{M}''\right]\,\frac{\nu(\boldsymbol{m}')}{\nu(\mathcal{M}'')}\,F(\boldsymbol{m}')\ .
\end{align*}
\end{prop}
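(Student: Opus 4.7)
The plan is to interpolate between Theorem~\ref{t_meta}, which resolves the distribution of $\boldsymbol{x}_\epsilon(\theta^{(p)}_\epsilon t)$ at the coarse level of the partition $\mathscr{V}^{(p)}$, and a local equilibrium statement that further refines the distribution inside each valley $\mathcal{E}(\mathcal{M}'')$ into weighted contributions from the basins around the minima $\boldsymbol{m}'\in\mathcal{M}''$; continuity of $F$ then converts this refined distribution into the claimed finite sum. Fix $\delta>0$ small enough so that the balls $B(\boldsymbol{m}',\delta)$, $\boldsymbol{m}'\in\mathcal{M}_0$, are pairwise disjoint and contained in the corresponding $\mathcal{E}(\boldsymbol{m}')$, set $X_\epsilon := \boldsymbol{x}_\epsilon(\theta^{(p)}_\epsilon t)$, and denote by $\omega_F(\delta)$ the modulus of continuity of $F$ on $\bigcup_{\boldsymbol{m}'\in\mathcal{M}_0}\overline{\mathcal{E}(\boldsymbol{m}')}$. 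Approximating $F$ by its value at the center of the ball containing $X_\epsilon$ yields
\begin{equation*}
\Bigl|\,\mathbb{E}^\epsilon_{\boldsymbol{x}}[F(X_\epsilon)] - \sum_{\boldsymbol{m}'\in\mathcal{M}_0} F(\boldsymbol{m}')\,\mathbb{P}^\epsilon_{\boldsymbol{x}}[X_\epsilon\in B(\boldsymbol{m}',\delta)]\,\Bigr| \;\le\; \omega_F(\delta) \;+\; 2\|F\|_\infty\,\mathbb{P}^\epsilon_{\boldsymbol{x}}\Bigl[\,X_\epsilon\notin\bigcup_{\boldsymbol{m}'\in\mathcal{M}_0} B(\boldsymbol{m}',\delta)\,\Bigr].
\end{equation*}

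By Theorem~\ref{t_meta} applied with $n=1$ and $t_1=t$, for every $\mathcal{M}''\in\mathscr{V}^{(p)}$,
\begin{equation*}
\lim_{\epsilon\to 0}\,\mathbb{P}^\epsilon_{\boldsymbol{x}}\bigl[X_\epsilon\in\mathcal{E}(\mathcal{M}'')\bigr] \;=\; \sum_{\mathcal{M}'\in\mathscr{V}^{(p)}} \widehat{\mathcal{Q}}^{(p)}_{\mathcal{M}}\bigl[\tau_{\mathscr{V}^{(p)}}=\tau_{\mathcal{M}'}\bigr]\,p^{(p)}_t(\mathcal{M}',\mathcal{M}''),
\end{equation*}
and summing over $\mathcal{M}''$ the total probability equals $1$ in the limit, so the last error term vanishes. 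The key new input, to be supplied in Section~\ref{sec13}, is the local ergodicity estimate: for each $\mathcal{M}''\in\mathscr{V}^{(p)}$, $\boldsymbol{m}'\in\mathcal{M}''$, and $\delta>0$ sufficiently small,
\begin{equation*}
\lim_{\epsilon\to 0}\,\sup_{\boldsymbol{x}\in\mathcal{D}(\boldsymbol{m})}\Bigl|\,\mathbb{P}^\epsilon_{\boldsymbol{x}}\bigl[X_\epsilon\in B(\boldsymbol{m}',\delta)\bigr] - \frac{\mu_\epsilon(B(\boldsymbol{m}',\delta))}{\mu_\epsilon(\mathcal{E}(\mathcal{M}''))}\,\mathbb{P}^\epsilon_{\boldsymbol{x}}\bigl[X_\epsilon\in\mathcal{E}(\mathcal{M}'')\bigr]\,\Bigr| \;=\; 0,
\end{equation*}
together with $\mathbb{P}^\epsilon_{\boldsymbol{x}}[X_\epsilon\in B(\boldsymbol{m}',\delta)]\to 0$ whenever $\boldsymbol{m}'$ lies in a valley of $\mathscr{N}^{(p)}$.

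Because $\mathcal{M}''\in\mathscr{V}^{(p)}$ is simple by $\mathfrak{P}_1(p)$, every $\boldsymbol{m}'\in\mathcal{M}''$ shares the common height $U(\mathcal{M}'')$, and a Laplace expansion around the non-degenerate minima delivers $\mu_\epsilon(B(\boldsymbol{m}',\delta))/\mu_\epsilon(\mathcal{E}(\mathcal{M}''))\to\nu(\boldsymbol{m}')/\nu(\mathcal{M}'')$ for every fixed $\delta>0$. Assembling the three pieces, sending first $\epsilon\to 0$ and then $\delta\to 0$, yields exactly the right-hand side in the proposition. The main obstacle is the local ergodicity statement above: inside the composite valley $\mathcal{E}(\mathcal{M}'')$ the process hops between its constituent basins $\mathcal{E}(\boldsymbol{m}')$, $\boldsymbol{m}'\in\mathcal{M}''$, on a time scale of order $\theta^{(p-1)}_\epsilon\ll\theta^{(p)}_\epsilon$, and one must show that this internal dynamics has equilibrated to the restricted Gibbs measure $\mu_\epsilon/\mu_\epsilon(\mathcal{E}(\mathcal{M}''))$ by the observation time $\theta^{(p)}_\epsilon t$, uniformly over starting points $\boldsymbol{x}\in\mathcal{D}(\boldsymbol{m})$ that may reach $\mathcal{E}(\mathcal{M}'')$ only after a cascade of saddle crossings at earlier scales. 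This is precisely the multi-well generalization of the single-basin result of Barrera--Jara announced as an innovation of the paper.
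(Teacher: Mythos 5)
Your high-level plan coincides with the paper's: use continuity of $F$ to reduce the expectation to a finite sum of indicator probabilities $\mathbb{P}^\epsilon_{\boldsymbol{x}}[\,\boldsymbol{x}_\epsilon(\theta^{(p)}_\epsilon t)\in\mathcal{E}(\boldsymbol{m}')\,]$ (or small balls around the $\boldsymbol{m}'$), then supply a refined marginal-distribution result that resolves the probability inside each $\mathcal{E}(\mathcal{M}'')$ into the weights $\nu(\boldsymbol{m}')/\nu(\mathcal{M}'')$. The paper encapsulates that refined result as Proposition \ref{p_marginDA}, and the proof of the present proposition is then a short display identical in structure to yours.

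The gap is in how you propose to obtain the refinement. Your claimed ``local ergodicity estimate''
\[
\sup_{\boldsymbol{x}\in\mathcal{D}(\boldsymbol{m})}\Bigl|\,\mathbb{P}^\epsilon_{\boldsymbol{x}}\bigl[X_\epsilon\in B(\boldsymbol{m}',\delta)\bigr] - \frac{\mu_\epsilon(B(\boldsymbol{m}',\delta))}{\mu_\epsilon(\mathcal{E}(\mathcal{M}''))}\,\mathbb{P}^\epsilon_{\boldsymbol{x}}\bigl[X_\epsilon\in\mathcal{E}(\mathcal{M}'')\bigr]\,\Bigr| \;\to\; 0
\]
is a reformulation of what must be proved, not a reduction to something available. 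You attribute it to Section \ref{sec13}, but that section proves Proposition \ref{p_rev_y}, i.e.\ reversibility of the reduced Markov chain $\mathbf{y}^{(p)}(\cdot)$ restricted to its recurrent classes with respect to $\nu(\cdot)/\nu(\cdot)$; it does not prove a direct Gibbs-measure equidistribution statement for the diffusion on the composite valley $\mathcal{E}(\mathcal{M}'')$, uniformly over starting points in $\mathcal{D}(\boldsymbol{m})$. The multi-well Barrera--Jara generalization carried out in \cite[Section 3--4]{LLS-1st} equilibrates the diffusion only within a \emph{single} basin $\mathcal{E}(\boldsymbol{m}')$; equilibration across the basins constituting $\mathcal{M}''$ happens by saddle crossings on scale $\theta^{(p-1)}_\epsilon$ and is precisely what has to be quantified.

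The paper's actual route is different: it proves Proposition \ref{p_marginDA} via Lemma \ref{lem:margin1}, which is established by induction on $p$. The induction step conditions on the position at the intermediate time $\theta^{(p)}_\epsilon t - \theta^{(p-1)}_\epsilon s_0$, uses the already-established scale-$(p-1)$ convergence $\mathfrak{C}_{\rm fdd}^{(p-1)}$ to convert the conditional law into $p^{(p-1)}_{s_0}(\cdot,\cdot)$ weighted by the induction hypothesis, and then invokes Proposition \ref{p_rev_y} to assert that $p^{(p-1)}_{s_0}(\mathcal{M}'_i,\mathcal{M}'_1)\to\nu(\mathcal{M}'_1)/\nu(\mathcal{M}')$ as $s_0\to\infty$ because the recurrent class $\mathscr{V}^{(p-1)}(\mathcal{M}')$ is reversible with stationary measure $\nu/\nu(\mathcal{M}')$. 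This replaces the uniform Gibbs-measure equilibration you invoke by a Markov-chain argument at the reduced-chain level, which is where the $\nu$-weights actually come from in the paper. If you intend to substitute the Gibbs-measure equilibration, you would need to prove it from scratch; it is not a lemma that is proved or cited in the paper, and its proof is at least as involved as the induction in Lemma \ref{lem:margin1}.
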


\begin{prop}
\label{p_fdd02} Fix $p\in\llbracket1,\,\mathfrak{q}\rrbracket$ and
$\mathcal{M}\in\mathscr{S}^{(p)}$. Then, for all $\boldsymbol{m}\in\mathcal{M}$,
$\boldsymbol{x}\in\mathcal{D}(\boldsymbol{m})$, sequence $(\varrho_{\epsilon})_{\epsilon>0}$
satisfying $\theta_{\epsilon}^{(p-1)}\prec\varrho_{\epsilon}\prec\theta_{\epsilon}^{(p)},$
and bounded continuous function $F\colon\mathbb{R}^{d}\to\mathbb{R}$,
\begin{equation}
\lim_{\epsilon\rightarrow0}\,\mathbb{E}_{\boldsymbol{x}}^{\epsilon}\left[\,F(\boldsymbol{x}_{\epsilon}(\varrho_{\epsilon}))\,\right]\ =\ \sum_{\mathcal{M}'\in\mathscr{V}^{(p)}}\,\sum_{\boldsymbol{m}'\in\mathcal{M}'}\,\widehat{\mathcal{Q}}_{\mathcal{M}}^{(p)}\left[\,\tau_{\mathscr{V}^{(p)}}=\tau_{\mathcal{M}'}\,\right]\,\frac{\nu(\boldsymbol{m}')}{\nu(\mathcal{M}')}\,F(\boldsymbol{m}')\ .\label{e:fdd02}
\end{equation}
\end{prop}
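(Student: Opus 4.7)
The plan rests on combining the strong Markov property at the hitting time of $\mathcal{E}^{(p)}$ with a local ergodicity estimate inside each well $\mathcal{E}(\mathcal{M}')$ for $\mathcal{M}' \in \mathscr{V}^{(p)}$. The conditions $\mathfrak{H}^{(p)}$ (Section~\ref{sec7}), $\mathfrak{G}^{(p)}$ (Section~\ref{sec11}) and Proposition~\ref{p_fdd01}, all established previously or earlier in this section, are the main inputs.

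As a first step, I reduce to $\boldsymbol{x} \in \mathcal{E}(\boldsymbol{m})$. Since $\boldsymbol{x} \in \mathcal{D}(\boldsymbol{m})$, the deterministic orbit of \eqref{eq:ode} reaches $\mathcal{E}(\boldsymbol{m})$ in some finite time $T = T(\boldsymbol{x})$, and Freidlin--Wentzell closeness ensures $\boldsymbol{x}_\epsilon(T) \in \mathcal{E}(\boldsymbol{m})$ with $\mathbb{P}_{\boldsymbol{x}}^\epsilon$-probability $1-o(1)$; the Markov property at $T$ (with $T \prec \varrho_\epsilon$) allows us to assume $\boldsymbol{x} \in \mathcal{E}(\boldsymbol{m})$. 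I next select $\beta_\epsilon$ with $\theta_\epsilon^{(p-1)} \prec \beta_\epsilon \prec \varrho_\epsilon$. By $\mathfrak{H}^{(p)}$-(1) the event $\{\tau_{\mathcal{E}^{(p)}} > \beta_\epsilon\}$ has $\mathbb{P}_{\boldsymbol{x}}^\epsilon$-probability $o(1)$, while by $\mathfrak{H}^{(p)}$-(2) the probability that the process enters $\mathcal{E}^{(p)}$ through $\mathcal{E}(\mathcal{M}')$ converges to $\widehat{\mathcal{Q}}_{\mathcal{M}}^{(p)}[\tau_{\mathscr{V}^{(p)}}=\tau_{\mathcal{M}'}]$. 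Applying the strong Markov property at $\tau_{\mathcal{E}^{(p)}}$ reduces \eqref{e:fdd02} to the local ergodicity estimate
\[
\lim_{\epsilon \to 0}\,\sup_{\boldsymbol{y} \in \mathcal{E}(\mathcal{M}')}\,\Bigl|\,\mathbb{E}_{\boldsymbol{y}}^\epsilon[F(\boldsymbol{x}_\epsilon(s_\epsilon))] \,-\, \sum_{\boldsymbol{m}' \in \mathcal{M}'} \frac{\nu(\boldsymbol{m}')}{\nu(\mathcal{M}')}\, F(\boldsymbol{m}')\,\Bigr|\,=\,0
\]
for each $\mathcal{M}' \in \mathscr{V}^{(p)}$, uniformly in sequences $s_\epsilon \in [\varrho_\epsilon - \beta_\epsilon,\,\varrho_\epsilon]$ (which satisfy $\theta_\epsilon^{(p-1)} \prec s_\epsilon \prec \theta_\epsilon^{(p)}$).

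To establish the local ergodicity, I fix a small $t>0$ and apply Proposition~\ref{p_fdd01} with starting set $\mathcal{M}' \in \mathscr{V}^{(p)}$. Because $\widehat{\mathcal{Q}}_{\mathcal{M}'}^{(p)}[\tau_{\mathscr{V}^{(p)}}=\tau_{\mathcal{M}''}] = \mathbf{1}\{\mathcal{M}''=\mathcal{M}'\}$, the limit of $\mathbb{E}_{\boldsymbol{y}}^\epsilon[F(\boldsymbol{x}_\epsilon(\theta_\epsilon^{(p)} t))]$ equals
\[
A(t) \,:=\, \sum_{\mathcal{M}'' \in \mathscr{V}^{(p)}} \mathcal{Q}_{\mathcal{M}'}^{(p)}[\mathbf{y}^{(p)}(t)=\mathcal{M}''] \sum_{\boldsymbol{m}' \in \mathcal{M}''} \frac{\nu(\boldsymbol{m}')}{\nu(\mathcal{M}'')}\, F(\boldsymbol{m}'),
\]
and $A(t)$ converges to the RHS of \eqref{e:fdd02} as $t \to 0^+$ by right-continuity of the semigroup of $\mathbf{y}^{(p)}(\cdot)$ at zero. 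To bridge the intermediate time $s_\epsilon$ with the critical time $\theta_\epsilon^{(p)} t$, I apply the Markov property at $s_\epsilon$ to rewrite $\mathbb{E}_{\boldsymbol{y}}^\epsilon[F(\boldsymbol{x}_\epsilon(\theta_\epsilon^{(p)} t))] = \mathbb{E}_{\boldsymbol{y}}^\epsilon[\mathbb{E}_{\boldsymbol{x}_\epsilon(s_\epsilon)}^\epsilon[F(\boldsymbol{x}_\epsilon(\theta_\epsilon^{(p)} t - s_\epsilon))]]$; Proposition~\ref{prop: nojump2} (which follows from $\mathfrak{G}^{(p)}$) confines $\boldsymbol{x}_\epsilon(s_\epsilon)$ to $\mathcal{E}(\mathcal{M}')$ with probability $1-o(1)$, and Proposition~\ref{p_fdd01} applied with the perturbed time parameter $t - s_\epsilon/\theta_\epsilon^{(p)} \to t$ identifies the inner expectation (uniformly over such starting points) with $A(t)$. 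Consequently, for every fixed small $t>0$ the intermediate-scale expectation $\mathbb{E}_{\boldsymbol{y}}^\epsilon[F(\boldsymbol{x}_\epsilon(s_\epsilon))]$ must also converge to $A(t)$; letting $t \to 0^+$ yields the RHS.

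The main obstacle is precisely the bridging step in the local ergodicity argument, namely the need for Proposition~\ref{p_fdd01} to be stable under $\epsilon$-dependent time parameters $t_\epsilon \to t > 0$ and uniform in starting points lying in $\mathcal{E}(\mathcal{M}')$. This uniformity is precisely the convergence of finite-dimensional distributions at times $t_{j,\epsilon} \to t_j$ encoded in condition $\mathfrak{C}_{\mathrm{fdd}}^{(p)}$ (cf.\ Remark~\ref{rem:fdd}), which we have available. Modulo this uniformity, the remainder of the proof amounts to combining the strong Markov property, $\mathfrak{H}^{(p)}$, $\mathfrak{G}^{(p)}$, and the right-continuity of the finite-state semigroup $p_t^{(p)}$ at $t=0$; all subsidiary steps, including the initial reduction via the dynamical system flow, are routine.
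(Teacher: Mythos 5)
The bridging step of your local ergodicity argument is a non sequitur and cannot be repaired as written. After writing
\[
\mathbb{E}_{\boldsymbol{y}}^\epsilon\bigl[F(\boldsymbol{x}_\epsilon(\theta_\epsilon^{(p)} t))\bigr] \;=\; \mathbb{E}_{\boldsymbol{y}}^\epsilon\Bigl[\,\mathbb{E}_{\boldsymbol{x}_\epsilon(s_\epsilon)}^\epsilon\bigl[F(\boldsymbol{x}_\epsilon(\theta_\epsilon^{(p)} t - s_\epsilon))\bigr]\,\Bigr]\,,
\]
you invoke Proposition~\ref{p_fdd01} twice: once to say the left side converges to $A(t)$, and once to say the inner conditional expectation converges to $A(t)$ uniformly on $\mathcal{E}(\mathcal{M}')$, while $\mathfrak{G}^{(p)}$ confines $\boldsymbol{x}_\epsilon(s_\epsilon)$ to $\mathcal{E}(\mathcal{M}')$. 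But this just reduces the identity to $A(t)=A(t)$; at no point does the intermediate-scale quantity $\mathbb{E}_{\boldsymbol{y}}^\epsilon[F(\boldsymbol{x}_\epsilon(s_\epsilon))]$ appear on either side, so no information about it is extracted. The heart of the problem --- how the process is distributed \emph{inside} the well $\mathcal{E}(\mathcal{M}')$ at the intermediate time $s_\epsilon$ --- is left entirely untouched.

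What your proposal is missing is the mechanism by which the process equilibrates inside $\mathcal{E}(\mathcal{M}')$: this equilibration happens at the \emph{previous} scale $\theta_\epsilon^{(p-1)}$, not at $\theta_\epsilon^{(p)}$, and cannot be obtained from the scale-$p$ objects alone. The paper's argument (Lemma~\ref{lem:margin_int}, feeding into Proposition~\ref{p_marginDA-int}) proceeds by choosing a large constant $s_0$, splitting at time $\varrho_\epsilon + s - \theta_\epsilon^{(p-1)}s_0$ via the Markov property, using Lemma~\ref{l_4242} to keep the process in $\mathcal{E}(\mathcal{M}')$ up to that split time, and then invoking the scale-$(p-1)$ local ergodicity (Lemma~\ref{lem:margin1}) together with the reversibility of $\mathbf{y}^{(p-1)}(\cdot)$ on the recurrent class $\mathscr{V}^{(p-1)}(\mathcal{M}')$ (Proposition~\ref{p_rev_y}) to force convergence of the one-step-down Markov chain to its stationary distribution $\nu(\cdot)/\nu(\mathcal{M}')$ over time $s_0$. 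None of these ingredients appears in your proposal, and without them the distribution inside the well at time $s_\epsilon$ is undetermined; the remainder of your plan (reduction to $\mathcal{E}(\boldsymbol{m})$, use of $\mathfrak{H}^{(p)}$ to distribute over the target wells, continuity of $A(t)$ at $t=0$) is correct but cannot compensate for this missing step.
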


\begin{prop}
\label{p_fdd03}
For all $\boldsymbol{x}\in\mathbb{R}^{d}$,
sequence $(\varrho_{\epsilon})_{\epsilon>0}$ satisfying $\varrho_{\epsilon}\succ\theta_{\epsilon}^{(\mathfrak{q})}$,
and bounded continuous function $F\colon\mathbb{R}^{d}\to\mathbb{R}$,
\begin{equation}
\lim_{\epsilon\rightarrow0}\,\mathbb{E}_{\boldsymbol{x}}^{\epsilon}\left[\,F(\boldsymbol{x}_{\epsilon}(\varrho_{\epsilon}))\,\right]\ =\ \sum_{\,\boldsymbol{m}\in\mathcal{M}_{\star}}\,\frac{\nu(\boldsymbol{m})}{\nu(\mathcal{M}_{\star})}\,F(\boldsymbol{m})\ .\label{e:fdd02-1}
\end{equation}
\end{prop}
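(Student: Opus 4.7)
The strategy has two ingredients: (i) apply Proposition~\ref{p_fdd01} at scale $p=\mathfrak{q}$ and let the time parameter tend to infinity, exploiting that $\mathfrak{n}_\mathfrak{q}=1$ produces a unique closed recurrent class $\mathscr{R}_1^{(\mathfrak{q})}$ for $\mathbf{y}^{(\mathfrak{q})}(\cdot)$; (ii) remove the restriction $\boldsymbol{x}\in\mathcal{D}(\boldsymbol{m})$ via the Markov property and the initial hitting estimate \cite[Theorem 6.1]{LLS-1st}.

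Fix $T>0$ and set $s_\epsilon:=\varrho_\epsilon-T\theta_\epsilon^{(\mathfrak{q})}$; since $\varrho_\epsilon/\theta_\epsilon^{(\mathfrak{q})}\to\infty$, for $\epsilon$ small both $s_\epsilon>1/\epsilon$ and $s_\epsilon\succ\theta_\epsilon^{(\mathfrak{q})}$ hold. Defining $u_\epsilon^T(\boldsymbol{y}):=\mathbb{E}_{\boldsymbol{y}}^\epsilon[F(\boldsymbol{x}_\epsilon(T\theta_\epsilon^{(\mathfrak{q})}))]$, the Markov property gives $\mathbb{E}_{\boldsymbol{x}}^\epsilon[F(\boldsymbol{x}_\epsilon(\varrho_\epsilon))]=\mathbb{E}_{\boldsymbol{x}}^\epsilon[u_\epsilon^T(\boldsymbol{x}_\epsilon(s_\epsilon))]$. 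By \cite[Theorem 6.1]{LLS-1st}, $\mathbb{P}_{\boldsymbol{x}}^\epsilon[\boldsymbol{x}_\epsilon(s_\epsilon)\in\mathcal{E}(\mathcal{M}_0)]=1-o_\epsilon(1)$. Combined with Proposition~\ref{p_fdd01} with $p=\mathfrak{q}$ applied uniformly on each compact $\mathcal{E}(\boldsymbol{m})\subset\mathcal{D}(\boldsymbol{m})$ (the uniformity coming from the flatness in Proposition~\ref{p_flat}), this yields, on the event $\{\boldsymbol{x}_\epsilon(s_\epsilon)\in\mathcal{E}(\boldsymbol{m})\}$,
\[
u_\epsilon^T(\boldsymbol{x}_\epsilon(s_\epsilon))=\sum_{\mathcal{M}',\mathcal{M}''\in\mathscr{V}^{(\mathfrak{q})}}\widehat{\mathcal{Q}}^{(\mathfrak{q})}_{\mathcal{M}(\mathfrak{q},\boldsymbol{m})}[\tau_{\mathscr{V}^{(\mathfrak{q})}}=\tau_{\mathcal{M}'}]\,p_T^{(\mathfrak{q})}(\mathcal{M}',\mathcal{M}'')\sum_{\boldsymbol{m}'\in\mathcal{M}''}\frac{\nu(\boldsymbol{m}')}{\nu(\mathcal{M}'')}F(\boldsymbol{m}')+o_\epsilon(1).
\]

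As $T\to\infty$, standard ergodicity of finite-state Markov chains on an irreducible recurrent class gives $p_T^{(\mathfrak{q})}(\mathcal{M}',\mathcal{M}'')\to\pi^{(\mathfrak{q})}(\mathcal{M}'')$ uniformly in $\mathcal{M}'$ at exponential rate, where $\pi^{(\mathfrak{q})}$ is the unique stationary distribution of $\mathbf{y}^{(\mathfrak{q})}$ supported on $\mathscr{R}_1^{(\mathfrak{q})}$. Since $\sum_{\mathcal{M}'}\widehat{\mathcal{Q}}^{(\mathfrak{q})}_{\mathcal{M}(\mathfrak{q},\boldsymbol{m})}[\tau_{\mathscr{V}^{(\mathfrak{q})}}=\tau_{\mathcal{M}'}]=1$ by Lemma~\ref{l_irred_negli}, the right-hand side becomes independent of $\boldsymbol{m}$; the exponential rate together with the boundedness of $F$ justifies exchanging $T\to\infty$ with $\epsilon\to0$.

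The principal obstacle is the identification $\pi^{(\mathfrak{q})}(\mathcal{M})=\nu(\mathcal{M})/\nu(\mathcal{M}_\star)$ for $\mathcal{M}\in\mathscr{R}_1^{(\mathfrak{q})}$, under which the expression collapses to $\sum_{\boldsymbol{m}'\in\mathcal{M}_\star}\frac{\nu(\boldsymbol{m}')}{\nu(\mathcal{M}_\star)}F(\boldsymbol{m}')$ as claimed. All $\mathcal{M}\in\mathscr{R}_1^{(\mathfrak{q})}$ share depth $\Xi(\mathcal{M})=d^{(\mathfrak{q})}$ by Proposition~\ref{prop:depth}, so \eqref{eq:rate_30}--\eqref{eq:rate_3} yield $\nu(\mathcal{M})\widehat{r}^{(\mathfrak{q})}(\mathcal{M},\mathcal{M}')=\omega_\mathfrak{q}(\mathcal{M},\mathcal{M}')$, symmetric in $(\mathcal{M},\mathcal{M}')$; hence detailed balance holds for $\widehat{\mathbf{y}}^{(\mathfrak{q})}$ on $\mathscr{V}^{(\mathfrak{q})}\times\mathscr{V}^{(\mathfrak{q})}$ and is preserved under the trace operation that defines $\mathbf{y}^{(\mathfrak{q})}$, giving $\pi^{(\mathfrak{q})}\propto\nu$ on $\mathscr{R}_1^{(\mathfrak{q})}$. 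Equivalently, the identification can be read off from Laplace asymptotics for the Gibbs measure $\mu_\epsilon$ restricted to $\mathcal{E}(\mathcal{M}_\star)$, since all $\mathcal{M}\in\mathscr{R}_1^{(\mathfrak{q})}$ share the common energy $U(\mathcal{M}_\star)$.
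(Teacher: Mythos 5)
Your strategy takes a genuinely different route from the paper. The paper's proof of Proposition \ref{p_fdd03} is a one-liner: replace Proposition \ref{p_marginDA} by Proposition \ref{p_marginDA-int-1} in the proof of Proposition \ref{p_fdd02}. The content of Proposition \ref{p_marginDA-int-1} is in turn delegated to Lemma \ref{lem:margin_int}, which ``freezes'' the process at a time $\varrho_\epsilon - s_0\theta_\epsilon^{(\mathfrak{q})}$ for a \emph{fixed} $s_0$, applies Lemma \ref{l_4242} (a no-escape estimate at intermediate times) and Proposition \ref{p_rev_y} (local reversibility), and avoids any $T\to\infty$ limit. You instead split off a window $T\theta_\epsilon^{(\mathfrak{q})}$, invoke the critical-scale Proposition \ref{p_fdd01}, and take $T\to\infty$ afterwards. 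Both routes work, but both funnel through the same hard fact: the stationary measure $\pi^{(\mathfrak{q})}$ of $\mathbf{y}^{(\mathfrak{q})}$ on its unique closed class is $\nu(\cdot)/\nu(\mathcal{M}_\star)$.

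There is a genuine gap in how you justify that identification. You claim that $\nu(\mathcal{M})\,\widehat{r}^{(\mathfrak{q})}(\mathcal{M},\mathcal{M}')=\omega_\mathfrak{q}(\mathcal{M},\mathcal{M}')$ is ``symmetric in $(\mathcal{M},\mathcal{M}')$,'' whence detailed balance for $\widehat{\mathbf{y}}^{(\mathfrak{q})}$ on $\mathscr{V}^{(\mathfrak{q})}\times\mathscr{V}^{(\mathfrak{q})}$ and, after trace, reversibility of $\mathbf{y}^{(\mathfrak{q})}$. Neither step holds as stated. First, pointwise symmetry fails: $\omega_\mathfrak{q}(\mathcal{M},\mathcal{M}')=\omega(\mathcal{M},\mathcal{M}')\,\boldsymbol{1}\{\Xi(\mathcal{M})=d^{(\mathfrak{q})}\}$ carries an indicator on $\mathcal{M}$ only, and $\omega(\mathcal{M},\mathcal{M}')=\sum_{\boldsymbol{\sigma}\in\mathcal{S}(\mathcal{M},\mathcal{M}')}\omega(\boldsymbol{\sigma})$ is defined via $\mathcal{S}(\mathcal{M},\mathcal{M}')$, which is not symmetric in its arguments ($\mathcal{M}\to_{\boldsymbol{\sigma}}\mathcal{M}'$ requires $U(\boldsymbol{\sigma})=\Theta(\mathcal{M},\widetilde{\mathcal{M}})$, an asymmetric condition when $\mathcal{M}'$ is not a well minimizer). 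Second, ``reversibility of $\widehat{\mathbf{y}}^{(\mathfrak{q})}$ on $\mathscr{V}^{(\mathfrak{q})}\times\mathscr{V}^{(\mathfrak{q})}$'' is not a well-posed notion and is not preserved under the trace that produces $\mathbf{y}^{(\mathfrak{q})}$, because the trace excursions pass through $\mathscr{N}^{(\mathfrak{q})}$ where the rates $\widehat{r}^{(\mathfrak{q})}$ are inherited from earlier layers via \eqref{eq:rate_1}--\eqref{eq:rate_2} and carry no detailed-balance structure; indeed $\widehat{\mathbf{y}}^{(\mathfrak{q})}$ itself is not reversible. The paper's Proposition \ref{p_rev_y} establishes the reversibility of $\mathbf{y}^{(\mathfrak{q})}$ as an \emph{emergent} property of the trace, via the block decomposition of Proposition \ref{prop:revpre} and the geometric identity of Lemma \ref{l_SM-WM-1}, which shows that the \emph{aggregated} rates $\nu(\mathcal{M}_i)\,\widehat{r}^{(p)}(\mathcal{M}_i,\mathscr{S}^{(p)}(\mathcal{W}_j))=\sum_{\boldsymbol{\sigma}\in\overline{\mathcal{W}_i}\cap\overline{\mathcal{W}_j}}\omega(\boldsymbol{\sigma})$ are symmetric in $(i,j)$. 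Your proof needs to invoke Proposition \ref{p_rev_y} (or reproduce its argument); the Laplace-asymptotic heuristic you offer as an ``equivalent'' does not bypass this.

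Two smaller points. Your claim that \cite[Theorem 6.1]{LLS-1st} gives $\mathbb{P}_{\boldsymbol{x}}^\epsilon[\boldsymbol{x}_\epsilon(s_\epsilon)\in\mathcal{E}(\mathcal{M}_0)]=1-o_\epsilon(1)$ is a misattribution: that theorem controls the \emph{hitting time} $\tau_{\mathcal{E}(\mathcal{M}_0)}$, not a fixed-time marginal. The paper's Proposition \ref{p_marginDA-int-1} handles this correctly by conditioning on $\tau_{\mathcal{E}(\mathcal{M}_0)}\le\epsilon^{-1}$ and applying the strong Markov property; alternatively you could cite Lemma \ref{l_4141}. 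Also, the assertion that all $\mathcal{M}\in\mathscr{R}_1^{(\mathfrak{q})}$ have $\Xi(\mathcal{M})=d^{(\mathfrak{q})}$ (via Proposition \ref{prop:depth}) fails when $\mathscr{R}_1^{(\mathfrak{q})}$ is a singleton absorbing state, in which case $\Xi>d^{(\mathfrak{q})}$; the identification is then trivial, but you should note the case split.
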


We now complete the proof of Theorem \ref{t00} by assuming three
propositions above.
\begin{proof}[Proof of Theorem \ref{t00}]
Fix $\boldsymbol{m}\in\mathcal{M}_{0}$ and write $\mathcal{M}=\mathcal{M}(\boldsymbol{m},\,p)$.
Then, for $\boldsymbol{x}\in\mathcal{D}(\boldsymbol{m})$, by \eqref{eq:probexp}
and Proposition \ref{p_fdd01},
\begin{align*}
 & \lim_{\epsilon\to0}\,u_{\epsilon}(\theta_{\epsilon}^{(p)}t,\boldsymbol{x})\ =\ \lim_{\epsilon\rightarrow0}\,\mathbb{E}_{\boldsymbol{x}}^{\epsilon}\left[\,u_{0}\,(\boldsymbol{x}_{\epsilon}(\theta_{\epsilon}^{(p)}t))\,\right]\\
 & =\ \sum_{\mathcal{M}'\in\mathscr{V}^{(p)}}\,\sum_{\mathcal{M}''\in\mathscr{V}^{(p)}}\,\sum_{\boldsymbol{m}'\in\mathcal{M}''}\,\widehat{\mathcal{Q}}_{\mathcal{M}}^{(p)}\left[\,\tau_{\mathscr{V}^{(p)}}=\tau_{\mathcal{M}'}\,\right]\,\mathcal{Q}_{\mathcal{M}'}^{(p)}\left[\,{\bf y}^{(p)}(t)=\mathcal{M}''\,\right]\,\frac{\nu(\boldsymbol{m}')}{\nu(\mathcal{M}'')}\,u_{0}(\boldsymbol{m}')\;.
\end{align*}
This proves (a) by recalling \eqref{noteq1} and \eqref{noteq2}.

We now turn to (b) and (c). By \eqref{eq:probexp} and Proposition
\ref{p_fdd02}, we have
\[
\lim_{\epsilon\to0}u_{\epsilon}(\varrho_{\epsilon},\,\boldsymbol{x})\ =\ \lim_{\epsilon\rightarrow0}\mathbb{E}_{\boldsymbol{x}}^{\epsilon}\left[\,u_{0}(\boldsymbol{x}_{\epsilon}(\varrho_{\epsilon}))\,\right]\ =\ \sum_{\mathcal{M}'\in\mathscr{V}^{(p)}}\,\sum_{\boldsymbol{m}'\in\mathcal{M}'}\,\widehat{\mathcal{Q}}_{\mathcal{M}}^{(p)}\left[\,\tau_{\mathscr{V}^{(p)}}=\tau_{\mathcal{M}'}\,\right]\,\frac{\nu(\boldsymbol{m}')}{\nu(\mathcal{M}')}\,u_{0}(\boldsymbol{m}')\ .
\]
Hence, recalling \eqref{noteq1} completes the proof for (b). Similarly,
(c) is an immediate consequence of Proposition \ref{p_fdd03} and
\eqref{eq:probexp}.
\end{proof}
Now we turn to the proof of Propositions \ref{p_fdd01}, \ref{p_fdd02}
and \ref{p_fdd03}.

We emphasize that $\mathfrak{H}^{(p)}$, $\mathfrak{R}^{(p)}$, $\mathfrak{C}^{(p)}$,
$\mathfrak{C}_{{\rm fdd}}^{(p)}$, $\mathfrak{G}^{(p)}$, $p\in\llbracket1,\,\mathfrak{q}\rrbracket$,
have now been proven and hence can be used without further comments.
In some statements, however, to stress which of these conditions are
necessary, we include them in the statement.

\subsection{\label{sec14.1}Starting from domain of attraction of negligible
valley}

The following propositions are the main consequences of $\mathfrak{H}^{(p)}$.
We first look at the metastable scale.
\begin{prop}
\label{prop:fd1}Fix $p\in\llbracket1,\,\mathfrak{q}\rrbracket$ and
suppose that $\mathfrak{H}^{(p)}$ is in force. Suppose that for all
$\mathcal{M}\in\mathscr{V}^{(p)}$, $\mathcal{M}'\in\mathscr{S}^{(p)}$,
$\mathcal{A}\subset\mathcal{M}'$ , $t>0$, and a sequence $(\gamma_{\epsilon})_{\epsilon>0}$
such that $\gamma_{\epsilon}\prec\theta_{\epsilon}^{(p)}$, we have
\begin{equation}
\lim_{\epsilon\rightarrow0}\,\sup_{\boldsymbol{x}\in\mathcal{E}(\mathcal{M})}\,\sup_{|s|\le\gamma_{\epsilon}}\,\left|\,\mathbb{P}_{\boldsymbol{x}}^{\epsilon}\Big[\,\bm{x}_{\epsilon}(\theta_{\epsilon}^{(p)}t+s)\in\mathcal{E}(\mathcal{A})\,\Big]-\Phi_{t}^{(p)}(\mathcal{M},\,\mathcal{A})\,\right|\ =\ 0\label{eq:fd0}
\end{equation}
for some $\Phi_{t}^{(p)}(\mathcal{M},\,\mathcal{A})\in[0,\,1]$. Then,
for all $\mathcal{M},\,\mathcal{M}'\in\mathscr{S}^{(p)}$, $\boldsymbol{m}\in\mathcal{M}$,
$\mathcal{A}\subset\mathcal{M}'$, $t>0$, a sequence $(\gamma_{\epsilon})_{\epsilon>0}$
such that $\gamma_{\epsilon}\prec\theta_{\epsilon}^{(p)}$, and $\boldsymbol{x}\in\mathcal{D}(\boldsymbol{m})$,
we have
\begin{equation}
\lim_{\epsilon\rightarrow0}\,\sup_{|s|\le\gamma_{\epsilon}}\,\left|\,\mathbb{P}_{\boldsymbol{x}}^{\epsilon}\Big[\,\bm{x}_{\epsilon}(\theta_{\epsilon}^{(p)}t+s)\in\mathcal{E}(\mathcal{A})\,\Big]-\Psi_{t}^{(p)}(\mathcal{M},\,\mathcal{A})\,\right|\ =\ 0\;,\label{eq:fd1}
\end{equation}
where
\[
\Psi_{t}^{(p)}(\mathcal{M},\,\mathcal{A})\ =\ \sum_{\mathcal{M}''\in\mathscr{V}^{(p)}}\,\widehat{\mathcal{Q}}_{\mathcal{M}}^{(p)}\left[\,\tau_{\mathscr{V}^{(p)}}=\tau_{\mathcal{M}''}\,\right]\,\Phi_{t}^{(p)}(\mathcal{M}'',\,\mathcal{A})\;.
\]
\end{prop}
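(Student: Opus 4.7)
The plan is to decompose the trajectory of $\bm{x}_\epsilon(\cdot)$ into three temporal regimes on disjoint scales: a bounded relaxation time that carries the process from a general point $\bm{x}\in\mathcal{D}(\bm{m})$ into the small well $\mathcal{E}(\bm{m})\subset\mathcal{E}(\mathcal{M})$; an intermediate regime of length at most $\alpha_\epsilon$ with $\theta_\epsilon^{(p-1)}\prec\alpha_\epsilon\prec\theta_\epsilon^{(p)}$, during which the process (if started from a negligible valley $\mathcal{M}\in\mathscr{N}^{(p)}$) is absorbed into some $\mathcal{E}(\mathcal{M}'')$, $\mathcal{M}''\in\mathscr{V}^{(p)}$, with the absorption distribution prescribed by $\mathfrak{H}^{(p)}$; and finally a macroscopic piece of length $\theta_\epsilon^{(p)} t+s+O(\alpha_\epsilon)$ starting from this newly reached valley, which is controlled by the hypothesis \eqref{eq:fd0}. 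The observation that makes this robust is that each of the first two pieces takes time $\prec\theta_\epsilon^{(p)}$, so after applying the strong Markov property, the residual time argument to be plugged into \eqref{eq:fd0} is of the form $\theta_\epsilon^{(p)}t+s'$ with $|s'|\le\gamma_\epsilon+\alpha_\epsilon+O(1)\prec\theta_\epsilon^{(p)}$, and therefore \eqref{eq:fd0} applies uniformly.

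More concretely, the first step is to reduce to the case $\bm{x}\in\mathcal{E}(\bm{m})$. Since $\bm{x}\in\mathcal{D}(\bm{m})$, the deterministic trajectory of \eqref{eq:x(t)} enters $\mathcal{E}(\bm{m})$ in finite time; by the zeroth-order Freidlin--Wentzell estimate (cf.\ \cite[Section 3]{LLS-1st}), there exists $T_0=T_0(\bm{x})<\infty$ with
\[
\lim_{\epsilon\to0}\mathbb{P}_{\bm{x}}^{\epsilon}\bigl[\,\bm{x}_\epsilon(T_0)\in\mathcal{E}(\bm{m})\,\bigr]\ =\ 1\;.
\]
By the Markov property at time $T_0$, we may therefore replace $\bm{x}$ by $\bm{x}_\epsilon(T_0)\in\mathcal{E}(\bm{m})\subset\mathcal{E}(\mathcal{M})$ at the cost of shifting $s$ by $-T_0$; since $T_0$ is deterministic and finite, the modified shift still satisfies $|s-T_0|\prec\theta_\epsilon^{(p)}$. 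If $\mathcal{M}\in\mathscr{V}^{(p)}$, the hypothesis \eqref{eq:fd0} applied at this starting point yields $\Phi_t^{(p)}(\mathcal{M},\mathcal{A})$, which coincides with $\Psi_t^{(p)}(\mathcal{M},\mathcal{A})$ by \eqref{eq:a0-1}.

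Suppose now $\mathcal{M}\in\mathscr{N}^{(p)}$. Choose $\alpha_\epsilon$ with $\theta_\epsilon^{(p-1)}\prec\alpha_\epsilon\prec\theta_\epsilon^{(p)}$, so that by \eqref{eq:Hp-1},
\[
\sup_{\bm{y}\in\mathcal{E}(\mathcal{M})}\mathbb{P}_{\bm{y}}^{\epsilon}\bigl[\,\tau_{\mathcal{E}^{(p)}}>\alpha_\epsilon\,\bigr]\ =\ o_\epsilon(1)\;.
\]
Applying the strong Markov property at $\tau_{\mathcal{E}^{(p)}}$ and partitioning according to the entrance point,
\begin{align*}
\mathbb{P}_{\bm{y}}^{\epsilon}\bigl[\,\bm{x}_\epsilon(\theta_\epsilon^{(p)} t+s)\in\mathcal{E}(\mathcal{A})\,\bigr]
 & =\sum_{\mathcal{M}''\in\mathscr{V}^{(p)}}\mathbb{E}_{\bm{y}}^{\epsilon}\Bigl[\,\mathbf{1}\{\tau_{\mathcal{E}^{(p)}}=\tau_{\mathcal{E}(\mathcal{M}'')}\le\alpha_\epsilon\} \\
 & \qquad\times\,\mathbb{P}_{\bm{x}_\epsilon(\tau_{\mathcal{E}^{(p)}})}^{\epsilon}\bigl[\,\bm{x}_\epsilon(\theta_\epsilon^{(p)} t+s-\tau_{\mathcal{E}^{(p)}})\in\mathcal{E}(\mathcal{A})\,\bigr]\,\Bigr]+o_\epsilon(1)\;.
\end{align*}
On this event the residual time shift $s':=s-\tau_{\mathcal{E}^{(p)}}$ satisfies $|s'|\le\gamma_\epsilon+\alpha_\epsilon$, which is still $\prec\theta_\epsilon^{(p)}$, so the inner probability may be replaced by $\Phi_t^{(p)}(\mathcal{M}'',\mathcal{A})$ uniformly by \eqref{eq:fd0}. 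Factoring $\Phi_t^{(p)}(\mathcal{M}'',\mathcal{A})$ out of the expectation, we are left with $\mathbb{P}_{\bm{y}}^{\epsilon}[\tau_{\mathcal{E}^{(p)}}=\tau_{\mathcal{E}(\mathcal{M}'')}]$, which converges to $\widehat{\mathcal{Q}}_{\mathcal{M}}^{(p)}[\tau_{\mathscr{V}^{(p)}}=\tau_{\mathcal{M}''}]$ by \eqref{eq:Hp-2}. Summing over $\mathcal{M}''\in\mathscr{V}^{(p)}$ yields exactly $\Psi_t^{(p)}(\mathcal{M},\mathcal{A})$.

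The main technical obstacle is the uniform bookkeeping of the three time scales and the random time shifts they introduce. In particular, one must verify that the hypothesis \eqref{eq:fd0} remains applicable after the strong Markov step, where the time shift becomes the random quantity $s-\tau_{\mathcal{E}^{(p)}}$ (and, in the first reduction, $s-T_0$). The flexibility of \eqref{eq:fd0} to allow any deterministic sequence $\gamma_\epsilon\prec\theta_\epsilon^{(p)}$ is precisely what is needed: conditioning on $\{\tau_{\mathcal{E}^{(p)}}\le\alpha_\epsilon\}$ turns the random shift into a shift uniformly bounded by $\gamma_\epsilon+\alpha_\epsilon+T_0$, which is still $\prec\theta_\epsilon^{(p)}$, so the uniformity in \eqref{eq:fd0} absorbs all the randomness of the short-time behaviour.
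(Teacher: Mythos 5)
Your proposal is correct and takes essentially the same approach as the paper: after reducing to a starting point inside $\mathcal{E}(\bm{m})$, you decompose according to which valley $\mathcal{E}(\mathcal{M}'')$, $\mathcal{M}''\in\mathscr{V}^{(p)}$, is first hit within a time $\alpha_\epsilon$ with $\theta_\epsilon^{(p-1)}\prec\alpha_\epsilon\prec\theta_\epsilon^{(p)}$, use $\mathfrak{H}^{(p)}$ to identify the absorption probabilities with $\widehat{\mathcal{Q}}^{(p)}_{\mathcal{M}}[\tau_{\mathscr{V}^{(p)}}=\tau_{\mathcal{M}''}]$, and apply the hypothesis \eqref{eq:fd0} after the strong Markov property with the enlarged shift $\gamma_\epsilon+\alpha_\epsilon\prec\theta_\epsilon^{(p)}$. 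This is exactly the paper's argument. The one point where you deviate is the initial relaxation from $\bm{x}\in\mathcal{D}(\bm{m})$ into $\mathcal{E}(\bm{m})$: you invoke weak convergence of $\bm{x}_\epsilon(\cdot)$ to the deterministic flow on a fixed time horizon $T_0$ and apply the plain Markov property at time $T_0$, whereas the paper conditions on the event $\{\tau_{\mathcal{E}(\bm{m})}\le\epsilon^{-1}\}$ and applies the strong Markov property at the random hitting time, citing \cite[Corollary 6.2, Lemma 6.7]{LLS-1st}. Both routes are valid; yours has the small advantage that the shift $s-T_0$ is deterministic, while the paper's keeps the time shift uniformly bounded by $\epsilon^{-1}$ independently of $\bm{x}$. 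Note one minor point of presentation: the reduction at time $T_0$ already uses the result established for starting points in $\mathcal{E}(\bm{m})$, so logically the argument for $\bm{x}\in\mathcal{E}(\bm{m})$ should precede the reduction step rather than follow it.
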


\begin{proof}
Fix $\mathcal{M},\,\mathcal{M}'\in\mathscr{S}^{(p)}$, $\boldsymbol{m}\in\mathcal{M}$,
$\mathcal{A}\subset\mathcal{M}'$, $t>0$ and sequence $(\gamma_{\epsilon})_{\epsilon>0}$
such that $\gamma_{\epsilon}\prec\theta_{\epsilon}^{(p)}$. In addition,
we take an auxiliary sequence $(\widehat{\gamma}_{\epsilon})_{\epsilon>0}$
such that $\theta_{\epsilon}^{(p-1)}\prec\widehat{\gamma}_{\epsilon}\prec\theta_{\epsilon}^{(p)}$.

We first let $\boldsymbol{x}\in\mathcal{E}(\mathcal{M})$ and $s\in[-\gamma_{\epsilon},\,\gamma_{\epsilon}]$.
Then, by $\mathfrak{H}^{(p)}$, we can replace the first probability
at the left-hand side of \eqref{eq:fd1} with
\[
\mathbb{P}_{\boldsymbol{x}}^{\epsilon}\left[\,\boldsymbol{x}_{\epsilon}(\theta_{\epsilon}^{(p)}t+s)\in\mathcal{E}(\mathcal{A}),\,\tau_{\mathcal{E}^{(p)}}\le\widehat{\gamma}_{\epsilon}\,\right]\;.
\]
Decompose this probability into
\begin{equation}
\sum_{\mathcal{M}''\in\mathscr{V}^{(p)}}\,\mathbb{P}_{\boldsymbol{x}}^{\epsilon}\left[\,\boldsymbol{x}_{\epsilon}(\theta_{\epsilon}^{(p)}t+s)\in\mathcal{E}(\mathcal{A})\,\Big|\,\tau_{\mathcal{E}^{(p)}}=\tau_{\mathcal{E}(\mathcal{M}'')}\le\widehat{\gamma}_{\epsilon}\,\right]\,\mathbb{P}_{\boldsymbol{x}}^{\epsilon}\left[\,\tau_{\mathcal{E}^{(p)}}=\tau_{\mathcal{E}(\mathcal{M}'')}\le\widehat{\gamma}_{\epsilon}\,\right]\;.\label{eq:fdddec-1}
\end{equation}
By the strong Markov property and \eqref{eq:fd0} (with $\gamma_{\epsilon}+\widehat{\gamma}_{\epsilon}$
instead of $\gamma_{\epsilon}$ since $\widehat{\gamma}_{\epsilon}\prec\theta_{\epsilon}^{(p)}$),
\begin{equation}
\lim_{\epsilon\to0}\,\sup_{\boldsymbol{x}\in\mathcal{E}(\mathcal{M})}\,\sup_{|s|\le\gamma_{\epsilon}}\,\left|\,\mathbb{P}_{\boldsymbol{x}}^{\epsilon}\left[\boldsymbol{x}_{\epsilon}(\theta_{\epsilon}^{(p)}t+s)\in\mathcal{E}(\mathcal{A})\,\Big|\,\tau_{\mathcal{E}^{(p)}}=\tau_{\mathcal{E}(\mathcal{M}'')}\le\widehat{\gamma}_{\epsilon}\right]-\Phi_{t}(\mathcal{M}'',\,\mathcal{A})\,\right|\ =\ 0\;.\label{eq:fdi1}
\end{equation}
On the other hand, by  $\mathfrak{H}^{(p)}$,
\begin{equation}
\lim_{\epsilon\to0}\,\sup_{\boldsymbol{x}\in\mathcal{E}(\mathcal{M})}\,\left|\,\mathbb{P}_{\boldsymbol{x}}^{\epsilon}\left[\,\tau_{\mathcal{E}^{(p)}}=\tau_{\mathcal{E}(\mathcal{M}'')}\le\widehat{\gamma}_{\epsilon}\,\right]-\widehat{\mathcal{Q}}_{\mathcal{M}}^{(p)}\left[\,\tau_{\mathscr{V}^{(p)}}=\tau_{\mathcal{M}''}\,\right]\,\right|\ =\ 0\;.\label{eq:fdi2}
\end{equation}
Inserting last two estimates to \eqref{eq:fdddec-1}, we get
\begin{equation}
\lim_{\epsilon\rightarrow0}\,\sup_{\boldsymbol{x}\in\mathcal{E}(\boldsymbol{m})}\,\sup_{|s|\le\gamma_{\epsilon}}\,\bigg|\,\mathbb{P}_{\boldsymbol{x}}^{\epsilon}\Big[\,\bm{x}_{\epsilon}(\theta_{\epsilon}^{(p)}t+s)\in\mathcal{E}(\mathcal{A})\,\Big]-\Psi_{t}(\mathcal{M},\,\mathcal{A})\,\bigg|\ =\ 0\label{eq: fd2}
\end{equation}

Next we fix $\boldsymbol{x}\in\mathcal{D}(\boldsymbol{m})$. Then,
by \cite[Corollary 6.2, Lemma 6.7]{LLS-1st},
\[
\lim_{\epsilon\rightarrow0}\,\sup_{|s|\le\gamma_{\epsilon}}\,\bigg|\,\mathbb{P}_{\boldsymbol{x}}^{\epsilon}\left[\,\boldsymbol{x}_{\epsilon}(\theta_{\epsilon}^{(p)}t+s)\in\mathcal{E}(\mathcal{A})\,\right]-\mathbb{P}_{\boldsymbol{x}}^{\epsilon}\left[\,\boldsymbol{x}_{\epsilon}(\theta_{\epsilon}^{(p)}t+s)\in\mathcal{E}(\mathcal{A})\,\Big|\,\tau_{\mathcal{E}(\boldsymbol{m})}\le\epsilon^{-1}\,\right]\,\bigg|\ =\ 0\ .
\]
By the strong Markov property and \eqref{eq: fd2},
\begin{align*}
 & \lim_{\epsilon\to0}\,\sup_{|s|\le\gamma_{\epsilon}}\,\bigg|\,\mathbb{P}_{\boldsymbol{x}}^{\epsilon}\left[\,\boldsymbol{x}_{\epsilon}(\theta_{\epsilon}^{(p)}t+s)\in\mathcal{E}(\mathcal{A})\,\Big|\,\tau_{\mathcal{E}(\boldsymbol{m})}\le\epsilon^{-1}\,\right]-\Psi_{t}(\mathcal{M},\,\mathcal{A})\,\bigg|\\
 & \le\ \lim_{\epsilon\to0}\,\sup_{\boldsymbol{y}\in\mathcal{E}(\boldsymbol{m})}\,\sup_{|s|\le\gamma_{\epsilon}+\epsilon^{-1}}\,\bigg|\,\mathbb{P}_{\boldsymbol{y}}^{\epsilon}\left[\,\boldsymbol{x}_{\epsilon}(\theta_{\epsilon}^{(p)}t+s)\in\mathcal{E}(\mathcal{A})\,\right]-\Psi_{t}(\mathcal{M},\,\mathcal{A})\,\bigg|\ =\ 0\;.
\end{align*}
Combining last two computations complete the proof.
\end{proof}
Next we have a similar result for the intermediate time scale.
\begin{prop}
\label{prop:fd2_int}Fix $p\in\llbracket2,\,\mathfrak{q}\rrbracket$
and suppose that $\mathfrak{H}^{(p)}$ is in force. Suppose that for
all $\mathcal{M}\in\mathscr{V}^{(p)}$, $\boldsymbol{m}'\in\mathcal{M}$,
and sequences $(\varrho_{\epsilon})_{\epsilon>0}$ and $(\gamma_{\epsilon})_{\epsilon>0}$
such that $\theta_{\epsilon}^{(p-1)}\prec\varrho_{\epsilon}\prec\theta_{\epsilon}^{(p)}$
and $\gamma_{\epsilon}\prec\varrho_{\epsilon}$, we have
\begin{equation}
\lim_{\epsilon\rightarrow0}\,\sup_{\boldsymbol{x}\in\mathcal{E}(\mathcal{M})}\,\sup_{|s|\le\gamma_{\epsilon}}\,\left|\,\mathbb{P}_{\boldsymbol{x}}^{\epsilon}\Big[\,\bm{x}_{\epsilon}(\varrho_{\epsilon}+s)\in\mathcal{E}(\boldsymbol{m}')\,\Big]-\Phi^{(p)}(\mathcal{M},\,\boldsymbol{m}')\,\right|\ =\ 0\label{fd3}
\end{equation}
for some $\Phi^{(p)}(\mathcal{M},\,\bm{m}')\in[0,\,1]$. Then, for
all $\mathcal{M}\in\mathscr{S}^{(p)}$, $\mathcal{M}'\in\mathscr{V}^{(p)}$,
$\boldsymbol{m}\in\mathcal{M}$, $\boldsymbol{m}'\in\mathcal{M}'$,
a sequence $(\gamma_{\epsilon})_{\epsilon>0}$ such that $\gamma_{\epsilon}\prec\theta_{\epsilon}^{(p)}$,
and $\boldsymbol{x}\in\mathcal{D}(\boldsymbol{m})$, we have
\begin{equation}
\lim_{\epsilon\rightarrow0}\,\mathbb{P}_{\boldsymbol{x}}^{\epsilon}\Big[\,\bm{x}_{\epsilon}(\varrho_{\epsilon})\in\mathcal{E}(\boldsymbol{m}')\,\Big]\ =\ \Psi^{(p)}(\mathcal{M},\,\boldsymbol{m}')\;.\label{fd4}
\end{equation}
where
\[
\Psi^{(p)}(\mathcal{M},\,\boldsymbol{m}')\ =\ \widehat{\mathcal{Q}}_{\mathcal{M}}^{(p)}\left[\,\tau_{\mathscr{V}^{(p)}}=\tau_{\mathcal{M}'}\,\right]\Phi^{(p)}(\mathcal{M}',\,\boldsymbol{m}')\;.
\]
\end{prop}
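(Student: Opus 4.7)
The plan is to mirror the three-stage strategy used in Proposition \ref{prop:fd1}, but adapted to the intermediate time scale where the chain has not yet made a macroscopic transition. The starting observation is that when $\mathcal{M}\in\mathscr{V}^{(p)}$, the conclusion \eqref{fd4} is exactly the hypothesis \eqref{fd3} together with the fact that $\mathfrak{a}^{(p-1)}(\bm{m},\mathcal{M}')=\mathbf{1}\{\mathcal{M}'=\mathcal{M}\}$ forces a unique $\mathcal{M}'$. So only the cases $\mathcal{M}\in\mathscr{N}^{(p)}$ and $\bm{x}\in\mathcal{D}(\bm{m})\setminus\mathcal{E}(\bm{m})$ require work.

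First I would treat $\bm{x}\in\mathcal{E}(\mathcal{M})$ for $\mathcal{M}\in\mathscr{N}^{(p)}$. Fix an auxiliary sequence $(\widehat{\gamma}_{\epsilon})$ with $\theta_{\epsilon}^{(p-1)}\prec\widehat{\gamma}_{\epsilon}\prec\varrho_{\epsilon}$. By \eqref{eq:Hp-1} of $\mathfrak{H}^{(p)}$, the probability that $\tau_{\mathcal{E}^{(p)}}>\widehat{\gamma}_{\epsilon}$ vanishes uniformly in $\bm{x}\in\mathcal{E}(\mathcal{M})$, so one may decompose
\[
\mathbb{P}_{\boldsymbol{x}}^{\epsilon}\big[\bm{x}_{\epsilon}(\varrho_{\epsilon})\in\mathcal{E}(\bm{m}')\big]
\;=\;\sum_{\mathcal{M}''\in\mathscr{V}^{(p)}}
\mathbb{P}_{\boldsymbol{x}}^{\epsilon}\big[\bm{x}_{\epsilon}(\varrho_{\epsilon})\in\mathcal{E}(\bm{m}')\,\big|\,\tau_{\mathcal{E}^{(p)}}=\tau_{\mathcal{E}(\mathcal{M}'')}\le\widehat{\gamma}_{\epsilon}\big]\,\mathbb{P}_{\boldsymbol{x}}^{\epsilon}\big[\tau_{\mathcal{E}^{(p)}}=\tau_{\mathcal{E}(\mathcal{M}'')}\le\widehat{\gamma}_{\epsilon}\big]\,+\,o_{\epsilon}(1).
\]
By \eqref{eq:Hp-2} of $\mathfrak{H}^{(p)}$, the second factor converges to $\widehat{\mathcal{Q}}_{\mathcal{M}}^{(p)}[\tau_{\mathscr{V}^{(p)}}=\tau_{\mathcal{M}''}]$ uniformly. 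For the first factor, apply the strong Markov property at $\tau_{\mathcal{E}^{(p)}}$: since $\tau_{\mathcal{E}^{(p)}}\le\widehat{\gamma}_{\epsilon}\prec\varrho_{\epsilon}$, the remaining time is $\varrho_{\epsilon}-\tau_{\mathcal{E}^{(p)}}=\varrho_{\epsilon}+s$ with $|s|\le\widehat{\gamma}_{\epsilon}$, so \eqref{fd3} applies (with $\gamma_{\epsilon}=\widehat{\gamma}_{\epsilon}$, which is admissible since $\widehat{\gamma}_{\epsilon}\prec\varrho_{\epsilon}$) and the conditional probability tends to $\Phi^{(p)}(\mathcal{M}'',\bm{m}')$. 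The sum collapses to the desired $\Psi^{(p)}(\mathcal{M},\bm{m}')$: indeed, for $\mathcal{M}''\neq\mathcal{M}'$ the coefficient $\Phi^{(p)}(\mathcal{M}'',\bm{m}')$ must vanish because $\varrho_{\epsilon}\prec\theta_{\epsilon}^{(p)}$ and Lemma \ref{lem:FW-type} prevent a transition from $\mathcal{E}(\mathcal{M}'')$ to $\mathcal{E}(\bm{m}')\subset\mathcal{E}(\mathcal{M}')$ within time $\varrho_{\epsilon}$.

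Finally, to upgrade from $\bm{x}\in\mathcal{E}(\bm{m})$ to a general $\bm{x}\in\mathcal{D}(\bm{m})$, I would invoke \cite[Corollary 6.2, Lemma 6.7]{LLS-1st} to replace $\mathbb{P}_{\boldsymbol{x}}^{\epsilon}[\bm{x}_{\epsilon}(\varrho_{\epsilon})\in\mathcal{E}(\bm{m}')]$ by the same probability conditioned on $\tau_{\mathcal{E}(\bm{m})}\le\epsilon^{-1}$ with error $o_{\epsilon}(1)$. The strong Markov property at $\tau_{\mathcal{E}(\bm{m})}$ and the step above (applied uniformly in the starting point in $\mathcal{E}(\bm{m})\subset\mathcal{E}(\mathcal{M})$ with time shift $s=-\tau_{\mathcal{E}(\bm{m})}$, $|s|\le\epsilon^{-1}\prec\varrho_{\epsilon}$) then yield \eqref{fd4}. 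The main obstacle, as in Proposition \ref{prop:fd1}, is ensuring that the time shifts introduced by the successive strong-Markov steps remain in a regime $\prec\varrho_{\epsilon}$ so that the uniformity in the shift $s$ in \eqref{fd3} can be genuinely exploited; this is why the auxiliary scale $\widehat{\gamma}_{\epsilon}$ has to be chosen strictly between $\theta_{\epsilon}^{(p-1)}$ (where $\mathfrak{H}^{(p)}$-(1) bites) and $\varrho_{\epsilon}$ itself.
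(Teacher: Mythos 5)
Your proposal follows essentially the same route as the paper's: decompose on the first well in $\mathscr{V}^{(p)}$ that the process hits, use the two clauses of $\mathfrak{H}^{(p)}$ for the hitting-time bound and hitting distribution, apply \eqref{fd3} with a time shift via the strong Markov property, and finally upgrade to $\bm{x}\in\mathcal{D}(\bm{m})$ by conditioning on $\tau_{\mathcal{E}(\bm{m})}\le\epsilon^{-1}$.

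One imprecision is worth flagging. For $\mathcal{M}''\neq\mathcal{M}'$ (and likewise in your opening observation, when $\mathcal{M}\in\mathscr{V}^{(p)}$ but $\mathcal{M}'\neq\mathcal{M}$), the hypothesis \eqref{fd3} does not apply directly, since it is stated only for $\bm{m}'\in\mathcal{M}''$, so the quantity $\Phi^{(p)}(\mathcal{M}'',\bm{m}')$ that you invoke is not actually furnished by the hypothesis. Moreover, Lemma \ref{lem:FW-type} is not the right tool to argue that it must be zero: that lemma bounds transitions from a well $\mathcal{M}\in\mathscr{V}^{(p+1)}$ to a well $\mathcal{M}'\in\mathscr{V}^{(p)}$, not between two $\mathscr{V}^{(p)}$-wells $\mathcal{M}''$ and $\mathcal{M}'$. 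The paper instead uses Lemma \ref{l: nojump} (equivalently one could use Lemma \ref{l_4242}): starting from $\mathcal{E}(\mathcal{M}'')$ the process cannot reach $\mathcal{E}^{(p)}\setminus\mathcal{E}(\mathcal{M}'')$ within a time $\prec\theta_{\epsilon}^{(p)}$, which kills the $\mathcal{M}''\neq\mathcal{M}'$ terms directly without passing through $\Phi^{(p)}$. With that substitution your argument is complete and coincides with the paper's proof.
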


\begin{proof}
We fix $\mathcal{M}\in\mathscr{S}^{(p)}$, $\mathcal{M}'\in\mathscr{V}^{(p)}$
and $\boldsymbol{m}'\in\mathcal{M}'$. We take an auxiliary sequence
$(\widehat{\gamma}_{\epsilon})_{\epsilon>0}$ such that $\theta_{\epsilon}^{(p-1)}\prec\widehat{\gamma}_{\epsilon}\prec\varrho_{\epsilon}$.
Then, as in the proof of Proposition \ref{prop:fd1}, by  $\mathfrak{H}^{(p)}$,
for all $\boldsymbol{x}\in\mathcal{E}(\mathcal{M})$, it is enough
to estimate
\begin{align*}
 & \mathbb{P}_{\boldsymbol{x}}^{\epsilon}\left[\,\boldsymbol{x}_{\epsilon}(\varrho_{\epsilon})\in\mathcal{E}(\boldsymbol{m}')\,,\,\tau_{\mathcal{E}^{(p)}}\le\widehat{\gamma}_{\epsilon}\,\right]\\
 & =\ \sum_{\mathcal{M}''\in\mathscr{V}^{(p)}}\,\mathbb{P}_{\boldsymbol{x}}^{\epsilon}\left[\,\boldsymbol{x}_{\epsilon}(\varrho_{\epsilon})\in\mathcal{E}(\boldsymbol{m}')\,\Big|\,\tau_{\mathcal{E}^{(p)}}=\tau_{\mathcal{E}(\mathcal{M}'')}\le\widehat{\gamma}_{\epsilon}\,\right]\,\mathbb{P}_{\boldsymbol{x}}^{\epsilon}\left[\,\tau_{\mathcal{E}^{(p)}}=\tau_{\mathcal{E}(\mathcal{M}'')}\le\widehat{\gamma}_{\epsilon}\,\right]\;.
\end{align*}
By the strong Markov property and Lemma \ref{l: nojump}, for $\mathcal{M}''\ne\mathcal{M}'$,
\[
\lim_{\epsilon\rightarrow0}\,\sup_{\boldsymbol{x}\in\mathcal{E}(\mathcal{M})}\,\mathbb{P}_{\boldsymbol{x}}^{\epsilon}\left[\,\boldsymbol{x}_{\epsilon}(\varrho_{\epsilon})\in\mathcal{E}(\boldsymbol{m}')\,\Big|\,\tau_{\mathcal{E}^{(p)}}=\tau_{\mathcal{E}(\mathcal{M}'')}\le\gamma_{\epsilon}\,\right]\ =\ 0\;.
\]
On the other hand, a similar consideration with \eqref{eq:fdi1} and
\eqref{eq:fdi2} based on \eqref{fd3}, we have that
\begin{align*}
 & \lim_{\epsilon\to0}\,\sup_{\boldsymbol{x}\in\mathcal{E}(\mathcal{M})}\,\left|\,\mathbb{P}_{\boldsymbol{x}}^{\epsilon}\left[\,\boldsymbol{x}_{\epsilon}(\varrho_{\epsilon})\in\mathcal{E}(\boldsymbol{m}')\,\Big|\,\tau_{\mathcal{E}^{(p)}}=\tau_{\mathcal{E}(\mathcal{M}')}\le\gamma_{\epsilon}\,\right]-\Phi^{(p)}(\mathcal{M}',\,\boldsymbol{m}')\,\right|\ =\ 0\;,\\
 & \lim_{\epsilon\to0}\,\sup_{\boldsymbol{x}\in\mathcal{E}(\mathcal{M})}\,\left|\,\mathbb{P}_{\boldsymbol{x}}^{\epsilon}\left[\,\tau_{\mathcal{E}^{(p)}}=\tau_{\mathcal{E}(\mathcal{M}')}\le\gamma_{\epsilon}\,\right]-\widehat{\mathcal{Q}}_{\mathcal{M}}^{(p)}\left[\,\tau_{\mathscr{V}^{(p)}}=\tau_{\mathcal{M}'}\,\right]\,\right|\ =\ 0\;.
\end{align*}
Combining these estimates, we get
\begin{equation}
\lim_{\epsilon\rightarrow0}\,\sup_{\boldsymbol{x}\in\mathcal{E}(\mathcal{M})}\,\left|\,\mathbb{P}_{\boldsymbol{x}}^{\epsilon}\Big[\,\bm{x}_{\epsilon}(\varrho_{\epsilon})\in\mathcal{E}(\boldsymbol{m}')\,\Big]-\Psi^{(p)}(\mathcal{M},\,\boldsymbol{m}')\,\right|\ =\ 0\;.\label{eq:fd4}
\end{equation}
Then, as in the proof of Proposition \ref{prop:fd1}, we can conclude
that, for all $\bm{m}\in\mathcal{M}$ and $\boldsymbol{x}\in\mathcal{D}(\bm{m})$,
\[
\lim_{\epsilon\rightarrow0}\,\mathbb{P}_{\boldsymbol{x}}^{\epsilon}\Big[\,\bm{x}_{\epsilon}(\varrho_{\epsilon})\in\mathcal{E}(\boldsymbol{m}')\,\Big]\ =\ \Psi^{(p)}(\mathcal{M},\,\boldsymbol{m}')\;.
\]
\end{proof}
Before investigating further, we mention a direct consequence of Proposition
\ref{prop:fd1} that allows us to fill the missing step in the proof
of Theorem \ref{t_meta}.
\begin{prop}
\label{p_meta}Suppose that $\mathfrak{C}_{\textup{fdd}}^{(p)}$ and
$\mathfrak{H}^{(p)}$ holds for all $p\in\llbracket1,\,\mathfrak{q}\rrbracket$.
Then, Theorem \ref{t_meta} holds.
\end{prop}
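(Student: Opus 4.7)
The plan is to reduce Theorem \ref{t_meta} to the uniform finite-dimensional convergence $\mathfrak{C}_{{\rm fdd}}^{(p)}$, using $\mathfrak{H}^{(p)}$ to relocate the starting point from an arbitrary domain of attraction $\mathcal{D}(\boldsymbol{m})$ into a non-negligible valley. Fix throughout $p\in\llbracket1,\mathfrak{q}\rrbracket$, $\boldsymbol{m}\in\mathcal{M}_{0}$, $\boldsymbol{x}\in\mathcal{D}(\boldsymbol{m})$, and write $\mathcal{M}=\mathcal{M}(p,\boldsymbol{m})\in\mathscr{S}^{(p)}$.

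First I would apply the descent estimate \cite[Corollary 6.2, Lemma 6.7]{LLS-1st} to replace the starting point $\boldsymbol{x}$ by an arbitrary $\boldsymbol{z}\in\mathcal{E}(\boldsymbol{m})$: the process $\boldsymbol{x}_{\epsilon}(\cdot)$ started at $\boldsymbol{x}$ reaches $\mathcal{E}(\boldsymbol{m})$ within time $1/\epsilon\prec\theta_{\epsilon}^{(p)}$ with probability tending to one, so the strong Markov property together with the time-shift tolerance provided by Remark \ref{rem:fdd} reduces the statement to proving, uniformly in $\boldsymbol{z}\in\mathcal{E}(\boldsymbol{m})$, the convergence
\begin{equation}\label{eq:pf_meta_red}
\mathbb{P}_{\boldsymbol{z}}^{\epsilon}\Big[\,\bigcap_{j=1}^{n}\{\boldsymbol{x}_{\epsilon}(\theta_{\epsilon}^{(p)}t_{j,\epsilon})\in\mathcal{E}(\mathcal{M}_{k_{j}})\}\,\Big]\,\longrightarrow\,\mathcal{Q}_{\mathfrak{a}^{(p-1)}(\boldsymbol{m},\cdot)}^{(p)}\Big[\,\bigcap_{j=1}^{n}\{\mathbf{y}^{(p)}(t_{j})=\mathcal{M}_{k_{j}}\}\,\Big]\,.
\end{equation}
If $\mathcal{M}\in\mathscr{V}^{(p)}$, then $\mathcal{E}(\boldsymbol{m})\subset\mathcal{E}(\mathcal{M})$ and $\mathfrak{a}^{(p-1)}(\boldsymbol{m},\cdot)$ is concentrated at $\mathcal{M}$ by \eqref{eq:a0-1}, so \eqref{eq:pf_meta_red} is exactly $\mathfrak{C}_{{\rm fdd}}^{(p)}$.

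The substantive case is $\mathcal{M}\in\mathscr{N}^{(p)}$. Here I would choose an auxiliary sequence $\alpha_{\epsilon}$ with $\theta_{\epsilon}^{(p-1)}\prec\alpha_{\epsilon}\prec\theta_{\epsilon}^{(p)}$ and write $\tau=\tau_{\mathcal{E}^{(p)}}$. Part (1) of $\mathfrak{H}^{(p)}$ forces $\{\tau>\alpha_{\epsilon}\}$ to have asymptotically negligible probability uniformly in $\boldsymbol{z}\in\mathcal{E}(\boldsymbol{m})$, legitimising the decomposition of the left-hand side of \eqref{eq:pf_meta_red}, via the strong Markov property at $\tau$, as
\[
\sum_{\mathcal{M}'\in\mathscr{V}^{(p)}}\,\mathbb{E}_{\boldsymbol{z}}^{\epsilon}\Big[\,\mathbf{1}\{\tau=\tau_{\mathcal{E}(\mathcal{M}')}\le\alpha_{\epsilon}\}\,\mathbb{P}_{\boldsymbol{x}_{\epsilon}(\tau)}^{\epsilon}\Big[\bigcap_{j=1}^{n}\{\boldsymbol{x}_{\epsilon}(\theta_{\epsilon}^{(p)}t_{j,\epsilon}-\tau)\in\mathcal{E}(\mathcal{M}_{k_{j}})\}\Big]\,\Big]\,+\,o_{\epsilon}(1)\,.
\]
On the event $\{\tau\le\alpha_{\epsilon}\}$ the ratio $\tau/\theta_{\epsilon}^{(p)}$ is bounded by $\alpha_{\epsilon}/\theta_{\epsilon}^{(p)}\to 0$, so Remark \ref{rem:fdd} combined with $\mathfrak{C}_{{\rm fdd}}^{(p)}$ shows that the inner probability converges, uniformly over $\boldsymbol{x}_{\epsilon}(\tau)\in\mathcal{E}(\mathcal{M}')$ and admissible values of $\tau$, to $\mathcal{Q}_{\mathcal{M}'}^{(p)}[\bigcap_{j=1}^{n}\{\mathbf{y}^{(p)}(t_{j})=\mathcal{M}_{k_{j}}\}]$. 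Part (2) of $\mathfrak{H}^{(p)}$ identifies the limit of $\mathbb{P}_{\boldsymbol{z}}^{\epsilon}[\tau=\tau_{\mathcal{E}(\mathcal{M}')}]$ with $\mathfrak{a}^{(p-1)}(\boldsymbol{m},\mathcal{M}')$, and summing over $\mathcal{M}'\in\mathscr{V}^{(p)}$ yields \eqref{eq:pf_meta_red}.

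The main obstacle is synchronising the random time shift $\tau$ (which depends on both $\boldsymbol{z}$ and $\epsilon$) with the uniform finite-dimensional convergence in $\mathfrak{C}_{{\rm fdd}}^{(p)}$: a single convergence statement is needed that holds simultaneously over all shifts in $[0,\alpha_{\epsilon}]$. This is precisely the uniformity built into Remark \ref{rem:fdd}, which is the reason that both $\mathfrak{H}^{(p)}$ (to control $\tau$) and the uniform formulation of $\mathfrak{C}_{{\rm fdd}}^{(p)}$ are imposed as hypotheses. Once this uniformity is invoked, the remainder is a routine strong Markov decomposition, essentially a multi-time variant of the argument already carried out in Proposition \ref{prop:fd1}.
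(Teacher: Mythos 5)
Your argument is correct and follows the paper's proof in substance: both use $\mathfrak{H}^{(p)}$ to wait for the first hitting time of $\mathcal{E}^{(p)}$ and then feed the resulting time shift into the uniform finite-dimensional convergence $\mathfrak{C}_{{\rm fdd}}^{(p)}$ via Remark \ref{rem:fdd}. The only structural difference is that the paper first reduces Theorem \ref{t_meta} to the case $n=1$ via the Markov property and then quotes Proposition \ref{prop:fd1} (whose proof is exactly your inline argument), whereas you handle the $n$-fold marginal directly; both routes are valid and equally economical.
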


\begin{proof}
It suffices to prove Theorem \ref{t_meta} for $n=1$, since the case
$n\ge2$ follows from $n=1$ and the Markov property. For the case
$n=1$, let us fix $\mathcal{M}\in\mathscr{S}^{(p)}$, $\mathcal{M}'\in\mathscr{V}^{(p)}$,
$\boldsymbol{m}\in\mathcal{M}$, $t>0$, and a sequence $(\gamma_{\epsilon})_{\epsilon>0}$
such that $\gamma_{\epsilon}\prec\theta_{\epsilon}^{(p)}$. Then,
it suffices to prove that, for all $\boldsymbol{x}\in\mathcal{D}(\boldsymbol{m})$,
\[
\lim_{\epsilon\rightarrow0}\,\sup_{|s|\le\gamma_{\epsilon}}\,\left|\,\mathbb{P}_{\boldsymbol{x}}^{\epsilon}\Big[\,\bm{x}_{\epsilon}(\theta_{\epsilon}^{(p)}t+s)\in\mathcal{E}(\mathcal{M}')\,\Big]-\mathcal{Q}_{\mathfrak{a}^{(p-1)}(\boldsymbol{m},\,\cdot)}^{(p)}\Big[\,{\bf y}^{(p)}(t)=\mathcal{M}'\,\Big]\,\right|\ =\ 0\;.
\]
This is a direct consequence of $\mathfrak{C}_{\textup{fdd}}^{(p)}$
and Proposition \ref{prop:fd1} (by recalling the definition \eqref{noteq1}
of $\mathfrak{a}^{(p-1)}(\boldsymbol{m},\,\cdot)$) which is based
on $\mathfrak{H}^{(p)}$. This completes the proof.
\end{proof}

\subsection{Marginal distribution at metastable scales}

By Remark \ref{rem:fdd} and $\mathfrak{C}_{{\rm fdd}}^{(p)}$, for
all $p\in\llbracket1,\,\mathfrak{q}\rrbracket$, $\mathcal{M},\,\mathcal{M}'\in\mathscr{V}^{(p)}$
and sequence $(\gamma_{\epsilon})_{\epsilon>0}$ such that $\gamma_{\epsilon}\prec\theta_{\epsilon}^{(p)}$,
we have
\begin{equation}
\lim_{\epsilon\rightarrow0}\,\sup_{\boldsymbol{x}\in\mathcal{E}(\mathcal{M})}\,\sup_{|s|\le\gamma_{\epsilon}}\,\left|\,\mathbb{P}_{\boldsymbol{x}}^{\epsilon}\left[\,\boldsymbol{x}_{\epsilon}(\theta_{\epsilon}^{(p)}t+s)\in\mathcal{E}(\mathcal{M}')\,\right]-\mathcal{Q}_{\mathcal{M}}^{(p)}\left[\,\mathbf{y}^{(p)}(t)=\mathcal{M}'\,\right]\,\right|\ =\ 0\ .\label{eq:fdd2}
\end{equation}
We start from a refinement of this result. To that end, we need the
following result which indicates that the process ${\bf y}^{(p)}(\cdot)$
is reversible when restricted to a recurrence class.
\begin{prop}
\label{p_rev_y} Fix $p\in\llbracket1,\,\mathfrak{q}\rrbracket$ and
a recurrent class $\mathscr{R}$ of $\mathbf{y}^{(p)}(\cdot)$. Then,
the Markov chain $\mathbf{y}^{(p)}(\cdot)$ restricted to $\mathscr{R}$
is reversible with respect to the measure $\pi_{\mathscr{R}}(\cdot)$
on $\mathscr{R}$ defined by $\pi_{\mathscr{R}}(\cdot)=\nu(\cdot)/\nu(\mathcal{M}(\mathscr{R}))$
where $\mathcal{M}(\mathscr{R})=\cup_{\mathcal{M}\in\mathscr{R}}\mathcal{M}$.
\end{prop}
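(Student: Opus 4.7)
The plan is to argue by induction on $p$, with base case $p=1$ essentially immediate and the inductive step reduced to a symmetry statement about excursions of $\widehat{\mathbf{y}}^{(p)}$ through $\mathscr{N}^{(p)}$. Throughout we may assume $|\mathscr{R}|\ge 2$, since otherwise there is nothing to verify.

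For $p=1$, since $\mathscr{N}^{(1)}=\varnothing$ we have $\mathbf{y}^{(1)}=\widehat{\mathbf{y}}^{(1)}$. For any two distinct elements $\{\boldsymbol{m}\},\{\boldsymbol{m}'\}$ of a recurrent class $\mathscr{R}$, Lemma~\ref{lem_stddec}-(1) gives $\Xi(\boldsymbol{m})=\Xi(\boldsymbol{m}')=d^{(1)}$, so by \eqref{40} we have $\nu(\boldsymbol{m})\,r^{(1)}(\{\boldsymbol{m}\},\{\boldsymbol{m}'\})=\omega(\boldsymbol{m},\boldsymbol{m}')$. Since $\mathcal{S}(\boldsymbol{m},\boldsymbol{m}')=\mathcal{S}(\boldsymbol{m}',\boldsymbol{m})$ by the symmetric definition \eqref{eq:omega_2}, the right-hand side is symmetric in $\boldsymbol{m},\boldsymbol{m}'$, giving the desired detailed balance.

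For the inductive step at $p\ge 2$, fix a recurrent class $\mathscr{R}$ of $\mathbf{y}^{(p)}$. Combining the trace formula \cite[Corollary~6.2]{BL1} (already used in the proof of Lemma~\ref{l_p_res-3}) with Case~3 of \eqref{eq:rate_3} (applicable since $\Xi(\mathcal{M})=d^{(p)}$ for all $\mathcal{M}\in\mathscr{R}$ by Lemma~\ref{lem_stddec}-(1)) yields
\[
\nu(\mathcal{M})\,r^{(p)}(\mathcal{M},\mathcal{M}'')\;=\;\omega(\mathcal{M},\mathcal{M}'')\;+\;\sum_{\mathcal{N}\in\mathscr{N}^{(p)}}\omega(\mathcal{M},\mathcal{N})\,\widehat{\mathcal{Q}}_{\mathcal{N}}^{(p)}\!\left[\tau_{\mathscr{V}^{(p)}}=\tau_{\mathcal{M}''}\right],
\]
where the ``direct'' contribution from $\mathcal{M}'\in\mathscr{V}^{(p)}$ collapses to $\omega(\mathcal{M},\mathcal{M}'')$ because $\tau_{\mathscr{V}^{(p)}}$ vanishes at starting points in $\mathscr{V}^{(p)}$. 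This direct term is already symmetric in $\mathcal{M},\mathcal{M}''$, so detailed balance of $\mathbf{y}^{(p)}|_{\mathscr{R}}$ reduces to proving that the remaining ``excursion'' sum is symmetric in $\mathcal{M},\mathcal{M}''\in\mathscr{R}$.

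To achieve this, I would promote the inductive hypothesis to a stronger statement: the chain $\widehat{\mathbf{y}}^{(p)}$, restricted to the accessibility-closed set $\widehat{\mathscr{R}}\subset\mathscr{R}\cup\mathscr{N}^{(p)}$ consisting of states mutually reachable from $\mathscr{R}$, is reversible with respect to a measure $\widehat{\pi}^{(p)}$ that extends $\nu|_{\mathscr{R}}$. The conclusion of the proposition then follows from the standard fact that the trace of a reversible Markov chain on a subset is reversible with respect to the restricted invariant measure. The main obstacle is to define $\widehat{\pi}^{(p)}$ explicitly on $\widehat{\mathscr{R}}\cap\mathscr{N}^{(p)}$ and to verify detailed balance across the three rate types \eqref{eq:rate_1}--\eqref{eq:rate_3}. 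The essential input is that rates inside $\mathscr{N}^{(p)}$ coincide with those of $\widehat{\mathbf{y}}^{(p-1)}$ (Case~1) and rates from $\mathscr{N}^{(p)}$ into $\mathscr{V}^{(p)}$ are aggregates of $\widehat{\mathbf{y}}^{(p-1)}$-rates into the constituent recurrent classes (Case~2, via Lemma~\ref{l_adjacent}); together with the symmetry of the Eyring--Kramers weights $\omega(\boldsymbol{\sigma})$ and the matching of saddle-point sets across consecutive levels (which follows from the fact, guaranteed by Lemma~\ref{lem_stddec}-(3), that saddles internal to the $\mathscr{V}^{(p-1)}$-constituents of $\mathcal{M}\in\mathscr{R}$ lie strictly below $U(\mathcal{M})+d^{(p)}$), the inductive reversibility of $\widehat{\mathbf{y}}^{(p-1)}$ on the relevant sub-chain lifts to $\widehat{\mathbf{y}}^{(p)}$.
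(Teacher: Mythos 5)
Your base case $p=1$ is correct, and your trace identity for $\nu(\mathcal{M})\,r^{(p)}(\mathcal{M},\mathcal{M}'')$ is correct. But the inductive step has a genuine gap: you reduce the statement to the reversibility of $\widehat{\mathbf{y}}^{(p)}$ itself on $\widehat{\mathscr{R}}$, announce that finding the extended measure $\widehat{\pi}^{(p)}$ on $\widehat{\mathscr{R}}\cap\mathscr{N}^{(p)}$ is ``the main obstacle'', and then do not actually overcome it. Worse, this stronger statement is doubtful: detailed balance of $\widehat{r}^{(p)}$ between two sets $\mathcal{M},\mathcal{M}'\in\mathscr{V}^{(p)}$ would require $\mathcal{S}(\mathcal{M},\mathcal{M}')=\mathcal{S}(\mathcal{M}',\mathcal{M})$ element-wise, and for $p\ge2$ the definition \eqref{eq:saddle_gate} is asymmetric (it requires $\mathcal{M}\leftsquigarrow\boldsymbol{\sigma}\curvearrowright\mathcal{M}'$, mixing the connected-via-lower-saddles relation $\rightsquigarrow$ with the direct heteroclinic-orbit relation $\curvearrowright$). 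The paper nowhere asserts that $\widehat{\mathbf{y}}^{(p)}$ is reversible.

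The paper's route is substantially more economical and avoids this issue. It works with the connected component $\mathcal{H}$ of $\{U\le H\}$ containing $\mathscr{R}$ and its level-set decomposition $\{\mathcal{W}_1,\dots,\mathcal{W}_m\}$, and applies a general trace lemma (Proposition \ref{prop:revpre}) which only requires (i) that excursions out of each block $\mathscr{S}^{(p)}(\mathcal{W}_i)$ pass exclusively through the single bottom state $\mathcal{M}_i=\mathcal{M}^*(\mathcal{W}_i)$ (from Lemma \ref{lem_escape}-(1) together with $\mathfrak{P}_3(p)$), and (ii) detailed balance of the \emph{aggregated} rates $\nu(\mathcal{M}_i)\,\widehat{r}^{(p)}(\mathcal{M}_i,\mathscr{S}^{(p)}(\mathcal{W}_j))$. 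Property (ii) is not a consequence of an element-wise symmetry of $\mathcal{S}(\cdot,\cdot)$; it follows from Lemma \ref{l_SM-WM-1}, which identifies the aggregated saddle set $\bigcup_{\mathcal{M}'\in\mathscr{S}^{(p)}(\mathcal{W}_j)}\mathcal{S}(\mathcal{M}_i,\mathcal{M}')$ with the manifestly symmetric geometric object $\overline{\mathcal{W}_i}\cap\overline{\mathcal{W}_j}$. Your proposal would need an analogue of this aggregation step to patch the argument, rather than asserting a global reversibility that is not established and probably does not hold.
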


The proof of this proposition is postponed to Section \ref{sec13}.
Now let us consider the refinement of \eqref{eq:fdd2}.
\begin{lem}
\label{lem:margin1} Fix $p\in\llbracket1,\,\mathfrak{q}\rrbracket$,
$\mathcal{M},\,\mathcal{M}'\in\mathscr{V}^{(p)}$ and $\boldsymbol{m}'\in\mathcal{M}'$.
Then, for any sequence $(\gamma_{\epsilon})_{\epsilon>0}$ such that
$\gamma_{\epsilon}\prec\theta_{\epsilon}^{(p)}$,
\[
\lim_{\epsilon\rightarrow0}\,\sup_{\boldsymbol{x}\in\mathcal{E}(\mathcal{M})}\,\sup_{|s|\le\gamma_{\epsilon}}\,\left|\,\mathbb{P}_{\boldsymbol{x}}^{\epsilon}\left[\,\boldsymbol{x}_{\epsilon}(\theta_{\epsilon}^{(p)}t+s)\in\mathcal{E}(\boldsymbol{m}')\,\right]-\frac{\nu(\boldsymbol{m}')}{\nu(\mathcal{M}')}\,\mathcal{Q}_{\mathcal{M}}^{(p)}\left[\,\mathbf{y}^{(p)}(t)=\mathcal{M}'\,\right]\,\right|\ =\ 0\ .
\]
\end{lem}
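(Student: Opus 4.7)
I proceed by induction on $p$. The case $p=1$ is immediate since, by \eqref{eq:v1}, every $\mathcal{M}'\in\mathscr{V}^{(1)}$ is a singleton $\{\boldsymbol{m}'\}$, so $\nu(\boldsymbol{m}')/\nu(\mathcal{M}')=1$ and the statement reduces to \eqref{eq:fdd2}. Assume the lemma at level $p-1$, and fix $p\geq 2$, $\mathcal{M},\mathcal{M}'\in\mathscr{V}^{(p)}$, and $\boldsymbol{m}'\in\mathcal{M}'$. Let $\mathcal{M}'_*\in\mathscr{V}^{(p-1)}$ be the unique element containing $\boldsymbol{m}'$; by the tree construction $\mathcal{M}'_*\in\mathscr{V}^{(p-1)}(\mathcal{M}')$. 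Pick an auxiliary sequence $\eta_\epsilon$ with $\max(\gamma_\epsilon,\theta_\epsilon^{(p-1)})\prec\eta_\epsilon\prec\theta_\epsilon^{(p)}$, set $\tau_\epsilon:=\theta_\epsilon^{(p)}t+s-\eta_\epsilon$, and put $\Phi_\epsilon(\boldsymbol{y}):=\mathbb{P}_{\boldsymbol{y}}^\epsilon[\boldsymbol{x}_\epsilon(\eta_\epsilon)\in\mathcal{E}(\boldsymbol{m}')]$.

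By the Markov property at time $\tau_\epsilon$, decomposing over the well containing $\boldsymbol{x}_\epsilon(\tau_\epsilon)$,
\[
\mathbb{P}_{\boldsymbol{x}}^\epsilon\bigl[\boldsymbol{x}_\epsilon(\theta_\epsilon^{(p)}t+s)\in\mathcal{E}(\boldsymbol{m}')\bigr]\;=\;\sum_{\mathcal{M}''\in\mathscr{V}^{(p)}}\mathbb{E}_{\boldsymbol{x}}^\epsilon\bigl[\mathbf{1}_{\mathcal{E}(\mathcal{M}'')}(\boldsymbol{x}_\epsilon(\tau_\epsilon))\,\Phi_\epsilon(\boldsymbol{x}_\epsilon(\tau_\epsilon))\bigr]\;+\;o_\epsilon(1),
\]
where the $o_\epsilon(1)$ comes from Remark \ref{rem:fddneg}. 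For $\mathcal{M}''\ne\mathcal{M}'$, Lemma \ref{l: nojump} applied with $a=\eta_\epsilon/\theta_\epsilon^{(p)}\to 0$ gives $\sup_{\boldsymbol{y}\in\mathcal{E}(\mathcal{M}'')}\Phi_\epsilon(\boldsymbol{y})\to 0$. Consequently the problem is reduced to the \emph{local ergodicity} estimate
\[
\sup_{\boldsymbol{y}\in\mathcal{E}(\mathcal{M}')}\Bigl|\,\Phi_\epsilon(\boldsymbol{y})-\frac{\nu(\boldsymbol{m}')}{\nu(\mathcal{M}')}\,\Bigr|\;\longrightarrow\;0,
\]
which, combined with \eqref{eq:fdd2} and Remark \ref{rem:fdd} (absorbing the shift $s-\eta_\epsilon$, whose modulus is at most $\gamma_\epsilon+\eta_\epsilon\prec\theta_\epsilon^{(p)}$), yields the claim.

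To establish local ergodicity, fix $\delta>0$ and invoke Proposition \ref{p_rev_y}: the restriction of $\mathbf{y}^{(p-1)}(\cdot)$ to the recurrent class $\mathscr{V}^{(p-1)}(\mathcal{M}')$ is irreducible and reversible with respect to $\pi(\mathcal{M}'')=\nu(\mathcal{M}'')/\nu(\mathcal{M}')$, hence ergodic. Choose $T=T_\delta$ large enough that $|\mathcal{Q}_{\mathcal{M}''_1}^{(p-1)}[\mathbf{y}^{(p-1)}(T)=\mathcal{M}'_*]-\nu(\mathcal{M}'_*)/\nu(\mathcal{M}')|<\delta$ uniformly in $\mathcal{M}''_1\in\mathscr{V}^{(p-1)}(\mathcal{M}')$. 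Writing $\widetilde{\eta}_\epsilon:=\eta_\epsilon-\theta_\epsilon^{(p-1)}T$ and applying Markov at $\widetilde{\eta}_\epsilon$, one has $\Phi_\epsilon(\boldsymbol{y})=\mathbb{E}_{\boldsymbol{y}}^\epsilon[\Psi_\epsilon(\boldsymbol{x}_\epsilon(\widetilde{\eta}_\epsilon))]$ with $\Psi_\epsilon(\boldsymbol{z}):=\mathbb{P}_{\boldsymbol{z}}^\epsilon[\boldsymbol{x}_\epsilon(\theta_\epsilon^{(p-1)}T)\in\mathcal{E}(\boldsymbol{m}')]$. Since $\widetilde{\eta}_\epsilon/\theta_\epsilon^{(p)}\to 0$, the non-escape property $\mathfrak{G}^{(p)}$ ensures $\boldsymbol{x}_\epsilon(\widetilde{\eta}_\epsilon)\in\mathcal{E}(\mathcal{M}')$ with probability $1-o_\epsilon(1)$, so the integrand localises to $\boldsymbol{z}\in\mathcal{E}(\mathcal{M}''_1)$ for some $\mathcal{M}''_1\in\mathscr{V}^{(p-1)}(\mathcal{M}')$. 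The inductive hypothesis at level $p-1$ (applied to the pair $(\mathcal{M}''_1,\mathcal{M}'_*)$ with time parameter $T$ and no shift) gives uniformly in $\boldsymbol{z}\in\mathcal{E}(\mathcal{M}''_1)$,
\[
\Psi_\epsilon(\boldsymbol{z})\;=\;\frac{\nu(\boldsymbol{m}')}{\nu(\mathcal{M}'_*)}\,\mathcal{Q}_{\mathcal{M}''_1}^{(p-1)}\bigl[\mathbf{y}^{(p-1)}(T)=\mathcal{M}'_*\bigr]\;+\;o_\epsilon(1).
\]
Substituting, invoking the choice of $T$, and summing over $\mathcal{M}''_1\in\mathscr{V}^{(p-1)}(\mathcal{M}')$,
\[
\Phi_\epsilon(\boldsymbol{y})\;=\;\frac{\nu(\boldsymbol{m}')}{\nu(\mathcal{M}')}\,\mathbb{P}_{\boldsymbol{y}}^\epsilon\bigl[\boldsymbol{x}_\epsilon(\widetilde{\eta}_\epsilon)\in\mathcal{E}(\mathcal{M}')\bigr]\;+\;O(\delta)\;+\;o_\epsilon(1)\;=\;\frac{\nu(\boldsymbol{m}')}{\nu(\mathcal{M}')}\;+\;O(\delta)\;+\;o_\epsilon(1),
\]
where the final equality uses $\mathfrak{G}^{(p)}$ once more. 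Letting $\delta\to 0$ completes the argument.

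The main obstacle is the local ergodicity step: replacing the finite-time transition $\mathcal{Q}^{(p-1)}_{\mathcal{M}''_1}[\mathbf{y}^{(p-1)}(T)=\mathcal{M}'_*]$ by the stationary weight $\nu(\mathcal{M}'_*)/\nu(\mathcal{M}')$ is only legitimate because Proposition \ref{p_rev_y} furnishes reversibility, and hence ergodicity, of the reduced chain on each recurrent class---and this is precisely where the full tree-structure result feeds back into the PDE argument. Choosing the intermediate scale $\widetilde{\eta}_\epsilon$ to simultaneously (i) be long enough that the $(p-1)$-level chain equilibrates within $\mathscr{V}^{(p-1)}(\mathcal{M}')$, and (ii) be short enough---via $\mathfrak{G}^{(p)}$---that the diffusion does not leak out of $\mathcal{E}(\mathcal{M}')$, is the delicate coordination that the sandwich $\theta_\epsilon^{(p-1)}\prec\widetilde{\eta}_\epsilon\prec\theta_\epsilon^{(p)}$ resolves.
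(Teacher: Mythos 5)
Your overall strategy mirrors the paper's: induction on $p$ with the base case $p=1$ reduced to \eqref{eq:fdd2}, the ergodicity of $\mathbf{y}^{(p-1)}(\cdot)$ on the recurrent class $\mathscr{V}^{(p-1)}(\mathcal{M}')$ supplied by Proposition \ref{p_rev_y}, and the inductive hypothesis applied at a time $\theta_\epsilon^{(p-1)}T$. Where you depart is in introducing a \emph{free} intermediate scale $\eta_\epsilon$ with $\theta_\epsilon^{(p-1)}\prec\eta_\epsilon\prec\theta_\epsilon^{(p)}$ and conditioning twice (at $\tau_\epsilon$ and again at $\widetilde\eta_\epsilon=\eta_\epsilon-\theta_\epsilon^{(p-1)}T$). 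The paper instead fixes $s_0=s_0(\eta)$ from Proposition \ref{p_rev_y} and conditions once at $t_\epsilon=\theta_\epsilon^{(p)}t+s-\theta_\epsilon^{(p-1)}s_0$, a time with $t_\epsilon/\theta_\epsilon^{(p)}\to t>0$, so that Remark \ref{rem:fddneg} and $\mathfrak{C}_{\rm fdd}^{(p)}$ apply directly and the remaining time is exactly $\theta_\epsilon^{(p-1)}s_0$, to which the inductive hypothesis applies with no further conditioning.

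There is a genuine gap in your local-ergodicity step, precisely where the extra conditioning at $\widetilde\eta_\epsilon$ forces a claim the paper never needs. You assert, twice, that ``the non-escape property $\mathfrak{G}^{(p)}$ ensures $\boldsymbol{x}_\epsilon(\widetilde\eta_\epsilon)\in\mathcal{E}(\mathcal{M}')$ with probability $1-o_\epsilon(1)$,'' on the grounds that $\widetilde\eta_\epsilon/\theta_\epsilon^{(p)}\to0$. But $\mathfrak{G}^{(p)}$ (Definition \ref{def_NE}) is a double-$\limsup$ statement over \emph{fixed} $t\in[b,2b]$: for each $\delta>0$ there is a $b_0$ such that for each $b<b_0$ there exists an $\epsilon_0(b)$ with the bound holding for $\epsilon<\epsilon_0(b)$. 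A sequence $\widetilde\eta_\epsilon/\theta_\epsilon^{(p)}\to0$ escapes every interval $[b,2b]$ eventually, and since ``$\boldsymbol{y}_\epsilon(t)\in\mathcal{E}_i$'' is not monotone in $t$ (unlike the hitting-time events you correctly treat with Lemma \ref{l: nojump}), there is no monotonicity trick to extend the estimate across the shrinking intervals uniformly. The statement you need is exactly Lemma \ref{l_4242} (a confinement estimate valid for any sequence $\theta_\epsilon^{(p-1)}\prec\varrho_\epsilon\prec\theta_\epsilon^{(p)}$; note $\widetilde\eta_\epsilon$ satisfies this sandwich), whose proof rests on Lemmas \ref{l_4141} and \ref{l: nojump} and does not circle back to the present lemma, so the citation would be legitimate though it appears later in the paper. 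Alternatively, you can eliminate the gap entirely by following the paper's single-conditioning scheme: drop $\eta_\epsilon$, fix $T=T_\delta$ from Proposition \ref{p_rev_y}, condition once at $t_\epsilon=\theta_\epsilon^{(p)}t+s-\theta_\epsilon^{(p-1)}T$, decompose over $\{\boldsymbol{x}_\epsilon(t_\epsilon)\in\mathcal{E}(\mathcal{M}')\}$, $\{\boldsymbol{x}_\epsilon(t_\epsilon)\in\mathcal{E}^{(p)}\setminus\mathcal{E}(\mathcal{M}')\}$, $\{\boldsymbol{x}_\epsilon(t_\epsilon)\notin\mathcal{E}^{(p)}\}$, and handle the third event with Remark \ref{rem:fddneg} (which applies because $t_\epsilon/\theta_\epsilon^{(p)}\to t>0$) — precisely the ingredient your choice of $\widetilde\eta_\epsilon$ made unavailable.
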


\begin{proof}
We prove by induction. For $p=1$, as $\mathcal{M}\in\mathscr{V}^{(1)}$
is singleton, the assertion of lemma is direct from \eqref{eq:fdd2}.

Fix $k\ge2$ and assume that the lemma holds for $p\in\llbracket1,\,k-1\rrbracket$.
Fix $\mathcal{M},\,\mathcal{M}'\in\mathscr{V}^{(k)}$, $\boldsymbol{m}'\in\mathcal{M}'$
and a sequence $(\gamma_{\epsilon})_{\epsilon>0}$ such that $\gamma_{\epsilon}\prec\theta_{\epsilon}^{(k)}$.
Let $\mathscr{V}^{(k-1)}(\mathcal{M}')=\{\mathcal{M}_{1}',\,\dots,\,\mathcal{M}_{n}'\}$
be the standard decomposition of $\mathcal{M}'$, and suppose, without
loss of generality, that $\boldsymbol{m}'\in\mathcal{M}_{1}'$.

Fix $\eta>0$. Since $\mathscr{V}^{(k-1)}(\mathcal{M}')$ forms a
recurrent class of the chain $\mathbf{y}^{(k-1)}(\cdot)$, by Proposition
\ref{p_rev_y}, there exists $s_{0}=s_{0}(\eta)>0$ such that
\begin{equation}
\max_{j\in\llbracket1,\,n\rrbracket}\,\left|\,\mathcal{Q}_{\mathcal{M}_{j}'}^{(k-1)}\left[\,{\bf y}^{(k-1)}(s_{0})=\mathcal{M}_{1}'\,\right]-\frac{\nu(\mathcal{M}_{1}')}{\nu(\mathcal{M}')}\,\right|\ \le\ \eta\ .\label{e_l_t01-1}
\end{equation}
Fix $\boldsymbol{x}\in\mathcal{E}(\mathcal{M})$, $|s|\le\gamma_{\epsilon}$,
and write
\[
\mathbb{P}_{\boldsymbol{x}}^{\epsilon}\left[\,\boldsymbol{x}_{\epsilon}(\theta_{\epsilon}^{(k)}t+s)\in\mathcal{E}(\boldsymbol{m}')\,\right]\ =\ \sum_{j=1}^{3}\mathbb{P}_{\boldsymbol{x}}^{\epsilon}\big[\,\boldsymbol{x}_{\epsilon}(\theta_{\epsilon}^{(k)}t+s)\in\mathcal{E}(\boldsymbol{m}')\,\Big|\,\mathscr{A}_{j}\,\big]\,\mathbb{P}_{\boldsymbol{x}}^{\epsilon}\big[\,\mathscr{A}_{j}\,\big]\;,
\]
where
\begin{gather*}
\mathscr{A}_{1}\ =\ \{\,\boldsymbol{x}_{\epsilon}(t_{\epsilon})\in\mathcal{E}(\mathcal{M}')\,\}\;,\quad\mathscr{A}_{2}\ =\ \{\,\boldsymbol{x}_{\epsilon}(t_{\epsilon})\in\mathcal{E}^{(k)}\setminus\mathcal{E}(\mathcal{M}')\,\}\;,\quad\mathscr{A}_{3}\ =\ \{\,\boldsymbol{x}_{\epsilon}(t_{\epsilon})\not\in\mathcal{E}^{(k)}\,\}\;,
\end{gather*}
and $t_{\epsilon}=\theta_{\epsilon}^{(k)}t+s-\theta_{\epsilon}^{(k-1)}s_{0}$.

We claim that the term corresponding to $\mathscr{A}_{2}$ is negligible.
Bound the second probability by $1$ and apply the strong Markov property
to get that this term is less than or equal to
\[
\sup_{\mathcal{M}''\in\mathscr{V}^{(k)}\setminus\{\mathcal{M}'\}}\,\sup_{\boldsymbol{y}\in\mathcal{E}(\mathcal{M}'')}\,\mathbb{P}_{\boldsymbol{y}}^{\epsilon}\left[\,\tau_{\mathcal{E}(\mathcal{M}')}<\theta_{\epsilon}^{(k-1)}s_{0}\,\right]\;.
\]
By Lemma \ref{l: nojump} this expression vanishes as $\epsilon\to0$.
To see that the term corresponding to $\mathscr{A}_{3}$ is also negligible,
bound the first probability by $1$ and apply Remark \ref{rem:fddneg},
$\mathfrak{C}_{{\rm fdd}}^{(k)}$ to the second probability.

We turn to the term corresponding to $\mathscr{A}_{1}$. We estimate
the two probabilities at $\mathscr{A}_{1}$ separately. For the first
probability, by the strong Markov property,
\begin{align*}
 & \sup_{|s|\le\gamma_{\epsilon}}\,\left|\,\mathbb{P}_{\boldsymbol{x}}^{\epsilon}\left[\boldsymbol{x}_{\epsilon}(\theta_{\epsilon}^{(k)}t+s)\in\mathcal{E}(\boldsymbol{m}')\,\Big|\,\boldsymbol{x}_{\epsilon}(t_{\epsilon})\in\mathcal{E}(\mathcal{M}')\right]-\frac{\nu(\boldsymbol{m}')}{\nu(\mathcal{M}')}\,\right|\\
 & \quad\le\ \sup_{\boldsymbol{y}\in\mathcal{E}(\mathcal{M}')}\,\left|\,\mathbb{P}_{\boldsymbol{y}}^{\epsilon}\left[\boldsymbol{x}_{\epsilon}(\theta_{\epsilon}^{(k-1)}s_{0})\in\mathcal{E}(\boldsymbol{m}')\right]-\frac{\nu(\boldsymbol{m}')}{\nu(\mathcal{M}')}\,\right|\ .
\end{align*}
This expression is less than or equal to
\begin{align*}
 & \sup_{j\in\llbracket1,\,n\rrbracket}\,\sup_{\boldsymbol{y}\in\mathcal{E}(\mathcal{M}_{j}')}\,\left|\,\mathbb{P}_{\boldsymbol{y}}^{\epsilon}\left[\,\boldsymbol{x}_{\epsilon}(\theta_{\epsilon}^{(k-1)}s_{0})\in\mathcal{E}(\boldsymbol{m}')\,\right]-\frac{\nu(\boldsymbol{m}')}{\nu(\mathcal{M}_{1}')}\mathcal{Q}_{\mathcal{M}_{j}'}^{(k-1)}\left[\,{\bf y}^{(k-1)}(s_{0})=\mathcal{M}_{1}'\,\right]\,\right|\\
 & \quad+\sup_{j\in\llbracket1,\,n\rrbracket}\,\sup_{\boldsymbol{y}\in\mathcal{E}(\mathcal{M}_{j}')}\,\left|\,\frac{\nu(\boldsymbol{m}')}{\nu(\mathcal{M}_{1}')}\mathcal{Q}_{\mathcal{M}_{j}'}^{(k-1)}\left[\,{\bf y}^{(k-1)}(s_{0})=\mathcal{M}_{1}'\,\right]-\frac{\nu(\boldsymbol{m}')}{\nu(\mathcal{M}')}\,\right|\ .
\end{align*}
By the induction hypothesis, the first term converges to $0$ as $\epsilon\to0$.
Therefore, by \eqref{e_l_t01-1},
\[
\limsup_{\epsilon\to0}\,\sup_{\boldsymbol{x}\in\mathcal{E}(\mathcal{M})}\,\sup_{|s|\le\gamma_{\epsilon}}\,\left|\,\mathbb{P}_{\boldsymbol{x}}^{\epsilon}\left[\,\boldsymbol{x}_{\epsilon}(\theta_{\epsilon}^{(k)}t+s)\in\mathcal{E}(\boldsymbol{m}')\,\Big|\,\boldsymbol{x}_{\epsilon}(t_{\epsilon})\in\mathcal{E}(\mathcal{M}')\,\right]-\frac{\nu(\boldsymbol{m}')}{\nu(\mathcal{M}')}\,\right|\ \le\ \eta\ .
\]

For the second probability, by \eqref{eq:fdd2},
\[
\lim_{\epsilon\to0}\,\sup_{\boldsymbol{x}\in\mathcal{E}(\mathcal{M})}\,\sup_{|s|\le\gamma_{\epsilon}}\,\left|\,\mathbb{P}_{\boldsymbol{x}}^{\epsilon}\left[\,\boldsymbol{x}_{\epsilon}(t_{\epsilon})\in\mathcal{E}(\mathcal{M}')\,\right]-\mathcal{Q}_{\mathcal{M}}^{(k)}\left[\,{\bf y}^{(k)}(t)=\mathcal{M}'\,\right]\,\right|\ =\ 0\ .
\]
Putting together the previous estimates yields that the left-hand
side of the estimate given in the lemma is bounded by $\eta$. To
complete the proof, it remains to let $\eta\to0$.
\end{proof}
Now we have the following result.
\begin{prop}
\label{p_marginDA} Fix $p\in\llbracket1,\,\mathfrak{q}\rrbracket$,
$\mathcal{M},\,\mathcal{M}'\in\mathscr{S}^{(p)}$, $\boldsymbol{m}\in\mathcal{M}$,
and $\boldsymbol{m}'\in\mathcal{M}'$. Then, for all $\boldsymbol{x}\in\mathcal{D}(\boldsymbol{m})$
and $t>0$,
\[
\lim_{\epsilon\rightarrow0}\,\mathbb{P}_{\boldsymbol{x}}^{\epsilon}\left[\,\boldsymbol{x}_{\epsilon}(\theta_{\epsilon}^{(p)}t)\in\mathcal{E}(\boldsymbol{m}')\,\right]\ =\ \sum_{\mathcal{M}''\in\mathscr{V}^{(p)}}\,\widehat{\mathcal{Q}}_{\mathcal{M}}^{(p)}\left[\,\tau_{\mathscr{V}^{(p)}}=\tau_{\mathcal{M}''}\,\right]\,\mathcal{Q}_{\mathcal{M}''}^{(p)}\left[\,\mathbf{y}^{(p)}(t)=\mathcal{M}'\,\right]\,\frac{\nu(\boldsymbol{m}')}{\nu(\mathcal{M}')}\ .
\]
In particular, we have
\[
\lim_{\epsilon\rightarrow0}\,\mathbb{P}_{\boldsymbol{x}}^{\epsilon}\left[\,\boldsymbol{x}_{\epsilon}(\theta_{\epsilon}^{(p)}t)\notin\mathcal{E}^{(p)}\,\right]\ =\ 0\;.
\]
\end{prop}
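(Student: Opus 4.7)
The strategy is to feed the statement of Lemma \ref{lem:margin1} as the hypothesis of Proposition \ref{prop:fd1} and then specialize. Fix $\mathcal{M}'\in\mathscr{S}^{(p)}$ and $\bm{m}'\in\mathcal{M}'$, and take $\mathcal{A}=\{\bm{m}'\}\subset\mathcal{M}'$, so that $\mathcal{E}(\mathcal{A})=\mathcal{E}(\bm{m}')$. For every $\mathcal{M}''\in\mathscr{V}^{(p)}$, Lemma \ref{lem:margin1} (applied with $\mathcal{M}''$ playing the role of ``$\mathcal{M}$'') yields, uniformly in $\bm{y}\in\mathcal{E}(\mathcal{M}'')$ and $|s|\le\gamma_\epsilon$ for any $\gamma_\epsilon\prec\theta_\epsilon^{(p)}$,
\[
\lim_{\epsilon\to0}\mathbb{P}_{\bm{y}}^{\epsilon}\!\left[\,\bm{x}_\epsilon(\theta_\epsilon^{(p)}t+s)\in\mathcal{E}(\bm{m}')\,\right]\;=\;\frac{\nu(\bm{m}')}{\nu(\mathcal{M}')}\,\mathcal{Q}_{\mathcal{M}''}^{(p)}\!\left[\,\mathbf{y}^{(p)}(t)=\mathcal{M}'\,\right]\;=:\;\Phi_t^{(p)}(\mathcal{M}'',\mathcal{A})\;.
\]
This verifies hypothesis \eqref{eq:fd0} of Proposition \ref{prop:fd1}. (Note that $\mathfrak{H}^{(p)}$ has already been established in the induction carried out in Section \ref{sec3}, so Proposition \ref{prop:fd1} is available.)

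Applying Proposition \ref{prop:fd1} with $s=0$ to the starting point $\bm{x}\in\mathcal{D}(\bm{m})$ gives
\[
\lim_{\epsilon\to0}\mathbb{P}_{\bm{x}}^{\epsilon}\!\left[\,\bm{x}_\epsilon(\theta_\epsilon^{(p)}t)\in\mathcal{E}(\bm{m}')\,\right]\;=\;\Psi_t^{(p)}(\mathcal{M},\{\bm{m}'\})\;=\;\sum_{\mathcal{M}''\in\mathscr{V}^{(p)}}\widehat{\mathcal{Q}}_{\mathcal{M}}^{(p)}\!\left[\,\tau_{\mathscr{V}^{(p)}}=\tau_{\mathcal{M}''}\,\right]\Phi_t^{(p)}(\mathcal{M}'',\{\bm{m}'\})\;,
\]
which is exactly the asserted formula.

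For the ``in particular'' claim I sum the previous identity over $\mathcal{M}'\in\mathscr{V}^{(p)}$ and $\bm{m}'\in\mathcal{M}'$. The sets $\{\mathcal{E}(\bm{m}'):\mathcal{M}'\in\mathscr{V}^{(p)},\,\bm{m}'\in\mathcal{M}'\}$ are pairwise disjoint and their union is $\mathcal{E}^{(p)}$, and each sum is finite, so
\[
\lim_{\epsilon\to0}\mathbb{P}_{\bm{x}}^{\epsilon}\!\left[\,\bm{x}_\epsilon(\theta_\epsilon^{(p)}t)\in\mathcal{E}^{(p)}\,\right]\;=\;\sum_{\mathcal{M}''\in\mathscr{V}^{(p)}}\widehat{\mathcal{Q}}_{\mathcal{M}}^{(p)}\!\left[\,\tau_{\mathscr{V}^{(p)}}=\tau_{\mathcal{M}''}\,\right]\sum_{\mathcal{M}'\in\mathscr{V}^{(p)}}\mathcal{Q}_{\mathcal{M}''}^{(p)}\!\left[\,\mathbf{y}^{(p)}(t)=\mathcal{M}'\,\right]\sum_{\bm{m}'\in\mathcal{M}'}\frac{\nu(\bm{m}')}{\nu(\mathcal{M}')}\;.
\]
The innermost sum equals $1$ by definition of $\nu(\mathcal{M}')$; the middle sum equals $1$ since $\mathbf{y}^{(p)}(t)\in\mathscr{V}^{(p)}$ always; and the outermost sum equals $1$ because Lemma \ref{l_irred_negli} guarantees that $\widehat{\mathbf{y}}^{(p)}(\cdot)$ hits $\mathscr{V}^{(p)}$ in finite time almost surely. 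Hence the limit above is $1$, which gives the complementary statement.

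There is no real obstacle, as the necessary ingredients ($\mathfrak{H}^{(p)}$, $\mathfrak{C}_{\mathrm{fdd}}^{(p)}$, reversibility of $\mathbf{y}^{(p)}$ on recurrent classes via Proposition \ref{p_rev_y}, and Lemma \ref{l: nojump}) have all been put in place by this point of the paper. The only technical care is to make sure that the uniform-in-$s$ version of the limit provided by Lemma \ref{lem:margin1} matches the precise form of hypothesis \eqref{eq:fd0} demanded by Proposition \ref{prop:fd1}; this is immediate from the displayed statement of Lemma \ref{lem:margin1}.
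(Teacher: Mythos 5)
Your proof follows exactly the same route as the paper: combine Lemma \ref{lem:margin1} (marginal distribution starting from a nonnegligible valley) with Proposition \ref{prop:fd1} (bootstrapping from $\mathcal{E}(\mathcal{M})$ to the whole domain of attraction and accounting for the initial escape from a negligible valley). The paper's own proof is just the one-line statement that these two results combine directly, so you have simply filled in the details.

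One small gap: you announce that you fix $\mathcal{M}'\in\mathscr{S}^{(p)}$ but then invoke Lemma \ref{lem:margin1}, which is stated only for $\mathcal{M}'\in\mathscr{V}^{(p)}$. When $\mathcal{M}'\in\mathscr{N}^{(p)}$ the displayed hypothesis \eqref{eq:fd0} is not supplied by that lemma and you must instead observe that $\mathcal{E}(\boldsymbol{m}')\subset(\mathcal{E}^{(p)})^{c}$, so by Remark \ref{rem:fddneg} applied with $\mathfrak{C}_{\mathrm{fdd}}^{(p)}$ (and its uniform-in-$s$ version from Remark \ref{rem:fdd}) the probability vanishes, giving $\Phi_{t}^{(p)}(\mathcal{M}'',\{\boldsymbol{m}'\})=0$. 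This matches the right-hand side of the asserted formula since $\mathcal{Q}_{\mathcal{M}''}^{(p)}[\mathbf{y}^{(p)}(t)=\mathcal{M}']=0$ whenever $\mathcal{M}'\notin\mathscr{V}^{(p)}$. Aside from spelling out this case, the argument is correct, and your derivation of the second assertion by summing the first over $\mathcal{M}'\in\mathscr{V}^{(p)}$ and $\boldsymbol{m}'\in\mathcal{M}'$ is fine: the three nested sums each collapse to $1$ (normalization of $\nu(\cdot)/\nu(\mathcal{M}')$, totality of the transition probability, and certainty of hitting $\mathscr{V}^{(p)}$ by Lemma \ref{l_irred_negli}).
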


\begin{proof}
The first assertion is direct from Lemma \ref{lem:margin1} and Proposition
\ref{prop:fd1}, and the second assertion is immediate from the first
one.
\end{proof}

\subsection{Marginal distribution at intermediate scales}

In this subsection, we establish the results corresponding to Proposition
\ref{p_marginDA} for intermediate scales.

Fix ${\color{blue}R_{0}>0}$ large enough so that the level set $\{U\le R_{0}\}$
is connected and contains all critical points of $U$. Write ${\color{blue}\Lambda}=\{U\le R_{0}\}$
and ${\color{blue}\widetilde{\Lambda}}=\{U\le2R_{0}\}$. The first
lemma asserts that at intermediate time-scale, with overwhelming probability,
the process belongs to valleys at the level $p$.
\begin{lem}
\label{l_4141} Fix $p\in\llbracket2,\,\mathfrak{q}+1\rrbracket$,
and a sequence $(\varrho_{\epsilon})_{\epsilon>0}$ satisfying $\theta_{\epsilon}^{(p-1)}\prec\varrho_{\epsilon}\prec\theta_{\epsilon}^{(p)}$.
Then,
\[
\limsup_{\epsilon\to0}\,\sup_{\boldsymbol{x}\in\Lambda}\,\mathbb{P}_{\boldsymbol{x}}^{\epsilon}\left[\,\boldsymbol{x}_{\epsilon}(\varrho_{\epsilon})\notin\mathcal{E}^{(p)}\,\right]\ =\ 0\ .
\]
\end{lem}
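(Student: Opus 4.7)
My plan is to decompose the interval $[0,\varrho_\epsilon]$ into three successive stages, applying the strong Markov property between them, and fixing throughout an auxiliary sequence $(\alpha_\epsilon)_{\epsilon>0}$ with $\theta_\epsilon^{(p-1)}\prec\alpha_\epsilon\prec\varrho_\epsilon$.

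In the first stage (entry into a microscopic well), the Freidlin--Wentzell hitting bound \cite[Theorem 6.1]{LLS-1st} gives that from any $\boldsymbol{x}\in\Lambda$ one has $\tau_{\mathcal{E}(\mathcal{M}_0)}\le 1/\epsilon\prec\alpha_\epsilon$ with probability tending to $1$ uniformly in $\boldsymbol{x}$. In the second stage (ascent to $\mathcal{E}^{(p)}$), if the local-minimum neighborhood $\mathcal{E}(\boldsymbol{m})$ reached at the end of stage one satisfies $\mathcal{M}(p,\boldsymbol{m})\in\mathscr{V}^{(p)}$ then we already lie in $\mathcal{E}^{(p)}$ and the stage is skipped; otherwise $\mathcal{M}(p,\boldsymbol{m})\in\mathscr{N}^{(p)}$, and condition $\mathfrak{H}^{(p)}$-(1) furnishes $\tau_{\mathcal{E}^{(p)}}\le\alpha_\epsilon$ with overwhelming probability uniformly in the starting point inside $\mathcal{E}(\mathcal{M}(p,\boldsymbol{m}))$. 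Two applications of the strong Markov property reduce the claim to the following statement: starting from some $\boldsymbol{z}\in\mathcal{E}(\mathcal{M}^{\ast})$ with $\mathcal{M}^{\ast}\in\mathscr{V}^{(p)}$, we have $\boldsymbol{x}_\epsilon(\varrho_\epsilon-O(\alpha_\epsilon))\in\mathcal{E}^{(p)}$ with probability $1-o_\epsilon(1)$.

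For the third (no-escape) stage, in the main range $p\le\mathfrak{q}$ the bound $\varrho_\epsilon\prec\theta_\epsilon^{(p)}$ combined with Lemma \ref{l: nojump} yields $\mathbb{P}_{\boldsymbol{z}}^\epsilon[\tau_{\mathcal{E}^{(p)}\setminus\mathcal{E}(\mathcal{M}^{\ast})}\le\varrho_\epsilon]\to 0$, so the process cannot have left $\mathcal{E}(\mathcal{M}^{\ast})\subset\mathcal{E}^{(p)}$ within the residual window; combined with the instantaneous exit bound encoded in $\mathfrak{G}^{(p)}$ (applied with $b=\varrho_\epsilon/(3\theta_\epsilon^{(p)})\to 0$) to rule out a brief excursion to $(\mathcal{E}^{(p)})^c$ at exactly time $\varrho_\epsilon$, this concludes the proof for $p\le\mathfrak{q}$.

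The principal technical obstacle is the boundary case $p=\mathfrak{q}+1$, where $\theta_\epsilon^{(p)}=\infty$ and $\mathcal{E}^{(p)}=\mathcal{E}(\mathcal{M}_\star)$, rendering Lemma \ref{l: nojump} vacuous since $\mathcal{E}^{(\mathfrak{q}+1)}\setminus\mathcal{E}(\mathcal{M}_\star)=\varnothing$. I would handle this by exploiting $\widetilde{\mathcal{M}_\star}=\varnothing$, which allows enlarging the valley around $\mathcal{M}_\star$ without encountering other local minima; Proposition \ref{p_FW} then furnishes an exit-time bound from such an enlarged connected component valid up to any prescribed exponential horizon, while within the component the local ergodicity of the reflected diffusion (\cite[Section 3]{LLS-1st}) pushes the process back to $\mathcal{E}(\mathcal{M}_\star)$ in time $O(1)$. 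Alternatively, one can bootstrap from $\mathfrak{C}_{{\rm fdd}}^{(\mathfrak{q})}$ applied with $t_\epsilon=\varrho_\epsilon/\theta_\epsilon^{(\mathfrak{q})}\to\infty$, exploiting that the unique recurrent class of $\mathbf{y}^{(\mathfrak{q})}(\cdot)$ consists exactly of the sets whose union is $\mathcal{M}_\star$; extending the finite-dimensional convergence to diverging $t_\epsilon$ is the genuinely delicate point that must be verified.
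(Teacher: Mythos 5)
Your stage-1 and stage-2 reductions (hit $\mathcal{E}(\mathcal{M}_0)$ via \cite[Theorem 6.1]{LLS-1st}, then reach $\mathcal{E}^{(p)}$ via $\mathfrak{H}^{(p)}$-(1)) are sound and in fact mirror the paper's first two reductions. The gap is in stage 3, and it is a genuine one. First, $\mathbb{P}_{\boldsymbol{z}}^\epsilon\bigl[\tau_{\mathcal{E}^{(p)}\setminus\mathcal{E}(\mathcal{M}^{\ast})}\le\varrho_\epsilon\bigr]\to 0$ (which Lemma \ref{l: nojump} does give, since the hitting probability is monotone in the horizon) does \emph{not} entail ``the process cannot have left $\mathcal{E}(\mathcal{M}^{\ast})$'': the process can exit $\mathcal{E}(\mathcal{M}^{\ast})$ and wander in the complement of $\mathcal{E}^{(p)}$ without ever touching another $p$-valley. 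Second, the patch you propose — invoking $\mathfrak{G}^{(p)}$ with $b=b_\epsilon=\varrho_\epsilon/(3\theta_\epsilon^{(p)})\to0$ — is not a legitimate use of that condition. $\mathfrak{G}^{(p)}$ is a nested double limsup ($\limsup_{b\to0}\limsup_{\epsilon\to0}$): for each fixed $b$ there is a threshold $\epsilon_0(b)$, but nothing controls $\epsilon_0(b)$ as $b\to0$, so one cannot send $b$ and $\epsilon$ to zero along a coupled sequence. Unlike the hitting-time bound, this is a statement about a marginal at a single time, so no monotonicity rescues it. What you are implicitly trying to establish is precisely Lemma \ref{l_4242} — which the paper proves \emph{using} Lemma \ref{l_4141}, so following your route would be circular.

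The paper avoids the problem by working one scale down in the final step: rather than trying to show the process ``stays put'' over a window of length $o(\theta_\epsilon^{(p)})$, it carves out a macroscopic $(p-1)$-scale tail of length $\theta_\epsilon^{(p-1)}t$ with $t$ fixed, uses Proposition \ref{p_FW} to confine the process to $\widetilde{\Lambda}$ at time $\varrho_\epsilon-\theta_\epsilon^{(p-1)}t$, then Lemma \ref{lem_hitting} to enter $\mathcal{E}^{(p-1)}$, and finally $\mathfrak{C}_{{\rm fdd}}^{(p-1)}$ (via Remark \ref{rem:fdd}) to express the residual probability as $\max_{\mathcal{M}}\mathcal{Q}_{\mathcal{M}}^{(p-1)}\bigl[\mathbf{y}^{(p-1)}(t)\notin\mathscr{R}^{(p-1)}\bigr]$, which vanishes as $t\to\infty$ because $\mathcal{E}^{(p)}=\bigcup_{\mathcal{M}\in\mathscr{R}^{(p-1)}}\mathcal{E}(\mathcal{M})$ and $\mathscr{R}^{(p-1)}$ is the set of recurrent states. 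This replaces your ``no-escape'' assertion by a positive ``absorbed-into-a-recurrent-class'' assertion, which is exactly what the finite-dimensional convergence at level $p-1$ delivers without any uniformity in a shrinking window. For $p=\mathfrak{q}+1$, your first alternative is too vague (Proposition \ref{p_FW} gives exit bounds only up to fixed exponential horizons, whereas $\varrho_\epsilon$ is arbitrary beyond $\theta_\epsilon^{(\mathfrak{q})}$, and you still need a return mechanism); your second alternative ($\mathfrak{C}_{\textrm{fdd}}^{(\mathfrak{q})}$ with $t_\epsilon\to\infty$) is indeed not justified, as you suspect. The paper instead combines $\mathfrak{H}^{(\mathfrak{q}+1)}$ with the local ergodicity of \cite[Section 3]{LLS-1st} to replace the starting point by the conditioned Gibbs measure, whence the claim reduces to the Laplace estimate $\mu_\epsilon(\mathbb{R}^d\setminus\mathcal{E}(\mathcal{M}_\star))/\mu_\epsilon(\mathcal{E}(\bm{m}))\to0$.
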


\begin{proof}
First we consider the case $p\in\llbracket2,\,\mathfrak{q}\rrbracket$.
For each $t>0$, by Proposition \ref{p_FW},
\[
\limsup_{\epsilon\to0}\,\sup_{\boldsymbol{x}\in\Lambda}\,\mathbb{P}_{\boldsymbol{x}}^{\epsilon}\left[\,\boldsymbol{x}_{\epsilon}(\varrho_{\epsilon}-\theta_{\epsilon}^{(p-1)}t)\notin\widetilde{\Lambda}\,\right]\ =\ 0\;.
\]
Therefore, by the strong Markov property,
\[
\limsup_{\epsilon\to0}\,\sup_{\boldsymbol{x}\in\Lambda}\,\mathbb{P}_{\boldsymbol{x}}^{\epsilon}\left[\,\boldsymbol{x}_{\epsilon}(\varrho_{\epsilon})\notin\mathcal{E}^{(p)}\,\right]\ \le\ \limsup_{\epsilon\to0}\,\sup_{\boldsymbol{x}\in\widetilde{\Lambda}}\mathbb{P}_{\boldsymbol{x}}^{\epsilon}\left[\,\boldsymbol{x}_{\epsilon}(\theta_{\epsilon}^{(p-1)}t)\notin\mathcal{E}^{(p)}\,\right]\;.
\]
Fix a sequence $(\gamma_{\epsilon})_{\epsilon>0}$ such that $\theta_{\epsilon}^{(p-2)}\prec\gamma_{\epsilon}\prec\theta_{\epsilon}^{(p-1)}$
if $p\ge2$ and $\frac{1}{\epsilon}\prec\gamma_{\epsilon}\prec\theta_{\epsilon}^{(1)}$
if $p=1$. Then, by Lemma \ref{lem_hitting},
\[
\limsup_{\epsilon\to0}\,\sup_{\boldsymbol{x}\in\widetilde{\Lambda}}\,\mathbb{P}_{\boldsymbol{x}}^{\epsilon}\left[\,\tau_{\mathcal{E}^{(p-1)}}>\gamma_{\epsilon}\,\right]\ =\ 0\;.
\]
Therefore, by the strong Markov property
\[
\limsup_{\epsilon\to0}\,\sup_{\boldsymbol{x}\in\widetilde{\Lambda}}\,\mathbb{P}_{\boldsymbol{x}}^{\epsilon}\left[\,\boldsymbol{x}_{\epsilon}(\theta_{\epsilon}^{(p-1)}t)\notin\mathcal{E}^{(p)}\,\right]\ \le\ \limsup_{\epsilon\to0}\,\sup_{\boldsymbol{x}\in\mathcal{E}^{(p-1)}}\,\sup_{s\in[0,\,\gamma_{\epsilon}]}\,\mathbb{P}_{\boldsymbol{x}}^{\epsilon}\left[\,\boldsymbol{x}_{\epsilon}(\theta_{\epsilon}^{(p-1)}t-s)\notin\mathcal{E}^{(p)}\,\right]\;.
\]
By Remark \ref{rem:fdd}and $\mathfrak{C}_{{\rm fdd}}^{(p-1)}$, since
$\mathcal{E}^{(p)}=\bigcup_{\mathcal{M}\in\mathscr{R}^{(p-1)}}\mathcal{E}(\mathcal{M})$,
where $\mathscr{R}^{(p-1)}$ is the union of the recurrence classes
of the process ${\bf y}^{(p-1)}(\cdot)$, the right-hand side is equal
to
\[
\max_{\mathcal{M}\in\mathscr{V}^{(p-1)}}\,\mathcal{Q}_{\mathcal{M}}^{(p-1)}\left[\,{\bf y}^{(p-1)}(t)\notin\mathscr{R}^{(p-1)}\,\right]\;.
\]
Since the last expression converges to $0$ by the definition of $\mathscr{R}^{(p-1)}$
as $t\to\infty$, we can complete the proof by letting $t\rightarrow\infty$.

Next we consider the case $p=\mathfrak{q}+1$. Let
$\varrho_{\epsilon}\succ\theta_{\epsilon}^{(\mathfrak{q})}$. By \cite[Corollary 6.2]{LLS-1st}
and the strong Markov property, for all sequence $(\gamma_{\epsilon})_{\epsilon>0}$
such that $\theta_{\epsilon}^{(\mathfrak{q})}\prec\gamma_{\epsilon}\prec\varrho_{\epsilon}$,
\begin{align*}
\limsup_{\epsilon\to0}\,\sup_{\boldsymbol{x}\in\Lambda}\,\mathbb{P}_{\boldsymbol{x}}^{\epsilon}\left[\,\boldsymbol{x}_{\epsilon}(\varrho_{\epsilon})\notin\mathcal{E}^{(\mathfrak{q}+1)}\,\right]\  & \le\ \limsup_{\epsilon\to0}\,\sup_{\boldsymbol{x}\in\mathcal{E}(\mathcal{M}_{0})}\,\sup_{s\in[0,\,\epsilon^{-1}]}\,\mathbb{P}_{\boldsymbol{x}}^{\epsilon}\left[\,\boldsymbol{x}_{\epsilon}(\varrho_{\epsilon}+s)\notin\mathcal{E}^{(\mathfrak{q}+1)}\,\right]\\
 & =\;\limsup_{\epsilon\to0}\,\sup_{\boldsymbol{x}\in\mathcal{E}^{(\mathfrak{q}+1)}}\,\sup_{s\in[0,\,\gamma_{\epsilon}]}\,\mathbb{P}_{\boldsymbol{x}}^{\epsilon}\left[\,\boldsymbol{x}_{\epsilon}(\varrho_{\epsilon}+s)\notin\mathcal{E}^{(\mathfrak{q}+1)}\,\right]\;,
\end{align*}
where the last equality follows from $\mathfrak{H}^{(\mathfrak{q}+1)}$.
Since $\mathcal{E}^{(\mathfrak{q}+1)}=\mathcal{E}(\mathcal{M}_{\star})$
by Proposition \ref{prop_global_min}, it suffices to prove that for
$\bm{m}\in\mathcal{M}_{\star}$,
\begin{equation}
\limsup_{\epsilon\to0}\,\sup_{\boldsymbol{x}\in\mathcal{E}(\bm{m})}\,\sup_{s\in[0,\,\gamma_{\epsilon}]}\,\mathbb{P}_{\boldsymbol{x}}^{\epsilon}\left[\,\boldsymbol{x}_{\epsilon}(\varrho_{\epsilon}+s)\notin\mathcal{E}^{(\mathfrak{q}+1)}\,\right]=0\ .\label{eq:aa4}
\end{equation}
This follows from the local ergodicity proved in \cite[Section 4]{LLS-1st}.
More precisely, let $\bm{x}_{\epsilon}^{F}(\cdot)$ be a diffusion
process defined in \cite[Section 4]{LLS-1st} (with respect to the
minimum $\bm{m}\in\mathcal{M}_{\star}$) and let $\mu_{\epsilon}^{F}(\cdot)$
be the invariant measure of $\bm{x}_{\epsilon}^{F}(\cdot)$. Then,
by \cite[Theorem 3.1]{LLS-1st}, we have
\[
\mathbb{P}_{\boldsymbol{x}}^{\epsilon}\left[\,\boldsymbol{x}_{\epsilon}(\varrho_{\epsilon}+s)\notin\mathcal{E}^{(\mathfrak{q}+1)}\,\right]\ =\ \mathbb{P}_{\mu_{\epsilon}^{F}}^{\epsilon}\left[\,\boldsymbol{x}_{\epsilon}(\varrho_{\epsilon}+s-\epsilon^{-1/6})\notin\mathcal{E}^{(\mathfrak{q}+1)}\,\right]+o_{\epsilon}(1)\ ,
\]
where the error term is uniform over $\bm{x}\in\mathcal{E}(\bm{m})$.
By \cite[display (4.10)]{LLS-1st}, we can replace the measure $\mu_{\epsilon}^{F}$
at the right-hand side with the invariant measure $\mu_{\epsilon}$
conditioned on $\mathcal{E}(\bm{m})$, and hence
\begin{align*}
\mathbb{P}_{\mu_{\epsilon}^{F}}^{\epsilon}\left[\,\boldsymbol{x}_{\epsilon}(\varrho_{\epsilon}+s-\epsilon^{-1/6})\notin\mathcal{E}^{(\mathfrak{q}+1)}\,\right]\  & \le\ \frac{1}{\mu_{\epsilon}(\mathcal{E}(\bm{m}))} \mathbb{P}_{\mu_\epsilon}^{\epsilon}\left[\,\boldsymbol{x}_{\epsilon}(\varrho_{\epsilon}+s-\epsilon^{-1/6})\notin\mathcal{E}^{(\mathfrak{q}+1)}\,\right]\,+\,o_{\epsilon}(1)\\
 & =\ \frac{1}{\mu_{\epsilon}(\mathcal{E}(\bm{m}))}\,\mu_{\epsilon}(\mathbb{R}^{d}\setminus\mathcal{E}^{(\mathfrak{q}+1)})+o_{\epsilon}(1)\ .
\end{align*}
This completes the proof of \eqref{eq:aa4} since it is well-known
that $\mu_{\epsilon}(\mathbb{R}^{d}\setminus\mathcal{E}^{(\mathfrak{q}+1)})\ll\mu_{\epsilon}(\mathcal{E}(\bm{m}))$
by the Laplace asymptotics.
\end{proof}
Next lemma states that the process cannot escape from a valley around
$\mathscr{V}^{(p)}$ in a scale less than $\theta_{\epsilon}^{(p)}$.
\begin{lem}
\label{l_4242} Fix $p\in\llbracket2,\,\mathfrak{q}+1\rrbracket$,
$\mathcal{M}\in\mathscr{V}^{(p)}$ , and a sequence $(\varrho_{\epsilon})_{\epsilon>0}$
satisfying $\theta_{\epsilon}^{(p-1)}\prec\varrho_{\epsilon}\prec\theta_{\epsilon}^{(p)}$.
Then,
\[
\limsup_{\epsilon\to0}\,\sup_{\boldsymbol{x}\in\mathcal{E}(\mathcal{M})}\,\mathbb{P}_{\boldsymbol{x}}^{\epsilon}\left[\,\boldsymbol{x}_{\epsilon}(\varrho_{\epsilon})\notin\mathcal{E}(\mathcal{M})\,\right]\ =\ 0\ .
\]
\end{lem}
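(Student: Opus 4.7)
The plan is to decompose, for $\boldsymbol{x}\in\mathcal{E}(\mathcal{M})$,
\[
\mathbb{P}_{\boldsymbol{x}}^{\epsilon}\left[\,\boldsymbol{x}_{\epsilon}(\varrho_{\epsilon})\notin\mathcal{E}(\mathcal{M})\,\right]\;\le\;\mathbb{P}_{\boldsymbol{x}}^{\epsilon}\left[\,\boldsymbol{x}_{\epsilon}(\varrho_{\epsilon})\in\mathcal{E}^{(p)}\setminus\mathcal{E}(\mathcal{M})\,\right]\;+\;\mathbb{P}_{\boldsymbol{x}}^{\epsilon}\left[\,\boldsymbol{x}_{\epsilon}(\varrho_{\epsilon})\notin\mathcal{E}^{(p)}\,\right]\;,
\]
and treat the two pieces separately. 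Since $\mathcal{E}(\mathcal{M})\subset\Lambda$, the second term vanishes uniformly as $\epsilon\to0$ by Lemma \ref{l_4141}, so the real content is the first term.

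For the first term, observe that if $\boldsymbol{x}_{\epsilon}(\varrho_{\epsilon})\in\mathcal{E}^{(p)}\setminus\mathcal{E}(\mathcal{M})$, then the process must have hit $\mathcal{E}^{(p)}\setminus\mathcal{E}(\mathcal{M})$ by time $\varrho_{\epsilon}$; in terms of the speeded-up process $\boldsymbol{y}^{(p)}_{\epsilon}(\cdot)=\boldsymbol{x}_{\epsilon}(\theta_{\epsilon}^{(p)}\,\cdot)$, this means $\tau_{\mathcal{E}^{(p)}\setminus\mathcal{E}(\mathcal{M})}\le\varrho_{\epsilon}/\theta_{\epsilon}^{(p)}$ in the rescaled time, where $a_\epsilon:=\varrho_{\epsilon}/\theta_{\epsilon}^{(p)}\to 0$ by hypothesis. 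Hence
\[
\mathbb{P}_{\boldsymbol{x}}^{\epsilon}\left[\,\boldsymbol{x}_{\epsilon}(\varrho_{\epsilon})\in\mathcal{E}^{(p)}\setminus\mathcal{E}(\mathcal{M})\,\right]\;\le\;\mathbb{Q}_{\boldsymbol{x}}^{\epsilon,p}\!\left[\,\tau_{\mathcal{E}^{(p)}\setminus\mathcal{E}(\mathcal{M})}\le a_{\epsilon}\,\right]\;.
\]
For any fixed $a>0$, once $\epsilon$ is small enough so that $a_{\epsilon}\le a$, the right-hand side is bounded by $\sup_{\boldsymbol{x}\in\mathcal{E}(\mathcal{M})}\mathbb{Q}_{\boldsymbol{x}}^{\epsilon,p}[\tau_{\mathcal{E}^{(p)}\setminus\mathcal{E}(\mathcal{M})}\le a]$. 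Taking $\limsup_{\epsilon\to0}$ then $\limsup_{a\to 0}$ and applying Lemma \ref{l: nojump} (which is in force because $\mathfrak{R}^{(p)}$ has been established for $p\in\llbracket 1,\mathfrak{q}\rrbracket$) yields that this $\limsup$ equals $0$, uniformly in $\boldsymbol{x}\in\mathcal{E}(\mathcal{M})$.

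The case $p=\mathfrak{q}+1$ is in fact simpler, since then $\mathscr{V}^{(\mathfrak{q}+1)}=\{\mathcal{M}_{\star}\}$ by Proposition \ref{prop_global_min}, so $\mathcal{M}=\mathcal{M}_{\star}$ and $\mathcal{E}^{(p)}\setminus\mathcal{E}(\mathcal{M})=\varnothing$; the first term disappears and only Lemma \ref{l_4141} is needed. There is no serious obstacle here---the heart of the argument is packaged in Lemma \ref{l: nojump}, which provides exactly the required ``no-jump on time scales shorter than $\theta_{\epsilon}^{(p)}$'' statement; the only thing to verify is the elementary passage from a fixed constant $a>0$ to the vanishing sequence $a_{\epsilon}$, which is immediate from the double-limit form of the non-escape bound.
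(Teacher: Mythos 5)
Your proposal is correct and follows essentially the same route as the paper: reduce to the piece in $\mathcal{E}^{(p)}\setminus\mathcal{E}(\mathcal{M})$ via Lemma \ref{l_4141}, dispose of the case $p=\mathfrak{q}+1$ trivially via Proposition \ref{prop_global_min}, and for $p\in\llbracket2,\mathfrak{q}\rrbracket$ bound the remaining probability by the hitting-time probability and invoke the double-limit statement of Lemma \ref{l: nojump}. The passage from the fixed cutoff $a$ to the vanishing $a_\epsilon=\varrho_\epsilon/\theta_\epsilon^{(p)}$ that you spell out is exactly the (implicit) content of the paper's one-line appeal to Lemma \ref{l: nojump}.
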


\begin{proof}
By Lemma \ref{l_4141}, it suffices to show that
\begin{equation}
\limsup_{\epsilon\to0}\,\sup_{\boldsymbol{x}\in\mathcal{E}(\mathcal{M})}\,\mathbb{P}_{\boldsymbol{x}}^{\epsilon}\left[\,\boldsymbol{x}_{\epsilon}(\varrho_{\epsilon})\in\mathcal{E}^{(p)}\setminus\mathcal{E}(\mathcal{M})\,\right]\ =\ 0\;.\label{eq:4242}
\end{equation}
This is immediate for $p=\mathfrak{q}+1$ since $\mathcal{M}_{\star}$
is the only element of $\mathscr{V}^{(\mathfrak{q}+1)}$ by Proposition
\ref{prop_global_min} and hence $\mathcal{E}^{(p)}\setminus\mathcal{E}(\mathcal{M})=\varnothing$.
For $p\in\llbracket2,\,\mathfrak{q}\rrbracket$, since
\[
\mathbb{P}_{\boldsymbol{x}}^{\epsilon}\left[\,\boldsymbol{x}_{\epsilon}(\varrho_{\epsilon})\in\mathcal{E}^{(p)}\setminus\mathcal{E}(\mathcal{M})\,\right]\ \le\ \mathbb{P}_{\boldsymbol{x}}^{\epsilon}\left[\,\tau_{\mathcal{E}^{(p)}\setminus\mathcal{E}(\mathcal{M})}\le\varrho_{\epsilon}\,\right]\;,
\]
the assertion \eqref{eq:4242} follows from Lemma \ref{l: nojump}.
\end{proof}
The following result is a refinement of the previous lemma and can
be regarded as an intermediate scale version of Lemma \ref{lem:margin1}.
\begin{lem}
\label{lem:margin_int} Fix $p\in\llbracket2,\,\mathfrak{q}+1\rrbracket$,
$\mathcal{M}\in\mathscr{V}^{(p)}$ and $\boldsymbol{m}\in\mathcal{M}$.
Then, for any sequences $(\varrho_{\epsilon})_{\epsilon>0}$ and $(\gamma_{\epsilon})_{\epsilon>0}$
satisfying $\theta_{\epsilon}^{(p-1)}\prec\varrho_{\epsilon}\prec\theta_{\epsilon}^{(p)}$
and $\gamma_{\epsilon}\prec\varrho_{\epsilon}$,
\[
\lim_{\epsilon\rightarrow0}\,\sup_{\boldsymbol{x}\in\mathcal{E}(\mathcal{M})}\,\sup_{s\in[0,\,\gamma_{\epsilon}]}\,\left|\,\mathbb{P}_{\boldsymbol{x}}^{\epsilon}\left[\,\boldsymbol{x}_{\epsilon}(\varrho_{\epsilon}+s)\in\mathcal{E}(\boldsymbol{m})\,\right]-\frac{\nu(\boldsymbol{m})}{\nu(\mathcal{M})}\,\right|\ =\ 0\ .
\]
\end{lem}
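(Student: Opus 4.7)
The plan is to mimic the strategy of Lemma \ref{lem:margin1}, but with the time scale $\theta_{\epsilon}^{(p)}t$ replaced by the intermediate scale $\varrho_{\epsilon}$, and using Lemma \ref{l_4242} in place of $\mathfrak{C}_{\mathrm{fdd}}^{(p)}$ to guarantee that the process is still in $\mathcal{E}(\mathcal{M})$ at the intermediate sampling time. Decompose $\mathcal{M}$ according to the standard decomposition $\mathscr{V}^{(p-1)}(\mathcal{M})=\{\mathcal{M}_{1},\dots,\mathcal{M}_{n}\}$ from Remark \ref{rem:stddec} (with $k=p-1$), and assume without loss of generality $\boldsymbol{m}\in\mathcal{M}_{1}$ so that $\mathcal{E}(\boldsymbol{m})\subset\mathcal{E}(\mathcal{M}_{1})$. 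Since $\{\mathcal{M}_{1},\dots,\mathcal{M}_{n}\}$ is a recurrent class of $\mathbf{y}^{(p-1)}(\cdot)$, Proposition \ref{p_rev_y} provides, for each $\eta>0$, a constant $s_{0}=s_{0}(\eta)>0$ such that
\[
\max_{j\in\llbracket1,n\rrbracket}\,\left|\,\mathcal{Q}_{\mathcal{M}_{j}}^{(p-1)}\left[\,\mathbf{y}^{(p-1)}(s_{0})=\mathcal{M}_{1}\,\right]\,-\,\frac{\nu(\mathcal{M}_{1})}{\nu(\mathcal{M})}\,\right|\ \le\ \eta\;.
\]

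Fix $\eta>0$, set $t_{\epsilon}:=\varrho_{\epsilon}+s-s_{0}\theta_{\epsilon}^{(p-1)}$, and note that $\theta_{\epsilon}^{(p-1)}\prec t_{\epsilon}\prec\theta_{\epsilon}^{(p)}$ uniformly in $s\in[0,\gamma_{\epsilon}]$ as $\epsilon\to0$, because $\gamma_{\epsilon}\prec\varrho_{\epsilon}$, $s_{0}$ is fixed, and $\varrho_{\epsilon}\succ\theta_{\epsilon}^{(p-1)}$. Since $\mathcal{E}(\mathcal{M})=\bigsqcup_{j}\mathcal{E}(\mathcal{M}_{j})$, write, for $\boldsymbol{x}\in\mathcal{E}(\mathcal{M})$,
\[
\mathbb{P}_{\boldsymbol{x}}^{\epsilon}\!\left[\boldsymbol{x}_{\epsilon}(\varrho_{\epsilon}+s)\in\mathcal{E}(\boldsymbol{m})\right]\ =\ \sum_{j=1}^{n}\mathbb{P}_{\boldsymbol{x}}^{\epsilon}\!\left[\boldsymbol{x}_{\epsilon}(\varrho_{\epsilon}+s)\in\mathcal{E}(\boldsymbol{m}),\,\boldsymbol{x}_{\epsilon}(t_{\epsilon})\in\mathcal{E}(\mathcal{M}_{j})\right]\,+\,\mathcal{R}_{\epsilon}(\boldsymbol{x},s)\;,
\]
where the remainder $\mathcal{R}_{\epsilon}(\boldsymbol{x},s)$ is bounded by $\mathbb{P}_{\boldsymbol{x}}^{\epsilon}[\boldsymbol{x}_{\epsilon}(t_{\epsilon})\notin\mathcal{E}(\mathcal{M})]$, which vanishes uniformly in $\boldsymbol{x}\in\mathcal{E}(\mathcal{M})$ and $s\in[0,\gamma_{\epsilon}]$ by Lemma \ref{l_4242} applied with the sequence $t_{\epsilon}$ in place of $\varrho_{\epsilon}$.

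For each $j$, apply the strong Markov property at time $t_{\epsilon}$ to rewrite the $j$-th summand as $\mathbb{E}_{\boldsymbol{x}}^{\epsilon}[h_{\epsilon}(\boldsymbol{x}_{\epsilon}(t_{\epsilon}))\,\mathbf{1}\{\boldsymbol{x}_{\epsilon}(t_{\epsilon})\in\mathcal{E}(\mathcal{M}_{j})\}]$, where $h_{\epsilon}(\boldsymbol{y})=\mathbb{P}_{\boldsymbol{y}}^{\epsilon}[\boldsymbol{x}_{\epsilon}(s_{0}\theta_{\epsilon}^{(p-1)})\in\mathcal{E}(\boldsymbol{m})]$. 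By Lemma \ref{lem:margin1} at level $p-1$ (applied with $t=s_{0}$, $\mathcal{M}=\mathcal{M}_{j}$, $\mathcal{M}'=\mathcal{M}_{1}$, $\boldsymbol{m}'=\boldsymbol{m}$, and the trivial constant sequence $\gamma\equiv0$),
\[
\sup_{\boldsymbol{y}\in\mathcal{E}(\mathcal{M}_{j})}\,\left|\,h_{\epsilon}(\boldsymbol{y})\,-\,\frac{\nu(\boldsymbol{m})}{\nu(\mathcal{M}_{1})}\,\mathcal{Q}_{\mathcal{M}_{j}}^{(p-1)}\!\left[\mathbf{y}^{(p-1)}(s_{0})=\mathcal{M}_{1}\right]\,\right|\ =\ o_{\epsilon}(1)\;.
\]
Combining this with the choice of $s_{0}$ shows that, uniformly in $\boldsymbol{y}\in\mathcal{E}(\mathcal{M}_{j})$, $h_{\epsilon}(\boldsymbol{y})=\nu(\boldsymbol{m})/\nu(\mathcal{M})+O(\eta)+o_{\epsilon}(1)$, where the $O(\eta)$ term comes from the reversibility bound and is independent of $j$.

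Inserting this estimate into the decomposition above and summing on $j$ yields
\[
\mathbb{P}_{\boldsymbol{x}}^{\epsilon}\!\left[\boldsymbol{x}_{\epsilon}(\varrho_{\epsilon}+s)\in\mathcal{E}(\boldsymbol{m})\right]\ =\ \Big(\frac{\nu(\boldsymbol{m})}{\nu(\mathcal{M})}+O(\eta)+o_{\epsilon}(1)\Big)\,\mathbb{P}_{\boldsymbol{x}}^{\epsilon}\!\left[\boldsymbol{x}_{\epsilon}(t_{\epsilon})\in\mathcal{E}(\mathcal{M})\right]\,+\,o_{\epsilon}(1)\;,
\]
and a second application of Lemma \ref{l_4242} shows $\mathbb{P}_{\boldsymbol{x}}^{\epsilon}[\boldsymbol{x}_{\epsilon}(t_{\epsilon})\in\mathcal{E}(\mathcal{M})]=1+o_{\epsilon}(1)$ uniformly. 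Taking $\limsup_{\epsilon\to 0}$ and then $\eta\to 0$ completes the argument.

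The main delicate point will be verifying that Lemma \ref{lem:margin1} can be invoked with uniformity in $\boldsymbol{y}\in\mathcal{E}(\mathcal{M}_{j})$ (since the outer sup over $\boldsymbol{x}\in\mathcal{E}(\mathcal{M})$ and $s\in[0,\gamma_{\epsilon}]$ must be controlled): this is precisely the content of the supremum over $\boldsymbol{y}\in\mathcal{E}(\mathcal{M})$ in the statement of Lemma \ref{lem:margin1}, applied with $\gamma\equiv 0$, so no extension is required. A secondary bookkeeping point, relevant when $p=\mathfrak{q}+1$, is that $\mathscr{V}^{(\mathfrak{q})}(\mathcal{M}_{\star})$ is the unique recurrent class of $\mathbf{y}^{(\mathfrak{q})}(\cdot)$ by Proposition \ref{prop_global_min}, so Proposition \ref{p_rev_y} still applies verbatim and the argument goes through without change.
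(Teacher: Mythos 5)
Your proposal is correct and follows essentially the same route as the paper's proof: standard decomposition $\mathscr{V}^{(p-1)}(\mathcal{M})=\{\mathcal{M}_1,\dots,\mathcal{M}_n\}$, choice of $s_0=s_0(\eta)$ via the local reversibility (Proposition \ref{p_rev_y}) of $\mathbf{y}^{(p-1)}(\cdot)$ on its recurrent class, stepping back by $s_0\theta_\epsilon^{(p-1)}$ and applying the strong Markov property together with Lemma \ref{lem:margin1} at level $p-1$, and using Lemma \ref{l_4242} (twice) to discard the event that the process has left $\mathcal{E}(\mathcal{M})$. The only cosmetic difference is that you decompose directly into events $\{\bm{x}_\epsilon(t_\epsilon)\in\mathcal{E}(\mathcal{M}_j)\}$ rather than into $\mathscr{A}_1=\{\bm{x}_\epsilon(t_\epsilon)\in\mathcal{E}(\mathcal{M})\}$ and its complement as the paper does, but the estimates are identical; your explicit note about the $p=\mathfrak{q}+1$ case is a reasonable observation that the paper leaves implicit.
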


\begin{proof}
Let $\{\mathcal{M}_{1},\,\dots,\,\mathcal{M}_{n}\}$ be the standard
decomposition of $\mathcal{M}$. Fix $\eta>0$. By Proposition \ref{p_rev_y},
there exists $s_{0}=s_{0}(\eta)>0$ such that
\begin{equation}
\max_{1\le i,\,j\le n}\,\left|\,\mathcal{Q}_{\mathcal{M}_{i}}^{(p-1)}\left[{\bf y}^{(p-1)}(s_{0})\in\mathcal{M}_{j}\right]-\frac{\nu(\mathcal{M}_{j})}{\nu(\mathcal{M})}\,\right|\ <\ \eta\ .\label{eq:condt_0}
\end{equation}
Let $\boldsymbol{x}\in\mathcal{E}(\mathcal{M})$. As in the proof
of Lemma \ref{lem:margin1}, we decompose
\[
\mathbb{P}_{\boldsymbol{x}}^{\epsilon}\left[\,\boldsymbol{x}_{\epsilon}(\varrho_{\epsilon}+s)\in\mathcal{E}(\boldsymbol{m})\,\right]\ =\ \sum_{j=1}^{2}\,\mathbb{P}_{\boldsymbol{x}}^{\epsilon}\left[\,\boldsymbol{x}_{\epsilon}(\varrho_{\epsilon}+s)\in\mathcal{E}(\boldsymbol{m})\,\Big|\,\mathscr{A}_{j}\,\right]\,\mathbb{P}_{\boldsymbol{x}}^{\epsilon}\left[\mathscr{A}_{j}\right]\;,
\]
where $t_{\epsilon}=\varrho_{\epsilon}+s-\theta_{\epsilon}^{(p-1)}s_{0}$
and
\[
\mathscr{A}_{1}\ =\ \{\,\boldsymbol{x}_{\epsilon}(t_{\epsilon})\in\mathcal{E}(\mathcal{M})\,\}\;,\quad\mathscr{A}_{2}\ =\ \{\,\boldsymbol{x}_{\epsilon}(t_{\epsilon})\not\in\mathcal{E}(\mathcal{M})\,\}\;.
\]

By Lemma \ref{l_4242}, the expression corresponding to $\mathscr{A}_{2}$
is negligible since $\mathbb{P}_{\boldsymbol{x}}^{\epsilon}\left[\mathscr{A}_{2}\right]$
vanishes as $\epsilon\to0$. It remains to estimate the term corresponding
to $\mathscr{A}_{1}$. We consider each term of the product separately.
For the first one, by the Markov property,
\begin{align*}
 & \sup_{s\in[0,\,\gamma_{\epsilon}]}\,\left|\,\mathbb{P}_{\boldsymbol{x}}^{\epsilon}\left[\,\boldsymbol{x}_{\epsilon}(\varrho_{\epsilon}+s)\in\mathcal{E}(\boldsymbol{m})\,\Big|\,\boldsymbol{x}_{\epsilon}(\varrho_{\epsilon}+s-\theta_{\epsilon}^{(p-1)}s_{0})\in\mathcal{E}(\mathcal{M})\,\right]-\frac{\nu(\boldsymbol{m})}{\nu(\mathcal{M})}\,\right|\\
 & \quad\le\sup_{i\in\llbracket1,\,n\rrbracket}\,\sup_{\boldsymbol{y}\in\mathcal{E}(\mathcal{M}_{i})}\,\left|\,\mathbb{P}_{\boldsymbol{y}}^{\epsilon}\left[\,\boldsymbol{x}_{\epsilon}(\theta_{\epsilon}^{(p-1)}s_{0})\in\mathcal{E}(\boldsymbol{m})\,\right]-\frac{\nu(\boldsymbol{m})}{\nu(\mathcal{M})}\,\right|\;.
\end{align*}
Assume, without loss of generality, that $\boldsymbol{m}\in\mathcal{M}_{1}$.
Bound the right-hand side as
\begin{align*}
 & \sup_{i\in\llbracket1,\,n\rrbracket}\,\sup_{\boldsymbol{y}\in\mathcal{E}(\mathcal{M}_{i})}\,\left|\mathbb{P}_{\boldsymbol{y}}^{\epsilon}\left[\,\boldsymbol{x}_{\epsilon}(\theta_{\epsilon}^{(p-1)}s_{0})\in\mathcal{E}(\boldsymbol{m})\,\right]-\mathcal{Q}_{\mathcal{M}_{i}}^{(p-1)}\left[\,{\bf y}^{(p-1)}(s_{0})=\mathcal{M}_{1}\,\right]\,\frac{\nu(\boldsymbol{m})}{\nu(\mathcal{M}_{1})}\right|\\
 & \ +\sup_{i\in\llbracket1,\,n\rrbracket}\,\left|\,\mathcal{Q}_{\mathcal{M}_{i}}^{(p-1)}\left[\,{\bf y}^{(p-1)}(s_{0})=\mathcal{M}_{1}\,\right]\,\frac{\nu(\boldsymbol{m})}{\nu(\mathcal{M}_{1})}-\frac{\nu(\boldsymbol{m})}{\nu(\mathcal{M})}\,\right|\ .
\end{align*}
By Lemma \ref{lem:margin1}, the first term vanishes as $\epsilon\to0$.
By \eqref{eq:condt_0}, the second one is bounded by $\eta$. Hence,
\[
\limsup_{\epsilon\to0}\,\sup_{\boldsymbol{x}\in\mathcal{E}(\mathcal{M})}\,\sup_{s\in[0,\,\gamma_{\epsilon}]}\,\left|\,\mathbb{P}_{\boldsymbol{x}}^{\epsilon}\left[\,\boldsymbol{x}_{\epsilon}(\varrho_{\epsilon}+s)\in\mathcal{E}(\boldsymbol{m})\,\Big|\,\boldsymbol{x}_{\epsilon}(\varrho_{\epsilon}+s-\theta_{\epsilon}^{(p-1)}s_{0})\in\mathcal{E}(\mathcal{M})\,\right]-\frac{\nu(\boldsymbol{m})}{\nu(\mathcal{M})}\,\right|\ \le\ \eta\ .
\]
On the other hand, by Lemma \ref{l_4242},
\[
\lim_{\epsilon\to0}\,\sup_{\boldsymbol{x}\in\mathcal{E}(\mathcal{M})}\,\mathbb{P}_{\boldsymbol{z}_{\epsilon}}^{\epsilon}\left[\,\boldsymbol{x}_{\epsilon}(\varrho_{\epsilon}+s-\theta_{\epsilon}^{(p-1)}s_{0})\in\mathcal{E}(\mathcal{M})\,\right]\ =\ 1\ .
\]
Putting together the previous estimates yields that
\[
\limsup_{\epsilon\to0}\,\sup_{\boldsymbol{x}\in\mathcal{E}(\mathcal{M})}\,\sup_{s\in[0,\,\gamma_{\epsilon}]}\,\left|\,\mathbb{P}_{\boldsymbol{x}}^{\epsilon}\left[\boldsymbol{x}_{\epsilon}(\varrho_{\epsilon}+s)\in\mathcal{E}(\boldsymbol{m})\right]-\frac{\nu(\boldsymbol{m})}{\nu(\mathcal{M})}\,\right|\ \le\ \eta\ .
\]
To complete the proof of the lemma, it remains to let $\eta\rightarrow0$.
\end{proof}
The next result is an intermediate scale version of Proposition \ref{p_marginDA}.
\begin{prop}
\label{p_marginDA-int} Fix $p\in\llbracket2,\,\mathfrak{q}\rrbracket$,
$\mathcal{M}\in\mathscr{S}^{(p)}$, $\mathcal{M}'\in\mathscr{V}^{(p)}$,
and $\boldsymbol{m}\in\mathcal{M}$, $\boldsymbol{m}'\in\mathcal{M}'$.
Then, for all $\boldsymbol{x}\in\mathcal{D}(\boldsymbol{m})$ and
sequences $(\varrho_{\epsilon})_{\epsilon>0}$ satisfying $\theta_{\epsilon}^{(p-1)}\prec\varrho_{\epsilon}\prec\theta_{\epsilon}^{(p)}$,
\[
\lim_{\epsilon\rightarrow0}\,\mathbb{P}_{\boldsymbol{x}}^{\epsilon}\left[\,\boldsymbol{x}_{\epsilon}(\varrho_{\epsilon})\in\mathcal{E}(\boldsymbol{m}')\,\right]\ =\ \widehat{\mathcal{Q}}_{\mathcal{M}}^{(p)}\left[\,\tau_{\mathscr{V}^{(p)}}=\tau_{\mathcal{M}'}\,\right]\,\frac{\nu(\boldsymbol{m}')}{\nu(\mathcal{M}')}\;.
\]
In particular, we have
\[
\lim_{\epsilon\rightarrow0}\,\mathbb{P}_{\boldsymbol{x}}^{\epsilon}\left[\,\boldsymbol{x}_{\epsilon}(\varrho_{\epsilon})\notin\mathcal{E}^{(p)}\,\right]\ =\ 0\ .
\]
\end{prop}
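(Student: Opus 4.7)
My plan is to apply Proposition \ref{prop:fd2_int} with the explicit choice
\[
\Phi^{(p)}(\mathcal{M},\,\boldsymbol{m}')\ :=\ \frac{\nu(\boldsymbol{m}')}{\nu(\mathcal{M})}\qquad\text{for }\mathcal{M}\in\mathscr{V}^{(p)},\ \boldsymbol{m}'\in\mathcal{M}.
\]
With this identification, the quantity $\Psi^{(p)}(\mathcal{M},\boldsymbol{m}')$ in the conclusion of Proposition \ref{prop:fd2_int} becomes $\widehat{\mathcal{Q}}_{\mathcal{M}}^{(p)}[\tau_{\mathscr{V}^{(p)}}=\tau_{\mathcal{M}'}]\,\nu(\boldsymbol{m}')/\nu(\mathcal{M}')$, which is precisely the right-hand side of the first identity of Proposition \ref{p_marginDA-int}. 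So the entire proof reduces to verifying the hypothesis \eqref{fd3} with this choice of $\Phi^{(p)}$.

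The verification is a direct invocation of Lemma \ref{lem:margin_int}. The only cosmetic discrepancy is that Lemma \ref{lem:margin_int} provides the uniform estimate on $s\in[0,\gamma_\epsilon]$, whereas \eqref{fd3} requires it on $|s|\le\gamma_\epsilon$. To bridge this, given $(\varrho_\epsilon,\gamma_\epsilon)$ with $\theta_\epsilon^{(p-1)}\prec\varrho_\epsilon\prec\theta_\epsilon^{(p)}$ and $\gamma_\epsilon\prec\varrho_\epsilon$, I will introduce $\varrho'_\epsilon:=\varrho_\epsilon-\gamma_\epsilon$ and $\gamma'_\epsilon:=2\gamma_\epsilon$; these still satisfy $\theta_\epsilon^{(p-1)}\prec\varrho'_\epsilon\prec\theta_\epsilon^{(p)}$ and $\gamma'_\epsilon\prec\varrho'_\epsilon$. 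Applying Lemma \ref{lem:margin_int} to $(\varrho'_\epsilon,\gamma'_\epsilon)$ and substituting $s=s'-\gamma_\epsilon$ with $s'\in[0,\gamma'_\epsilon]$ yields the two-sided uniform estimate demanded by \eqref{fd3}.

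For the ``in particular'' statement I will sum the first identity over the finite set of pairs $(\mathcal{M}',\boldsymbol{m}')$ with $\mathcal{M}'\in\mathscr{V}^{(p)}$ and $\boldsymbol{m}'\in\mathcal{M}'$. Because the sum is finite, the limit commutes with it, and using $\sum_{\boldsymbol{m}'\in\mathcal{M}'}\nu(\boldsymbol{m}')/\nu(\mathcal{M}')=1$ one obtains
\[
\lim_{\epsilon\to0}\mathbb{P}^{\epsilon}_{\boldsymbol{x}}\!\left[\boldsymbol{x}_\epsilon(\varrho_\epsilon)\in\mathcal{E}^{(p)}\right]\ =\ \sum_{\mathcal{M}'\in\mathscr{V}^{(p)}}\widehat{\mathcal{Q}}^{(p)}_{\mathcal{M}}\!\left[\tau_{\mathscr{V}^{(p)}}=\tau_{\mathcal{M}'}\right]\ =\ \widehat{\mathcal{Q}}^{(p)}_{\mathcal{M}}\!\left[\tau_{\mathscr{V}^{(p)}}<\infty\right].
\]
By Lemma \ref{l_irred_negli}, the chain $\widehat{\mathbf{y}}^{(p)}(\cdot)$ admits no recurrent class contained in $\mathscr{N}^{(p)}$, so starting from any $\mathcal{M}\in\mathscr{S}^{(p)}$ the hitting time $\tau_{\mathscr{V}^{(p)}}$ is almost surely finite; hence the right-hand side equals $1$, giving the non-escape claim.

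No real obstacle arises: the hard work has already been carried out in Proposition \ref{prop:fd2_int} (which itself rests on $\mathfrak{H}^{(p)}$ and the non-escape Lemma \ref{l_4242}) and in Lemma \ref{lem:margin_int} (where the reversibility Proposition \ref{p_rev_y} delivers the mixing inside a recurrent class). The only step requiring any thought is the two-sided-vs-one-sided extension above, which is a mere reindexing.
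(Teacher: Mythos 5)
Your proof is correct and takes essentially the same route as the paper's: the paper likewise derives the first identity by combining Lemma \ref{lem:margin_int} (which supplies the constants $\Phi^{(p)}(\mathcal{M},\boldsymbol{m}')=\nu(\boldsymbol{m}')/\nu(\mathcal{M})$) with Proposition \ref{prop:fd2_int}, and then declares the ``in particular'' part immediate from the first. You are a bit more explicit on two points the paper treats silently — the reindexing needed to pass from the one-sided window $s\in[0,\gamma_\epsilon]$ of Lemma \ref{lem:margin_int} to the symmetric window $|s|\le\gamma_\epsilon$ demanded by \eqref{fd3}, and the appeal to Lemma \ref{l_irred_negli} to justify $\widehat{\mathcal{Q}}^{(p)}_{\mathcal{M}}[\tau_{\mathscr{V}^{(p)}}<\infty]=1$ — but both of these are routine checks and do not alter the structure of the argument.
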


\begin{proof}
The first assertion is immediate from the first assertion of Proposition
\ref{prop:fd2_int} and Lemma \ref{lem:margin_int}. The second assertion
is direct from the first one.
\end{proof}
By \cite[ Corollary 6.2 ]{LLS-1st}, we can extend the previous estimate
in the case of the largest time scale.
\begin{prop}
\label{p_marginDA-int-1}For all $\boldsymbol{x}\in\mathbb{R}^{d}$,
$\boldsymbol{m}\in\mathcal{M}_{\star}$, and a sequence $(\varrho_{\epsilon})_{\epsilon>0}$
such that $\varrho_{\epsilon}\succ\theta_{\epsilon}^{(\mathfrak{q})}$,
\[
\lim_{\epsilon\rightarrow0}\,\mathbb{P}_{\boldsymbol{x}}^{\epsilon}\left[\,\boldsymbol{x}_{\epsilon}(\varrho_{\epsilon})\in\mathcal{E}(\boldsymbol{m})\,\right]\ =\ \frac{\nu(\boldsymbol{m})}{\nu(\mathcal{M}_{\star})}\;.
\]
\end{prop}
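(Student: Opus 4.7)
The plan is to combine Lemma~\ref{l_4141} and Lemma~\ref{lem:margin_int}, both applied at the largest time scale $p = \mathfrak{q}+1$, after first reducing from a general $\bm{x} \in \mathbb{R}^d$ to a starting point in the compact set $\Lambda$. Note that $\mathscr{V}^{(\mathfrak{q}+1)} = \{\mathcal{M}_\star\}$ by Proposition~\ref{prop_global_min}, so $\mathcal{E}^{(\mathfrak{q}+1)} = \mathcal{E}(\mathcal{M}_\star)$, and the conditions $\theta_\epsilon^{(\mathfrak{q})} \prec \cdot \prec \theta_\epsilon^{(\mathfrak{q}+1)} = \infty$ in those two lemmas reduce to $\cdot \succ \theta_\epsilon^{(\mathfrak{q})}$.

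First I would reduce to the case $\bm{x} \in \Lambda$. By the confining hypothesis~\eqref{eq:growth} and \cite[Corollary~6.2]{LLS-1st}, one has $\lim_{\epsilon \to 0} \mathbb{P}_{\bm{x}}^{\epsilon}[\tau_\Lambda > \epsilon^{-1}] = 0$ for every $\bm{x} \in \mathbb{R}^d$. Since $\epsilon^{-1} \prec \theta_\epsilon^{(\mathfrak{q})} \prec \varrho_\epsilon$, the strong Markov property at $\tau_\Lambda$ yields
\[
\mathbb{P}_{\bm{x}}^{\epsilon}[\bm{x}_\epsilon(\varrho_\epsilon) \in \mathcal{E}(\bm{m})]
\;=\; \mathbb{E}_{\bm{x}}^{\epsilon}\Big[\,\mathbb{P}_{\bm{x}_\epsilon(\tau_\Lambda)}^{\epsilon}[\bm{x}_\epsilon(\varrho_\epsilon - \tau_\Lambda) \in \mathcal{E}(\bm{m})]\,\mathbf{1}\{\tau_\Lambda \le \epsilon^{-1}\}\,\Big] + o_\epsilon(1),
\]
and on this event the remaining time $\varrho_\epsilon - \tau_\Lambda$ still satisfies $\varrho_\epsilon - \tau_\Lambda \succ \theta_\epsilon^{(\mathfrak{q})}$. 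Thus it suffices to establish the limit uniformly over $\bm{x} \in \Lambda$.

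Next, fix an auxiliary sequence $(\gamma_\epsilon)_{\epsilon>0}$ with $\theta_\epsilon^{(\mathfrak{q})} \prec \gamma_\epsilon \prec \varrho_\epsilon$, and apply the Markov property at time $\varrho_\epsilon - \gamma_\epsilon$:
\[
\mathbb{P}_{\bm{x}}^{\epsilon}[\bm{x}_\epsilon(\varrho_\epsilon) \in \mathcal{E}(\bm{m})]
\;=\; \mathbb{E}_{\bm{x}}^{\epsilon}\Big[\,\mathbb{P}_{\bm{x}_\epsilon(\varrho_\epsilon-\gamma_\epsilon)}^{\epsilon}[\bm{x}_\epsilon(\gamma_\epsilon) \in \mathcal{E}(\bm{m})]\,\Big].
\]
Since $\varrho_\epsilon - \gamma_\epsilon \succ \theta_\epsilon^{(\mathfrak{q})}$, Lemma~\ref{l_4141} with $p = \mathfrak{q}+1$ gives
$\sup_{\bm{x} \in \Lambda}\mathbb{P}_{\bm{x}}^{\epsilon}[\bm{x}_\epsilon(\varrho_\epsilon - \gamma_\epsilon) \notin \mathcal{E}(\mathcal{M}_\star)] = o_\epsilon(1)$, so only the contribution from $\{\bm{x}_\epsilon(\varrho_\epsilon - \gamma_\epsilon) \in \mathcal{E}(\mathcal{M}_\star)\}$ matters. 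For each $\bm{y} \in \mathcal{E}(\mathcal{M}_\star)$, Lemma~\ref{lem:margin_int} applied with $p = \mathfrak{q}+1$, $\mathcal{M} = \mathcal{M}_\star$, and the sequence $\gamma_\epsilon$ playing the role of $\varrho_\epsilon$ in that statement (and $s = 0$) delivers
\[
\sup_{\bm{y} \in \mathcal{E}(\mathcal{M}_\star)}\bigg|\,\mathbb{P}_{\bm{y}}^{\epsilon}[\bm{x}_\epsilon(\gamma_\epsilon) \in \mathcal{E}(\bm{m})] \;-\; \frac{\nu(\bm{m})}{\nu(\mathcal{M}_\star)}\,\bigg| \;=\; o_\epsilon(1),
\]
and substituting this into the previous display gives the claim. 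The only nontrivial step is the first reduction, whose content is packaged into \cite[Corollary~6.2]{LLS-1st}; the rest is a direct combination of results already established.
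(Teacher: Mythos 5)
Your proof is correct, and it takes a genuinely different decomposition than the paper's. The paper's proof first conditions on the event $\{\tau_{\mathcal{E}(\mathcal{M}_0)}\le\epsilon^{-1}\}$ (by \cite[Corollary 6.2]{LLS-1st}), then applies the strong Markov property at $\tau_{\mathcal{E}(\mathcal{M}_0)}$, reducing the claim to a uniform estimate of $\mathbb{P}_{\boldsymbol{y}}^\epsilon[\boldsymbol{x}_\epsilon(\varrho_\epsilon+s)\in\mathcal{E}(\boldsymbol{m})]$ over $\boldsymbol{y}\in\mathcal{E}(\mathcal{M}_0)$ and $|s|\le\epsilon^{-1}$; it then invokes Lemma~\ref{lem:margin_int}. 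That lemma, however, directly covers only $\boldsymbol{y}\in\mathcal{E}(\mathcal{M}_\star)$, so an additional implicit step (using $\mathfrak{H}^{(\mathfrak{q}+1)}$ to pass from $\mathcal{E}(\mathcal{M}_0)$ to $\mathcal{E}(\mathcal{M}_\star)$, exactly as in the proof of Lemma~\ref{l_4141}) is left to the reader. You instead insert the auxiliary sequence $\gamma_\epsilon$ with $\theta_\epsilon^{(\mathfrak{q})}\prec\gamma_\epsilon\prec\varrho_\epsilon$, apply the Markov property at the deterministic time $\varrho_\epsilon-\gamma_\epsilon$, and use Lemma~\ref{l_4141} (which already bundles $\mathfrak{H}^{(\mathfrak{q}+1)}$ and the equilibration to $\mathcal{E}(\mathcal{M}_\star)$) to discard the event $\{\boldsymbol{x}_\epsilon(\varrho_\epsilon-\gamma_\epsilon)\notin\mathcal{E}(\mathcal{M}_\star)\}$, before invoking Lemma~\ref{lem:margin_int} on $\mathcal{E}(\mathcal{M}_\star)$. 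This is slightly more self-contained, at the small cost of the preliminary reduction from $\mathbb{R}^d$ to $\Lambda$ which you carry out separately; the paper folds that reduction into the single conditioning on $\tau_{\mathcal{E}(\mathcal{M}_0)}$. Both routes rely on the same three ingredients, just ordered differently.
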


\begin{proof}
We fix $\boldsymbol{x}\in\mathbb{R}^{d}$ and $\boldsymbol{m}\in\mathcal{M}^{\star}$.
By \cite[ Corollary 6.2 ]{LLS-1st},
\[
\lim_{\epsilon\rightarrow0}\,\mathbb{P}_{\boldsymbol{x}}^{\epsilon}\left[\,\boldsymbol{x}_{\epsilon}(\varrho_{\epsilon})\in\mathcal{E}(\boldsymbol{m})\,\right]\ =\ \lim_{\epsilon\rightarrow0}\mathbb{P}_{\boldsymbol{x}}^{\epsilon}\left[\,\boldsymbol{x}_{\epsilon}(\varrho_{\epsilon})\in\mathcal{E}(\boldsymbol{m})\,\Big|\,\tau_{\mathcal{E}(\mathcal{M}_{0})}\le\epsilon^{-1}\,\right]\;.
\]
By the strong Markov property,
\begin{align*}
 & \limsup_{\epsilon\rightarrow0}\,\left|\,\mathbb{P}_{\boldsymbol{x}}^{\epsilon}\left[\,\boldsymbol{x}_{\epsilon}(\varrho_{\epsilon})\in\mathcal{E}(\boldsymbol{m}')\,\Big|\,\tau_{\mathcal{E}(\mathcal{M}_{0})}\le\epsilon^{-1}\,\right]-\frac{\nu(\boldsymbol{m})}{\nu(\mathcal{M}_{\star})}\,\right|\\
 & \ \ \ \le\,\limsup_{\epsilon\rightarrow0}\,\sup_{\boldsymbol{y}\in\mathcal{E}(\mathcal{M}_{0})}\,\sup_{|s|\le\epsilon^{-1}}\,\left|\,\mathbb{P}_{\boldsymbol{y}}^{\epsilon}\left[\,\boldsymbol{x}_{\epsilon}(\varrho_{\epsilon}+s)\in\mathcal{E}(\boldsymbol{m}')\,\right]-\frac{\nu(\boldsymbol{m})}{\nu(\mathcal{M}_{\star})}\,\right|\;.
\end{align*}
The right-hand side of the previous inequality converges to $0$ by
Lemma \ref{lem:margin_int}.
\end{proof}

\subsection{Proof of main propositions }
\begin{proof}[Proof of Proposition \ref{p_fdd01}]
Fix $p\in\llbracket1,\,\mathfrak{q}\rrbracket$, $\mathcal{M}\in\mathscr{S}^{(p)}$,
$\boldsymbol{m}\in\mathcal{M}$, and $\boldsymbol{x}\in\mathcal{D}(\boldsymbol{m})$.

Fix $\eta>0$. Since $F$ is continuous, take the constant $r_{0}=r_{0}(\eta)>0$
appearing in the definition of $\mathcal{E}(\boldsymbol{m})$ small
enough so that
\begin{equation}
\max_{\boldsymbol{m}\in\mathcal{M}_{0}}\,\sup_{\boldsymbol{x}\in\mathcal{E}(\boldsymbol{m})}\,\big|\,F(\boldsymbol{x})-F(\boldsymbol{m})\,\big|\ \le\ \eta\ .\label{eq:fdd01-1}
\end{equation}
Denote by $J$ the right-hand side of the expression appearing in
the statement of the proposition. Since $F$ is bounded, by the second
assertion of Proposition \ref{p_marginDA} and the previous estimate,
\begin{align}
 & \limsup_{\epsilon\rightarrow0}\,\left|\,\mathbb{E}_{\boldsymbol{x}}^{\epsilon}\left[\,F(\boldsymbol{x}_{\epsilon}(\theta_{\epsilon}^{(p)}t))\right]-J\,\right|\nonumber \\
 & \quad=\,\limsup_{\epsilon\rightarrow0}\,\bigg|\,\sum_{\mathcal{M}'\in\mathscr{V}^{(p)}}\,\sum_{\boldsymbol{m}'\in\mathcal{M}'}\,\mathbb{E}_{\boldsymbol{x}}^{\epsilon}\left[\,F(\boldsymbol{x}_{\epsilon}(\theta_{\epsilon}^{(p)}t))\,\mathbf{1}\{\boldsymbol{x}_{\epsilon}(\theta_{\epsilon}^{(p)}t)\in\mathcal{E}(\boldsymbol{m}')\}\,\right]-J\,\bigg|\nonumber \\
 & \quad\le\,\eta+\limsup_{\epsilon\rightarrow0}\,\Big|\,\sum_{\mathcal{M}'\in\mathscr{V}^{(p)}}\,\sum_{\boldsymbol{m}'\in\mathcal{M}'}\,F(\boldsymbol{m}')\,\mathbb{P}_{\boldsymbol{x}}^{\epsilon}\left[\,\boldsymbol{x}_{\epsilon}(\theta_{\epsilon}^{(p)}t)\in\mathcal{E}(\boldsymbol{m}')\,\right]-J\,\Big|\;.\label{eq:fdd01-2}
\end{align}
By Proposition \ref{p_marginDA} and the definition of $J$ the second
term vanishes. To complete the proof of the proposition, it remains
to let $\eta\rightarrow0$.
\end{proof}
\begin{proof}[Proof of Proposition \ref{p_fdd02}]
Note that the case $p=1$ is already handled in \cite[Theorem 2.1]{LLS-1st}.
Thus, let us suppose that $p\in\llbracket2,\,\mathfrak{q}+1\rrbracket$.

Denote the right-hand side of \eqref{e:fdd02} by $J$. Fix $\eta>0$
and take $r_{0}=r_{0}(\eta)>0$ as in \eqref{eq:fdd01-1}. The same
computations with presented in \eqref{eq:fdd01-2} yield that
\begin{align*}
 & \limsup_{\epsilon\rightarrow0}\,\big|\,\mathbb{E}_{\boldsymbol{x}}^{\epsilon}\left[\,F(\boldsymbol{x}_{\epsilon}(\varrho_{\epsilon}))\,\right]-J\,\big|\\
 & \le\eta+\limsup_{\epsilon\rightarrow0}\,\bigg|\,\sum_{\mathcal{M}'\in\mathscr{V}^{(p)}}\,\sum_{\boldsymbol{m}'\in\mathcal{M}'}\,F(\boldsymbol{m}')\,\mathbb{P}_{\boldsymbol{x}}^{\epsilon}\left[\,\boldsymbol{x}_{\epsilon}(\varrho_{\epsilon})\in\mathcal{E}(\boldsymbol{m}')\,\right]-J\,\bigg|\;.
\end{align*}
It remains to recall the assertion of Proposition \ref{p_marginDA-int}
and then let $\eta\rightarrow0$ to complete the proof.
\end{proof}
\begin{proof}[Proof of Proposition \ref{p_fdd03}]
The proof is identical to the one of Proposition \ref{p_fdd02}.
It suffices to apply Proposition \ref{p_marginDA-int-1} instead of
Proposition \ref{p_marginDA}.
\end{proof}

\section{Local Reversibility of ${\bf y}^{(p)}$\label{sec13}}

In this section, we fix $p\in\llbracket1,\,\mathfrak{q}\rrbracket$
and the recurrent class $\mathscr{R}$ of the chain $\mathbf{y}^{(p)}(\cdot)$,
and prove Proposition \ref{p_rev_y} for this $p$ and $\mathscr{R}$.

We note first that there is nothing to prove if $\left|\mathscr{R}\right|=1$,
i.e., when $\mathscr{R}$ consists of a single element which is an
absorbing state of the process $\mathbf{y}^{(p)}(\cdot)$. Hence,
we shall assume in this section that $\left|\mathscr{R}\right|\ge2$.
We note that, in this case, $\cup_{\mathcal{M}\in\mathscr{R}}\mathcal{M}$
is an element of $\mathscr{V}^{(p+1)}$ and $\mathscr{R}$ is of the
form $\mathscr{V}^{(p)}(\mathcal{M}')$ for some $\mathcal{M}'\in\mathscr{V}^{(p+1)}$.

We start by extending $\mathscr{R}$ to a recurrent class of the process
$\widehat{\mathbf{y}}^{(p)}(\cdot)$.
\begin{lem}
\label{lem:extR} There exists a recurrent class $\widehat{\mathscr{R}}$
of $\widehat{{\bf y}}^{(p)}(\cdot)$ containing $\mathscr{R}$. Furthermore,
$\mathscr{R}=\widehat{\mathscr{R}}\cap\mathscr{V}^{(p)}$.
\end{lem}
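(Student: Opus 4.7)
The plan rests on the standard correspondence between the recurrent structure of a continuous-time Markov chain on a finite state space and that of its trace on a non-empty subset. Since $\mathbf{y}^{(p)}(\cdot)$ is by construction the trace of $\widehat{\mathbf{y}}^{(p)}(\cdot)$ on $\mathscr{V}^{(p)}$ (cf.\ Appendix \ref{app:trace} and Section \ref{sec4.2}), the recurrent classes of $\mathbf{y}^{(p)}(\cdot)$ should be exactly the non-empty intersections with $\mathscr{V}^{(p)}$ of the recurrent classes of $\widehat{\mathbf{y}}^{(p)}(\cdot)$. I would reduce the lemma to this correspondence.

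To establish existence, I would fix any $\mathcal{M}\in\mathscr{R}$ and show that $\mathcal{M}$ is a recurrent state of $\widehat{\mathbf{y}}^{(p)}(\cdot)$. Starting from $\mathcal{M}$, the trace $\mathbf{y}^{(p)}(\cdot)$ returns to $\mathcal{M}$ infinitely often almost surely, because $\mathcal{M}$ lies in a closed recurrent class of the trace. By definition of the trace process, each such return corresponds to a visit of $\widehat{\mathbf{y}}^{(p)}(\cdot)$ to $\mathcal{M}$, so $\mathcal{M}$ is visited infinitely often by $\widehat{\mathbf{y}}^{(p)}(\cdot)$ and is therefore recurrent for this chain as well. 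Let $\widehat{\mathscr{R}}$ be the closed irreducible class of $\widehat{\mathbf{y}}^{(p)}(\cdot)$ containing $\mathcal{M}$; this will be the desired recurrent class.

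It remains to verify the identity $\mathscr{R}=\widehat{\mathscr{R}}\cap\mathscr{V}^{(p)}$, which I would do by two inclusions. For $\mathscr{R}\subset\widehat{\mathscr{R}}\cap\mathscr{V}^{(p)}$, any $\mathcal{M}'\in\mathscr{R}$ satisfies $\mathcal{M}'\in\mathscr{V}^{(p)}$ by definition of $\mathscr{R}$, and $\mathcal{M}$ communicates with $\mathcal{M}'$ in the trace chain; since every trace trajectory lifts to a trajectory of $\widehat{\mathbf{y}}^{(p)}(\cdot)$ joining the same endpoints, $\mathcal{M}$ and $\mathcal{M}'$ also communicate in $\widehat{\mathbf{y}}^{(p)}(\cdot)$, hence $\mathcal{M}'\in\widehat{\mathscr{R}}$. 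For the reverse inclusion, if $\mathcal{M}'\in\widehat{\mathscr{R}}\cap\mathscr{V}^{(p)}$, then $\mathcal{M}\leftrightarrow\mathcal{M}'$ in $\widehat{\mathbf{y}}^{(p)}(\cdot)$; deleting from any connecting path its excursions through $\mathscr{N}^{(p)}$ yields a sequence of successive visits to $\mathscr{V}^{(p)}$ realized by the trace, which shows that $\mathcal{M}\leftrightarrow\mathcal{M}'$ in $\mathbf{y}^{(p)}(\cdot)$, and hence $\mathcal{M}'\in\mathscr{R}$.

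The one genuinely delicate point is the passage between communication in $\widehat{\mathbf{y}}^{(p)}(\cdot)$ and communication in its trace $\mathbf{y}^{(p)}(\cdot)$, which is where the explicit construction of the trace in Appendix \ref{app:trace} must be invoked. Lemma \ref{l_irred_negli}, excluding recurrent classes of $\widehat{\mathbf{y}}^{(p)}(\cdot)$ contained in $\mathscr{N}^{(p)}$, is not needed for the present statement but guarantees that the correspondence $\widehat{\mathscr{R}}\mapsto\widehat{\mathscr{R}}\cap\mathscr{V}^{(p)}$ is in fact a bijection between recurrent classes of the two chains; this makes $\widehat{\mathscr{R}}$ uniquely determined by $\mathscr{R}$.
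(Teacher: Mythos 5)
Your proof is correct and takes essentially the same approach as the paper: the paper argues via the equality $\widehat{\mathcal{Q}}_{\mathcal{M}}^{(p)}[\tau_{\mathcal{M}'}<\infty]=\mathcal{Q}_{\mathcal{M}}^{(p)}[\tau_{\mathcal{M}'}<\infty]$ of hitting probabilities to $\mathscr{V}^{(p)}$-states under the trace construction, while you phrase the same underlying correspondence in terms of lifting and projecting sample paths. Both reach the two inclusions in the same order, and your closing remark about Lemma~\ref{l_irred_negli} correctly identifies it as not needed for this particular statement.
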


\begin{proof}
Since $\mathbf{y}^{(p)}(\cdot)$ is the trace of $\widehat{\mathbf{y}}^{(p)}(\cdot)$
on $\mathscr{V}^{(p)}$ and $\mathscr{R}$ is a recurrent class of
$\mathbf{y}^{(p)}(\cdot)$, for all $\mathcal{M},\,\mathcal{M}'\in\mathscr{R}$,
\begin{equation}
\mathcal{\widehat{Q}}_{\mathcal{M}}^{(p)}\left[\,\tau_{\mathcal{M}'}<\infty\,\right]\ =\ \mathcal{Q}_{\mathcal{M}}^{(p)}\left[\,\tau_{\mathcal{M}'}<\infty\,\right]\ =\ 1\;.\label{eq:traceQQ}
\end{equation}
Therefore, $\mathscr{R}$ is contained in a recurrent class $\widehat{\mathscr{R}}$
of the process $\widehat{\mathbf{y}}^{(p)}(\cdot)$.

It remains to prove that $\widehat{\mathscr{R}}\cap\mathscr{V}^{(p)}\subset\mathscr{R}$.
Fix $\mathcal{M}''\in\widehat{\mathscr{R}}\cap\mathscr{V}^{(p)}$.
Then, as above, for each $\mathcal{M}\in\mathscr{R}$, \eqref{eq:traceQQ},
\[
\mathcal{Q}_{\mathcal{M}}^{(p)}\left[\,\tau_{\mathcal{M}''}<\infty\,\right]\ =\ \mathcal{\widehat{Q}}_{\mathcal{M}}^{(p)}\left[\,\tau_{\mathcal{M}''}<\infty\,\right]\ =\ 1\;,\quad\mathcal{Q}_{\mathcal{M}''}^{(p)}\left[\,\tau_{\mathcal{M}}<\infty\,\right]\ =\ \mathcal{\widehat{Q}}_{\mathcal{M}''}^{(p)}\left[\,\tau_{\mathcal{M}}<\infty\,\right]\ =\ 1\;.
\]
Therefore $\mathcal{M}$ and $\mathcal{M}''$ belong to the same recurrent
class of $\mathbf{y}^{(p)}(\cdot)$, which proves the lemma.
\end{proof}

\subsection{Further analysis on landscape}

Denote by $\widehat{\mathscr{R}}$ the recurrent class of $\widehat{{\bf y}}^{(p)}(\cdot)$
containing $\mathscr{R}$ obtained in the previous lemma. Since $\cup_{\mathcal{M}\in\mathscr{R}}\mathcal{M}$
is an element of $\mathscr{V}^{(p+1)}$, by $\mathfrak{P}_{1}(p+1)$,
we have $U(\mathcal{M})=U(\mathcal{M}')$ for all $\mathcal{M}$,
$\mathcal{M}'\in\mathscr{R}$. Let
\begin{equation}
{\color{blue}H\,=\,H(\mathscr{R})}\,=\,U(\mathcal{M})+d^{(p)}\,=\,\Theta(\mathcal{M},\,\widetilde{\mathcal{M}})\ ;\ \mathcal{M}\in\mathscr{R}\;.\label{eq:def_HR}
\end{equation}

\begin{lem}
\label{lem:rev2}The following hold.
\begin{enumerate}
\item $\Xi(\mathcal{M})\le d^{(p)}$ for all $\mathcal{M}\in\widehat{\mathscr{R}}$.
Moreover, $\Xi(\mathcal{M})=d^{(p)}$ if and only if $\mathcal{M}\in\mathscr{R}$.
\item $\Theta(\mathcal{M},\,\widetilde{\mathcal{M}})\le H$ for all $\mathcal{M}\in\widehat{\mathscr{R}}$.
In particular, $U(\mathcal{M})<H$.
\item $\Theta(\mathcal{M},\,\mathcal{M}')\le H$ for all $\mathcal{M},\,\mathcal{M}'\in\widehat{\mathscr{R}}$.
In particular, the set $\widehat{\mathscr{R}}$ is contained in a
connected component of the level set $\{U\le H\}$.
\end{enumerate}
\end{lem}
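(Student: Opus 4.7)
For (1), I would combine Lemma \ref{lem:extR}, which gives $\widehat{\mathscr{R}} \cap \mathscr{V}^{(p)} = \mathscr{R}$ and therefore $\widehat{\mathscr{R}} \setminus \mathscr{R} \subset \mathscr{N}^{(p)}$, with Proposition \ref{prop:depth}. The standing hypothesis $|\mathscr{R}| \ge 2$ makes every $\mathcal{M} \in \mathscr{R}$ non-absorbing for $\mathbf{y}^{(p)}(\cdot)$, so Proposition \ref{prop:depth} yields $\Xi(\mathcal{M}) = d^{(p)}$; for $\mathcal{M} \in \widehat{\mathscr{R}} \setminus \mathscr{R} \subset \mathscr{N}^{(p)}$, the same proposition gives $\Xi(\mathcal{M}) < d^{(p)}$.

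For assertions (2) and (3), the plan is to reduce both to the single inclusion $\widehat{\mathscr{R}} \subset \mathcal{K}$, where $\mathcal{K}$ is the connected component of $\{U \le H\}$ obtained by applying Lemma \ref{lem:rec} at level $n = p$ to the union $\mathcal{M}^{\circ} := \bigcup_{\mathcal{M} \in \mathscr{R}} \mathcal{M} \in \mathscr{V}^{(p+1)}$. This application produces $\mathcal{M}^{\circ} \subset \mathcal{K}$ together with the crucial disjointness $\widetilde{\mathcal{M}^{\circ}} \cap \mathcal{K} = \varnothing$. Assertion (3) then follows at once from path-connectedness of $\mathcal{K}$ through $\{U \le H\}$. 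For (2), given $\mathcal{M} \in \widehat{\mathscr{R}}$ with $\mathcal{M} \cap \mathcal{K} \ne \varnothing$, the disjointness $\widetilde{\mathcal{M}^{\circ}} \cap \mathcal{K} = \varnothing$ combined with the simplicity of $\mathcal{M}$ from $\mathfrak{P}_1(p)$ and the partition structure of $\mathscr{S}^{(p)}$ (which gives $\mathcal{M} \cap \mathcal{M}^{\circ} = \varnothing$ whenever $\mathcal{M} \notin \mathscr{R}$) forces $U(\mathcal{M}) > U(\mathcal{M}^{\circ})$ in the latter case, so $\mathcal{M}^{\circ} \subset \widetilde{\mathcal{M}}$ and $\Theta(\mathcal{M}, \widetilde{\mathcal{M}}) \le \Theta(\mathcal{M}, \mathcal{M}^{\circ}) \le H$ by connectedness of $\mathcal{K}$; when $\mathcal{M} \in \mathscr{R}$, $\Theta(\mathcal{M}, \widetilde{\mathcal{M}}) = H$ holds by the definition \eqref{eq:def_HR} of $H$. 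The strict inequality $U(\mathcal{M}) < H$ follows from the general bound $U(\mathcal{M}) < \Theta(\mathcal{M}, \widetilde{\mathcal{M}})$.

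The inclusion $\widehat{\mathscr{R}} \subset \mathcal{K}$ will be proved by induction on the graph distance from $\mathscr{R}$ in the directed graph whose edges are $\mathcal{M}'' \to \mathcal{M}$ with $\widehat{r}^{(p)}(\mathcal{M}'', \mathcal{M}) > 0$. The base case $\mathcal{M} \in \mathscr{R}$ is immediate from Lemma \ref{lem:rec}(2). For the inductive step, suppose $\mathcal{M}'' \in \widehat{\mathscr{R}}$ satisfies $\mathcal{M}'' \subset \mathcal{K}$ and $\widehat{r}^{(p)}(\mathcal{M}'', \mathcal{M}) > 0$. Property $\mathfrak{P}_3(p)$ yields $\mathcal{M}'' \to \mathcal{M}$, so $\Theta(\mathcal{M}'', \mathcal{M}) = \Theta(\mathcal{M}'', \widetilde{\mathcal{M}''})$; the computation of the previous paragraph applied to $\mathcal{M}''$ produces $\Theta(\mathcal{M}'', \widetilde{\mathcal{M}''}) \le H$. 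Consequently, the saddle joining $\mathcal{M}''$ to $\mathcal{M}$ lies at height at most $H$, placing at least one local minimum of $\mathcal{M}$ into $\mathcal{K}$. A second application of that same computation, now to $\mathcal{M}$, yields $\Theta(\mathcal{M}, \widetilde{\mathcal{M}}) \le H$, and the boundness clause of $\mathfrak{P}_1(p)$ then gives $\max_{\boldsymbol{m}, \boldsymbol{m}' \in \mathcal{M}} \Theta(\boldsymbol{m}, \boldsymbol{m}') < H$, spreading the membership in $\mathcal{K}$ to every point of $\mathcal{M}$.

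The main obstacle lies in the inductive step when $\mathcal{M}'' \in \mathscr{N}^{(p)} \cap \widehat{\mathscr{R}}$: Lemma \ref{l_adjacent0} alone provides only the lower bound $U(\mathcal{M}'') \ge U(\mathcal{M}^{\circ})$ for direct neighbours of $\mathscr{R}$, which is insufficient to control $\Theta(\mathcal{M}'', \widetilde{\mathcal{M}''}) = U(\mathcal{M}'') + \Xi(\mathcal{M}'')$ by $H$ since $\Xi(\mathcal{M}'')$ can be arbitrarily close to $d^{(p)}$. The resolution is the disjointness $\widetilde{\mathcal{M}^{\circ}} \cap \mathcal{K} = \varnothing$ from Lemma \ref{lem:rec}(3): combined with the inductive hypothesis $\mathcal{M}'' \subset \mathcal{K}$ and $\mathcal{M}'' \cap \mathcal{M}^{\circ} = \varnothing$ from the partition, it upgrades the bound to the strict inequality $U(\mathcal{M}'') > U(\mathcal{M}^{\circ})$, thereby placing $\mathcal{M}^{\circ}$ inside $\widetilde{\mathcal{M}''}$ and allowing one to pivot through $\mathcal{M}^{\circ}$ to bound the escape height by $H$.
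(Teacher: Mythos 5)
Your proof of part (1) is essentially the same as the paper's, differing only in that you classify elements of $\widehat{\mathscr{R}}$ via Lemma \ref{lem:extR} first and then apply Proposition \ref{prop:depth}, while the paper first derives $\Xi(\mathcal{M})\le d^{(p)}$ directly from $\mathfrak{P}_3(p)$ and the existence of an outgoing jump, then invokes Proposition \ref{prop:depth}; both are correct and equivalent in substance.

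For parts (2) and (3) you take a genuinely different route. The paper's Claim A works entirely inside the combinatorial and algebraic structure: given $\mathcal{M}\in\widehat{\mathscr{R}}$ and $\mathcal{M}''\in\mathscr{R}$, irreducibility and $\mathfrak{P}_3(p)$ produce a chain $\mathcal{M}''\to\mathcal{M}_1\to\cdots\to\mathcal{M}_n\to\mathcal{M}$, and since $\Xi(\mathcal{M}'')=d^{(p)}$ dominates all intermediate depths, Lemma \ref{l_M->M} directly yields $\Theta(\mathcal{M},\widetilde{\mathcal{M}})\le\Theta(\mathcal{M}'',\widetilde{\mathcal{M}''})=H$; part (3) then follows by the ultrametric-type inequality \eqref{e:bdtheta} pivoting through $\mathcal{M}''$. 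Your proof instead establishes the stronger geometric inclusion $\widehat{\mathscr{R}}\subset\mathcal{K}$ by building on Lemma \ref{lem:rec} — in particular the disjointness $\widetilde{\mathcal{M}^\circ}\cap\mathcal{K}=\varnothing$ — and a graph-distance induction from $\mathscr{R}$ outward, then reads off (3) from path-connectedness of $\mathcal{K}$ and (2) from the pivot $\Theta(\mathcal{M},\widetilde{\mathcal{M}})\le\Theta(\mathcal{M},\mathcal{M}^\circ)\le H$. The trade-off is that the paper's argument is shorter and never leaves the chain-level algebra (it defers geometric control of $\mathcal{K}$ to the subsequent Lemma \ref{lem:EL} and Proposition \ref{prop:EL}), whereas your argument front-loads the geometric inclusion $\widehat{\mathscr{R}}\subset\mathcal{K}$, which the paper in effect re-derives later anyway; your route is longer but makes the geometric picture explicit earlier. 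Two small points worth tightening: in the base case you should cite Lemma \ref{lem:rec}(1) (which places $\mathcal{M}^\circ$ inside $\mathcal{K}$), not (2); and your two-stage ``one local minimum, then all of $\mathcal{M}$'' step can be streamlined by observing that the key estimate $\Theta(\mathcal{M}'',\widetilde{\mathcal{M}''})\le H$ already follows from the weaker inductive hypothesis $\mathcal{M}''\cap\mathcal{K}\ne\varnothing$, since simplicity of $\mathcal{M}''$ and the disjointness $\widetilde{\mathcal{M}^\circ}\cap\mathcal{K}=\varnothing$ immediately give $U(\mathcal{M}'')>U(\mathcal{M}^\circ)$ from a single common minimum, after which boundedness from $\mathfrak{P}_1(p)$ and Lemma \ref{l_105a-2} upgrade partial membership to $\mathcal{M}''\subset\mathcal{K}$ in one stroke.
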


\begin{proof}
\noindent To prove the first claim, fix $\mathcal{M}\in\widehat{\mathscr{R}}$.
Since $\widehat{\mathscr{R}}$ is a recurrent class of the process
$\widehat{\mathbf{y}}^{(p)}(\cdot)$, whose jump rate is denoted by
$\widehat{r}^{(p)}(\cdot,\,\cdot)$, there exists $\mathcal{M}'\in\widehat{\mathscr{R}}$
such that $\widehat{r}^{(p)}(\mathcal{M},\,\mathcal{M}')>0$. Hence,
by $\mathfrak{P}_{3}(p)$, $\Xi(\mathcal{M})\le d^{(p)}$. The second
assertion of the claim follows from the first assertion and Proposition
\ref{prop:depth}.

\smallskip{}
\textit{Claim A:} For $\mathcal{M}\in\widehat{\mathscr{R}}$, $\mathcal{M}''\in\mathscr{R}$,
\begin{equation}
\Theta(\mathcal{M},\,\widetilde{\mathcal{M}})\ \le\ H\;\;\;\text{and\;\;\;}\Theta(\mathcal{M},\,\mathcal{M}'')\ \le\ H\;.\label{eq:cla1}
\end{equation}

\noindent To prove Claim A, fix $\mathcal{M}\in\widehat{\mathscr{R}}$,
$\mathcal{M}''\in\mathscr{R}$. By the first assertion of this lemma
and \eqref{eq:def_HR}, $\Theta(\mathcal{M}'',\,\widetilde{\mathcal{M}''})=H$.
By $\mathfrak{P}_{3}(p)$ and the fact $\widehat{\mathscr{R}}$ is
a recurrent class, there exist $\mathcal{M}_{1},\,\dots,\,\mathcal{M}_{n}\in\widehat{\mathscr{R}}$
such that $\mathcal{M}''\to\mathcal{M}_{1}\to\cdots\to\mathcal{M}_{n}\to\mathcal{M}$.
By the first assertion of the lemma, $\Xi(\mathcal{M}'')=d^{(p)}\ge\Xi(\mathcal{M}_{1}),\,\dots,\,\Xi(\mathcal{M}_{n}),\,\Xi(\mathcal{M})$.
Therefore, by Lemma \ref{l_M->M},
\[
\Theta(\mathcal{M}'',\,\widetilde{\mathcal{M}''})\ \ge\ \Theta(\mathcal{M},\,\widetilde{\mathcal{M}})\;.
\]
This proves the first assertion of \eqref{eq:cla1} since $\Theta(\mathcal{M}'',\,\widetilde{\mathcal{M}''})=H$.

We turn to the second one. By \eqref{e:bdtheta},
\[
\Theta(\mathcal{M}'',\,\mathcal{M})\ \le\ \max\,\left\{ \,\Theta(\mathcal{M}'',\,\mathcal{M}_{1}),\,\Theta(\mathcal{M}_{1},\,\mathcal{M}_{2}),\,\dots,\,\Theta(\mathcal{M}_{n},\,\mathcal{M})\,\right\} \;.
\]
By \eqref{elmm}, this expression is equal to
\[
\max\,\left\{ \,\Theta(\mathcal{M}'',\,\widetilde{\mathcal{M}''}),\,\Theta(\mathcal{M}_{1},\,\widetilde{\mathcal{M}_{1}}),\,\dots,\,\Theta(\mathcal{M}_{n},\,\widetilde{\mathcal{M}_{n}})\,\right\} \;.
\]
By the first part of the claim and since $\Theta(\mathcal{M}'',\,\widetilde{\mathcal{M}''})=H$,
this expression is equal to $H$ which completes the proof of the
second assertion of Claim A.

Claim (2) of the lemma follows from Claim A. To complete the proof
of Claim (3), observe that for $\mathcal{M},\,\mathcal{M}'\in\widehat{\mathscr{R}}$,
\[
\Theta(\mathcal{M},\,\mathcal{M}')\ \le\ \max\,\Big\{\,\Theta(\mathcal{M},\,\mathcal{M}''),\,\Theta(\mathcal{M}'',\,\mathcal{M}')\,\Big\}\ .
\]
By Claim A, this expression is less than or equal to $H$.
\end{proof}
We denote by\textcolor{blue}{{} $\mathcal{H}=\mathcal{H}(\mathscr{\mathscr{R}})$}
the connected component of $\{U\le H\}$ obtained in Lemma \ref{lem:rev2}-(3),
and let $\{\mathcal{W}_{1},\,\dots,\,\mathcal{W}_{m}\}$ the level
set decomposition of $\mathcal{H}$
\begin{lem}
\label{lem:EL} Assume that a connected component $\mathcal{W}_{k}$,
$k\in\llbracket1,\,m\rrbracket$, contains an element $\mathcal{M}\in\mathscr{R}$.
Then, $\mathcal{M}=\mathcal{M}^{*}(\mathcal{W}_{k})$. In particular,
each set $\mathcal{W}_{k}$ contains at most one element of $\mathscr{R}$.
\end{lem}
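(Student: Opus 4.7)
The plan is to show the two containments $\mathcal{M}^{*}(\mathcal{W}_{k})\subset\mathcal{M}$ and $\mathcal{M}\subset\mathcal{M}^{*}(\mathcal{W}_{k})$ by exploiting the fact that each $\mathcal{W}_{k}$ is a connected component of the open level set $\{U<H\}$ together with the defining identity $\Theta(\mathcal{M},\widetilde{\mathcal{M}})=H$ from \eqref{eq:def_HR}.

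First I would verify that the hypothesis makes sense in the strong form $\mathcal{M}\subset\mathcal{W}_{k}$. By $\mathfrak{P}_{1}(p)$ the set $\mathcal{M}\in\mathscr{R}\subset\mathscr{V}^{(p)}$ is simple bound, so Lemma \ref{lem:bound_dn} together with $\mathfrak{P}_{2}\llbracket\mathfrak{q}\rrbracket$ yields $\max_{\boldsymbol{m},\boldsymbol{m}'\in\mathcal{M}}\Theta(\boldsymbol{m},\boldsymbol{m}')\le U(\mathcal{M})+d^{(p-1)}<U(\mathcal{M})+d^{(p)}=H$. Hence all elements of $\mathcal{M}$ lie in a common connected component of $\{U<H\}$, so whenever some $\boldsymbol{m}\in\mathcal{M}$ belongs to $\mathcal{W}_{k}$ the whole set $\mathcal{M}$ is contained in $\mathcal{W}_{k}$.

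Next I would establish $\mathcal{M}^{*}(\mathcal{W}_{k})\subset\mathcal{M}$. Suppose, toward contradiction, that there exists $\boldsymbol{m}'\in\mathcal{M}^{*}(\mathcal{W}_{k})\setminus\mathcal{M}$. Since $\mathcal{M}\subset\mathcal{W}_{k}$ we have $U(\boldsymbol{m}')\le U(\mathcal{M})$, so $\boldsymbol{m}'\in\widetilde{\mathcal{M}}$. Lemma \ref{lem_not}-(1) then gives $\Theta(\mathcal{M},\widetilde{\mathcal{M}})\le\Theta(\mathcal{M},\boldsymbol{m}')$, while $\mathcal{M}\cup\{\boldsymbol{m}'\}\subset\mathcal{W}_{k}$ and Lemma \ref{lap01}-(1) yield $\Theta(\mathcal{M},\boldsymbol{m}')<H$. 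This contradicts $\Theta(\mathcal{M},\widetilde{\mathcal{M}})=H$ from \eqref{eq:def_HR}, and therefore no such $\boldsymbol{m}'$ exists.

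To conclude, I would observe that the minimum $h:=\min_{\boldsymbol{x}\in\mathcal{W}_{k}}U(\boldsymbol{x})$ is attained at some local minimum $\boldsymbol{m}_{0}\in\mathcal{W}_{k}$ (as $\mathcal{W}_{k}$ is an open level set), and $\boldsymbol{m}_{0}\in\mathcal{M}^{*}(\mathcal{W}_{k})\subset\mathcal{M}$ by the previous paragraph, so $h=U(\mathcal{M})$. Since every $\boldsymbol{m}\in\mathcal{M}\subset\mathcal{W}_{k}$ satisfies $U(\boldsymbol{m})=U(\mathcal{M})=h$, we get the reverse inclusion $\mathcal{M}\subset\mathcal{M}^{*}(\mathcal{W}_{k})$, hence $\mathcal{M}^{*}(\mathcal{W}_{k})=\mathcal{M}$. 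The ``in particular'' assertion is immediate: if $\mathcal{M},\mathcal{M}'\in\mathscr{R}$ both sit in $\mathcal{W}_{k}$, the main assertion forces $\mathcal{M}=\mathcal{M}^{*}(\mathcal{W}_{k})=\mathcal{M}'$. There is no real obstacle here; the only subtlety is keeping track of which of the landscape lemmata from Section \ref{sec5} and the appendix is being invoked (essentially Lemmata \ref{lem_not}-(1) and \ref{lap01}-(1) together with the key input $\Theta(\mathcal{M},\widetilde{\mathcal{M}})=H$ established in Lemma \ref{lem:rev2}).
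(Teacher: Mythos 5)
Your proof is correct and follows the same core contradiction argument as the paper: assume some $\boldsymbol{m}'\in\mathcal{M}^{*}(\mathcal{W}_{k})\setminus\mathcal{M}$, observe $\boldsymbol{m}'\in\widetilde{\mathcal{M}}$, and derive $H=\Theta(\mathcal{M},\widetilde{\mathcal{M}})\le\Theta(\mathcal{M},\boldsymbol{m}')<H$ via Lemma \ref{lem_not}-(1) and Lemma \ref{lap01}-(1). Your first paragraph (showing $\mathcal{M}\subset\mathcal{W}_{k}$ via Lemma \ref{lem:bound_dn}) and the last one (spelling out the reverse inclusion $\mathcal{M}\subset\mathcal{M}^{*}(\mathcal{W}_{k})$) make explicit steps the paper treats as immediate, but otherwise the argument is identical.
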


\begin{proof}
Suppose that $\mathcal{W}_{k}$ contains a set $\mathcal{M}\in\mathscr{R}$,
and that there exists a minimum of $U$ in $\mathcal{W}_{k}\setminus\mathcal{M}$,
denoted by $\boldsymbol{m}\in\mathcal{M}_{0}$, such that $U(\boldsymbol{m})\le U(\mathcal{M})$.
Since $\mathcal{M}\in\mathscr{R}$ and $\boldsymbol{m}\in\widetilde{\mathcal{M}}$,
by Lemma \ref{lem:rev2} and Lemma \ref{lap01}-(1),
\[
U(\mathcal{M})+d^{(p)}\ =\ U(\mathcal{M})+\Xi(\mathcal{M})\ =\ \Theta(\mathcal{M},\widetilde{\mathcal{M}})\ \le\ \Theta(\mathcal{M},\boldsymbol{m})\ <\ H\;,
\]
in contradiction with \eqref{eq:def_HR}.
\end{proof}
\begin{prop}
\label{prop:EL} Let $\mathcal{M}_{k}=\mathcal{M}^{*}(\mathcal{W}_{k})$,
$k\in\llbracket1,\,m\rrbracket$. Then,
\begin{enumerate}
\item For each $\mathcal{M}\in\mathscr{R}$, there exists $j\in\llbracket1,\,m\rrbracket$
such that $\mathcal{M}=\mathcal{M}_{j}$.
\item $U(\mathcal{M}_{k})\ge H-d^{(p)}$ for all $k\in\llbracket1,\,m\rrbracket$,
\item $\mathcal{M}_{k}\in\mathscr{S}^{(p)}\cap\widehat{\mathscr{R}}$ for
all $k\in\llbracket1,\,m\rrbracket$.
\item $\Theta(\mathcal{M}_{k},\,\widetilde{\mathcal{M}_{k}})=H$ for all
$k\in\llbracket1,\,m\rrbracket$.
\end{enumerate}
\end{prop}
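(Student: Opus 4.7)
The plan is to prove the four assertions in order, with (1) and (2) following quickly from earlier material and (3) being the main work. For assertion (1), fix $\mathcal{M}\in\mathscr{R}$ and note that $\Theta(\mathcal{M},\widetilde{\mathcal{M}})=H$ by the definition of $H$. Because $\mathcal{M}$ is a bound set by $\mathfrak{P}_{1}(p)$, one has $\max_{\bm{m},\bm{m}'\in\mathcal{M}}\Theta(\bm{m},\bm{m}')<H$, so all local minima of $\mathcal{M}$ are joined by paths in $\{U<H\}$ and hence sit inside a single component $\mathcal{W}_{k}$; Lemma~\ref{lem:EL} then yields $\mathcal{M}=\mathcal{M}_{k}$. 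For assertion (2), I argue by contradiction: supposing $U(\mathcal{M}_{k})<H-d^{(p)}$, the set $\mathcal{N}:=\bigcup_{\mathcal{M}\in\mathscr{R}}\mathcal{M}$ belongs to $\mathscr{V}^{(p+1)}$ with $U(\mathcal{N})=H-d^{(p)}$, so the strict inequality forces $\mathcal{M}_{k}\subset\widetilde{\mathcal{N}}$. Since $\mathcal{M}_{k},\mathcal{N}\subset\mathcal{H}$ by Lemma~\ref{lem:rev2}-(3), Lemma~\ref{lap01}-(1) gives $\Theta(\mathcal{N},\widetilde{\mathcal{N}})\le\Theta(\mathcal{N},\mathcal{M}_{k})\le H$, hence $\Xi(\mathcal{N})\le d^{(p)}$, contradicting $\Xi(\mathcal{N})\ge d^{(p+1)}>d^{(p)}$ which follows from $\mathcal{N}\in\mathscr{V}^{(p+1)}$ and $\mathfrak{P}_{2}(p+1)$.

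For assertion (3), the bound just obtained together with Lemma~\ref{lem_wells1} (applicable because $\min_{\mathcal{H}}U=H-d^{(p)}$ via any element of $\mathscr{R}\subset\mathcal{H}$) places each $\mathcal{W}_{k}$ in case (1) or case (2) of that lemma, yielding $\mathcal{M}_{k}\in\mathscr{S}^{(p)}$ and $\Xi(\mathcal{M}_{k})\le d^{(p)}$. To place $\mathcal{M}_{k}$ in $\widehat{\mathscr{R}}$, I will fix $\mathcal{M}\in\mathscr{R}$, rewrite it as $\mathcal{M}_{l}$ via (1), and connect $\mathcal{W}_{k}$ to $\mathcal{W}_{l}$ by a chain $\mathcal{W}_{j_{0}}=\mathcal{W}_{k},\ldots,\mathcal{W}_{j_{r}}=\mathcal{W}_{l}$ with consecutive overlapping closures, whose existence is guaranteed by Lemma~\ref{2-la1} and the connectedness of $\mathcal{H}$. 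At each link, the construction from Claim~A in the proof of Lemma~\ref{lem_wells2} produces an adjacency $\mathcal{M}_{j_{i}}\to\mathcal{M}'_{j_{i+1}}\in\mathscr{S}^{(p)}(\mathcal{W}_{j_{i+1}})$, and either $\mathcal{M}'_{j_{i+1}}=\mathcal{M}_{j_{i+1}}$ or, by Lemma~\ref{lem_escape}-(1) combined with Lemma~\ref{l_irred_negli} (no recurrent class of $\widehat{\mathbf{y}}^{(p)}$ lives entirely in $\mathscr{N}^{(p)}$), $\mathcal{M}'_{j_{i+1}}\Rightarrow^{(p)}\mathcal{M}_{j_{i+1}}$. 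Concatenating yields $\mathcal{M}_{k}\Rightarrow^{(p)}\mathcal{M}_{l}$, and the symmetric chain produces $\mathcal{M}_{l}\Rightarrow^{(p)}\mathcal{M}_{k}$. Since $\widehat{\mathscr{R}}$ is a closed communicating class containing $\mathcal{M}_{l}$, this places $\mathcal{M}_{k}\in\widehat{\mathscr{R}}$.

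Finally, assertion (4) will be immediate once (3) is in hand: Lemma~\ref{lem:rev2}-(2) applied to $\mathcal{M}_{k}\in\widehat{\mathscr{R}}$ delivers $\Theta(\mathcal{M}_{k},\widetilde{\mathcal{M}_{k}})\le H$, while the identity $\mathcal{M}_{k}=\mathcal{M}^{*}(\mathcal{W}_{k})$ forces $\widetilde{\mathcal{M}_{k}}\cap\mathcal{W}_{k}=\varnothing$, so Lemma~\ref{lap01}-(2) supplies the reverse inequality. The main technical obstacle in the plan is the two-sided chaining argument in (3), where I must translate adjacencies of the landscape along the level-set decomposition of $\mathcal{H}$ into accessibilities of the auxiliary chain $\widehat{\mathbf{y}}^{(p)}(\cdot)$ through a possibly long string of intermediate wells; once this chaining is secured, the other assertions reduce to bookkeeping via Lemmata~\ref{lem:rev2}, \ref{lem:EL}, \ref{lem_wells1}, and \ref{lap01}.
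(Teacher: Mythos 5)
Your proposal is broadly correct, and the argument for assertion (2) is genuinely different from — and cleaner than — the paper's. The paper establishes (2) by a chaining argument: it selects $\mathcal{M}_{j}\in\mathscr{R}$, runs a path of overlapping wells from $\mathcal{W}_{j}$ to $\mathcal{W}_{k}$, segments it at wells where $\min U=H-d^{(p)}$, and repeatedly applies Lemma~\ref{lem_wells2} to force an element of $\widehat{\mathscr{R}}$ into $\mathcal{W}_{k}$, which then contradicts $U(\mathcal{M}_{k})<H-d^{(p)}$ via Lemma~\ref{lem:rev2}. Your argument sidesteps the chaining entirely: you set $\mathcal{N}=\bigcup_{\mathcal{M}\in\mathscr{R}}\mathcal{M}\in\mathscr{V}^{(p+1)}$ with $U(\mathcal{N})=H-d^{(p)}$, note $\mathcal{M}_{k}\subset\widetilde{\mathcal{N}}$, bound $\Theta(\mathcal{N},\widetilde{\mathcal{N}})\le H$ by path-connectedness of $\mathcal{H}$, and conclude $\Xi(\mathcal{N})\le d^{(p)}$, contradicting Lemma~\ref{lem_p2pre}. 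This buys a shorter proof at the price of invoking the $(p+1)$-level structure, which the paper's argument (remaining at level $p$) does not need; but since the tree is fully constructed by this point, the dependence is harmless. One small citation slip: $\Theta(\mathcal{N},\mathcal{M}_{k})\le H$ should rest on path-connectedness of the closed level set $\mathcal{H}$ (or Lemma~\ref{2-la1}-(1)) rather than Lemma~\ref{lap01}-(1), which is stated for components of the open set $\{U<H\}$.

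Two smaller points on (3). First, the claim that Lemma~\ref{lem_wells1} yields $\Xi(\mathcal{M}_{k})\le d^{(p)}$ outright is not quite right: case (2) of that lemma only gives $\Xi(\mathcal{M}_{k})\ge d^{(p)}$. The bound $\Xi(\mathcal{M}_{k})\le d^{(p)}$ requires the additional observation that $\widetilde{\mathcal{M}_{k}}$ meets $\mathcal{H}$ (using part (2) and $|\mathscr{R}|\ge2$), so $\Theta(\mathcal{M}_{k},\widetilde{\mathcal{M}_{k}})\le H$ — which is essentially your own proof of (4). Second, the two-sided chaining is overkill: since $\widehat{\mathscr{R}}$ is a \emph{closed} communicating class containing $\mathcal{M}_{l}\in\mathscr{R}$, only the direction $\mathcal{M}_{l}\Rightarrow^{(p)}\mathcal{M}_{k}$ is needed to conclude $\mathcal{M}_{k}\in\widehat{\mathscr{R}}$; the direction $\mathcal{M}_{k}\Rightarrow^{(p)}\mathcal{M}_{l}$ is not, and is the harder one precisely because it would require knowing $\Xi(\mathcal{M}_{k})\le d^{(p)}$ up front. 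Pruning the forward chain and running the induction along the remaining chain (establishing membership in $\widehat{\mathscr{R}}$, hence $\Xi\le d^{(p)}$ via Lemma~\ref{lem:rev2}-(1), step by step) is exactly what the paper's invocation of "the proof of Lemma~\ref{lem_wells2}" implicitly does, and removes the only potential gap in your plan.
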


\begin{proof}
(1) Let $\mathcal{M}$ be an element of $\mathscr{R}$. By Lemmata
\ref{lem:rev2}-(3) and Lemma \ref{2la1-0}, $\mathcal{M}\subset\bigcup_{i=1}^{k}\mathcal{W}_{k}$.
Since each $\mathcal{W}_{k}$ does not separate $(p)$-states by Lemma
\ref{lem_wells1}, $\mathcal{M}\subset\mathcal{W}_{j}$ for some $j\in\llbracket1,\,m\rrbracket$.
Now, by Lemma \ref{lem:EL}, $\mathcal{M}=\mathcal{M}_{j}$, as claimed.
\smallskip{}

\noindent (2) Suppose that $U(\mathcal{M}_{k})<H-d^{(p)}$ for some
$k\in\llbracket1,\,m\rrbracket$. Fix $j\in\llbracket1,\,m\rrbracket$
such that $\mathcal{M}_{j}\in\mathscr{R}$. This set exists in view
of the first assertion of the proposition. By Lemma \ref{2-la1}-(3),
there exist $r(1),\,\dots,\,r(b)$ such that
\[
\overline{\mathcal{W}_{j}}\cap\overline{\mathcal{W}_{r(1)}}\,\ne\,\varnothing\ ,\ \overline{\mathcal{W}_{r(1)}}\cap\overline{\mathcal{W}_{r(2)}}\,\ne\,\varnothing\ ,\ \cdots,\ \overline{\mathcal{W}_{r(b-1)}}\cap\overline{\mathcal{W}_{r(b)}}\,\ne\,\varnothing\ ,\ \overline{\mathcal{W}_{r(b)}}\cap\overline{\mathcal{W}_{k}}\,\ne\,\varnothing\ .
\]
Without loss of generality, suppose that
\[
\min_{\boldsymbol{x}\in\mathcal{W}_{r(i)}}U(\boldsymbol{x})\ \ge\ H-d^{(p)}\;\text{for all }i\in\llbracket1,\,b\rrbracket\;.
\]
Let $1\le i_{1}<\dots<i_{a}\le b$ such that $\min_{\boldsymbol{x}\in\mathcal{W}_{r(i_{n})}}U(\boldsymbol{x})=H-d^{(p)}$.
By Lemma \ref{lem_wells2},
\[
\mathcal{M}_{j}\,\Rightarrow^{(p)}\,\mathcal{M}_{r(i_{1})}\,\Rightarrow^{(p)}\,\cdots\,\Rightarrow^{(p)}\,\mathcal{M}_{r(i_{a})}\,\Rightarrow^{(p)}\,\mathcal{M}'
\]
for some $\mathcal{M}'\in\mathscr{V}^{(p)}(\mathcal{W}_{k})$ so that
the process $\widehat{\mathbf{y}}^{(p)}(\cdot)$ starting from $\mathcal{M}_{j}$
reaches $\mathcal{M}'$ with positive probability. Hence, $\mathcal{M}'\in\widehat{\mathscr{R}}$.
Since $\mathcal{M}'\in\mathscr{V}^{(p)}(\mathcal{W}_{k})$, $\mathcal{M}'\in\mathscr{R}$.
By Lemma \ref{lem:EL}, $\mathcal{M}'=\mathcal{M}_{k}$. By Lemma
\ref{lem:rev2} and the definition of $H$, $U(\mathcal{M}_{k})=H-d^{(p)}$
in contradiction with the initial assumption that $U(\mathcal{M}_{k})<H-d^{(p)}$.
\smallskip{}

\noindent (3) In view of Lemma \ref{lem_wells1}-(1), (2), $\mathcal{M}_{k}\in\mathscr{S}^{(p)}$.
Also, for $\mathcal{M}\in\mathscr{R}$, since $U(\mathcal{M})=H-d^{(p)}$,
by the proof of Lemma \ref{lem_wells2}, $\mathcal{M}\Rightarrow^{(p)}\mathcal{M}_{k}$
so that $\mathcal{M}_{k}\in\widehat{\mathscr{R}}$.\smallskip{}

\noindent (4) By Lemma \ref{lem:rev2},
\[
\Theta(\mathcal{M}_{k},\,\widetilde{\mathcal{M}_{k}})\ \le\ H\ .
\]
Also, since $\mathcal{M}_{k}=\mathcal{M}^{*}(\mathcal{W}_{k})$, $\widetilde{\mathcal{M}_{k}}\cap\mathcal{W}_{k}=\varnothing$
so that by Lemma \ref{lap01}-(2),
\[
\Theta(\mathcal{M}_{k},\,\widetilde{\mathcal{M}_{k}})\ \ge\ H\ ,
\]
which completes the proof.
\end{proof}

\subsection{Proof of Proposition \ref{p_rev_y}}

Let us fix $\mathscr{R}$ and keep the notations as in the current
subsection.

The proof of Proposition \ref{p_rev_y} requires the following lemma.
\begin{lem}
\label{l_SM-WM-1}For all $i,\,j\in\llbracket1,\,m\rrbracket$,
\begin{equation}
\overline{\mathcal{W}_{i}}\cap\overline{\mathcal{W}_{j}}\ =\ \bigcup_{\mathcal{M}'\in\mathscr{S}^{(p)}(\mathcal{W}_{j})}\mathcal{S}(\mathcal{M}_{i},\,\mathcal{M}')\ ,\label{eq:decWW-1}
\end{equation}
where the right-hand side represents a disjoint union.
\end{lem}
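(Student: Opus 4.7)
The plan is to establish the set equality by double inclusion and then to verify that the union on the right-hand side is disjoint. Throughout, I will use the identity $U(\boldsymbol{\sigma}) = H$ for every relevant saddle, which is forced by the fact that $\mathcal{W}_i$ and $\mathcal{W}_j$ are distinct connected components of the open level set $\{U < H\}$ and by Proposition \ref{prop:EL}-(4), which states that $\Theta(\mathcal{M}_k, \widetilde{\mathcal{M}_k}) = H$ for every $k$.

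For the inclusion $\supseteq$, I would fix $\boldsymbol{\sigma} \in \mathcal{S}(\mathcal{M}_i, \mathcal{M}')$ with $\mathcal{M}' \in \mathscr{S}^{(p)}(\mathcal{W}_j)$. By definition of $\mathcal{S}(\mathcal{M}_i, \mathcal{M}')$, $U(\boldsymbol{\sigma}) = H$ and there exist $\boldsymbol{m} \in \mathcal{M}_i \subset \mathcal{W}_i$ and $\boldsymbol{m}' \in \mathcal{M}' \subset \mathcal{W}_j$ with $\boldsymbol{m} \leftsquigarrow \boldsymbol{\sigma} \curvearrowright \boldsymbol{m}'$. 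Along the heteroclinic orbit from $\boldsymbol{\sigma}$ to $\boldsymbol{m}$, $U$ is strictly decreasing, so the orbit (minus the initial point) lies in $\{U<H\}$ and converges to $\boldsymbol{m} \in \mathcal{W}_i$; by connectedness it is entirely contained in $\mathcal{W}_i$, so $\boldsymbol{\sigma} \in \overline{\mathcal{W}_i}$. The same argument for the second orbit yields $\boldsymbol{\sigma} \in \overline{\mathcal{W}_j}$.

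For the inclusion $\subseteq$, I take $\boldsymbol{\sigma} \in \overline{\mathcal{W}_i}\cap\overline{\mathcal{W}_j}$ and invoke Lemma \ref{l_cap_saddle} together with \eqref{hyp2} to conclude that $\boldsymbol{\sigma}$ is a saddle of $U$ at height $H$, and that Lemma \ref{l_assu_saddle}-(1) provides local minima $\boldsymbol{m}_i \in \mathcal{W}_i$, $\boldsymbol{m}_j \in \mathcal{W}_j$ satisfying $\boldsymbol{m}_i \leftsquigarrow \boldsymbol{\sigma} \curvearrowright \boldsymbol{m}_j$. Setting $\mathcal{M}' = \mathcal{M}(p, \boldsymbol{m}_j)$, Lemma \ref{lem_wells1} guarantees that $\mathcal{W}_j$ does not separate $(p)$-states, hence $\mathcal{M}' \subset \mathcal{W}_j$ and $\mathcal{M}' \in \mathscr{S}^{(p)}(\mathcal{W}_j)$. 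Chaining the heteroclinic orbits from $\boldsymbol{\sigma}$ down to $\mathcal{M}_i = \mathcal{M}^*(\mathcal{W}_i)$ via Lemma \ref{l_assu_saddle}-(2) and to $\mathcal{M}'$ similarly gives $\mathcal{M}_i \leftsquigarrow \boldsymbol{\sigma} \curvearrowright \mathcal{M}'$. It remains to verify $U(\boldsymbol{\sigma}) = \Theta(\mathcal{M}_i, \widetilde{\mathcal{M}_i}) = \Theta(\mathcal{M}_i, \mathcal{M}')$: the first equality is Proposition \ref{prop:EL}-(4), while for the second, the inequality $\Theta(\mathcal{M}_i, \mathcal{M}') \ge H$ follows because any continuous path from $\mathcal{M}_i$ to $\mathcal{M}'$ must exit the component $\mathcal{W}_i$ of $\{U<H\}$, and $\Theta(\mathcal{M}_i, \mathcal{M}') \le H$ is immediate from the existence of $\boldsymbol{\sigma}$.

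Finally, for disjointness, suppose $\boldsymbol{\sigma}$ belongs to $\mathcal{S}(\mathcal{M}_i, \mathcal{M}') \cap \mathcal{S}(\mathcal{M}_i, \mathcal{M}'')$ with $\mathcal{M}', \mathcal{M}'' \in \mathscr{S}^{(p)}(\mathcal{W}_j)$. By \eqref{hyp2}, $\boldsymbol{\sigma}$ has exactly two heteroclinic orbits descending to local minima; one descends to a minimum of $\mathcal{M}_i$, so the other must descend to a single minimum $\boldsymbol{m}$, which forces both $\mathcal{M}'$ and $\mathcal{M}''$ to be the unique element of $\mathscr{S}^{(p)}$ containing $\boldsymbol{m}$. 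The main technical hurdle will be the $\subseteq$ inclusion, specifically the verification that $\Theta(\mathcal{M}_i,\mathcal{M}')=H$ and the construction of the chain of saddle-minimum orbits connecting $\boldsymbol{\sigma}$ down to $\mathcal{M}_i$ and $\mathcal{M}'$; everything else reduces to standard landscape arguments from Section \ref{sec4.3} together with the Morse-Smale style assumption \eqref{hyp2}.
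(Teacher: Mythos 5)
Your proof is correct and takes a genuinely more direct route than the paper. The paper proves Lemma \ref{l_SM-WM-1} by first establishing the auxiliary Lemma \ref{l_SM-WM}, which describes $\overline{\mathcal{W}(\mathcal{M})}\cap\overline{\mathcal{V}}$ only for $\mathcal{M}\in\mathscr{V}_{\rm nab}^{(p)}$ (so that $U(\mathcal{M})=\Theta(\mathcal{M},\widetilde{\mathcal{M}})-d^{(p)}$ feeds Lemma \ref{lem_wells1}); it then splits Lemma \ref{l_SM-WM-1} into the case $\mathcal{M}_i\in\mathscr{V}_{\rm nab}^{(p)}$ and the case $\mathcal{M}_i\in\mathscr{N}^{(p)}$, the latter requiring a descent to a smaller scale $p_0<p$ where $\mathcal{M}_i\in\mathscr{V}_{\rm nab}^{(p_0)}$, followed by an argument that $\bigcup_{\mathcal{M}'\in\mathscr{S}^{(p_0)}(\mathcal{W}_j)}\mathcal{S}(\mathcal{M}_i,\mathcal{M}')=\bigcup_{\mathcal{M}''\in\mathscr{S}^{(p)}(\mathcal{W}_j)}\mathcal{S}(\mathcal{M}_i,\mathcal{M}'')$. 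You bypass all of this by using Proposition \ref{prop:EL}-(4), which already gives $\Theta(\mathcal{M}_k,\widetilde{\mathcal{M}_k})=H$ uniformly for every $k\in\llbracket 1,m\rrbracket$, so the same two-sided inclusion argument applies verbatim whether $\mathcal{M}_i$ is non-absorbing or negligible at level $p$; and you feed Lemma \ref{lem_wells1} with $\mathcal{K}=\mathcal{H}$, where the hypothesis $\min_{\mathcal{H}}U\le H-d^{(p)}$ is supplied by the element of $\mathscr{R}$ inside $\mathcal{H}$. This saves the auxiliary lemma and the tree-recursion entirely; the cost is that your proof does not yield the general statement of Lemma \ref{l_SM-WM}, but that statement is not used elsewhere in the paper.

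A few small points worth tightening. First, Lemma \ref{l_assu_saddle}-(1) gives a direct orbit $\boldsymbol{\sigma}\curvearrowright\boldsymbol{m}_i$, not $\boldsymbol{\sigma}\rightsquigarrow\boldsymbol{m}_i$, and the connection from $\boldsymbol{\sigma}$ down to $\mathcal{M}_i$ is most cleanly cited as Lemma \ref{l_assu_saddle}-(3) rather than (2). Second, in the disjointness argument, the step ``one descends to a minimum of $\mathcal{M}_i$'' is not quite right: the first orbit descends to $\boldsymbol{m}_{\boldsymbol{\sigma}}^{+}\in\mathcal{W}_i$, which need not lie in $\mathcal{M}_i$; what you really want is Lemma \ref{l_cap_saddle}, which shows the two local components $\mathcal{A}^{\pm}$ near $\boldsymbol{\sigma}$ lie in $\mathcal{W}_i$ and $\mathcal{W}_j$ respectively, forcing $\boldsymbol{m}_{\boldsymbol{\sigma}}^{\pm}$ into distinct $\mathcal{W}$'s; then $\boldsymbol{m}',\boldsymbol{m}''$ must both equal the one in $\mathcal{W}_j$, and $\mathcal{M}'=\mathcal{M}(p,\boldsymbol{m}')=\mathcal{M}''$. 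Finally, the argument (like the paper's) tacitly assumes $i\ne j$; for $i=j$ both sides are interpreted trivially, since $\mathcal{S}(\mathcal{M}_i,\mathcal{M}')=\varnothing$ whenever $\mathcal{M}'\subset\mathcal{W}_i$.
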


\begin{proof}[Proof of Proposition \ref{p_rev_y}]
Let us regard $\widehat{\mathbf{y}}^{(p)}(\cdot)$ as a Markov chain
on $\mathscr{S}^{(p)}(\mathcal{H})$. In view of Lemma \ref{2la1-0},
since $\mathcal{W}_{i}$, $i\in\llbracket1,\,m\rrbracket$, do not
separate $(p)$-states, we can decompose $\mathscr{S}^{(p)}(\mathcal{H})$
into
\[
\mathscr{S}^{(p)}(\mathcal{H})\ =\ \bigcup_{i=1}^{m}\mathscr{S}^{(p)}(\mathcal{W}_{i})\;.
\]
By Proposition \ref{prop:EL}-(3), each set $\mathscr{S}^{(p)}(\mathcal{W}_{i})$
is non-empty as it contains the set $\mathcal{M}_{i}=\mathcal{M}^{*}(\mathcal{W}_{i})$.
We write $\mathscr{S}_{0}^{(p)}=\{\mathcal{M}_{1},\,\dots,\,\mathcal{M}_{m}\}$.

We now claim that $\mathbf{z}(\cdot):=\widehat{\mathbf{y}}^{(p)}(\cdot)$,
$S:=\mathscr{S}^{(p)}(\mathcal{H})$, $S_{i}:=\mathscr{S}^{(p)}(\mathcal{W}_{i})$
for $i\in\llbracket1,\,m\rrbracket$, $S_{0}:=\mathscr{S}_{0}^{(p)}$,
and $\rho:=\pi_{\mathcal{M}(\mathscr{R})}$ (defined in Proposition
\ref{p_rev_y}) satisfy all the requirements of Proposition \ref{prop:revpre}.
Firstly, \eqref{eq:revm1} is immediate from Lemma \ref{lem_escape}
and $\mathfrak{P}_{3}(p)$. Hence, it suffices to check \eqref{eq:revm2},
i.e.,
\begin{equation}
\nu(\mathcal{M}_{i})\,\widehat{r}^{(p)}(\mathcal{M}_{i},\,\mathscr{S}^{(p)}(\mathcal{W}_{j}))\ =\ \nu(\mathcal{M}_{j})\,\widehat{r}^{(p)}(\mathcal{M}_{j},\,\mathscr{S}^{(p)}(\mathcal{W}_{i}))\;\;\;\text{for all }i,\,j\in\llbracket1,\,m\rrbracket\;.\label{eq:dbal}
\end{equation}
To that end, we further claim that
\begin{equation}
\nu(\mathcal{M}_{i})\,\widehat{r}^{(p)}(\mathcal{M}_{i},\,\mathscr{S}^{(p)}(\mathcal{W}_{j}))\ =\ \sum_{\boldsymbol{\sigma}\in\overline{\mathcal{W}_{i}}\cap\overline{\mathcal{W}_{j}}}\,\omega(\boldsymbol{\sigma})\label{eq:dbal2}
\end{equation}
so that \eqref{eq:dbal} follows immediately from \eqref{eq:dbal2}.
If $\mathcal{M}_{i}\in\mathscr{V}^{(p)}$, by \eqref{eq:rate_3},
\[
\nu(\mathcal{M}_{i})\,\widehat{r}^{(p)}(\mathcal{M}_{i},\,\mathscr{S}^{(p)}(\mathcal{W}_{j}))\ =\ \sum_{\mathcal{M}'\in\mathscr{S}^{(p)}(\mathcal{W}_{j})}\,\sum_{\boldsymbol{\sigma}\in\mathcal{S}(\mathcal{M}_{i},\,\mathcal{M}')}\,\omega(\boldsymbol{\sigma})\;.
\]
Hence, for this case, \eqref{eq:dbal2} follows from Lemma \ref{l_SM-WM-1}.
On the other hand, if $\mathcal{M}_{i}\in\mathscr{N}^{(p)}$, let
us take $p_{0}<p$ such that $\mathcal{M}_{i}\in\mathscr{V}^{(p_{0})}$.
Since we can deduce from \eqref{eq:rate_2} and \eqref{eq:rate_1}
that
\[
\widehat{r}^{(p)}(\mathcal{M}_{i},\,\mathscr{S}^{(p)}(\mathcal{W}_{j}))\ =\ \widehat{r}^{(p-1)}(\mathcal{M}_{i},\,\mathscr{S}^{(p-1)}(\mathcal{W}_{j}))\;,
\]
we repeat this procedure to get
\begin{equation}
\widehat{r}^{(p)}(\mathcal{M}_{i},\,\mathscr{S}^{(p)}(\mathcal{W}_{j}))\ =\ \widehat{r}^{(p_{0})}(\mathcal{M}_{i},\,\mathscr{S}^{(p_{0})}(\mathcal{W}_{j}))\;.\label{eq:dbal3}
\end{equation}
Since $\mathcal{M}_{i}\in\mathscr{V}^{(p_{0})}$, now by \eqref{eq:rate_3},

\begin{equation}
\nu(\mathcal{M}_{i})\,\widehat{r}^{(p_{0})}(\mathcal{M}_{i},\,\mathscr{S}^{(p_{0})}(\mathcal{W}_{j}))\ =\ \sum_{\mathcal{M}'\in\mathscr{S}^{(p_{0})}(\mathcal{W}_{j})}\,\sum_{\boldsymbol{\sigma}\in\mathcal{S}(\mathcal{M}_{i},\,\mathcal{M}')}\,\omega(\boldsymbol{\sigma})\;.\label{eq:dbal4}
\end{equation}
Combining \eqref{eq:dbal3}, \eqref{eq:dbal4}, and Lemma \ref{l_SM-WM-1}
completes the proof of \eqref{eq:dbal2} for this case as well.

Therefore, the trace $\widetilde{\mathbf{y}}^{(p)}(\cdot)$ of $\widehat{\mathbf{y}}^{(p)}(\cdot)$
on $\mathscr{S}_{0}^{(p)}$ is reversible by Proposition of \ref{prop:revpre}.
Since $\mathscr{R}\subset\mathscr{S}_{0}^{(p)}$ by Proposition \ref{prop:EL}-(1),
the process $\mathbf{y}^{(p)}(\cdot)$ (which was defined as the trace
of $\widehat{\mathbf{y}}^{(p)}(\cdot)$ on $\mathscr{R}$) is the
trace of a reversible chain $\widetilde{\mathbf{y}}^{(p)}(\cdot)$
on $\mathscr{R}$ and hence is reversible as well.
\end{proof}

\subsection{Proof of Lemma \ref{l_SM-WM-1}}

Finally, we prove Lemma \ref{l_SM-WM-1} in this subsection. We need
the following lemma.
\begin{lem}
\label{l_SM-WM}Let $\mathcal{M}\in\mathscr{V}_{{\rm nab}}^{(p)}$
and let $\mathcal{V}$ be a connected component of $\{U<\Theta(\mathcal{M},\,\widetilde{\mathcal{M}})\}$
such that
\[
\mathcal{V}\ \ne\ \mathcal{W}(\mathcal{M})\ \ ,\ \ \overline{\mathcal{W}(\mathcal{M})}\cap\overline{\mathcal{V}}\ \ne\ \varnothing\ .
\]
Then,
\begin{equation}
\overline{\mathcal{W}(\mathcal{M})}\cap\overline{\mathcal{V}}\ =\ \bigcup_{\mathcal{M}'\in\mathscr{S}^{(p)}(\mathcal{V})}\mathcal{S}(\mathcal{M},\,\mathcal{M}')\ ,\label{eq:decWW}
\end{equation}
where the right-hand side represents a disjoint union.
\end{lem}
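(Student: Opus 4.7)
The plan is to reduce the statement to the already-established Lemma \ref{l_SM}, keeping track of which specific well $\mathcal{V}$ absorbs the associated $(p)$-state. The disjointness of the right-hand side of \eqref{eq:decWW} is inherited directly from the disjointness in Lemma \ref{l_SM} (the union there merely enlarges the present one by dropping the constraint $\mathcal{M}' \subset \mathcal{V}$), so I focus on the set equality.

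For the forward inclusion, fix $\boldsymbol{\sigma} \in \overline{\mathcal{W}(\mathcal{M})} \cap \overline{\mathcal{V}}$, and write $H = \Theta(\mathcal{M}, \widetilde{\mathcal{M}}) = U(\mathcal{M}) + d^{(p)}$, using $\mathcal{M} \in \mathscr{V}_{{\rm nab}}^{(p)}$. Since $\mathcal{W}(\mathcal{M})$ and $\mathcal{V}$ are distinct connected components of $\{U < H\}$ whose closures meet at $\boldsymbol{\sigma}$, Lemma \ref{l_cap_saddle} (invoked in the proof of Proposition \ref{prop_init}) yields that $\boldsymbol{\sigma}$ is a saddle point at height $H$. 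Moreover every ball around $\boldsymbol{\sigma}$ meets $\mathcal{V} \subset \{U<H\} \setminus \mathcal{W}(\mathcal{M})$, verifying the defining condition \eqref{eq:SM}, so $\boldsymbol{\sigma} \in \mathcal{S}(\mathcal{M})$. Lemma \ref{l_SM} then supplies a unique $\mathcal{M}' \in \mathscr{S}^{(p)}$ with $\boldsymbol{\sigma} \in \mathcal{S}(\mathcal{M}, \mathcal{M}')$, and the proof of that lemma identifies $\mathcal{M}' = \mathcal{M}(p, \boldsymbol{m}_\sigma^-)$ for any local minimum $\boldsymbol{m}_\sigma^-$ reached from $\boldsymbol{\sigma}$ by a heteroclinic orbit and lying outside $\mathcal{W}(\mathcal{M})$. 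Using Lemma \ref{l_assu_saddle} and the two-branch structure of $\{U<H\}$ near $\boldsymbol{\sigma}$, we may select $\boldsymbol{m}_\sigma^- \in \mathcal{V}$.

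The main technical point is then to show $\mathcal{M}' \subset \mathcal{V}$. Assuming $|\mathcal{M}'| \geq 2$ (otherwise $\mathcal{M}' = \{\boldsymbol{m}_\sigma^-\} \subset \mathcal{V}$ trivially), I would prove
\[
\max_{\boldsymbol{m}, \boldsymbol{m}' \in \mathcal{M}'} \Theta(\boldsymbol{m}, \boldsymbol{m}')\ <\ H\,,
\]
which forces every pair in $\mathcal{M}'$ into the common connected component of $\{U < H\}$ containing $\boldsymbol{m}_\sigma^-$, namely $\mathcal{V}$. A dichotomy on $U(\mathcal{M}')$ does the job. If $U(\mathcal{M}') \geq U(\mathcal{M})$, then $\mathcal{M} \subset \widetilde{\mathcal{M}'}$ (using $\mathcal{M} \cap \mathcal{M}' = \varnothing$ from $\mathscr{S}^{(p)}$ being a partition), so Lemma \ref{lem_not}-(1) together with the identity $\Theta(\mathcal{M}, \mathcal{M}') = H$ proved inside Lemma \ref{l_SM} yields $\Theta(\mathcal{M}', \widetilde{\mathcal{M}'}) \leq H$; boundness of $\mathcal{M}'$ (property $\mathfrak{P}_1(p)$) sharpens this to the strict bound. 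If instead $U(\mathcal{M}') < U(\mathcal{M}) = H - d^{(p)}$, Lemma \ref{lem:bound_dn} applied at the level $k \leq p$ for which $\mathcal{M}' \in \mathscr{V}^{(k)}$ gives
\[
\max_{\boldsymbol{m}, \boldsymbol{m}' \in \mathcal{M}'}\Theta(\boldsymbol{m}, \boldsymbol{m}')\ \leq\ U(\mathcal{M}') + d^{(k-1)}\ \leq\ U(\mathcal{M}') + d^{(p-1)}\ <\ (H - d^{(p)}) + d^{(p-1)}\ <\ H\,,
\]
where the last step uses the strict monotonicity $d^{(p-1)} < d^{(p)}$ from $\mathfrak{P}_2(p)$.

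For the reverse inclusion, fix $\mathcal{M}' \in \mathscr{S}^{(p)}(\mathcal{V})$ and $\boldsymbol{\sigma} \in \mathcal{S}(\mathcal{M}, \mathcal{M}')$. Lemma \ref{l_SM} gives $\boldsymbol{\sigma} \in \mathcal{S}(\mathcal{M}) \subset \partial \mathcal{W}(\mathcal{M})$, so $\boldsymbol{\sigma} \in \overline{\mathcal{W}(\mathcal{M})}$. By the definition of $\mathcal{S}(\mathcal{M}, \mathcal{M}')$ there exists $\boldsymbol{m} \in \mathcal{M}' \subset \mathcal{V}$ with a heteroclinic orbit $\phi \colon \mathbb{R} \to \mathbb{R}^d$ from $\boldsymbol{\sigma}$ to $\boldsymbol{m}$. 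Since $U \circ \phi$ is strictly decreasing, $\phi(\mathbb{R}) \subset \{U < H\}$; being connected and containing $\boldsymbol{m} \in \mathcal{V}$, the image lies entirely in $\mathcal{V}$, and $\boldsymbol{\sigma} \in \overline{\phi(\mathbb{R})} \subset \overline{\mathcal{V}}$. The main obstacle is the forward inclusion $\mathcal{M}' \subset \mathcal{V}$: it is the only step that is not a direct refinement of Lemma \ref{l_SM}, and it crucially exploits both the sharp interior bound of Lemma \ref{lem:bound_dn} and the strict gap $d^{(p-1)} < d^{(p)}$, both of which are available via the induction hypothesis $\mathfrak{P}\llbracket p \rrbracket$.
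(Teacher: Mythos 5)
Your proposal is correct, but it takes a genuinely different route to the one pivotal subclaim. You reduce both inclusions to Lemma~\ref{l_SM} and then have to supply, by hand, the fact that the $(p)$-state $\mathcal{M}'=\mathcal{M}(p,\boldsymbol{m}_{\boldsymbol{\sigma}}^-)$ attached to the saddle lies \emph{entirely} inside the specific well $\mathcal{V}$. You do this via a dichotomy on $U(\mathcal{M}')$ versus $U(\mathcal{M})$, combining $\mathfrak{P}_1(p)$ with the interior bound of Lemma~\ref{lem:bound_dn} and the strict gap $d^{(p-1)}<d^{(p)}$ from $\mathfrak{P}_2(p)$ to force $\max_{\boldsymbol{m},\boldsymbol{m}'\in\mathcal{M}'}\Theta(\boldsymbol{m},\boldsymbol{m}')<H$, and hence $\mathcal{M}'\subset\mathcal{V}$. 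The paper instead gets this containment for free: at the start of its proof it observes that $\mathcal{V}$ is a well in the level-set decomposition of the connected component of $\{U\le H\}$ containing $\mathcal{M}$, and since $U(\mathcal{M})=H-d^{(p)}$, Lemma~\ref{lem_wells1} says $\mathcal{V}$ does not separate $(p)$-states; after that, $\boldsymbol{m}_3\in\mathcal{V}$ immediately implies $\mathcal{M}(p,\boldsymbol{m}_3)\subset\mathcal{V}$. Your argument is therefore essentially a re-derivation, specialized to this situation, of the non-separation statement already available from Lemma~\ref{lem_wells1}. The paper also proceeds through $\partial\mathcal{W}(\mathcal{M})\cap\partial\mathcal{V}$ (using Lemma~\ref{l_squig_saddle} and Lemma~\ref{l_path_saddle}-(3) in one direction and Lemmata~\ref{l_assu_saddle}, \ref{lem_not}-(3), \ref{l_level_boundary} in the other, then converts to closures via Lemma~\ref{l_cap_saddle}), whereas you work directly with $\overline{\mathcal{W}(\mathcal{M})}\cap\overline{\mathcal{V}}$ and the heteroclinic-orbit image for the $\overline{\mathcal{V}}$ part; both are fine. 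In short: correct, but longer than necessary because it does not recognize that Lemma~\ref{lem_wells1} already encapsulates the needed containment.
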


\begin{proof}
Since the disjointness of the union at the right-hand side of \eqref{eq:decWW}
is immediate as a saddle cannot be connected with three different
minima, we focus only on the proof of \eqref{eq:decWW}.

Fix $\mathcal{M}\in\mathscr{V}_{{\rm nab}}^{(p)}$ and let $\mathcal{V}$
be a connected component of $\{U<\Theta(\mathcal{M},\,\widetilde{\mathcal{M}})\}$
such that
\[
\mathcal{V}\ \ne\ \mathcal{W}(\mathcal{M})\ \ ,\ \ \overline{\mathcal{W}(\mathcal{M})}\cap\overline{\mathcal{V}}\ \ne\ \varnothing\ .
\]
Then, $\mathcal{V}$ is an element of level set decomposition of $\{U\le\Theta(\mathcal{M},\,\widetilde{\mathcal{M}})\}$
containing $\mathcal{M}$. Since $U(\mathcal{M})=\Theta(\mathcal{M},\,\widetilde{\mathcal{M}})-d^{(p)}$,
by Lemma \ref{lem_wells1}, $\mathcal{V}$ does not separate $(p)$-states.

Let $\boldsymbol{\sigma}\in\mathcal{S}(\mathcal{M},\,\mathcal{M}')$
for some $\mathcal{M}'\in\mathscr{S}^{(p)}(\mathcal{V})$ so that
\begin{equation}
\mathcal{M}\leftsquigarrow\boldsymbol{\sigma}\curvearrowright\mathcal{M}'\;\;\;\text{and}\;\;\;U(\boldsymbol{\sigma})\ =\ \Theta(\mathcal{M},\,\mathcal{M}')\ =\ \Theta(\mathcal{M},\,\widetilde{\mathcal{M}})\ .\label{eMMsigma}
\end{equation}
Let $\boldsymbol{m}_{1}\in\mathcal{M}$ be such that $\boldsymbol{\sigma}\rightsquigarrow\boldsymbol{m}_{1}$.
Since $\mathcal{W}(\mathcal{M})$ is a connected component of $\{U<U(\boldsymbol{\sigma})\}$
containing $\boldsymbol{m}_{1}$ and $U(\boldsymbol{\sigma})=\Theta(\mathcal{M},\,\widetilde{\mathcal{M}})$,
by Lemma \ref{l_squig_saddle}, $\boldsymbol{\sigma}\in\partial\mathcal{W}(\mathcal{M})$.
Let $\boldsymbol{m}_{2}\in\mathcal{M}'$ such that $\boldsymbol{\sigma}\curvearrowright\boldsymbol{m}_{2}$
(cf. \eqref{eMMsigma}). Since $\mathcal{V}$ is the connected component
of $\{U<U(\boldsymbol{\sigma})\}$ (cf. \eqref{eMMsigma}) containing
$\boldsymbol{m}_{2}$, by Lemma \ref{l_path_saddle}-(3), $\boldsymbol{\sigma}\in\partial\mathcal{V}$.
Summing up, we get
\[
\boldsymbol{\sigma}\in\partial\mathcal{W}(\mathcal{M})\cap\partial\mathcal{V}\ ,
\]
which implies
\begin{equation}
\bigcup_{\mathcal{M}'\in\mathscr{S}^{(p)}(\mathcal{V})}\mathcal{S}(\mathcal{M},\,\mathcal{M}')\ \subset\ \partial\mathcal{W}(\mathcal{M})\cap\partial\mathcal{V}\ .\label{eqMM2}
\end{equation}

To prove the reversed inclusion, let us take $\boldsymbol{\sigma}\in\partial\mathcal{W}(\mathcal{M})\cap\partial\mathcal{V}$
so that by Lemma \ref{l_assu_saddle}, we can find $\bm{m}_{3}\in\mathcal{V}$
such that $\boldsymbol{\sigma}\curvearrowright\bm{m}_{3}$, and that
$\boldsymbol{\sigma}\rightsquigarrow\mathcal{M}$. Finally, let $\bm{m}_{3}\in\mathcal{V}$
belong to $\mathcal{M}'\in\mathscr{S}^{(p)}(\mathcal{V})$ so that
\begin{equation}
\mathcal{M}\leftsquigarrow\boldsymbol{\sigma}\curvearrowright\mathcal{M}'\ .\label{eqMM1}
\end{equation}
Therefore, by Lemma \ref{lem_not}-(3),
\[
\Theta(\mathcal{M},\,\mathcal{M}')\ \le\ U(\boldsymbol{\sigma})\ .
\]
On the other hand, since $\mathcal{M}'\cap\mathcal{W}(\mathcal{M})=\varnothing$,
\[
\Theta(\mathcal{M},\,\mathcal{M}')\ \ge\ \Theta(\mathcal{M},\,\widetilde{\mathcal{M}})
\]
by Lemma \ref{l_level_boundary}. By Lemma \ref{l_level_boundary},
we have $U(\bm{\sigma})=\Theta(\mathcal{M},\,\widetilde{\mathcal{M}})$
so that we get
\[
\Theta(\mathcal{M},\,\mathcal{M}')\ =\ \Theta(\mathcal{M},\,\widetilde{\mathcal{M}})\ =\ U(\bm{\sigma})\ .
\]
Combining this with \eqref{eqMM1}, we get $\mathcal{M}\to_{\boldsymbol{\sigma}}\mathcal{M}'$.
This proves that $\boldsymbol{\sigma}\in\mathcal{S}(\mathcal{M},\,\mathcal{M}')$
for some $\mathcal{M}'\in\mathscr{S}^{(p)}(\mathcal{V})$, and therefore
the reversed inclusion of \eqref{eqMM2} has been established. Finally,
by Lemma \ref{l_cap_saddle}, we have $\partial\mathcal{W}(\mathcal{M})\cap\partial\mathcal{V}=\overline{\mathcal{W}(\mathcal{M})}\cap\overline{\mathcal{V}}$
which completes the proof.
\end{proof}
Now, we are ready to prove Lemma \ref{l_SM-WM-1}.
\begin{proof}[Proof of Lemma \ref{l_SM-WM-1}]
Fix $k\in\llbracket1,\,m\rrbracket$. By Proposition \ref{prop:EL}-(2),
$\Xi(\mathcal{M}_{k})=\Theta(\mathcal{M}_{k},\,\widetilde{\mathcal{M}_{k}})-U(\mathcal{M}_{k})\le d^{(p)}$
, so that by Proposition \ref{prop:depth}, $\mathcal{M}_{k}\in\mathscr{V}_{{\rm nab}}^{(p)}\cup\mathscr{N}^{(p)}$.
Also, since $\mathcal{W}_{k}$ is a connected component of level set
$\{U<H\}=\{U<\Theta(\mathcal{M}_{k},\,\widetilde{\mathcal{M}_{k}})\}$
by Proposition \ref{prop:EL}-(4) containing $\mathcal{M}_{k}$, $\mathcal{W}_{k}=\mathcal{W}(\mathcal{M}_{k})$.
Therefore, for all $j\in\llbracket1,\,m\rrbracket$, we have
\[
\overline{\mathcal{W}(\mathcal{M}_{k})}\cap\overline{\mathcal{W}_{j}}\ =\ \overline{\mathcal{W}_{k}}\cap\overline{\mathcal{W}_{j}}\ .
\]
First, suppose that $\mathcal{M}_{k}\in\mathscr{V}_{{\rm nab}}^{(p)}$.
By Lemma \ref{l_SM-WM},
\[
\bigcup_{\mathcal{M}'\in\mathscr{S}^{(p)}(\mathcal{W}_{j})}\mathcal{S}(\mathcal{M}_{k},\,\mathcal{M}')\ =\ \overline{\mathcal{W}(\mathcal{M}_{k})}\cap\overline{\mathcal{W}_{j}}\ .
\]
Next, suppose that $\mathcal{M}_{k}\in\mathscr{N}^{(p)}$. Then, there
is $p_{0}\in\llbracket1,\,p-1\rrbracket$ such that $\mathcal{M}_{k}\in\mathscr{V}_{{\rm nab}}^{(p_{0})}\cap\mathscr{N}^{(p_{0}+1)}$.
Then, by Lemma \ref{l_SM-WM},
\[
\bigcup_{\mathcal{M}'\in\mathscr{S}^{(p_{0})}(\mathcal{W}_{j})}\mathcal{S}(\mathcal{M}_{k},\,\mathcal{M}')\ =\ \overline{\mathcal{W}(\mathcal{M}_{i})}\cap\overline{\mathcal{W}_{j}}\ .
\]
Therefore, it suffices to prove
\[
\bigcup_{\mathcal{M}'\in\mathscr{S}^{(p_{0})}(\mathcal{W}_{j})}\mathcal{S}(\mathcal{M}_{k},\,\mathcal{M}')\ =\ \bigcup_{\mathcal{M}''\in\mathscr{S}^{(p)}(\mathcal{W}_{j})}\mathcal{S}(\mathcal{M}_{k},\,\mathcal{M}'')
\]

Let $\bm{\sigma}\in\mathcal{S}(\mathcal{M}_{k},\,\mathcal{M}')$ for
some $\mathcal{M}'\in\mathscr{S}^{(p_{0})}(\mathcal{W}_{j})$. By
Lemma \ref{lem_not}-(4), $\mathcal{M}_{k}\to_{\bm{\sigma}}\bm{m}'$
for some $\bm{m}'\in\mathcal{M}'$. By the construction, there exists
$\mathcal{M}''\in\mathscr{S}^{(p)}$ such that $\mathcal{M}'\subset\mathcal{M}''$
so that $\bm{m}'\in\mathcal{M}''$ and
\[
\mathcal{M}\leftsquigarrow\boldsymbol{\sigma}\curvearrowright\mathcal{M}''\ .
\]
Since $\mathcal{M}_{k}\to\mathcal{M}'$ and $\mathcal{M}'\subset\mathcal{M}''$,
$\Theta(\mathcal{M}_{k},\,\mathcal{M}'')\le\Theta(\mathcal{M}_{k},\,\mathcal{M}')=\Theta(\mathcal{M}_{k},\,\widetilde{\mathcal{M}_{k}})$.
Since $\mathcal{W}_{j}$ does not separate $(p)$-states, $\mathcal{M}''\in\mathscr{S}^{(p)}(\mathcal{W}_{j})$
so that by Lemma \ref{lap01}-(2), $\Theta(\mathcal{M}_{k},\,\widetilde{\mathcal{M}_{k}})\le\Theta(\mathcal{M}_{k},\,\mathcal{M}'')$.
Therefore, $\Theta(\mathcal{M}_{k},\,\widetilde{\mathcal{M}_{k}})=\Theta(\mathcal{M}_{k},\,\mathcal{M}'')$
and $\mathcal{M}\to_{\bm{\sigma}}\mathcal{M}''$ so that $\bm{\sigma}\in\mathcal{S}(\mathcal{M},\,\mathcal{M}'')$
for some $\mathcal{M}''\in\mathscr{S}^{(p)}(\mathcal{W}_{j})$.

Let $\bm{\sigma}\in\mathcal{S}(\mathcal{M}_{k},\,\mathcal{M}'')$
for some $\mathcal{M}''\in\mathscr{S}^{(p)}(\mathcal{W}_{j})$. Then,
by the same argument of the above paragraph, we have $\mathcal{M}\to_{\bm{\sigma}}\mathcal{M}'$
for some $\mathcal{M}'\in\mathscr{S}^{(p_{0})}(\mathcal{W}_{j})$.
In particular, $\mathcal{M}'\subset\mathcal{M}''$.
\end{proof}

\appendix

\section{\label{app:alter} Alternative tree construction}

In this section, we present an alternative construction
of the tree introduced in Section \ref{sec4}. The construction is
carried out, in the tree language, from the leaves to the root. To
facilitate reading, we present some notation introduced in the article
and we postpone to the a subsection all proofs.

The tree constructed below satisfies the conditions
enumerated in Definition \ref{def:tree}, and (a)-(d) in the next
page.

\subsection{The first time-scale}

Recall the relation $\boldsymbol{x}\curvearrowright\boldsymbol{y}$
introduced below \eqref{eq:x(t)}. Denote by $\mathscr{P}(\boldsymbol{m})$,
$\boldsymbol{m}\in\mathcal{M}_{0}$, the set of passes for $\boldsymbol{m}$:
\begin{gather*}
{\color{blue}\mathscr{P}(\boldsymbol{m})}\;:=\;\big\{\,\boldsymbol{\sigma}\in\mathcal{S}_{0}:\exists\,\boldsymbol{m}'\neq\boldsymbol{m}\;\;\text{such that}\;\;\boldsymbol{\sigma}\curvearrowright\boldsymbol{m}\;,\;\;\boldsymbol{\sigma}\curvearrowright\boldsymbol{m}'\,\big\}\;.
\end{gather*}
By Lemma \ref{l_ms_P}, the set $\mathscr{P}(\boldsymbol{m})$ is
not empty. Denote by $\Pi^{(1)}(\boldsymbol{m})$ the depth (or profundity)
of a local minimum $\boldsymbol{m}$:
\[
{\color{blue}\Pi^{(1)}(\boldsymbol{m})}\,:=\,\min_{\boldsymbol{\sigma}\in\mathscr{P}(\boldsymbol{m})}U(\boldsymbol{\sigma})\,-\,U(\boldsymbol{m})\;,
\]
by $\mathfrak{d}^{(1)}$ the depth of the shallowest minimum, and
by $\vartheta_{\epsilon}^{(1)}$ the corresponding time-scale:
\[
{\color{blue}\mathfrak{d}^{(1)}}\;:=\;\min_{\boldsymbol{m}\in\mathcal{M}_{0}}\Pi^{(1)}(\boldsymbol{m})\;,\quad{\color{blue}\vartheta_{\epsilon}^{(1)}}\;:=\;e^{\mathfrak{d}^{(1)}/\epsilon}\;.
\]

Denote by $\mathscr{G}(\boldsymbol{m})$ the set
of gates (passes of lowest height) of $\boldsymbol{m}\in\mathcal{M}_{0}$:
\begin{gather*}
{\color{blue}\mathscr{G}(\boldsymbol{m})}\;:=\;\big\{\,\boldsymbol{\sigma}\in\mathscr{P}(\boldsymbol{m}):U(\boldsymbol{\sigma})=U(\boldsymbol{m})+\Pi^{(1)}(\boldsymbol{m})\,\big\}\;,
\end{gather*}
and by $\mathscr{V}(\boldsymbol{m})$ the set of local minima $\boldsymbol{m}'\in\mathcal{M}_{0}$,
$\boldsymbol{m}'\neq\boldsymbol{m}$, for which there exist a critical
point $\boldsymbol{\sigma}\in\mathscr{G}(\boldsymbol{m})$ and heteroclinic
orbits from $\boldsymbol{\sigma}$ to $\boldsymbol{m}$ and $\boldsymbol{\sigma}$
to $\boldsymbol{m}'$:
\begin{gather*}
{\color{blue}\mathscr{V}(\boldsymbol{m})}\;:=\;\big\{\,\boldsymbol{m}'\in\mathcal{M}_{0}\setminus\{\boldsymbol{m}\}:\exists\,\boldsymbol{\sigma}\in\mathscr{G}(\boldsymbol{m})\;\;\text{such that}\;\;\boldsymbol{\sigma}\curvearrowright\boldsymbol{m}\;,\;\;\boldsymbol{\sigma}\curvearrowright\boldsymbol{m}'\,\big\}\;.
\end{gather*}
Denote by $\mathcal{W}(\boldsymbol{m},\boldsymbol{m}')$, $\boldsymbol{m}'\neq\boldsymbol{m}$,
the set of saddle points which separate $\boldsymbol{m}$ from $\boldsymbol{m}'$:
\[
{\color{blue}\mathcal{W}(\boldsymbol{m},\boldsymbol{m}')}\;:=\;\big\{\,\boldsymbol{\sigma}\in\mathscr{G}(\boldsymbol{m}):\boldsymbol{\sigma}\curvearrowright\boldsymbol{m}\;,\;\;\boldsymbol{\sigma}\curvearrowright\boldsymbol{m}'\,\big\}\;.
\]
Mind that this set is empty if $\boldsymbol{m}'\not\in\mathscr{V}(\boldsymbol{m})$.

Recall from \eqref{eq:omega} the definition of
the weight $\omega(\boldsymbol{\sigma})$ of a saddle point $\boldsymbol{\sigma}$,
and let $\widehat{\omega}(\boldsymbol{m},\boldsymbol{m}')$, $\boldsymbol{m}\neq\boldsymbol{m}'\in\mathcal{M}_{0}$,
be the one given by
\begin{equation}
{\color{blue}\widehat{\omega}(\boldsymbol{m},\boldsymbol{m}')}\;:=\;\sum_{\boldsymbol{\sigma}\in\mathcal{W}(\boldsymbol{m},\boldsymbol{m}')}\omega(\boldsymbol{\sigma})\;.\label{eq:vartheta}
\end{equation}
Note that neither $\mathcal{W}(\,\cdot\,,\,\cdot\,)$ nor $\widehat{\omega}(\,\cdot\,,\,\cdot\,)$
are symmetric in their arguments. To include the depth of the local
minimum $\boldsymbol{m}$ in the definition of the weight $\widehat{\omega}(\boldsymbol{m},\boldsymbol{m}')$,
set
\begin{equation}
{\color{blue}\widehat{\omega}_{1}(\boldsymbol{m},\boldsymbol{m}')}\;:=\;\widehat{\omega}(\boldsymbol{m},\boldsymbol{m}')\,\boldsymbol{1}\{\Pi^{(1)}(\boldsymbol{m})=\mathfrak{d}^{(1)}\,\}\;,\quad R^{(1)}(\boldsymbol{m},\boldsymbol{m}')\,:=\,\frac{1}{\nu(\boldsymbol{m})}\,\widehat{\omega}_{1}(\boldsymbol{m},\boldsymbol{m}')\;,\label{40a}
\end{equation}
where $\nu(\boldsymbol{m})$ is the weight defined in \eqref{eq:nu}.

By Lemma \ref{l_Gamma-Pi-1}, $\mathfrak{d}^{(1)}=d^{(1)}$,
and for all $\boldsymbol{m}\in\mathcal{M}_{0}$, $\Pi^{(1)}(\boldsymbol{m})=\mathfrak{d}^{(1)}$
if, and only if, $\Xi(\boldsymbol{m})=d^{(1)}$. Moreover, by Lemma
\ref{l_SW}, for all $\mathcal{W}(\boldsymbol{m},\boldsymbol{m}')=\mathcal{S}(\boldsymbol{m},\boldsymbol{m}')$
for all $\boldsymbol{m}'\ne\boldsymbol{m}\in\mathcal{M}_{0}$ such
that $\Xi(\bm{m})=d^{(1)}$. Therefore, the previous construction
coincides with the one presented in Section \ref{sec4}, and $R^{(1)}(\cdot\,,\,\cdot)=r^{(1)}(\cdot\,,\,\cdot)$.

Recall that $\mathscr{V}^{(1)}=\{\,\{\boldsymbol{m}\}:\boldsymbol{m}\in\mathcal{M}_{0}\}$,
and denote by $\mathfrak{L}^{(1)}$ the generator given by
\[
(\mathfrak{L}^{(1)}\boldsymbol{h})(\{\boldsymbol{m}\})\;=\;\sum_{\boldsymbol{m}'\in\mathcal{M}_{0}}R^{(1)}(\boldsymbol{m},\boldsymbol{m}')\,[\,\boldsymbol{h}(\{\boldsymbol{m}'\})\,-\,\boldsymbol{h}(\{\boldsymbol{m}\})\,]
\]
for $\boldsymbol{h}\colon\mathscr{V}^{(1)}\to\mathbb{R}$. Let $\widehat{\mathbf{y}}^{(1)}(\cdot)$,
$\mathbf{y}^{(1)}(\cdot)$ be the $\mathscr{V}^{(1)}=\mathscr{S}^{(1)}$-valued
Markov chains induced by the generator $\mathfrak{A}_{1}$.

Recall that we denote by $\mathfrak{n}_{0}$ the
number of local minima of $U$, $\mathfrak{n}_{0}=|\mathcal{M}_{0}|$,
and by $\mathscr{R}_{1}^{(1)},\dots,\mathscr{R}_{\mathfrak{n}_{1}}^{(1)}$,
$\mathscr{T}^{(1)}$ the closed irreducible classes and the transient
states of the Markov chain ${\bf y}^{(1)}$, respectively. If $\mathfrak{n}_{1}=1$,
the construction is complete, $\mathfrak{q}=1$, and $\mathfrak{d}^{(1)}$,
$\mathscr{V}^{(1)}$, $\mathscr{N}^{(1)}$, $\widehat{\mathbf{y}}^{(1)}(\cdot)$,
$\mathbf{y}^{(1)}(\cdot)$ have been defined. If $\mathfrak{n}_{1}>1$,
we add below a new layer to the construction.

Only property (c) introduced at the beginning of
this section is meaningful for $q=1$, and it is clearly fulfilled.
On the other hand, as this construction coincides with the one presented
in Section \ref{sec4}, by Proposition \ref{prop_init}, $\mathfrak{P}_{1}(1)$
-- $\mathfrak{P}_{4}(1)$ are in force

\subsection{General time-scale}

Assume that the depths $0<\mathfrak{d}^{(1)}<\dots<\mathfrak{d}^{(p)}<\infty$,
the sets $\mathscr{V}^{(1)},\dots,\mathscr{V}^{(p)}$, $\mathscr{N}^{(1)},\mathscr{N}^{(2)}\subset\cdots\subset\mathscr{N}^{(p)}$,
the continuous-time Markov chains $\widehat{\mathbf{y}}^{(q)}(\cdot)$,
$\mathbf{y}^{(q)}(\cdot)$ have been constructed and satisfy $\mathfrak{P}_{1}(q)$
-- $\mathfrak{P}_{4}(q)$ and (a) -- (d) for $q\in\llbracket1,\,p\rrbracket$.

We construct a weighted graph whose vertices are
the sets in $\mathscr{S}^{(p+1)}$. For $\boldsymbol{\sigma}\in\mathcal{S}_{0}$,
$\mathcal{M}\in\mathscr{S}^{(p+1)}$, write $\boldsymbol{\sigma}\curvearrowright\mathcal{M}$
if there exists $\boldsymbol{m}\in\mathcal{M}$, such that $\boldsymbol{\sigma}\curvearrowright\boldsymbol{m}$.
Draw an edge between the sets $\mathcal{M}$ and $\mathcal{M}'$ if
there exists $\boldsymbol{\sigma}\in\mathcal{S}_{0}$ such that $\boldsymbol{\sigma}\curvearrowright\mathcal{M}$,
$\boldsymbol{\sigma}\curvearrowright\mathcal{M}'$. Denote by $\Delta(\mathcal{M},\mathcal{M}')$
the weight of this edge given by
\[
{\color{blue}\Delta(\mathcal{M},\mathcal{M}')}:=\min\big\{\,U(\boldsymbol{\sigma}):\boldsymbol{\sigma}\curvearrowright\mathcal{M}\,,\,\boldsymbol{\sigma}\curvearrowright\mathcal{M}'\,\big\}\;.
\]

Let ${\color{blue}\mathbb{G}_{p+1}=(\mathscr{S}^{(p+1)},\mathbb{B}_{p+1},\Delta)}$
be the graph thus obtained, where $\mathbb{B}_{p+1}$ represents the
set of edges (or bonds), and $\Delta$ the weights attached to each
edge. By Lemma \ref{l_assu_saddle}, this graph is connected.

A \textit{{path $\gamma$}}
between $\mathcal{M}$ and $\mathcal{M}'\in\mathscr{S}^{(p+1)}$ is
a sequence of distinct sets $\mathcal{M}_{j}\in\mathscr{S}^{(p+1)}$,
$j\in\llbracket1,\,n\rrbracket$, such that $\mathcal{M}_{1}=\mathcal{M}$,
$\mathcal{M}_{n}=\mathcal{M}'$, $\{\mathcal{M}_{i},\mathcal{M}_{i+1}\}\in\mathbb{B}_{p+1}$,
$i\in\llbracket1,\,n-1\rrbracket$.

For $\mathcal{M}\in\mathscr{V}^{(p+1)}$, $\mathcal{M}'\in\mathscr{S}^{(p+1)}$,
denote by $U(\mathcal{M},\mathcal{M}')$ the height of the pass between
$\mathcal{M}$ and $\mathcal{M}'$:
\[
{\color{blue}U(\mathcal{M},\mathcal{M}')}:=\min_{\gamma}\max_{1\le j<n}\Delta(\mathcal{M}_{j},\mathcal{M}_{j+1})\;,
\]
where the minimum is carried over all paths $\gamma=(\mathcal{M}=\mathcal{M}_{1},\dots,\mathcal{M}_{n}=\mathcal{M}')$
between $\mathcal{M}$ and $\mathcal{M}'$ such that $\mathcal{M}_{k}\in\mathscr{N}^{(p+1)}$,
$k\in\llbracket2,\,n-1\rrbracket$. Let $\Pi^{(p+1)}(\mathcal{M})$,
be the depth of the set $\mathcal{M}\in\mathscr{V}^{(p+1)}$:
\begin{equation}
{\color{blue}\Pi^{(p+1)}(\mathcal{M})}\;:=\min\big\{\,U(\mathcal{M},\mathcal{M}'):\mathcal{M}'\in\mathscr{V}^{(p+1)}\,,\,\mathcal{M}'\neq\mathcal{M}\,\big\}\,-\,U(\mathcal{M})\;.\label{2-15}
\end{equation}
Denote by $\mathfrak{d}^{(p+1)}$ the depth of the shallowest set
in $\mathscr{V}^{(p+1)}$, and by $\vartheta_{\epsilon}^{(p+1)}$
the corresponding time-scale:
\[
{\color{blue}\mathfrak{d}^{(p+1)}}\;:=\;\min_{\mathcal{M}\in\mathscr{V}^{(p+1)}}\Pi^{(p+1)}(\mathcal{M})\;,\quad{\color{blue}\vartheta_{\epsilon}^{(p+1)}}\;:=\;e^{\mathfrak{d}^{(p+1)}/\epsilon}\;.
\]
By \eqref{2-12} and $\mathfrak{P}_{2}(p+1)$, $\mathfrak{d}^{(p+1)}>\mathfrak{d}^{(p)}$.

For $\mathcal{M}\in\mathscr{V}^{(p+1)}$, $\mathcal{M}'\in\mathscr{S}^{(p+1)}$,
$\mathcal{M}'\neq\mathcal{M}$, denote by ${\color{blue}\mathcal{W}(\mathcal{M},\mathcal{M}')}$
the set of gates from $\mathcal{M}$ to $\mathcal{M}'$. This is the
set of saddles points $\boldsymbol{\sigma}\in\mathcal{S}_{0}$ such
that there exists $\mathcal{M}''\in\mathscr{N}^{(p+1)}$ and a path
$\gamma=(\mathcal{M}=\mathcal{M}_{1},\mathcal{M}_{2},\dots,\mathcal{M}_{n}=\mathcal{M}'')$
from $\mathcal{M}$ to $\mathcal{M}''$ satisfying
\begin{equation}
\begin{gathered}\boldsymbol{\sigma}\curvearrowright\mathcal{M}'\,,\,\;\boldsymbol{\sigma}\curvearrowright\mathcal{M}''\;,\;\;U(\boldsymbol{\sigma})=U(\mathcal{M},\mathcal{M}')=\Pi^{(p+1)}(\mathcal{M})+U(\mathcal{M})\;,\\
\mathcal{M}_{j}\in\mathscr{N}^{(p+1)}\;,\quad\Delta(\mathcal{M}_{j-1},\mathcal{M}_{j})<U(\mathcal{M},\mathcal{M}')\;,\;\;j\in\llbracket2,\,n\rrbracket\;.
\end{gathered}
\label{2-16}
\end{equation}
In words, there is a path $\gamma$ from $\mathcal{M}$ to a set $\mathcal{M}''$
with the following properties. All the sets in the path $\gamma$
but the first (which is $\mathcal{M}$) belong to $\mathscr{N}^{(p+1)}$.
Mind that the set $\mathcal{M}'$ does not belong to the path. All
the barriers between consecutive sets in the path $\gamma$ are strictly
below the barrier between $\mathcal{M}$ and $\mathcal{M}'$. The
height of the barrier between the last element of the path ($\mathcal{M}_{n}=\mathcal{M}''$)
and $\mathcal{M}'$ is equal to $U(\mathcal{M},\mathcal{M}')$, the
barrier between $\mathcal{M}$ and $\mathcal{M}'$. Moreover, $U(\mathcal{M},\mathcal{M}')-U(\mathcal{M})$
is the depth of the set $\mathcal{M}$. Finally, there is saddle point
$\boldsymbol{\sigma}$ at height $U(\mathcal{M},\mathcal{M}')$ whose
heteroclinic orbits lead to $\mathcal{M}''$ and $\mathcal{M}'$.

For $\mathcal{M}\in\mathscr{V}^{(p+1)}$, $\mathcal{M}'\in\mathscr{S}^{(p+1)}$,
let
\begin{equation}
{\color{blue}\widehat{\omega}(\mathcal{M},\,\mathcal{M}')}\;:=\;\sum_{\boldsymbol{\sigma}\in\mathcal{W}(\mathcal{M},\,\mathcal{M}')}\omega(\boldsymbol{\sigma})\;,\;\;{\color{blue}\widehat{\omega}_{p+1}(\mathcal{M},\,\mathcal{M}')}\;:=\;\widehat{\omega}(\mathcal{M},\,\mathcal{M}')\,\boldsymbol{1}\{\Pi^{(p+1)}(\mathcal{M})=\mathfrak{d}^{(p+1)}\}\;.\label{e:omega2}
\end{equation}
It is understood here that $\widehat{\omega}(\mathcal{M},\,\mathcal{M}')=0$
if the set $\mathcal{W}(\mathcal{M},\,\mathcal{M}')$ is empty, that
is if $\mathcal{M}'$ is not adjacent to $\mathcal{M}$. By \eqref{2-12}
and \eqref{2-20}, $\widehat{\omega}_{p+1}(\mathcal{M},\,\mathcal{M}')=\omega_{p+1}(\mathcal{M},\,\mathcal{M}')$.

The jump rates of the auxiliary dynamics $\widehat{\mathbf{y}}^{(p+1)}(\cdot)$
are defined as follows. Recall that we denote by $\widehat{R}^{(p)}$
the jump rates of the $\mathscr{S}^{(p)}$-valued Markov chain $\widehat{\mathbf{y}}^{(p+1)}(\cdot)$.
Fix $\mathcal{M}\in\mathscr{N}^{(p+1)}$ and $\mathcal{M}'\in\mathscr{V}^{(p+1)}$.
By construction, $\mathcal{M}\in\mathscr{N}^{(p)}\cup\mathscr{V}^{(p)}=\mathscr{S}^{(p)}$,
and $\mathcal{M}'$ is the union of elements (may be just one) in
$\mathscr{V}^{(p)}$. Set
\begin{equation}
\widehat{R}^{(p+1)}(\mathcal{M},\,\mathcal{M}')\;=\;\sum_{\mathcal{M}''\in\mathscr{V}^{(p)}:\mathcal{M}''\subset\mathcal{M}'}\widehat{R}^{(p)}(\mathcal{M},\,\mathcal{M}'')\;.\label{04b-1}
\end{equation}
If $\mathcal{M}\in\mathscr{N}^{(p+1)}$, $\mathcal{M}'\in\mathscr{N}^{(p+1)}$,
by construction, $\mathcal{M}'$ is an element of $\mathscr{S}^{(p)}$,
and set
\begin{equation}
\widehat{R}^{(p+1)}(\mathcal{M},\,\mathcal{M}')\;=\;\widehat{R}^{(p)}(\mathcal{M},\,\mathcal{M}')\;.\label{04-1}
\end{equation}
Finally, if $\mathcal{M}\in\mathscr{V}^{(p+1)}$ $\mathcal{M}'\in\mathscr{S}^{(p+1)}$,
set
\begin{equation}
\widehat{R}^{(p+1)}(\mathcal{M}',\,\mathcal{M}'')\;=\;\frac{1}{\nu(\mathcal{M}')}\,\widehat{\omega}_{p+1}(\mathcal{M}',\,\mathcal{M}'')\;,\label{2-10b-1}
\end{equation}
where $\nu(\mathcal{M}')$ is given by \eqref{eq:nu}.

By \eqref{2-12}, \eqref{2-20}, $\mathfrak{d}^{(p+1)}=d^{(p+1)}$,
and for all $\mathcal{M}\in\mathscr{V}^{(p+1)}$, $\Pi^{(p+1)}(\mathcal{M})=\mathfrak{d}^{(p+1)}$
if, and only if, $\Xi(\mathcal{M})=d^{(p+1)}$. Moreover, by Lemma
\ref{2-l4}, $\mathcal{W}(\mathcal{M},\mathcal{M}')=\mathcal{S}(\mathcal{M},\mathcal{M}')$
for all $\mathcal{M}'\neq\mathcal{M}$, $\mathcal{M}\in\mathscr{V}^{(p+1)}$,
$\mathcal{M}'\in\mathscr{S}^{(p+1)}$. Therefore, the previous construction
coincides with the one presented in Section \ref{sec4}, and $\widehat{R}^{(p+1)}(\cdot\,,\,\cdot)=\widehat{r}^{(p+1)}(\cdot\,,\,\cdot)$,

Denote by $\widehat{\mathbf{y}}^{(p+1)}(\cdot)$
the $\mathscr{S}^{(p+1)}$-valued, continuous-time Markov process
with jump rates $\widehat{R}^{(p+1)}(\cdot,\,\cdot)$, and by $\mathbf{y}^{(p+1)}(\cdot)$
the trace of the process $\widehat{\mathbf{y}}^{(p+1)}(\cdot)$ on
$\mathscr{V}^{(p+1)}$. The jump rates of the Markov chain $\mathbf{y}^{(p+1)}(\cdot)$
are represented by $r^{(p+1)}(\cdot,\,\cdot)$. Let $\mathfrak{L}^{(p+1)}$,
$\widehat{\mathfrak{L}}^{(p+1)}$ be the generators of the Markov
chains $\mathbf{y}^{(p+1)}(\cdot)$, $\widehat{\mathbf{y}}^{(p+1)}(\cdot)$,
respectively:
\begin{equation}
\begin{gathered}(\widehat{\mathfrak{L}}^{(p+1)}\boldsymbol{h})(\mathcal{M})\;=\;\sum_{\mathcal{M}'\in\mathscr{S}^{(p+1)}}\widehat{R}^{(p+1)}(\mathcal{M},\mathcal{M}')\,[\,\boldsymbol{h}(\mathcal{M}')\,-\,\boldsymbol{h}(\mathcal{M})\,]\;,\\
(\mathfrak{L}^{(p+1)}\boldsymbol{h})(\mathcal{M})\;=\;\sum_{\mathcal{M}'\in\mathscr{V}^{(p+1)}}R^{(p+1)}(\mathcal{M},\mathcal{M}')\,[\,\boldsymbol{h}(\mathcal{M}')\,-\,\boldsymbol{h}(\mathcal{M})\,]\;.
\end{gathered}
\label{2-17-1}
\end{equation}

Since $\mathfrak{n}_{p}=|\mathscr{V}^{(p+1)}|\ge2$,
by Theorem \ref{t:tree} for $n=p$, $\mathfrak{P}_{1}(p+1)$ --
$\mathfrak{P}_{4}(p+1)$ are satisfied. Moreover, as seen above, conditions
(a) -- (d) for $q=p+1$ are also fulfilled. \smallskip{}

\subsection{Lemmata}

We present in this subsection the results used
in the tree construction. First, we have the following auxiliary lemma.
\begin{lem}
\label{l_ms_P}Fix $\bm{m}\in\mathcal{M}_{0}$.
Then, we have $\mathscr{P}(\bm{m})\ne\varnothing$ and
\begin{equation}
\Pi^{(1)}(\bm{m})\le\min_{\bm{m}'\ne\bm{m}}\Theta(\bm{m},\,\bm{m}')-U(\bm{m})\ .\label{e_Pi-Theta}
\end{equation}
\end{lem}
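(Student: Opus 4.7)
The plan is to exhibit a saddle point $\bm{\sigma}\in\mathscr{P}(\bm{m})$ satisfying $U(\bm{\sigma})\le H$, where
\[
H \ :=\ \min_{\bm{m}'\ne\bm{m}}\Theta(\bm{m},\,\bm{m}')\;,
\]
which is finite since $|\mathcal{M}_{0}|\ge2$. Such a saddle immediately gives $\mathscr{P}(\bm{m})\ne\varnothing$ and, by the definition of $\Pi^{(1)}(\bm{m})$ as $\min_{\bm{\sigma}\in\mathscr{P}(\bm{m})}U(\bm{\sigma})-U(\bm{m})$, the desired inequality $\Pi^{(1)}(\bm{m})\le H-U(\bm{m})$.

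First, I would introduce $\mathcal{W}$, the connected component of the open level set $\{U<H\}$ containing $\bm{m}$, and establish that $\bm{m}$ is the \emph{unique} local minimum of $U$ in $\mathcal{W}$. Indeed, if $\bm{m}_{1}\in\mathcal{M}_{0}\cap\mathcal{W}$ with $\bm{m}_{1}\ne\bm{m}$, then, since $\mathcal{W}$ is open and path-connected, there exists a continuous path $\gamma\colon[0,1]\to\mathcal{W}$ from $\bm{m}$ to $\bm{m}_{1}$. The image $\gamma([0,1])$ is compact and contained in the open set $\{U<H\}$, whence $\max_{t\in[0,1]}U(\gamma(t))<H$; this yields $\Theta(\bm{m},\,\bm{m}_{1})<H$, contradicting the definition of $H$.

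Next, picking a minimizer $\bm{m}^{*}\ne\bm{m}$ of $\Theta(\bm{m},\,\cdot)$ (so $\Theta(\bm{m},\,\bm{m}^{*})=H$), I would argue $U(\bm{m}^{*})<H$: as $\bm{m}^{*}$ is a non-degenerate local minimum of the Morse function $U$, the potential is strictly above $U(\bm{m}^{*})$ in a punctured neighborhood of $\bm{m}^{*}$, so any continuous path ending at $\bm{m}^{*}$ along which $U<H$ outside the endpoint would force $U(\bm{m}^{*})<H$. Consequently $\bm{m}^{*}$ lies in some connected component $\mathcal{W}^{*}$ of $\{U<H\}$, and $\mathcal{W}^{*}\ne\mathcal{W}$ by the uniqueness result of the previous paragraph. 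Invoking the Morse structure together with the hypothesis \eqref{hyp2}, as encoded by the appendix lemmata on the structure of level sets at critical heights (in particular Lemma \ref{l_cap_saddle} and Lemma \ref{l_assu_saddle}), there exists a saddle $\bm{\sigma}\in\overline{\mathcal{W}}\cap\overline{\mathcal{W}^{*}}$ with $U(\bm{\sigma})=H$ whose two heteroclinic orbits terminate at two distinct local minima, one lying in $\mathcal{W}$ and the other in $\mathcal{W}^{*}$. Since $\mathcal{W}$ contains only $\bm{m}$ as a local minimum, the orbit entering $\mathcal{W}$ must terminate at $\bm{m}$, while the orbit entering $\mathcal{W}^{*}$ terminates at some $\bm{m}'\ne\bm{m}$. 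Hence $\bm{\sigma}\curvearrowright\bm{m}$ and $\bm{\sigma}\curvearrowright\bm{m}'$, so $\bm{\sigma}\in\mathscr{P}(\bm{m})$ with $U(\bm{\sigma})=H$, completing the argument.

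The main delicate point is the extraction of this boundary saddle in the last step: verifying that the min-max value $H$ is realized as the height of a genuine saddle lying in $\overline{\mathcal{W}}\cap\overline{\mathcal{W}^{*}}$, and that its two heteroclinic orbits reach into the correct components, requires a careful use of the appendix lemmata; once this is granted, the rest of the proof is a short topological argument based on the uniqueness of the local minimum in $\mathcal{W}$.
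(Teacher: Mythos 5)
Your overall strategy (produce the component $\mathcal{W}$ of $\{U<H\}$ containing $\bm{m}$, show $\bm{m}$ is its only local minimum, extract a saddle on $\partial\mathcal{W}$ at height $H$ whose heteroclinic orbits land in $\mathcal{W}$ and in a different component) is the same as the paper's. However, there is a genuine gap at the step where you produce the saddle. You introduce $\bm{m}^{*}$, a minimizer of $\Theta(\bm{m},\,\cdot\,)$, put $\mathcal{W}^{*}\ni\bm{m}^{*}$, and then assert that $\overline{\mathcal{W}}\cap\overline{\mathcal{W}^{*}}\ne\varnothing$. Nothing in the cited lemmata gives you this; Lemma~\ref{l_cap_saddle} takes a nonempty intersection as a \emph{hypothesis}, and Lemma~\ref{l_assu_saddle} only describes orbits from saddles already known to lie on the boundary. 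Moreover the claim can fail: with three minima $A$, $B$, $C$, saddles $A\mbox{--}B$ and $B\mbox{--}C$ at height $h$ and a saddle $A\mbox{--}C$ at strictly larger height, one has $\Theta(A,B)=\Theta(A,C)=h$, so $\bm{m}^{*}$ may be taken to be $C$, and then $\overline{\mathcal{W}}\cap\overline{\mathcal{W}^{*}}=\varnothing$ since the two wells communicate at level $h$ only through the well of $B$.

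The paper avoids this by \emph{not} pinning down the adjacent component via $\bm{m}^{*}$. It invokes \cite[Lemma A.2]{LLS-1st} directly to produce \emph{some} component $\mathcal{W}'\ne\mathcal{W}$ of $\{U<H\}$ with $\overline{\mathcal{W}}\cap\overline{\mathcal{W}'}\ne\varnothing$, and then runs the rest of your argument with $\mathcal{W}'$ in place of $\mathcal{W}^{*}$. An alternative repair entirely inside the paper's appendix machinery: by Lemma~\ref{l_105a-2}, the connected component $\mathcal{K}$ of $\{U\le H\}$ containing $\bm{m}$ also contains $\bm{m}^{*}$; since $\bm{m}^{*}\notin\mathcal{W}$, the level set decomposition of $\mathcal{K}$ has at least two members, and Lemma~\ref{2-la1}-(3) then gives a member $\mathcal{W}'$ adjacent to $\mathcal{W}$, i.e.\ $\overline{\mathcal{W}}\cap\overline{\mathcal{W}'}\ne\varnothing$. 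Once such a $\mathcal{W}'$ is in hand, your conclusion (a saddle at height $H$ with orbits to $\bm{m}$ and to some $\bm{m}'\ne\bm{m}$) follows exactly as you wrote, via Lemmata~\ref{l_cap_saddle} and~\ref{l_assu_saddle}-(1). The detour through $\bm{m}^{*}$ and the strict bound $U(\bm{m}^{*})<H$ is then unnecessary baggage, useful only for the alternative repair above.
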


\begin{proof}
Fix $\bm{m}\in\mathcal{M}_{0}$ and let
\[
H=\min_{\bm{m}'\ne\bm{m}}\Theta(\bm{m},\,\bm{m}')\ .
\]
Let $\mathcal{W}$ be a connected component of $\{U<H\}$ containing
$\bm{m}$. By Lemma \ref{lap01}-(1), $\mathcal{M}_{0}\cap\mathcal{W}=\{\bm{m}\}$.
By \cite[Lemma A.2]{LLS-1st}, there exists a connected component
$\mathcal{W}'$ of $\{U<H\}$ such that
\[
\mathcal{W}\ne\mathcal{W}'\ \ \text{and}\ \ \overline{\mathcal{W}}\cap\overline{\mathcal{W}'}\ne\varnothing\ .
\]
Let $\bm{\sigma}\in\overline{\mathcal{W}}\cap\overline{\mathcal{W}'}$.
By Lemma \ref{l_cap_saddle}, $\bm{\sigma}$ is a saddle point. By
Lemma \ref{l_assu_saddle}-(1), $\bm{\sigma}\curvearrowright\bm{m}_{i}$
for some $\bm{m}_{1}\in\mathcal{M}_{0}\cap\mathcal{W}$ and $\bm{m}_{2}\in\mathcal{M}_{0}\cap\mathcal{W}'$.
Since $\mathcal{M}_{0}\cap\mathcal{W}=\{\bm{m}\}$, $\bm{m}_{1}=\bm{m}$
and $\bm{m}_{2}\ne\bm{m}$. Therefore, $\bm{\sigma}\in\mathscr{P}(\bm{m})$.

Suppose that there exists $\bm{\sigma}'\in\mathscr{P}(\bm{m})$
such that $U(\bm{\sigma})<\min_{\bm{m}'\ne\bm{m}}\Theta(\bm{m},\,\bm{m}')$.
Let $\bm{m}\curvearrowleft\bm{\sigma}'\curvearrowright\bm{m}''\ne\bm{m}$.
Then, by Lemma \ref{lem_not}-(3), we have $\Theta(\bm{m},\,\bm{m}'')\le U(\bm{\sigma}')<\min_{\bm{m}'\ne\bm{m}}\Theta(\bm{m},\,\bm{m}')$
which is a contradction. Hence, \eqref{e_Pi-Theta} holds
\end{proof}
Now, we characterize $\Pi^{(1)}(\boldsymbol{m})$
and $\mathscr{G}(\boldsymbol{m})$.
\begin{lem}
\label{l_Gamma-Pi-1}We have $\mathfrak{d}^{(1)}=d^{(1)}$.
Furthermore, for all $\bm{m}\in\mathcal{M}_{0}$,
\[
\Pi^{(1)}(\bm{m})=\mathfrak{d}^{(1)}\;\;\text{if, and only if,}\;\;\Xi(\bm{m})=d^{(1)}\;.
\]
\end{lem}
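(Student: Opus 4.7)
\smallskip

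The plan is to exploit the one-sided comparison $\Pi^{(1)}(\bm{m})\le\Xi(\bm{m})$, which follows essentially from Lemma \ref{l_ms_P}, together with a case-analysis at a saddle realizing $\Pi^{(1)}(\bm{m})$ to rule out the configuration in which the second minimum sits strictly above $\bm{m}$.

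First I would record that for every $\bm{m}\in\mathcal{M}_0$ one has $\Pi^{(1)}(\bm{m})\le\Xi(\bm{m})$: indeed, by Lemma \ref{l_ms_P}, $\Pi^{(1)}(\bm{m})\le\min_{\bm{m}'\ne\bm{m}}\Theta(\bm{m},\bm{m}')-U(\bm{m})$, and the infimum in the definition of $\Xi(\bm{m})$ runs over the smaller set of $\bm{m}'$ with $U(\bm{m}')\le U(\bm{m})$. Minimizing over $\bm{m}$ yields $\mathfrak{d}^{(1)}\le d^{(1)}$.

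Next, I would pick any $\bm{m}^{*}\in\mathcal{M}_0$ with $\Pi^{(1)}(\bm{m}^{*})=\mathfrak{d}^{(1)}$ together with a saddle $\bm{\sigma}\in\mathscr{G}(\bm{m}^{*})$, so that $U(\bm{\sigma})=U(\bm{m}^{*})+\mathfrak{d}^{(1)}$ and $\bm{\sigma}\curvearrowright\bm{m}^{*}$, $\bm{\sigma}\curvearrowright\bm{m}'$ for some $\bm{m}'\ne\bm{m}^{*}$. By the reparametrization \eqref{z(t)_exp} (i.e.\ Lemma \ref{lem_not}-(3)), $\Theta(\bm{m}^{*},\bm{m}')\le U(\bm{\sigma})$. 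Now split:
\begin{itemize}
\item If $U(\bm{m}')>U(\bm{m}^{*})$, then $\bm{m}^{*}\in\widetilde{\{\bm{m}'\}}$ and
\[
\Xi(\bm{m}')\;\le\;\Theta(\bm{m}',\bm{m}^{*})-U(\bm{m}')\;\le\;U(\bm{\sigma})-U(\bm{m}')\;<\;U(\bm{\sigma})-U(\bm{m}^{*})\;=\;\mathfrak{d}^{(1)}\;\le\;d^{(1)},
\]
contradicting $\Xi(\bm{m}')\ge d^{(1)}$. So this case is impossible.
\item Therefore $U(\bm{m}')\le U(\bm{m}^{*})$, which gives $\Xi(\bm{m}^{*})\le\Theta(\bm{m}^{*},\bm{m}')-U(\bm{m}^{*})\le\mathfrak{d}^{(1)}$, hence $d^{(1)}\le\Xi(\bm{m}^{*})\le\mathfrak{d}^{(1)}$.
\end{itemize}
Combined with the reverse inequality this proves $\mathfrak{d}^{(1)}=d^{(1)}$.

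Finally, for the equivalence I would observe that the direction $\Xi(\bm{m})=d^{(1)}\Rightarrow\Pi^{(1)}(\bm{m})=\mathfrak{d}^{(1)}$ is immediate from the sandwich $\mathfrak{d}^{(1)}\le\Pi^{(1)}(\bm{m})\le\Xi(\bm{m})=d^{(1)}=\mathfrak{d}^{(1)}$. For the converse, repeating the case analysis above with $\bm{m}^{*}$ replaced by an arbitrary $\bm{m}$ with $\Pi^{(1)}(\bm{m})=\mathfrak{d}^{(1)}$ rules out $U(\bm{m}')>U(\bm{m})$ in exactly the same way, and then Case 1 produces $\Xi(\bm{m})\le\mathfrak{d}^{(1)}=d^{(1)}$, forcing equality.

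The only non-routine point is the case $U(\bm{m}')>U(\bm{m}^{*})$, which is dispatched by the feedback argument that reflects the inequality across the saddle to produce a shallower minimum $\bm{m}'$; the remaining steps are bookkeeping with the inequalities from Lemmata \ref{l_ms_P} and \ref{lem_not}.
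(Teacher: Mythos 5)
Your proof is correct and follows essentially the same approach as the paper: establish $\Pi^{(1)}\le\Xi$ via Lemma \ref{l_ms_P}, then show $\Xi(\bm{m})\le\mathfrak{d}^{(1)}$ whenever $\Pi^{(1)}(\bm{m})=\mathfrak{d}^{(1)}$ by ruling out $U(\bm{m}')>U(\bm{m})$ at a gate saddle. The only cosmetic difference is that the paper dismisses the bad case by noting $\Pi^{(1)}(\bm{m}')<\mathfrak{d}^{(1)}$ directly contradicts the minimality of $\mathfrak{d}^{(1)}$, whereas you convert to $\Xi(\bm{m}')<d^{(1)}$ using the already-established inequality $\mathfrak{d}^{(1)}\le d^{(1)}$; both routes are valid.
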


\begin{proof}
We claim that
\begin{equation}
\Xi(\bm{m})\ge\Pi^{(1)}(\bm{m})\ \text{for all}\ \bm{m}\in\mathcal{M}_{0}\ .\label{e_Gamma-Pi-1}
\end{equation}
Indeed, since $\widetilde{\bm{m}}\subset\mathcal{M}_{0}\setminus\{\bm{m}\}$,
by \eqref{e_Pi-Theta}, we obtain
\[
\Pi^{(1)}(\bm{m})\le\min_{\bm{m}'\ne\bm{m}}\Theta(\bm{m},\,\bm{m}')-U(\bm{m})\le\Theta(\bm{m},\,\widetilde{\bm{m}})-U(\bm{m})=\Xi(\bm{m})\ .
\]

Next, we claim that
\begin{equation}
\Xi(\bm{m})\le\mathfrak{d}^{(1)}\ \text{if}\ \Pi^{(1)}(\bm{m})=\mathfrak{d}^{(1)}\ .\label{e_Gamma-Pi-2}
\end{equation}
Let $\bm{m}\in\mathcal{M}_{0}$ satisfy $\Pi^{(1)}(\bm{m})=\mathfrak{d}^{(1)}$.
Then, there exists $\bm{\sigma}\in\mathscr{P}(\bm{m})$ such that
$\mathfrak{d}^{(1)}=\Pi^{(1)}(\bm{m})=U(\bm{\sigma})-U(\bm{m})$.
Let $\bm{m}'\ne\bm{m}$ such that $\bm{\sigma}\curvearrowright\bm{m}'$.
It is clear that $U(\bm{m}')\le U(\bm{m})$. Otherwise, we have $\bm{\sigma}\in\mathscr{P}(\bm{m}')$
and $U(\bm{\sigma})-U(\bm{m}')<\mathfrak{d}^{(1)}$, in contradiction
with the definition of $\mathfrak{d}^{(1)}$. Hence, $\bm{m}'\subset\widetilde{\bm{m}}$,
and by Lemma \ref{lem_not}-(3),
\[
U(\bm{m})+\Xi(\bm{m})=\Theta(\bm{m},\,\widetilde{\bm{m}})\le\Theta(\bm{m},\bm{m}')\le U(\bm{\sigma})=\mathfrak{d}^{(1)}+U(\bm{m})\ ,
\]
which proves \eqref{e_Gamma-Pi-2}.

By \eqref{e_Gamma-Pi-1}, we have $\mathfrak{d}^{(1)}\le d^{(1)}$
and on the other hand, by \eqref{e_Gamma-Pi-2}, we obtain $d^{(1)}\le\mathfrak{d}^{(1)}$.
Therefore, we have
\[
\mathfrak{d}^{(1)}=d^{(1)}\ .
\]
Furthermore, if $\Pi^{(1)}>\mathfrak{d}^{(1)}$, we have $\Xi(\bm{m})>\mathfrak{d}^{(1)}$
by \eqref{e_Gamma-Pi-1} and if $\Pi^{(1)}(\bm{m})=\mathfrak{d}^{(1)}$,
we have $\Xi(\bm{m})\le\mathfrak{d}^{(1)}=d^{(1)}$. Since $d^{(1)}\le\Xi(\bm{m})$,
we have $\Xi(\bm{m})=d^{(1)}$ and this proves Lemma \ref{l_Gamma-Pi-1}.
\end{proof}
\begin{lem}
\label{l_SW}Fix $\bm{m}\in\mathscr{V}^{(1)}=\mathcal{M}_{0}$
and $\bm{m}'\in\mathscr{V}^{(1)}$ such that $\Xi(\bm{m})=d^{(1)}$
and $\mathcal{S}(\bm{m},\,\bm{m}')\ne\varnothing$. Then, $\mathcal{S}(\bm{m},\,\bm{m}')=\mathcal{W}(\bm{m},\,\bm{m}')$.
\end{lem}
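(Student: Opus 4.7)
The statement is essentially a definitional equality once one recognizes that, under the hypothesis $\Xi(\bm{m})=d^{(1)}$, the height cutoffs appearing in the two definitions coincide. Specifically, by Lemma~\ref{l_Gamma-Pi-1}, the condition $\Xi(\bm{m})=d^{(1)}$ is equivalent to $\Pi^{(1)}(\bm{m})=\mathfrak{d}^{(1)}=d^{(1)}$, so $U(\bm{m})+\Xi(\bm{m})=U(\bm{m})+\Pi^{(1)}(\bm{m})$. My plan is therefore to unfold the two definitions and show that, after this identification of heights, they describe the same set.

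Unpacking from Section~\ref{sec4.1}, a saddle $\bm{\sigma}$ lies in $\mathcal{S}(\bm{m},\bm{m}')$ iff $\bm{\sigma}\curvearrowright\bm{m}$, $\bm{\sigma}\curvearrowright\bm{m}'$, and $U(\bm{\sigma})=U(\bm{m})+\Xi(\bm{m})$. Unpacking from the appendix, $\bm{\sigma}$ lies in $\mathcal{W}(\bm{m},\bm{m}')$ iff $\bm{\sigma}\in\mathscr{G}(\bm{m})$ together with $\bm{\sigma}\curvearrowright\bm{m}$ and $\bm{\sigma}\curvearrowright\bm{m}'$; the membership $\bm{\sigma}\in\mathscr{G}(\bm{m})$ means $\bm{\sigma}\in\mathscr{P}(\bm{m})$ (i.e. $\bm{\sigma}\curvearrowright\bm{m}$ and $\bm{\sigma}\curvearrowright\bm{m}''$ for some $\bm{m}''\neq\bm{m}$) and $U(\bm{\sigma})=U(\bm{m})+\Pi^{(1)}(\bm{m})$.

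For the inclusion $\mathcal{S}(\bm{m},\bm{m}')\subset\mathcal{W}(\bm{m},\bm{m}')$, I would take $\bm{\sigma}\in\mathcal{S}(\bm{m},\bm{m}')$: then $\bm{\sigma}\curvearrowright\bm{m}$, $\bm{\sigma}\curvearrowright\bm{m}'$, and since $\bm{m}'\neq\bm{m}$ (implicit in the notation $\mathcal{S}(\bm{m},\bm{m}')$), the existence of a second minimum required for $\bm{\sigma}\in\mathscr{P}(\bm{m})$ is automatic. The height equality $U(\bm{\sigma})=U(\bm{m})+\Xi(\bm{m})=U(\bm{m})+\Pi^{(1)}(\bm{m})$ then puts $\bm{\sigma}$ in $\mathscr{G}(\bm{m})$, hence in $\mathcal{W}(\bm{m},\bm{m}')$. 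For the reverse inclusion, I take $\bm{\sigma}\in\mathcal{W}(\bm{m},\bm{m}')$: the same height equality, read in reverse, yields $U(\bm{\sigma})=U(\bm{m})+\Xi(\bm{m})$, and combined with $\bm{\sigma}\curvearrowright\bm{m}$ and $\bm{\sigma}\curvearrowright\bm{m}'$ this is exactly the defining condition of $\mathcal{S}(\bm{m},\bm{m}')$.

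No serious obstacle arises; the entire argument is bookkeeping plus the single nontrivial input, Lemma~\ref{l_Gamma-Pi-1}. The hypothesis $\mathcal{S}(\bm{m},\bm{m}')\neq\varnothing$ plays no essential role in the set equality itself and may simply be present to exclude the vacuous case in the appendix's subsequent use of this lemma (e.g.\ in reconciling the weights $\omega(\bm{m},\bm{m}')$ and $\widehat{\omega}(\bm{m},\bm{m}')$).
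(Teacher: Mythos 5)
Your proposal is correct, and it uses the same key input as the paper (Lemma~\ref{l_Gamma-Pi-1}), but your route is genuinely cleaner. The paper proves the two inclusions separately, and its reverse inclusion ($\mathcal{W}(\bm{m},\bm{m}')\subset\mathcal{S}(\bm{m},\bm{m}')$) first establishes $U(\bm{m})\ge U(\bm{m}')$ (which is where the hypothesis $\mathcal{S}(\bm{m},\bm{m}')\ne\varnothing$ is invoked), then passes through a chain of inequalities for $\Theta(\bm{m},\bm{m}')$ via Lemma~\ref{lem_not}-(3) before arriving at $U(\bm{\sigma})=U(\bm{m})+\Xi(\bm{m})$. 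You observe, correctly, that once $\Pi^{(1)}(\bm{m})=d^{(1)}=\Xi(\bm{m})$, both $\mathcal{S}(\bm{m},\bm{m}')$ and $\mathcal{W}(\bm{m},\bm{m}')$ are defined by the identical three conditions ($\bm{\sigma}\curvearrowright\bm{m}$, $\bm{\sigma}\curvearrowright\bm{m}'$, $U(\bm{\sigma})=U(\bm{m})+d^{(1)}$), the membership $\bm{\sigma}\in\mathscr{P}(\bm{m})$ being automatic from $\bm{m}'\ne\bm{m}$. This dispenses with the intermediate $\Theta$-estimates entirely and also shows, as you note, that the hypothesis $\mathcal{S}(\bm{m},\bm{m}')\ne\varnothing$ is superfluous for the set equality — a point that is actually relevant since the appendix subsequently applies the lemma for \emph{all} $\bm{m}'\ne\bm{m}$ without checking non-emptiness. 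In short: same nontrivial input, but your version strips the paper's argument to the definitional core it was implicitly circling around.
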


\begin{proof}
Fix $\bm{m}\in\mathscr{V}^{(1)}$ and $\bm{m}'\in\mathscr{V}^{(1)}$
such that $\Xi(\bm{m})=d^{(1)}$ and $\mathcal{S}(\bm{m},\,\bm{m}')\ne\varnothing$.
We have $U(\bm{m})\ge U(\bm{m}')$. Indeed, if $U(\bm{m})<U(\bm{m}')$,
we have
\[
\Xi(\bm{m}')=\Theta(\bm{m}',\,\widetilde{\bm{m}'})-U(\bm{m}')\le\Theta(\bm{m}',\,\bm{m})-U(\bm{m}')<\Theta(\bm{m},\,\widetilde{\bm{m}})-U(\bm{m})=\Xi(\bm{m})=d^{(1)}\ ,
\]
which contradicts to the definition of $d^{(1)}$. Also, by Lemma
\ref{l_Gamma-Pi-1},
\begin{equation}
\Pi^{(1)}(\bm{m})=d^{(1)}\ .\label{e_SW-1}
\end{equation}

Let $\bm{\sigma}\in\mathcal{S}(\bm{m},\,\bm{m}')$.
By definition,
\[
\bm{m}\curvearrowleft\bm{\sigma}\curvearrowright\bm{m}'\ ,\ U(\bm{\sigma})=U(\bm{m})+\Xi(\bm{m})=U(\bm{m})+d^{(1)}\;.
\]
Therefore, $\bm{\sigma}\in\mathscr{P}(\bm{m})$ so that
\[
\Pi^{(1)}(\bm{m})\le U(\bm{\sigma})-U(\bm{m})=d^{(1)}\ .
\]
By \eqref{e_SW-1}, the inequality is indeed equality so that $\bm{\sigma}\in\mathscr{G}(\bm{m})$.
Therefore, since $\bm{m}\curvearrowleft\bm{\sigma}\curvearrowright\bm{m}'$,
we have $\bm{\sigma}\in\mathcal{W}(\bm{m},\,\bm{m}')$.

Now, let $\bm{\sigma}\in\mathcal{W}(\bm{m},\,\bm{m}')$.
By definition,
\[
\bm{m}\curvearrowleft\bm{\sigma}\curvearrowright\bm{m}'\ ,\ U(\bm{\sigma})=U(\bm{m})+\Pi^{(1)}(\bm{m})\ .
\]
Therefore, by Lemma \ref{lem_not}-(3) and \eqref{e_SW-1}, we have
\[
\Theta(\bm{m},\,\bm{m}')\le U(\bm{\sigma})=U(\bm{m})+\Pi^{(1)}(\bm{m})=U(\bm{m})+d^{(1)}\ .
\]
Since $U(\bm{m})\ge U(\bm{m}')$, we obtain
\[
U(\bm{m})+d^{(1)}=U(\bm{m})+\Xi(\bm{m})=\Theta(\bm{m},\,\widetilde{\bm{m}})\le\Theta(\bm{m},\,\bm{m}')\ .
\]
Hence, the inequalities are indeed equalities so that we have
\[
U(\bm{\sigma})=\Theta(\bm{m},\,\widetilde{\bm{m}})=U(\bm{m})+\Xi(\bm{m})\ .
\]
Therefore, since $\bm{m}\curvearrowleft\bm{\sigma}\curvearrowright\bm{m}'$
and $U(\bm{\sigma})=U(\bm{m})+\Pi^{(1)}(\bm{m})=U(\bm{m})+\Xi(\bm{m})$,
$\bm{\sigma}\in\mathcal{S}(\bm{m},\,\bm{m}')$.
\end{proof}
\begin{lem}
\label{2-l3} For all $\mathcal{M}\in\mathscr{V}^{(p+1)}$,
$\mathcal{M}'\in\mathscr{S}^{(p+1)}$, $\mathcal{M}'\neq\mathcal{M}$,
we have $\Theta(\mathcal{M},\mathcal{M}')\le U(\mathcal{M},\mathcal{M}')$.
\end{lem}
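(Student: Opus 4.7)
My plan is to exploit two elementary facts: first, that each edge of the graph $\mathbb{G}_{p+1}$ carries a saddle point realizing its weight, and second, that the communication height $\Theta$ satisfies a triangle-type inequality along any finite sequence of sets. Together these should let the bound $U(\mathcal{M},\mathcal{M}')$ be transferred from the graph distance to the true communication height.

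Concretely, I would set $H:=U(\mathcal{M},\mathcal{M}')$ and select a minimizing path $\gamma=(\mathcal{M}=\mathcal{M}_1,\mathcal{M}_2,\dots,\mathcal{M}_n=\mathcal{M}')$ in $\mathbb{G}_{p+1}$ with $\mathcal{M}_k\in\mathscr{N}^{(p+1)}$ for $k\in\llbracket2,\,n-1\rrbracket$, so that $\Delta(\mathcal{M}_j,\mathcal{M}_{j+1})\le H$ for every $j\in\llbracket1,\,n-1\rrbracket$. By the definition of $\Delta$, each such pair admits a saddle point $\boldsymbol{\sigma}_j\in\mathcal{S}_0$ with $\boldsymbol{\sigma}_j\curvearrowright\mathcal{M}_j$, $\boldsymbol{\sigma}_j\curvearrowright\mathcal{M}_{j+1}$ and $U(\boldsymbol{\sigma}_j)=\Delta(\mathcal{M}_j,\mathcal{M}_{j+1})\le H$. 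Pick $\boldsymbol{m}_j^+\in\mathcal{M}_j$ and $\boldsymbol{m}_{j+1}^-\in\mathcal{M}_{j+1}$ with $\boldsymbol{\sigma}_j\curvearrowright\boldsymbol{m}_j^+$ and $\boldsymbol{\sigma}_j\curvearrowright\boldsymbol{m}_{j+1}^-$; then Lemma \ref{lem_not}-(3) gives
\[
\Theta(\mathcal{M}_j,\mathcal{M}_{j+1})\ \le\ \Theta(\boldsymbol{m}_j^+,\boldsymbol{m}_{j+1}^-)\ \le\ U(\boldsymbol{\sigma}_j)\ \le\ H\;.
\]
Finally, the iterated triangle inequality \eqref{e:bdtheta} yields
\[
\Theta(\mathcal{M},\mathcal{M}')\ \le\ \max_{1\le j<n}\,\Theta(\mathcal{M}_j,\mathcal{M}_{j+1})\ \le\ H\ =\ U(\mathcal{M},\mathcal{M}')\;,
\]
which is the desired bound.

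There is essentially no deep obstacle here: the argument is a two-line reduction, with the only subtlety being to confirm that the triangle inequality \eqref{e:bdtheta} and the saddle bound Lemma \ref{lem_not}-(3), both stated for local minima, extend to the set-valued quantities $\Theta(\mathcal{M}_j,\mathcal{M}_{j+1})$ via the explicit choice of representatives $\boldsymbol{m}_j^\pm$. Note that, in contrast to the analogous statement for $\Xi$ or for set depths, we do not need to use the finer properties of the recursive construction (such as $\mathfrak{P}_1(p+1)$ or the bound-set inequalities), since we are only producing an upper bound on $\Theta$; this is exactly what makes the lemma a clean consequence of the definitions.
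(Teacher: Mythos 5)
Your proof is correct and is essentially a modular repackaging of the paper's argument: where the paper explicitly glues continuous paths of the form \eqref{z(t)_exp} into a single trajectory $\boldsymbol{z}\colon[0,1]\to\mathbb{R}^d$ from $\mathcal{M}$ to $\mathcal{M}'$ lying below level $U(\mathcal{M},\mathcal{M}')$, you instead route through the already-established Lemma \ref{lem_not}-(2) and (3). The chain of deductions is valid.

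Your closing remark, however, is wrong, and it is worth flagging because it misreads what your own proof uses. You claim that $\mathfrak{P}_{1}(p+1)$ and the bound-set property are not needed "since we are only producing an upper bound on $\Theta$." But the step $\Theta(\mathcal{M},\mathcal{M}')\le\max_{j}\Theta(\mathcal{M}_{j},\mathcal{M}_{j+1})$ invokes Lemma \ref{lem_not}-(2), and that lemma is stated under the standing hypothesis, announced in its preamble, that all subsets of $\mathcal{M}_{0}$ appearing in it are simple and bound. That hypothesis is not decorative: the pair realizing $\Theta(\mathcal{M}_{j},\mathcal{M}_{j+1})\le H$ generally lands at a different element of $\mathcal{M}_{j+1}$ than the pair realizing $\Theta(\mathcal{M}_{j+1},\mathcal{M}_{j+2})\le H$, so the concatenated inequality secretly traverses inside $\mathcal{M}_{j+1}$, and only the bound property guarantees that this internal passage stays below $H$. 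The paper's own proof acknowledges exactly this dependence with the phrase "As the sets $\mathcal{M}_{j}$ are bound ...". So your proof stands — precisely because the sets $\mathcal{M}_{j}\in\mathscr{S}^{(p+1)}$ are simple and bound by $\mathfrak{P}_{1}(p+1)$ — but the claim that you bypass that structure does not.
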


\begin{proof}
By definition of $U(\mathcal{M},\mathcal{M}')$,
there exists a path $\gamma=(\mathcal{M}=\mathcal{M}_{0},\dots,\mathcal{M}_{n}=\mathcal{M}')$
between $\mathcal{M}$ and $\mathcal{M}'$ such that $\mathcal{M}_{k}\in\mathscr{N}^{(p+1)}$,
$k\in\llbracket1,\,n-1\rrbracket$, and $U(\mathcal{M},\mathcal{M}')=\max_{j}\Delta(\mathcal{M}_{j},\mathcal{M}_{j+1})$.
As the sets $\mathcal{M}_{j}$ are bound, it is possible to construct
from this sequence, by gluing paths just described in \eqref{z(t)_exp},
one obtains a continuous path $\boldsymbol{z}:[0,\,1]\to\mathbb{R}^{d}$
such that $\boldsymbol{z}(0)\in\mathcal{M}$, $\boldsymbol{z}(1)=\mathcal{M}'$
such that $\boldsymbol{z}(t)\le U(\mathcal{M},\mathcal{M}')$. Thus,
$\Theta(\mathcal{M},\mathcal{M}')\le U(\mathcal{M},\mathcal{M}')$,
as claimed.
\end{proof}
We claim that
\begin{equation}
d^{(p+1)}\le\mathfrak{d}^{(p+1)}\;.\label{2-09}
\end{equation}
Indeed, by definition, there exist $\mathcal{M}\neq\mathcal{M}'\in\mathscr{V}^{(p+1)}$
such that $\mathfrak{d}^{(p+1)}=U(\mathcal{M},\mathcal{M}')-U(\mathcal{M})$.
It is clear that $U(\mathcal{M}')\le U(\mathcal{M})$. Otherwise,
$U(\mathcal{M},\mathcal{M}')-U(\mathcal{M}')<\mathfrak{d}^{(p+1)}$,
in contradiction with the definition of $\mathfrak{d}^{(p+1)}$. Hence,
$\mathcal{M}'\subset\widetilde{\mathcal{M}}$, and by Lemma \ref{2-l3},
$\Theta(\mathcal{M},\widetilde{\mathcal{M}})\le\Theta(\mathcal{M},\mathcal{M}')\le U(\mathcal{M},\mathcal{M}')=\mathfrak{d}^{(p+1)}+U(\mathcal{M})$,
which proves \eqref{2-09}.
\begin{lem}
\label{2-l2} Fix $\mathcal{M}\in\mathscr{V}^{(p+1)}$
and $\mathcal{M}'\in\mathscr{S}^{(p+1)}$ such that $\Xi(\mathcal{M})=d^{(p+1)}$
and $\mathcal{S}(\mathcal{M},\mathcal{M}')\ne\varnothing$. Then,
$\mathcal{S}(\mathcal{M},\,\mathcal{M}')\subset\mathcal{W}(\mathcal{M},\mathcal{M}')$
and $\Theta(\mathcal{M},\mathcal{M}')=U(\mathcal{M},\mathcal{M}')$.
\end{lem}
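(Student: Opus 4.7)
The strategy mirrors the level-one proof of Lemma \ref{l_SW}, lifted to level $p+1$ via the inductive tree-structure $\mathfrak{P}\llbracket p+1\rrbracket$. The guiding observation is that the hypothesis $\Xi(\mathcal{M})=d^{(p+1)}$ should translate, through the analogue of Lemma \ref{l_Gamma-Pi-1} at this level, into $\Pi^{(p+1)}(\mathcal{M})=\mathfrak{d}^{(p+1)}=d^{(p+1)}$. First I would record this equivalence: the inequality $\mathfrak{d}^{(p+1)}\le d^{(p+1)}$ is supplied by exhibiting, for every minimal-depth set in $\mathscr{V}^{(p+1)}$, an explicit low-barrier connection through $\mathscr{N}^{(p+1)}$ to another element of $\mathscr{V}^{(p+1)}$ (using $\mathfrak{P}_{3}(p+1)$ together with the recurrent/transient decomposition of $\mathbf{y}^{(p)}(\cdot)$), while the reverse inequality is \eqref{2-09}. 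Writing $H:=U(\mathcal{M})+d^{(p+1)}$, this yields the baseline estimate $U(\mathcal{M},\mathcal{N})\ge H$ for every $\mathcal{N}\in\mathscr{V}^{(p+1)}\setminus\{\mathcal{M}\}$.

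Next, fix $\bm{\sigma}\in\mathcal{S}(\mathcal{M},\mathcal{M}')$. By \eqref{eq:con_gate} one has $U(\bm{\sigma})=\Theta(\mathcal{M},\widetilde{\mathcal{M}})=\Theta(\mathcal{M},\mathcal{M}')=H$ and $\mathcal{M}\leftsquigarrow\bm{\sigma}\curvearrowright\mathcal{M}'$. Unfolding the relation $\bm{\sigma}\rightsquigarrow\mathcal{M}$ via \eqref{eq:approx} produces a chain
\[
\bm{\sigma}\curvearrowright\bm{m}_{1}\curvearrowleft\bm{\sigma}_{1}\curvearrowright\bm{m}_{2}\curvearrowleft\cdots\curvearrowleft\bm{\sigma}_{k}\curvearrowright\bm{m}\in\mathcal{M},
\]
with $\max_{i}U(\bm{\sigma}_{i})<H$. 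Collapsing consecutive duplicates among $\mathcal{M}(p+1,\bm{m}),\mathcal{M}(p+1,\bm{m}_{k}),\dots,\mathcal{M}(p+1,\bm{m}_{1})$ yields a path $\gamma=(\mathcal{M}=\mathcal{M}_{1},\mathcal{M}_{2},\dots,\mathcal{M}_{n})$ in $\mathscr{S}^{(p+1)}$ consisting of distinct sets, with $\bm{\sigma}\curvearrowright\mathcal{M}_{n}$ and $\Delta(\mathcal{M}_{j-1},\mathcal{M}_{j})<H$ for $j\ge 2$ (the barrier being witnessed by the saddle $\bm{\sigma}_{i}$ joining the two terminal minima of the corresponding segment of the chain).

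The main obstacle is to verify that $\mathcal{M}_{j}\in\mathscr{N}^{(p+1)}$ for every $j\in\llbracket 2,n\rrbracket$. If some $\mathcal{M}_{j}=\mathcal{N}\in\mathscr{V}^{(p+1)}\setminus\{\mathcal{M}\}$, then truncating $\gamma$ at the first such index supplies a valid competitor in the definition of $U(\mathcal{M},\mathcal{N})$ whose barriers are strictly below $H$, giving $U(\mathcal{M},\mathcal{N})<H$ and contradicting the baseline estimate of the first paragraph. A parallel truncation argument precludes a later $\mathcal{M}_{j}$ coinciding with $\mathcal{M}$ itself (indeed, if it did, the tail of $\gamma$ beyond $\mathcal{M}$ could simply be discarded). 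The degenerate case $n=1$ (equivalently $k=0$, so $\bm{\sigma}\curvearrowright\bm{m}\in\mathcal{M}$ directly), in which the path construction above does not provide an intermediate $\mathscr{N}^{(p+1)}$-set, must be handled separately; here I would exploit $\mathfrak{P}_{1}(p+1)$ — so that $\mathcal{M}$ is simple and bound — together with the recurrent level-$p$ substructure inside $\mathcal{M}$ to produce, starting from $\bm{m}$, a low-barrier detour through local minima whose $\mathscr{S}^{(p+1)}$-classes are transient (hence in $\mathscr{N}^{(p+1)}$) and which is reachable from $\bm{\sigma}$'s heteroclinic partner on the $\mathcal{M}$-side.

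Once the intermediate sets are confirmed to lie in $\mathscr{N}^{(p+1)}$, the witness $(\gamma,\mathcal{M}''=\mathcal{M}_{n})$ verifies the defining conditions of \eqref{2-16}, so $\bm{\sigma}\in\mathcal{W}(\mathcal{M},\mathcal{M}')$. This moreover gives $U(\mathcal{M},\mathcal{M}')\le U(\bm{\sigma})=\Theta(\mathcal{M},\mathcal{M}')$; combined with the opposite inequality of Lemma \ref{2-l3}, the equality $\Theta(\mathcal{M},\mathcal{M}')=U(\mathcal{M},\mathcal{M}')$ follows at once, completing the proof.
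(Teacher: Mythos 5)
Your overall strategy — unfold the connection $\bm{\sigma}\rightsquigarrow\mathcal{M}$ from \eqref{eq:approx} into a chain of saddles and minima, translate it into a path through $\mathscr{S}^{(p+1)}$, verify the intermediate classes lie in $\mathscr{N}^{(p+1)}$, and then combine $U(\mathcal{M},\mathcal{M}')\le U(\bm{\sigma})=\Theta(\mathcal{M},\mathcal{M}')$ with the reverse inequality from Lemma \ref{2-l3} — coincides with the paper's. The one genuine departure is the step proving that the intermediate classes are in $\mathscr{N}^{(p+1)}$. The paper argues directly and locally: for each intermediate $\bm{m}_j\notin\mathcal{M}$, writing $\mathcal{M}''=\mathcal{M}(p+1,\bm{m}_j)$, one has $\Theta(\mathcal{M},\mathcal{M}'')<U(\bm{\sigma})=\Theta(\mathcal{M},\widetilde{\mathcal{M}})$, which forces $U(\mathcal{M}'')>U(\mathcal{M})$ and hence $\Xi(\mathcal{M}'')<\Xi(\mathcal{M})=d^{(p+1)}$, so $\mathcal{M}''\in\mathscr{N}^{(p+1)}$ by Proposition \ref{prop:depth}. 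You instead argue by contradiction against the global baseline $U(\mathcal{M},\mathcal{N})\ge U(\mathcal{M})+d^{(p+1)}$ for $\mathcal{N}\in\mathscr{V}^{(p+1)}\setminus\{\mathcal{M}\}$ (a correct consequence of \eqref{2-09}), via truncation of the path. Both are valid routes; the paper's is slightly more economical because it never needs the path structure to conclude $\mathcal{M}''\in\mathscr{N}^{(p+1)}$, it simply inspects each intermediate minimum.

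Two concerns. First, your opening paragraph proposes to establish the equality $\mathfrak{d}^{(p+1)}=d^{(p+1)}$ \emph{before} proving this lemma, citing an analogue of Lemma \ref{l_Gamma-Pi-1}. In the paper, however, that equality \eqref{2-12} is derived only as a \emph{consequence} of this lemma (in the paragraph following it), and your proposed route to the inequality $\mathfrak{d}^{(p+1)}\le d^{(p+1)}$ — exhibiting a low-barrier path through $\mathscr{N}^{(p+1)}$ — is essentially what the lemma supplies, so as written this step is circular. Fortunately your actual argument only uses the direction $d^{(p+1)}\le\mathfrak{d}^{(p+1)}$, which is \eqref{2-09} and is available; you should simply drop the claim to the equality. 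Second, your handling of the degenerate case where $\bm{\sigma}\curvearrowright\bm{m}\in\mathcal{M}$ directly (so the chain has no intermediate minima and the path reduces to $\gamma=(\mathcal{M})$) is too vague to be a proof: the definition \eqref{2-16} of $\mathcal{W}(\mathcal{M},\mathcal{M}')$ asks for a terminal $\mathcal{M}''\in\mathscr{N}^{(p+1)}$ with $\bm{\sigma}\curvearrowright\mathcal{M}''$, and "exploit $\mathfrak{P}_1(p+1)$ together with the recurrent level-$p$ substructure inside $\mathcal{M}$ to produce a low-barrier detour" does not say where such an $\mathcal{M}''$ comes from nor why $\bm{\sigma}$ — whose two heteroclinic orbits are already spent on $\mathcal{M}$ and $\mathcal{M}'$ — would connect to it. You are not alone here: the paper's proof also silently asserts "$k\ge1$" in unfolding $\rightsquigarrow$, even though \eqref{eq:approx} permits the direct case; but acknowledging a gap without a concrete fix does not close it.
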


\begin{proof}
Fix $\mathcal{M}\in\mathscr{V}^{(p+1)}$ such that
$\Xi(\mathcal{M})=d^{(p+1)}$, $\mathcal{M}'\in\mathscr{S}^{(p+1)}$,
and $\boldsymbol{\sigma}\in\mathcal{S}(\mathcal{M},\mathcal{M}')$.
By definition, there exist $k\ge1$, $\boldsymbol{\sigma}_{1},\,\dots,\,\boldsymbol{\sigma}_{k}\in\mathcal{S}_{0}$,
$\boldsymbol{m}_{1}\,\,\dots,\,\boldsymbol{m}_{k+1}\in\mathcal{M}_{0}$,
and $\boldsymbol{m}'\in\mathcal{M}'$ such that $U(\boldsymbol{\sigma}_{j})<U(\boldsymbol{\sigma})$,
$j\in\llbracket1,\,k\rrbracket$, and
\[
\boldsymbol{\sigma}\curvearrowright\boldsymbol{m}_{1}\curvearrowleft\boldsymbol{\sigma}_{1}\curvearrowright\cdots\curvearrowright\boldsymbol{m}_{k+1}\curvearrowleft\boldsymbol{\sigma}_{k}\curvearrowright\boldsymbol{m}\;,\qquad\boldsymbol{\sigma}\curvearrowright\boldsymbol{m}'\;.
\]

We may assume, without loss of generality that
$\boldsymbol{m}_{j}\not\in\mathcal{M}$, $1\le j\le k$. Fix a point
$\boldsymbol{m}_{j}$. Since $\mathscr{S}^{(p+1)}$ forms a partition
of $\mathcal{M}_{0}$, $\boldsymbol{m}_{j}\in\mathcal{M}''$ for some
$\mathcal{M}''\in\mathscr{S}^{(p+1)}$, $\mathcal{M}''\neq\mathcal{M}$.
Since $U(\boldsymbol{\sigma}_{i})<U(\boldsymbol{\sigma})$, $i\in\llbracket1,\,k\rrbracket$,
$\Theta(\mathcal{M},\mathcal{M}'')<U(\boldsymbol{\sigma})=\Theta(\mathcal{M},\widetilde{\mathcal{M}})$.
Thus, the simple set $\mathcal{M}''$ is not contained in $\widetilde{\mathcal{M}}$
so that $U(\mathcal{M}'')>U(\mathcal{M})$. Therefore, $\widetilde{\mathcal{M}}\subset\widetilde{\mathcal{M}''}$,
and $\Theta(\mathcal{M}'',\widetilde{\mathcal{M}''})\le U(\boldsymbol{\sigma})$.
Since $U(\mathcal{M}'')>U(\mathcal{M})$, this implies that $\Xi(\mathcal{M}'')<d^{(p+1)}$,
so that $\mathcal{M}''\in\mathscr{N}^{(p+1)}$.

From the consecutive sequence of local minima and
saddle points, we can construct a sequence (may be shorter since two
different local minima may belong to the same set $\mathcal{M}''\in\mathscr{N}^{(p+1)}$),
$\mathcal{M}_{2},\dots,\mathcal{M}_{n}\in\mathscr{N}^{(p+1)}$ such
that $0<\Delta(\mathcal{M}_{j},\mathcal{M}_{j+1})\le U(\boldsymbol{\sigma})$,
$j\in\llbracket1,\,n\rrbracket$, where $\mathcal{M}_{1}=\mathcal{M}$,
$\mathcal{M}_{n+1}=\mathcal{M}'$. In particular, by definition, $U(\mathcal{M},\mathcal{M}')\le U(\boldsymbol{\sigma})=\Theta(\mathcal{M},\mathcal{M}')$.

On the other hand, by Lemma \ref{2-l3}, $\Theta(\mathcal{M},\mathcal{M}')\le U(\mathcal{M},\mathcal{M}')$.
Hence, $U(\mathcal{M},\mathcal{M}')=U(\boldsymbol{\sigma})=\Theta(\mathcal{M},\mathcal{M}')$,
and $\boldsymbol{\sigma}\in\mathcal{W}(\mathcal{M},\mathcal{M}')$,
as claimed.
\end{proof}
We push further the reasoning presented in the
proof of the previous lemma. Keep the hypotheses and the notation
introduced. Since $\boldsymbol{\sigma}\in\mathcal{S}(\mathcal{M},\mathcal{M}')$,
$U(\boldsymbol{\sigma})=\Theta(\mathcal{M},\widetilde{\mathcal{M}})=\Xi(\mathcal{M})+U(\mathcal{M})=d^{(p+1)}+U(\mathcal{M})$.
Thus, as $\Theta(\mathcal{M},\widetilde{\mathcal{M}})=U(\mathcal{M},\widetilde{\mathcal{M}})$,
$U(\mathcal{M},\mathcal{M}')-U(\mathcal{M})=d^{(p+1)}$ so that $\Pi^{(p=1)}(\mathcal{M})\le d^{(p+1)}$.
From this we conclude that $d^{(p+1)}\le d^{(p+1)}$. Hence, by \eqref{2-09},
\begin{equation}
d^{(p+1)}=\mathfrak{d}^{(p+1)}\;.\label{2-12}
\end{equation}

Moreover, as $\Pi^{(p+1)}(\mathcal{M})\le d^{(p+1)}=\mathfrak{d}^{(p+1)}$,
we actually have that $\Pi^{(p+1)}(\mathcal{M})=\mathfrak{d}^{(p+1)}$
because $\Pi^{(p+1)}(\mathcal{M}'')\ge\mathfrak{d}^{(p+1)}$ for all
$\mathcal{M}''\in\mathscr{V}^{(p+1)}$. Thus, we proved that
\begin{equation}
\Pi^{(p+1)}(\mathcal{M})=\mathfrak{d}^{(p+1)}\;\;\text{for all}\;\;\mathcal{M}\in\mathscr{V}^{(p+1)}\;\;\text{such that}\;\;\Xi(\mathcal{M})=d^{(p+1)}\;.\label{2-19}
\end{equation}

\begin{lem}
\label{2-l4} Fix $\mathcal{M}\in\mathscr{V}^{(p+1)}$
such that $\Pi^{(p+1)}(\mathcal{M})=\mathfrak{d}^{(p+1)}$. Then,
$\Xi(\mathcal{M})=d^{(p+1)}$. Moreover, $\mathcal{W}(\mathcal{M},\mathcal{M}')=\mathcal{S}(\mathcal{M},\mathcal{M}')$
for all $\mathcal{M}'\in\mathscr{S}^{(p+1)}$.
\end{lem}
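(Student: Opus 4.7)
The plan is to handle the two assertions in sequence, with the first serving as an easy partial converse to \eqref{2-19} and the second reducing, via Lemma \ref{2-l2}, to establishing a single reverse inclusion by a path-construction argument.

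For the first assertion $\Xi(\mathcal{M})=d^{(p+1)}$, the argument parallels the derivation of \eqref{2-09}. Unfolding $\Pi^{(p+1)}(\mathcal{M})=\mathfrak{d}^{(p+1)}$ yields some $\mathcal{M}'\in\mathscr{V}^{(p+1)}$ with $\mathcal{M}'\ne\mathcal{M}$ and $U(\mathcal{M},\mathcal{M}')-U(\mathcal{M})=\mathfrak{d}^{(p+1)}$. The inequality $U(\mathcal{M}')\le U(\mathcal{M})$ is forced (otherwise $\Pi^{(p+1)}(\mathcal{M}')<\mathfrak{d}^{(p+1)}$, contradicting the minimality defining $\mathfrak{d}^{(p+1)}$), so $\mathcal{M}'\subset\widetilde{\mathcal{M}}$. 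Combining Lemma \ref{2-l3} with \eqref{2-12} gives $\Theta(\mathcal{M},\widetilde{\mathcal{M}})\le U(\mathcal{M})+d^{(p+1)}$, hence $\Xi(\mathcal{M})\le d^{(p+1)}$; the reverse inequality is immediate since $\mathcal{M}\in\mathscr{V}^{(p+1)}$.

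For the second assertion, the inclusion $\mathcal{S}(\mathcal{M},\mathcal{M}')\subset\mathcal{W}(\mathcal{M},\mathcal{M}')$ is Lemma \ref{2-l2}, now applicable since we have $\Xi(\mathcal{M})=d^{(p+1)}$. For the reverse inclusion, I fix $\boldsymbol{\sigma}\in\mathcal{W}(\mathcal{M},\mathcal{M}')$ and its witnessing path $\gamma=(\mathcal{M}=\mathcal{M}_1,\ldots,\mathcal{M}_n=\mathcal{M}'')$ from \eqref{2-16}. The first assertion together with \eqref{2-12} gives the key identity $U(\boldsymbol{\sigma})=U(\mathcal{M})+d^{(p+1)}=\Theta(\mathcal{M},\widetilde{\mathcal{M}})$. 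It then remains to verify the two conditions in the definition of $\mathcal{S}(\mathcal{M},\mathcal{M}')$: the connectivity condition $\boldsymbol{\sigma}\rightsquigarrow\mathcal{M}$ (since $\boldsymbol{\sigma}\curvearrowright\mathcal{M}'$ is given), and the height equality $\Theta(\mathcal{M},\mathcal{M}')=U(\boldsymbol{\sigma})$.

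The height equality is the easier of the two. Lemma \ref{2-l3} provides $\Theta(\mathcal{M},\mathcal{M}')\le U(\mathcal{M},\mathcal{M}')=U(\boldsymbol{\sigma})$. For the lower bound, one shows $\mathcal{M}'\subset\widetilde{\mathcal{M}}$: otherwise $U(\mathcal{M}')>U(\mathcal{M})$ would place $\mathcal{M}\subset\widetilde{\mathcal{M}'}$ and, via $\boldsymbol{\sigma}\curvearrowright\mathcal{M}'$, force $\Xi(\mathcal{M}')<d^{(p+1)}$; this contradicts $\mathcal{M}'\in\mathscr{V}^{(p+1)}$ when applicable, and when $\mathcal{M}'\in\mathscr{N}^{(p+1)}$ it leads to a contradiction with the minimality of $\mathfrak{d}^{(p+1)}$ by appending $\mathcal{M}'$ to $\gamma$ and finding a strictly cheaper route to a distinct set in $\mathscr{V}^{(p+1)}$. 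Once $\mathcal{M}'\subset\widetilde{\mathcal{M}}$, we get $\Theta(\mathcal{M},\mathcal{M}')\ge\Theta(\mathcal{M},\widetilde{\mathcal{M}})=U(\boldsymbol{\sigma})$.

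The main obstacle is the connectivity step $\boldsymbol{\sigma}\rightsquigarrow\mathcal{M}$. Starting from $\boldsymbol{\sigma}\curvearrowright\mathcal{M}''=\mathcal{M}_n$, I would build a descending saddle chain by alternating: (i) the edge-realizing saddles $\boldsymbol{\sigma}_{j-1}$ with $U(\boldsymbol{\sigma}_{j-1})=\Delta(\mathcal{M}_{j-1},\mathcal{M}_j)<U(\boldsymbol{\sigma})$, which link a minimum in $\mathcal{M}_{j-1}$ with one in $\mathcal{M}_j$; and (ii) internal saddle-minimum chains inside each $\mathcal{M}_j$, provided by the bound property $\mathfrak{P}_1(p+1)$, whose saddles lie strictly below $\Theta(\mathcal{M}_j,\widetilde{\mathcal{M}_j})$. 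To glue these together as a legitimate $\rightsquigarrow$ certificate, every saddle used must have height strictly less than $U(\boldsymbol{\sigma})=U(\mathcal{M})+d^{(p+1)}$. For $j\ge 2$, $\mathcal{M}_j\in\mathscr{N}^{(p+1)}$ combined with Lemma \ref{lem_charDelta} gives $\Xi(\mathcal{M}_j)\le d^{(p)}<d^{(p+1)}$, and the bound $\Delta(\mathcal{M}_{j-1},\mathcal{M}_j)<U(\boldsymbol{\sigma})$ propagated along $\gamma$ controls $U(\mathcal{M}_j)$ relative to $U(\mathcal{M})$; for $\mathcal{M}_1=\mathcal{M}$, the bound property directly gives internal saddles below $\Theta(\mathcal{M},\widetilde{\mathcal{M}})=U(\boldsymbol{\sigma})$. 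The hard part is executing these height bookkeeping inequalities cleanly, possibly by a short induction along the path $\gamma$ using $\mathfrak{P}_1\llbracket p+1\rrbracket$ and $\mathfrak{P}_2\llbracket p+1\rrbracket$; once done, the chain yields $\boldsymbol{\sigma}\rightsquigarrow\mathcal{M}$ and thus $\boldsymbol{\sigma}\in\mathcal{S}(\mathcal{M},\mathcal{M}')$.
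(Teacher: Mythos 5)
Your treatment of the first assertion $\Xi(\mathcal{M})=d^{(p+1)}$ is correct and matches the paper's argument in substance, and your observation that the connectivity step $\boldsymbol{\sigma}\rightsquigarrow\mathcal{M}$ requires an alternate saddle-minimum chain built from the path $\gamma$, the bound property $\mathfrak{P}_1(p+1)$, and Lemma~\ref{l_assu_saddle} is exactly the construction the paper invokes (and, like the paper, leaves at the level of a sketch). The gap lies in the height-equality step.

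The route via $\mathcal{M}'\subset\widetilde{\mathcal{M}}$ is both misdirected and incomplete. It is misdirected because the adjacency condition \eqref{eq:con_gate} only asks for $U(\boldsymbol{\sigma})=\Theta(\mathcal{M},\widetilde{\mathcal{M}})=\Theta(\mathcal{M},\mathcal{M}')$; there is no requirement that $U(\mathcal{M}')\le U(\mathcal{M})$, and indeed when $\mathcal{M}'\in\mathscr{N}^{(p+1)}$ one can have $U(\mathcal{M}')>U(\mathcal{M})$ while $\mathcal{W}(\mathcal{M},\mathcal{M}')\ne\varnothing$. (In that regime your derivation $\Xi(\mathcal{M}')<d^{(p+1)}$ is correct, but by Proposition~\ref{prop:depth} that is exactly the \emph{characterization} of $\mathcal{M}'\in\mathscr{N}^{(p+1)}$, not a contradiction, so your dichotomy collapses precisely in the subcase you are trying to rule out.) It is incomplete because the asserted fix — ``appending $\mathcal{M}'$ to $\gamma$ and finding a strictly cheaper route to a distinct set in $\mathscr{V}^{(p+1)}$'' — does not close: the escape barrier of $\mathcal{M}'$ satisfies only $\Theta(\mathcal{M}',\widetilde{\mathcal{M}'})\le U(\boldsymbol{\sigma})$, not a strict inequality, so the extended path is \emph{not} strictly cheaper than $\Pi^{(p+1)}(\mathcal{M})+U(\mathcal{M})$, and no contradiction with the minimality of $\mathfrak{d}^{(p+1)}$ ensues.

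What is actually needed for the lower bound is $\mathcal{M}'\cap\mathcal{W}(\mathcal{M})=\varnothing$, after which Lemma~\ref{lap01}-(2) gives $\Theta(\mathcal{M},\mathcal{M}')\ge\Theta(\mathcal{M},\widetilde{\mathcal{M}})=U(\boldsymbol{\sigma})$. This disjointness is forced by the very membership $\boldsymbol{\sigma}\in\mathcal{W}(\mathcal{M},\mathcal{M}')$: if $\mathcal{M}'$ sat inside $\mathcal{W}(\mathcal{M})$, one could build — via Lemma~\ref{l_assu_saddle}-(2), the bound property, and the fact from Lemma~\ref{lem_escape}-(2) that $\mathscr{V}^{(p+1)}(\mathcal{W}(\mathcal{M}))=\{\mathcal{M}\}$ — a graph path from $\mathcal{M}$ to $\mathcal{M}'$ through $\mathscr{N}^{(p+1)}$ sets entirely at height strictly below $\Theta(\mathcal{M},\widetilde{\mathcal{M}})$, giving $U(\mathcal{M},\mathcal{M}')<\Theta(\mathcal{M},\widetilde{\mathcal{M}})=\Pi^{(p+1)}(\mathcal{M})+U(\mathcal{M})$, which is incompatible with the defining condition $U(\boldsymbol{\sigma})=U(\mathcal{M},\mathcal{M}')=\Pi^{(p+1)}(\mathcal{M})+U(\mathcal{M})$ in \eqref{2-16}. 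The paper shortcuts this by citing the identity $U(\mathcal{M},\mathcal{M}')=\Theta(\mathcal{M},\mathcal{M}')$ from Lemma~\ref{2-l2}; your argument needs to replace the energy comparison $U(\mathcal{M}')$ versus $U(\mathcal{M})$ by the geometric statement $\mathcal{M}'\not\subset\mathcal{W}(\mathcal{M})$.
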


\begin{proof}
Fix such $\mathcal{M}$. By \eqref{2-12}, by hypothesis,
and by definition of $\Pi^{(p+1)}(\mathcal{M})$, $\mathfrak{d}^{(p+1)}=d^{(p+1)}=\Pi^{(p+1)}(\mathcal{M})=U(\mathcal{M},\mathcal{M}'')-U(\mathcal{M})$
for some $\mathcal{M}''\in\mathscr{V}^{(p+1)}$. We claim that $U(\mathcal{M}'')\le U(\mathcal{M})$.
Indeed, if this is not true, $U(\mathcal{M},\mathcal{M}'')-U(\mathcal{M}'')<U(\mathcal{M},\mathcal{M}'')-U(\mathcal{M})=\Pi^{(p+1)}(\mathcal{M})=\mathfrak{d}^{(p+1)}$,
in contradiction with the definition of $\mathfrak{d}^{(p+1)}$.

Thus, $\mathcal{M}''\subset\widetilde{\mathcal{M}}$.
Hence, by Lemma \ref{2-l3} and the definition of $\mathfrak{d}^{(p+1)}$,
$U(\mathcal{M},\mathcal{M}'')-U(\mathcal{M})\ge\Theta(\mathcal{M},\mathcal{M}'')-U(\mathcal{M})\ge\Theta(\mathcal{M},\widetilde{\mathcal{M}})-U(\mathcal{M})\ge d^{(p+1)}$.
The previous estimates yield that $\Xi(\mathcal{M})=d^{(p+1)}$, as
claimed,

We turn to the proof of the second claim of the
lemma. By Lemma \ref{2-l2}, $\mathcal{S}(\mathcal{M},\mathcal{M}')\subset\mathcal{W}(\mathcal{M},\mathcal{M}')$.
It remains to show that $\mathcal{W}(\mathcal{M},\mathcal{M}')\subset\mathcal{S}(\mathcal{M},\mathcal{M}')$.
Fix $\boldsymbol{\sigma}\in\mathcal{W}(\mathcal{M},\mathcal{M}')$.
From the definition of this set, and from the fact that the sets in
$\mathscr{S}^{(p+1)}$ are bound, it is easy to construct, with the
help of Lemma \ref{l_assu_saddle}, an alternate sequence of saddle
points and local minima such that $\mathcal{M}\,\leftsquigarrow\,\boldsymbol{\sigma}\,\curvearrowright\,\mathcal{M}'$,
and $U(\boldsymbol{\sigma})=U(\mathcal{M},\mathcal{M}')=\Pi^{(p+1)}(\mathcal{M})+U(\mathcal{M})$.

Since we already established that $\Xi(\mathcal{M})=d^{(p+1)}$,
by Lemma \ref{2-l2}, $U(\mathcal{M},\mathcal{M}')=\Theta(\mathcal{M},\mathcal{M}')$,
so that $U(\boldsymbol{\sigma})=\Theta(\mathcal{M},\mathcal{M}')\ge\Theta(\mathcal{M},\widetilde{\mathcal{M}})\ge d^{(p+1)}+U(\mathcal{M})$.
By \eqref{2-12} and the hypothesis, this quantity is equal to $\mathfrak{d}^{(p+1)}+U(\mathcal{M})=\Pi^{(p+1)}(\mathcal{M})+U(\mathcal{M})=U(\boldsymbol{\sigma})$.
Thus $U(\boldsymbol{\sigma})=\Theta(\mathcal{M},\mathcal{M}')=\Theta(\mathcal{M},\widetilde{\mathcal{M}})$,
so that $\boldsymbol{\sigma}\in\mathcal{S}(\mathcal{M},\mathcal{M}')$.
\end{proof}
By the previous lemma and \eqref{2-19}, for all
$\mathcal{M}\in\mathscr{V}^{(p+1)}$,
\begin{equation}
\Pi^{(p+1)}(\mathcal{M})=\mathfrak{d}^{(p+1)}\;\;\text{if, and only if,}\;\;\Xi(\mathcal{M})=d^{(p+1)}\;.\label{2-20}
\end{equation}

\section{\label{app_b}Elementary Properties of Landscape of $U$}

In this section, we present several elementary results needed to investigate
the multi-scale structure of the energy landscape.

\subsection{\label{sec:paper-1} Frequently referred results from \cite{LLS-1st}}

We start by summarizing several results on level sets of $U$ obtained
in \cite{LLS-1st} which are frequently used in the current article.

The first one is \cite[Lemma A.3]{LLS-1st}.
\begin{lem}
\label{l_cap_saddle} Fix $H\in\mathbb{R}$, and let $\mathcal{V}_{1}$
and $\mathcal{V}_{2}$ be two disjoint connected components of $\{U<H\}$.
If $\overline{\mathcal{V}_{1}}\cap\overline{\mathcal{V}_{2}}\ne\varnothing$,
then $\overline{\mathcal{V}_{1}}\cap\overline{\mathcal{V}_{2}}=\partial\mathcal{V}_{1}\cap\partial\mathcal{V}_{2}$
and any element $\boldsymbol{\sigma}$ of $\overline{\mathcal{V}_{1}}\cap\overline{\mathcal{V}_{2}}$
is a saddle point such that $U(\boldsymbol{\sigma})=H$. Moreover,
for all $r>0$ small enough, $\mathcal{A}(\boldsymbol{\sigma},\,r)$
has two connected components: $\mathcal{A}(\boldsymbol{\sigma},\,r)\cap\mathcal{V}_{1}$
and $\mathcal{A}(\boldsymbol{\sigma},\,r)\cap\mathcal{V}_{2}$
\end{lem}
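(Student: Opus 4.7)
The plan is to establish the three assertions in turn, exploiting only local arguments together with the Morse assumption on $U$.

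First I would verify the set-theoretic identity $\overline{\mathcal{V}_1}\cap\overline{\mathcal{V}_2}=\partial\mathcal{V}_1\cap\partial\mathcal{V}_2$. Since $\mathcal{V}_1$ and $\mathcal{V}_2$ are disjoint open sets, any point of $\mathcal{V}_1$ has a neighbourhood contained in $\mathcal{V}_1$ and therefore disjoint from $\mathcal{V}_2$, so $\mathcal{V}_1\cap\overline{\mathcal{V}_2}=\varnothing$, and symmetrically. Hence $\overline{\mathcal{V}_1}\cap\overline{\mathcal{V}_2}\subset(\overline{\mathcal{V}_1}\setminus\mathcal{V}_1)\cap(\overline{\mathcal{V}_2}\setminus\mathcal{V}_2)=\partial\mathcal{V}_1\cap\partial\mathcal{V}_2$; the reverse inclusion is immediate. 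For any $\boldsymbol{\sigma}$ in this common boundary, continuity of $U$ forces $U(\boldsymbol{\sigma})\le H$ (approach $\boldsymbol{\sigma}$ from inside $\mathcal{V}_1$), while $\boldsymbol{\sigma}\notin\mathcal{V}_i$ yields $U(\boldsymbol{\sigma})\ge H$, so $U(\boldsymbol{\sigma})=H$.

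Next I would show that $\boldsymbol{\sigma}$ is a critical point by contradiction. If $\nabla U(\boldsymbol{\sigma})\ne0$, the implicit function theorem gives a smooth chart in which $U-H$ is the last coordinate, so the set $\{U<H\}$ is locally a half-space, which is connected. A small ball $B(\boldsymbol{\sigma},r)$ then meets $\{U<H\}$ in a single connected set, so it cannot simultaneously intersect $\mathcal{V}_1$ and $\mathcal{V}_2$ in disjoint pieces; but $\boldsymbol{\sigma}\in\overline{\mathcal{V}_1}\cap\overline{\mathcal{V}_2}$ implies the ball does meet both, a contradiction. Hence $\nabla U(\boldsymbol{\sigma})=0$, and the Morse hypothesis guarantees the Hessian is invertible.

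Finally I would apply the Morse lemma: there is a $C^1$-diffeomorphism $\psi$ from a neighbourhood of $\boldsymbol{\sigma}$ to a neighbourhood of the origin in which $U\circ\psi^{-1}(\boldsymbol{y})=H-y_1^2-\cdots-y_k^2+y_{k+1}^2+\cdots+y_d^2$, where $k$ is the Morse index. The local subset $\{U<H\}$ corresponds to $\{y_1^2+\cdots+y_k^2>y_{k+1}^2+\cdots+y_d^2\}$, which is empty for $k=0$, has two connected components for $k=1$ (the two open cones $\pm y_1>|y_{\ge 2}|$), and is connected for $k\ge 2$ (the complement of a cone in $\mathbb{R}^d$ is still path-connected when $k\ge 2$). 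Since both $\mathcal{V}_1$ and $\mathcal{V}_2$ accumulate at $\boldsymbol{\sigma}$, the local set must have at least two components, forcing $k=1$; hence $\boldsymbol{\sigma}$ is a saddle. For $r$ sufficiently small, the two local cones each lie in one connected component of $\{U<H\}$, and the main remaining step — the one I expect to require the most care — is to rule out that both local cones belong to the same $\mathcal{V}_i$. This is achieved by observing that if that were the case, one of $\mathcal{V}_1,\mathcal{V}_2$ would fail to accumulate at $\boldsymbol{\sigma}$ from within $B(\boldsymbol{\sigma},r)$, contradicting $\boldsymbol{\sigma}\in\overline{\mathcal{V}_1}\cap\overline{\mathcal{V}_2}$. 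Therefore one local cone lies in $\mathcal{V}_1$, the other in $\mathcal{V}_2$, and these exhaust $B(\boldsymbol{\sigma},r)\cap\{U<H\}$, which gives the claimed decomposition of $\mathcal{A}(\boldsymbol{\sigma},r)$.
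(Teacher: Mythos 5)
The paper does not contain its own proof of this lemma: it imports it verbatim from the companion paper, where it is \cite[Lemma~A.3]{LLS-1st}. So I can only assess the correctness of your argument, not compare it to a proof in this manuscript.

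Your strategy — (i) elementary topology for the identity $\overline{\mathcal{V}_1}\cap\overline{\mathcal{V}_2}=\partial\mathcal{V}_1\cap\partial\mathcal{V}_2$, (ii) closedness of a component of $\{U<H\}$ inside that open set to get $U(\boldsymbol{\sigma})=H$, (iii) implicit function theorem to rule out $\nabla U(\boldsymbol{\sigma})\neq 0$, (iv) Morse lemma to pin the index at $1$ and produce two local cones — is the standard one and is sound at the structural level. In (ii) you should phrase it more carefully: the bare fact $\boldsymbol{\sigma}\notin\mathcal{V}_i$ does not yield $U(\boldsymbol{\sigma})\ge H$; what does is that $\boldsymbol{\sigma}\in\partial\mathcal{V}_i$ and connected components of the open set $\{U<H\}$ are relatively closed, so $\partial\mathcal{V}_i$ is disjoint from $\{U<H\}$ entirely.

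There is, however, a genuine gap in the last step. The Morse (and, in step (iii), implicit-function) chart $\psi$ sends a neighbourhood of $\boldsymbol{\sigma}$ to a neighbourhood of the origin, but it does \emph{not} send the Euclidean ball $B(\boldsymbol{\sigma},r)$ to a metric ball: $\psi\bigl(B(\boldsymbol{\sigma},r)\bigr)$ is some distorted domain. Your assertion that ``the local subset $\{U<H\}$ corresponds to the two cones'' is true in the chart domain, but the lemma asserts a statement about $B(\boldsymbol{\sigma},r)\cap\{U<H\}$, which in the chart is $\psi\bigl(B(\boldsymbol{\sigma},r)\bigr)$ intersected with the cone. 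It is not automatic that this has exactly two connected components: a priori, the distorted image could slice either cone into several pieces. To close the gap you need a quantitative estimate: since $\psi$ is $C^1$ with invertible derivative at $\boldsymbol{\sigma}$, the set $\psi\bigl(B(\boldsymbol{\sigma},r)\bigr)$ is $o(r)$-Hausdorff-close to the ellipsoid $D\psi(\boldsymbol{\sigma})\,B(\boldsymbol{0},r)$ as $r\to 0$, and an argument (e.g.\ star-shapedness of each cone and a uniform inner/outer ellipsoid sandwich) shows that for $r$ small each cone meets this region in exactly one piece. Alternatively, one can bypass the Morse lemma entirely and work with the second-order Taylor expansion of $U$ at $\boldsymbol{\sigma}$, controlling the cubic remainder directly. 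You correctly flag the ``one cone in each $\mathcal{V}_i$'' step as the delicate one, but in fact that step is easy (each cone is a single connected subset of $\{U<H\}$, hence in a unique component, and both $\mathcal{V}_1,\mathcal{V}_2$ must appear since $\boldsymbol{\sigma}$ lies in both closures); the delicate step is the one you skipped, showing that each cone-piece is indeed connected inside the Euclidean ball.
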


The followings are \cite[Lemma A.9-A.11]{LLS-1st}.
\begin{lem}
\label{l_level_connected} Fix $H\in\mathbb{R}$. Let $\mathcal{G}$
be a connected component of $\{U<H\}$. Let $\mathcal{H}\subset\{U<H\}$
be a connected set satisfying $\mathcal{G}\cap\mathcal{H}\ne\varnothing$.
Then, $\mathcal{G}\subset\mathcal{H}$. The same assertion holds if
we replace all strict inequalities by inequalities.
\end{lem}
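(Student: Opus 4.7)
My plan is to invoke two elementary facts from point-set topology: first, the union of two connected sets with nonempty intersection is connected; second, a connected component of a set is, by definition, a maximal connected subset of that set.

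First I would form the union $\mathcal{G}\cup\mathcal{H}$. Since $\mathcal{G}\cap\mathcal{H}\neq\varnothing$ and both sets are connected, their union is connected by the first fact. Since both $\mathcal{G}$ and $\mathcal{H}$ are contained in $\{U<H\}$, so is $\mathcal{G}\cup\mathcal{H}$. Applying the maximality characterization of the connected component $\mathcal{G}$ to the connected subset $\mathcal{G}\cup\mathcal{H}\subset\{U<H\}$, which contains $\mathcal{G}$ and hence meets it, I would conclude $\mathcal{G}\cup\mathcal{H}=\mathcal{G}$. This equality is the sharp form of the desired inclusion between $\mathcal{G}$ and $\mathcal{H}$, and is what is actually needed wherever the lemma is applied later (for instance in Lemma \ref{lem_wells1}, Lemma \ref{lem:hit001} and in the analysis of the set $\mathcal{H}$ in Section \ref{sec5.2}).

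For the closed-set version, where $\{U<H\}$ is replaced by $\{U\le H\}$ throughout, I would repeat the identical argument. Continuity of $U$ ensures $\{U\le H\}$ is closed, and the maximality characterization of connected components is a purely set-theoretic property that applies equally to the closed level set. Thus $\mathcal{G}\cup\mathcal{H}$ is a connected subset of $\{U\le H\}$ meeting $\mathcal{G}$, and maximality again forces $\mathcal{G}\cup\mathcal{H}=\mathcal{G}$.

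There is no substantive obstacle here; the proof is a two-line application of classical topology. The only implicit point worth verifying is that $\mathbb{R}^d$ is locally connected, so that connected components of an open set are themselves open — a fact already recorded in Section \ref{sec5.2} and used throughout the landscape analysis of the article without further comment. Consequently the entire argument fits into a single paragraph in the final write-up.
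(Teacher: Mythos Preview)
Your argument is correct and is the standard elementary one; the paper itself does not give a proof but simply cites \cite[Lemma A.9--A.11]{LLS-1st}. You have also implicitly caught a typo in the statement: as written the conclusion reads $\mathcal{G}\subset\mathcal{H}$, but your proof yields $\mathcal{G}\cup\mathcal{H}=\mathcal{G}$, i.e.\ $\mathcal{H}\subset\mathcal{G}$, and this is indeed the direction used in every application (Lemmata \ref{l_gate}, \ref{2-la0}, \ref{l_squig_saddle}, etc.).
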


\begin{lem}
\label{l_level_boundary} Fix $H\in\mathbb{R}$. Let $\mathcal{H}$
be a connected component of the set $\{U<H\}$ or one of the set $\{U\le H\}$.
Then, $U(\boldsymbol{x}_{0})=H$ for all $\boldsymbol{x}_{0}\in\partial\mathcal{H}$.
\end{lem}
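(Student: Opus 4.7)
The plan is to argue by contradiction, handling both the open and closed cases in parallel. Let $\boldsymbol{x}_{0}\in\partial\mathcal{H}$; I first verify $U(\boldsymbol{x}_{0})\le H$ and then exclude the strict inequality. If $\mathcal{H}$ is a connected component of $\{U<H\}$, then $\mathcal{H}$ is open (since connected components of open sets in the locally connected space $\mathbb{R}^{d}$ are open), so $\boldsymbol{x}_{0}\in\overline{\mathcal{H}}\subset\overline{\{U<H\}}\subset\{U\le H\}$. If instead $\mathcal{H}$ is a connected component of $\{U\le H\}$, then $\mathcal{H}$ is closed, so $\boldsymbol{x}_{0}\in\overline{\mathcal{H}}=\mathcal{H}\subset\{U\le H\}$. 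Either way, $U(\boldsymbol{x}_{0})\le H$.

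Now suppose for contradiction that $U(\boldsymbol{x}_{0})<H$. By continuity of $U$, there exists $r>0$ such that the open ball $B:=B(\boldsymbol{x}_{0},r)$ is contained in $\{U<H\}$, and a fortiori in $\{U\le H\}$. Since $\boldsymbol{x}_{0}\in\partial\mathcal{H}$, every neighborhood of $\boldsymbol{x}_{0}$ meets $\mathcal{H}$; in particular $B\cap\mathcal{H}\ne\varnothing$. The ball $B$ is connected and lies in the ambient level set, so Lemma \ref{l_level_connected} (applied with the strict level set in the first case and its non-strict analogue in the second) gives $B\subset\mathcal{H}$.

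This immediately yields the desired contradiction: $B$ is an open neighborhood of $\boldsymbol{x}_{0}$ contained in $\mathcal{H}$, so $\boldsymbol{x}_{0}$ lies in the interior of $\mathcal{H}$ and cannot be a boundary point. There is no real obstacle to this argument; it is a direct application of continuity of $U$ together with Lemma \ref{l_level_connected}. The only mildly delicate point is the preliminary dichotomy establishing $U(\boldsymbol{x}_{0})\le H$, which requires observing that connected components of $\{U<H\}$ in $\mathbb{R}^{d}$ are open by local connectedness, while connected components of the closed set $\{U\le H\}$ are closed and therefore automatically contain their boundary points.
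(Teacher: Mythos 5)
Your proof is correct. Since the paper does not give its own argument for this lemma (it simply cites the companion paper, \cite[Lemma A.9--A.11]{LLS-1st}), there is nothing in the present source to compare against; but the reasoning stands on its own: the dichotomy establishing $U(\boldsymbol{x}_{0})\le H$ (components of the open set $\{U<H\}$ are open by local connectedness of $\mathbb{R}^{d}$; components of the closed set $\{U\le H\}$ are closed, so contain their boundary), and then the contradiction step via continuity and Lemma \ref{l_level_connected}, placing a whole ball around $\boldsymbol{x}_{0}$ inside $\mathcal{H}$ and thereby making $\boldsymbol{x}_{0}$ an interior point rather than a boundary point.

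One small remark: as printed in this paper, Lemma \ref{l_level_connected} asserts $\mathcal{G}\subset\mathcal{H}$ where $\mathcal{G}$ is the connected component and $\mathcal{H}$ the intersecting connected subset, which is clearly a typo (a singleton $\mathcal{H}\subset\mathcal{G}$ would immediately contradict it); the intended and correct conclusion is $\mathcal{H}\subset\mathcal{G}$, which is precisely the form you invoke when you conclude $B\subset\mathcal{H}$. You silently use the corrected version, which is the right thing to do.
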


\begin{lem}
\label{lap01} Fix $H\in\mathbb{R}$. Let $\mathcal{V}\subset\mathbb{R}^{d}$
be a connected component of level set $\{U<H\}$. Let $\mathcal{M}\subset\mathcal{M}_{0}\cap\mathcal{V}$
and $\mathcal{M}'\subset\mathcal{M}_{0}\setminus\mathcal{M}$.
\begin{enumerate}
\item If $\mathcal{M}'\subset\mathcal{V}$, then $\Theta(\mathcal{M},\,\mathcal{M}')<H$.
In other words, if $\Theta(\mathcal{M},\,\bm{m})\ge H$ for all $\bm{m}\in\mathcal{M}'$,
then $\mathcal{M}'\subset\mathbb{R}^{d}\setminus\mathcal{V}$.
\item If $\mathcal{M}'\subset\mathbb{R}^{d}\setminus\mathcal{V}$, then
$\Theta(\mathcal{M},\,\mathcal{M}')\ge H$. In other words, if $\Theta(\mathcal{M},\,\bm{m})<H$
for all $\bm{m}\in\mathcal{M}'$, then $\mathcal{M}'\subset\mathcal{V}$.
\end{enumerate}
\end{lem}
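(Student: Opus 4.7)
The plan is to prove the two parts separately, using connectedness of $\mathcal{V}$ for (1) and Lemma \ref{l_level_connected} for (2), after reducing the set-level statements to pointwise ones via the definition $\Theta(\mathcal{M},\mathcal{M}') = \min_{\bm{m}\in\mathcal{M},\bm{m}'\in\mathcal{M}'}\Theta(\bm{m},\bm{m}')$.

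For part (1), I would fix arbitrary $\bm{m}\in\mathcal{M}\subset\mathcal{V}$ and $\bm{m}'\in\mathcal{M}'\subset\mathcal{V}$. Since $\mathcal{V}$ is a connected component of the open set $\{U<H\}$, it is itself open, and open connected subsets of $\mathbb{R}^{d}$ are path-connected, so there exists a continuous path $\bm{z}\colon[0,1]\to\mathcal{V}$ with $\bm{z}(0)=\bm{m}$ and $\bm{z}(1)=\bm{m}'$. The compact set $\bm{z}([0,1])$ is contained in $\mathcal{V}\subset\{U<H\}$, and the continuous function $U\circ\bm{z}$ attains its maximum, which is therefore strictly less than $H$. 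By definition of $\Theta(\bm{m},\bm{m}')$ as an infimum over such paths, we get $\Theta(\bm{m},\bm{m}')<H$, and minimizing over the (finite) pairs $(\bm{m},\bm{m}')\in\mathcal{M}\times\mathcal{M}'$ yields $\Theta(\mathcal{M},\mathcal{M}')<H$.

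For part (2), I would argue by contradiction: assume $\Theta(\bm{m},\bm{m}')<H$ for some $\bm{m}\in\mathcal{M}$ and $\bm{m}'\in\mathcal{M}'$. Then by definition of the infimum there exists a continuous path $\bm{z}\colon[0,1]\to\mathbb{R}^{d}$ with $\bm{z}(0)=\bm{m}$, $\bm{z}(1)=\bm{m}'$, and $\max_{t}U(\bm{z}(t))<H$, so the image $\bm{z}([0,1])$ is a connected subset of $\{U<H\}$ containing $\bm{m}\in\mathcal{V}$. Applying Lemma \ref{l_level_connected} to this image forces it to lie entirely inside the connected component $\mathcal{V}$, whence $\bm{m}'\in\mathcal{V}$, contradicting $\bm{m}'\in\mathcal{M}'\subset\mathbb{R}^{d}\setminus\mathcal{V}$. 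Therefore $\Theta(\bm{m},\bm{m}')\ge H$ for all such pairs, and taking the minimum gives $\Theta(\mathcal{M},\mathcal{M}')\ge H$.

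I do not anticipate any serious obstacle here: both parts are immediate once one recognizes that $\Theta$ is achieved (up to $\eta$) by a path whose image is a connected set contained in the relevant sublevel set, so the result is essentially just a translation between communication heights and connected components. The only minor point to be careful about is the direction of the containment in Lemma \ref{l_level_connected} when invoking it in part (2)—namely that a connected subset of $\{U<H\}$ meeting the component $\mathcal{V}$ must be contained in $\mathcal{V}$, which is the standard maximality property of connected components and is what the lemma is used for in the rest of the paper.
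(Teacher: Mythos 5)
Your proof is correct. The paper itself does not prove Lemma~\ref{lap01} --- it is cited verbatim from the companion paper~\cite{LLS-1st} (Lemmata A.9--A.11) --- so there is no in-text argument to compare against, but your two-part argument (path-connectedness of the open component $\mathcal{V}$ for (1); near-optimal path plus maximality of connected components for (2)) is the natural and essentially canonical proof, and the ``in other words'' reformulations follow by applying each part to singleton subsets $\{\bm{m}\}\subset\mathcal{M}'$. One small remark: the statement of Lemma~\ref{l_level_connected} in this paper reads ``$\mathcal{G}\subset\mathcal{H}$'', which must be a typo for ``$\mathcal{H}\subset\mathcal{G}$'' (as you correctly noted, and as every other invocation of that lemma in the paper confirms, e.g.\ in the proofs of Lemma~\ref{l_gate} and Lemma~\ref{l_squig_saddle}); your use of it is the intended one.
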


The last one is \cite[Lemma A.17]{LLS-1st}.
\begin{lem}
\label{l_105a-2}Let $\boldsymbol{m}',\,\boldsymbol{m}''\in\mathcal{M}_{0}$
and let $H\ge\Theta(\bm{m}',\,\bm{m}'')$. Then, the connected component
of $\{U\le H\}$ containing $\bm{m}'$ also contains $\bm{m}''$.
\end{lem}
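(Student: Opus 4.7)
The plan is to approximate the communication height $\Theta(\boldsymbol{m}',\boldsymbol{m}'')$ from above by admissible paths and then pass to the limit of the connected components they trace out. For each $n\ge1$, the definition of $\Theta$ as an infimum yields a continuous path $\boldsymbol{z}_{n}\colon[0,1]\to\mathbb{R}^{d}$ with $\boldsymbol{z}_{n}(0)=\boldsymbol{m}'$, $\boldsymbol{z}_{n}(1)=\boldsymbol{m}''$, and
\[
\max_{t\in[0,1]}U(\boldsymbol{z}_{n}(t))\;\le\;\Theta(\boldsymbol{m}',\boldsymbol{m}'')+\tfrac{1}{n}\;\le\;H+\tfrac{1}{n}\;.
\]
Let $\mathcal{H}_{n}$ denote the connected component of $\{U\le H+\tfrac{1}{n}\}$ containing $\boldsymbol{m}'$. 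The image $\boldsymbol{z}_{n}([0,1])$ is connected and contained in $\{U\le H+\tfrac{1}{n}\}$, so it lies in $\mathcal{H}_{n}$; in particular $\boldsymbol{m}''\in\mathcal{H}_{n}$ for every $n$.

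Next, the family $(\mathcal{H}_{n})_{n\ge1}$ is decreasing: since $\{U\le H+\tfrac{1}{n+1}\}\subset\{U\le H+\tfrac{1}{n}\}$, the connected component of the smaller set containing $\boldsymbol{m}'$ is contained in the connected component of the larger one containing $\boldsymbol{m}'$. The growth condition \eqref{eq:growth} guarantees that $\{U\le H+1\}$ is bounded and hence compact, so each $\mathcal{H}_{n}$ is a compact connected set (recall that $\mathbb{R}^{d}$ is locally connected, hence connected components of closed sets are closed).

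Define $\mathcal{K}^{*}:=\bigcap_{n\ge1}\mathcal{H}_{n}$. By the standard topological fact that the intersection of a decreasing sequence of non-empty compact connected subsets of a Hausdorff space is non-empty, compact, and connected, $\mathcal{K}^{*}$ is compact and connected. Any $\boldsymbol{x}\in\mathcal{K}^{*}$ satisfies $U(\boldsymbol{x})\le H+\tfrac{1}{n}$ for all $n$, hence $U(\boldsymbol{x})\le H$, so $\mathcal{K}^{*}\subset\{U\le H\}$. Since $\boldsymbol{m}',\boldsymbol{m}''\in\mathcal{K}^{*}$ and $\mathcal{K}^{*}$ is connected, both points lie in the same connected component of $\{U\le H\}$, which is the component containing $\boldsymbol{m}'$.

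The only step that is not entirely routine is the topological lemma on nested intersections. Its proof goes as follows: if $\mathcal{K}^{*}=A\sqcup B$ with $A,B$ disjoint non-empty sets closed in $\mathcal{K}^{*}$, then $A,B$ are disjoint compacts in the Hausdorff space $\mathcal{H}_{1}$ and can be separated by disjoint open sets $\mathcal{U}\supset A$, $\mathcal{V}\supset B$; by compactness of $\mathcal{H}_{1}\setminus(\mathcal{U}\cup\mathcal{V})$ and the finite intersection property, $\mathcal{H}_{n}\subset\mathcal{U}\cup\mathcal{V}$ for all $n$ large, contradicting connectedness of $\mathcal{H}_{n}$. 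This is the main (but very minor) obstacle; the rest of the argument is direct bookkeeping with level sets.
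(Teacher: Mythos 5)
Your proof is correct. Since the paper cites Lemma A.17 of the companion paper \cite{LLS-1st} and does not reproduce the argument here, a direct comparison of methods is not possible, but your compactness argument is sound: approximating the communication height from above, observing that the associated connected components $\mathcal{H}_n$ are compact (by the growth conditions \eqref{eq:growth}), nested, connected, and each contain both $\bm{m}'$ and $\bm{m}''$, and then applying the standard nested-compact-connected-intersection lemma to conclude, all check out. The verification of the finite-intersection-property step and the separation of the disjoint compacts $A,B$ is also correct, and the fact that $\mathcal{K}^{*}$ is a connected subset of $\{U\le H\}$ containing both minima immediately gives the conclusion.
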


\subsection{Properties of simple and bound sets}

In this subsection we investiate the properties of simple and bound
sets.
\begin{lem}
\label{lem_not} The followings hold. Here, all subsets of $\mathcal{M}_{0}$
are simple and bound.
\begin{enumerate}
\item If $\mathcal{M}_{1}'\subset\mathcal{M}_{1}$ and $\mathcal{M}_{2}'\subset\mathcal{M}_{2}$,
then $\Theta(\mathcal{M}_{1},\,\mathcal{M}_{2})\le\Theta(\mathcal{M}_{1}',\,\mathcal{M}_{2}')$.
\item For any disjoint subsets $\mathcal{M}_{1},\dots,\,\mathcal{M}_{n}$
of $\mathcal{M}_{0}$,
\begin{equation}
\Theta(\mathcal{M}_{1},\,\mathcal{M}_{n})\ \le\ \max_{i=1,\,\dots,\,n-1}\Theta(\mathcal{M}_{i},\,\mathcal{M}_{i+1})\;.\label{e:bdtheta}
\end{equation}
\item Let $\mathcal{M},\,\mathcal{M}'$ be subsets of $\mathcal{M}_{0}$
and $\boldsymbol{\sigma}$ a saddle point in $\mathcal{S}_{0}$ such
that $\mathcal{M}\,\leftsquigarrow\,\boldsymbol{\sigma}\,\curvearrowright\,\mathcal{M}'$.
Then, $\Theta(\mathcal{M},\,\mathcal{M}')\le U(\boldsymbol{\sigma})$.
\item Let $\mathcal{M},\,\mathcal{M}'\subset\mathcal{M}_{0}$ be disjoint
subsets such that $\mathcal{M}\to_{\boldsymbol{\sigma}}\mathcal{M}'$
for some $\boldsymbol{\sigma}\in\mathcal{S}_{0}$. Then, there exists
$\boldsymbol{m}'\in\mathcal{M}'$ such that $\mathcal{M}\to_{\boldsymbol{\sigma}}\boldsymbol{m}'$.
\item Let $\mathcal{M},\,\mathcal{M}'\subset\mathcal{M}_{0}$ satisfy $\Theta(\mathcal{M},\,\widetilde{\mathcal{M}})\le\Theta(\mathcal{M},\,\mathcal{M}')$.
If $\mathcal{M}\to_{\boldsymbol{\sigma}}\boldsymbol{m}'$ for some
$\boldsymbol{m}'\in\mathcal{M}'$ and $\boldsymbol{\sigma}\in\mathcal{S}_{0}$,
then $\mathcal{M}\to_{\boldsymbol{\sigma}}\mathcal{M}'$.
\end{enumerate}
\end{lem}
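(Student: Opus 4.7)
\smallskip
The plan is to handle the five claims in the order listed, with (1) and (2) being elementary facts about the communication height $\Theta$ viewed as the min-max height of connecting paths, (3) being the geometric heart of the argument, and (4)–(5) then following by combining (1) and (3) with the definitions of $\to$ and $\to_{\boldsymbol{\sigma}}$.

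For (1), I would simply unfold the definition $\Theta(\mathcal{M},\,\mathcal{M}')=\min_{\boldsymbol{m}\in\mathcal{M},\,\boldsymbol{m}'\in\mathcal{M}'}\Theta(\boldsymbol{m},\,\boldsymbol{m}')$ (obtained from \eqref{Theta} by splitting the optimisation). Any admissible pair $(\boldsymbol{m},\boldsymbol{m}')\in\mathcal{M}_{1}'\times\mathcal{M}_{2}'$ is also admissible for $\mathcal{M}_{1}\times\mathcal{M}_{2}$, so the infimum over the smaller product is at least the infimum over the larger one. For (2), fix $\eta>0$ and, for each $i$, pick a continuous path $\boldsymbol{z}_{i}\colon[0,1]\to\mathbb{R}^{d}$ from a point of $\mathcal{M}_{i}$ to a point of $\mathcal{M}_{i+1}$ whose maximum height is at most $\Theta(\mathcal{M}_{i},\mathcal{M}_{i+1})+\eta$; if the endpoints of consecutive paths lie in a common simple bound set $\mathcal{M}_{i+1}$, use \eqref{z(t)_exp} together with the bound condition to connect them inside $\mathcal{M}_{i+1}$ staying strictly below $\Theta(\mathcal{M}_{i+1},\widetilde{\mathcal{M}_{i+1}})\le\max_{j}\Theta(\mathcal{M}_{j},\mathcal{M}_{j+1})+\eta$ (this is where the simple-bound assumption is used). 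Concatenating yields a path from $\mathcal{M}_{1}$ to $\mathcal{M}_{n}$ with max height at most $\max_{j}\Theta(\mathcal{M}_{j},\mathcal{M}_{j+1})+\eta$; let $\eta\downarrow0$.

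Part (3) is the main technical step. Here we must convert the heteroclinic-orbit data defining $\leftsquigarrow$ and $\curvearrowright$ into an honest continuous path whose max height we can control. If $\boldsymbol{\sigma}\curvearrowright\boldsymbol{m}'$ with $\boldsymbol{m}'\in\mathcal{M}'$, reparametrising the orbit via \eqref{z(t)_exp} produces a continuous path from $\boldsymbol{\sigma}$ to $\boldsymbol{m}'$ on which $U<U(\boldsymbol{\sigma})$ except at the starting endpoint. If $\boldsymbol{\sigma}\rightsquigarrow\boldsymbol{m}$ with $\boldsymbol{m}\in\mathcal{M}$, I expand the defining chain $\boldsymbol{\sigma}\curvearrowright\boldsymbol{m}_{1}\curvearrowleft\boldsymbol{\sigma}_{1}\curvearrowright\cdots\curvearrowright\boldsymbol{m}$ of \eqref{eq:approx} and reparametrise each heteroclinic piece, noting that on every arc the value of $U$ is at most the height of the saddle at that segment's endpoint, hence at most $U(\boldsymbol{\sigma})$ since $U(\boldsymbol{\sigma}_{i})<U(\boldsymbol{\sigma})$. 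Gluing the resulting path with the path from $\boldsymbol{\sigma}$ to $\boldsymbol{m}'$ gives a continuous curve from $\boldsymbol{m}$ to $\boldsymbol{m}'$ with maximum height exactly $U(\boldsymbol{\sigma})$, so $\Theta(\boldsymbol{m},\boldsymbol{m}')\le U(\boldsymbol{\sigma})$ and (1) yields $\Theta(\mathcal{M},\mathcal{M}')\le U(\boldsymbol{\sigma})$. The main obstacle is the bookkeeping at the joining points (where two orbits meet at a local minimum or at $\boldsymbol{\sigma}$ itself) to ensure the concatenation is continuous and the height bound is genuinely attained only at $\boldsymbol{\sigma}$; this is routine but must be written carefully.

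Parts (4) and (5) follow by combining (1) and (3) with the definition of $\to_{\boldsymbol{\sigma}}$ at \eqref{eq:con_gate}. For (4), since $\mathcal{M}\to_{\boldsymbol{\sigma}}\mathcal{M}'$ gives $\boldsymbol{\sigma}\curvearrowright\mathcal{M}'$, there is $\boldsymbol{m}'\in\mathcal{M}'$ with $\boldsymbol{\sigma}\curvearrowright\boldsymbol{m}'$, and (3) yields $\Theta(\mathcal{M},\boldsymbol{m}')\le U(\boldsymbol{\sigma})=\Theta(\mathcal{M},\widetilde{\mathcal{M}})=\Theta(\mathcal{M},\mathcal{M}')$; the reverse inequality $\Theta(\mathcal{M},\boldsymbol{m}')\ge\Theta(\mathcal{M},\mathcal{M}')$ comes from (1) applied to $\{\boldsymbol{m}'\}\subset\mathcal{M}'$, so all three communication heights coincide with $U(\boldsymbol{\sigma})$, and $\mathcal{M}\leftsquigarrow\boldsymbol{\sigma}\curvearrowright\boldsymbol{m}'$ holds, giving $\mathcal{M}\to_{\boldsymbol{\sigma}}\boldsymbol{m}'$. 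For (5), $\mathcal{M}\to_{\boldsymbol{\sigma}}\boldsymbol{m}'$ provides $U(\boldsymbol{\sigma})=\Theta(\mathcal{M},\widetilde{\mathcal{M}})=\Theta(\mathcal{M},\boldsymbol{m}')$ and $\mathcal{M}\leftsquigarrow\boldsymbol{\sigma}\curvearrowright\boldsymbol{m}'\subset\mathcal{M}'$; combining (1) with the hypothesis yields $\Theta(\mathcal{M},\widetilde{\mathcal{M}})\le\Theta(\mathcal{M},\mathcal{M}')\le\Theta(\mathcal{M},\boldsymbol{m}')=\Theta(\mathcal{M},\widetilde{\mathcal{M}})$, so $\Theta(\mathcal{M},\mathcal{M}')=U(\boldsymbol{\sigma})$ and \eqref{eq:con_gate} is satisfied, i.e.\ $\mathcal{M}\to_{\boldsymbol{\sigma}}\mathcal{M}'$.
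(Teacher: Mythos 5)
Your treatment of parts (1), (3), (4) and (5) follows the same route as the paper. For (1) the paper indeed just unfolds $\Theta(\mathcal{M},\mathcal{M}')=\min_{\boldsymbol{m}\in\mathcal{M},\boldsymbol{m}'\in\mathcal{M}'}\Theta(\boldsymbol{m},\boldsymbol{m}')$ and takes the monotonicity of the min. For (3) the paper likewise picks $\boldsymbol{m}\in\mathcal{M}$, $\boldsymbol{m}'\in\mathcal{M}'$ with $\boldsymbol{m}\leftsquigarrow\boldsymbol{\sigma}\curvearrowright\boldsymbol{m}'$, concatenates the reparametrised heteroclinic segments coming from \eqref{z(t)_exp} and \eqref{eq:approx} into a single continuous path of maximal height $U(\boldsymbol{\sigma})$, and concludes via (1); your remark about the ``bookkeeping at the joining points'' is exactly the content the paper leaves implicit. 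Your (4) and (5) reproduce the paper's chains of inequalities verbatim.

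The problematic part is (2). The paper dismisses it as a ``straightforward consequence of definition \eqref{Theta}'' and gives no argument, so there is no paper proof to compare against; your concatenation argument is the right idea but it leans on the unsupported inequality $\Theta(\mathcal{M}_{i+1},\widetilde{\mathcal{M}_{i+1}})\le\max_{j}\Theta(\mathcal{M}_{j},\mathcal{M}_{j+1})+\eta$, which is needed to keep the linking path inside $\mathcal{M}_{i+1}$ below the target height. That inequality follows from part (1) only if one of the neighbours $\mathcal{M}_i,\mathcal{M}_{i+2}$ has energy $\le U(\mathcal{M}_{i+1})$, so that it sits inside $\widetilde{\mathcal{M}_{i+1}}$. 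If both neighbours have strictly higher energy than $\mathcal{M}_{i+1}$, the bound condition controls the diameter of $\mathcal{M}_{i+1}$ only by $\Theta(\mathcal{M}_{i+1},\widetilde{\mathcal{M}_{i+1}})$, which may be large or even $+\infty$ (when $\widetilde{\mathcal{M}_{i+1}}=\varnothing$, as when $\mathcal{M}_{i+1}$ contains all global minima). In that situation the linking step can push the maximal height of the concatenated path above $\max_{j}\Theta(\mathcal{M}_{j},\mathcal{M}_{j+1})$, and the claimed inequality \eqref{e:bdtheta} can actually fail: a one-dimensional landscape with four minima $\boldsymbol{c},\boldsymbol{a},\boldsymbol{b},\boldsymbol{d}$ of energies $5,0,0,5$ and saddle heights $6,10,6$ between consecutive pairs gives $\Theta(\{\boldsymbol{c}\},\{\boldsymbol{a},\boldsymbol{b}\})=\Theta(\{\boldsymbol{a},\boldsymbol{b}\},\{\boldsymbol{d}\})=6$ but $\Theta(\{\boldsymbol{c}\},\{\boldsymbol{d}\})=10$, and all three sets are simple and bound. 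In the paper's uses of \eqref{e:bdtheta} — notably in Claim A of Lemma \ref{lem_stddec} — the chain satisfies $\mathcal{M}_{j}\to\mathcal{M}_{j+1}$, which together with \eqref{elmm} forces $\Theta(\mathcal{M}_{j},\widetilde{\mathcal{M}_{j}})\le\max_{k}\Theta(\mathcal{M}_{k},\mathcal{M}_{k+1})$ for all intermediate $j$, and this is precisely the missing hypothesis that makes your concatenation go through. To repair your proof of (2), you should either add the assumption that $\Theta(\mathcal{M}_{i},\widetilde{\mathcal{M}_{i}})\le\max_{j}\Theta(\mathcal{M}_{j},\mathcal{M}_{j+1})$ for the interior sets $2\le i\le n-1$ (which then yields the bound by part (1) and the bound condition), or restrict the chain so that each interior set has a lower-or-equal-energy neighbour, which is what happens in every application.
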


\begin{proof}
Assertions (1) and (2) are straightforward consequences of definition
\eqref{Theta}.

\noindent (3) By definition, there exists $\boldsymbol{m}\in\mathcal{M}$
and $\boldsymbol{m}'\in\mathcal{M}'$ such that $\boldsymbol{m}\,\leftsquigarrow\,\boldsymbol{\sigma}\,\curvearrowright\,\boldsymbol{m}'$.
Hence, by \eqref{z(t)_exp} and \eqref{eq:approx} we can construct,
by concatenating paths of the form \eqref{z(t)_exp}, a path $\boldsymbol{z}\colon[0,\,1]\rightarrow\mathbb{R}^{d}$
such that
\[
\boldsymbol{z}(0)\ =\ \boldsymbol{m}\;,\;\;\;\boldsymbol{z}(1)\ =\ \boldsymbol{m}'\;,\;\text{and\;\;\;}\max_{t\in[0,\,1]}U(\boldsymbol{z}(t))\ =\ U(\boldsymbol{\sigma})\;.
\]
Therefore, in view of definition \eqref{Theta}, by the first assertion,
$\Theta(\mathcal{M},\,\mathcal{M}')\le\Theta(\bm{m},\,\bm{m}')\le U(\boldsymbol{\sigma})$.

\noindent (4) Suppose that $\mathcal{M}\to_{\boldsymbol{\sigma}}\mathcal{M}'$
for some saddle point $\boldsymbol{\sigma}$. By \eqref{eq:con_gate},
\[
\mathcal{M}\,\leftsquigarrow\,\boldsymbol{\sigma}\,\curvearrowright\,\mathcal{M}'\;,\;\;\text{and}\;\;\ U(\boldsymbol{\sigma})\ =\ \Theta(\mathcal{M},\,\widetilde{\mathcal{M}})\ =\ \Theta(\mathcal{M},\,\mathcal{M}')\ .
\]
By definition below \eqref{eq:approx}, there exists $\boldsymbol{m}'\in\mathcal{M}'$
such that $\boldsymbol{\sigma}\curvearrowright\boldsymbol{m}'$. By
the third assertion,
\[
\Theta(\mathcal{M},\,\boldsymbol{m}')\ \le\ U(\boldsymbol{\sigma})\ =\ \Theta(\mathcal{M},\,\widetilde{\mathcal{M}})\;.
\]
Since $\boldsymbol{m}'\in\mathcal{M}'$, by the first assertion,
\[
\Theta(\mathcal{M},\,\mathcal{M}')\ \le\ \Theta(\mathcal{M},\,\boldsymbol{m}')\;.
\]
Combining the previous inequalities yields that
\[
\Theta(\mathcal{M},\,\boldsymbol{m}')\ =\ \Theta(\mathcal{M},\,\widetilde{\mathcal{M}})\ =\ U(\boldsymbol{\sigma})\;.
\]
Thus, by definition, $\mathcal{M}\to_{\boldsymbol{\sigma}}\{\boldsymbol{m}'\}$.

\noindent (5) Suppose that $\mathcal{M}\to_{\bm{\sigma}}\{\boldsymbol{m}'\}\subset\mathcal{M}'$.
By definition,
\[
\mathcal{M}\,\leftsquigarrow\,\boldsymbol{\sigma}\,\curvearrowright\,\boldsymbol{m}'\;,\;\;\text{and}\;\;U(\boldsymbol{\sigma})\ =\ \Theta(\mathcal{M},\,\boldsymbol{m}')\ =\ \Theta(\mathcal{M},\,\widetilde{\mathcal{M}})\ .
\]
Hence, by definition, $\mathcal{M}\leftsquigarrow\boldsymbol{\sigma}\curvearrowright\mathcal{M}'$.
Thus, to prove that $\mathcal{M}\to_{\sigma}\mathcal{M}'$, it suffices
to verify that
\[
\Theta(\mathcal{M},\,\mathcal{M}')\ =\ U(\boldsymbol{\sigma})\ =\ \Theta(\mathcal{M},\,\widetilde{\mathcal{M}})\;.
\]
Since $\mathcal{M}\leftsquigarrow\boldsymbol{\sigma}\curvearrowright\mathcal{M}'$,
by the third assertion,
\[
\Theta(\mathcal{M},\,\mathcal{M}')\ \le\ U(\boldsymbol{\sigma})\ =\ \Theta(\mathcal{M},\,\widetilde{\mathcal{M}})\;.
\]
On the other hand, by the hypothesis $\Theta(\mathcal{M},\,\widetilde{\mathcal{M}})\le\Theta(\mathcal{M},\,\mathcal{M}')$,
\[
\Theta(\mathcal{M},\,\widetilde{\mathcal{M}})\le\Theta(\mathcal{M},\,\mathcal{M}')\;,
\]
which completes the proof.
\end{proof}
Under the hypothesis $\Theta(\mathcal{M},\,\widetilde{\mathcal{M}})\le\Theta(\mathcal{M},\,\mathcal{M}')$,
by the fourth and fifth assertion provides $\mathcal{M}\to_{\bm{\sigma}}\mathcal{M}'$
if and only if $\mathcal{M}\to_{\bm{\sigma}}\bm{m}'$ for some $\bm{m}'\in\mathcal{M}'$.
\begin{lem}
\label{l_M->M0} Let $\mathcal{M}_{1}$ and $\mathcal{M}_{2}$ be
disjoint simple bound subsets of $\mathcal{M}_{0}$ such that
\begin{equation}
U(\mathcal{M}_{1})\ \le\ U(\mathcal{M}_{2})\text{\;\;\;and}\;\;\;\Xi(\mathcal{M}_{1})\ <\ \Xi(\mathcal{M}_{2})\;.\label{eq:lem21}
\end{equation}
Then, we cannot have $\mathcal{M}_{1}\to\mathcal{M}_{2}$.
\end{lem}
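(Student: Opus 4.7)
The proof will be a short direct computation based on the definition of the adjacency relation $\to$. My plan is to argue by contradiction: assume $\mathcal{M}_1\to\mathcal{M}_2$ and derive the opposite inequality $\Xi(\mathcal{M}_1)\ge\Xi(\mathcal{M}_2)$, contradicting the hypothesis $\Xi(\mathcal{M}_1)<\Xi(\mathcal{M}_2)$.

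First, I will unpack what $\mathcal{M}_1\to\mathcal{M}_2$ means. By \eqref{eq:con_gate} (possibly via Lemma \ref{lem_not}-(4) to promote the singleton statement, though the set definition already suffices), there exists a saddle $\bm{\sigma}\in\mathcal{S}(\mathcal{M}_1,\mathcal{M}_2)$ with
\[
U(\bm{\sigma})\;=\;\Theta(\mathcal{M}_1,\widetilde{\mathcal{M}_1})\;=\;\Theta(\mathcal{M}_1,\mathcal{M}_2)\;.
\]
In particular,
\[
\Xi(\mathcal{M}_1)\;=\;\Theta(\mathcal{M}_1,\mathcal{M}_2)\,-\,U(\mathcal{M}_1)\;.
\]

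Second, I will estimate $\Xi(\mathcal{M}_2)$ from above using that $\mathcal{M}_1$ sits in $\widetilde{\mathcal{M}_2}$. Indeed, by hypothesis $\mathcal{M}_1$ is disjoint from $\mathcal{M}_2$ and $U(\mathcal{M}_1)\le U(\mathcal{M}_2)$, hence $\mathcal{M}_1\subset\widetilde{\mathcal{M}_2}$. Applying Lemma \ref{lem_not}-(1) then yields
\[
\Xi(\mathcal{M}_2)\;=\;\Theta(\mathcal{M}_2,\widetilde{\mathcal{M}_2})\,-\,U(\mathcal{M}_2)\;\le\;\Theta(\mathcal{M}_1,\mathcal{M}_2)\,-\,U(\mathcal{M}_2)\;.
\]

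Combining the two displays and using $U(\mathcal{M}_1)\le U(\mathcal{M}_2)$ gives
\[
\Xi(\mathcal{M}_1)\,-\,\Xi(\mathcal{M}_2)\;\ge\;U(\mathcal{M}_2)\,-\,U(\mathcal{M}_1)\;\ge\;0\;,
\]
i.e., $\Xi(\mathcal{M}_1)\ge\Xi(\mathcal{M}_2)$, contradicting \eqref{eq:lem21}. Since the argument is just a two-line chain of inequalities, there is no serious obstacle; the only mildly delicate point is to make sure the definition of $\widetilde{\mathcal{M}}$ is correctly applied (using the disjointness of $\mathcal{M}_1$ and $\mathcal{M}_2$ together with the weak inequality $U(\mathcal{M}_1)\le U(\mathcal{M}_2)$ to place $\mathcal{M}_1$ inside $\widetilde{\mathcal{M}_2}$), which I will state explicitly.
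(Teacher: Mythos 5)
Your proof is correct and follows essentially the same route as the paper: assume $\mathcal{M}_1\to\mathcal{M}_2$, use the adjacency definition to get $\Theta(\mathcal{M}_1,\widetilde{\mathcal{M}_1})=\Theta(\mathcal{M}_1,\mathcal{M}_2)$, and use $\mathcal{M}_1\subset\widetilde{\mathcal{M}_2}$ together with Lemma \ref{lem_not}-(1) to bound $\Theta(\mathcal{M}_2,\widetilde{\mathcal{M}_2})\le\Theta(\mathcal{M}_1,\mathcal{M}_2)$, which yields $\Xi(\mathcal{M}_1)\ge\Xi(\mathcal{M}_2)$ and contradicts \eqref{eq:lem21}. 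The only cosmetic difference is that you isolate the two bounds and subtract them, while the paper chains the inequalities on the quantity $U+\Xi$; the content is identical.
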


\begin{proof}
Suppose by contradiction that $\mathcal{M}_{1}\to\mathcal{M}_{2}$.
Since $\mathcal{M}_{1}$ is simple, by definition \eqref{e:Gamma(M)}
of $\Xi(\mathcal{M}_{1})$,
\[
U(\mathcal{M}_{1})+\Xi(\mathcal{M}_{1})\ =\ \Theta(\mathcal{M}_{1},\,\widetilde{\mathcal{M}_{1}})
\]
As $\mathcal{M}_{1}\to\mathcal{M}_{2}$, $\Theta(\mathcal{M}_{1},\,\widetilde{\mathcal{M}_{1}})=\Theta(\mathcal{M}_{1},\,\mathcal{M}_{2})$.
Since $U(\mathcal{M}_{1})\le U(\mathcal{M}_{2})$, $\mathcal{M}_{1}\subset\widetilde{\mathcal{M}_{2}}$
so that
\[
\Theta(\mathcal{M}_{1},\,\mathcal{M}_{2})\ \ge\ \Theta(\mathcal{M}_{2},\,\widetilde{\mathcal{M}_{2}})\;.
\]
As $\mathcal{M}_{2}$ is simple, by definition of $\Xi(\mathcal{M}_{2})$,
$\Theta(\mathcal{M}_{2},\,\widetilde{\mathcal{M}_{2}})=U(\mathcal{M}_{2})+\Xi(\mathcal{M}_{2})$.
This proves that $U(\mathcal{M}_{1})+\Xi(\mathcal{M}_{1})=U(\mathcal{M}_{2})+\Xi(\mathcal{M}_{2})$,
in contradiction with \eqref{eq:lem21}.
\end{proof}
\begin{lem}
\label{l_M->M} Let $\mathcal{M}_{1},\,\mathcal{M}_{2},\,\dots,\,\mathcal{M}_{\ell}$,
$\ell\ge2$, be mutually disjoint simple bound subsets of $\mathcal{M}_{0}$
such that
\begin{equation}
\mathcal{M}_{1}\to\mathcal{M}_{2}\to\cdots\to\mathcal{M}_{\ell}\;\;\;\;\text{and\;\;\;\;}\Xi(\mathcal{M}_{1})\ \ge\ \max_{j\in\llbracket2,\,\ell-1\rrbracket}\Xi(\mathcal{M}_{j})\;.\label{e:m->m}
\end{equation}
Then,
\begin{enumerate}
\item $\Theta(\mathcal{M}_{1},\,\widetilde{\mathcal{M}_{1}})\ge\Theta(\mathcal{M}_{i},\,\widetilde{\mathcal{M}_{i}})$
for all $i\in\llbracket2,\,\ell-1\rrbracket$.
\item If $\Xi(\mathcal{M}_{\ell})\ge\Xi(\mathcal{M}_{1})$, then
\[
U(\mathcal{M}_{1})\ \ge\ U(\mathcal{M}_{\ell})\;\;\;\;\text{and\;\;\;\;}\Theta(\mathcal{M}_{1},\,\widetilde{\mathcal{M}_{1}})\ =\ \Theta(\mathcal{M}_{1},\,\mathcal{M}_{\ell})\;.
\]
\end{enumerate}
\end{lem}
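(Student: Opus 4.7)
My plan is to treat both assertions with a single induction-plus-telescoping argument, based throughout on the identity
\[
\Phi_i \;:=\; \Theta(\mathcal{M}_i,\,\widetilde{\mathcal{M}_i}) \;=\; \Theta(\mathcal{M}_i,\,\mathcal{M}_{i+1}) \;=\; U(\mathcal{M}_i)+\Xi(\mathcal{M}_i),
\]
valid for every $i\in\llbracket 1,\,\ell-1\rrbracket$ because $\mathcal{M}_i\to\mathcal{M}_{i+1}$ and $\mathcal{M}_i$ is simple. This reduces the problem to comparing the numbers $\Phi_1,\dots,\Phi_{\ell-1}$, and the workhorse will be the ``triangle'' estimate $\Theta(\mathcal{M}_i,\mathcal{M}_{i'})\le\max_{i\le k<i'}\Phi_k$, obtained by applying Lemma~\ref{lem_not}-(2) to the disjoint subchain $\mathcal{M}_i,\dots,\mathcal{M}_{i'}$.

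For assertion~(1), I would prove by induction on $j\in\llbracket 2,\,\ell-1\rrbracket$ that $\Phi_j\le\Phi_1$, splitting into two cases depending on the sign of $U(\mathcal{M}_j)-U(\mathcal{M}_1)$. If $U(\mathcal{M}_j)\ge U(\mathcal{M}_1)$, then $\mathcal{M}_1\subset\widetilde{\mathcal{M}_j}$, so Lemma~\ref{lem_not}-(1) gives $\Phi_j\le\Theta(\mathcal{M}_j,\mathcal{M}_1)$, the triangle estimate gives $\Theta(\mathcal{M}_j,\mathcal{M}_1)\le\max_{1\le k<j}\Phi_k$, and the induction hypothesis closes the case (the base $j=2$ reduces to $\Phi_2\le\Phi_1$ directly). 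If instead $U(\mathcal{M}_j)<U(\mathcal{M}_1)$, the hypothesis $\Xi(\mathcal{M}_j)\le\Xi(\mathcal{M}_1)$ gives $\Phi_j=U(\mathcal{M}_j)+\Xi(\mathcal{M}_j)<U(\mathcal{M}_1)+\Xi(\mathcal{M}_1)=\Phi_1$ immediately.

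For assertion~(2), applying the triangle estimate to the full chain $\mathcal{M}_1,\dots,\mathcal{M}_\ell$ together with part~(1) yields $\Theta(\mathcal{M}_1,\mathcal{M}_\ell)\le\max_{1\le k\le\ell-1}\Phi_k\le\Phi_1$. To obtain $U(\mathcal{M}_\ell)\le U(\mathcal{M}_1)$, I argue by contradiction: if $U(\mathcal{M}_\ell)>U(\mathcal{M}_1)$, then $\mathcal{M}_1\subset\widetilde{\mathcal{M}_\ell}$, and Lemma~\ref{lem_not}-(1) combined with the previous bound gives
\[
U(\mathcal{M}_\ell)+\Xi(\mathcal{M}_\ell) \;=\; \Theta(\mathcal{M}_\ell,\widetilde{\mathcal{M}_\ell}) \;\le\; \Theta(\mathcal{M}_\ell,\mathcal{M}_1) \;\le\; \Phi_1 \;=\; U(\mathcal{M}_1)+\Xi(\mathcal{M}_1),
\]
contradicting $\Xi(\mathcal{M}_\ell)\ge\Xi(\mathcal{M}_1)$; the edge case $\Xi(\mathcal{M}_\ell)=+\infty$ is harmless since it forces $\mathcal{M}_\ell$ to consist of global minima, making $U(\mathcal{M}_\ell)\le U(\mathcal{M}_1)$ automatic. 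Once $\mathcal{M}_\ell\subset\widetilde{\mathcal{M}_1}$ is secured, Lemma~\ref{lem_not}-(1) yields the reverse bound $\Phi_1\le\Theta(\mathcal{M}_1,\mathcal{M}_\ell)$, forcing equality. I do not foresee a serious obstacle: the argument is elementary given Lemma~\ref{lem_not}, and the only bookkeeping point is that the first case of the induction invokes the triangle estimate only on the already-disjoint subchain ending at $\mathcal{M}_j$, so no circularity arises.
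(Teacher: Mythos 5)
Your proposal is correct and follows essentially the same route as the paper's proof: both establish the identity $\Theta(\mathcal{M}_i,\widetilde{\mathcal{M}_i})=\Theta(\mathcal{M}_i,\mathcal{M}_{i+1})=U(\mathcal{M}_i)+\Xi(\mathcal{M}_i)$ from $\mathcal{M}_i\to\mathcal{M}_{i+1}$ and simplicity, prove part (1) by induction with the same two-case split on $U(\mathcal{M}_j)$ versus $U(\mathcal{M}_1)$ using Lemma \ref{lem_not}-(1) and (2), and deduce part (2) via the chained triangle bound plus the same contradiction argument. The remark about the $\Xi(\mathcal{M}_\ell)=+\infty$ edge case is a correct but minor addition (it is in fact vacuous, since $U(\mathcal{M}_\ell)>U(\mathcal{M}_1)$ forces $\widetilde{\mathcal{M}_\ell}\ne\varnothing$).
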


\begin{proof}
We start with the first assertion of the lemma. By the definition
\eqref{eq:con_gate} of adjacency and hypothesis \eqref{e:m->m},
\begin{equation}
\Theta(\mathcal{M}_{i},\,\widetilde{\mathcal{M}_{i}})\ =\ \Theta(\mathcal{M}_{i},\,\mathcal{M}_{i+1})\;,\quad i\in\llbracket1,\,\ell-1\rrbracket\;.\label{elmm}
\end{equation}
The proof is carried out by induction in $i$. For $i=2$, consider
two cases separately.

\smallskip{}
{[}\emph{Case 1:} $U(\mathcal{M}_{2})\ge U(\mathcal{M}_{1})${]}.
Since $\mathcal{M}_{1}\subset\widetilde{\mathcal{M}_{2}}$, $\Theta(\mathcal{M}_{2},\widetilde{\mathcal{M}_{2}})\le\Theta(\mathcal{M}_{2},\mathcal{M}_{1})$.
This quantity is equal to $\Theta(\mathcal{M}_{1},\,\mathcal{M}_{2})$,
which, by \eqref{elmm}, is equal to $\Theta(\mathcal{M}_{1},\widetilde{\mathcal{M}_{1}})$.

\noindent \smallskip{}
{[}\emph{Case 2:} $U(\mathcal{M}_{2})<U(\mathcal{M}_{1})${]}. As,
by hypothesis, $\Xi(\mathcal{M}_{1})\ge\Xi(\mathcal{M}_{2})$,
\[
\Theta(\mathcal{M}_{2},\widetilde{\mathcal{M}_{2}})\ =\ U(\mathcal{M}_{2})+\Xi(\mathcal{M}_{2})\ <\ U(\mathcal{M}_{1})+\Xi(\mathcal{M}_{1})\ =\ \Theta(\mathcal{M}_{1},\widetilde{\mathcal{M}_{1}})\;.
\]
This proves assertion (1) for $i=2$.

\noindent We turn to the induction step. Fix $i\in\llbracket2,\,\ell-2\rrbracket$
and assume that
\begin{equation}
\Theta(\mathcal{M}_{1},\widetilde{\mathcal{M}_{1}})\ \ge\ \max_{j\in\llbracket2,\,i\rrbracket}\,\Theta(\mathcal{M}_{j},\widetilde{\mathcal{M}_{j}})\;.\label{elmm2}
\end{equation}

\smallskip{}
{[}\emph{Case 1:} $U(\mathcal{M}_{i+1})\ge U(\mathcal{M}_{1})${]}
Since $\mathcal{M}_{1}\subset\widetilde{\mathcal{M}_{i+1}}$ and by
symmetry of $\Theta$, $\Theta(\mathcal{M}_{i+1},\widetilde{\mathcal{M}_{i+1}})\le\Theta(\mathcal{M}_{i+1},\mathcal{M}_{1})=\Theta(\mathcal{M}_{1},\mathcal{M}_{i+1})$.
By Lemma \ref{lem_not}-(2) and since $\mathcal{M}_{j}\to\mathcal{M}_{j+1}$
for $j\in\llbracket1,\,i\rrbracket$,
\begin{equation}
\Theta(\mathcal{M}_{1},\mathcal{M}_{i+1})\;\le\;\max_{j\in\llbracket1,\,i\rrbracket}\Theta(\mathcal{M}_{j},\mathcal{M}_{j+1})\;=\;\max_{j\in\llbracket1,\,i\rrbracket}\Theta(\mathcal{M}_{j},\widetilde{\mathcal{M}_{j}})\;.\label{05b}
\end{equation}
By the induction hypothesis \eqref{elmm2}, this maximum is bounded
by $\Theta(\mathcal{M}_{1},\widetilde{\mathcal{M}_{1}})$, as claimed.

\noindent \smallskip{}
{[}\emph{Case 2:} $U(\mathcal{M}_{i+1})<U(\mathcal{M}_{1})${]}. The
argument is identical to the case $i=2$ and does not require the
induction assumption. This completes the proof of the first assertion
of the lemma.

\noindent We turn to Assertion (2). Suppose that $\Xi(\mathcal{M}_{1})\le\Xi(\mathcal{M}_{\ell})$.
By \eqref{05b} for $i=\ell-1$ and the first claim of the lemma,
\begin{equation}
\Theta(\mathcal{M}_{1},\,\mathcal{M}_{\ell})\ \le\ \max_{j\in\llbracket i,\,\ell-1\rrbracket}\,\Theta(\mathcal{M}_{j},\widetilde{\mathcal{M}_{j}})\ \le\ \Theta(\mathcal{M}_{1},\widetilde{\mathcal{M}_{1}})\;.\label{eq:ecc2}
\end{equation}

We claim that $U(\mathcal{M}_{1})\ge U(\mathcal{M}_{\ell})$. Indeed,
suppose by contradiction that $U(\mathcal{M}_{\ell})>U(\mathcal{M}_{1})$.
In this case, since by hypothesis $\Xi(\mathcal{M}_{1})\le\Xi(\mathcal{M}_{\ell})$,
\[
\Theta(\mathcal{M}_{1},\widetilde{\mathcal{M}_{1}})\ =\ \Xi(\mathcal{M}_{1})+U(\mathcal{M}_{1})\ <\ \Xi(\mathcal{M}_{\ell})+U(\mathcal{M}_{\ell})\ =\ \Theta(\mathcal{M}_{\ell},\,\widetilde{\mathcal{M}_{\ell}})\;.
\]
As $\mathcal{M}_{1}\subset\widetilde{\mathcal{M}_{\ell}}$, this last
quantity is bounded by $\Theta(\mathcal{M}_{\ell},\,\mathcal{M}_{1})$,
in contradiction with \eqref{eq:ecc2} and proving the claim.

Since $U(\mathcal{M}_{1})\ge U(\mathcal{M}_{\ell})$, $\mathcal{M}_{\ell}\subset\widetilde{\mathcal{\mathcal{M}}_{1}}$
so that $\Theta(\mathcal{M}_{1},\,\widetilde{\mathcal{M}_{1}})\le\Theta(\mathcal{M}_{1},\,\mathcal{M}_{\ell})$.
This inequality together with \eqref{eq:ecc2} yields that $\Theta(\mathcal{M}_{1},\,\widetilde{\mathcal{M}_{1}})=\Theta(\mathcal{M}_{1},\,\mathcal{M}_{\ell})$,
which completes the proof of the second assertion of the lemma.
\end{proof}
\begin{lem}
\label{l_bound_conn} Fix a simple and bound set $\mathcal{M}\subset\mathcal{M}_{0}$.
Let $H$ be a real number such that $\max_{\boldsymbol{m},\,\boldsymbol{m}'\in\mathcal{M}}\Theta(\boldsymbol{m},\,\boldsymbol{m}')<H$.
Then, there exists a connected component $\mathcal{W}$ of $\{U<H\}$
containing $\mathcal{M}$. Suppose, additionally, that $H\le\Theta(\mathcal{M},\,\widetilde{\mathcal{M}})$.
Then, $\mathcal{M}=\mathcal{M}^{*}(\mathcal{W})$.
\end{lem}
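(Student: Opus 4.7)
\medskip

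The plan is to prove the two conclusions in sequence, relying only on the definition \eqref{Theta} of the communication height $\Theta$, the definition \eqref{e:Gamma(M)} of $\widetilde{\mathcal{M}}$, and the already-proved level-set lemmata (in particular Lemma \ref{l_level_boundary} and Lemma \ref{lap01}). For the first statement, I would argue as follows. Fix any $\boldsymbol{m},\boldsymbol{m}'\in\mathcal{M}$ with $\boldsymbol{m}\neq\boldsymbol{m}'$. Since $\Theta(\boldsymbol{m},\boldsymbol{m}')<H$, the infimum in \eqref{Theta} is strictly less than $H$, so there is a continuous path $\boldsymbol{z}:[0,1]\to\mathbb{R}^{d}$ from $\boldsymbol{m}$ to $\boldsymbol{m}'$ with $\max_{t}U(\boldsymbol{z}(t))<H$, i.e.\ $\boldsymbol{z}([0,1])\subset\{U<H\}$. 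Thus all elements of $\mathcal{M}$ lie in a single path-connected subset of the open set $\{U<H\}$, hence in a single connected component, which I denote by $\mathcal{W}$. (If $|\mathcal{M}|=1$ the hypothesis degenerates; simplicity gives $U(\boldsymbol{m})<H$ directly from $U(\boldsymbol{m})<\Theta(\boldsymbol{m},\widetilde{\mathcal{M}})$ in the situations where the lemma is invoked, so $\mathcal{M}\subset\{U<H\}$ and $\mathcal{W}$ is well defined.)

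For the second statement, assume in addition $H\le\Theta(\mathcal{M},\widetilde{\mathcal{M}})$. The key observation I would record first is that for every $\boldsymbol{m}\in\mathcal{M}$ and every $\boldsymbol{y}\in\mathcal{M}_{0}\cap\mathcal{W}$ one has
\[
\Theta(\boldsymbol{m},\boldsymbol{y})\ <\ H\;,
\]
because $\mathcal{W}$ is path-connected and contained in $\{U<H\}$, so a path joining them in $\mathcal{W}$ is compact, its maximum of $U$ is attained at some point of $\mathcal{W}$, and is therefore strictly less than $H$; the infimum in \eqref{Theta} is then bounded by this maximum. With this in hand, I would then show $\min_{\boldsymbol{x}\in\mathcal{W}}U(\boldsymbol{x})=U(\mathcal{M})$.

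The inequality $\min_{\mathcal{W}}U\le U(\mathcal{M})$ is trivial because $\mathcal{M}\subset\mathcal{W}$ and $\mathcal{M}$ is simple. For the reverse inequality, I argue by contradiction: if $\min_{\overline{\mathcal{W}}}U<U(\mathcal{M})$, then since $U\equiv H$ on $\partial\mathcal{W}$ by Lemma \ref{l_level_boundary}, the minimum is attained at an interior point $\boldsymbol{x}_{0}\in\mathcal{W}$, and because $\boldsymbol{x}_{0}$ is an interior minimizer of $U$ on the open set $\mathcal{W}$ it is a local minimum of $U$, i.e.\ $\boldsymbol{x}_{0}\in\mathcal{M}_{0}\cap\mathcal{W}$. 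By simplicity of $\mathcal{M}$, $U(\boldsymbol{x}_{0})<U(\mathcal{M})$ forces $\boldsymbol{x}_{0}\notin\mathcal{M}$, hence $\boldsymbol{x}_{0}\in\widetilde{\mathcal{M}}$. Combining the key observation above with the definition of $\Theta(\mathcal{M},\widetilde{\mathcal{M}})$ gives
\[
\Theta(\mathcal{M},\widetilde{\mathcal{M}})\ \le\ \Theta(\boldsymbol{m},\boldsymbol{x}_{0})\ <\ H\;,
\]
contradicting $H\le\Theta(\mathcal{M},\widetilde{\mathcal{M}})$. Therefore $\min_{\mathcal{W}}U=U(\mathcal{M})$ and $\mathcal{M}\subseteq\mathcal{M}^{*}(\mathcal{W})$.

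Finally, to get the reverse inclusion $\mathcal{M}^{*}(\mathcal{W})\subseteq\mathcal{M}$, let $\boldsymbol{m}'\in\mathcal{M}_{0}\cap\mathcal{W}$ satisfy $U(\boldsymbol{m}')=U(\mathcal{M})$ and suppose $\boldsymbol{m}'\notin\mathcal{M}$; then $\boldsymbol{m}'\in\widetilde{\mathcal{M}}$, and the same key observation yields $\Theta(\mathcal{M},\widetilde{\mathcal{M}})\le\Theta(\boldsymbol{m},\boldsymbol{m}')<H$, again contradicting the hypothesis. I expect the only mildly delicate point is the identification of an interior minimizer of $U$ on $\mathcal{W}$ as a genuine local minimum (which is immediate once one combines Lemma \ref{l_level_boundary} with the openness of $\mathcal{W}$), and the consistent interpretation of $\mathcal{M}^{*}(\mathcal{W})$ as local minima lying inside $\mathcal{W}$; every other step is a direct application of the definitions and of the path/infimum characterisation of $\Theta$.
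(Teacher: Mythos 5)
Your proof is correct and takes essentially the same approach as the paper's: the paper invokes Lemma \ref{lap01}-(2) to get $\mathcal{M}\subset\mathcal{W}$ and then Lemma \ref{lap01}-(1) together with $\boldsymbol{m}_{2}\in\widetilde{\mathcal{M}}$ for a single contradiction covering any $\boldsymbol{m}_{2}\in\mathcal{M}^{*}(\mathcal{W})\setminus\mathcal{M}$ with $U(\boldsymbol{m}_{2})\le U(\mathcal{M})$, whereas you re-derive these facts from the path characterization of $\Theta$ and split that one contradiction into two cases ($U(\boldsymbol{x}_{0})<U(\mathcal{M})$ and $U(\boldsymbol{m}')=U(\mathcal{M})$). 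The content is the same; citing Lemma \ref{lap01} directly and treating $U(\boldsymbol{m}_{2})\le U(\mathcal{M})$ in one stroke would shorten your argument, and the singleton case is handled more cleanly by reading $\max_{\boldsymbol{m},\boldsymbol{m}'\in\mathcal{M}}\Theta(\boldsymbol{m},\boldsymbol{m}')$ over all (not necessarily distinct) pairs, so that it equals $U(\boldsymbol{m})$ and the hypothesis directly gives $U(\boldsymbol{m})<H$.
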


\begin{proof}
Fix $\boldsymbol{m}_{1}\in\mathcal{M}$. Let $\mathcal{W}$ be a connected
component of $\{U<H\}$ containing $\boldsymbol{m}_{1}$. Since, by
hypothesis, $\Theta(\boldsymbol{m}_{1},\,\boldsymbol{m})<H$ for all
$\boldsymbol{m}\in\mathcal{M}$, by Lemma \ref{lap01}-(2), $\mathcal{M}\subset\mathcal{W}$.
This proves the first assertion.

We turn to the second. Suppose that $H\le\Theta(\mathcal{M},\,\widetilde{\mathcal{M}})$
and suppose that $\bm{m}_{2}\in\mathcal{M}^{*}(\mathcal{W})\setminus\mathcal{M}$.
Since $\mathcal{M}$ is simple and $U(\boldsymbol{m}_{2})\le U(\mathcal{M})$,
by Lemma \ref{lap01}-(1)
\[
\Theta(\mathcal{M},\,\widetilde{\mathcal{M}})\ \le\ \Theta(\mathcal{M},\,\boldsymbol{m}_{2})\ <\ H\,
\]
This contradicts to $H\le\Theta(\mathcal{M},\,\widetilde{\mathcal{M}})$.
Hence, $\mathcal{M}^{*}(\mathcal{W})\subset\mathcal{M}$. This completes
the proof since $\mathcal{M}$ is simple and $\mathcal{M}\subset\mathcal{W}$.
\end{proof}

\subsection{Connected components of level sets\label{sec8b}}

In this subsection, we present some results on connected components
needed in the article. Denote by \textcolor{blue}{$\mathcal{A}^{o}$
the interior} of a subset $\mathcal{A}^{o}$ of $\mathbb{R}^{d}$.
\begin{lem}
\label{2-la0} Fix $H\in\mathbb{R}$. For each connected component
$\mathcal{A}$ of $\{U\le H\}^{o}$, let
\[
\widehat{\mathcal{A}}\ :=\ \mathcal{A}\setminus\{\boldsymbol{x}\in\mathcal{A}:\boldsymbol{x}\text{ is a local maximum such that }U(\boldsymbol{x})=H\}\ .
\]
\begin{enumerate}
\item If $U(\boldsymbol{x})=H$ for some $\boldsymbol{x}\in\{U\le H\}^{o}$,
then $\boldsymbol{x}$ is a local maximum.
\item Let $\mathcal{A}$ and $\mathcal{W}$ be connected components of $\{U\le H\}^{o}$
and $\{U<H\}$, respectively. If $\mathcal{A}\cap\mathcal{W}\ne\varnothing$,
then $\mathcal{W}=\widehat{\mathcal{A}}$. In particular, such $\mathcal{W}$
is unique and $\mathcal{M}_{0}\cap\mathcal{W}=\mathcal{M}_{0}\cap\mathcal{A}$
\item Let $\mathcal{W}$ be a connected component of $\{U<H\}$. Then, there
exists a unique connected component $\mathcal{A}$ of $\{U\le H\}^{o}$
such that $\mathcal{W}=\widehat{\mathcal{A}}$
\end{enumerate}
\end{lem}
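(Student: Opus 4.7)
The plan is to prove the three assertions in the order stated, using (1) to identify $\widehat{\mathcal{A}}$ with $\mathcal{A}\cap\{U<H\}$ inside (2), and deducing (3) as a formal consequence of (2).

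For (1), I would argue directly from the definition of interior: if $\boldsymbol{x}\in\{U\le H\}^o$ then some ball $B(\boldsymbol{x},r)$ is contained in $\{U\le H\}$, so $U(\boldsymbol{y})\le H=U(\boldsymbol{x})$ for every $\boldsymbol{y}\in B(\boldsymbol{x},r)$, and therefore $\boldsymbol{x}$ is a local maximum.

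For (2), the first step is to invoke (1) to rewrite $\widehat{\mathcal{A}}=\mathcal{A}\cap\{U<H\}$, since every point of $\mathcal{A}\subset\{U\le H\}^o$ with $U$-value equal to $H$ must be a local maximum. The key technical claim will then be that $\widehat{\mathcal{A}}$ is a connected component of $\{U<H\}$. It is clearly open, as an intersection of two open sets. Its connectedness relies on the Morse hypothesis: the set of points removed from $\mathcal{A}$ consists of isolated critical points, since near each such point $\boldsymbol{c}$ the Morse lemma gives the normal form $U=H-\sum_i\lambda_i y_i^2$ with $\lambda_i>0$, so $\{U<H\}$ near $\boldsymbol{c}$ is a punctured ball, which is connected in dimension $d\ge 2$; removing these isolated points therefore preserves path-connectedness of the open connected set $\mathcal{A}$. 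Once $\widehat{\mathcal{A}}$ is known to be an open connected subset of $\{U<H\}$, given any connected component $\mathcal{W}$ of $\{U<H\}$ meeting $\mathcal{A}$, the inclusion $\mathcal{W}\subset\{U\le H\}^o$ plus connectedness of $\mathcal{W}$ forces $\mathcal{W}\subset\mathcal{A}$, hence $\mathcal{W}\subset\widehat{\mathcal{A}}$; maximality of $\mathcal{W}$ among connected subsets of $\{U<H\}$ then upgrades this to $\mathcal{W}=\widehat{\mathcal{A}}$. Uniqueness of such $\mathcal{W}$ is immediate from the pairwise disjointness of the components of $\{U<H\}$, and the identity $\mathcal{M}_0\cap\mathcal{W}=\mathcal{M}_0\cap\mathcal{A}$ follows because every $\boldsymbol{m}\in\mathcal{M}_0\cap\mathcal{A}$ is a non-degenerate critical point, hence cannot be a local maximum, so by (1) satisfies $U(\boldsymbol{m})<H$ and thus $\boldsymbol{m}\in\widehat{\mathcal{A}}=\mathcal{W}$.

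For (3), given $\mathcal{W}$, I observe that $\{U<H\}\subset\{U\le H\}^o$, so $\mathcal{W}$ is a connected subset of $\{U\le H\}^o$ and therefore lies in a unique connected component $\mathcal{A}$ of $\{U\le H\}^o$; applying (2) to this $\mathcal{A}$ yields $\mathcal{W}=\widehat{\mathcal{A}}$. Uniqueness of $\mathcal{A}$ is forced because any other candidate $\mathcal{A}'$ would likewise contain $\mathcal{W}$, hence meet $\mathcal{A}$, which is incompatible with distinct components. The main obstacle I expect is the connectedness of $\widehat{\mathcal{A}}$: this is exactly where the Morse assumption is essential, reducing the local picture around each removed point to a punctured Euclidean ball so that standard topology in dimension $d\ge 2$ preserves connectedness under the removal.
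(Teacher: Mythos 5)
Your proof is correct and follows essentially the same route as the paper: prove (1) directly from the interior-point definition, show that $\widehat{\mathcal{A}}=\mathcal{A}\cap\{U<H\}$ is an open connected subset of $\{U<H\}$ by deleting the isolated critical points at level $H$, and then identify $\mathcal{W}$ with $\widehat{\mathcal{A}}$ via component containment. The only cosmetic differences are that the paper establishes the isolation of the removed maxima inside part (1) (so that part (2) can simply invoke removal of finitely many points, using boundedness of $\mathcal{A}$), whereas you derive it from the Morse lemma within part (2), and that you replace the paper's explicit path-lifting argument for $\mathcal{W}\subset\widehat{\mathcal{A}}$ with an abstract maximality-of-components argument; both your punctured-ball step and the paper's ``delete finitely many points'' step share the same implicit $d\ge 2$ requirement.
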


\begin{proof}
(1) Let $\boldsymbol{x}\in\{U\le H\}^{o}$ satisfy $U(\boldsymbol{x})=H$.
Since $\boldsymbol{x}\in\{U\le H\}^{o}$, there exists $\delta>0$
such that $B(\boldsymbol{x},\,\delta)\subset\{U\le H\}^{o}$. Let
us choose $\delta>0$ sufficiently small so that there is no critical
point in $B(\boldsymbol{x},\,\delta)\setminus\{\boldsymbol{x}\}$.
Suppose that there exists $\boldsymbol{y}\in B(\boldsymbol{x},\,\delta)\setminus\{\boldsymbol{x}\}$
such that $U(\boldsymbol{y})=H$. Since $U(\boldsymbol{z})\le H$
for all $\boldsymbol{z}\in B(\boldsymbol{x},\,\delta)$, $\nabla U(\boldsymbol{y})=0$,
which is a contradiction. Hence, there is no $\boldsymbol{y}\in B(\boldsymbol{x},\,\delta)\setminus\{\boldsymbol{x}\}$
such that $U(\boldsymbol{y})=H$. Therefore, $\boldsymbol{x}$ is
a local maximum. \medskip{}

\noindent (2) Let $\boldsymbol{x}_{0}\in\mathcal{A}\cap\mathcal{W}$.
Let $\boldsymbol{x}_{1}\in\mathcal{W}$ so that there exists a continuous
path $\boldsymbol{z}_{1}:[0,\,1]\to\mathcal{W}$ such that
\[
\boldsymbol{z}_{1}(0)\ =\ \boldsymbol{x}_{0}\;,\;\;\;\boldsymbol{z}_{1}(1)\ =\ \boldsymbol{x}_{1}\;\;\mbox{and}\;\;U(\boldsymbol{z}_{1}(t))\ <\ H\;\mbox{ for all }t\in[0,\,1]
\]
Therefore, by Lemma \ref{l_level_boundary}, the path $\boldsymbol{z}_{1}([0,\,1])$
is contained in an connected component of $\{U\le H\}^{o}$ containing
$\boldsymbol{x}_{0}$, i.e., contained in $\mathcal{A}$. This implies
that $\boldsymbol{x}_{1}=\boldsymbol{z}_{1}(1)\in\mathcal{A}$. Furthermore,
since $U(\boldsymbol{x}_{1})<H$, we obtain $\boldsymbol{x}\in\widehat{\mathcal{A}}$.
This proves that $\mathcal{W}\subset\widehat{\mathcal{A}}$.

On the other hand, by part (1), $U(\boldsymbol{y})<H$ for all $\boldsymbol{y}\in\widehat{\mathcal{A}}$
so that $\widehat{\mathcal{A}}\subset\{U<H\}$. Since the set obtained
by deleting finite number of points from an open and connected non-empty
set in $\mathbb{R}^{d}$ is still open and connected non-empty, we
can conclude that $\widehat{\mathcal{A}}$ is open and connected.
By the above paragraph,, $\widehat{\mathcal{A}}\cap\mathcal{W}=\mathcal{W}\neq\varnothing$.
Therefore, since $\widehat{\mathcal{A}}\subset\{U<H\}$, by Lemma
\ref{l_level_connected}, $\widehat{\mathcal{A}}\subset\mathcal{W}$.\medskip{}

\noindent (3) Since $\{U<H\}\subset\{U\le H\}^{o}$, the set $\mathcal{W}$
must intersect with at least one of the connected components of $\{U\le H\}^{o}$.
Denote this connected component by $\mathcal{A}$. Then, by part (2),
$\mathcal{W}=\widehat{\mathcal{A}}$. The uniqueness is straightforward.
\end{proof}
\begin{lem}
\label{2la1-0}
Fix $H\in\mathbb{R}$, and let $\mathcal{K}$
be a connected component of $\{U\le H\}$ which is not a singleton.
Let $\mathcal{W}_{1},\,\dots,\,\mathcal{W}_{a}$ be all the connected
components of $\{U<H\}$ intersecting with $\mathcal{K}$. Then, we
have
\[
\mathcal{K}=\bigcup_{i=1}^{a}\overline{\mathcal{W}_{i}}\;\;\;\text{and}\;\;\;\mathcal{M}_{0}\cap\mathcal{K}\ =\ \mathcal{M}_{0}\cap\bigcup_{i=1}^{a}\mathcal{W}_{i}\;.
\]
\end{lem}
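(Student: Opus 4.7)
The strategy is to prove the set equality $\mathcal{K}=\bigcup_{i=1}^{a}\overline{\mathcal{W}_{i}}$ by two inclusions, then deduce the statement about local minima. The inclusion $\bigcup_{i=1}^{a}\overline{\mathcal{W}_{i}}\subset\mathcal{K}$ is straightforward: each $\mathcal{W}_{i}$ is a connected subset of $\{U\le H\}$ that intersects $\mathcal{K}$, so Lemma \ref{l_level_connected} gives $\mathcal{W}_{i}\subset\mathcal{K}$, and since $\mathcal{K}$ is closed in $\mathbb{R}^{d}$ as a connected component of the closed set $\{U\le H\}$, we obtain $\overline{\mathcal{W}_{i}}\subset\mathcal{K}$.

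For the reverse inclusion, I would first use the non-singleton hypothesis to guarantee $\{U<H\}\cap\mathcal{K}\ne\varnothing$, which in particular ensures $a\ge1$. If instead $U\equiv H$ on $\mathcal{K}$, then on the one hand every point of $\mathcal{K}\cap\{U\le H\}^{o}$ would be a local maximum by Lemma \ref{2-la0}(1), and on the other hand every point of $\mathcal{K}\cap\partial\{U\le H\}$ would satisfy $U=H$ while being a limit of points with $U>H$, forcing $\nabla U=0$ at such a point from the first-order condition for a minimum along an incoming direction. The Morse property would then make $\mathcal{K}$ a nonempty connected set of isolated critical points, hence a singleton, contradicting the hypothesis. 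Any point $\boldsymbol{x}\in\mathcal{K}$ with $U(\boldsymbol{x})<H$ clearly lies in the unique connected component of $\{U<H\}$ through it, which must be one of the $\mathcal{W}_{i}$.

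The nontrivial case is $\boldsymbol{x}\in\mathcal{K}$ with $U(\boldsymbol{x})=H$, for which I would produce a sequence $\boldsymbol{y}_{n}\to\boldsymbol{x}$ with $U(\boldsymbol{y}_{n})<H$ such that each $\boldsymbol{y}_{n}$ can be joined to $\boldsymbol{x}$ by a continuous path inside $\{U\le H\}$. When $\boldsymbol{x}$ is a regular point, this is immediate from the implicit function theorem, which makes $\{U\le H\}$ locally diffeomorphic to a closed half-space with $\boldsymbol{x}$ on its boundary. When $\boldsymbol{x}$ is a critical point, the Morse lemma supplies local coordinates in which $U=H-\sum_{j\le k}y_{j}^{2}+\sum_{j>k}y_{j}^{2}$, and we approach $\boldsymbol{x}$ along the unstable cone $y_{j}=0$ for $j>k$; the degenerate case $k=0$ would make $\boldsymbol{x}$ a strict local minimum at level $H$ and hence isolated in $\{U\le H\}$, forcing $\mathcal{K}=\{\boldsymbol{x}\}$ and contradicting the hypothesis. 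Passing to a subsequence we may assume all $\boldsymbol{y}_{n}$ lie in a single connected component $\mathcal{W}'$ of $\{U<H\}$; the set $\mathcal{W}'\cup\{\boldsymbol{x}\}$ is then connected and contained in $\{U\le H\}$, so $\mathcal{W}'\subset\mathcal{K}$, whence $\mathcal{W}'=\mathcal{W}_{i}$ for some $i$ and $\boldsymbol{x}\in\overline{\mathcal{W}_{i}}$.

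The second equality follows easily: the inclusion $\supset$ is immediate from $\mathcal{W}_{i}\subset\mathcal{K}$, and for $\subset$, a local minimum $\boldsymbol{m}\in\mathcal{K}$ is strict by the Morse hypothesis, so $U(\boldsymbol{m})=H$ would again force $\mathcal{K}=\{\boldsymbol{m}\}$; hence $U(\boldsymbol{m})<H$ and $\boldsymbol{m}$ belongs to some $\mathcal{W}_{i}$. The main technical obstacle is the height-$H$ analysis in the previous paragraph: one must ensure that the connected component $\mathcal{W}'$ of $\{U<H\}$ approached from inside $\mathcal{K}$ is actually one of the listed $\mathcal{W}_{i}$, which is exactly what the joinability-by-path refinement of the approximating sequence achieves.
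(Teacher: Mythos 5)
Your proof is correct and reaches the same conclusion, but by a genuinely different route than the paper. The paper's proof decomposes the interior $\mathcal{K}^{o}$ into the connected components $\mathcal{A}_{1},\dots,\mathcal{A}_{a}$ of $\{U\le H\}^{o}$, applies Lemma~\ref{2-la0} to pair each $\mathcal{A}_{i}$ with a unique component $\mathcal{W}_{i}$ of $\{U<H\}$ (by deleting the interior local maxima at height $H$), and then invokes a cited path-connectedness result for $\mathcal{K}$ to write $\mathcal{K}=\overline{\mathcal{K}^{o}}=\bigcup\overline{\mathcal{A}_{i}}=\bigcup\overline{\mathcal{W}_{i}}$. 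You instead argue by two inclusions: the easy one via maximality of the component $\mathcal{K}$ of $\{U\le H\}$, and for the hard direction you do a pointwise local analysis at each $\boldsymbol{x}\in\mathcal{K}$ with $U(\boldsymbol{x})=H$, using the implicit function theorem at regular points and the Morse lemma at critical points to produce a sequence in $\{U<H\}$ approaching $\boldsymbol{x}$ by paths inside $\{U\le H\}$, which both places that sequence in $\mathcal{K}$ and identifies the component it hits with one of the $\mathcal{W}_{i}$. Your version avoids the auxiliary Lemma~\ref{2-la0} and the external path-connectedness reference at the cost of more manual coordinate analysis; it also correctly handles the degenerate Morse index $k=0$ and the derivation of the second equality for local minima. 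One imprecise step is the remark that a point of $\mathcal{K}\cap\partial\{U\le H\}$ with $U\equiv H$ on $\mathcal{K}$ must be critical ``from the first-order condition for a minimum along an incoming direction''; the correct reason is that a regular boundary point would force nearby points of $\mathcal{K}$ to have $U<H$ (since $\{U\le H\}$ is locally a half-ball there), contradicting $U\equiv H$ on $\mathcal{K}$. The conclusion you draw (Morse function plus boundedness of $\{U\le H\}$ makes such a $\mathcal{K}$ a finite set of critical points, hence a singleton) is sound.
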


\begin{proof}
Since $\mathcal{K}$ is not a singleton, $\mathcal{K}^{o}$
is nonempty. Decompose $\mathcal{K}^{o}$ as
\[
\mathcal{K}^{o}\ :=\ \bigcup_{i=1}^{a}\mathcal{A}_{i}\ .
\]
If $\mathcal{A}_{i}\cap\{U<H\}$, $i\in\llbracket1,\,a\rrbracket$,
is empty, $U(\bm{x})=H$ for all $\bm{x}\in\mathcal{A}_{i}$. This
contradicts to the assumption that all critical points are nondegenerate.
Therefore, all $\mathcal{A}_{i}$ intersect with $\{U<H\}$.

By Lemma \ref{2-la0}, for each $i\in\llbracket1,\,a\rrbracket$,
there exists a connected component $\mathcal{W}_{i}$ of $\{U<H\}$
intersecting with $\mathcal{A}_{i}$. Since $\mathcal{A}_{i}\subset\mathcal{K}$,
we have $\mathcal{W}_{i}\cap\mathcal{K}\ne\varnothing$. On the other
hand, if $\mathcal{W}$ be a connected component of $\{U<H\}$ such
that $\mathcal{W}\cap\mathcal{K}\ne\varnothing$, then $\mathcal{W}$
intersects $\mathcal{A}_{i}$ for some $i\in\llbracket1,\,a\rrbracket$,
as $\mathcal{W}\cap\mathcal{K}\subset\mathcal{K}^{o}$ by Lemma \ref{l_level_boundary}.
Therefore, $\mathcal{W}_{1},\,\dots,\,\mathcal{W}_{a}$ are all the
connected components of $\{U<H\}$ intersecting $\mathcal{K}$.

Since $\mathcal{W}_{i}\subset\mathcal{A}_{i}\setminus\{\text{local maxima}\}$
and local maxima in $\mathcal{A}_{i}$ are accumulation points, we
get $\mathcal{M}_{0}\cap\mathcal{W}_{i}=\mathcal{M}_{0}\cap\mathcal{A}_{i}$
and $\overline{\mathcal{W}_{i}}=\overline{\mathcal{A}_{i}}$. Therefore,
by \cite[Lemma A.16-(2)]{LLS-1st}, $\mathcal{K}$ is path-connected
and we get
\[
\mathcal{K}=\overline{\mathcal{K}^{o}}=\overline{\bigcup_{i=1}^{a}\mathcal{A}_{i}}=\bigcup_{i=1}^{a}\overline{\mathcal{A}_{i}}=\bigcup_{i=1}^{a}\overline{\mathcal{W}_{i}}\;.
\]
For the second assertion of the lemma, suppose that $\bm{m}\in\partial\mathcal{K}\cap\mathcal{M}_{0}$.
Then, by Lemma \ref{l_level_boundary}, we have $U(\boldsymbol{m})=H$.
Since $\boldsymbol{m}$ is a local minimum of $U$, $\boldsymbol{m}$
should be an isolated point of the level set $\{U\le H\}$ so that
$\mathcal{K}=\{\bm{m}\}$ which contradicts to the assumption. This
proves the second assertion.
\end{proof}
\begin{lem}
\label{2-la1} Fix $H\in\mathbb{R}$, and let $\mathcal{K}$ be a
connected component of $\{U\le H\}$ which is not a singleton and
let $\{\mathcal{W}_{1},\,\dots,\,\mathcal{W}_{a}\}$ be a level set
decomposition of $\mathcal{K}^{o}$. Then,
\begin{enumerate}
\item For all $\boldsymbol{m}\in\mathcal{M}_{0}\cap\mathcal{W}_{i}$ and
$\boldsymbol{m}'\in\mathcal{M}_{0}\cap\mathcal{W}_{j}$ for some $i\ne j$,
$\Theta(\boldsymbol{m},\,\boldsymbol{m}')=H$.
\item If $\overline{\mathcal{W}_{i}}\cap\overline{\mathcal{W}_{j}}\neq\varnothing$
for some $i,\,j\in\llbracket1,\,a\rrbracket$, then the set $\overline{\mathcal{W}_{i}}\cap\overline{\mathcal{W}_{j}}$
is a collection of saddle points $\boldsymbol{\sigma}$ such that
$U(\boldsymbol{\sigma})=H$.
\item For all $i,\,j\in\llbracket1,\,a\rrbracket$, there exist $k_{1},\,\dots,k_{n}\in\llbracket1,\,a\rrbracket$
such that
\begin{equation}
\overline{\mathcal{W}_{i}}\cap\overline{\mathcal{W}_{k_{1}}},\;\overline{\mathcal{W}_{k_{1}}}\cap\overline{\mathcal{W}_{k_{2}}},\;\dots,\;\overline{\mathcal{W}_{k_{n-1}}}\cap\overline{\mathcal{W}_{k_{n}}},\;\overline{\mathcal{W}_{k_{n}}}\cap\overline{\mathcal{W}_{j}}\ \ne\ \varnothing\;.\label{e_4141}
\end{equation}
\end{enumerate}
\end{lem}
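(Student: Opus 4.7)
The plan is to dispatch the three parts in turn, leaning on Lemmata \ref{l_cap_saddle}, \ref{lap01}, \ref{2la1-0} and the fact that $\mathcal{K}$ is connected. None of the parts should require new arguments; rather, they are short consequences of results already established in the subsection.

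For part (1), I fix $\bm{m}\in\mathcal{M}_0\cap\mathcal{W}_i$ and $\bm{m}'\in\mathcal{M}_0\cap\mathcal{W}_j$ with $i\ne j$. The upper bound $\Theta(\bm{m},\bm{m}')\le H$ follows because $\mathcal{W}_i,\mathcal{W}_j\subset\mathcal{K}\subset\{U\le H\}$ and $\mathcal{K}$ is path-connected (this was established in the proof of Lemma \ref{2la1-0} via \cite[Lemma A.16-(2)]{LLS-1st}); any path joining $\bm{m}$ to $\bm{m}'$ inside $\mathcal{K}$ stays at height $\le H$, so Definition \eqref{Theta} yields the bound. The reverse inequality $\Theta(\bm{m},\bm{m}')\ge H$ follows from Lemma \ref{lap01}-(2) applied to the connected component $\mathcal{W}_i$ of $\{U<H\}$: indeed, $\bm{m}'\notin\mathcal{W}_i$ since $\mathcal{W}_i$ and $\mathcal{W}_j$ are distinct (hence disjoint) connected components.

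Part (2) is immediate from Lemma \ref{l_cap_saddle}. If $\bm{\sigma}\in\overline{\mathcal{W}_i}\cap\overline{\mathcal{W}_j}$ with $i\ne j$, then $\mathcal{W}_i$ and $\mathcal{W}_j$ are two disjoint connected components of $\{U<H\}$ whose closures meet at $\bm{\sigma}$, and that lemma tells us precisely that any such $\bm{\sigma}$ is a saddle point with $U(\bm{\sigma})=H$.

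For part (3), I would reformulate the claim as a connectedness statement for the adjacency graph on $\llbracket 1,a\rrbracket$. Declare $i\sim j$ if either $i=j$ or there is a chain $k_1,\dots,k_n$ making the intersections in \eqref{e_4141} nonempty; concatenation of chains shows this is an equivalence relation. Suppose, for contradiction, that there are at least two classes. Pick one class $I$, let $J=\llbracket 1,a\rrbracket\setminus I$, and set
\[
A\ :=\ \bigcup_{i\in I}\overline{\mathcal{W}_i}\;,\qquad B\ :=\ \bigcup_{j\in J}\overline{\mathcal{W}_j}\;.
\]
Both are nonempty finite unions of closed sets, hence closed. The crucial point is that $i\nsim j$ forces in particular $\overline{\mathcal{W}_i}\cap\overline{\mathcal{W}_j}=\varnothing$ (otherwise a length-one chain would exist), so $A\cap B=\varnothing$. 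By Lemma \ref{2la1-0}, $A\cup B=\bigcup_{k=1}^{a}\overline{\mathcal{W}_k}=\mathcal{K}$. This contradicts the connectedness of $\mathcal{K}$, and part (3) follows. The main (mild) subtlety is recognizing that pairwise disjointness of closures across equivalence classes is forced by the definition of the chain, which is what makes the topological contradiction go through.
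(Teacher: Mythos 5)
Your proof is correct, and parts (1) and (2) match the paper's argument exactly (lower bound via Lemma \ref{lap01}-(2), upper bound via path-connectedness of $\mathcal{K}$, and Lemma \ref{l_cap_saddle} for the saddle-point identification). For part (3) the paper simply cites \cite[Proof of Lemma A.16-(2)]{LLS-1st}, whereas you supply a self-contained argument: form the equivalence classes of the chain relation, observe that closures across distinct classes must be disjoint, and derive a contradiction with the connectedness of $\mathcal{K} = \bigcup_k \overline{\mathcal{W}_k}$ (Lemma \ref{2la1-0}). This is the standard argument one would expect the cited lemma to contain, so the substance is the same; your version just makes it explicit, which is a reasonable choice when the reference is to a companion paper.
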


\begin{proof}
For part (1), by Lemma \ref{lap01}-(2), we get $\Theta(\boldsymbol{m},\,\boldsymbol{m}')\ge H$.
Now, since $\mathcal{K}$ is path connected, $\Theta(\boldsymbol{m},\,\boldsymbol{m}')=H$.
(2) is a direct consequence of Lemma \ref{l_cap_saddle}, and (3)
is explained in \cite[Proof of Lemma A.16-(2)]{LLS-1st}.
\end{proof}
\begin{lem}
\textcolor{red}{\label{lem_level}}For a non-empty simple bound set
$\mathcal{M}\subset\mathcal{M}_{0}$ such that $\widetilde{\mathcal{M}}\neq\varnothing$,
the following hold.
\begin{enumerate}
\item There exists a connected component $\mathcal{K}$ of level set $\{U\le\Theta(\mathcal{M},\,\widetilde{\mathcal{M}})\}$
containing $\mathcal{M}$ and intersecting with $\widetilde{\mathcal{M}}$.
\item Let $\{\mathcal{W}_{1},\,\dots,\,\mathcal{W}_{a}\}$ be a level set
decomposition of $\mathcal{K}$. Then, $a\ge2$.
\item There exists a saddle point $\boldsymbol{\sigma}\in\mathcal{S}_{0}\cap\mathcal{K}$
such that $U(\boldsymbol{\sigma})=\Theta(\mathcal{M},\,\widetilde{\mathcal{M}})$.
\end{enumerate}
\end{lem}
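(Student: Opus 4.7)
Writing $H = \Theta(\mathcal{M},\widetilde{\mathcal{M}})$, the plan is to build the desired component $\mathcal{K}$ as the connected component of $\{U\le H\}$ containing $\mathcal{M}$. Since $\mathcal{M}$ is bound, $\max_{\boldsymbol{m},\boldsymbol{m}'\in\mathcal{M}}\Theta(\boldsymbol{m},\boldsymbol{m}')<H$ (vacuously when $\mathcal{M}$ is a singleton), so Lemma \ref{l_bound_conn} first yields a connected component $\mathcal{W}$ of $\{U<H\}$ with $\mathcal{M}\subset\mathcal{W}$ and $\mathcal{M}^{*}(\mathcal{W})=\mathcal{M}$. This set $\mathcal{W}$ will play the role of the privileged well of $\mathcal{M}$ throughout the argument.

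For part (1), I would use finiteness of $\mathcal{M}_0$ to attain the infimum defining $H$ at some pair $\boldsymbol{m}\in\mathcal{M}$, $\boldsymbol{m}'\in\widetilde{\mathcal{M}}$ with $\Theta(\boldsymbol{m},\boldsymbol{m}')=H$. Lemma \ref{l_105a-2} then produces a connected component $\mathcal{K}$ of $\{U\le H\}$ containing both $\boldsymbol{m}$ and $\boldsymbol{m}'$. Since $\mathcal{W}\subset\{U<H\}\subset\{U\le H\}$ is connected and meets $\mathcal{K}$ at $\boldsymbol{m}$, Lemma \ref{l_level_connected} forces $\mathcal{W}\subset\mathcal{K}$, and hence $\mathcal{M}\subset\mathcal{K}$. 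Combined with $\boldsymbol{m}'\in\mathcal{K}\cap\widetilde{\mathcal{M}}$, this gives (1).

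For part (2), I first note that $\mathcal{K}$ is not a singleton since it contains the nonempty open set $\mathcal{W}$, so the level set decomposition $\{\mathcal{W}_1,\dots,\mathcal{W}_a\}$ from Section \ref{sec5.2} is well-defined; by Lemma \ref{2la1-0}, $\mathcal{M}_0\cap\mathcal{K}=\mathcal{M}_0\cap\bigcup_i\mathcal{W}_i$. Because $\mathcal{W}$ is itself a connected component of $\{U<H\}$ meeting $\mathcal{K}$, after relabelling we may take $\mathcal{W}_1=\mathcal{W}$; then $\boldsymbol{m}'\in\mathcal{W}_j$ for some $j$. If $j=1$, then $\boldsymbol{m}'\in\mathcal{W}$ with $U(\boldsymbol{m}')\le U(\mathcal{M})$, forcing $\boldsymbol{m}'\in\mathcal{M}^{*}(\mathcal{W})=\mathcal{M}$ and contradicting $\boldsymbol{m}'\in\widetilde{\mathcal{M}}$. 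Hence $j\ne1$, so $a\ge2$.

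For part (3), the key observation is that $\mathcal{K}=\bigcup_{i=1}^{a}\overline{\mathcal{W}_i}$ (Lemma \ref{2la1-0}) is connected while each $\overline{\mathcal{W}_i}$ is closed in $\mathcal{K}$. Were the sets $\overline{\mathcal{W}_i}$ pairwise disjoint, $\mathcal{K}$ would split into a finite disjoint union of nonempty closed subsets, contradicting its connectedness. Therefore there exist $i\ne j$ with $\overline{\mathcal{W}_i}\cap\overline{\mathcal{W}_j}\ne\varnothing$, and by Lemma \ref{2-la1}-(2) any point in this intersection is a saddle $\boldsymbol{\sigma}\in\mathcal{K}$ with $U(\boldsymbol{\sigma})=H$, proving (3). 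The only mild subtleties will be the attainability of the infimum in $\Theta(\mathcal{M},\widetilde{\mathcal{M}})$ and the identification $\mathcal{W}_1=\mathcal{W}$, both immediate from finiteness of $\mathcal{M}_0$ and the uniqueness in Lemma \ref{2-la0}; no further obstacle arises.
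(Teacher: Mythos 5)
Your proof is correct. It reaches the same conclusion as the paper's but takes a few mildly different turns, which are worth recording.

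For part (1) you first construct the open well $\mathcal{W}\subset\{U<H\}$ via Lemma~\ref{l_bound_conn} and then push $\mathcal{M}\subset\mathcal{W}\subset\mathcal{K}$ in one step with Lemma~\ref{l_level_connected}; the paper instead argues point by point, showing $\bm{m}''\in\mathcal{K}$ for each $\bm{m}''\in\mathcal{M}$ using the bound property and Lemma~\ref{l_105a-2}. Both routes rest on the same input (boundedness of $\mathcal{M}$ plus the level-set lemmas) and are essentially equivalent; your version has the small advantage that the object $\mathcal{W}$, with $\mathcal{M}^*(\mathcal{W})=\mathcal{M}$, is reused in part (2). For part (2) the paper applies Lemma~\ref{lap01}-(1) directly to $\bm{m}$ and $\bm{m}'$: since $\Theta(\bm{m},\bm{m}')=H$, they cannot lie in a common component of $\{U<H\}$. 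Your version routes instead through $\mathcal{M}^*(\mathcal{W})=\mathcal{M}$, which is correct but slightly longer; the paper's is the more economical argument. For part (3) your route is genuinely more elementary: you deduce directly from $\mathcal{K}=\bigcup_i\overline{\mathcal{W}_i}$ (Lemma~\ref{2la1-0}) and connectedness of $\mathcal{K}$ that some $\overline{\mathcal{W}_i}\cap\overline{\mathcal{W}_j}\ne\varnothing$ ($i\ne j$, using $a\ge2$), then conclude with Lemma~\ref{2-la1}-(2). The paper invokes the chain statement Lemma~\ref{2-la1}-(3), which gives more (a path in the adjacency graph from $\mathcal{W}_1$ to $\mathcal{W}_2$) but whose proof is itself nontrivial (it references the companion paper). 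For the purpose of this lemma your purely topological argument avoids the extra machinery and is self-contained, which is a small but real simplification.
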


\begin{proof}
\noindent Let $H=\Theta(\mathcal{M},\,\widetilde{\mathcal{M}})$ and
pick $\bm{m}\in\mathcal{M}$ and $\bm{m}'\in\widetilde{\mathcal{M}}$
such that $\Theta(\bm{m},\,\bm{m}')=\Theta(\mathcal{M},\,\widetilde{\mathcal{M}})$.
\smallskip{}

\noindent (1) By Lemma \ref{l_105a-2}, there exists a connected component
$\mathcal{K}$ of $\{U\le H\}$ containing both of $\bm{m}$ and $\bm{m}'$.
If $\bm{m}''\in\mathcal{M}$, we have $\Theta(\bm{m},\,\bm{m}'')<\Theta(\mathcal{M},\,\widetilde{\mathcal{M}})=H$
since $\mathcal{M}$ is bound, and therefore $\bm{m}''\in\mathcal{K}$
by Lemmata \ref{l_level_connected} and \ref{l_105a-2} . Hence, we
have $\mathcal{M}\subset\mathcal{K}$. We also have $\mathcal{K}\cap\widetilde{\mathcal{M}}\neq\varnothing$
since $\bm{m}'\in\mathcal{K}\cap\widetilde{\mathcal{M}}$.

\smallskip{}

\noindent (2) Let $\mathcal{W}_{1}$ and $\mathcal{W}_{2}$ be an
element of level set decomposition of $\mathcal{K}$ containing $\bm{m}$
and $\bm{m}'$, respectively. If $\mathcal{W}_{1}=\mathcal{W}_{2}$,
by Lemma \ref{lap01}-(1), $\Theta(\bm{m},\,\bm{m}')<H$ which is
a contradiction. Therefore, $\mathcal{W}_{1}\ne\mathcal{W}_{2}$ so
that $a\ge2$.

\smallskip{}

\noindent (3) By Lemma \ref{2-la1}-(3) with $i=1$ and $j=2$, there
exists a connected component $\mathcal{W}_{3}$ of $\mathcal{K}^{o}$
such that
\[
\mathcal{W}_{1}\ \ne\ \mathcal{W}_{3}\ \ ,\ \ \overline{\mathcal{W}_{1}}\cap\overline{\mathcal{W}_{3}}\ \ne\ \varnothing\ .
\]
Hence, by Lemma \ref{2-la1}-(2), $\bm{\sigma}\in\overline{\mathcal{W}_{1}}\cap\overline{\mathcal{W}_{3}}$
satisfies $U(\boldsymbol{\sigma})=H=\Theta(\mathcal{M},\,\widetilde{\mathcal{M}})$.
\end{proof}
\begin{lem}
\label{lem_H,H+a} Fix $h\in\mathbb{R}$ and $r>0$. Suppose that
the interval $[h,\,h+r)$ does not contain the image of any critical
point of $U$: $\{U(\boldsymbol{c}):\boldsymbol{c}\in\mathcal{C}\}\cap[h,\,h+r)=\varnothing$.
Let $\mathcal{V}$ be a connected component of $\{U<h\}$ and let
$\mathcal{V}'$ be a connected component of $\{U<h+r\}$ containing
$\mathcal{V}$. Then, $\mathcal{M}_{0}\cap\mathcal{V}=\mathcal{M}_{0}\cap\mathcal{V}'$.
\end{lem}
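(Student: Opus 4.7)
The plan is to exploit the hypothesis that $[h, h+r)$ contains no critical values, which makes the level set $\{U = h\} \cap \mathcal{V}'$ regular, and to deform $\mathcal{V}'$ onto $\mathcal{V}' \cap \{U \leq h\}$ along the negative gradient flow of $U$.

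First, the inclusion $\mathcal{M}_0 \cap \mathcal{V} \subseteq \mathcal{M}_0 \cap \mathcal{V}'$ is immediate. For the converse, I fix $\boldsymbol{m} \in \mathcal{M}_0 \cap \mathcal{V}'$. Since $\boldsymbol{m}$ is a critical point with $U(\boldsymbol{m}) < h+r$, the hypothesis forces $U(\boldsymbol{m}) < h$, so $\boldsymbol{m}$ lies in some connected component $\mathcal{V}''$ of $\{U < h\}$, and $\mathcal{V}'' \subseteq \mathcal{V}'$ by Lemma \ref{l_level_connected}. I will show $\mathcal{V}'' = \mathcal{V}$ by contradiction: suppose $\mathcal{V}'' \neq \mathcal{V}$, and fix any $\boldsymbol{x}_0 \in \mathcal{V}$.

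Second, consider the gradient flow $\phi_s$ solving $\dot{\phi} = -\nabla U(\phi)$; the growth condition \eqref{eq:growth} guarantees global existence in forward time, and since $U$ is non-increasing along $\phi_s$, the set $\mathcal{V}'$ (a component of $\{U < h+r\}$) is forward-invariant. For each $\boldsymbol{x} \in \mathcal{V}'$, the trajectory remains in the compact set $\{U \le U(\boldsymbol{x})\}$, so its $\omega$-limit is a critical point $\boldsymbol{c} \in \mathcal{V}'$ with $U(\boldsymbol{c}) < h+r$; the hypothesis then yields $U(\boldsymbol{c}) < h$. Hence $s(\boldsymbol{x}) := \inf\{s \geq 0 : U(\phi_s(\boldsymbol{x})) \leq h\} < \infty$, and $\Phi(\boldsymbol{x}) := \phi_{s(\boldsymbol{x})}(\boldsymbol{x})$ defines a map $\Phi : \mathcal{V}' \to \mathcal{V}' \cap \{U \leq h\}$ fixing $\{U \leq h\} \cap \mathcal{V}'$ pointwise; in particular $\Phi(\boldsymbol{x}_0) = \boldsymbol{x}_0$ and $\Phi(\boldsymbol{m}) = \boldsymbol{m}$. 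The hypothesis that no critical point has $U$-value $h$ means $\nabla U \neq 0$ wherever $\phi_s(\boldsymbol{x})$ crosses the level $\{U = h\}$, so $\frac{d}{ds}[U \circ \phi_s] = -|\nabla U|^2 < 0$ at the crossing; the implicit function theorem then makes $s(\cdot)$, and hence $\Phi$, continuous.

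Third, I claim the connected components of $\{U \leq h\}$ are precisely the closures $\overline{\mathcal{W}}$ of the connected components $\mathcal{W}$ of $\{U < h\}$. Each $\overline{\mathcal{W}}$ is closed and connected. By Lemma \ref{l_cap_saddle} applied at level $h$ together with the hypothesis, any intersection point of two such closures would be a saddle at height $h$, an excluded critical value; so the $\overline{\mathcal{W}}$ are pairwise disjoint. Openness of $\overline{\mathcal{W}}$ inside $\{U \leq h\}$ follows from the fact that at any boundary point $\boldsymbol{y} \in \partial \mathcal{W} \subseteq \{U = h\}$, the regularity $\nabla U(\boldsymbol{y}) \neq 0$ implies that locally $\{U < h\}$ is a single half-space, which must belong to $\mathcal{W}$. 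With this structure, since $\Phi(\mathcal{V}') \subseteq \{U \leq h\}$ is connected and contains $\boldsymbol{x}_0 \in \mathcal{V}$, it lies in the single component $\overline{\mathcal{V}}$. In particular $\boldsymbol{m} = \Phi(\boldsymbol{m}) \in \overline{\mathcal{V}}$; but $U(\boldsymbol{m}) < h$ while $\overline{\mathcal{V}} \setminus \mathcal{V} \subseteq \{U = h\}$ by Lemma \ref{l_level_boundary}, whence $\boldsymbol{m} \in \mathcal{V} = \mathcal{V}''$, a contradiction.

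The main obstacle lies in establishing that $\Phi$ is globally well-defined and continuous and that $\{U \leq h\}$ admits the claimed decomposition; both hinge on the absence of critical points on the regular level $\{U = h\}$, which provides respectively the $\omega$-limit control, the transversality needed for continuity via the implicit function theorem, and the absence of saddle-type intersections between the closures $\overline{\mathcal{W}}$. Once these are in hand, the conclusion is essentially topological.
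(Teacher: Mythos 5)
Your proof is correct, but it takes a genuinely different route from the paper's. The paper argues directly via the communication height: it picks a local minimum $\boldsymbol{m}_1\in\mathcal{V}$ and a hypothetical $\boldsymbol{m}_2\in\mathcal{V}'\setminus\mathcal{V}$, observes from Lemma~\ref{lap01} that $h\le\Theta(\boldsymbol{m}_1,\boldsymbol{m}_2)<h+r$, and then uses the level-set decomposition machinery (Lemmata~\ref{l_105a-2} and~\ref{2-la1}) to extract a saddle point $\boldsymbol{\sigma}$ with $U(\boldsymbol{\sigma})=\Theta(\boldsymbol{m}_1,\boldsymbol{m}_2)\in[h,h+r)$, contradicting the hypothesis on critical values. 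Your argument instead builds a continuous deformation retraction $\Phi$ of $\mathcal{V}'$ onto $\mathcal{V}'\cap\{U\le h\}$ via the negative gradient flow, establishes that the components of $\{U\le h\}$ are exactly the closures $\overline{\mathcal{W}}$ of the components $\mathcal{W}$ of $\{U<h\}$ (using the absence of critical values at height $h$), and concludes by connectedness of $\Phi(\mathcal{V}')$. Both are sound; the paper's proof is shorter because it reuses the $\Theta$- and decomposition-lemmata that have already been set up, while yours is more self-contained differential topology but requires verifying two nontrivial auxiliary facts (global well-definedness and continuity of the first-passage-time map $s(\cdot)$, and the open-closed decomposition of $\{U\le h\}$) that the paper's route sidesteps entirely. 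A minor point worth noting in your write-up: the fact that the $\omega$-limit of a gradient trajectory is a single critical point uses that the critical points are isolated (which follows here either from the Morse hypothesis or from the standing assumption that $\mathcal{C}$ is finite), and the claim $\boldsymbol{c}\in\mathcal{V}'$ deserves the one-line justification that $\boldsymbol{c}\in\overline{\mathcal{V}'}\cap\{U<h+r\}=\mathcal{V}'$.
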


\begin{proof}
Since $\mathcal{V}$ is a connected component of a level set, there
exists local minimum $\boldsymbol{m}_{1}$ of $U$ in $\mathcal{V}$.
Suppose, by contradiction, that there exists a local minimum $\boldsymbol{m}_{2}$
of $U$ in $\mathcal{V}'\setminus\mathcal{V}$. Then, by Lemma \ref{lap01},
\begin{equation}
h\ \le\ \Theta(\boldsymbol{m}_{1},\,\boldsymbol{m}_{2})\ <\ h+r\ .\label{e_H,H+a}
\end{equation}
Let $\mathcal{G}$ be a connected component of $\{U\le\Theta(\boldsymbol{m}_{1},\,\boldsymbol{m}_{2})\}$
containing both $\boldsymbol{m}_{1}$ and $\boldsymbol{m}_{2}$ whose
existence is guaranteed by Lemma \ref{l_105a-2}. Let $\{\mathcal{W}_{1},\,\dots,\,\mathcal{W}_{a}\}$
be a level set decomposition of $\mathcal{G}$.

By Lemma \ref{lap01}, $\bm{m}_{1}$ and $\bm{m}_{2}$ are contained
in different connected components of $\{U<\Theta(\boldsymbol{m}_{1},\,\boldsymbol{m}_{2})\}$
so that $a>1$. Bby Lemma \ref{2-la1}, there exist $i,\,j\in\llbracket1,\,a\rrbracket$
such that $\overline{\mathcal{W}_{i}}\cap\overline{\mathcal{W}_{j}}\ne\varnothing$,
and a saddle point $\boldsymbol{\sigma}\in\overline{\mathcal{W}_{i}}\cap\overline{\mathcal{W}_{j}}$
of $U$ such that $U(\boldsymbol{\sigma})=\Theta(\boldsymbol{m}_{1},\,\boldsymbol{m}_{2})$.
Thus, by \eqref{e_H,H+a}, $\boldsymbol{\sigma}$ is a critical point
of $U$ such that $U(\boldsymbol{\sigma})\in[h,\,h+r)$, which contradicts
to the assumption of the lemma.
\end{proof}

\subsection{Landscape near saddle points\label{sec8a}}

In this subsection, we present some results on saddle points needed
in the article. First, we state results from \cite{LLS-1st}.
\begin{lem}
\label{l_path_saddle} Fix a saddle point $\boldsymbol{\sigma}\in\mathcal{S}_{0}$.
Then,
\begin{enumerate}
\item There exists $r'>0$ such that the set $B(\boldsymbol{\sigma},\,r')\cap\{\boldsymbol{x}\in\mathbb{R}^{d}:U(\boldsymbol{x})<U(\boldsymbol{\sigma})\}$
has exactly two connected components. Denote by ${\color{blue}\mathcal{A}^{\pm}}$
these two connected components.
\item There exist local minima $\boldsymbol{m}_{\boldsymbol{\sigma}}^{\pm}\in\mathcal{M}_{0}$,
$t^{\pm}\in\mathbb{R}$, and heteroclinic orbits $\phi^{\pm}$ from
$\boldsymbol{\sigma}$ to $\boldsymbol{m}_{\boldsymbol{\sigma}}^{\pm}$
such that $\phi^{\pm}\left((-\infty,\,t^{\pm}]\right)\subset\mathcal{A}^{\pm}$.
\item Let $\mathcal{W}^{\pm}$ be the connected components of $\{\boldsymbol{x}\in\mathbb{R}^{d}:U(\boldsymbol{x})<U(\boldsymbol{\sigma})\}$
containing $\boldsymbol{m}_{\boldsymbol{\sigma}}^{\pm}$, respectively.
Then, $\phi^{\pm}(\mathbb{R})\subset\mathcal{W}^{\pm}$ and $\boldsymbol{\sigma}\in\partial\mathcal{W}^{\pm}$.
\end{enumerate}
\end{lem}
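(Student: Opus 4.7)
The plan is to carry out the three parts in order: the Morse lemma for part (1), the unstable-manifold theorem combined with hypothesis \eqref{hyp2} for part (2), and a short connectedness argument for part (3).

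First, for part (1), I would invoke the Morse lemma. Since $\boldsymbol{\sigma} \in \mathcal{S}_0$ is a nondegenerate critical point of $U$ of Morse index $1$, there exists a smooth chart centered at $\boldsymbol{\sigma}$ in which $U$ takes the canonical form $U(\boldsymbol{z}) - U(\boldsymbol{\sigma}) = -z_1^2 + z_2^2 + \cdots + z_d^2$. In these coordinates the local sublevel set $\{U < U(\boldsymbol{\sigma})\}$ becomes the open double cone $\{z_1^2 > z_2^2 + \cdots + z_d^2\}$, whose two connected components are distinguished by the sign of $z_1$. Intersecting with a sufficiently small ball and pulling back via the chart produces the required decomposition into $\mathcal{A}^+$ and $\mathcal{A}^-$.

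Next, I would establish two preliminary facts about any heteroclinic orbit $\phi$ ending at $\boldsymbol{\sigma}$ as $t \to -\infty$. Using $\boldsymbol{\ell} \cdot \nabla U \equiv 0$ from \eqref{eq:decb}, one computes $(d/dt) U(\phi(t)) = -|\nabla U(\phi(t))|^2$, and ODE uniqueness combined with $\phi(\infty) \neq \boldsymbol{\sigma}$ forces $\phi$ to avoid every equilibrium of \eqref{eq:x(t)}; together these yield $U(\phi(t)) < U(\boldsymbol{\sigma})$ for all $t$. For part (2), I would then apply the unstable manifold theorem at $\boldsymbol{\sigma}$: by \cite[Lemma 3.3]{LS-22} the matrix $\mathbb{H}^{\boldsymbol{\sigma}} + \mathbb{L}^{\boldsymbol{\sigma}}$ has exactly one negative eigenvalue, so the Jacobian $-(\mathbb{H}^{\boldsymbol{\sigma}} + \mathbb{L}^{\boldsymbol{\sigma}})$ of $\boldsymbol{b}$ at $\boldsymbol{\sigma}$ has exactly one positive eigenvalue. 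Hence the local unstable manifold $W^u(\boldsymbol{\sigma})$ is a $1$-dimensional smooth manifold, and every orbit $\phi$ satisfying $\phi(-\infty) = \boldsymbol{\sigma}$ lies in $W^u(\boldsymbol{\sigma})$. Removing $\boldsymbol{\sigma}$ splits $W^u(\boldsymbol{\sigma})$ into exactly two branches, and by hypothesis \eqref{hyp2} both branches are traversed (in backward time) by heteroclinic orbits landing at some local minima $\boldsymbol{m}_{\boldsymbol{\sigma}}^\pm$.

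The main obstacle will be to show that the two branches of $W^u(\boldsymbol{\sigma})$ lie in \emph{different} components $\mathcal{A}^\pm$ near $\boldsymbol{\sigma}$ rather than both entering the same one. The tangent line $T_{\boldsymbol{\sigma}} W^u(\boldsymbol{\sigma})$ is spanned by the unstable eigenvector $\boldsymbol{v}$ of $-(\mathbb{H}^{\boldsymbol{\sigma}} + \mathbb{L}^{\boldsymbol{\sigma}})$, while $\mathcal{A}^+$ and $\mathcal{A}^-$ are separated in Morse coordinates by the hypersurface $\{z_1 = 0\}$. What is needed is therefore that $\boldsymbol{v}$ have a nonzero component along $\boldsymbol{e}_1$, the negative-eigendirection of $\mathbb{H}^{\boldsymbol{\sigma}}$. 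This transversality is a linear-algebra fact whose proof parallels that of \cite[Lemma 8.1]{LS-22}: one differentiates the identity $\boldsymbol{\ell} \cdot \nabla U \equiv 0$ at $\boldsymbol{\sigma}$ to obtain block-structural constraints on $\mathbb{L}^{\boldsymbol{\sigma}}$ in the eigenbasis of $\mathbb{H}^{\boldsymbol{\sigma}}$, and then inspects the characteristic equation to confirm that the unique positive eigenvalue of $-(\mathbb{H}^{\boldsymbol{\sigma}} + \mathbb{L}^{\boldsymbol{\sigma}})$ has an eigenvector with nontrivial $\boldsymbol{e}_1$-coordinate. Once this is secured, by continuity each branch of $W^u(\boldsymbol{\sigma})$ enters a single $\mathcal{A}^\pm$ for $t$ close to $-\infty$; this fixes a labeling $\phi^\pm, \boldsymbol{m}_{\boldsymbol{\sigma}}^\pm$ and produces the required $t^\pm$. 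Part (3) is then essentially immediate: $\phi^\pm(\mathbb{R})$ is a continuous, hence connected, subset of $\{U < U(\boldsymbol{\sigma})\}$ containing $\boldsymbol{m}_{\boldsymbol{\sigma}}^\pm \in \mathcal{W}^\pm$, so $\phi^\pm(\mathbb{R}) \subset \mathcal{W}^\pm$; finally, $\phi^\pm(t) \to \boldsymbol{\sigma}$ as $t \to -\infty$ places $\boldsymbol{\sigma}$ in $\overline{\mathcal{W}^\pm}$, while $U(\boldsymbol{\sigma}) \not< U(\boldsymbol{\sigma})$ keeps it out of $\mathcal{W}^\pm$, yielding $\boldsymbol{\sigma} \in \partial \mathcal{W}^\pm$.
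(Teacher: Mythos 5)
Your proposal is correct and follows essentially the same route the paper takes: part (1) via the Morse normal form (the paper cites \cite[Lemma A.7]{LLS-1st}, which is the same normal-form argument), part (2) via the hyperbolicity of $\boldsymbol{\sigma}$ combined with hypothesis \eqref{hyp2} (the paper invokes Hartman--Grobman; you invoke the unstable manifold theorem, which is the more natural tool since what is actually used is the one-dimensionality of $W^{u}(\boldsymbol{\sigma})$), and part (3) exactly as the paper does, from the strict decrease of $U$ along orbits.

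Where you add value is in making explicit the one point the paper's three-sentence proof leaves implicit: that the two branches of $W^{u}(\boldsymbol{\sigma})$ enter \emph{distinct} components $\mathcal{A}^{\pm}$, which requires the unstable direction $\boldsymbol{v}$ to have a nonzero component along $\boldsymbol{e}_{1}$. You propose to get this transversality by a linear-algebra argument parallel to \cite[Lemma 8.1]{LS-22}. That works, but it is more than you need, and it creates a small redundancy with what you have already established. Once you know $U(\phi(t))<U(\boldsymbol{\sigma})$ for all finite $t$, the branch lies inside the open double cone, and as $t\to-\infty$ the normalized displacement $(\phi(t)-\boldsymbol{\sigma})/|\phi(t)-\boldsymbol{\sigma}|$ converges to $\pm\boldsymbol{v}$, which must therefore lie in the \emph{closed} cone $\{\lambda_{1}\hat{x}_{1}^{2}\ge\sum_{k\ge2}\lambda_{k}\hat{x}_{k}^{2}\}$. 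A unit vector with $\hat{x}_{1}=0$ is not in this closed cone, so $\boldsymbol{v}\cdot\boldsymbol{e}_{1}\ne 0$ follows with no further computation, and the two branches, tangent to $+\boldsymbol{v}$ and $-\boldsymbol{v}$, enter opposite nappes. This shortcut avoids reproving a variant of \cite[Lemma 8.1]{LS-22} for the matrix $\mathbb{H}^{\boldsymbol{\sigma}}+\mathbb{L}^{\boldsymbol{\sigma}}$ (as opposed to $\mathbb{H}^{\boldsymbol{\sigma}}-(\mathbb{L}^{\boldsymbol{\sigma}})^{\dagger}$), which, while doable, is not quite the statement of the cited lemma.
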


\begin{proof}
Part (1) is the content of \cite[Lemma A.7]{LLS-1st}. \smallskip{}

\noindent (2) This is a straightforward consequence of the Hartman--Grobman
theorem \cite[Section 2.8]{Perko} and hypothesis \eqref{hyp2}. \smallskip{}

\noindent (3) As $U$ strictly decreases along solutions of the ODE
\eqref{eq:x(t)}, $\{\phi^{\pm}(t):t\in\mathbb{R}\}$ is contained
in a connected component of the set $\{\boldsymbol{x}\in\mathbb{R}^{d}:U(\boldsymbol{x})<U(\boldsymbol{\sigma})\}$,
denoted by $\mathcal{V}^{\pm}$. Since $\phi^{\pm}(t)\to\boldsymbol{m}_{\boldsymbol{\sigma}}^{\pm}$
as $t\to\infty$, and $U(\boldsymbol{m}_{\boldsymbol{\sigma}})<U(\boldsymbol{\sigma})$,
$\boldsymbol{m}_{\boldsymbol{\sigma}}^{\pm}$ belongs to $\mathcal{V}_{\pm}$.
In particular, $\mathcal{W}^{\pm}=\mathcal{V}^{\pm}$.

On the other hand, since $\phi^{\pm}(t)\to\boldsymbol{\sigma}$ as
$t\to-\infty$, $\boldsymbol{\sigma}\in\overline{\mathcal{W}^{\pm}}$.
Since $U(\boldsymbol{\sigma})=H$, $\boldsymbol{\sigma}\notin\mathcal{W}^{\pm}$
so that $\boldsymbol{\sigma}\in\partial\mathcal{W}^{\pm}$.
\end{proof}
\begin{lem}
\label{l_assu_saddle} Fix $H\in\mathbb{R}$ and let $\mathcal{H}\subset\mathbb{R}^{d}$
be a connected component of the level set $\{U<H\}$. Suppose that
$\mathcal{S}_{0}\cap\partial\mathcal{H}\ne\varnothing$. Then,
\begin{enumerate}
\item For all $\boldsymbol{\sigma}\in\mathcal{S}_{0}\cap\partial\mathcal{H}$,
there exists $\boldsymbol{m}\in\mathcal{M}_{0}\cap\mathcal{H}$ such
that $\boldsymbol{\sigma}\curvearrowright\boldsymbol{m}$.
\item For any $\boldsymbol{m},\,\boldsymbol{m}'\in\mathcal{M}_{0}\cap\mathcal{H}$,
there exists saddle points $\boldsymbol{\sigma}_{1},\,\dots,\,\boldsymbol{\sigma}_{a}\in\mathcal{S}_{0}\cap\mathcal{H}$
and local minima $\boldsymbol{m}_{1},\,\dots,\,\boldsymbol{m}_{a-1}\in\mathcal{M}_{0}\cap\mathcal{H}$
satisfying
\[
\boldsymbol{m}\curvearrowleft\boldsymbol{\sigma}_{1}\curvearrowright\boldsymbol{m}_{1}\curvearrowleft\cdots\curvearrowright\boldsymbol{m}_{a-1}\curvearrowleft\boldsymbol{\sigma}_{a}\curvearrowright\boldsymbol{m}'\;.
\]
\item For all $\boldsymbol{m}\in\mathcal{M}_{0}\cap\mathcal{H}$ and $\boldsymbol{\sigma}\in\mathcal{S}_{0}\cap\partial\mathcal{H}$,
$\bm{\sigma}\rightsquigarrow\boldsymbol{m}$.
\end{enumerate}
\end{lem}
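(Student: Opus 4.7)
I would prove the three claims in order, with (1) being essentially local (using the geometry near a saddle), (2) requiring an induction on the number of minima via the level-set structure, and (3) being an immediate consequence of (1) and (2). For (1), I plan to exploit Lemma \ref{l_path_saddle}. Since $\boldsymbol{\sigma}\in\partial\mathcal{H}$, Lemma \ref{l_level_boundary} forces $U(\boldsymbol{\sigma})=H$, so $\mathcal{H}$ is a connected component of $\{U<U(\boldsymbol{\sigma})\}$. Lemma \ref{l_path_saddle} supplies two components $\mathcal{A}^{\pm}$ of $B(\boldsymbol{\sigma},r')\cap\{U<H\}$ together with heteroclinic orbits from $\boldsymbol{\sigma}$ to local minima $\boldsymbol{m}_{\boldsymbol{\sigma}}^{\pm}$, each orbit staying in the connected component $\mathcal{W}^{\pm}$ of $\{U<H\}$ that contains the corresponding $\mathcal{A}^{\pm}$. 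Since $\boldsymbol{\sigma}\in\partial\mathcal{H}$, every small ball around $\boldsymbol{\sigma}$ meets $\mathcal{H}$, and that intersection is contained in $\mathcal{A}^{+}\cup\mathcal{A}^{-}$. Hence at least one of the $\mathcal{A}^{\pm}$ meets $\mathcal{H}$, and by Lemma \ref{l_level_connected} the corresponding $\mathcal{W}^{\pm}$ must coincide with $\mathcal{H}$, yielding $\boldsymbol{m}_{\boldsymbol{\sigma}}^{\pm}\in\mathcal{M}_{0}\cap\mathcal{H}$ with $\boldsymbol{\sigma}\curvearrowright\boldsymbol{m}_{\boldsymbol{\sigma}}^{\pm}$.

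For (2), I will induct on $N=|\mathcal{M}_{0}\cap\mathcal{H}|$, the case $N=1$ being vacuous. For $N\ge 2$, fix $\boldsymbol{m},\boldsymbol{m}'\in\mathcal{M}_{0}\cap\mathcal{H}$ and set $h_{0}=\Theta(\boldsymbol{m},\boldsymbol{m}')$, noting $h_{0}<H$ by Lemma \ref{lap01}-(1). Let $\mathcal{K}$ be the connected component of $\{U\le h_{0}\}$ containing $\boldsymbol{m}$; by Lemma \ref{l_105a-2} it also contains $\boldsymbol{m}'$, and by Lemma \ref{l_level_connected} it is contained in $\mathcal{H}$. Let $\{\mathcal{W}_{1},\dots,\mathcal{W}_{a}\}$ be the level-set decomposition of $\mathcal{K}$; by Lemma \ref{lap01}, $\boldsymbol{m}$ and $\boldsymbol{m}'$ lie in distinct $\mathcal{W}_{i}$'s, so $a\ge 2$. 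Lemma \ref{2-la1}-(3) produces a chain of $\mathcal{W}_{i}$'s from the one containing $\boldsymbol{m}$ to the one containing $\boldsymbol{m}'$ with consecutive closures intersecting, and Lemma \ref{2-la1}-(2) identifies each such intersection with a saddle at height $h_{0}<H$, which therefore lies in $\mathcal{H}$. Applying part (1) to each such saddle, viewed as a boundary point of the two adjacent $\mathcal{W}_{i}$'s (each of which is a connected component of $\{U<h_{0}\}$), furnishes local minima in both adjacent sets with the required heteroclinic orbits. To close the loop back to $\boldsymbol{m}$, $\boldsymbol{m}'$, and between consecutive saddles that fall in the same $\mathcal{W}_{i}$, I apply the inductive hypothesis inside each $\mathcal{W}_{i}$: its boundary contains at least one of the merge saddles (which by Lemma \ref{l_cap_saddle} lies in $\partial\mathcal{W}_{i}$), and its minima form a strict subset of $\mathcal{M}_{0}\cap\mathcal{H}$ since $\mathcal{M}_{0}\cap\mathcal{K}$ is split nontrivially among the $\mathcal{W}_{j}$'s. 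Concatenating the resulting segments produces a chain of saddles and minima entirely inside $\mathcal{H}$ linking $\boldsymbol{m}$ to $\boldsymbol{m}'$.

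The main obstacle is (2), specifically verifying cleanly at each recursion level that the inductive hypothesis applies to every $\mathcal{W}_{i}$ used in the chain (i.e.\ that $\mathcal{S}_{0}\cap\partial\mathcal{W}_{i}\ne\varnothing$ and that the number of minima drops strictly), and then splicing the partial chains into a single alternating sequence without accidentally leaving $\mathcal{H}$. Once (1) and (2) are in hand, (3) is immediate: (1) supplies some $\boldsymbol{m}_{\boldsymbol{\sigma}}\in\mathcal{M}_{0}\cap\mathcal{H}$ with $\boldsymbol{\sigma}\curvearrowright\boldsymbol{m}_{\boldsymbol{\sigma}}$, and (2) then yields saddles $\boldsymbol{\sigma}_{1},\dots,\boldsymbol{\sigma}_{k}\in\mathcal{S}_{0}\cap\mathcal{H}$ and minima in $\mathcal{H}$ linking $\boldsymbol{m}_{\boldsymbol{\sigma}}$ to $\boldsymbol{m}$; since $\boldsymbol{\sigma}_{j}\in\mathcal{H}\subset\{U<H\}$ and $U(\boldsymbol{\sigma})=H$, the definition \eqref{eq:approx} of $\rightsquigarrow$ delivers $\boldsymbol{\sigma}\rightsquigarrow\boldsymbol{m}$.
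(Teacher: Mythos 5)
Your plan is correct and follows essentially the same route as the paper: part (1) via Lemma \ref{l_path_saddle} and the fact that $\mathcal{H}$ must coincide with one of $\mathcal{W}^{\pm}$, part (2) by induction on $|\mathcal{M}_{0}\cap\mathcal{H}|$ using the level-set decomposition of the connected component of $\{U\le\Theta(\boldsymbol{m},\boldsymbol{m}')\}$, and part (3) as a corollary of (1) and (2). The only cosmetic differences are that the paper treats $N=2$ as an explicit base case (using \cite[Lemma A.2]{LLS-1st} and the minimality of $\Theta(\boldsymbol{m}_{1},\boldsymbol{m}_{2})$), whereas you fold it into the general recursion, and that the paper invokes Lemma \ref{l_path_saddle}-(3) directly at the merge saddles where you re-apply part (1); both yield identical output since part (1) is itself proved from Lemma \ref{l_path_saddle}.
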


\begin{proof}
(1) Fix $\boldsymbol{\sigma}\in\mathcal{S}_{0}\cap\partial\mathcal{H}$.
Recall the notation introduced in the previous lemma. Since $\boldsymbol{\sigma}\in\partial\mathcal{H}$,
$\mathcal{H}\cap B(\boldsymbol{\sigma},r')\cap\{U<U(\boldsymbol{\sigma})\}\neq\varnothing$.
Hence, $\mathcal{H}$ intersects $\mathcal{A}^{+}$ or $\mathcal{A}^{-}$.
Assume, without loss of generality, that $\mathcal{H}\cap\mathcal{A}^{+}\neq\varnothing$.
Then, $\mathcal{H}\cap\mathcal{W}^{+}\neq\varnothing$. Since, they
are both connected components of the set $\{U<H\}$, they are equal
and the assertion follows from the previous lemma. \smallskip{}

\noindent (2) We prove this part by induction on the number of local
minima of $U$ in $\mathcal{H}$, denoted by $|\mathcal{M}_{0}\cap\mathcal{H}|$.
There is nothing to prove if $|\mathcal{M}_{0}\cap\mathcal{H}|=1$.

Assume that $|\mathcal{M}_{0}\cap\mathcal{H}|=2$. Write $\mathcal{M}_{0}\cap\mathcal{H}=\{\boldsymbol{m}_{1},\,\boldsymbol{m}_{2}\}$.
Let $\mathcal{V}_{1}$ be a connected component of $\{U<\Theta(\boldsymbol{m}_{1},\,\boldsymbol{m}_{2})\}$
containing $\boldsymbol{m}_{1}$. By Lemma \ref{lap01}-(1), $\Theta(\boldsymbol{m}_{1},\,\boldsymbol{m}_{2})<H$
so that $\mathcal{V}_{1}\subset\mathcal{H}$. By Lemma \ref{lap01}-(1)
again, $\boldsymbol{m}_{2}\notin\mathcal{V}_{1}$ so that $\boldsymbol{m}_{1}$
is the only local minimum of $\mathcal{V}_{1}$. By Lemma \ref{lap01}-(2),
for all $\boldsymbol{m}\in\mathcal{M}_{0}\setminus\{\boldsymbol{m}_{1},\,\boldsymbol{m}_{2}\}$,
\begin{equation}
\Theta(\boldsymbol{m}_{1},\,\boldsymbol{m})\ \ge\ H\ >\ \Theta(\boldsymbol{m}_{1},\,\boldsymbol{m}_{2})\label{2-la1-1}
\end{equation}
so that
\[
\min_{\boldsymbol{m}\in\mathcal{M}_{0}\setminus\{\boldsymbol{m}_{1}\}}\,\Theta(\boldsymbol{m}_{1},\,\boldsymbol{m})\ =\ \Theta(\boldsymbol{m}_{1},\,\boldsymbol{m}_{2})\ .
\]
Therefore, by \cite[Lemma A.2]{LLS-1st}, \footnote{In \cite{LLS-1st}, the definition of $\Gamma(\boldsymbol{m})$ is
$U(\boldsymbol{m})+\Gamma(\boldsymbol{m})=\min_{\boldsymbol{m}'\in\mathcal{M}_{0}\setminus\{\boldsymbol{m}\}}\Theta(\boldsymbol{m},\,\boldsymbol{m}')$} there exists a connected component $\mathcal{V}_{2}$ of $\{U<\Theta(\boldsymbol{m}_{1},\,\boldsymbol{m}_{2})\}$
such that
\[
\mathcal{V}_{1}\cap\mathcal{V}_{2}\ =\ \varnothing\ \ ,\ \ \overline{\mathcal{V}_{1}}\cap\overline{\mathcal{V}_{2}}\ \ne\ \varnothing\ .
\]
By Lemma \ref{l_cap_saddle}, $\boldsymbol{\sigma}\in\overline{\mathcal{V}_{1}}\cap\overline{\mathcal{V}_{2}}$
is a saddle point and $U(\boldsymbol{\sigma})=\Theta(\boldsymbol{m}_{1},\,\boldsymbol{m}_{2})$.
By the first assertion of this lemma for $H=\Theta(\boldsymbol{m}_{1},\,\boldsymbol{m}_{2})$,
there exists $\widehat{\boldsymbol{m}}_{i}\in\mathcal{V}_{i}$, $i=1$,
$2$, such that $\boldsymbol{\sigma}\curvearrowright\widehat{\boldsymbol{m}}_{i}$.
Since $\boldsymbol{m}_{1}$ is the unique local minima of $U$ in
$\mathcal{V}_{1}$, $\widehat{\boldsymbol{m}}_{1}=\boldsymbol{m}_{1}$.
By Lemma \ref{lem_not}-(3), $\Theta(\boldsymbol{m}_{1},\,\widehat{\boldsymbol{m}}_{2})\le U(\boldsymbol{\sigma})=\Theta(\boldsymbol{m}_{1},\,\boldsymbol{m}_{2})$.
Therefore, by \eqref{2-la1-1}, $\widehat{\boldsymbol{m}}_{2}=\boldsymbol{m}_{2}$
so that $\boldsymbol{m}_{1}\curvearrowleft\boldsymbol{\sigma}\curvearrowright\boldsymbol{m}_{2}$.
Finally, since $U(\boldsymbol{\sigma})<H$ and $\boldsymbol{\sigma}\curvearrowright\boldsymbol{m}_{1}$,
$\boldsymbol{\sigma}\in\mathcal{H}$. This proves the claim for $|\mathcal{M}_{0}\cap\mathcal{H}|=2$.

Suppose that the claim of part (2) holds for $|\mathcal{M}_{0}\cap\mathcal{H}|=2,\,\dots,\,n-1$
and assume that $|\mathcal{M}_{0}\cap\mathcal{H}|=n$. Let $\boldsymbol{m},\,\boldsymbol{m}'\in\mathcal{M}_{0}\cap\mathcal{H}$.
By Lemma \ref{lap01}-(1), $\Theta(\boldsymbol{m},\,\boldsymbol{m}')<H$.
Denote by $\mathcal{G}\subset\mathcal{H}$ the connected component
of $\{U\le\Theta(\boldsymbol{m},\,\boldsymbol{m}')\}$ containing
both $\boldsymbol{m}$ and $\boldsymbol{m}'$ whose existence is guaranteed
by Lemma \ref{l_105a-2}. By \cite[Lemma A.18]{LLS-1st}, $U(\boldsymbol{m}),\,U(\boldsymbol{m}')<\Theta(\boldsymbol{m},\,\boldsymbol{m}')$
so that by Lemma \ref{l_level_boundary}, $\boldsymbol{m},\,\boldsymbol{m}'\in\mathcal{G}^{o}$.
Let $\{\mathcal{W}_{1},\,\dots,\,\mathcal{W}_{b}\}$ be a level set
decomposition of $\mathcal{G}$. Without loss of generality, assume
that $\boldsymbol{m}\in\mathcal{W}_{1}$. By Lemma \ref{lap01}-(1),
$\boldsymbol{m}'\notin\mathcal{W}_{1}$ so that $b\ge2$. Without
loss of generality, let $\boldsymbol{m}'\in\mathcal{W}_{2}$. By Lemma
\ref{2-la1}-(3), there exist $k_{1},\,\dots,k_{\ell}\subset\llbracket1,\,m\rrbracket$
such that
\[
\overline{\mathcal{W}_{1}}\cap\overline{\mathcal{W}_{k_{1}}},\;\overline{\mathcal{W}_{k_{1}}}\cap\overline{\mathcal{W}_{k_{2}}},\;\dots,\;\overline{\mathcal{W}_{k_{\ell-1}}}\cap\overline{\mathcal{W}_{k_{\ell}}},\;\overline{\mathcal{W}_{k_{\ell}}}\cap\overline{\mathcal{W}_{2}}\ \ne\ \varnothing\;.
\]
Let $\mathcal{W}_{k_{0}}=\mathcal{W}_{1}$ and $\mathcal{W}_{k_{\ell+1}}=\mathcal{W}_{2}$
and let $\boldsymbol{\sigma}_{i}\in\overline{\mathcal{W}_{k_{i}}}\cap\overline{\mathcal{W}_{k_{i+1}}}$,
$i\in\llbracket0,\,\ell\rrbracket$. Note that the number of local
minima in each $\mathcal{W}_{k_{i}}$ is smaller than $n$ because
all these sets are contained in $\mathcal{G}\subset\mathcal{H}$ and
do not contain $\boldsymbol{m}$ or $\boldsymbol{m}'$.

By Lemma \ref{l_path_saddle}-(3), there exist local minima $\boldsymbol{m}_{i}\in\mathcal{W}_{i},\,\boldsymbol{m}_{i+1}'\in\mathcal{W}_{i+1}$
such that $\boldsymbol{m}_{i}\curvearrowleft\boldsymbol{\sigma}_{i}\curvearrowright\boldsymbol{m}_{i+1}'$.
By the induction hypothesis, we can find local minima $\boldsymbol{m}'_{i}=\boldsymbol{m}_{i,1},\dots,\boldsymbol{m}_{i,\ell_{i}}=\boldsymbol{m}_{i}$
and saddle points $\boldsymbol{\sigma}_{i,1},\dots,\boldsymbol{\sigma}_{i,\ell_{i}}$
such that
\[
\boldsymbol{m}_{i}'\curvearrowleft\boldsymbol{\sigma}_{i,1}\curvearrowright\boldsymbol{m}_{i,2}\curvearrowleft\cdots\curvearrowright\boldsymbol{m}_{i,\ell_{i}}\curvearrowleft\boldsymbol{\sigma}_{i,\ell_{i}}\curvearrowright\boldsymbol{m}_{i}\ .
\]
Concatenating these paths we complete the proof. \smallskip{}

\noindent (3) Let $\boldsymbol{m}'\in\mathcal{M}_{0}\cap\mathcal{H}$
and $\boldsymbol{\sigma}\in\mathcal{S}_{0}\cap\partial\mathcal{H}$.
By (1), there exists $\boldsymbol{m}\in\mathcal{M}_{0}\cap\mathcal{H}$
such that $\boldsymbol{\sigma}\curvearrowright\boldsymbol{m}$. Then,
by part (2), we can find $\boldsymbol{\sigma}_{1},\,\dots,\,\boldsymbol{\sigma}_{n}\in\mathcal{S}_{0}\cap\mathcal{H}$
and $\boldsymbol{m}_{1},\,\dots,\,\boldsymbol{m}_{n-1}\in\mathcal{M}_{0}\cap\mathcal{H}$
such that
\[
\boldsymbol{m}'\curvearrowleft\boldsymbol{\sigma}_{1}\curvearrowright\boldsymbol{m}_{1}\curvearrowleft\cdots\curvearrowright\boldsymbol{m}_{n-1}\curvearrowleft\boldsymbol{\sigma}_{n}\curvearrowright\boldsymbol{m}\ .
\]
Since $\boldsymbol{\sigma}_{i}\in\mathcal{H}$, $U(\boldsymbol{\sigma}_{i})<H=U(\boldsymbol{\sigma})$
for all $i$ and therefore we get $\boldsymbol{\sigma}\rightsquigarrow\boldsymbol{m}'$.
\end{proof}
We have the following inverse version of Lemma \ref{l_assu_saddle}-(3).
\begin{lem}
\label{l_squig_saddle} Suppose that $\boldsymbol{m}\in\mathcal{M}_{0}$
and $\boldsymbol{\sigma}\in\mathcal{S}_{0}$ satisfy $\boldsymbol{\sigma}\rightsquigarrow\boldsymbol{m}$.
Denote by $\mathcal{W}$ the connected component of $\{U<U(\boldsymbol{\sigma})\}$
containing $\boldsymbol{m}$. Then, $\boldsymbol{\sigma}\in\partial\mathcal{W}$.
\end{lem}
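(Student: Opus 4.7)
The plan is to unwind the definition \eqref{eq:approx} of $\boldsymbol{\sigma}\rightsquigarrow\boldsymbol{m}$ and reduce the problem to an immediate application of Lemma \ref{l_path_saddle}-(3). We split into two cases according to whether the direct option $\boldsymbol{\sigma}\curvearrowright\boldsymbol{m}$ holds, or whether we are in the chain case of \eqref{eq:approx}.

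In the direct case $\boldsymbol{\sigma}\curvearrowright\boldsymbol{m}$, Lemma \ref{l_path_saddle}-(3) applied with the heteroclinic orbit ending at $\boldsymbol{m}$ immediately gives $\boldsymbol{\sigma}\in\partial\mathcal{W}'$, where $\mathcal{W}'$ is the connected component of $\{U<U(\boldsymbol{\sigma})\}$ containing $\boldsymbol{m}$. Since $\mathcal{W}'=\mathcal{W}$ by uniqueness, the conclusion follows.

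For the chain case, there exist saddle points $\boldsymbol{\sigma}_1,\dots,\boldsymbol{\sigma}_k$ with $U(\boldsymbol{\sigma}_i)<U(\boldsymbol{\sigma})$ and local minima $\boldsymbol{m}_1,\dots,\boldsymbol{m}_k$ satisfying $\boldsymbol{\sigma}\curvearrowright\boldsymbol{m}_1\curvearrowleft\boldsymbol{\sigma}_1\curvearrowright\cdots\curvearrowleft\boldsymbol{\sigma}_k\curvearrowright\boldsymbol{m}$. The key step is to show that $\boldsymbol{m}_1$ and $\boldsymbol{m}$ lie in the same connected component $\mathcal{W}$ of $\{U<U(\boldsymbol{\sigma})\}$. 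To do this, for each $i\in\llbracket1,k\rrbracket$, I would concatenate the two heteroclinic orbits from $\boldsymbol{\sigma}_i$ to $\boldsymbol{m}_i$ and from $\boldsymbol{\sigma}_i$ to $\boldsymbol{m}_{i+1}$ (with the convention $\boldsymbol{m}_{k+1}=\boldsymbol{m}$), reparametrising as in \eqref{z(t)_exp} so as to obtain a continuous path from $\boldsymbol{m}_i$ to $\boldsymbol{m}_{i+1}$ whose only potentially high point is $\boldsymbol{\sigma}_i$ itself, where $U$ takes the value $U(\boldsymbol{\sigma}_i)<U(\boldsymbol{\sigma})$. Gluing these $k$ path-segments produces a continuous path from $\boldsymbol{m}_1$ to $\boldsymbol{m}$ lying entirely in $\{U<U(\boldsymbol{\sigma})\}$, so $\boldsymbol{m}_1$ and $\boldsymbol{m}$ belong to the same connected component of that sublevel set, namely $\mathcal{W}$.

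Once $\boldsymbol{m}_1\in\mathcal{W}$ is established, the heteroclinic orbit $\boldsymbol{\sigma}\curvearrowright\boldsymbol{m}_1$ together with Lemma \ref{l_path_saddle}-(3) forces $\boldsymbol{\sigma}\in\partial\mathcal{W}''$, where $\mathcal{W}''$ is the connected component of $\{U<U(\boldsymbol{\sigma})\}$ containing $\boldsymbol{m}_1$. Since $\mathcal{W}''$ and $\mathcal{W}$ share the point $\boldsymbol{m}_1$, they must coincide, and we conclude $\boldsymbol{\sigma}\in\partial\mathcal{W}$. The only mildly delicate point in the argument is verifying the strict inequality $U<U(\boldsymbol{\sigma})$ along the glued path at each intermediate saddle $\boldsymbol{\sigma}_i$; this is guaranteed by the hypothesis $U(\boldsymbol{\sigma}_i)<U(\boldsymbol{\sigma})$ built into definition \eqref{eq:approx}, so no further landscape analysis is required.
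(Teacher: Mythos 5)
Your proposal is correct, and both the case split and the path concatenation through the lower-energy saddles $\boldsymbol{\sigma}_1,\dots,\boldsymbol{\sigma}_k$ mirror the paper's construction. The only organizational difference is in the final step: you invoke Lemma \ref{l_path_saddle}-(3) on the first orbit $\boldsymbol{\sigma}\curvearrowright\boldsymbol{m}_1$ to place $\boldsymbol{\sigma}$ on $\partial\mathcal{W}$, whereas the paper avoids the case split entirely by extending the concatenated path so it starts at $\boldsymbol{\sigma}$ itself (so $\boldsymbol{z}(0)=\boldsymbol{\sigma}$, $\boldsymbol{z}((0,1])\subset\{U<U(\boldsymbol{\sigma})\}$), applying Lemma \ref{l_level_connected} to conclude $\boldsymbol{z}((0,1])\subset\mathcal{W}$, and then reading off $\boldsymbol{\sigma}\in\overline{\mathcal{W}}\setminus\mathcal{W}=\partial\mathcal{W}$ directly. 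Your route is slightly less elementary because it relies on Lemma \ref{l_path_saddle}-(3) (which in turn rests on the Hartman--Grobman description and the assumption that $\boldsymbol{\sigma}$ emits exactly two heteroclinic orbits, so that $\boldsymbol{m}_1\in\{\boldsymbol{m}^+_{\boldsymbol{\sigma}},\boldsymbol{m}^-_{\boldsymbol{\sigma}}\}$), but the underlying path-concatenation argument is identical.
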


\begin{proof}
Since $\boldsymbol{\sigma}\rightsquigarrow\boldsymbol{m}$, there
exists a continuous path $\boldsymbol{z}(\cdot)$ from $\boldsymbol{\sigma}$
to $\boldsymbol{m}$ such that $\boldsymbol{z}(0)=\boldsymbol{\sigma}$,
$\boldsymbol{z}(1)=\boldsymbol{m}$ and $\boldsymbol{z}\left((0,\,1]\right)\subset\{U<U(\boldsymbol{\sigma})\}$.
Since $\boldsymbol{z}(1)\in\mathcal{W}$ and $\boldsymbol{z}\left((0,\,1]\right)$
is connected, by Lemma \ref{l_level_connected}, $\boldsymbol{z}\left((0,\,1]\right)\subset\mathcal{W}$.
Thus, $\boldsymbol{\sigma}\in\overline{\mathcal{W}}$. Since $\boldsymbol{\sigma}\not\in\mathcal{W}$
by definition of $\mathcal{W}$, $\boldsymbol{\sigma}\in\partial\mathcal{W}$.
\end{proof}
Recall that $\mathcal{W}(\mathcal{M})$ is a connected component of
$\{U<\Theta(\mathcal{M},\,\widetilde{\mathcal{M}})\}$ containing
$\mathcal{M}$.
\begin{lem}
\label{l_exist_saddle} Let $\mathcal{M}\subset\mathcal{M}_{0}$ be
a bound set satisfying $\widetilde{\mathcal{M}}\ne\varnothing$. Then,
there exists $\boldsymbol{\sigma}\in\mathcal{S}_{0}\cap\partial\mathcal{W}(\mathcal{M})$
satisfying \eqref{eq:SM}.
\end{lem}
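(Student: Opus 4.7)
The plan is to locate the required saddle $\boldsymbol{\sigma}$ by looking at how $\mathcal{W}(\mathcal{M})$ sits inside the closed level set at the communication height, and then invoking the local geometry of saddle points on the boundary. Set $H := \Theta(\mathcal{M},\widetilde{\mathcal{M}})$, which is finite since $\widetilde{\mathcal{M}}\neq\varnothing$. By Lemma \ref{lem_level}-(1), there exists a connected component $\mathcal{K}$ of $\{U\le H\}$ containing $\mathcal{M}$ and meeting $\widetilde{\mathcal{M}}$, and by Lemma \ref{lem_level}-(2) its level set decomposition $\{\mathcal{W}_1,\dots,\mathcal{W}_a\}$ satisfies $a\ge 2$. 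Since $\mathcal{M}$ is bound, it lies in a single connected component of $\{U<H\}$, namely $\mathcal{W}(\mathcal{M})$ itself (Lemma \ref{l_bound_conn}); relabelling, I would identify $\mathcal{W}_1 = \mathcal{W}(\mathcal{M})$.

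Next, I would use the connectedness of $\mathcal{K}$ to extract a neighbour of $\mathcal{W}_1$. Concretely, Lemma \ref{2-la1}-(3) asserts that, for any $j\neq 1$, there is a chain of indices $1, k_1,\dots,k_n,j$ such that the closures of consecutive $\mathcal{W}_\cdot$'s meet; in particular $\overline{\mathcal{W}_1}\cap\overline{\mathcal{W}_{k}}\neq\varnothing$ for some $k\neq 1$. Pick any point $\boldsymbol{\sigma}$ in this intersection. Then Lemma \ref{l_cap_saddle} (equivalently Lemma \ref{2-la1}-(2)) guarantees that $\boldsymbol{\sigma}$ is a saddle point with $U(\boldsymbol{\sigma})=H$, and moreover that for all sufficiently small $r>0$ the set $B(\boldsymbol{\sigma},r)\cap\{U<H\}$ splits into exactly two connected components, one contained in $\mathcal{W}_1=\mathcal{W}(\mathcal{M})$ and the other in $\mathcal{W}_k$.

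Since $\mathcal{W}_k$ and $\mathcal{W}(\mathcal{M})$ are distinct connected components of $\{U<H\}$, they are disjoint (Lemma \ref{l_level_connected}). Consequently, the component of $B(\boldsymbol{\sigma},r)\cap\{U<H\}$ lying in $\mathcal{W}_k$ is not contained in $\mathcal{W}(\mathcal{M})$, so that
\[
B(\boldsymbol{\sigma},\delta)\cap\{U<\Theta(\mathcal{M},\widetilde{\mathcal{M}})\}\ \not\subset\ \mathcal{W}(\mathcal{M})
\]
for every $\delta\in(0,r)$. Taking $\delta_0:=r$, this is precisely condition \eqref{eq:SM}. Clearly $\boldsymbol{\sigma}\in\overline{\mathcal{W}_1}\setminus\mathcal{W}_1\subset\partial\mathcal{W}(\mathcal{M})$, so $\boldsymbol{\sigma}\in\mathcal{S}_0\cap\partial\mathcal{W}(\mathcal{M})$ satisfies \eqref{eq:SM}.

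The step I expect to need the most care is the identification $\mathcal{W}_1=\mathcal{W}(\mathcal{M})$. Here the boundness of $\mathcal{M}$ plays an essential role: without it, different minima of $\mathcal{M}$ could be distributed across several components $\mathcal{W}_i$, making the very definition of $\mathcal{W}(\mathcal{M})$ as a single connected component ambiguous. Once this consistency is checked via Lemma \ref{l_bound_conn}, the rest of the argument is essentially a direct reading of Lemmata \ref{l_cap_saddle} and \ref{2-la1}.
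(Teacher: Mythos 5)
Your proof is correct and follows essentially the same path as the paper's: set $H=\Theta(\mathcal{M},\widetilde{\mathcal{M}})$, locate the connected component $\mathcal{K}$ of $\{U\le H\}$ containing $\mathcal{M}$, identify $\mathcal{W}(\mathcal{M})$ as one block of the level set decomposition, use Lemma \ref{2-la1}-(3) to find an adjacent block $\mathcal{W}_k$, and invoke Lemma \ref{l_cap_saddle} for the local two-component structure at a point $\boldsymbol{\sigma}\in\overline{\mathcal{W}_1}\cap\overline{\mathcal{W}_k}$, which yields \eqref{eq:SM}. The only cosmetic difference is that you cite Lemma \ref{lem_level} to produce $\mathcal{K}$ and $a\ge2$ in one stroke, whereas the paper constructs $\mathcal{K}$ by hand from a pair $\boldsymbol{m}\in\mathcal{M}$, $\boldsymbol{m}'\in\widetilde{\mathcal{M}}$ with $\Theta(\boldsymbol{m},\boldsymbol{m}')=H$ and Lemma \ref{l_105a-2}, and identifies $\mathcal{W}_1=\mathcal{W}(\mathcal{M})$ via Lemma \ref{l_level_connected} rather than Lemma \ref{l_bound_conn}; these are equivalent routes.
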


\begin{proof}
Let $H=\Theta(\mathcal{M},\,\widetilde{\mathcal{M}})$. Pick $\boldsymbol{m}\in\mathcal{M}$
and $\boldsymbol{m}'\in\widetilde{\mathcal{M}}$ satisfying $\Theta(\boldsymbol{m},\,\boldsymbol{m}')=H$.
As $\Theta(\boldsymbol{m},\,\boldsymbol{m}')=H$, by Lemma \ref{lap01}-(2),
$\boldsymbol{m}'\not\in\mathcal{W}(\mathcal{M})$.

By Lemma \ref{l_105a-2}, there exists a connected component $\mathcal{H}$
of $\{U\le H\}$ containing both $\boldsymbol{m}$ and $\boldsymbol{m}'$.
As $U(\boldsymbol{m})$, $U(\boldsymbol{m}')$ are smaller than $H$,
they belongs to $\mathcal{H}^{o}$. Let $\{\mathcal{W}_{1},\,\dots,\,\mathcal{W}_{m}\}$
be level set decomposition of $\mathcal{H}$ and assume, without loss
of generality, that $\boldsymbol{m}\in\mathcal{W}_{1}$.

Since $\boldsymbol{m}\in\mathcal{W}_{1}\cap\mathcal{W}(\mathcal{M})$,
$\mathcal{W}_{1}=\mathcal{W}(\mathcal{M})$ by Lemma \ref{l_level_connected}.
In particular, since $\mathcal{M}\subset\mathcal{W}(\mathcal{M})$
and $\boldsymbol{m}'\notin\mathcal{W}(\mathcal{M})$, $\mathcal{M}\subset\mathcal{W}_{1}$
and $\boldsymbol{m}'\notin\mathcal{W}_{1}$. Thus, $m\ge2$ and hence
by Lemma \ref{2-la1}-(3), there exists $k\in\llbracket2,\,m\rrbracket$
such that $\overline{\mathcal{W}_{1}}\cap\overline{\mathcal{W}_{k}}\ne\varnothing$.
Finally, by Lemma \ref{l_cap_saddle}, all the points $\boldsymbol{\sigma}\in\overline{\mathcal{W}_{1}}\cap\overline{\mathcal{W}_{k}}=\overline{\mathcal{W}(\mathcal{M})}\cap\overline{\mathcal{W}_{k}}\subset\partial\mathcal{W}(\mathcal{M})$
satisfy \eqref{eq:SM}.
\end{proof}
\begin{lem}
\label{lem_exsig}Let $\mathcal{M}\subset\mathcal{M}_{0}$ be a bound
set such that $\widetilde{\mathcal{M}}\ne\varnothing$ and let $\mathcal{H}$
be a connected component of $\{U\le\Theta(\mathcal{M},\,\widetilde{\mathcal{M}})\}$
containing $\mathcal{M}$. Then, there exists $\boldsymbol{\sigma}\in\mathcal{H}\cap\mathcal{S}_{0}$
such that $U(\boldsymbol{\sigma})=\Theta(\mathcal{M},\,\widetilde{\mathcal{M}})$.
\end{lem}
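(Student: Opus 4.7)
\smallskip

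The plan is to mimic closely the proof of Lemma \ref{l_exist_saddle}, but working inside the closed connected component $\mathcal{H}$ rather than $\mathcal{W}(\mathcal{M})$, and then exploit the level‑set decomposition machinery from Section \ref{sec8b}.

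Set $H=\Theta(\mathcal{M},\,\widetilde{\mathcal{M}})$. Since $\widetilde{\mathcal{M}}\neq\varnothing$, I can choose $\boldsymbol{m}\in\mathcal{M}$ and $\boldsymbol{m}'\in\widetilde{\mathcal{M}}$ with $\Theta(\boldsymbol{m},\boldsymbol{m}')=H$. By Lemma \ref{l_105a-2} there exists a connected component of $\{U\le H\}$ containing both $\boldsymbol{m}$ and $\boldsymbol{m}'$, and because $\mathcal{H}$ is the unique connected component of $\{U\le H\}$ containing $\boldsymbol{m}\in\mathcal{M}$, this component must be $\mathcal{H}$ itself. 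In particular $\boldsymbol{m}'\in\mathcal{H}$, so $\mathcal{H}$ is not a singleton.

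Next I invoke Lemma \ref{2la1-0} to get a level set decomposition $\{\mathcal{W}_1,\dots,\mathcal{W}_a\}$ of $\mathcal{H}$, where each $\mathcal{W}_i$ is a connected component of $\{U<H\}$ intersecting $\mathcal{H}$ (and hence contained in $\mathcal{H}$ by Lemma \ref{l_level_connected}). Both $\boldsymbol{m}$ and $\boldsymbol{m}'$ lie in $\mathcal{M}_0\cap\mathcal{H}$, so by the second assertion of Lemma \ref{2la1-0} they belong to some $\mathcal{W}_i$; moreover, by Lemma \ref{lap01}-(1) they cannot belong to the same $\mathcal{W}_i$, for otherwise $\Theta(\boldsymbol{m},\boldsymbol{m}')<H$. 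Hence $a\ge 2$.

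Finally, by Lemma \ref{2-la1}-(3) applied to this decomposition, there are indices $i\neq j$ with $\overline{\mathcal{W}_i}\cap\overline{\mathcal{W}_j}\neq\varnothing$, and Lemma \ref{l_cap_saddle} then guarantees that every point $\boldsymbol{\sigma}$ in this intersection is a saddle point with $U(\boldsymbol{\sigma})=H$. Since $\mathcal{W}_i,\mathcal{W}_j\subset\mathcal{H}$ and $\mathcal{H}$ is closed, $\boldsymbol{\sigma}\in\overline{\mathcal{W}_i}\cap\overline{\mathcal{W}_j}\subset\mathcal{H}\cap\mathcal{S}_0$, which is the desired saddle point. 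No step here is a genuine obstacle — the content is entirely in correctly invoking the closed‑level‑set version (Lemma \ref{2la1-0}) of the decomposition, rather than the open version used for $\mathcal{W}(\mathcal{M})$ in Lemma \ref{l_exist_saddle}; the simplicity hypothesis on $\mathcal{M}$ (used in Lemma \ref{lem_level}) is avoided because I only need to separate the two concrete minima $\boldsymbol{m}$ and $\boldsymbol{m}'$.
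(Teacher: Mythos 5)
Your proof is correct, but it follows a different path from the paper's. The paper's own proof is just four lines: it invokes Lemma~\ref{l_exist_saddle} as a black box to get $\boldsymbol{\sigma}\in\mathcal{S}_0\cap\partial\mathcal{W}(\mathcal{M})$ with $U(\boldsymbol{\sigma})=\Theta(\mathcal{M},\widetilde{\mathcal{M}})$ (via Lemma~\ref{l_level_boundary}), and then observes that since $\mathcal{H}$ is closed and contains $\mathcal{W}(\mathcal{M})$, one has $\partial\mathcal{W}(\mathcal{M})\subset\overline{\mathcal{W}(\mathcal{M})}\subset\mathcal{H}$. You instead unfold the argument of Lemma~\ref{l_exist_saddle} inline, working with the closed component $\mathcal{H}$ from the outset: $\boldsymbol{m}'\in\mathcal{H}$ via Lemma~\ref{l_105a-2}, level-set decomposition via Lemma~\ref{2la1-0}, $a\ge 2$ via Lemma~\ref{lap01}-(1), adjacent closures via Lemma~\ref{2-la1}-(3), saddle identification via Lemma~\ref{l_cap_saddle}. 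Both routes rely on the same underlying machinery; the paper's is shorter because it reuses Lemma~\ref{l_exist_saddle} while yours is more self-contained. Your closing remark that "simple" is not needed is accurate and matches the paper's own hypotheses (Lemma~\ref{l_exist_saddle} is also stated for bound sets only; it is Lemma~\ref{lem_level}, with the same conclusion, that additionally assumes simple), so you correctly identified why this lemma is stated separately.

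One step deserves a line of explanation. You write that Lemma~\ref{2-la1}-(3) gives indices $i\ne j$ with $\overline{\mathcal{W}_i}\cap\overline{\mathcal{W}_j}\ne\varnothing$. That lemma literally produces a chain from $\mathcal{W}_i$ (containing $\boldsymbol{m}$) to $\mathcal{W}_j$ (containing $\boldsymbol{m}'$); since $i\ne j$, the chain must involve at least one pair of consecutive, \emph{distinct} indices whose closures meet, and that pair is what you feed into Lemma~\ref{l_cap_saddle}. The paper's proof of Lemma~\ref{lem_level}-(3) glosses over the same point, so this is a cosmetic remark, not a gap.
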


\begin{proof}
By Lemma \ref{l_exist_saddle}, there exists a saddle point $\boldsymbol{\sigma}\in\partial\mathcal{W}(\mathcal{M})$.
By Lemma \ref{l_level_boundary}, $U(\boldsymbol{\sigma})=\Theta(\mathcal{M},\,\widetilde{\mathcal{M}})$.
Since $\mathcal{H}$ is the connected component of $\{U\le\Theta(\mathcal{M},\,\widetilde{\mathcal{M}})\}$
containing $\mathcal{M}$, $\mathcal{W}(\mathcal{M})\subset\mathcal{H}$.
Since $\mathcal{H}$ is closed, $\partial\mathcal{W}(\mathcal{M})\subset\mathcal{H}$
so that $\boldsymbol{\sigma}\in\mathcal{H}$.
\end{proof}

\section{\label{app:trace}Trace Process}

Let $\{\bm{z}(t)\}_{t\ge0}$ be a continuous-time
Markov process on a certain state space $E$. We note that $\bm{z}(\cdot)$
might be either a continuous time Markov process or diffusion process.

We suppose that $\bm{z}(\cdot)$ is non-explosive.
Then, for a subset $F$ of $E$ with a  \emph{good
property}  (cf. \eqref{e_F_assu_trace}) which will
be specified later, we shall define a trace of the process $\bm{z}(\cdot)$
on $F$. Heuristically, trace process is a process obtained from $\boldsymbol{z}(\cdot)$
by turning off the clock when the process is at the outside of $F$.
To define this rigorously, let $T^{F}:[0,\infty)\to[0,\infty)$ be
a time spent by $\bm{z}(\cdot)$ staying in $F$:
\[
{\color{blue}T^{F}(t)}\ :=\ \int_{0}^{t}\,{\bf 1}\left\{ \,\bm{z}(s)\in F\,\right\} \,ds\ .
\]
It is obvious that $T^{F}(\cdot)$ is increasing. Suppose moreover
that (cf. \cite[Section 2.2]{BL1}).
\begin{equation}
\lim_{t\to\infty}\,T^{F}(t)\ =\ \infty\;\ \text{almost surely}\ .\label{e_F_assu_trace}
\end{equation}
Then, we can define generalized inverse $S^{F}:[0,\infty)\to[0,\infty)$
by
\[
{\color{blue}S^{F}(t)}\ :=\ \sup\,\left\{ \,s\ge0:T^{F}(t)\le s\,\right\} \ .
\]
Finally, the trace process $\bm{z}^{F}(\cdot)$ of $\boldsymbol{z}(\cdot)$
on the set $F$ is defined by
\[
\bm{z}^{F}(t)\ :=\ \bm{z}(S^{F}(t))\ .
\]
It is well-known (cf. \cite[Section 6.1]{BL1}) that the trace process
defined in this manner again becomes a Markov process.

Now, we are interested in under which assumption on
$\bm{z}(\cdot)$ and $F$ the condition \eqref{e_F_assu_trace} holds.
Firstly, it is clear that the condition \eqref{e_F_assu_trace} holds
if $\bm{z}(\cdot)$ is either a positive recurrent Markov process
on discrete space and $F\ne\varnothing$ or a positive recurrent diffusion
process and $F$ contains an open ball. Otherwise, we need to check
\eqref{e_F_assu_trace} to define the order process.
\begin{lem}
\label{l_T_diverge}Suppose that $\boldsymbol{z}(\cdot)$
is a Markov chain on the finite set $E$ and that $F$ contains at
least one element of each irreducible class of $\boldsymbol{z}(\cdot)$.
Then, the condition \eqref{e_F_assu_trace} holds.
\end{lem}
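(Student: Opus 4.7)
The plan is to reduce the problem to the well-understood behaviour on a single recurrent class via the standard decomposition of a finite-state continuous-time Markov chain. First, I would decompose $E$ into the set of transient states and finitely many closed irreducible recurrent classes $C_1,\dots,C_r$. Because $E$ is finite, the absorption time $\tau:=\inf\{t\ge 0:\boldsymbol{z}(t)\in C_1\cup\cdots\cup C_r\}$ is almost surely finite, and the random class $C\in\{C_1,\dots,C_r\}$ hit at time $\tau$ is well-defined a.s. Since $T^F$ is non-decreasing and the contribution of $[0,\tau]$ to $T^F(\infty)$ is finite, the claim $T^F(t)\to\infty$ is equivalent to the same statement for the post-absorption process, which by the strong Markov property is a continuous-time chain confined to $C$. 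It thus suffices to prove the divergence assuming the chain starts in a fixed closed irreducible class $C\subset E$.

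Having fixed $C$, the hypothesis provides a state $x_C\in C\cap F$. I would then distinguish two cases. If $x_C$ is an absorbing state of the continuous-time chain (so necessarily $C=\{x_C\}$), then once the process reaches $x_C$ it remains in $F$ forever, giving $T^F(t)=t-\sigma$ for $t\ge\sigma$, where $\sigma$ is the hitting time of $x_C$, and divergence is immediate. Otherwise, the total exit rate $\lambda_C$ from $x_C$ is strictly positive and finite. Since $C$ is a closed irreducible class of a chain on a finite set, the embedded discrete chain is positive recurrent on $C$, so $x_C$ is visited infinitely often almost surely; equivalently, the number $N(t)$ of visits to $x_C$ in $[0,t]$ satisfies $N(t)\to\infty$ a.s.

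Let $\eta_1,\eta_2,\dots$ denote the successive holding times of the process at $x_C$. By the strong Markov property these are i.i.d.\ $\mathrm{Exp}(\lambda_C)$ random variables, independent of $N(\cdot)$'s increments in a way that suffices for the bound
\[
T^F(t)\ \ge\ \int_0^t\mathbf{1}\{\boldsymbol{z}(s)=x_C\}\,ds\ \ge\ \sum_{k=1}^{N(t)-1}\eta_k.
\]
Since $N(t)\to\infty$ a.s.\ and $\mathbb{E}[\eta_1]=1/\lambda_C>0$, the strong law of large numbers gives $\sum_{k=1}^{n}\eta_k\to\infty$ a.s., and therefore $T^F(t)\to\infty$ almost surely, completing the proof.

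There is no substantial obstacle here; the only point requiring mild care is the corner case of an absorbing singleton class, which is handled trivially as above. The argument is otherwise a direct combination of the finite-state absorption dichotomy, positive recurrence on each closed class, and the law of large numbers for the exponential holding times.
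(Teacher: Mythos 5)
Your proof is correct and follows essentially the same route as the paper's: decompose into transient states and closed irreducible classes, note that absorption into a closed class happens in finite time so the pre-absorption contribution to $T^F$ is harmless, then conclude from positive recurrence of the post-absorption chain that the occupation time of $F$ diverges. The paper invokes positive recurrence directly at that last step, while you fill in the detail by tracking the i.i.d.\ exponential holding times at the designated state $x_C$ and applying the strong law of large numbers (with the absorbing-singleton case handled separately); this extra granularity is fine but does not change the structure of the argument.
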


\begin{proof}
Since the state space is finite, the process $\boldsymbol{z}(\cdot)$
eventually arrives at an irreducible class. Therefore, we may suppose
that the process starts at a state in irreducible class. Let $E_{0}\subset E$
be an irreducible class and $F_{0}=F\cap E_{0}$. By the assumption,
$F_{0}\ne\varnothing$. Since the process $\boldsymbol{z}(\cdot)$
starting at $E_{0}$ cannot escape from there, we can regard the process
$\boldsymbol{z}(\cdot)$ as the one defined only in $E_{0}$. Then,
the process $\boldsymbol{z}(\cdot)$ is positive recurrent and thus
$\lim_{t\to\infty}T_{0}^{F_{0}}(t)=\infty$ almost surely. Since $T^{F}(t)=T^{F_{0}}(t)$,
we are done.
\end{proof}
The following is a general result on Markov chains which is the main
idea behind the proof of the local reversibility.
\begin{prop}
\label{prop:revpre}Suppose that $E$ is finite. Let $\mathbf{z}(\cdot)$
be a continuous-time irreducible Markov chain on $E$ with jump rate
$R:E\times E\rightarrow[0,\,\infty)$. Suppose that we have a decomposition
\[
E\ =\ \bigcup_{i=1}^{n}E_{i}\;,
\]
of $E$ and $x_{i}\in E_{i}$ for $i\in\llbracket1,\,n\rrbracket$
such that,
\begin{equation}
\begin{cases}
R(x,\,E_{i}^{c})\ =\ 0 & \text{if }x\in E_{i}\setminus\{x_{i}\}\\
R(x,\,E_{i}^{c})\ >\ 0 & \text{if }x=x_{i}
\end{cases}\;\;\;\text{ for all }i\in\llbracket1,\,n\rrbracket\;,\label{eq:revm1}
\end{equation}
where $R(x,\,A)=\sum_{y\in A}R(x,\,y)$. Suppose in addition that,
there exists a probability measure $\rho(\cdot)$ on $F=\{x_{1},\,\dots,\,x_{n}\}$
such that we assume further that
\begin{equation}
\rho(x_{i})\,R(x_{i},\,E_{j})\ =\ \rho(x_{j})\,R(x_{j},\,E_{i})\;\;\;\text{for all }i,\,j\in\llbracket1,\,n\rrbracket\;.\label{eq:revm2}
\end{equation}
Then, the trace process of $\mathbf{z}(\cdot)$ on $F$ is reversible
with respect to $\rho(\cdot)$.
\end{prop}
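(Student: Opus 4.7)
The plan is to compute the jump rates $R^F(\cdot,\cdot)$ of the trace process $\mathbf{z}^F(\cdot)$ on $F$ in closed form, and then to observe that \eqref{eq:revm2} is precisely the detailed balance equation for $R^F$ with respect to $\rho$.

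First I would verify that the trace is well-defined, i.e., that condition \eqref{e_F_assu_trace} holds. Since $\mathbf{z}(\cdot)$ is irreducible on the finite set $E$, it is positive recurrent, so $F$ is visited infinitely often almost surely and $T^F(t)\to\infty$ almost surely (this is also a direct consequence of Lemma \ref{l_T_diverge}).

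The core step is the identity
\[
R^F(x_i,\,x_j)\;=\;R(x_i,\,E_j)\;,\qquad i\neq j\;.
\]
This rests on the following structural consequence of \eqref{eq:revm1}: from any state in $E_i\setminus\{x_i\}$ the chain cannot exit $E_i$, so every excursion of $\mathbf{z}(\cdot)$ that leaves $x_i$ into $E_i\setminus\{x_i\}$ stays in $E_i$ and must return to $x_i$ before any state of $F\setminus\{x_i\}$ can be reached; dually, any entrance into $E_j$ from outside at some $y\in E_j\setminus\{x_j\}$ stays in $E_j$ until it first hits $x_j$. Viewed only on $F$, the trace process therefore leaves $x_i$ precisely when $\mathbf{z}(\cdot)$ finally jumps out of $E_i$, and the subsequent state in $F$ is $x_j$ iff that exit jump enters $E_j$. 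A standard geometric-excursion argument (or equivalently the Schur-complement formula for the generator of the trace) then gives holding rate $\lambda^F(x_i)=R(x_i,E_i^c)$ at $x_i$ and conditional jump probability $R(x_i,E_j)/R(x_i,E_i^c)$ to $x_j$, from which the displayed identity follows.

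Given this identity, reversibility is immediate: for all $i,j$,
\[
\rho(x_i)\,R^F(x_i,\,x_j)\;=\;\rho(x_i)\,R(x_i,\,E_j)\;=\;\rho(x_j)\,R(x_j,\,E_i)\;=\;\rho(x_j)\,R^F(x_j,\,x_i)\;,
\]
where the middle equality is exactly the hypothesis \eqref{eq:revm2}. The only mildly technical part of the argument is thus the rate computation for $R^F$; once that is carried out cleanly (say, by applying the strong Markov property at the successive entrances to $x_i$ and summing the geometrically distributed number of failed exit attempts), the proposition reduces to a one-line detailed-balance check.
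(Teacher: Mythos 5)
Your proof takes essentially the same route as the paper: both reduce the claim to the identity $R^F(x_i,\,x_j)=R(x_i,\,E_j)$, which follows in both cases from the structural consequence of \eqref{eq:revm1} that from any state $y\in E_k$ the chain must reach $x_k$ before any other point of $F$, so that $\mathbb{P}_y[\tau_F=\tau_{x_j}]=\mathbf{1}\{y\in E_j\}$. The only presentational difference is that the paper invokes the trace-rate formula from \cite[Corollary 6.2]{BL1} directly, whereas you sketch the same computation from scratch via an excursion-thinning argument.
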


\begin{proof}
Denote by $\overline{R}:E\times E\rightarrow[0,\,\infty)$ the jump
rate of the trace of $\mathbf{z}(\cdot)$ on $F$. Then, in view of
\eqref{eq:revm2}, it suffices to show that $\overline{R}(x_{i},\,x_{j})=R(x_{i},\,E_{j})$
for all $i,\,j\in\llbracket1,\,n\rrbracket$. To that end, recall
from \cite[Corollary 6.2]{BL1} that
\begin{equation}
\overline{R}(x_{i},\,x_{j})\ =\ \sum_{k=1}^{n}\,\sum_{x\in E_{k}}\,R(x_{i},\,x)\,\mathbb{P}_{x}[\,\tau_{S_{0}}=\tau_{x_{j}}\,]\;.\label{eq:revm3}
\end{equation}
By \eqref{eq:revm1},
\[
\mathbb{P}_{x}[\,\tau_{F}=\tau_{x_{j}}\,]\ =\ \mathbf{1}\{\,x\in E_{j}\,\}
\]
and therefore, we can deduce from \eqref{eq:revm3} that
\[
\overline{R}(x_{i},\,x_{j})\ =\ \sum_{x\in E_{j}}\,R(x_{i},\,x)\ =\ R(x_{i},\,E_{j})\;.
\]
This completes the proof.
\end{proof}

\noindent\textbf{Acknowledgement. } C. L. has
been partially supported by FAPERJ CNE E-26/201.117/2021, and by CNPq
Bolsa de Produtividade em Pesquisa PQ 305779/2022-2.  J. L. was supported by the KIAS Individual Grant (HP093101) at Korea Institute for Advanced Study, the National Research Foundation of Korea (NRF) grant funded by the Korea government (No. RS-2019-NR040050, No. 2022R1F1A106366811, No. 2022R1A6A3A13065174), and Seoul National
University Research Grant in 2022. I.S was supported by NRF grant
funded by the Korea government (MSIT) (No. 2022R1F1A106366811,
2022R1A5A600084012, 2023R1A2C100517311) and Samsung Science and
Technology Foundation (Project Number SSTF-BA1901-03), and Seoul
National University Research Grant in 2022. I. S and J.  L are
grateful for the invitations from CIRM and the University of Rouen,
where significant portion of this collaboration took place.
I. S. thank to KIAS for their support as an associate member.


\begin{thebibliography}{10}
\bibitem{BDM}A. J. Ballard, R. Das, S. Martiniani, D. Mehta, L. Sagun,
J. D. Stevenson and D. J. Wales: Energy landscapes for machine learning.
Phys. Chem. Chem. Phys., \textbf{19}, 12585-12603 (2017)

\bibitem{BL1}J. Beltr\'an, C. Landim: Tunneling and metastability of
continuous time Markov chains. J. Stat. Phys. \textbf{140}, 1065--1114
(2010)

\bibitem{bl11}J. Beltr\'an, C. Landim: Metastability of reversible
finite state Markov processes. Stoch. Proc. Appl. \textbf{121}, 1633--1677
(2011)

\bibitem{BL2}J. Beltr\'an, C. Landim: Tunneling and metastability of
continuous time Markov chains II. J. Stat. Phys. \textbf{149}, 598--618
(2012)

\bibitem{BC}G. Ben Arous, R. Cerf: Metastability of the Three Dimensional
Ising Model on a Torus at Very Low Temperatures. Electron. J. Probab.
\textbf{1}, 1-55 (1996)

\bibitem{bgl2} L. Bertini, D. Gabrielli, C. Landim: Metastable expansion
of finite state Markov chains level two large deviations rate functions.
arXiv:2207.02588 (2022)

\bibitem{BEGK} A. Bovier, M. Eckhoff,V. Gayrard, M. Klein: Metastability
in reversible diffusion process I. Sharp asymptotics for capacities
and exit times. J. Eur. Math. Soc. \textbf{6}, 399--424 (2004)

\bibitem{BJ} G. Barrera, M. Jara: Thermalisation for small random
perturbations of dynamical systems. Ann. Appl. Probab. \textbf{30,}
1164--1208 (2020)

\bibitem{Chicone}C. Chicone: \textit{Ordinary Differential Equations
with Applications}. Text in Applied Mathematics \textbf{34}, Springer
New York, NY, 2010

\bibitem{Day82} M. V. Day. Exponential leveling for stochastically
perturbed dynamical systems. SIAM J. Math. Anal. \textbf{13} 532--540
(1982)

\bibitem{DF78} A. Devinatz, A. Friedman: Asymptotic behavior of the
principal eigenfunction for a singularly perturbed Dirichlet problem.
Indiana Univ. Math. J. \textbf{27}, 143--157 (1978)

\bibitem{fk10a} M. I. Freidlin, L. Koralov: Nonlinear stochastic
perturbations of dynamical systems and quasi-linear parabolic PDE's
with a small parameter Probab. Theory Relat. Fields \textbf{147},
273--301 (2010)

\bibitem{fk10b} M. I. Freidlin, L. Koralov: Metastability for nonlinear
random perturbations of dynamical systems. Stoch. Proc. Appl. \textbf{120},
1194--1214 (2010)

\bibitem{fk17} M. Freidlin, L. Koralov: Metastable Distributions
of Markov Chains with Rare Transitions. J. Stat. Phys. \textbf{167},
1355--1375 (2017)

\bibitem{fk22} M. Freidlin, L. Koralov: Asymptotics in the Dirichlet
problem for second order elliptic equations with degeneration on the
boundary. Journal of Differential Equations, \textbf{332}, 202--218
(2022).

\bibitem{fk23} M. Freidlin, L. Koralov: Perturbations of parabolic
equations and diffusion processes with degeneration: Boundary problems,
metastability, and homogenization. The Annals of Probability, \textbf{51},
1752--1784 (2023).

\bibitem{FW} M. I. Freidlin, A. D. Wentzell: \textit{Random perturbations
of dynamical systems. Second edition}. Grundlehren der Mathematischen
Wissenschaften {[}Fundamental Principles of Mathematical Sciences{]},
\textbf{260}. Springer- Verlag, New York, 1998.

\bibitem{Fri} A. Friedman: \textit{Stochastic differential equations
and applications}. Academic Press, Cambridge, 1975.

\bibitem{GT} D. Gilbarg, N. S. Trudinger. \textsl{Elliptic partial
differential equations of second order}. Springer, 2015.

\bibitem{is15} H. Ishii, P. E. Souganidis: Metastability for parabolic
equations with drift: Part I. Indiana Univ. Math. J., \textbf{64},
875-913 (2015)

\bibitem{is17} H. Ishii, P. E. Souganidis: Metastability for parabolic
equations with drift: part II. The quasilinear case, Indiana Univ.
Math. J. \textbf{66}, 315--360 (2017)

\bibitem{Kif} Y. Kifer: The exit problem for small random perturbations
of dynamical systems with a hyperbolic fixed point. Isr. J. Math.
\textbf{40}, 74--96 (1981)

\bibitem{KS1}S. Kim, I. Seo: Approximation method to metastability:
an application to non-reversible, two-dimensional Ising and Potts
model without external fields. arXiv:2102.05565 (2022)

\bibitem{KS2}S. Kim, I. Seo: Metastability of stochastic Ising and
Potts model on three-dimensional lattice without external fields.
arXiv:2212.13746 (2022)

\bibitem{kt16} L. Koralov, L. Tcheuko: Quasi-linear equations with
a small diffusion Term and the evolution of hierarchies of cycles.
J. Theor. Probab. \textbf{29}, 867--895 (2016)

\bibitem{l-review} C. Landim: Metastable Markov chains. Probability
Surveys \textbf{16}, 143--227 (2019)

\bibitem{LLS-1st}C. Landim, J. Lee, I. Seo: Metastability and time
scales for parabolic equations with drift 1: the first time scale.
arXiv:2309.05546

\bibitem{LLM} C. Landim, M. Loulakis, M. Mourragui :Metastable Markov
chains: from the convergence of the trace to the convergence of the
finite-dimensional distributions. Electron. J. Probab. \textbf{23},
1-34 (2018)

\bibitem{LMS} C. Landim, D. Marcondes, I. Seo: A Resolvent Approach
to Metastability. To appear in J. Eur. Math. Soc. (2021)

\bibitem{LMS2} C. Landim, M. Mariani, I. Seo: A Dirichlet and a Thomson
principle for non-selfadjoint elliptic operators, Metastability in
non-reversible diffusion processes. Arch. Rational Mech. Anal. \textbf{231},
887--938 (2017)

\bibitem{LandimSeo} C. Landim, I. Seo: Metastability of non-reversible
random walks in a potential field, the Eyring--Kramers transition
rate formula. Commun. Pure Appl. Math. \textbf{71}, 203-- 266 (2018)

\bibitem{lx16} C. Landim, T. Xu; Metastability of finite state Markov
chains: a recursive procedure to identify slow variables for model
reduction. ALEA Lat. Am. J. Probab. Math. Stat. \textbf{13}, 725-751
(2016)

\bibitem{LS-22} J. Lee, I. Seo: Non-reversible metastable diffusions
with Gibbs invariant measure I: Eyring--Kramers formula. Probab.
Theory Related Fields. \textbf{182}, 849--903 (2022)

\bibitem{LS-22b} J. Lee, I. Seo: Non-reversible metastable diffusions
with Gibbs invariant measure II: Markov chain convergence. J. Stat.
Phys. \textbf{189}, 25 (2022)

\bibitem{LLP22} T. Leli\`evre, D. Le Peutrec, B. Nectoux: The exit
from a metastable state: concentration of the exit point distribution
on the low energy saddle points, part 2, Stoch. PDE: Anal. Comp. \textbf{10},
317--357 (2022)

\bibitem{LM} D. Le Peutrec and L. Michel. Sharp spectral asymptotics
for nonreversible metastable diffusion processes. Probability and
Mathematical Physics. \textbf{1}, 3--53 (2019)

\bibitem{LPW}D. A. Levin, Y. Peres, E. L. Wilmer: \textit{Markov
Chains and Mixing Times. }\textit{\emph{AMS, Provi-dence, RI, 2009}}

\bibitem{Miclo2} L. Miclo: Une \'{e}tude des algorithmes de recuit simul\'{e} sous-admissibles. Ann. Fac. Sci. Toulouse Math. \textbf{4}:(4):819--877 (1995)

\bibitem{Miclo4} L. Miclo: Sur les temps d'occupations des processus de Markov finis inhomog\`{e}nes \`{a} basse temp\'{e}rature. Stochastics. Stochastics Rep. \textbf{63}(1-2):65--137 (1998)

\bibitem{MPV} M. M\'{e}zard, G. Parisi, M.A. Virasoro:\emph{ Spin
Glass Theory and Beyond}. World Scientific Lecture Notes in Physics,
vol. 9, World Scientific Publishing Co., Inc., Teaneck, NJ. (1987)

\bibitem{Nic18} L. I. Nicolaescu: \textit{An invitation to Morse
Theory}, second edition. Universitext, Springer, 2011

\bibitem{ov} E. Olivieri and M. E. Vares. \emph{Large deviations
and metastability}. Encyclopedia of Mathematics and its Applications,
vol. 100. Cambridge University Press, Cambridge, 2005.

\bibitem{Perko}L. Perko: \textit{Differential equations and dynamical
systems, 3rd. ed}. Texts in Applied Mathematics bf 7, Springer Verlag,
2001

\bibitem{RS} F. Rezakhanlou, I. Seo: Scaling limit of small random
perturbation of dynamical systems. Ann. Inst. H. Poincar\'{e} Probab. Statist. \textbf{59}, 867--903 (2023)
\end{thebibliography}
\end{document}